\documentclass{amsart}

\usepackage{amsfonts,amssymb,mathtools,mathabx,mathrsfs}

\usepackage[usenames,dvipsnames]{xcolor}
\usepackage{enumerate}
\usepackage{palatino}
\usepackage{todonotes}
\usepackage{verbatim}

\usepackage{tikz}
\usetikzlibrary{arrows, positioning, calc, intersections}
\usetikzlibrary{decorations.pathreplacing, decorations.markings}

\usepackage{hyperref}
\hypersetup{
    unicode=false,          		% non-Latin characters in Acrobat’s bookmarks
    pdftoolbar=true,        		% show Acrobat’s toolbar?
    pdfmenubar=true,        	% show Acrobat’s menu?
    pdffitwindow=false,     		% window fit to page when opened
    pdfstartview={FitH},    		% fits the width of the page to the window
    linktocpage=true,			% link to page not section title in TOC
    pdfnewwindow=true,      	% links in new PDF window
    colorlinks=true,       			% false: boxed links; true: colored links
    linkcolor=blue,          		% color of internal links (change box color with linkbordercolor)
    citecolor=PineGreen,    	% color of links to bibliography
    filecolor=magenta,      		% color of file links
    urlcolor=cyan           		% color of external links
}

\theoremstyle{plain}
\newtheorem{theorem}{Theorem}[section]
\newtheorem{corollary}[theorem]{Corollary}
\newtheorem{lemma}[theorem]{Lemma}
\newtheorem{proposition}[theorem]{Proposition}

\theoremstyle{definition}
\newtheorem{definition}[theorem]{Definition}

\theoremstyle{remark}
\newtheorem{remark}[theorem]{Remark}

\numberwithin{figure}{section}
\numberwithin{equation}{section}

\allowdisplaybreaks

\usepackage{float}

\usepackage[font=small, format=plain, labelfont=bf, up, textfont=it, up]{caption}
\usepackage{subcaption}

\DeclareMathOperator{\dist}{dist}
\DeclareMathOperator{\real}{Re}
\DeclareMathOperator{\imag}{Im}
\DeclareMathOperator{\sgn}{sgn}

\DeclareFontFamily{U}{mathx}{\hyphenchar\font45}
\DeclareFontShape{U}{mathx}{m}{n}{
      <5> <6> <7> <8> <9> <10>
      <10.95> <12> <14.4> <17.28> <20.74> <24.88>
      mathx10
      }{}
\DeclareSymbolFont{mathx}{U}{mathx}{m}{n}
\DeclareFontSubstitution{U}{mathx}{m}{n}
\DeclareMathAccent{\widecheck}{0}{mathx}{"71}
\DeclareMathAccent{\wideparen}{0}{mathx}{"75}

\begin{document}
\title{The Direct Scattering Transform for the Intermediate Long Wave Equation}
%\author{Joel Klipfel}
%\address[Klipfel]{ Matrix Research, Inc., Dayton, Ohio 43450}
%\email{ joel.klipfel@matrixresearch.com}
\author{Peter Perry}
\address[Perry]{Department of Mathematics, University of Kentucky, Lexington, Kentucky, 40506--0027}
%% PAP corrected his email
\email{pperr0@uky.edu}
\author{Yilun Wu}
\address[Wu]{Department of Mathematics, University of Oklahoma, Norman, Oklahoma, 73019-3103}
\email{allenwu@ou.edu}
\date{\today}
%\thanks{This is the current  version of the paper.}

\begin{abstract}
    In this paper, we provide a rigorous study of the direct scattering problem proposed by Kodama, Ablowitz and Satsuma for the complete integrability theory of the Intermediate Long Wave (ILW) equation. We derive the Lax operators for ILW, and study their spectral theory. We show finiteness and simplicity of the discrete spectrum, existence and uniqueness of the Jost solutions, and construct the scattering data for the Lax operator. We also show that the Jost solutions satisfy a nonlocal Riemann-Hilbert problem whose singularity data are given by the scattering data, and that the scattering potential can be reconstructed from the Jost solutions. This work paves the way for a  solution to ILW via the inverse scattering transform.
\end{abstract}

\maketitle

\tableofcontents

%%%%%%%%%%%%%%%%%%%
%		
%		TikZ Stuff
%
%%%%%%%%%%%%%%%%%%%

%% arrow in middle of line ->- 
\tikzset{->-/.style={decoration={
  markings,
  mark=at position .55 with {\arrow[scale=0.6]{triangle 45}} },postaction={decorate}}
}

%%%%%%%%%%%%%%%%%%%

%%

\newcommand{\dee}{\partial}
\newcommand{\dotarg}{\, \cdot \,}
\newcommand{\eps}{\varepsilon}
\newcommand{\lam}{\lambda}

\newcommand{\C}{\mathbb{C}}
\newcommand{\N}{\mathbb{N}}
\newcommand{\R}{\mathbb{R}}
\newcommand{\Z}{\mathbb{Z}}

\newcommand{\calC}{\mathcal{C}}
\newcommand{\calD}{\mathcal{D}}
\newcommand{\calF}{\mathcal{F}}
\newcommand{\calS}{\mathcal{S}}

\newcommand{\scrS}{\mathscr{S}}

\newcommand{\calH}{\mathcal{H}}

\newcommand{\loc}{{\mathrm{loc}}}
\newcommand{\unif}{{\mathrm{unif}}}

\newcommand{\norm}[2][ ]{\left\Vert #2 \right\Vert_{#1}}
\newcommand{\bigO}[2][ ]{\mathcal{O}_{#1} \left( #2 \right)}

\newcommand{\dint}{\displaystyle{\int}}

\newcommand{\mathitem}[1]{\item[${#1}$]}

\newcommand{\ba}{\breve{a}}
\newcommand{\bb}{\breve{b}}

\newcommand{\sidenote}[1]{\marginpar{\scriptsize{\color{purple} #1}}}

\newcommand{\cl}{\text{cl}}

\newcommand{\up}{\uparrow}
\newcommand{\dn}{\downarrow}

\newcommand{\supn}{^{(n)}}

%%%%%%%%%%%%%%%%%%%%%%%%%%%%%%%%%%%%%%%%%%%%%%%%%%%%%%%%%%%%%%%
%
%		Section structure modified by PAP on 2025-05-20
%
%%%%%%%%%%%%%%%%%%%%%%%%%%%%%%%%%%%%%%%%%%%%%%%%%%%%%%%%%%%%%%%

%%%%%%%%%%%%%%%%%%%%
%
%		intro.tex	 	-	Introduction
%
%%%%%%%%%%%%%%%%%%%%

\section{Introduction}

\subsection{Background and main results}
The Intermediate long wave equation (ILW) is the nonlinear dispersive equation
\begin{equation}
\label{ILW}
u_t + \tfrac{1}{\delta} u_x + 2uu_x + T^\delta u_{xx} = 0 
\end{equation}
where $u$ is a real-valued function of $(x,t) \in \R^2$, $\delta>0$ is the fluid depth parameter,  and $T^\delta$ is the singular integral operator
\begin{equation}
\label{T}
(T^\delta f)(x) = \text{P.V.} \frac{1}{2\delta} \int_{-\infty}^\infty \coth \frac{\pi(y-x)}{2\delta} f(y) \, dy. 
\end{equation}
Kubota, Ko, and Dobbs \cite{KKD:1978} introduced \eqref{ILW} as a model of weakly nonlinear wave propagation in a fluid of finite depth which interpolates between the Benjamin-Ono (BO) equation, corresponding to deep water, and the Korteweg-de Vries (KdV) equation, corresponding to shallow water. Sharp well-posedness results of these equations on the line and on the circle are known (see \cite{killip2019kdv}, \cite{kappeler2006well}, \cite{killip2024sharp}, \cite{gerard2023sharp}, 
\cite{gerard2023low}, \cite{gassot2025global}), with the exception of ILW on the line, for which well-posedness has been established on $L^2(\mathbb R)$ (\cite{ifrim2025lifespan}).

All of these equations are completely integrable in the sense of admitting a Lax representation. For rigorous theories, the integrability of KdV on the line is classically established via inverse scattering of the Schr\"odinger operator (\cite{DeiftTrubowitz1979}, \cite{BealsCoifman1985}), while on the circle it is given via global Birkhoff coordinates (\cite{kappeler2013kdv}).
Rigorous integrability theories of BO have also been developed, either by inverse scattering (\cite{coifman1990scattering}, \cite{Wu:2016}, \cite{Wu:2017}), global Birkhoff coordinates (\cite{gerard2021integrability}), or even an exact solution formula (\cite{gerard2023explicit}).

In comparison, rigorous integrability theory of ILW is missing, although formal inverse scattering problems have been discovered. 
\begin{comment}\footnote{For the BO equation on the line, see however the papers of Coifman-Wickerhauser \cite{CW:1990} and the more recent work of the third author \cite{Wu:2016,Wu:2017}; a rigorous analysis of complete integrability for the periodic Benjamin-Ono equation has recently been given by G\'{e}rard and Kappeler \cite{GK:2019}}.  This is the first in a series of papers devoted to the completely integrable method for the ILW equation.

The integral operator $T$ may be regarded as a Fourier multiplier with singular symbol
\begin{equation}
\label{sT}
\sigma(T)(\xi) = i \coth (\delta \xi).
\end{equation}
so that $\delta^{-1} \dee_x + T \dee_x^2$ acts as a Fourier multiplier with symbol 
$$ i \left(-\xi^2 \coth(\delta \xi) + \delta^{-1} \xi \right) \sim
\begin{cases}
-i\dfrac{\delta \xi^3}{3}, 	&	\xi \to 0,\\
\\
-i\xi^2 \sgn(\xi),				&	\xi \to \infty.
\end{cases}
$$
Thus, formally, the small-$\delta$ limit of \eqref{ILW} is the KdV equation
\begin{equation}
\label{KdV}
u_t + 2uu_x + (\delta/3) u_{xxx} = 0
\end{equation}
while the large-$\delta$ limit of \eqref{ILW} is the Benjamin-Ono equation
\begin{equation}
\label{BO}
u_t + 2uu_x + H\left( u_{xx} \right) = 0
\end{equation}
where $H$ is the Hilbert transform, a Fourier multiplier with symbol $i \sgn(\xi)$. 

Henceforth, we will take $\delta=1$.
\end{comment}
Satsuma, Ablowitz and Kodama \cite{SAK:1979} showed that \eqref{ILW} can be written as the compatibility condition for a linear spectral problem and an evolution equation and formulated an inverse scattering approach to integrability.   Chen-Lee \cite{CL:1979} and Joseph-Egri \cite{JE:1978} exploited complete integrability of ILW to find multisoliton solutions. Ablowitz, Kodama, and Satsuma  \cite{KAS:1982,KAS:1981} formulated the inverse scattering problem and studied the direct and inverse scattering transforms, while Santini, Ablowitz and Fokas \cite{SAF:1984} used inverse scattering to examine the BO limit of the ILW equation. A helpful exposition of inverse scattering for the ILW may be found in \cite[Chapter 4.2]{AC:1991}. We refer the reader to Saut's review \cite{Saut:2018}  of further PDE and inverse scattering results for the BO and ILW equations. 

Some early development of the rigorous scattering method of ILW was obtained in \cite{Klipfel:2020}. Our goal in the current paper is to provide a rigorous study of the whole direct scattering problem of ILW formulated in the literature, mostly under a small norm assumption on $u$. The only exception is in Section \ref{sec: finite discrete spec}, where we prove finiteness of the discrete spectrum  with only decay but no small norm assumptions. We now make a brief informal description of the direct scattering transform of ILW, referring to the sections and results in later parts of the paper for details. Set $\delta=1$.  Let $u$ be a real-valued function with suitable decay and small norm. We consider the spectral problem of the operator 
\begin{equation}
    L_u = e^D(D-\tilde u)e^D,
\end{equation}
where $D=\frac1i\partial_x$, and $\tilde u$ is the operator of multiplication by $u$. Refer to Section \ref{sec: lax pair} for a rigorous definition of $L_u$. For $k>0$, $k\ne\frac12$, there exist unique $L^\infty$ solutions (Sections \ref{sec: lax pair} and \ref{sec: existence}) $e^{-ikx}_{\pm\infty}$ satisfying 
\begin{equation}
    L_u e^{-ikx}_{\pm\infty}=-ke^{-2k}e^{-ikx}_{\pm\infty},
\end{equation}
with the asymptotic limit 
\begin{equation}
    e^{-ikx}_{\pm\infty}\sim e^{-ikx} \quad \text{as } x\to\pm\infty.
\end{equation}
These Jost solutions are related to each other by the transmission and reflection coefficients. Indeed, there exist $\tau(k)$ and $\rho(k)$ (Sections \ref{sec: lax pair} and \ref{sec: scat data}) such that 
\begin{equation}
    \tau(k)e^{-ikx}_{-\infty}=e^{-ikx}_{+\infty}+\rho(k)e^{-ik^*x}_{+\infty},
\end{equation}
where $k^*$ is the unique positive number such that $k^*\ne k$, $-k^* e^{-2k^*}=-ke^{-2k}$. $L_u$ has simple bound states $\varphi_j\in L^2$ with eigenvalues $\eta_j<-\frac1{2e}$, $j=1,\dots, N$ (Section \ref{sec: bound states}). Let $\zeta_j = -\frac12W_{-1}(2\eta_j)$, where $W_{-1}$ is the $-1$-st branch of the Lambert $W$ function. $\varphi_j$ can be normalized such that  $\varphi_j e^{i\overline{\zeta_j}x}\to 1$ as $x\to+\infty$. Define the norming constants to be 
\begin{equation}
    C_j = \left(\int_{\R}|\varphi_j|^2~dx\right)^{-1}.
\end{equation}
$\{\rho(k),\zeta_j,C_j\}$ for ${k>0,k\ne\frac12,j=1,\dots,N}$ together form the scattering data $\Sigma$. 
Let 
\begin{equation}
    N_1(x,k)=e^D(e^{ikx}e^{-ikx}_{+\infty}).
\end{equation}
We show that $N_1(x,k)$ can be extended to a meromorphic function $N_1(x,\zeta)$ for $\zeta\in \C\setminus \R^+$ such that $N_1(x,\zeta)$ satisfies a nonlocal Riemann-Hilbert (RH) problem with singular data $\Sigma$. In particular, $N_1(x,k)=N_1(x,\zeta)\big|_{\zeta=k-i0}$ (Lemma \ref{lem: anal cont R+}),  and $N(x,\zeta)$ satisfies (Proposition \ref{prop: RH})
\begin{enumerate}[(i)]
    \item $N(x,\zeta)$ has boundary values on $\R^+$ with
\begin{equation}
    N_1(x,k+i0)-N_1(x,k-i0)=\rho(k)e^{ix\lambda}N_1(x,k^*-i0),
\end{equation}
where $\lambda=Z^{-1}(k)$ (see Definition \ref{def: Z inv}). 
\item The poles of $N_1(x,\zeta)$ are at each $\zeta_j$ with residue
\begin{equation}
    \underset{\zeta=\zeta_j}{\text{Res}}~ N_1(x,\zeta) = iC_j e^{\lambda_j (1+ix) }N_1(x,\overline{\zeta_j}),
\end{equation}
where $\lambda_j = Z^{-1}(\zeta_j)$.
\item
\begin{equation}
    N_1(x,\zeta)\to 1
\end{equation}
as $\zeta\to\infty$ in $\mathbb C\setminus \R$. 
\end{enumerate}
The potential $u$ can be recovered from $N_1(x,\zeta)$ by (Proposition \ref{prop: rec u})
\begin{equation}
    u(x) = -2 \real\lim_{\zeta\to\infty} \zeta(N(x,\zeta)-1).
\end{equation}
We also show in Appendix \ref{app: A} that for a sufficiently smooth and decaying solution $u(x,t)$, $\Sigma(t)$ evolves by 
\begin{align}
     \rho(k,t) &= \rho(k,0)e^{i(k^*-k)(1-k-k^*)t},\\
     \zeta_j(t) &= \zeta_j(0),\\
     C_j(t) &= C_j(0)e^{2\imag [\zeta_j(1-\zeta_j)]t}.
\end{align}
If the above nonlocal RH problem can be solved from the scattering data $\Sigma$, we may construct a solution to the Cauchy problem of ILW by the inverse scattering transform. We leave the study of solvability of the nonlocal RH problem to a future paper.

\subsection{Notations and conventions}
For the reader's convenience, we compile here a list of commonly used notations and conventions for standard symbols in the paper.
$~$
\medskip

\begin{tabular}{@{}l@{\qquad}l@{}}
 
%% Capital Roman
$G_L(x,\zeta)$ & 	Green's function for left-normalized scattering problems \eqref{def: G}\\
$G_R(x,\zeta)$ & Greens' function for right-normalized scattering problems \eqref{def: G}\\

$e^{-ikx}_{\pm \infty}$ & Jost solutions asymptotic to $e^{-ikx}$ as $x\to\pm\infty$ \eqref{eq: eigen original}\\
$M_1(x,\zeta)$ &	normalized Jost solution asymptotic to $1$ on the left \eqref{jost prty: def M}\\
$M_e(x,k)$ &	normalized Jost solution asymptotic to  $e^{i\lambda x}$ on the left \eqref{jost prty: def M_e}\\
$N_1(x,\zeta)$ &	normalized Jost solution asymptotic to $1$ on the right \eqref{jost prty: def N}\\
$N_e(x,k)$ &	normalized Jost solution asymptotic to $e^{i\lambda x}$ on the right \eqref{jost prty: def N_e}\\

%$R(\xi,\zeta)$ & $p(\xi,\zeta)^{-1}$ minus polar parts at $\xi=0$ and $\xi=\lam(\zeta)$, equal to $R(\xi,\zeta,0)$\\

$T$ &		the convolution operator $-\frac12\coth\frac{\pi x}{2}*=i\coth(D)$ \\
$T_\pm$ &	$T \pm i I=\frac{2i}{1-e^{\mp 2D}}$\\
$T_L(\zeta), T_R(\zeta)$ & the operators $f\mapsto G_L(\cdot,\zeta)*(uf)$ and $f\mapsto G_R(\cdot,\zeta)*(uf)$\\

%% Lowercase Roman
$a(\zeta),b(\zeta)$ &	scattering coefficients \eqref{def: a}, \eqref{def: b}\\
$\tau(k),\rho(k)$ & transmission and reflection coefficients \eqref{def: tau}, \eqref{def: rho}\\
%$e_\lam(x)$ &	$\exp(i\lam x)$\\
$\log^+ t$ &  max$(0,\log t)$\\
%$p$ &		Symbol $p(\xi;\zeta) = \xi - \zeta(1-e^{-2\xi})$\\

%%	Upper Case Greek
$\Gamma_L$, $\Gamma_R$ &	integration contour for $G_L$ and $G_R$ (Figure \ref{fig: 1})\\

%% Lower Case Greek
$Z(\lambda)$ & the meromorphic map $\frac{\lambda}{1-e^{-2\lambda}}$\\
$Z^{-1}(\zeta)$ & the principal branch of the inverse of $Z$ (Definition \ref{def: Z inv})\\
$\zeta^*$ &  $Z[-Z^{-1}(\zeta)]$\\

%% Miscellaneous
$\langle x \rangle$ &	$\left(1 + |x|^2 \right)^{1/2}$ \\
$x_\pm$ & $\max(0,\pm x)$\\
$L^{p}_\mu(\R)$ & space of $f$ with 
	$\norm[L^{p}_\mu]{f} \coloneqq  \norm[L^p]{\langle x \rangle^{\mu} f(x)} < \infty$\\

$L^p_{\mu,\pm}(\R)$ & space of $f$ with 
    $\norm[L^p_{\mu,\pm}]{f} \coloneqq \|(1+x_\pm)^\mu f(x)\|_{L^p}<\infty$\\

$H^s_\mu(\mathbb R)$ & space of $f$ with
    $\norm[H^s_{\mu}]{f} \coloneqq \|\langle x\rangle ^\mu f(x)\|_{H^s}<\infty$\\

$H^{k,k}(\mathbb R)$ & space of $f$ with $\|f\|_{H^{k,k}}:=\sum_{j=0}^k\|\langle x \rangle^j f\|_{H^{k-j}}<\infty$\\
$\mathbb H^p(S)$ & the $L^p$ Hardy space on the strip $S$\\
$\mathbb H^\infty_{-N}(S)$ & space of holomorphic $F$ on $S$ with $(1+|z|)^{-N}F(z)\in L^\infty(S)$\\
$\mathbb H^\infty_{-N,\pm}(S)$ & space of holomorphic $F$ on $S$ with $(1+x_\pm)^{-N}F(x+iy)\in L^\infty(S)$\\
 ${W_n}$ & The $n$-th branch of the Lambert $W$ function\\
 $\hat f$, $\mathcal F f$ & Fourier transform of $f$\\
 $\check f$, $\mathcal F^{-1}f$ & inverse Fourier transform of $f$\\
 $D$ & $\frac{1}{i}\partial_x$\\
 $\tilde u$ & the operator of multiplication by $u$
\end{tabular}

\bigskip

Our convention for the Fourier transform and its inverse are
$$\hat f(\xi) = \int_{\R} f(x)e^{-ix\xi}~dx,\quad \check f(x) = \frac{1}{2\pi}\int_{\R} f(\xi)e^{ix\xi}~d\xi.$$

\subsection{Acknowledgements}
This work was supported by a grant from the Simons Foundation/SFARI (359431, PAP). Y.W. was supported in part by NSF grant DMS-2006212. Joel Klipfel \cite{Klipfel:2020} obtained results that 
%are 
were
useful for the early development of this project. 

							%%	Introduction
%%%%%%%%%%%%%%%%%%%
%
%		strip.tex
%
%%%%%%%%%%%%%%%%%%%

\section{Function theory on the strip}

To prepare for the presentation of a Lax pair for ILW, in this section we develop the theory for functions analytic in the strip 
$$S = \left\{ z \in \C~|~ 0 < \imag z < 2 \right\}=\mathbb R+i(0,2).$$
In particular, these functions arise naturally as solutions to the following Riemann-Hilbert problem with periodic jump contours. We want to look for a function $F_p(z)$ that is analytic in $S_p=\bigcup_{k\in \mathbb Z} (S+2ik)$, the complement of the contours $\bigcup_{k\in\mathbb Z}(\mathbb R+2ik)$, periodic across different copies of $S$:
\begin{equation}
    F_p(z+2ik)=F_p(z)\quad \text{for all }k\in\mathbb Z,
\end{equation}
with boundary values $F_p(x+2ik\pm i0)$ from either side of the contours, such that the jump is given by a predetermined function $f$:
\begin{equation}
    F_p(x+2ik+i0)-F_p(x+2ik-i0)=2if(x) \quad \text{for all }x\in \mathbb R.
\end{equation}
It is easy to see, at least formally, that a solution is given by periodizing the Cauchy kernel:
\begin{equation}
    F_p(z)=\frac1{\pi }\int_\R \sum_{k\in \mathbb Z}\frac{f(s)}{s+2ik-z}~ds.
\end{equation}
The formal kernel can then be matched to the Mittag-Leffler expansion of $\coth(z)$:
\begin{equation}
    \frac{\pi }{2}\coth\left(\frac{\pi z}{2}\right)=\frac{1}{z}+\sum_{k=1}^\infty\left(\frac{1}{z+2ik}+\frac{1}{z-2ik}\right).
\end{equation}
This suggests that we define the regular and singular integrals with a coth kernel:
\begin{equation}
     \calS f (z) = -\frac12\left[\coth\frac{\pi (\cdot+iy)}{2}*f\right](x)=\frac{1}{2} \int_{\R} \coth\frac{\pi(s-z)}2 f(s) \, ds
\end{equation}
with $z=x+iy\in S_p$,
and we set
$$ Tf (x) = \text{P.V.} \int_{\R} \frac{1}{2} \coth \frac{\pi (s-x)}2 f(s) \, ds $$
with $x\in\R$,
where P.V. denotes the principal value. 
We further define:
\begin{equation}\label{def: T pm}
    T_\pm f=(T\pm i)f,~(T_i f)(x)=(\mathcal S f)(x+i) = -\tfrac12\tanh\tfrac{\pi x}{2}*f(x).
\end{equation}
Note that $\coth\tfrac{\pi z}{2}$ is bounded on $\{z\in\mathbb C:\epsilon\le \imag z\le 2-\epsilon\}$ for any $\epsilon\in (0,1)$, and $\coth\tfrac{\pi x}{2}-\frac{2}{\pi x}$ bounded and smooth for $x\in\mathbb R$. It follows from the $L^p$ and weak $(1,1)$ boundedness of the Hilbert transform that 
    \begin{equation}\label{eq: T bounded}
        \|T f\|_{L^p+L^\infty}\le C\|f\|_{L^1\cap L^p},
    \end{equation}
for $1<p<\infty$, and 
\begin{equation}\label{eq: T bounded L1}
    \|T f\|_{L^{1,\infty}+L^\infty}\le C\|f\|_{L^1}.
\end{equation}

We compute the Fourier transform of the above $\coth$ kernels.
\begin{lemma}\label{lem: FT of coth}
    For $0<y<2$,
    \begin{equation}\label{eq: FT1}
        \mathcal F \left[-\frac12\coth\frac{\pi(x+iy)}2\right](\xi) = 2i~\text{P.V.}\frac{e^{(2-y)\xi}}{e^{2\xi}-1}.
    \end{equation}
    Also
    \begin{equation}\label{eq: FT2}
        \mathcal F\left(-\frac12\text{P.V.}\coth\frac{\pi x}2\right)(\xi)=i~\text{P.V.}\coth\xi.
    \end{equation}
\end{lemma}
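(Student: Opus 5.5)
Both formulas can be obtained at once from the Mittag-Leffler expansion quoted above: write the kernel as a sum of shifted Cauchy kernels and Fourier transform term by term. With $\mathcal F f(\xi)=\int f(x)e^{-ix\xi}\,dx$, the elementary transforms are
\begin{equation*}
\mathcal F\Big[\frac{1}{x-ia}\Big](\xi)=2\pi i\,e^{a\xi}\mathbf 1_{\xi<0}\ (a>0),\qquad \mathcal F\Big[\frac{1}{x-ia}\Big](\xi)=-2\pi i\,e^{a\xi}\mathbf 1_{\xi>0}\ (a<0).
\end{equation*}
Both follow by closing the contour in the lower or upper half plane.

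\textbf{First formula.} For $0<y<2$ we have
\begin{equation*}
-\tfrac12\coth\tfrac{\pi(x+iy)}2=-\tfrac1\pi\sum_{k\in\mathbb Z}\frac{1}{x-i(-y-2k)},
\end{equation*}
taken as a symmetric sum, with poles at $x=i(-y-2k)$. The pole position $-y-2k$ is positive exactly when $k\le -1$, and negative when $k\ge 0$.
\begin{itemize}
\item For $\xi<0$, the terms $k\le-1$ contribute $-2i\sum_{m\ge1}e^{(2m-y)\xi}=-2i\,e^{(2-y)\xi}/(1-e^{2\xi})$.
\item For $\xi>0$, the terms $k\ge0$ contribute $2i\sum_{k\ge0}e^{-(y+2k)\xi}=2i\,e^{-y\xi}/(1-e^{-2\xi})$.
\end{itemize}
Both cases equal $2i\,e^{(2-y)\xi}/(e^{2\xi}-1)$. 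The P.V. reflects the simple pole at $\xi=0$; the kernel tends to $\mp\frac12$ at $\pm\infty$, so its transform contains $-\pi i\,\sgn$-type distributional parts. The clean statement is therefore the identity of tempered distributions, with P.V. at $\xi=0$.

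\textbf{Second formula.} It follows by letting $y\downarrow0$ (or averaging $y\downarrow0$ with $y\uparrow2$). The boundary value at $y=0$ is $-\frac12\text{P.V.}\coth\frac{\pi x}2-\frac i2\cdot 2\delta\cdot\frac{1}{\ldots}$, by Sokhotski–Plemelj: the pole $\frac{1}{\pi}\frac{1}{x+iy}$ produces $\text{P.V.}\frac1{\pi x}-i\delta$. Hence
\begin{equation*}
\mathcal F[\text{P.V. kernel}]=\lim_{y\to0}2i\,\frac{e^{(2-y)\xi}}{e^{2\xi}-1}+\mathcal F[i\delta]
=2i\,\frac{e^{2\xi}}{e^{2\xi}-1}-i .
\end{equation*}
Since $2i\frac{e^{2\xi}}{e^{2\xi}-1}-i=i\coth\xi$, this gives the claim. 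Symmetrically, at $y\to2$ one gets $2i/(e^{2\xi}-1)+i=i\coth\xi$, which serves as a consistency check.

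\textbf{Main obstacle: rigor.} The series is not absolutely convergent, and the kernel does not decay. I would handle this in one of two ways:
\begin{itemize}
\item pair the terms $k$ and $-k-1$ (for the strip) so that each pair is $O(k^{-2})$ in $L^1_{\rm loc}$ plus tails, and check that the partial sums converge in $\mathcal S'$; or
\item more cleanly, verify the formula by the inverse direction. Compute $\mathcal F^{-1}$ of $e^{(2-y)\xi}/(e^{2\xi}-1)$, minus its pole part at $0$, by residues: shift the $\xi$-contour, noting the poles at $\xi=i\pi n$. This directly yields $\coth$.
\end{itemize}
The constant and $\delta$ contributions at $\xi=0$ (equivalently, the behaviour of the kernel at $x\to\pm\infty$) must be tracked carefully with the P.V. convention. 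This bookkeeping is where I expect errors, so I would test the final formulas against $\mathcal F[\sgn]=2/(i\xi)$ to fix constants.
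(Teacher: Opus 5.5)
Your argument is correct in substance, but it follows a different route from the paper.

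\textbf{How the paper does it.} The paper never expands the kernel. It computes $\mathcal F^{-1}$ of the right-hand side directly, integrating $e^{ix\xi}e^{(2-y)\xi}/(e^{2\xi}-1)$ around the rectangle with horizontal sides on $\R$ and $\R+i\pi$, dodging $0$ and $i\pi$ by small semicircles. The integrand on the top side is $e^{(2-y+ix)i\pi}$ times the integrand on the bottom side. So the two half-residues give $\frac{i}{4}\coth\frac{\pi(x+iy)}{2}$ at once, with the principal value built in, and no series or $\delta$-bookkeeping arises. Your second rigor option (inverse direction by residues) is close to this. The difference is that the $i\pi$-quasi-periodicity means only one strip is needed, not a sum over all poles $i\pi n$. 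For the second formula the paper does a separate contour computation with $e^{ix\xi}\coth\xi$. There the vertical sides vanish only weakly, so the identity is obtained only on $\R\setminus\{0\}$. The leftover distribution supported at $0$ is then killed by showing it is $L^2$ near $0$.

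\textbf{What your route buys.} Your forward, term-by-term transform of the Mittag-Leffler series makes the one-sided geometric sums transparent. Obtaining the second formula from the first by letting $y\downarrow0$ replaces the paper's second contour computation and support argument with one Sokhotski--Plemelj step. This is the same mechanism as $\mathcal Sf(x+i0)=T_+f$ in the next lemma. The price is justifying a conditionally convergent series and the behaviour at $\xi=0$.

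\textbf{Loose end 1: no $\delta$ in the first formula.} You assert rather than prove that no multiple of $\delta$ appears. The reason you give is also off: $-\pi i\sgn(\xi)$ is the transform of $\text{P.V.}\frac1x$, which is not present when $0<y<2$. Here is a clean argument. Write $K=-\frac12\coth\frac{\pi(x+iy)}{2}$.
\begin{itemize}
\item The symmetric partial sums are bounded uniformly in $x$ and $N$ and converge pointwise, hence in $\mathcal S'$. So your computation identifies $\hat K$ on $\R\setminus\{0\}$.
\item $K+\frac12\sgn\in L^1(\R)$, so its transform is a continuous function.
\item $\mathcal F[-\frac12\sgn]=\text{P.V.}\frac{i}{\xi}$ carries no $\delta$.
\end{itemize}
Hence $\hat K$ is exactly the principal value. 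You can also see this directly: with truncation $|k|\le N$, the would-be coefficient of $\delta$ is $i\int_0^1(e^{-2Ns}-e^{-2(N+1)s})\frac{ds}{s}=i\log\frac{N+1}{N}+o(1)\to0$. An unbalanced truncation would shift the sum by a constant and produce a spurious $\delta$, which is why the symmetric summation matters.

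\textbf{Loose end 2: sign in the Plemelj step.} Near $z=0$ we have $-\frac12\coth\frac{\pi z}{2}\sim-\frac{1}{\pi z}$. So the boundary value as $y\downarrow0$ is $-\frac12\text{P.V.}\coth\frac{\pi x}{2}+i\delta$, and $\mathcal F[i\delta]=i$ must be subtracted, not added. Your final expression $2i\frac{e^{2\xi}}{e^{2\xi}-1}-i=i\coth\xi$ is right, but your intermediate lines carry the opposite sign. The averaging variant you mention avoids the issue: the $\pm i\delta$ from $y\downarrow0$ and $y\uparrow2$ cancel, and $\frac12\bigl(\frac{2ie^{2\xi}}{e^{2\xi}-1}+\frac{2i}{e^{2\xi}-1}\bigr)=i\coth\xi$.
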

\begin{proof}
    We prove \eqref{eq: FT1} by computing the inverse Fourier transform of the right hand side. To that end, we note that 
    $$
    \int_\Gamma \frac{e^{(2-y)\xi}e^{ix\xi}}{e^{2\xi}-1}~d\xi=0,
    $$
    where $\Gamma=\Gamma^1_{\eps,R}+\Gamma^1_\eps+\Gamma^2_{\eps,R}+\Gamma^2_\epsilon+\Gamma^3_R$ is the closed contour that is almost the boundary of the rectangle $[-R,R]\times i[0,\pi]$, but dodging the poles $0$ and $i\pi$ by small upper and lower semicircles of radius $\eps$. Here $\Gamma^1_{\eps,R}$ is the lower edge excluding $(-\eps,\eps)$; $\Gamma^2_{\eps,R}$ is the upper edge excluding $(-\eps+i\pi,\eps+i\pi)$; $\Gamma^3_R$ are the two vertical edges; and the rest are the semicircles. It's easy to see that the function values on $\Gamma^2_{\eps,R}$ are proportional to those on $\Gamma^1_{\eps,R}$. It follows that
    \begin{equation}
        \left(1-e^{(2-y+ix)i\pi}\right)\int_{\Gamma^1_{\eps,R}} \frac{e^{(2-y)\xi}e^{ix\xi}}{e^{2\xi}-1}~d\xi=\int_{-\Gamma^1_\eps-\Gamma^2_\eps-\Gamma^3_R} \frac{e^{(2-y)\xi}e^{ix\xi}}{e^{2\xi}-1}~d\xi.
    \end{equation}
    We take the limit as $\epsilon\searrow 0$ and $R\nearrow \infty$. Since $0<y<2$, the integral on $\Gamma^3_R$ goes to 0 locally uniformly in $x$. By residue calculus and weak continuity of the Fourier transform, we get
    \begin{align}
        \mathcal F^{-1}\left[\text{P.V.}\frac{e^{(2-y)\xi}}{e^{2\xi}-1}\right](x)&=\frac{1}{1-e^{(2-y+ix)i\pi}}\left(\frac i2\cdot \frac12+\frac i2 \cdot \frac12e^{(2-y+ix)i\pi}\right)\notag\\
        &=\frac i4\coth\frac{\pi(x+iy)}{2}.
    \end{align}
    We compute the inverse Fourier transform of P.V.$\coth\xi$ in a similar way. Indeed,
    \begin{equation}
        (1-e^{-\pi x})\int_{\Gamma^1_{\eps,R}}e^{ix\xi}\coth \xi ~d\xi = \int_{-\Gamma^1_\eps-\Gamma^2_\eps-\Gamma^3_R}e^{ix\xi}\coth \xi ~d\xi.
    \end{equation}
    By Residue calculus, the integral on $-\Gamma^1_\eps-\Gamma^2_\eps$ converges to $i\pi (1+e^{-\pi x})$ locally uniformly in $x$ as $\eps\searrow 0$. 
    The integral on $-\Gamma^3_R$ is
    \begin{align}
        &~\int_0^\pi [e^{ix(-R+i\eta)}\coth(-R+i\eta)-e^{ix(R+i\eta)}\coth(R+i\eta)]~d\eta\notag\\
        =&~-\int_0^\pi [e^{ix(-R+i\eta)}+e^{ix(R+i\eta)}]~d\eta\label{eq: coth FT 1}\\
        &\quad +\int_0^\pi e^{ix(-R+i\eta)}(\coth(-R+i\eta)+1)~d\eta \label{eq: coth FT 2}\\
        &\quad - \int_0^\pi e^{ix(R+i\eta)}(\coth(R+i\eta)-1) ~d\eta.\label{eq: coth FT 3}
    \end{align}
    \eqref{eq: coth FT 1} equals $-\frac{2\cos(Rx)}x(1-e^{-\pi x})$, which converges weakly in distributions to  0  on $\mathbb R$ as $R\nearrow \infty$. Since $\coth(\mp R+i\eta)$ converges  to $\pm 1$ uniformly for $\eta\in(0,\pi)$ as $R\nearrow \infty$, \eqref{eq: coth FT 2} and \eqref{eq: coth FT 3} converges to 0 locally uniformly in $x$. It follows that $\int_{\Gamma^1_{\eps,R}}e^{ix\xi}\coth \xi ~d\xi$ converges weakly in distributions to $i\pi \coth\frac{\pi x}{2}$ on $\mathbb R\setminus \{0\}$ as $\eps\searrow 0$ and $R\nearrow \infty$. On the other hand, the same integral converges weakly in tempered distributions to $2\pi \mathcal F^{-1}[\text{P.V.}\coth \xi](x)$. It follows that 
    \begin{equation}
        \Delta = \mathcal F^{-1}[\text{P.V.}\coth \xi](x)-\frac i2\text{P.V.}\coth\frac{\pi x}{2}
    \end{equation}
    is a distribution supported on $\{0\}$. It remains to show $\Delta=0$. In fact, recall that $\mathcal F^{-1}[\text{sgn}(\xi)](x)=\text{P.V.}\frac{i}{\pi x}$, $\mathcal F^{-1}[\text{P.V.}\frac1\xi](x)=\frac{i}{2}\text{sgn}(x)$, and P.V.$\coth \xi -\text{P.V.}\frac1{\xi}- \sgn(\xi)\in L^2(\mathbb R)$. It follows that 
    \begin{equation}
        \Delta + \frac{i}{2}\text{P.V.}\coth\frac{\pi x}{2}-\text{P.V.}\frac{i}{\pi x}-\frac i2\text{sgn}(x) \in L^2(\mathbb R).
    \end{equation}
    Since the singularities of the two principle value distributions cancel at zero and $\text{sgn}(x)$ is in $L^2$ near zero, we get $\Delta$ is in $L^2$ near zero. Since $\Delta$ is supported at zero, it must be the zero distribution.
\end{proof}

\begin{lemma}\label{lem: bdry val Sf}
Let $f \in H^s\cap L^1$ or $f\in H^s\cap \dot H^{-1}$ for some $s>\frac12$. Then $\mathcal Sf(z)$ is analytic in $S$ and extends to a bounded uniformly continuous function on $\overline S$. The boundary values of $\calS f (z)$ are given as 
\begin{equation}\label{eq: S limit val}
    (\mathcal Sf)(x+i0)=(T_+f)(x),\quad (\mathcal S f)(x+i2)=(T_-f)(x),
\end{equation}
so that 
\begin{equation}
    (\mathcal S f)(x+i0)-(\mathcal S f)(x+i2)=2if(x).
\end{equation}
\end{lemma}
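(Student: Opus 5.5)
The plan is to work on the Fourier side, using Lemma \ref{lem: FT of coth}. For $z=x+iy$ with $0<y<2$, we have $\mathcal Sf(z) = -\tfrac12\coth\tfrac{\pi(\cdot+iy)}{2}*f\,(x)$. By \eqref{eq: FT1} this gives the representation
\begin{equation*}
\mathcal Sf(x+iy)=\frac{1}{2\pi}\int_\R 2i\,\frac{e^{(2-y)\xi}}{e^{2\xi}-1}\,\hat f(\xi)\,e^{ix\xi}\,d\xi .
\end{equation*}
The multiplier $m_y(\xi)=2i\,\text{P.V.}\,e^{(2-y)\xi}/(e^{2\xi}-1)$ has a simple pole at $\xi=0$ with residue $i$. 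It also satisfies $|m_y(\xi)|\lesssim e^{-y\xi}$ for $\xi>0$ large and $|m_y(\xi)|\lesssim e^{(2-y)\xi}$ for $\xi<0$ large. So it is uniformly bounded away from $\xi=0$ for $y\in[0,2]$.

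Split $\hat f=\hat f\chi_{|\xi|\le1}+\hat f\chi_{|\xi|>1}$.

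\emph{High frequencies.} Since $f\in H^s$ with $s>\frac12$, we have $\hat f\in L^1(|\xi|>1)$. Dominated convergence then gives the following for this part:
\begin{itemize}
\item it is analytic in $z$ (differentiate under the integral, using the exponential factor $e^{-y\xi}$ or $e^{(2-y)\xi}$ inside the strip);
\item it is bounded and uniformly continuous on $\overline S$;
\item it converges as $y\to0$ or $y\to2$.
\end{itemize}

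\emph{Low frequencies.} Write $m_y(\xi)=\frac{i}{\xi}+r_y(\xi)$ on $|\xi|\le1$, where $r_y$ is smooth and uniformly bounded in $y\in[0,2]$. The $r_y$ piece is handled as above, since $\hat f$ is bounded (if $f\in L^1$) or locally $L^1$. The piece $\frac{i}{\xi}\hat f(\xi)$ is the main obstacle.
\begin{itemize}
\item If $f\in\dot H^{-1}$, then $\hat f/\xi\in L^2$, hence in $L^1(|\xi|\le1)$, and the same argument applies directly.
\item If $f\in L^1$, then $\hat f$ is only continuous, and the P.V. must be kept. Handle it by writing $\hat f(\xi)=\hat f(0)+(\hat f(\xi)-\hat f(0))$. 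The quotient $(\hat f(\xi)-\hat f(0))/\xi$ is integrable near $0$, because $\hat f\in H^s$-regular, i.e.\ $f\in L^1\cap H^s$ gives $\hat f\in L^2$ with $\hat f$ Hölder-type only if $f$ has moments. So I would instead pass back to physical space for this term. Its inverse transform is $\hat f(0)\cdot\frac{i}{2}\cdot\frac{i}{2\pi}\ldots$, essentially $\tfrac{i}{2}\hat f(0)\,\sgn*$ type, and equals a bounded uniformly continuous combination of $\chi_{|\xi|\le1}$-smoothed $\operatorname{sgn}$ convolved with $f$. Concretely, I would split the kernel $-\frac12\coth\frac{\pi(x+iy)}2$ into $\mp\frac12$ tails (whose convolution with $f\in L^1$ is bounded and uniformly continuous, and analytic since the tails are differences of analytic kernels) plus a kernel decaying exponentially and uniformly in $y$ away from the pole. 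This is the step I expect to be delicate. The cleanest route is probably to do the whole $L^1$ case in physical space near the pole: $\coth\frac{\pi(x+iy)}{2}-\frac{2}{\pi(x+iy)}$ is bounded and analytic, while the Cauchy piece $\frac{1}{x+iy}*f$ for $f\in L^1\cap H^s$ is handled via $\hat f\in L^1(|\xi|>1)$ together with the Cauchy multiplier $\chi_{\xi<0}$.
\end{itemize}

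\emph{Boundary values.} As $y\to0^+$, $m_y(\xi)\to 2i\,\text{P.V.}\frac{e^{2\xi}}{e^{2\xi}-1}$, and
\begin{equation*}
2i\,\frac{e^{2\xi}}{e^{2\xi}-1}=i\coth\xi+i .
\end{equation*}
By \eqref{eq: FT2} this is the symbol of $T+i=T_+$. As $y\to2^-$, we get
\begin{equation*}
2i\,\frac{1}{e^{2\xi}-1}=i\coth\xi-i ,
\end{equation*}
which is the symbol of $T_-$. The convergence is justified by the dominated convergence estimates above. The low-frequency P.V. part converges because the P.V. structure, with residue $i$, is the same for all $y$. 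Subtracting the two boundary values gives $2if$.
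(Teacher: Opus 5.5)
Your final plan is correct and is essentially the paper's proof: the $H^s\cap\dot H^{-1}$ case goes exactly as you describe on the Fourier side ($\hat f/\xi\in L^2$ near $0$, $\hat f\in L^1$ away from $0$, dominated convergence, and the limiting symbols $i\coth\xi\pm i$ from Lemma~\ref{lem: FT of coth}), and the $H^s\cap L^1$ case uses your fallback split $\coth\frac{\pi z}{2}=\frac{2}{\pi z}+(\text{bounded, uniformly continuous})$ on $|\imag z|\le 1$. The paper combines this split with the $2i$-periodicity of $\mathcal Sf$ to reduce $y\to 2^-$ to $y\to 0^-$, and you need that step too, since the remainder is not bounded near $z=2i$.

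The only real difference is how the Cauchy piece is controlled. You use its bounded multiplier (which is $\chi_{\xi>0}e^{-y\xi}$ in the paper's Fourier convention, not $\chi_{\xi<0}$) together with $\hat f\in L^\infty\cap L^1(|\xi|>1)$; the paper instead invokes $H^s\subset C^{0,s-\frac12}$ and Plemelj's formula. The abandoned $\hat f(\xi)-\hat f(0)$ attempt does fail for general $f\in L^1$ and should simply be dropped.
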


\begin{proof}
First assume $f\in H^s\cap L^1$. Analyticity of $\mathcal S f$ follows from the analyticity of $\coth\frac{ \pi(s-z)}2$ in $z \in S$ and Morera's Theorem. To compute boundary values of $\mathcal S f$, we note that 
$\coth\frac{\pi z}{2}-\frac{2}{\pi z}$
is bounded and uniformly continuous for Im $z\in [-1,1]$, with the singularity at $z=0$ being removable. It follows that $\mathcal S f(z)$ will extend to a bounded uniformly continuous function on $\overline S$ if $\mathcal C f(z)=\frac{1}{\pi}\int_\mathbb{R} \frac{f(s)}{s-z}~ds$ thus extends in $\{z\in \mathbb C~|~0\le \text{Im }z\le 1\}$ and $\{z\in \mathbb C~|~-1\le \text{Im }z\le 0\}$ respectively, since $\mathcal S f(z)$ is periodic with period $2i$. The uniformly continuous extension of $\mathcal C f$ is standard, since $f\in H^s\cap L^1\subset C^{0,s-\frac12}_{\text{loc}}\cap L^1$. The limiting values of $\mathcal S f$ on the boundary follows from Plemelj's formula.

Now let $f\in H^s\cap \dot H^{-1}$. Thus $\hat f(\xi) = |\xi|g(\xi)$ for some $g\in L^2$. By Lemma \ref{lem: FT of coth} we regard $(\mathcal S f)(x+iy)$ as 
\begin{align}
    2i\mathcal F^{-1}\left(\hat f(\xi) ~\text{P.V.}\frac{e^{(2-y)\xi}}{e^{2\xi}-1} \right)(x) &= 2i\mathcal F^{-1}\left(g(\xi)\frac{|\xi|e^{(2-y)\xi}}{e^{2\xi}-1}\right)(x)\notag\\
    &=\frac{i}{\pi}\int_\mathbb{R}g(\xi)\frac{|\xi|e^{(2-y+ix)\xi}}{e^{2\xi}-1}~d\xi.
\end{align}
Analyticity of $\mathcal Sf$ on $S$ again follows from Morera's theorem. Note that $f\in H^s\cap \dot H^{-1}$ implies 
\begin{equation}
    \bigg|g(\xi)\frac{\xi e^{(2-y)\xi}}{e^{2\xi}-1}\bigg|\le | f(\xi)|\frac{ (e^{2\xi}+1)}{|e^{2\xi}-1|}\in L^1
\end{equation}
Thus $|\xi| g(\xi)\frac{e^{(2-y)\xi}}{e^{2\xi}-1}$ converges to $|\xi| g(\xi)\frac{e^{2\xi}}{e^{2\xi}-1}$ in $L^1$ as $y\searrow 0$, and converges to $|\xi| g(\xi)\frac{1}{e^{2\xi}-1}$ in $L^1$ as $y\nearrow 2$. It follows that $\mathcal S f$ extends to a bounded continuous function on $\overline S$. The limiting values \eqref{eq: S limit val} now follow from Lemma \ref{lem: FT of coth}.

%$$ \int \coth(\pi(z-s)/2) f(s) ds = \int \left[ \frac{2}{\pi(z-s)} + R(z-s) \right] f(s) \, ds $$
%where $R(z)$ is a bounded continuous function in the half-closed strip $\R \times [0,2)$ The remaining statements now follow from the identities
%$$
%\lim_{\pm y \downarrow 0} \int \frac{2}{\pi(x+iy-s)} f(s) \, ds 
%	=	\mp i f(x) +  \frac{2}{\pi} P \int \frac{1}{x-s} f(s) \, ds 
%$$
%and the fact that
%$ \coth(x+i(2-y)-s) = \coth(x-iy-s).$
\end{proof}

We also have the following weaker version of the boundary limits.

\begin{lemma}
    Let $f\in L^1\cap L^p$ for some $1<p<\infty$. Then $\mathcal Sf(z)\in \mathbb H^p(S)+\mathbb H^\infty(S)$. There exists $C>0$ such that 
    \begin{equation}
        \sup_{0<y<2}\|\mathcal Sf(\cdot+iy)\|_{L^p+L^\infty}\le C\|f\|_{L^1\cap L^p}
    \end{equation}
    for all $f\in L^1\cap L^p$, and $\mathcal Sf(x+iy)\to T_\pm f(x)$ as $y\to 0^+$ or $y\to 2^-$ respectively in $L^p+L^\infty$ and almost everywhere. 
\end{lemma}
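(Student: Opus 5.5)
The plan is to split the kernel into a Cauchy kernel plus a bounded remainder, reduce to the Cauchy integral on a half-strip (using the $2i$-periodicity of $\mathcal S f$), and invoke classical Hardy space theory for the Cauchy integral on a half-plane.

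\emph{Decomposition.} For $0<y\le 1$ write
\[
\tfrac12\coth\tfrac{\pi(s-z)}{2}=\frac{1}{\pi(s-z)}+R(s-z),
\]
where $R(w)=\frac12\coth\frac{\pi w}{2}-\frac{1}{\pi w}$ is bounded and uniformly continuous on $\{|\imag w|\le 1\}$. The point $w=0$ is a removable singularity of $R$, and the next pole is at $\pm 2i$. This gives
\[
\mathcal Sf(z)=\mathcal Cf(z)+\int_{\R} R(s-z)f(s)\,ds,\qquad \mathcal Cf(z)=\frac1\pi\int_{\R}\frac{f(s)}{s-z}\,ds .
\]
The $R$ term is bounded by $\|R\|_{L^\infty}\|f\|_{L^1}$, uniformly in $0\le y\le1$. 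By dominated convergence, using uniform continuity of $R$, it converges pointwise and uniformly as $y\to0^+$ to $\int R(s-x)f(s)\,ds$. It is holomorphic in $z$, so it lies in $\mathbb H^\infty$ of the half-strip $0<y<1$.

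\emph{Cauchy term.} Since $f\in L^p$ with $1<p<\infty$, classical theory applies to $\mathcal Cf$ on the upper half-plane. It is $\frac{1}{\pi}$ times the Cauchy integral, so it lies in $\mathbb H^p(\C^+)$ with $\sup_y\|\mathcal Cf(\cdot+iy)\|_{L^p}\le C\|f\|_{L^p}$. This follows by writing it as $P_y*f+iQ_y*f$ (Poisson plus conjugate Poisson) and using the $L^p$ boundedness of the Hilbert transform together with $\sup_y\|Q_y*f\|_p\lesssim\|Hf\|_p$. As $y\to0^+$, $\mathcal Cf(\cdot+iy)\to if+\frac1\pi\,\text{P.V.}\!\int\frac{f(s)}{s-x}ds$ in $L^p$ and almost everywhere. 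This is Plemelj/Fatou: $P_y*f\to f$ and $Q_y*f\to Hf$ in $L^p$ and a.e. by maximal function bounds.

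\emph{Assembling.} Adding the two limits gives $if(x)+\text{P.V.}\!\int\frac12\coth\frac{\pi(s-x)}{2}f(s)\,ds=T_+f(x)$, since the principal value and the $R$ term combine exactly into the P.V. $\coth$ integral. For $1\le y<2$, I will use the same decomposition centered at the pole $2i$. Using $\coth\frac{\pi(s-z)}2=\coth\frac{\pi(s-z+2i)}2$, write $z=z'+2i$ with $-1\le \imag z'<0$. The Cauchy integral in the lower half-plane has boundary value $-if+\dots$, which gives $T_-f$ as $y\to2^-$. The uniform bound on $0<y<2$ is the sum of the two half-strip bounds, giving $\mathcal Sf\in\mathbb H^p(S)+\mathbb H^\infty(S)$. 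Holomorphy in $S$ is as in Lemma~\ref{lem: bdry val Sf} (Morera, or direct differentiation, since for $y$ in compact subsets of $(0,2)$ the kernel is bounded and $f\in L^1$).

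The main obstacle is the bookkeeping of the decomposition near $y=1$, where both half-strip representations are used. This is harmless, since on $\epsilon\le y\le 2-\epsilon$ the kernel is bounded and $\mathcal Sf$ is directly in $\mathbb H^\infty$ with bound $C\|f\|_{L^1}$. There is also a subtlety in whether the a.e. limit is truly along vertical lines. Vertical (even nontangential) a.e. convergence holds for $P_y*f$ and $Q_y*f$ with $f\in L^p$. For the $R$ term it is uniform convergence, so no issue arises.
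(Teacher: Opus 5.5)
Your proposal is correct and is essentially the paper's proof, which reruns the first part of Lemma~\ref{lem: bdry val Sf} with the $L^p$ Hardy-space theory of $\mathcal Cf$ in place of the H\"older theory. That argument subtracts the Cauchy kernel from $\tfrac12\coth\tfrac{\pi w}{2}$, uses $2i$-periodicity to reduce to the two half-strips, and applies Plemelj. Two small points. First, the two half-strip splittings do not by themselves give a global one in $\mathbb H^p(S)+\mathbb H^\infty(S)$; take $\mathcal Cf(z)+\mathcal Cf(z-2i)$ as the $\mathbb H^p(S)$ part, since $\tfrac12\coth\tfrac{\pi w}{2}-\tfrac{1}{\pi w}-\tfrac{1}{\pi(w+2i)}$ is bounded on $-2\le\imag w\le 0$. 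Second, $\mathcal Cf(\cdot+iy)=i(P_y*f+iQ_y*f)$ rather than $P_y*f+iQ_y*f$, although the boundary value you state is the correct one.
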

\begin{proof}
    The proof is similar to the first part of that of Lemma \ref{lem: bdry val Sf}. One uses the $L^p$ theory of $\mathcal C f$ instead of the H\"older theory. 
\end{proof}

Next, we prove an analogue of the Cotlar identity $$fg - (Hf)(Hg) + H(fHg) +H(gHf) = 0$$ satisfied by the Hilbert transform.

\begin{lemma}[Cotlar's Identity for $T$]
\label{lemma:Cotlar}
Let $f,g \in L^1(\R) \cap L^2(\R)$. The identity
\begin{equation}
\label{Cotlar}
fg - (Tf)(Tg) + T(f Tg) + T(gTf) +\frac14 (If)(Ig) = 0
\end{equation}
holds pointwise almost everywhere, where $If=\int_\mathbb{R} f~dx$.
\end{lemma}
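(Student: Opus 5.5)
The plan is to mimic the classical proof of Cotlar's identity: a product of boundary values of functions analytic in the strip is again such a boundary value. By Lemma \ref{lem: bdry val Sf} and the subsequent weaker boundary-value lemma, $F=\mathcal S f$ and $G=\mathcal S g$ are analytic in $S$ with
\[
F(x+i0)=T_+f,\qquad F(x+2i)=T_-f,
\]
and similarly for $G$. Extending periodically with period $2i$, the product $H=FG$ is analytic in $S$. Its boundary values on the two edges are $(T_+f)(T_+g)$ and $(T_-f)(T_-g)$, so the jump is
\[
H(x+i0)-H(x+2i)=(T_+f)(T_+g)-(T_-f)(T_-g)=2i\bigl(f\,Tg+g\,Tf\bigr).
\]
I will first work with $f,g$ in a dense nice class, say Schwartz functions or $H^1\cap L^1$, where Lemma \ref{lem: bdry val Sf} gives bounded continuous boundary values. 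The general case $f,g\in L^1\cap L^2$ then follows by approximation. For this I use the bounds \eqref{eq: T bounded} and \eqref{eq: T bounded L1}: the products converge in $L^1+L^\infty$-type spaces, hence a.e. along a subsequence. The constant term $\frac14(If)(Ig)$ is continuous in $L^1$.

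The key step is a uniqueness or representation statement. A function analytic in $S$, with boundary values whose jump is $2i\,h$ for $h=f\,Tg+g\,Tf\in L^1$, should equal $\mathcal S h$ plus a constant. The constant is determined by the behaviour at $\pm\infty$ in the strip. Since $\coth\frac{\pi(s-z)}{2}\to\mp1$ as $\real z\to\pm\infty$, we get
\[
\mathcal S f(z)\to \mp\tfrac12 If \quad\text{as } \real z\to\pm\infty.
\]
Hence $H\to\frac14(If)(Ig)$ at both ends, while $\mathcal S h\to\mp\frac12 Ih$. So I also need $Ih=0$, i.e.\ $\int (f\,Tg+g\,Tf)=0$, which holds by antisymmetry of the kernel of $T$. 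To prove $H-\mathcal S h$ is constant, note that the difference has no jump across $\R+2ik$, so it extends to an entire $2i$-periodic function. It is bounded on the closed strip, hence bounded on $\C$, and Liouville's theorem makes it constant. Evaluating at $\real z\to\infty$ gives
\[
H=\mathcal S h+\tfrac14(If)(Ig).
\]
Removability of the jump contours needs continuity of the boundary values, which the nice class provides; Morera's theorem across the line then gives analyticity.

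Finally I take boundary values at $y\to0^+$:
\[
(T_+f)(T_+g)=T_+h+\tfrac14(If)(Ig).
\]
Expanding,
\[
(Tf)(Tg)-fg+i\bigl(f\,Tg+g\,Tf\bigr)=T\bigl(f\,Tg+g\,Tf\bigr)+i\bigl(f\,Tg+g\,Tf\bigr)+\tfrac14(If)(Ig).
\]
The imaginary cross terms cancel. Rearranging gives exactly \eqref{Cotlar}.

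The main obstacle is the Liouville step. I must check that $H$ and $\mathcal S h$ are genuinely bounded in the strip: $H$ is fine for nice $f,g$, but $h$ must be regular enough for Lemma \ref{lem: bdry val Sf}. The limits at $\real z\to\pm\infty$ must also hold uniformly in $y$. If $h$ lacks regularity, I would instead use the $L^p$ version: $H-\mathcal S h$ has a distributional jump of zero, so it is analytic across the line, and it lies in $\mathbb H^1+\mathbb H^\infty$ of each strip, which still forces it to be constant.
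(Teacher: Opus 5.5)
Your proof is correct. It reaches the key strip identity
\[
\mathcal S f(z)\,\mathcal S g(z)=\mathcal S(fTg+gTf)(z)+\tfrac14(If)(Ig),\qquad z\in S,
\]
by a different route from the paper's.

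The paper proves this identity for Schwartz $f,g$ by a direct kernel computation. The addition formula $\coth(\alpha-\beta)=\frac{\coth\alpha\coth\beta-1}{\coth\beta-\coth\alpha}$ splits $\coth\frac{\pi(s-z)}{2}\coth\frac{\pi(t-z)}{2}$ into $1$ plus two cross terms. After a principal-value limit over $|s-t|>\epsilon$, these cross terms integrate to $\mathcal S(gTf)$ and $\mathcal S(fTg)$. The paper then lets $z\to x+i0$ and uses the same density argument you describe.

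You instead treat $\mathcal Sf\,\mathcal Sg$ as a solution of the periodic jump problem from the start of Section 2 and identify it by Liouville.

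The paper's route is a two-line algebraic identity. Its cost is that the principal-value limit must be exchanged with the double integral, since the two cross terms are individually singular on the diagonal $s=t$. Your route avoids that bookkeeping and explains the constant $\frac14(If)(Ig)$ as the common limit of $\mathcal Sf\,\mathcal Sg$ at both ends of the strip. Its cost is checking that $h=fTg+gTf$ is regular enough for $\mathcal Sh$ to be continuous on $\overline S$, so that Morera removes the lines $\R+2ik$. This check does pass: $h$ is Schwartz when $f,g$ are, because $Tf$ and $Tg$ are smooth with bounded derivatives.

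You also do not need antisymmetry of $T$ separately. Once $FG-\mathcal Sh$ is constant, comparing its limits as $\real z\to+\infty$ and $\real z\to-\infty$ forces $Ih=0$. These limits need only be taken along one horizontal line such as $y=1$, where the kernel becomes $\tanh$ and dominated convergence applies. So the uniformity in $y$ you flag as an obstacle is not actually needed.
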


\begin{proof}
First suppose that $f, g \in \scrS(\R)$. By the hyperbolic trig addition formula
$\coth(\alpha-\beta)=\frac{\coth\alpha\coth\beta-1}{\coth\beta-\coth\alpha}$, we get for $z\in S$
\begin{align}
    \mathcal Sf(z)\mathcal Sg(z) &=\frac14\int_\mathbb{R}\int_\mathbb{R}\coth\tfrac{\pi(s-z)}{2}\coth\tfrac{\pi(t-z)}{2}f(s)g(t)~ds~dt\notag\\
    &=\lim_{\epsilon\searrow 0}\frac14\int_\mathbb{R}\int_{|s-t|>\epsilon}\bigg[\coth\tfrac{\pi(t-z)}{2}\coth\tfrac{\pi(s-t)}{2}\notag\\
    &\qquad \qquad \qquad \qquad \qquad +\coth\tfrac{\pi(s-z)}{2}\coth\tfrac{\pi(t-s)}{2}+1\bigg]f(s)g(t)~ds~dt\notag\\
    &=\mathcal S(gTf)(z)+\mathcal S(fTg)(z)+\frac14 (If)( Ig).
\end{align}
Take the limit as $z\to x+i0$ and use Lemma \ref{lem: bdry val Sf} to get
\begin{equation}
    (Tf+if)(Tg+ig)=T(gTf)+igTf+T(fTg)+ifTg+\frac14(If)(Ig),
\end{equation}
from which \eqref{Cotlar} follows. Now if $f,g\in L^1\cap L^2$, there exists sequences $f_n, g_n\in \mathscr S$ such that $f_n\to f$, $g_n\to g$ in $L^1\cap L^2$. We have \eqref{Cotlar} with $f$, $g$ replaced by $f_n$, $g_n$. By \eqref{eq: T bounded}, $Tf_n\to Tf$ and $Tg_n\to Tg$ in $L^2+L^\infty$. Thus $f_nTg_n\to fTg$, $g_nTf_n\to gTf$ in $L^1$. \eqref{Cotlar} now follows from \eqref{eq: T bounded L1}.  
\end{proof}

%(representation $F = \coth*(F_+ - F_-)$)

%(singular integral operator theory for Hilbert transform and perturbations - or maybe this belongs in an appendix?)

%\sidenote{This is the revised version of the lemma from Allen's notes}

The following lemma follows from the theory of $\mathbb H^2$ spaces, except that the base space is $L^2+L^\infty_{-N}$ instead of just $L^2$. In fact, for a positive integer $N$, we define $\mathbb H^\infty_{-N}(S)$ to be the set of holomorphic functions $F(z)$ on $S$ such that 
$(1+|z|)^{-N}|F(z)|$ is bounded on $S$.

\begin{lemma}
\label{lemma:FT.strip}
Suppose that for some $N\in \mathbb N$, $F(z)\in \mathbb H^2(S)+\mathbb H^\infty_{-N}(S)$. %that
%\begin{equation}
%    \sup_{0<y<H}\|F(\cdot+iy)\|_{L^p+L^\infty}<\infty,
%\end{equation}
Then the limit as $y\searrow 0$, $y\nearrow 2$ of $F(\cdot+iy)$ exist almost everywhere, with the limits $F(\cdot+i0)$, $F(\cdot+i2)$ in $L^2+L^\infty_{-N}$. Furthermore, for any $y\in [0,2]$, 
\begin{equation}\label{Mpm:Fourier}
    \mathcal F[F(\cdot+iy)](\xi)=e^{-y\xi}\mathcal F[F(\cdot+i0)](\xi).
\end{equation}
%Furthermore, $y\mapsto F(\cdot+iy)$ is continuous from $[0,H]$ to $L^p+L^\infty$.
%Similar conclusions hold if $H<0$.

\end{lemma}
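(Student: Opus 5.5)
By linearity it suffices to treat the two pieces of $F=F_1+F_2$ separately, with $F_1\in\mathbb H^2(S)$ and $F_2\in\mathbb H^\infty_{-N}(S)$. For $F_1$ the statements are the classical Paley--Wiener theory for the strip. Almost everywhere boundary limits and $L^2$ convergence follow from the usual Hardy space theory, for instance by conformally mapping a thinner substrip to the disc. The Fourier identity is obtained by applying Cauchy's theorem to $F_1(z)e^{-iz\xi}$ on rectangles $[-R,R]\times[y_1,y_2]$ with $0<y_1<y_2<2$. Choosing $R$ along a sequence on which the vertical-side contributions vanish (possible because $\int_{y_1}^{y_2}\int_\R|F_1(x+iy)|^2\,dx\,dy<\infty$), one gets $\hat F_1(\cdot+iy_2)(\xi)=e^{-(y_2-y_1)\xi}\hat F_1(\cdot+iy_1)(\xi)$ in the sense of distributions. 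Then let $y_1\searrow0$, and let $y_2\nearrow 2$ or stay in $(0,2)$, using the $L^2$ convergence of the boundary values.

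For $F_2$ I reduce to the $\mathbb H^2$ case by a division trick. Set $G(z)=F_2(z)/(z+i)^{N+1}$. Since $z+i$ does not vanish on $\overline S$ and $|z+i|\gtrsim 1+|z|$ there, we get $|G(z)|\lesssim(1+|z|)^{-1}$, so $G\in\mathbb H^2(S)\cap\mathbb H^\infty(S)$. The previous step then gives a.e.\ boundary limits of $G$ at $y=0$ and $y=2$; these are bounded by $C(1+|x|)^{-1}$, being a.e.\ limits of functions with that bound. Hence $F_2(x+i0)=(x+i)^{N+1}G(x+i0)$ exists a.e. and satisfies $|F_2(x+i0)|\lesssim(1+|x|)^N$, i.e.\ it lies in $L^\infty_{-N}$, and likewise at $y=2$. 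Bounded pointwise convergence also gives $F_2(\cdot+iy)\to F_2(\cdot+i0)$ in $\mathcal S'$.

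The main obstacle is the Fourier identity for $F_2$, because $F_2(\cdot+iy)$ is only a tempered distribution, and multiplying by $e^{-y\xi}$ does not even make sense on all of $\mathcal S'$ (exponential growth in $\xi$). I would interpret the identity by pairing with test functions $\phi$ whose Fourier transforms lie in $C_c^\infty$, since such test functions are dense enough to determine $\hat F$. I would then prove
\[
\int F_2(x+iy)\phi(x)\,dx=\int F_2(x+i0)\phi_y(x)\,dx,\qquad \hat\phi_y(\xi)=e^{-y\xi}\hat\phi(\xi),
\]
where $\phi_y(x)=\phi(x-iy)$ is the entire continuation of $\phi$, which decays faster than any polynomial on horizontal strips by Paley--Wiener. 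This is Cauchy's theorem applied to $F_2(z)\phi(z-iy)$, shifting the contour from $\imag z=y$ to $\imag z=\epsilon$. The vertical sides vanish because the integrand is $O((1+|z|)^{N}\cdot(1+|z|)^{-M})$, and letting $\epsilon\to0$ is justified by dominated convergence using the polynomial bound. The upper boundary $y=2$ is handled the same way.

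Alternatively, one could write $G$'s boundary-value relation $\hat G(\cdot+iy)=e^{-y\xi}\hat G(\cdot+i0)$ and multiply by $(x+iy+i)^{N+1}$. In Fourier variables this is a differential operator in $\xi$, and it commutes correctly with $e^{-y\xi}$ because $(D_\xi\text{-shift})$ exactly encodes the $iy$ translation. I expect the contour-shift pairing to be the cleaner route.
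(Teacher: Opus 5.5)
Your proof is correct and is essentially the paper's argument: the same splitting $F=F_1+F_2$, boundary values for the polynomially growing piece via division by a polynomial plus Hardy-space theory, and Cauchy's theorem on rectangles with the growth of $F$ absorbed by the test function. The paper integrates by parts in $\xi$ against $\varphi\in C_0^\infty(\mathbb R)$, which after exchanging the order of integration amounts to your pairing with the entire, rapidly decaying $\phi(z-iy)$; it then reaches $y=0,2$ through $\mathcal S'$-convergence of $F(\cdot+iy)$ rather than your direct dominated-convergence limit.

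One small slip: with the paper's convention $\hat f(\xi)=\int f(x)e^{-ix\xi}\,dx$ one gets $\widehat{\phi_y}(\xi)=e^{y\xi}\hat\phi(\xi)$, not $e^{-y\xi}\hat\phi(\xi)$. This is harmless, because Parseval pairs $\hat F(\xi)$ with $\hat\phi(-\xi)$. Your identity $\int F_2(x+iy)\phi(x)\,dx=\int F_2(x+i0)\phi(x-iy)\,dx$ therefore still yields exactly $\mathcal F[F_2(\cdot+iy)]=e^{-y\xi}\mathcal F[F_2(\cdot+i0)]$ in $\mathcal D'$.
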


%\marginpar{\scriptsize{\color{purple} Is this still needed? The section it refers to no longer exists.}}
%\begin{remark}
%In case $F_2=0$ the bound on $|F(x+iy)|$ is not needed since $F_1$ is bounded and continuous on $\overline{S}$. A model for the function $F_2$ is the Cauchy integral $G$ of an $L^2$ function $g$ on the upper half-plane, which obeys the bound $|G(z)| \lesssim (\imag z)^{-1/2}$ and converges to the boundary value $C_+ g$ in $L^2$ sense. Lemma \eqref{lemma:FT.strip} will be applied in section \ref{sec:solutions} to study the analytic continuation of Jost functions to the strip $S$. The convolution $G_L*(uM)$, analytically continued to $S$, has the above properties.
%\end{remark}

\begin{remark}
The two sides of \eqref{Mpm:Fourier} are equal as distributions. Although the right hand side is apriori not a tempered distribution, the equality implies that it actually is.
\end{remark}

\begin{proof}

Write $F=F_1+F_2$ with $F_1\in \mathbb H^2(S)$, $F_2\in\mathbb H^\infty_{-N}(S)$. We have $\frac{F_1(z)}{(z-3i)^N}\in \mathbb H^\infty(S)$. It follows that $F_1(\cdot+iy)$ converges almost everywhere and in $L^2(\mathbb R)$, and $F_2(\cdot+iy)$ converges almost everywhere, as $y\searrow 0$ or $y\nearrow 2$, by the theory of $\mathbb H^p$ spaces. It follows that $F(\cdot+iy)$ converges as tempered distributions as $y\searrow 0$ or $y\nearrow 2$. So we only need to prove 
\begin{equation}\label{Mpm:Fourier 1}
    e^{a\xi}\mathcal F[F(\cdot+ia)](\xi)=e^{b\xi}\mathcal F[F(\cdot+ib)](\xi)
\end{equation}
for $a,b\in (0,2)$.
We integrate $F(z) e^{-iz\xi}$ on the rectangular contour with the horizontal lines at height $\imag z = a$ and $\imag z=b$, and with the two vertical lines at $\real z = -R$ and $\real z =R$.
It follows that
\begin{align*}
&e^{a\xi}\int_{-R}^R e^{-ix\xi} F(x+ia) \, dx 
		- e^{b\xi}\int_{-R}^R e^{-ix \xi}  F(x+bi) \, dx \\		= &-i \int_a^{b} e^{iR\xi}{e^{t\xi}} F(-R+it) \, dt + i \int_a^{b} e^{-iR\xi} e^{t\xi} F(R+it) \, dt
\end{align*}
The left hand side obviously converges to $e^{a\xi}\mathcal F[F(\cdot+ia)](\xi)-e^{b\xi}\mathcal F[F(\cdot+ib)](\xi) $ in $\calD'(\R)$ as $R\to\infty$.
We claim that the right hand side converges to $0$ in $\calD'(\R)$ as $R \to \infty$ . We consider the first right hand term since the other can be analyzed similarly. Integrating the first right-hand term against a test function 
$\varphi \in C_0^\infty(\R)$ and integrating by parts, we get
\begin{align} \label{eq: Mpm: Fourier 2}
&-i\int_{-\infty}^\infty \varphi(\xi) \int_a^{b} e^{iR\xi} e^{t\xi} F(-R+it) \, dt \, d\xi \notag\\
=&	\frac{1}{R} \int_a^{b} F(-R+it) 
				\int_{-\infty}^\infty e^{iR\xi} \left( t e^{t\xi} \varphi(\xi) + e^{t\xi} \varphi'(\xi) \right)\, d\xi
			\, dt
\end{align}
Assume $N=0$ for the moment so that $F_2\in \mathbb H^\infty(S)$. By standard $\mathbb H^2$ theory $F(-R+it)$ is uniformly bounded for $R$ large and $t\in [a,b]$. It follows that \eqref{eq: Mpm: Fourier 2} is bounded by
$$ \frac{C}{R} \int_{-\infty}^\infty (1+b)e^{b|\xi|}\left( |\varphi(\xi)| + |\varphi'(\xi)| \right) \, d\xi$$
which vanishes as $R \to \infty$ for any $\varphi \in C_0^\infty(\R)$. For higher values of $N$, simply integrate by parts multiple times to create higher powers of $R$ on the denominator to balance out the power growth of $F(-R+it)$. We omit the details.
%It now follows that, as distributions in $\calD'(\R)$, 
%$$ \widehat{F_+}(\xi) = e^{(2-\eps)\xi} \widehat{F}(\xi+(2-%\eps)i) $$
%where the right-hand side denotes the distribution Fourier transform with respect to $x$ of the distribution $F(x+iy)$.
%Using properties (i) and (ii) we have 
%$$\widehat{F}(\dotarg+(2-\eps)i) \to \widehat{F_-}%(\dotarg)$$ in $\calD'(\R)$
%as $\eps \downarrow 0$, from which \eqref{Mpm:Fourier} follows.
\end{proof}

In view of Lemma \ref{lemma:FT.strip}, we may extend the definition of $e^{-hD}$ to functions with exponential growth at infinity, which are typically not tempered distributions.
\begin{definition}\label{def: e^D}
Let $F(z)$ be holomorphic on $0<\imag z<h$, and let the limit of $F(\cdot+iy)$ as $y\searrow 0$ or $y\nearrow h$ exist almost everywhere, with the limits denoted by $F(\cdot+i0)$ and $F(\cdot+ih)$. We define    
\begin{equation}
    (e^{-hD} F(\cdot+i0))(x)=F(x+ih).
\end{equation}
A similar definition can be given when $h$ is negative.
\end{definition}

% \begin{lemma}
%     Let $u\in H^s\cap L^1$ for $s>\frac32$. Then there exists a bounded continuous function $F(x+iy)$ on the strip $\{x+iy~|~0\le y\le 2$ such that $F$ is holomorphic inside the strip, and $F(x+i0)=(T_+u_x)(x)$, $F(x+i2)=(T_-u_x)(x)$.
% \end{lemma}
\begin{lemma}\label{lem: T pm 0}
    Let $f\in H^s\cap L^1$ or $f\in H^s\cap \dot H^{-1}$ for some $s>\frac12$, 
    then
    \begin{equation}\label{eq: multi T id}
        e^{-D}\widetilde{T_+f}e^D=e^{D}\widetilde{T_-f}e^{-D}=\widetilde{T_i f},
    \end{equation}
    where the operators above act on functions analytic in the strip $\{z\in \mathbb C~|~-1<\imag z<1\}$ with almost everywhere boundary values.
\end{lemma}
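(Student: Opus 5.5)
By Definition \ref{def: e^D}, $e^{-hD}$ is the translation $x\mapsto x+ih$ of an analytic function, so the identity reduces to a statement about analytic continuation of the multiplier $\mathcal Sf$. Fix $g$ analytic in $\{-1<\imag z<1\}$ with almost everywhere boundary values, and put $w(z)=\mathcal S f(z+i)$. I would reduce both equalities in \eqref{eq: multi T id} to evaluating the product $w\,g$ on the three lines $\imag z=-1,0,1$.

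\textbf{Step 1.} I would first record where $w$ lives. By Lemma \ref{lem: bdry val Sf}, $\mathcal Sf$ is analytic in $S$ and extends to a bounded uniformly continuous function on $\overline S$. Hence $w(z)=\mathcal Sf(z+i)$ is analytic in $\{-1<\imag z<1\}$ and continuous up to the boundary. Its values are:
\begin{itemize}
\item on $\imag z=-1$, by \eqref{eq: S limit val}, $w(x-i)=\mathcal Sf(x+i0)=T_+f(x)$;
\item on $\imag z=1$, $w(x+i)=\mathcal Sf(x+i2)=T_-f(x)$;
\item on the real line, by the definition \eqref{def: T pm}, $w(x)=\mathcal Sf(x+i)=T_if(x)$.
\end{itemize}

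\textbf{Step 2.} Next I would compute $e^{-D}\widetilde{T_+f}e^{D}g$ by reading the operators right to left, following Definition \ref{def: e^D} (with $h=-1$, then $h=1$).
\begin{itemize}
\item $e^{D}$ maps the boundary function $g(\cdot+i0)$ to $g(\cdot-i)$.
\item Multiplying by $T_+f(x)=w(x-i)$ gives $(wg)(\cdot-i)$. This is the boundary value on $\imag z=-1$ of the analytic function $wg$.
\item $e^{-D}$ then shifts by $+i$, so it returns $(wg)(x)=T_if(x)\,g(x)$.
\end{itemize}
The same computation with $e^{-D}$ first, followed by multiplication by $T_-f(x)=w(x+i)$ and then $e^{D}$, gives $e^{D}\widetilde{T_-f}e^{-D}g=T_if\,g$. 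This yields both equalities in \eqref{eq: multi T id}.

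\textbf{Main obstacle.} The delicate point is to justify the "shift back" step, that is, to check that $wg$ satisfies the hypotheses of Definition \ref{def: e^D} on the strip between the relevant lines.
\begin{itemize}
\item $wg$ is holomorphic there.
\item Its limits on the boundary lines exist almost everywhere, because $w$ is continuous up to the closed strip and $g$ has almost everywhere boundary limits.
\item So the value of $e^{\mp D}$ applied to $(wg)(\cdot\pm i)$ is by definition the interior value on the real line.
\end{itemize}

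One subtlety remains: the operator $e^{-D}$ acts on the function $(wg)(\cdot-i)$, which must be recognized as the boundary trace of an analytic function in $\{-1<\imag z<1\}$. I would make this identification explicit by taking the translated function $z\mapsto (wg)(z-i)$ on $0<\imag z<2$ and applying Definition \ref{def: e^D} with $h=-1$ relative to the strip $\{-1<\imag z<1\}$.

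Uniqueness of the analytic extension is needed for the definition to be well posed. It holds because boundary values on a set of positive measure determine the analytic function; by Lemma \ref{lemma:FT.strip}, in the tempered setting this also corresponds to the Fourier multiplier $e^{\pm\xi}$. I would note this briefly rather than prove it.
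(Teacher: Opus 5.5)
Your proposal is correct and follows the paper's proof essentially verbatim. The paper also applies $e^{D}$ to obtain $F(\cdot-i)$, multiplies by $T_+f=\mathcal Sf(\cdot+i0)$, and shifts back with $e^{-D}$ to get $\mathcal Sf(\cdot+i)F=T_if\,F$, using Lemma \ref{lem: bdry val Sf} for the boundary values and treating the $T_-$ case symmetrically; your explicit identification of $(wg)(\cdot-i)$ as the trace of $z\mapsto w(z-i)g(z-i)$, and your remark on the well-posedness of Definition \ref{def: e^D}, supply rigor that the paper leaves implicit.
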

\begin{proof}
    Denote by $F(z)$ the function that is analytic in the strip $-1<\imag z <1$ with almost everywhere boundary values. Then by Lemma \ref{lem: bdry val Sf},
    \begin{align}
        e^{-D}\widetilde{T_+f}e^D (F(\cdot+i0))&=e^{-D}[\mathcal Sf(\cdot+i0)F(\cdot-i)]\notag\\
        &=\mathcal Sf(\cdot+i)F(\cdot+i0)\notag\\
        &=\widetilde{T_i f}F(\cdot+i0).
    \end{align}
    The other equality in \eqref{eq: multi T id} can be proven similarly.
\end{proof}							%%	Function theory
%%%%%%%%%%%%%%%%%%%
%
%		lax.tex	- 	Lax Representation
%
%%%%%%%%%%%%%%%%%%%

\section{Lax representation}\label{sec: lax pair}

\subsection{The Lax operators}
The Lax pair of ILW was presented in \cite{KAS:1981, KAS:1982} as an overdetermined system of equations, with the ILW given as a compatibility equation for the system. However, to our needs we prefer a more classical operator form of the Lax pair. One could try to extract the Lax operators from the overdetermined system, but the connection cannot be seen easily without a significant modification of the setup. Instead, we define the Lax operators directly and demonstrate that the Lax equation is equivalent to the ILW equation. After defining the self-adjoint $L_u$ operator using the theory of semibounded quadratic forms, we engage in some initial study of the spectrum. %In this section, we first treat the operators on a formal level, where we assume that $u$ has $C_0^\infty$ Fourier transform in space, and let the Lax operators $L_u$ and $B_u$ act on functions with compactly supported Fourier transform. 
In Appendix \ref{app: B}, we show how the Lax operators for ILW will converge to those of the KdV and BO equations, as the depth parameter $\delta$ tends to $0$ or $\infty$. 

Let us now present the Lax operators. For a given function $f$ on $\mathbb R$, denote by $\tilde f$ the operator of multiplication by $f$. %For a given function $m$ on $\mathbb R$, denote by $m(D)$ the Fourier multiplier $F(m(D)\varphi)(\xi)=m(\xi)\hat\varphi(\xi)$. 
We define for $u$  with suitable regularity and decay:
\begin{align}
L_u &= e^D(D-\tilde u) e^D,\label{Lax pair: L}\\
B_u&=iD^2+iD+i\widetilde{T_i  u_x}\label{Lax pair: B}\\
&= e^{-D}(iD^2+iD+i\widetilde{T_+u_x})e^{D} \label{Lax pair: B 1}\\
&=e^{D}(iD^2+iD+i\widetilde{T_-u_x})e^{-D}. \label{Lax pair: B 2}
\end{align}
The equivalence of the above several forms of $B_u$ is an application of Lemma \ref{lem: T pm 0}. We use Definition \ref{def: e^D} to interpret $e^{\pm D}$. Thus the domain of $L_u$ apparently includes only functions analytic in the strip $\{z\in\mathbb C~|~-2<\imag z<0\}$. It may also seem that $L_u$ can be defined only if $u(x)$ has analytic extensions to the strip $\{z\in\mathbb C~|~-1<\imag z<0\}$. As will be shown in the following, the definition can be extended to include much more general $u$, using the theory of unbounded quadratic forms. This makes it convenient to apply the spectral theory of self-adjoint operators. However, when such a concern is not of immediate significance, one can essentially work with the operator $e^{-D}L_u= (D-\tilde u)e^D$, where the analytic extension of $u$ plays no role. Note from \eqref{def: T pm} that $T_i$ is a convolution operator with a real-valued kernel. So $B_u$ is skew-adjoint when $u$ is real-valued with suitable regularity and decay.

In the following, we state a version of the Lax equation with the above interpretation of $e^{-D}L_u$ and $e^{-D}B_u$.

\begin{lemma}\label{lem: Lax pair}
Let $u\in C^0_tH^2_x\cap C^1_tL^2_x$.  Then $u$ satisfies
\begin{equation}
    u_t+u_{x}+2uu_x+Tu_{xx}=0
\end{equation} if and only if  
\begin{equation}\label{eq: Lax}
    -\partial_t e^{-D}L_u +e^{-D}[L_u,B_u]=0.
\end{equation}
Here the left hand side of \eqref{eq: Lax} can be regarded as an operator acting on analytic functions on the strip $\{z\in\mathbb C ~|~-1<\imag z<1 \}$ with sufficiently smooth almost everywhere boundary values.
\end{lemma}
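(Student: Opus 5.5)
Write $e^{-D}L_u=(D-\tilde u)e^{D}$. The plan is to reduce \eqref{eq: Lax} to the vanishing of a single multiplication operator, and then identify that multiplier with the ILW equation.

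\textbf{Step 1: the time derivative.} Only $\tilde u$ depends on $t$, so $-\partial_t e^{-D}L_u=\widetilde{u_t}\,e^{D}$.

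\textbf{Step 2: the commutator.} Expand
$$e^{-D}[L_u,B_u]=(D-\tilde u)e^{D}B_u-e^{-D}B_u e^{D}(D-\tilde u)e^{D}.$$
For the first term I use the form \eqref{Lax pair: B 2}, which gives
$$e^{D}B_u=(iD^2+iD+i\widetilde{T_-u_x})e^{-D}e^{D}\cdot e^{D}=(iD^2+iD+i\widetilde{T_-u_x})e^{D}.$$
For the second term I use \eqref{Lax pair: B 1}, which gives
$$e^{-D}B_ue^{D}=iD^2+iD+i\widetilde{T_+u_x}.$$
Thus, writing $P=iD^2+iD$,
$$e^{-D}[L_u,B_u]=\Big\{(D-\tilde u)(P+i\widetilde{T_-u_x})-(P+i\widetilde{T_+u_x})(D-\tilde u)\Big\}e^{D}.$$
The identities $e^{D}e^{-D}=I$ are valid on the class of analytic functions in question by Definition \ref{def: e^D}. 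The justification is Lemma \ref{lem: T pm 0}, applied to $f=u_x\in H^1\cap \dot H^{-1}$, since $u\in H^2$.

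\textbf{Step 3: expanding the brace.} The terms $[D,P]=0$ and $D\cdot i\widetilde{T_-u_x}-i\widetilde{T_+u_x}D$ appear. Using $D\tilde g=\tilde gD+\widetilde{Dg}$, the second one equals
$$i(\widetilde{T_-u_x}-\widetilde{T_+u_x})D+\widetilde{\partial_x T_-u_x}=2\widetilde{u_x}D+\widetilde{\partial_xT_-u_x},$$
because $T_+-T_-=2i$.

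Next, $-\tilde uP+P\tilde u=[P,\tilde u]$. Here $[iD^2,\tilde u]=2i\widetilde{Du}D+i\widetilde{D^2u}=2\widetilde{u_x}D-i\widetilde{u_{xx}}$, and $[iD,\tilde u]=\widetilde{u_x}$. The remaining term is $-i\tilde u\widetilde{T_-u_x}+i\widetilde{T_+u_x}\tilde u=i\widetilde{u(T_+-T_-)u_x}=-2\widetilde{uu_x}$.

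\textbf{Step 4: collecting terms.} I expect the first-order pieces to cancel; the signs have to be tracked carefully so that the two $\widetilde{u_x}D$ contributions come in with opposite signs. The zeroth-order multiplier should then be
$$\widetilde{\partial_xT_-u_x}\pm i\widetilde{u_{xx}}\pm\widetilde{u_x}\pm 2\widetilde{uu_x}.$$
Using $\partial_xT_-u_x=Tu_{xx}-iu_{xx}$, the imaginary parts must cancel, leaving
$$-(u_x+2uu_x+Tu_{xx}).$$
Hence the left side of \eqref{eq: Lax} becomes $\widetilde{(u_t+u_x+2uu_x+Tu_{xx})}\,e^{D}$, up to an overall sign.

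\textbf{Step 5: the equivalence.} Since $e^{D}$ acting on analytic functions in the strip gives, for instance, the nonvanishing function $1$ when applied to $1$, the multiplication operator vanishes if and only if the function $u_t+u_x+2uu_x+Tu_{xx}$ vanishes. This yields both directions.

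\textbf{Main obstacle.} The main difficulty is the sign bookkeeping in Step 4, specifically ensuring that the $D$-terms cancel exactly. If they do not, I would recheck whether $B_u$ should carry $-iD$, or verify against the KdV limit.

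A secondary obstacle is rigor: justifying the commutation $D\widetilde{T_-u_x}$ and the relations $e^{\pm D}$ on analytic functions with boundary values. This requires $T_\pm u_x\in H^1$, which follows from $u\in H^2$ via Lemma \ref{lem: bdry val Sf}, and $u_t\in L^2$ for the time derivative.
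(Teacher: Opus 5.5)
Your strategy matches the paper's: reduce $-\partial_te^{-D}L_u+e^{-D}[L_u,B_u]$ to a multiplication operator times $e^{D}$. However, Step 2 interchanges $T_+$ and $T_-$, and this is why Step 4 cannot be closed.

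From \eqref{Lax pair: B 1}, $B_u=e^{-D}(P+i\widetilde{T_+u_x})e^{D}$, so $e^{D}B_u=(P+i\widetilde{T_+u_x})e^{D}$. From \eqref{Lax pair: B 2}, $B_u=e^{D}(P+i\widetilde{T_-u_x})e^{-D}$, so $e^{-D}B_ue^{D}=P+i\widetilde{T_-u_x}$.

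Your line $e^{D}B_u=(P+i\widetilde{T_-u_x})e^{-D}e^{D}e^{D}$ does not follow from \eqref{Lax pair: B 2}. That form actually gives $e^{D}B_u=e^{2D}(P+i\widetilde{T_-u_x})e^{-D}$. Moving $e^{2D}$ to the right past $\widetilde{T_-u_x}$ turns it into $\widetilde{T_+u_x}$; this is exactly Lemma \ref{lem: T pm 0}.

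With your interchanged bracket, your own Step 3 identities (which are correct) determine every sign and give
\[
4\widetilde{u_x}D+\widetilde{u_x}-2\widetilde{uu_x}+\widetilde{Tu_{xx}}-2i\widetilde{u_{xx}} .
\]
So the first-order terms add rather than cancel, the imaginary parts add, and $uu_x$ has the wrong sign. No choice of the signs you left open in Step 4 produces the ILW expression. Changing $B_u$ to carry $-iD$ is also the wrong remedy: $B_u$ is fine, and the error is in Step 2.

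The correct bracket is $(D-\tilde u)(P+i\widetilde{T_+u_x})-(P+i\widetilde{T_-u_x})(D-\tilde u)$. Its cross terms are
\[
iD\widetilde{T_+u_x}-i\widetilde{T_-u_x}D=-2\widetilde{u_x}D+\widetilde{T_+u_{xx}}, \qquad -i\tilde u\widetilde{T_+u_x}+i\widetilde{T_-u_x}\tilde u=2\widetilde{uu_x}.
\]
Add $[P,\tilde u]=2\widetilde{u_x}D-i\widetilde{u_{xx}}+\widetilde{u_x}$ and use $T_+u_{xx}-iu_{xx}=Tu_{xx}$. The $D$-terms cancel and you get exactly $\widetilde{u_x}+2\widetilde{uu_x}+\widetilde{Tu_{xx}}$, with no overall sign ambiguity.

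This is the paper's computation \eqref{eq: Lax commutator}. Combined with your Step 1 (giving $\widetilde{u_t}e^{D}$) and your Step 5 (testing on the constant $1$), it proves the lemma.
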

\begin{proof}
By \eqref{Lax pair: B 1} and \eqref{Lax pair: B 2},  
\begin{align}\label{eq: Lax commutator}
e^{-D}[L_u,B_u] &= [D-\tilde u, iD^2+iD] e^D\notag\\
&\quad +i\big((D-\tilde u)\widetilde{T_+u_x}-\widetilde{T_-u_x}(D-\tilde u)\big)e^D\notag\\
&=[-i\partial_x^2+\partial_x,\tilde u]e^D + (\widetilde{T_+u_{xx}}+2i\widetilde{u_x}\partial_x +2\widetilde{uu_x})e^D\notag\\
&=(\widetilde{u_x}+2\widetilde{uu_x}+\widetilde{Tu_{xx}})e^D.
%&= -i[e^D\tilde u e^D,D^2]-i[e^D\tilde u e^D, D]\notag\\
%&\quad +i[De^{2D},e^{-D}\widetilde{T_+u_x}e^{D}]-i[e^D\tilde u e^D,e^{-D}\widetilde{T_+u_x}e^{D}]\notag\\
%&=e^D(-%i\widetilde{u_{xx}}+2\widetilde{u_x}D)e^D+e^D\widetilde{u_x}e^D\notag\\
%&\quad + e^{D}(\widetilde{T_+u_{xx}}-2\widetilde{u_x}D)e^D+e^D2\widetilde{uu_x}e^D\notag\\
%&=e^D(\widetilde{u_x}+2\widetilde{uu_x}+\widetilde{Tu_{xx}})e^D.
\end{align}
%The above calculation is mostly straightforward, except for a few steps that require the following identity:
%\begin{equation}
%e^{-D}\widetilde{T_+u_{x}}e^{D}=e^D\widetilde{T_-u_x}e^{-D}.
%\end{equation}
%This identity can be proven by showing equality of the Fourier transforms of $e^{-2D}\widetilde{T_+u_{x}}e^{2D}\varphi$ and of $\widetilde{T_-u_x}\varphi$.
The assertion of the lemma is clear in view of \eqref{eq: Lax commutator}.
\end{proof}

For later rigorous studies, we need to describe the domain of $L_u$ more carefully. For $u\in L^\infty$ and real, we will define $L_u$ through the quadratic form
\begin{equation}
    q_u(\varphi,\psi)= \langle |D|^{\frac12}e^D\varphi,\text{sgn}(D)|D|^{\frac12}e^D \psi\rangle -\langle \tilde u e^D \varphi,e^D\psi\rangle,
\end{equation}
with form domain $Q=\{\varphi\in L^2~|~ |D|^{\frac12}e^D\varphi\in L^2\}$. We have 
\begin{lemma}\label{lem: domain of Lu}
    Let $u\in L^\infty$ be real-valued. Then there is a unique self-adjoint operator $L_u$ on $L^2$ whose associated quadratic form is $q_u$. The domain $D(L_u)$ of $L_u$ can be described as follows. $\varphi\in D(L_u)$ if and only if $\varphi\in L^2$, $e^D\varphi\in H^1$, and there exists a $\psi\in L^2$ such that $e^{-D}\psi\in L^2$ and
    \begin{equation}\label{eq: Lu phi = psi}
        De^D\varphi- \tilde u e^D\varphi = e^{-D}\psi.
    \end{equation}
    In this case, $L_u\varphi=\psi$.
\end{lemma}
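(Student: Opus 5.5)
Our plan is to apply the KLMN theorem (equivalently, the representation theorem for closed semibounded forms) to $q_u$, and then identify the domain by unpacking the defining relation $q_u(\varphi,\chi)=\langle L_u\varphi,\chi\rangle$ for all $\chi\in Q$.

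\textbf{Step 1: the unperturbed form.} Write $q_0(\varphi,\psi)=\langle |D|^{1/2}e^D\varphi,\sgn(D)|D|^{1/2}e^D\psi\rangle$. In Fourier variables this is $\frac{1}{2\pi}\int \xi e^{2\xi}\hat\varphi\overline{\hat\psi}\,d\xi$, the form of the multiplier $m(\xi)=\xi e^{2\xi}$. This multiplier is real, bounded below by $-\frac{1}{2e}$, and tends to $+\infty$ as $\xi\to+\infty$. So $q_0$ is the closed semibounded form of the self-adjoint multiplier $m(D)$. The natural form domain of $m(D)$ is $\{\varphi\in L^2: |m|^{1/2}\hat\varphi\in L^2\}$, which coincides with $Q$ because $|m(\xi)|^{1/2}=|\xi|^{1/2}e^{\xi}$.

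\textbf{Step 2: the perturbation.} Set $p(\varphi,\psi)=\langle \tilde u e^D\varphi,e^D\psi\rangle$, which is symmetric since $u$ is real. It is bounded by $\|u\|_\infty\|e^D\varphi\|_2\|e^D\psi\|_2$. We use the elementary inequality $e^{2\xi}\le \epsilon\,|\xi| e^{2\xi}\cdot\mathbf 1_{\xi>0}+C_\epsilon$: for $\xi\le 0$ we have $e^{2\xi}\le 1$, and for $\xi>0$ large, $e^{2\xi}\le\epsilon\xi e^{2\xi}$. This gives $\|e^D\varphi\|^2\le \epsilon\, q_0(\varphi,\varphi)+C'_\epsilon\|\varphi\|^2$. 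Hence $p$ is form-bounded with respect to $q_0$ with relative bound $0$. KLMN then yields a unique self-adjoint operator $L_u$, bounded below, with form $q_u$ on $Q$. This gives existence and uniqueness.

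\textbf{Step 3: identifying the domain.} The domain consists of those $\varphi\in Q$ for which there is $\psi\in L^2$ with $q_u(\varphi,\chi)=\langle\psi,\chi\rangle$ for all $\chi\in Q$. Take $\chi$ in the dense class with $\hat\chi\in C_0^\infty$, so that $e^{D}\chi$ runs over the Schwartz functions with compactly supported Fourier transform. Writing $g=e^D\chi$, the relation becomes, in the distributional sense,
\[
\langle De^D\varphi-\tilde u e^D\varphi, g\rangle=\langle\psi, e^{-D}g\rangle=\langle e^{-D}\psi,g\rangle,
\]
where $e^{-D}\psi$ is a priori only a distribution with $\widehat{e^{-D}\psi}=e^{\xi}\hat\psi\in L^2_{\loc}$. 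The domain characterization follows from this identity.

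\emph{Forward direction.} Suppose $\varphi\in D(L_u)$. Since $\varphi\in Q$, we have $e^D\varphi\in L^2$. The main point is to show $e^D\varphi\in H^1$ and $e^{-D}\psi\in L^2$. In Fourier space the identity reads $\xi e^{\xi}\hat\varphi-\widehat{ue^D\varphi}=e^{\xi}\hat\psi$, with $\widehat{ue^D\varphi}\in L^2$.
\begin{itemize}
\item For $\xi\le 0$: $e^{\xi}\hat\psi$ is in $L^2$ there, so $\xi e^{\xi}\hat\varphi\in L^2(\xi<0)$.
\item For $\xi>0$: $\hat\psi\in L^2$ and the factor $e^\xi$ dominates. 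From $\xi e^\xi\hat\varphi=e^\xi\hat\psi+\widehat{ue^D\varphi}$ we get $\xi e^\xi\hat\varphi\in L^2$ only if $e^\xi\hat\psi\in L^2$.
\end{itemize}
We expect this circularity on the high-frequency side to be the main obstacle. The plan is to break it using the form domain: $\varphi\in Q$ gives $|\xi|^{1/2}e^\xi\hat\varphi\in L^2$. We then bootstrap by dividing the identity by $e^\xi$ on $\xi>1$: $\xi\hat\varphi=\hat\psi+e^{-\xi}\widehat{ue^D\varphi}\in L^2$. This yields $\langle\xi\rangle\hat\varphi\in L^2$, but not yet the factor $e^\xi$.

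To get the exponential weight we will use the self-adjointness and the form identity tested against all of $Q$, not just nice $\chi$. If that fails, we fall back on this: as distributions, $e^{-D}\psi = De^D\varphi-ue^D\varphi\in H^{-1}$. Combined with $\widehat{e^{-D}\psi}=e^\xi\hat\psi$, a function, this shows $e^{-D}\psi$ is locally $L^2$ in frequency and in $H^{-1}$. The target statements $e^D\varphi\in H^1$ and $e^{-D}\psi\in L^2$ are then equivalent to each other through the identity. We would establish one of them by a resolvent argument: for large $\mu>0$, $(m(D)+\mu)^{-1}$ maps $L^2$ into $\{e^D\varphi\in H^1\}$ with small $e^D$-norm, so $L_u+\mu=(m(D)+\mu)^{1/2}(I+K)(m(D)+\mu)^{1/2}$ with $\|K\|<1$. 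Inverting this shows $D(L_u)\subset\{e^D\varphi\in H^1\}$.

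\emph{Converse direction.} If $\varphi$ satisfies the listed conditions, pairing \eqref{eq: Lu phi = psi} with $e^D\chi$ for $\chi\in Q$ recovers the form identity: first for nice $\chi$, then for all $\chi\in Q$ by density in the form norm. This gives $\varphi\in D(L_u)$ with $L_u\varphi=\psi$.
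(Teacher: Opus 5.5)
Your Steps 1--2 (KLMN with relative form bound $0$) are fine, and they are a cleaner substitute for the paper's hand proof of semiboundedness and closedness followed by Reed--Simon VIII.15. The converse direction is also fine.

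The forward direction, however, is not established, and the cause is a sign error. Since $e^{D}$ has symbol $e^{\xi}$, the operator $e^{-D}$ has symbol $e^{-\xi}$, so $\widehat{e^{-D}\psi}=e^{-\xi}\hat\psi$, not $e^{\xi}\hat\psi$. Test $q_u(\chi,\varphi)=\langle\chi,\psi\rangle$ against $\chi$ with $\hat\chi\in C_0^\infty$. This gives $\xi e^{2\xi}\hat\varphi-e^{\xi}\widehat{ue^D\varphi}=\hat\psi$ a.e., that is,
\[
\xi e^{\xi}\hat\varphi-\widehat{ue^D\varphi}=e^{-\xi}\hat\psi\quad\text{a.e.},
\]
with all terms in $L^2_{\mathrm{loc}}$ because $\varphi\in Q$ gives $e^{\xi}\hat\varphi\in L^2$. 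The Fourier identity you wrote, with $e^{\xi}\hat\psi$ on the right, is therefore false. The ``circularity on the high-frequency side'' is an artifact of that error, and your bootstrap rests on the false identity.

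Your fallback is only asserted, not carried out. Inverting $(m(D)+\mu)^{1/2}(I+K)(m(D)+\mu)^{1/2}$ gives $\varphi=(m(D)+\mu)^{-1/2}(I+K)^{-1}(m(D)+\mu)^{-1/2}f$, which by itself only places $\varphi$ in $Q$. To reach $e^D\varphi\in H^1$ you would additionally have to write $(I+K)^{-1}=I-K(I+K)^{-1}$ and use that $\langle D\rangle e^{D}(m(D)+\mu)^{-1}e^{D}$ is bounded. That is doable, but it is not in the proposal, and it is unnecessary.

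With the correct sign, the forward direction takes two lines.
\begin{itemize}
\item On $\xi>0$, the right-hand side lies in $L^2$ because $e^{-\xi}\le 1$. Hence $\xi e^{\xi}\hat\varphi\in L^2(0,\infty)$.
\item On $\xi<0$, the multiplier $\xi e^{\xi}$ is bounded by $e^{-1}$, so $\xi e^{\xi}\hat\varphi\in L^2(-\infty,0)$. Hence $e^{-\xi}\hat\psi\in L^2(-\infty,0)$.
\end{itemize}
Together with $e^{D}\varphi\in L^2$ (from $\varphi\in Q$) and $\psi\in L^2$, this gives $e^D\varphi\in H^1$, $e^{-D}\psi\in L^2$, and \eqref{eq: Lu phi = psi}. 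This is exactly the frequency split the paper uses. There it is implemented by testing with $\chi_{[0,N]}(D)D\varphi$ and $\chi_{[-N,0]}(D)e^{-2D}\psi$, applying Cauchy--Schwarz, and letting $N\to\infty$.
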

\begin{proof}
    Since $\xi e^{2\xi}\ge -\frac1{2e}$ when $\xi\le 0$, we have for all $\varphi\in Q$
    \begin{align}
        |2\langle \tilde u e^D\varphi,e^D\varphi\rangle |&\le 2\|u\|_{L^\infty}\langle e^D \varphi, e^D\varphi \rangle\notag\\
        &=\int_\R 2\|u\|_{L^\infty} e^{2\xi}|\widehat\varphi(\xi)|^2~d\xi\notag\\
        &\le \int_\R (\xi e^{2\xi}+\tfrac1{2e}+2\|u\|_{L^\infty}e^{2\|u\|_{L^\infty}})|\widehat\varphi(\xi)|^2~d\xi\notag\\
        &=\langle |D|^{\frac12}e^D\varphi,\text{sgn}(D)|D|^{\frac12}e^D \varphi\rangle+(\tfrac1{2e}+2\|u\|_{L^\infty}e^{2\|u\|_{L^\infty}})\|\varphi\|_{L^2}^2\notag\\
        &=q_u(\varphi,\varphi)+\langle \tilde u e^D\varphi,e^D\varphi\rangle + (\tfrac1{2e}+2\|u\|_{L^\infty}e^{2\|u\|_{L^\infty}})\|\varphi\|_{L^2}^2.
    \end{align}
    It follows that for $M=\frac1{2e}+2\|u\|_{L^\infty}e^{2\|u\|_{L^\infty}}$, we have
    \begin{equation}\label{eq: semi-bdd}
        q_u(\varphi,\varphi) + M\|\varphi\|_{L^2}^2\ge 0.
    \end{equation}
    In other words, $q_u$ is semi-bounded. In fact, we have the slightly stronger inequality
    \begin{equation}\label{eq: form bdds perturb term}
        |\langle \tilde u e^D\varphi,e^D\varphi\rangle|\le q_u(\varphi,\varphi)+M\|\varphi\|_{L^2}^2.
    \end{equation}
    We further estimate 
    \begin{align}
        \langle |D|^{\frac{1}2}e^D\varphi, |D|^{\frac12}e^D\varphi\rangle &= \int_\R |\xi|e^{2\xi}|\widehat\varphi(\xi)|^2~d\xi\notag\\
        &\le \int_\R (\xi e^{2\xi}+\tfrac{1}{e})|\widehat\varphi(\xi)|^2~d\xi\notag\\
        &=\langle |D|^{\frac12}e^D\varphi,\text{sgn}(D)|D|^{\frac12}e^D \varphi\rangle+ \tfrac{1}e \|\varphi\|_{L^2}^2\notag\\
        &=q_u(\varphi,\varphi)+\langle \tilde u e^D \varphi, e^D\varphi\rangle +\tfrac1e\|\varphi\|_{L^2}^2.
    \end{align}
    \eqref{eq: form bdds perturb term} now implies that 
    \begin{equation}\label{eq: form bdds norm}
        \langle |D|^{\frac{1}2}e^D\varphi, |D|^{\frac12}e^D\varphi\rangle\le 2q_u(\varphi,\varphi)+(\tfrac1e +M)\|\varphi\|_{L^2}^2.
    \end{equation}
    On the other hand, we have
    \begin{align}\label{eq: form bdd by norm}
        q_u(\varphi,\varphi)&=\int_\R \xi e^{2\xi}|\widehat\varphi(\xi)|^2~d\xi-\langle \tilde u e^D\varphi, e^D\varphi\rangle\notag\\
        &\le \int_\R (|\xi|+\|u\|_{L^\infty}) e^{2\xi}|\widehat\varphi(\xi)|^2~d\xi\notag\\
        &\le 2\int_\R |\xi|e^{2\xi}|\widehat\varphi(\xi)|^2~d\xi+ 2\|u\|_{L^\infty} e^{2\|u\|_{L^\infty}}\|\varphi\|_{L^2}^2.
    \end{align}
    Now assume a sequence $\{\varphi_n\}\in Q$ satisfies 
    \begin{equation}\label{eq: Cauchy in form norm}
        q_u(\varphi_n-\varphi_m,\varphi_n-\varphi_m)+ (M+1)\|\varphi_n-\varphi_m\|_{L^2}^2\to 0
    \end{equation}
    as $n,m\to\infty$. Using \eqref{eq: semi-bdd}, we obtain that \eqref{eq: Cauchy in form norm} implies $\{\varphi_n\}$ is a Cauchy sequence in $L^2$. Denote by $\varphi$ the $L^2$ limit of $\varphi_n$. A subsequence of $\{\varphi_n\}$ converges to $\varphi$ a.e. Now \eqref{eq: form bdds norm} implies that
    $
        \int_\R |\xi|e^{2\xi}|\widehat{\varphi_n}(\xi)-\widehat{\varphi_m}(\xi)|^2~d\xi\le 1
    $
    for $n,m$ sufficiently large. Letting $m$ pass to $\infty$ along the subsequence mentioned above, we obtain by Fatou's lemma
    \begin{equation}
        \int_{\R}|\xi|e^{2\xi}|\widehat{\varphi_n}(\xi)-\widehat\varphi(\xi)|^2~d\xi\le 1.
    \end{equation}
    Since $\varphi_n\in Q$, $\int_\R |\xi |e^{2\xi}|\widehat\varphi_n(\xi)|^2~d\xi<\infty$. It follows that $\varphi\in Q$. Similarly, for any $\epsilon>0$, we have $\int_\R |\xi|e^{2\xi}|\widehat{\varphi_n}(\xi)-\widehat{\varphi_m}(\xi)|^2~d\xi\le  \epsilon$ for $n,m$ sufficiently large. Arguing similarly as above, we get
    \begin{equation}
        \int_\R |\xi|e^{2\xi}|\widehat{\varphi_n}(\xi)-\widehat{\varphi}(\xi)|^2~d\xi\le \epsilon
    \end{equation}
    for $n$ sufficiently large. \eqref{eq: form bdd by norm} now implies that $q_u(\varphi_n-\varphi,\varphi_n-\varphi)\to 0$ as $n\to\infty$. Thus $q_u$ is closed. By Theorem VIII.15 in \cite{reed1980methods}, there exists a unique self-adjoint operator $L_u$ such that $q_u$ is the quadratic form associated with $L_u$. Moreover, $\varphi\in L^2$ is in the domain $D(L_u)$ if and only if $\varphi\in Q$ and there exists a $\psi\in L^2$ such that 
    \begin{equation}\label{eq: Lu phi = psi weak}
        q_u(\upsilon, \varphi)=\langle \upsilon,\psi\rangle 
    \end{equation}
    for all $\upsilon\in Q$. Such a $\psi$ is unique and we have $L_u\varphi=\psi$. Let $\varphi\in D(L_u)$ with $L_u\varphi = \psi$. We want to show that $e^D\varphi \in H^1$, $e^{-D}\psi\in L^2$ and \eqref{eq: Lu phi = psi} holds. In fact, take $\upsilon=\chi_{[0,N]}(D)D\varphi$, where $\chi_E$ denotes the characteristic function on the set $E$. Since $\varphi\in Q$, so is $\upsilon$. We also have
    \begin{equation}
        \|e^D\varphi\|_{L^2}^2=\int_\R e^{2\xi}|\widehat\varphi|^2~d\xi\le e\|\varphi\|_{L^2}^2+\int_\R |\xi e^{2\xi}\widehat\varphi|^2~d\xi<\infty.
    \end{equation}
    Thus $e^D\varphi\in L^2$. \eqref{eq: Lu phi = psi weak} gives
    \begin{align}
        &~\int_0^N\xi^2 e^{2\xi}|\widehat\varphi(\xi)|^2~d\xi\notag\\
        =&~\langle |D|^{\frac12}e^D\upsilon,\text{sgn}(D)|D^{\frac12}e^D\varphi\rangle \notag\\
        =&~\langle \tilde u e^D \upsilon,e^D\varphi\rangle + \langle \upsilon,\psi\rangle\notag\\
        \le &~ \|u\|_{L^\infty}\int_0^N ( e^\xi \widehat\varphi)( \xi e^\xi \overline{\widehat\varphi})~d\xi+\int_0^N (\xi \widehat\varphi )\overline{\widehat\psi}~d\xi\notag\\
        \le &~\|u\|_{L^\infty}\|e^D\varphi\|_{L^2}\left(\int_0^N\xi^2 e^{2\xi}|\widehat\varphi|^2~ d\xi\right)^{\frac12}+\|\psi\|_{L^2}\left(\int_0^N\xi^2 e^{2\xi}|\widehat\varphi|^2~ d\xi\right)^{\frac12}.
    \end{align}
    It follows that 
    \begin{equation}
        \left(\int_0^N\xi^2 e^{2\xi}|\widehat\varphi|^2~ d\xi\right)^{\frac12}\le \|u\|_{L^\infty}\|e^D\varphi\|_{L^2}+\|\psi\|_{L^2}.
    \end{equation}
    Passing to the limit as $N\to \infty$ gives
    \begin{equation}
        \left(\int_0^\infty\xi^2 e^{2\xi}|\widehat\varphi|^2~ d\xi\right)^{\frac12}\le \|u\|_{L^\infty}\|e^D\varphi\|_{L^2}+\|\psi\|_{L^2}.
    \end{equation}
    On the other hand, since $\xi^2 e^{2\xi}\le e^2$ when $\xi\le 0$, we get
    \begin{equation}
        \int_{-\infty}^0 \xi^2 e^{2\xi}|\widehat\varphi|^2~ d\xi\le e^2\|\varphi\|_{L^2}^2.
    \end{equation}
    Thus $De^D\varphi \in L^2$ and $e^D\varphi \in H^1$. 
    Now take $\upsilon=\chi_{[-N,0]}(D) e^{-2D}\psi$. We obviously have $\upsilon\in Q$. \eqref{eq: Lu phi = psi weak} gives
    \begin{align}
        &~\int_{-N}^0 e^{-2\xi}\left|\widehat\psi\right|^2~d\xi\notag\\
        =&~\int_{-N}^0 \xi e^{2\xi} e^{-2\xi}\widehat\psi \overline{\widehat \varphi}~d\xi-\langle ue^D\upsilon, e^D\varphi\rangle\notag\\
        \le &~\int_{-N}^0 (e^{-\xi}\widehat\psi)(\xi e^{\xi}\overline{\widehat\varphi})~d\xi+\|u\|_{L^\infty} \|e^D\upsilon\|_{L^2}\|e^D\varphi\|_{L^2}\notag\\
        \le &~\left(\int_{-N}^0 e^{-2\xi}\left|\widehat\psi\right|^2~d\xi\right)^{\frac12}(\left\|De^D\varphi\right\|_{L^2}+\|u\|_{L^\infty} \|e^D\varphi\|_{L^2})
    \end{align}
    It follows that
    \begin{equation}
        \left(\int_{-N}^0 e^{-2\xi}\left|\widehat\psi\right|^2~d\xi\right)^{\frac12}\le \left\|De^D\varphi\right\|_{L^2}+\|u\|_{L^\infty} \|e^D\varphi\|_{L^2}.
    \end{equation}
    Taking the limit as $N\to\infty$, we get
    \begin{equation}
        \left(\int_{-\infty}^0 e^{-2\xi}\left|\widehat\psi\right|^2~d\xi\right)^{\frac12}\le \left\|De^D\varphi\right\|_{L^2}+\|u\|_{L^\infty} \|e^D\varphi\|_{L^2}.
    \end{equation}
    On the other hand,
    \begin{equation}
        \int_0^\infty e^{-2\xi}\left|\widehat\psi\right|^2~d\xi\le \int_0^\infty \left|\widehat\psi\right|^2~d\xi\le \|\psi\|_{L^2}^2.
    \end{equation}
    Thus $e^{-D}\psi\in L^2$. Now take $\upsilon$ to be an arbitrary function such that $\widehat\upsilon\in C_0^\infty$. Since we have already shown $e^D\varphi\in H^1$, $e^{-D}\psi\in L^2$, \eqref{eq: Lu phi = psi weak} implies
    \begin{equation}
        \langle e^D\upsilon, De^{D}\varphi-\tilde u e^D\varphi\rangle = \langle e^D\upsilon,e^{-D}\psi\rangle.
    \end{equation}
    Since the set of all such $e^D\upsilon$ is dense in $L^2$, we get \eqref{eq: Lu phi = psi}.

    On the contrary, suppose $\varphi \in L^2$, $e^D\varphi\in H^1$, and there exists a $\psi\in L^2$ such that $e^{-D}\psi\in L^2$ and \eqref{eq: Lu phi = psi} holds, we want to show that $\varphi\in Q$, and \eqref{eq: Lu phi = psi weak} holds for all $\upsilon\in Q$. Indeed, by the definition of $Q$, we obviously have $\varphi\in Q$. For any $\upsilon\in Q$, we have $e^D\upsilon\in L^2$. Take the $L^2$ inner product of \eqref{eq: Lu phi = psi} with $e^D\upsilon$ to get
    \begin{align}
        \langle e^D \upsilon, D e^D \varphi\rangle -\langle \tilde u e^D\upsilon,e^D\varphi\rangle = \langle e^D\upsilon, e^{-D}\psi\rangle,
    \end{align}
    which is evidently equivalent to \eqref{eq: Lu phi = psi weak} if one rewrites the first and the last terms using Fourier transform.
\end{proof}

\begin{remark}\label{rmk: density of D(Lu) in Q(Lu)}
    By the proof of Theorem VIII.15 in \cite{reed1980methods}, $D(L_u)$ is dense in $Q$ under the norm
    $$\sqrt{q_u(\cdot,\cdot)+(M+1)\|\cdot\|_{L^2}^2}.$$
\end{remark}

\begin{remark}\label{rmk: domain of Lu}
Note that $\varphi\in L^2$ and $e^{\pm D}\varphi\in L^2$ if and only if $\sup_{y\in (-1, 1)}\|e^{-y\xi}\widehat\varphi(\xi)\|_{L^2}<\infty$. By
%\todo[size=\tiny]{LaTeX doesn't like this citation, which has whitespaces}
Theorem 2.3 in \cite{stein1971introduction}, this happens if and only if $\varphi\in \mathbb H^2(\R+i(-1, 1))$. 
\end{remark}

\begin{lemma}\label{lem: ess spec of Lu}
    Let $u\in L^p\cap L^\infty$ for some $p\in [1,\infty)$ be real-valued. Then $\sigma(L_u)=\sigma_{\text{disc}}(L_u)\cup \sigma_{\text{ess}}(L_u)$, where  $\sigma_{\text{ess}}(L_u)=[-\tfrac1{2e},\infty)$, and $\sigma_{\text{disc}}(L_u)\subset [-M,-\tfrac{1}{2e})$ for some $M>0$, and is a discrete set of eigenvalues which may accumulate only at $-\tfrac{1}{2e}$.
    %{\color{red} describe qualitatively the essential spectrum and the discrete spectrum of $L_u$ here}
\end{lemma}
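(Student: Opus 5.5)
The plan is to treat $L_u$ as a form perturbation of the free operator $L_0 = e^D D e^D$, the Fourier multiplier with symbol $\xi e^{2\xi}$. First I identify $\sigma(L_0)$. The function $\xi\mapsto \xi e^{2\xi}$ is continuous on $\R$, with minimum $-\tfrac{1}{2e}$ attained at $\xi=-\tfrac12$. It tends to $0^-$ as $\xi\to-\infty$ and to $+\infty$ as $\xi\to+\infty$, so its range is exactly $[-\tfrac{1}{2e},\infty)$. Hence $\sigma(L_0)=\sigma_{\text{ess}}(L_0)=[-\tfrac1{2e},\infty)$, since the spectrum of a multiplier is the essential range of its symbol and it has no isolated eigenvalues of finite multiplicity. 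The lower bound $\sigma(L_u)\subset[-M,\infty)$ comes directly from the semiboundedness \eqref{eq: semi-bdd} in the proof of Lemma \ref{lem: domain of Lu}.

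The core step is to show that the perturbation $V=\tilde u$ sandwiched between $e^D$'s is relatively form-compact with respect to $L_0$. I would then invoke the form version of Weyl's theorem (e.g.\ Reed--Simon IV, Corollary 2 of Theorem XIII.14 type results) to get $\sigma_{\text{ess}}(L_u)=\sigma_{\text{ess}}(L_0)$. Concretely, fix $\mu>\tfrac{1}{2e}$. It suffices to show that
$$K=(L_0+\mu)^{-1/2}e^D\,\tilde u\,e^D(L_0+\mu)^{-1/2}$$
is compact on $L^2$. Equivalently, $|u|^{1/2}e^D(L_0+\mu)^{-1/2}$ should be compact, writing $u=|u|^{1/2}\sgn(u)|u|^{1/2}$. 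This operator has the form $f(x)g(D)$ with $f=|u|^{1/2}$ and $g(\xi)=e^{\xi}(\xi e^{2\xi}+\mu)^{-1/2}$. As $\xi\to+\infty$ we have $g(\xi)\sim \xi^{-1/2}\to0$. As $\xi\to-\infty$, $g(\xi)\sim \mu^{-1/2}e^{\xi}\to 0$. So $g\in C_0(\R)$ and in fact $g\in L^q$ for all large $q$.

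The main obstacle is compactness under only $u\in L^p\cap L^\infty$. When $p\le 2$, $f\in L^{2p}$ but not necessarily $L^2$. I would handle this by approximation. Truncate $f$ to $f_n=f\chi_{|x|<n}\chi_{|f|<n}\in L^2\cap L^\infty$ and $g$ to $g_n=g\chi_{|\xi|<n}\in L^2$. Then $f_n(x)g_n(D)$ is Hilbert--Schmidt, with norm $\lesssim\|f_n\|_{L^2}\|g_n\|_{L^2}$. Moreover $\|f(x)g(D)-f_n(x)g_n(D)\|\le \|f-f_n\|_{L^\infty}\|g\|_{L^\infty}+\|f_n\|_{L^\infty}\|g-g_n\|_{L^\infty}$.

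This bound has a problem: the $L^\infty$ tails of $f$ need not vanish, since $f=|u|^{1/2}$ is only in $L^{2p}\cap L^\infty$. So I would instead use the Kato--Seiler--Simon inequality $\|f(x)g(D)\|_{\mathcal S_r}\lesssim\|f\|_{L^r}\|g\|_{L^r}$ for $r\ge2$, choosing $r=\max(2,2p)$. Then $f(x)g(D)$ lies in a Schatten class and is therefore compact. Both factors are in $L^r$: $g$ decays exponentially on the left and like $\xi^{-1/2}$ on the right, so $g\in L^r$ for $r>2$. If $p\le1$ I take $r$ slightly above $2$, using $f\in L^2\cap L^\infty\subset L^r$. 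Since the form domain of $L_u$ coincides with that of $L_0$ (by \eqref{eq: form bdds norm} and \eqref{eq: form bdd by norm}), the Weyl theorem for relatively form-compact perturbations applies.

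Finally, it remains to describe the discrete spectrum. With $\sigma_{\text{ess}}(L_u)=[-\tfrac1{2e},\infty)$ and $L_u\ge -M$, the spectrum of the self-adjoint operator $L_u$ below $-\tfrac1{2e}$ lies in $[-M,-\tfrac1{2e})$. By definition of the essential spectrum, it consists of isolated eigenvalues of finite multiplicity. These can only accumulate at points of $\sigma_{\text{ess}}$, and the only such point reachable from below is $-\tfrac{1}{2e}$. This gives the stated decomposition $\sigma(L_u)=\sigma_{\text{disc}}(L_u)\cup\sigma_{\text{ess}}(L_u)$.
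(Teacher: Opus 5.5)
Your proof is correct, but it takes a different route from the paper's.

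\textbf{The paper's route.} The paper checks the resolvent form of Weyl's theorem directly. It writes $(L_u+i)^{-1}-(L_0+i)^{-1}=[e^D(L_0+i)^{-1}](\tilde u\langle D\rangle^{-1/2})[\langle D\rangle^{1/2}e^D(L_u+i)^{-1}]$. This factorization needs two inputs.
\begin{itemize}
\item The domain characterization of Lemma \ref{lem: domain of Lu}, to use $e^{-D}L_ue^{-D}=D-\tilde u$ on the range of $(L_u+i)^{-1}$.
\item The closed-graph Lemma \ref{lem: generalized H-T}, to show that the last factor is bounded.
\end{itemize}
Compactness then comes from $\tilde u\langle D\rangle^{-1/2}\in\mathcal I_p$ for $p>2$.

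\textbf{Your route.} You check relative form-compactness instead, which is a statement about the free operator alone. You only need $|u|^{1/2}g(D)$ to lie in a Schatten class. Since $L_u$ is by construction the form sum on $Q$, the form version of Weyl's theorem applies directly, and you never need the domain of $L_u$ or Lemma \ref{lem: generalized H-T}. The price is citing that version of Weyl's theorem. Its proof is essentially a factorization $(L_u-\eta)^{-1}=A^{-1}(1-C)^{-1}A^{-1}$, which the paper itself writes down later.

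Your operator $K$ is exactly the Birman--Schwinger operator $C(\mu)$ of Section \ref{sec: finite discrete spec}, with your $\mu$ equal to $\tfrac{1}{2e}+\mu^2$ there. The paper proves its compactness by the same $\mathcal I_{2p}$ estimate, so your argument would let one compactness proof serve both sections.

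The paper's route has its own payoff. It builds the factorization machinery that is reused in Lemmas \ref{lem: extended resolvent inv}--\ref{lem: Lu norm resolv conv} to get norm-resolvent convergence $L_{u_n}\to L_u$, and your approach does not provide that for free.

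One small point: the Schatten exponent must always be strictly larger than $2$, because $g(\xi)\sim\xi^{-1/2}$ is not in $L^2$. Your formula $r=\max(2,2p)$ gives $r=2$ when $p=1$, but you already correct for this in that case.
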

\begin{proof}
    We will apply Weyl's essential spectrum theorem (Theorem XIII.14 in \cite{RSIV:1978}) to $L_u$ as a perturbation of $L_0=De^{2D}$. By the Fourier transform, it's easy to see that $\sigma(L_0)=\sigma_{\text{ess}}(L_0)=[-\tfrac{1}{2e},\infty)$. We only need to show that $(L_u+i)^{-1}-(L_0+i)^{-1}$ is compact. We first note that
    \begin{align}
        &(L_u+i)^{-1}-(L_0+i)^{-1}\notag\\
        =&(L_u+i)^{-1}-i(L_0+i)^{-1}(L_u+i)^{-1}-(L_0+i)^{-1}+i(L_0+i)^{-1}(L_u+i)^{-1}\notag\\
        =& (L_0+i-i)(L_0+i)^{-1}(L_u+i)^{-1}-(L_0+i)^{-1}(L_u+i-i)(L_u+i)^{-1}\notag\\
        =&L_0(L_0+i)^{-1}(L_u+i)^{-1}-(L_0+i)^{-1}L_u(L_u+i)^{-1}\notag\\
        =&[L_0(L_0+i)^{-1}-(L_0+i)^{-1}L_u](L_u+i)^{-1}.\notag
    \end{align}
    For any $\varphi\in L^2$, $(L_u+i)^{-1}\varphi\in D(L_u)$. By Lemma \ref{lem: domain of Lu}, $\langle D\rangle e^D (L_u+i)^{-1}\varphi\in L^2$. Moreover, $e^{-D}L_u(L_u+i)^{-1}\varphi\in L^2$. We can thus continue the above calculation to get
    \begin{align}
        &(L_u+i)^{-1}-(L_0+i)^{-1}\notag\\
        =&[L_0(L_0+i)^{-1}e^{-D}-(L_0+i)^{-1}e^D e^{-D}L_ue^{-D}]e^D(L_u+i)^{-1}\notag\\
        =& e^D (L_0+i)^{-1}[D-e^{-D}L_u e^{-D}]e^D(L_u+i)^{-1}\notag\\
        =& e^D(L_0+i)^{-1}~\tilde u ~e^D(L_u+i)^{-1}\notag\\
        =&[e^D (L_0+i)^{-1}](\tilde u \langle D\rangle^{-\frac12})[\langle D\rangle ^{\frac12}e^D(L_u+i)^{-1}].\label{eq: pertb resolv}
    \end{align}
    Here we have used $e^{-D}L_u e^{-D}=D-\tilde u$, which is equivalent to \eqref{eq: Lu phi = psi}. We observe that \eqref{eq: pertb resolv} is the product of three operators. Among them, $e^D(L_0+i)^{-1}$ is obviously bounded on $L^2$. We will show that $\tilde{u} \langle D\rangle^{-\frac12}$ is compact on $L^2$, and that $\langle D\rangle ^{\frac12}e^D(L_u+i)^{-1}$ is bounded on $L^2$. By interpolation, we may assume $u\in L^p$ with $p>2$ without loss of generality. Theorem 4.1 in \cite{Simon:2005} implies that $\tilde u \langle D\rangle^{-\frac12}$ is in the trace class $\mathcal I_p$ with trace norm bounded by 
    $$\|\tilde u \langle D\rangle^{-\frac12}\|_{\mathcal{I}_p}\le C\|u \|_{L^p}\|\langle \cdot\rangle ^{-\frac12}\|_{L^p}.$$
    Thus it is compact on $L^2$. For the last assertion, we note that $A=\langle D\rangle^{\frac12} e^D (L_u+i)^{-1}$ is defined on the entire $L^2$, since $(L_u+i)^{-1}$ maps into $D(L_u)\subset Q$, and $A^*=(L_u-i)^{-1}\langle D\rangle ^{\frac{1}{2}}e^{D}$ has a dense domain in $L^2$. The result follows from the following lemma.
\end{proof}

\begin{lemma}\label{lem: generalized H-T}
    Let $A:H\to H$ be a linear operator defined on an entire Hilbert space $H$. If $A^*$ is densely defined, then $A$ is bounded.
\end{lemma}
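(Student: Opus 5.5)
The plan is to show that $A$ is closed and then apply the closed graph theorem. Since $A$ is defined on all of $H$, a closed graph forces $A$ to be bounded. This is the classical Hellinger--Toeplitz argument, with the symmetry hypothesis replaced by the assumption that $A^*$ is densely defined.

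First recall the defining property of the adjoint. A vector $y$ lies in $D(A^*)$ if and only if the functional $x\mapsto\langle Ax,y\rangle$ is bounded on $D(A)=H$. In that case $A^*y$ is the unique vector with $\langle Ax,y\rangle=\langle x,A^*y\rangle$ for all $x\in H$. Uniqueness is automatic here because $D(A)=H$ is dense.

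Now take a sequence $x_n\to x$ in $H$ with $Ax_n\to z$. We must show $z=Ax$. For any $y\in D(A^*)$ we compute
\[
\langle z,y\rangle=\lim_{n\to\infty}\langle Ax_n,y\rangle=\lim_{n\to\infty}\langle x_n,A^*y\rangle=\langle x,A^*y\rangle=\langle Ax,y\rangle .
\]
Hence $z-Ax$ is orthogonal to $D(A^*)$. Since $D(A^*)$ is dense by hypothesis, $z=Ax$. So the graph of $A$ is closed, and the closed graph theorem gives boundedness.

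An equivalent route avoids the closed graph theorem by name. The adjoint $A^*$ is always closed, and a densely defined $A^*$ implies that $A$ is closable with closure $A^{**}$. Since $A$ is everywhere defined, $A=A^{**}$ is closed, and one concludes as before.

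There is no real obstacle in this argument. The only point to check is that the adjoint pairing $\langle Ax,y\rangle=\langle x,A^*y\rangle$ holds for every $x\in H$ without any continuity assumption on $A$, and this is exactly the definition of $D(A^*)$.

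In the application within Lemma \ref{lem: ess spec of Lu}, one still has to confirm the two hypotheses for $A=\langle D\rangle^{\frac12}e^D(L_u+i)^{-1}$. First, $A$ is defined on all of $L^2$, which holds because $(L_u+i)^{-1}$ maps into $D(L_u)\subset Q$. Second, $A^*$ contains $(L_u-i)^{-1}\langle D\rangle^{\frac12}e^D$ on, for instance, functions with $\widehat\varphi\in C_0^\infty$, and these are dense. Both facts are already noted in that proof.
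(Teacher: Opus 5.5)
Your proposal is correct and follows essentially the same route as the paper: both show that the graph of $A$ is closed by pairing against $y\in D(A^*)$ and using the density of $D(A^*)$, and then invoke the closed graph theorem. Your alternative remark via $A=A^{**}$ is also valid, since any closed extension of an everywhere-defined operator must coincide with it.
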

\begin{proof}
    We only need to prove that $A$ has a closed graph. For that, let $\{x_n\}$ be a sequence in $H$ such that $x_n\to x\in H$, $Ax_n\to y\in H$. For any $z\in D(A^*)$, we have 
    \begin{align*}
        (Ax,z)&=(x,A^*z)\\
        &=\lim_{n\to\infty}(x_n,A^*z)\\
        &=\lim_{n\to\infty} (Ax_n,z)\\
        &=(y,z).
    \end{align*}
    Since $D(A^*)$ is dense in $H$, we have $Ax=y$.
\end{proof}

Later it will be useful to approximate the function $u$ in $L_u$ by $C_0^\infty(\R)$ functions $u_n$. To have appropriate control of the spectrum, we need to show $L_{u_n}$ converges to $L_u$ in the norm resolvent sense. To estimate the difference of the resolvents $(L_u+i)^{-1}-(L_{u_n}+i)^{-1}$, we may consider formally the resolvent identity
\begin{align*}
    (L_u+i)^{-1}-(L_{u_n}+i)^{-1}&=(L_u+i)^{-1}(L_{u_n}-L_u)(L_{u_n}+i)^{-1}\\
    &=(L_u+i)^{-1}e^D(u-u_n)e^D(L_{u_n}+i)^{-1},
\end{align*}
and use the $L^2$ boundedness of $e^D(L_{u_n}+i)^{-1}$ and $(L_u+i)^{-1}e^D$. However, such an identity is difficult to justify directly, as it requires one to feed an output of $(L_{u_n}+i)^{-1}$ into $L_u$. In general, the output of $(L_{u_n}+i)^{-1}$ is only in $D(L_{u_n})$, which may have a small or trivial intersection with $D(L_u)$, so the composition may be undefined. To resolve this difficulty, we essentially need to extend the domain of $L_u$ and show that the identity $(L_u+i)^{-1}L_u = I-i(L_u+i)^{-1}$ continues to hold for the extended operator.
\begin{lemma}\label{lem: extended resolvent inv}
    Let $u\in L^\infty$ be real-valued, and let $L_u$ be the self-adjoint operator defined by Lemma \ref{lem: domain of Lu}, then $(L_u+i)^{-1}e^D \langle D\rangle$ is closable with a bounded closure on $L^2$, and for any $\varphi\in L^2$ such that $e^D\varphi\in L^2$, we have
    \begin{equation}\label{eq: generalized resolvent id}
        \overline{(L_u+i)^{-1}e^D \langle D\rangle} ~\langle D\rangle^{-1}(D-\tilde u)e^D\varphi = \varphi -i(L_u+i)^{-1}\varphi.
    \end{equation}
\end{lemma}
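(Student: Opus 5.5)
The plan is to obtain boundedness by duality, using the fact already established in the proof of Lemma \ref{lem: ess spec of Lu} that $A=\langle D\rangle^{\frac12}e^D(L_u+i)^{-1}$ is bounded. We strengthen this to boundedness of $B=\langle D\rangle e^D(L_u-i)^{-1}$ on $L^2$.

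\textbf{Boundedness of $B$.} By Lemma \ref{lem: domain of Lu}, $(L_u-i)^{-1}$ maps $L^2$ into $D(L_u)$, and for $\varphi\in D(L_u)$ we have $e^D\varphi\in H^1$. Hence $B$ is defined on all of $L^2$. Its adjoint contains $(L_u+i)^{-1}e^D\langle D\rangle$ on the dense set of $f$ with $\widehat f\in C_0^\infty$, so $B^*$ is densely defined. Lemma \ref{lem: generalized H-T} then gives that $B$ is bounded.

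\textbf{Closability.} On the dense domain $\{f:\widehat f\in C_0^\infty\}$, the operator $(L_u+i)^{-1}e^D\langle D\rangle$ agrees with the restriction of $B^*$, and $B^*$ is bounded. So it is closable, with closure $B^*=\big(\langle D\rangle e^D(L_u-i)^{-1}\big)^*$.

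\textbf{Identity \eqref{eq: generalized resolvent id}.} Fix $\varphi\in L^2$ with $e^D\varphi\in L^2$. Then $g:=\langle D\rangle^{-1}(D-\tilde u)e^D\varphi\in L^2$, since $\langle D\rangle^{-1}D$ and $\langle D\rangle^{-1}$ are bounded and $u\in L^\infty$. It suffices to pair $B^*g$ with an arbitrary $\chi\in L^2$. Set $w=(L_u-i)^{-1}\chi\in D(L_u)$. Then
\[
\langle B^*g,\chi\rangle=\langle g,\langle D\rangle e^D w\rangle=\langle (D-\tilde u)e^D\varphi, e^Dw\rangle .
\]
Here $e^Dw\in H^1$ and the middle pairing is legitimate. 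Now move $D-\tilde u$ across, which is allowed since $u$ is real and $e^Dw\in H^1$. Then use \eqref{eq: Lu phi = psi}, $(D-\tilde u)e^Dw=e^{-D}L_uw$, to get
\[
\langle e^D\varphi,(D-\tilde u)e^Dw\rangle=\langle e^D\varphi,e^{-D}L_uw\rangle=\langle \varphi, L_uw\rangle .
\]
The step $e^D$ to $e^{-D}$ is justified on the Fourier side: $e^{\xi}\widehat\varphi\in L^2$ and $e^{-\xi}\widehat{L_uw}\in L^2$, so the product of the multipliers is $1$ and the integral converges absolutely.

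Finally, $L_uw=\chi+iw=\chi+i(L_u-i)^{-1}\chi$. Hence
\[
\langle\varphi,L_uw\rangle=\langle\varphi,\chi\rangle+\langle -i(L_u+i)^{-1}\varphi,\chi\rangle,
\]
using $\big((L_u-i)^{-1}\big)^*=(L_u+i)^{-1}$ and conjugate-linearity in the second slot. Since $\chi$ is arbitrary, this gives \eqref{eq: generalized resolvent id}.

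\textbf{Main obstacle.} The delicate point is justifying the integration by parts $\langle De^D\varphi,e^Dw\rangle=\langle e^D\varphi,De^Dw\rangle$ when $e^D\varphi$ is only in $L^2$, not $H^1$. This is where the $\langle D\rangle^{-1}$ regularization matters. On the Fourier side the pairing $\langle\langle D\rangle^{-1}De^D\varphi,\langle D\rangle e^Dw\rangle$ is $\int \xi e^{2\xi}\widehat\varphi\,\overline{\widehat w}\,d\xi$ (up to normalization). This converges absolutely because $e^\xi\widehat\varphi\in L^2$ and $\xi e^\xi\widehat w\in L^2$, so no smoothness of $e^D\varphi$ is needed.
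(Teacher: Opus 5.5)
Your proof is correct. The boundedness and closability part matches the paper: duality with $\langle D\rangle e^D(L_u-i)^{-1}$ together with Lemma \ref{lem: generalized H-T}. The proof of \eqref{eq: generalized resolvent id}, however, takes a genuinely different route.

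\textbf{The paper's route.} It first checks the identity for $\varphi\in D(L_u)$. There $\langle D\rangle^{-1}(D-\tilde u)e^D\varphi=\langle D\rangle^{-1}e^{-D}L_u\varphi$ already lies in the original domain, so the identity is just the resolvent identity $(L_u+i)^{-1}L_u=I-i(L_u+i)^{-1}$. It then extends to all $\varphi$ with $e^D\varphi\in L^2$ by continuity. This requires showing that $D(L_u)$ is dense in $X=\{\varphi\in L^2: e^D\varphi\in L^2\}$. That density uses two ingredients:
\begin{enumerate}
\item the Reed--Simon fact that $D(L_u)$ is dense in the form domain (Remark \ref{rmk: density of D(Lu) in Q(Lu)});
\item a comparison showing the form norm dominates the $X$-norm.
\end{enumerate}

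\textbf{Your route.} You pair against an arbitrary $\chi$ and apply the domain characterization \eqref{eq: Lu phi = psi} to $w=(L_u-i)^{-1}\chi$ instead of to $\varphi$. This moves $(D-\tilde u)e^D$ onto the side with $H^1$ regularity, so only absolutely convergent Fourier-side pairings are needed.

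\textbf{Trade-offs.} Your argument bypasses the density step entirely and identifies the closure explicitly as $\bigl(\langle D\rangle e^D(L_u-i)^{-1}\bigr)^*$. The paper's argument stays on the primal side and yields the density of $D(L_u)$ in $X$ as a by-product.

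\textbf{One small point.} To conclude that the closure of $(L_u+i)^{-1}e^D\langle D\rangle$ equals $\bigl(\langle D\rangle e^D(L_u-i)^{-1}\bigr)^*$, you should check agreement on the operator's whole natural domain $\{f: e^D\langle D\rangle f\in L^2\}$, not only on $\{\widehat f\in C_0^\infty\}$. Agreement on a dense subset alone does not make the full operator closable. The same absolutely convergent Fourier computation gives agreement on the whole domain, so this is a one-line fix.
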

\begin{proof}
    $(L_u+i)^{-1}$ is defined on the entire $L^2$ space. The domain of $(L_u+i)^{-1}e^D\langle D\rangle$ consists of all functions $\varphi\in L^2$ such that $ e^D\langle D\rangle \varphi \in L^2$, which is dense in $L^2$. The $L^2$-boundedness of $(L_u+i)^{-1}e^D\langle D\rangle$ would follow from that of $\langle D\rangle e^D (L_u-i)^{-1}$ and duality. Since $(L_u-i)^{-1}$ maps into $D(L_u)$, which is contained in the domain of $e^D\langle D\rangle $ (see Lemma \ref{lem: domain of Lu}), it follows that $\langle D\rangle e^D (L_u-i)^{-1}$ is defined on the entire $L^2$ space with a densely defined adjoint. By Lemma \ref{lem: generalized H-T}, it is bounded. Thus $\overline{(L_u+i)^{-1}e^D \langle D\rangle} $ is defined on the entire $L^2$ space and is bounded. 
    
    We now turn to the proof of \eqref{eq: generalized resolvent id}. First observe that \eqref{eq: generalized resolvent id} holds if $\varphi\in D(L_u)$. Indeed, for such a $\varphi$, we have that $e^D\varphi \in H^1$, and there exists a $\psi\in L^2$ such that $e^{-D}\psi\in L^2$ and \eqref{eq: Lu phi = psi} holds and $L_u\varphi = \psi$. Thus 
    $$\langle D\rangle ^{-1} (D-\tilde u)e^D\varphi = \langle D\rangle ^{-1} e^{-D}\psi,$$
    which is in the original domain of $(L_u+i)^{-1}e^D\langle D\rangle$. Thus
    \begin{align*}
        \overline{(L_u+i)^{-1}e^D \langle D\rangle} ~\langle D\rangle^{-1}(D-\tilde u)e^D\varphi &= (L_u+i)^{-1}\psi\\
        &=\varphi - i(L_u+i)^{-1}\varphi.
    \end{align*}
    The last equality above follows by rewriting $L_u\varphi = \psi$ as 
    $$ \psi =(L_u+i)\varphi - i\varphi.$$
    To prove \eqref{eq: generalized resolvent id} for general $\varphi$, we  claim that for every $\varphi\in L^2$ such that $e^D\varphi\in L^2$, there exists a sequence of $\varphi_n\in D(L_u)$ such that $\varphi_n \to \varphi$ and $e^D\varphi_n\to e^D\varphi$ both in $L^2$. Assuming this claim, we first write \eqref{eq: generalized resolvent id} with $\varphi$ replaced by $\varphi_n$. Since $\overline{(L_u+i)^{-1}e^D \langle D\rangle} $, $\langle D\rangle ^{-1} (D-\tilde u)$ and $(L_u+i)^{-1}$ are all bounded on $L^2$, we may take the limit to recover the equation for $\varphi$. It remains to prove the above claim. 
    Let $X=\{\varphi\in L^2~|~e^D\varphi \in L^2\}$ with the norm $\|\varphi\|_X=\|\varphi\|_{L^2}+\|e^D \varphi\|_{L^2}$. We need to prove that $D(L_u)$ is dense in $X$ under its norm. We first observe that $Q\subset X$ is dense in $X$ under its norm, since it contains all $L^2$ functions with compactly supported Fourier transform.
    It suffices to show that $D(L_u)$ is dense in $Q$ under the $X$-norm.
    By Remark \ref{rmk: density of D(Lu) in Q(Lu)}, $D(L_u)$ is dense in $Q(L_u)$ under the norm 
    \begin{equation}\label{eq: H_+1 norm}
        \sqrt{q_u(\cdot,\cdot)+(M+1)\|\cdot \|_{L^2}^2}.
    \end{equation}
    By \eqref{eq: semi-bdd}, the above norm dominates the $L^2$ norm. By \eqref{eq: form bdds norm}, we see that \eqref{eq: H_+1 norm} also dominates the norm $\||D|^{\frac12}e^D\cdot\|_{L^2}$. On the other hand, it's easy to see that $\|\cdot\|_{L^2}+\||D|^{\frac12}e^D\cdot\|_{L^2}$ dominates the $X$-norm on $Q$. The proof is complete. 
\end{proof}

\begin{lemma}\label{lem: resolvent diff}
    Let $u,v\in L^\infty$ be real-valued, then
    \begin{equation}\label{eq: resolvent diff}
        (L_u+i)^{-1}-(L_v+i)^{-1}=\overline{(L_u+i)^{-1}e^D \langle D\rangle} \langle D\rangle ^{-1}(\tilde v-\tilde u)e^D(L_v+i)^{-1}.
    \end{equation}
\end{lemma}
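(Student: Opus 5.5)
The plan is to derive \eqref{eq: resolvent diff} from Lemma \ref{lem: extended resolvent inv}, applied to the operator $L_u$, with the test function taken from the range of $(L_v+i)^{-1}$. Fix an arbitrary $\psi\in L^2$ and set $\varphi=(L_v+i)^{-1}\psi\in D(L_v)$. By Lemma \ref{lem: domain of Lu}, applied to $v$, we have $e^D\varphi\in H^1\subset L^2$. We also have $e^{-D}L_v\varphi\in L^2$, together with
\[
(D-\tilde v)e^D\varphi=e^{-D}L_v\varphi .
\]
Since $\varphi\in L^2$ and $e^D\varphi\in L^2$, the function $\varphi$ is admissible in \eqref{eq: generalized resolvent id} for $L_u$. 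Since $u,v\in L^\infty$, multiplication by $\tilde v-\tilde u$ is bounded, so $(\tilde v-\tilde u)e^D\varphi\in L^2$ and the right-hand side of \eqref{eq: resolvent diff} is well defined.

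Next I split $(D-\tilde u)e^D\varphi=(D-\tilde v)e^D\varphi+(\tilde v-\tilde u)e^D\varphi$ inside the left-hand side of \eqref{eq: generalized resolvent id}. Because the closure $\overline{(L_u+i)^{-1}e^D\langle D\rangle}$ is linear and bounded, I may treat the two pieces separately.

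The first piece is $\langle D\rangle^{-1}e^{-D}L_v\varphi$. It lies in the original domain of $(L_u+i)^{-1}e^D\langle D\rangle$, so the closure acts on it as $(L_u+i)^{-1}L_v\varphi=(L_u+i)^{-1}(\psi-i\varphi)$. Substituting this into \eqref{eq: generalized resolvent id}, the terms $-i(L_u+i)^{-1}\varphi$ appear on both sides and cancel. What remains is a relation between $(L_u+i)^{-1}\psi$, $\varphi=(L_v+i)^{-1}\psi$, and the term $\overline{(L_u+i)^{-1}e^D\langle D\rangle}\,\langle D\rangle^{-1}(\tilde v-\tilde u)e^D(L_v+i)^{-1}\psi$. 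Since $\psi$ is arbitrary, this gives an operator identity of the form \eqref{eq: resolvent diff}.

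The main obstacle is the orientation of the difference, which the computation above must be checked against. The domain issues are already handled by Lemma \ref{lem: extended resolvent inv}; the only justification still needed is the step that identifies the closure with $(L_u+i)^{-1}e^D\langle D\rangle$ on the first piece, and that holds because this piece is in the original domain. The sign itself comes from $L_v-L_u=e^D(\tilde u-\tilde v)e^D$, which follows from $L_w=e^D(D-\tilde w)e^D$. The bookkeeping above then yields
\[
(L_v+i)^{-1}-(L_u+i)^{-1}=\overline{(L_u+i)^{-1}e^D \langle D\rangle}\, \langle D\rangle ^{-1}(\tilde v-\tilde u)e^D(L_v+i)^{-1},
\]
which is the same as \eqref{eq: resolvent diff} with the factor $(\tilde u-\tilde v)$ in place of $(\tilde v-\tilde u)$. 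I would recheck this sign against the printed statement before relying on it. Later in the paper the identity is used only through norm bounds, namely to prove $\|(L_u+i)^{-1}-(L_{u_n}+i)^{-1}\|\lesssim\|u-u_n\|_{L^\infty}$, and those bounds are insensitive to the sign.
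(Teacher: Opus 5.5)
Your argument is the paper's proof. You apply Lemma \ref{lem: extended resolvent inv} to $\varphi=(L_v+i)^{-1}\psi$, split $(D-\tilde u)e^D\varphi=e^{-D}(\psi-i\varphi)+(\tilde v-\tilde u)e^D\varphi$, and evaluate the closure on the first piece through the original domain of $(L_u+i)^{-1}e^D\langle D\rangle$, exactly as in \eqref{eq: resolvent diff 1}--\eqref{eq: resolvent diff 3}.

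Your sign check is also right. Those three displays combine to $(L_v+i)^{-1}-(L_u+i)^{-1}$ on the left, equivalently a factor $\tilde u-\tilde v$ on the right, in agreement with the formal identity written just before Lemma \ref{lem: extended resolvent inv}. So the printed statement carries a sign typo. It is harmless in Lemma \ref{lem: Lu norm resolv conv}, where only operator norms are used; note that the smallness there comes from $\|u_n-u\|_{L^2}$, not from $L^\infty$ as your remark says.
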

\begin{proof}
    For any $\psi \in L^2$, $(L_v+i)^{-1}\psi \in D(L_v)\subset Q$. Thus $e^D(L_v+i)^{-1}\psi\in L^2$. So the right hand side of \eqref{eq: resolvent diff} is defined on $L^2$. Denoting $\varphi = (L_v+i)^{-1}\psi$, we have $L_v\varphi =\psi-i\varphi$, so that $e^{-D}(\psi-i\varphi)\in L^2$, and
    \begin{equation}
        (D-v)e^D\varphi = e^{-D}(\psi-i\varphi).
    \end{equation}
    We also have
    \begin{align}
       &~ (\tilde v-\tilde u)e^D (L_v+i)^{-1}\psi \notag\\
       = &~(D-\tilde u)e^D (L_v+i)^{-1}\psi-(D-\tilde v)e^D (L_v+i)^{-1}\psi\notag\\
        =&~(D-\tilde u)e^D (L_v+i)^{-1}\psi-e^{-D}(\psi-i(L_v+i)^{-1}\psi).\label{eq: resolvent diff 1}
    \end{align}
    By Lemma \ref{lem: extended resolvent inv}, 
    \begin{align}
        &~\overline{(L_u+i)^{-1} e^D\langle D\rangle }\langle D\rangle ^{-1}(D-\tilde u)e^D(L_v+i)^{-1}\psi\notag\\
        =&~(L_v+i)^{-1}\psi-i(L_u+i)^{-1}(L_v+i)^{-1}\psi.\label{eq: resolvent diff 2}
    \end{align}
    On the other hand $\langle D\rangle^{-1} e^{-D}(\psi-(L_v+i)^{-1}\psi)$ is in the original domain of $(L_u+i)^{-1}e^D\langle D\rangle$, so that
    \begin{align}
        &~\overline{(L_u+i)^{-1}e^D\langle D\rangle}\langle D\rangle^{-1} e^{-D}(\psi-(L_v+i)^{-1}\psi)\notag\\
        =&~(L_u+i)^{-1}(\psi-i(L_v+i)^{-1}\psi)\notag\\
        =&~(L_u+i)^{-1}\psi-i(L_u+i)^{-1}(L_v+i)^{-1}\psi.\label{eq: resolvent diff 3}
    \end{align}
    \eqref{eq: resolvent diff} now follows from \eqref{eq: resolvent diff 1}, \eqref{eq: resolvent diff 2}, and \eqref{eq: resolvent diff 3}.
\end{proof}

We can now show the lemma on norm resolvent convergence.

\begin{lemma}\label{lem: Lu norm resolv conv}
    Let $u, u_n\in L^2\cap L^\infty$ and $u_n\to u$ in $L^2$. Then $(L_{u_n}+i)^{-1}\to (L_u+i)^{-1}$ in $L^2$ operator norm.  
\end{lemma}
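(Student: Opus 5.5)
We apply Lemma \ref{lem: resolvent diff} with $v=u_n$, which gives
\begin{equation*}
(L_u+i)^{-1}-(L_{u_n}+i)^{-1}=\overline{(L_u+i)^{-1}e^D \langle D\rangle}\,\Big[\langle D\rangle ^{-1}(\tilde u_n-\tilde u)\Big]\,e^D(L_{u_n}+i)^{-1}.
\end{equation*}
The first factor is a fixed bounded operator by Lemma \ref{lem: extended resolvent inv}, and it does not depend on $n$. It therefore suffices to show two things: first, that $\|\langle D\rangle^{-1}\tilde w\|_{L^2\to L^2}\le C\|w\|_{L^2}$; second, that $e^D(L_{u_n}+i)^{-1}$ is bounded on $L^2$ uniformly in $n$.

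The first claim is elementary. For $f\in L^2$ we have $wf\in L^1$ with $\|wf\|_{L^1}\le\|w\|_{L^2}\|f\|_{L^2}$. Moreover $\langle D\rangle^{-1}$ maps $L^1$ into $L^2$, because its symbol $\langle\xi\rangle^{-1}$ lies in $L^2$; equivalently $\|\langle\xi\rangle^{-1}\widehat{g}\|_{L^2}\le \|\langle\cdot\rangle^{-1}\|_{L^2}\|g\|_{L^1}$. Applying this with $w=u_n-u$ gives a factor tending to zero.

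The main obstacle is the uniform bound on $e^D(L_{u_n}+i)^{-1}$. The form estimates in Lemma \ref{lem: domain of Lu} use the constant $M$, which depends on $\|u_n\|_{L^\infty}$, and this is \emph{not} uniformly controlled since only $L^2$ convergence is assumed. I would redo the relative form bound of the perturbation using only $\|u_n\|_{L^2}$, which is uniformly bounded.

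By H\"older and the Sobolev embedding $H^{1/4}\subset L^4$,
\begin{equation*}
|\langle \tilde u_n e^D\varphi,e^D\varphi\rangle|\le \|u_n\|_{L^2}\|e^D\varphi\|_{L^4}^2\le C\|u_n\|_{L^2}\|\langle D\rangle^{1/4}e^D\varphi\|_{L^2}^2 .
\end{equation*}
On the Fourier side, $\langle\xi\rangle^{1/2}e^{2\xi}\le \epsilon|\xi|e^{2\xi}+C_\epsilon$ for all $\xi\in\R$; this holds because $e^{2\xi}$ is bounded for $\xi\le0$, and for large positive $\xi$ the term $|\xi|$ dominates $\langle\xi\rangle^{1/2}$. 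Hence
\begin{equation*}
|\langle \tilde u_n e^D\varphi,e^D\varphi\rangle|\le \tfrac12\int_\R |\xi|e^{2\xi}|\widehat\varphi|^2\,d\xi+K\|\varphi\|_{L^2}^2,
\end{equation*}
where $K$ depends only on $\sup_n\|u_n\|_{L^2}$. Combining this with $\int|\xi|e^{2\xi}|\widehat\varphi|^2\le \int\xi e^{2\xi}|\widehat\varphi|^2+\tfrac1e\|\varphi\|^2$, exactly as in the proof of Lemma \ref{lem: domain of Lu}, yields
\begin{equation*}
\int_\R |\xi|e^{2\xi}|\widehat\varphi|^2\,d\xi\le 2q_{u_n}(\varphi,\varphi)+K'\|\varphi\|_{L^2}^2 ,
\end{equation*}
uniformly in $n$.

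Now take $\varphi=(L_{u_n}+i)^{-1}\psi\in D(L_{u_n})$. Then $\|\varphi\|\le\|\psi\|$, and
\begin{equation*}
q_{u_n}(\varphi,\varphi)=\real\langle L_{u_n}\varphi,\varphi\rangle=\real\langle\psi-i\varphi,\varphi\rangle\le\|\psi\|^2 .
\end{equation*}
Hence $\int|\xi|e^{2\xi}|\widehat\varphi|^2\le C\|\psi\|^2$. Since $e^{2\xi}\le e\,+|\xi|e^{2\xi}$, we conclude $\|e^D\varphi\|_{L^2}\le C\|\psi\|_{L^2}$ with $C$ independent of $n$.

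Putting the three factors together,
\begin{equation*}
\|(L_u+i)^{-1}-(L_{u_n}+i)^{-1}\|\le C\|u_n-u\|_{L^2}\to0 .
\end{equation*}
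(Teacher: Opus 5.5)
Your proof is correct. The outer structure matches the paper's: Lemma \ref{lem: resolvent diff} with $v=u_n$, the fixed bounded factor from Lemma \ref{lem: extended resolvent inv}, the estimate $\|\langle D\rangle^{-1}\tilde w\|\lesssim\|w\|_{L^2}$, and a uniform bound on $e^D(L_{u_n}+i)^{-1}$. You also correctly spot the real issue: the constant $M$ of Lemma \ref{lem: domain of Lu} depends on $\|u_n\|_{L^\infty}$, so uniformity has to come from $\sup_n\|u_n\|_{L^2}$ alone.

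You differ from the paper in how you obtain the uniform bound. (For the middle factor, your elementary factorization $L^2\to L^1\to L^2$ replaces the paper's appeal to Simon's trace-ideal theorem and duality; this is a minor difference.)

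The paper works with the operator equation from Lemma \ref{lem: domain of Lu}. Writing $\varphi_n=(L_{u_n}+i)^{-1}\psi$, it Fourier transforms $(D-\tilde u_n)e^D\varphi_n=e^{-D}(\psi-i\varphi_n)$ on $\xi\ge1$. It bounds $\widehat{u_n}*\widehat{e^D\varphi_n}$ in $L^\infty$ by $\|u_n\|_{L^2}\|e^D\varphi_n\|_{L^2}$ and gains $M^{-1/2}$ from $\|\xi^{-1}\chi_{\ge M}\|_{L^2}$. It then picks $M$ in terms of $\sup_n\|u_n\|_{L^2}$ so the high-frequency part can be absorbed, and bounds frequencies below $M$ trivially by $e^M\|\varphi_n\|$.

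You instead show that $\tilde u$ is infinitesimally form-bounded relative to $De^{2D}$, with constants depending only on $\|u\|_{L^2}$. This uses H\"older, $H^{1/4}\subset L^4$, and $\langle\xi\rangle^{1/2}e^{2\xi}\le\epsilon|\xi|e^{2\xi}+C_\epsilon$. You then close with $q_{u_n}(\varphi,\varphi)=\real\langle\psi,\varphi\rangle\le\|\psi\|^2$.

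What your route buys:
\begin{enumerate}
\item It needs only the quadratic form, not the explicit operator-domain description.
\item It gives a stronger conclusion: a uniform bound on $|D|^{1/2}e^D(L_{u_n}+i)^{-1}$.
\item Via KLMN, the same estimate would let one define $L_u$ for real $u\in L^2$ without the $L^\infty$ hypothesis.
\end{enumerate}
The paper's route is more hands-on and avoids Sobolev embedding, at the cost of relying on the domain characterization of Lemma \ref{lem: domain of Lu}.
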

\begin{proof}
    By Lemma \ref{lem: resolvent diff}, 
    \begin{equation}
        (L_u+i)^{-1}-(L_{u_n}+i)^{-1}=\overline{(L_u+i)^{-1}e^D \langle D\rangle} \langle D\rangle ^{-1}(\tilde u_n-\tilde u)e^D(L_{u_n}+i)^{-1}.
    \end{equation}
    From Lemma \ref{lem: extended resolvent inv}, we have that $(L_u+i)^{-1}e^D\langle D \rangle $ is bounded on $L^2$. By Theorem 4.1 in \cite{Simon:2005} and duality, we know that the $L^2$ operator norm of $\langle D\rangle^{-1}(\tilde u_n-\tilde u)$ is bounded by $C\|u_n-u\|_{L^2}$. A simple application of Lemma \ref{lem: generalized H-T} will give us the $L^2$ boundedness of $e^D(L_{u_n}+i)^{-1}$. It remains to prove, however, that the bounds are uniform in $n$. For that we let $\|\psi\|_{L^2}=1$ and let $\varphi_n = (L_{u_n}+i)^{-1}\psi$. Since $L_{u_n}$ is self-adjoint, the operator norm of $(L_{u_n}+i)^{-1}$ is no more than 1, so $\|\varphi_n\|_{L^2}\le 1$. Thus we only need a uniform $L^2$ bound on the large and positive frequency part of $e^D\varphi_n$. Since $L_{u_n}\varphi =\psi-i\varphi_n $, we have from Lemma \ref{lem: domain of Lu} that $e^{-D}(\psi-i\varphi_n )\in L^2$, and
    \begin{equation}
        (D-u_n)e^D\varphi_n = e^{-D}(\psi-i\varphi_n).
    \end{equation}
    Fourier transform the above equation to get for $\xi\ge 1$
    \begin{equation}
        \widehat {e^D\varphi_n}=\frac1{\xi}\widehat {u_n}*\widehat{e^D\varphi_n}+\frac{e^{-\xi}}{\xi}(\hat\psi-i\widehat{\varphi_n})
    \end{equation}
    Denote by $\chi_{\ge M}$ the characteristic function of $[M,\infty)$. For $M\ge 1$ we get
    \begin{align}
        \|\chi_{\ge M}(D)e^D\varphi_n\|_{L^2}&\le \frac1{\sqrt M}\|\widehat{u_n}*\widehat{ e^D\varphi_n}\|_{L^\infty}+\|\psi\|_{L^2}+\|\varphi_n\|_{L^2}\notag\\
        &\le \frac{\|u_n\|_{L^2}}{\sqrt M}(\|\chi_{\ge M}(D)e^D\varphi_n\|_{L^2}+\|\chi_{< M}(D)e^D\varphi_n\|_{L^2})+2.\label{eq: hi freq e^D phi_n}
    \end{align}
    Since $u_n\to u$ in $L^2$, $\sup_n \|u_n\|_{L^2}<\infty$. Taking $\sqrt M=\max( 2\sup\|u_n\|_{L^2},1)$, and noting that $\|\chi_{<M}(D)e^D\varphi_n\|\le e^M\|\varphi_n\|_{L^2}\le e^M$, we get from \eqref{eq: hi freq e^D phi_n}
    \begin{equation}
        \|\chi_{\ge M}(D)e^D \varphi_n\|_{L^2}\le e^M+4.
    \end{equation}
    The proof is now complete.
\end{proof}

\subsection{Spectral problem and an outline of the paper}
We now consider $L^\infty$ eigenfunctions of $L_u$ with eigenvalue in the essential spectrum, which will be useful in scattering theory. For any $k\in\mathbb R$, we have
$$
L_0e^{-ikx}=-ke^{-2k}e^{-ikx}.
$$
Denote by $e^{-ik x }_{\pm\infty}$ the $L^\infty$ solution to
\begin{equation}\label{eq: eigen original}
     L_ue^{-ikx}_{\pm\infty}= -ke^{-2k}e^{-ikx}_{\pm\infty}
\end{equation}
with the asymptotic property
\begin{equation}\label{eq: asym original}
    e^{-ikx}_{\pm\infty} \sim e^{-ikx}\quad \text{ as }x\to\pm\infty.
\end{equation}
The existence and properties of such scattering functions will be established in  Sections \ref{sec: prop of jost} and \ref{sec: existence}.
We need to properly interpret $L_u$ acting on the $L^\infty$ function $e^{-ikx}_{\pm\infty}$. In view of the $L^2$ domain of $L_u$ in Lemma \ref{lem: domain of Lu} and definition of $e^{hD}$ given in Defintion \ref{def: e^D}, we interpret $L_u\varphi = \psi$, for $\varphi,\psi\in L^\infty$, as follows: $\varphi\in \mathbb H^\infty(\mathbb R+ i(-1,0))$, $\psi\in \mathbb H^\infty(\mathbb R + i(0,1))$, with 
\begin{equation}
    (D-u)\varphi(x-i)=\psi(x+i).
\end{equation}
With such an interpretation, we have that $e^{-ikx}_{\pm\infty}\in \mathbb H^\infty(\R+i(-1,1))$, with 
\begin{equation}
    (D-u)e^{-ik(x-i)}_{\pm\infty}=-ke^{-2k}e^{-ik(x+i)}_{\pm\infty}.
\end{equation}
We also analytically continue the asymptotic condition \eqref{eq: asym original} to the slightly stronger condition 
\begin{equation}
    e^{-ik(x+iy)}_{\pm\infty}\sim e^{-ik(x+iy)} \quad \text{ as }x\to\pm\infty \text{ for all }y\in (-1,1).
\end{equation}
Later on we will analytically continue $e^{-ikx}_{\pm\infty}$ in $k$  into the complex $\zeta$-plane (for which $k$ is the real axis). To avoid working with spaces of exponentially growing solutions, we define the normalized Jost solutions $M_1(x,k)$ and $N_1(x,k)$ by 
\begin{equation}\label{eq: M1 vs eik}
    M_1(x,k) = e^D(e^{ikx}  e^{-ikx}_{-\infty}) = e^{ik(x-i)} e^{-ik(x-i)}_{-\infty},
\end{equation}
\begin{equation}\label{eq: N1 vs eik}
    N_1(x,k)= e^D(e^{ikx} e^{-ikx}_{+\infty}) = e^{ik(x-i)} e^{-ik(x-i)}_{+\infty}.
\end{equation}
As we will show in Section \ref{sec: existence}, $M_1,N_1$ exist and belong to $\mathbb H^\infty(\mathbb R+ i(0,2))$ (except for $k=\tfrac12$, where $M_1$ and $N_1$ can have linear growth). They satisfy the equations
\begin{equation}\label{eq: main M1}
    [D-k(1-e^{-2D})]M_1= uM_1,
\end{equation}
\begin{equation}\label{eq: main N1}
    [D-k(1-e^{-2D})]N_1= uN_1,
\end{equation}
and asymptotic conditions
\begin{equation}
    M_1(x+iy,k)\sim 1 \quad \text{ as }x\to-\infty\text{ for all }y\in (0,2), 
\end{equation}
\begin{equation}
    N_1(x+iy,k)\sim 1 \quad \text{ as }x\to+\infty\text{ for all }y\in (0,2).
\end{equation}

Note that $k\mapsto -ke^{-2k}$ is one-to-one from $(-\infty,0)$ to $(0,\infty)$, but two-to-one from $(0,\infty)\setminus \{\tfrac12\}$ to $(-\tfrac{1}{2e},0)$. We denote the other value by $k^*$ so that $-ke^{-2k}=-k^*e^{-2k^*}$. It can be described alternatively as follows. Note that the map $\lambda\mapsto \frac{\lambda}{1-e^{-2\lambda}}$ (with the singularity at $0$ removed) maps $\mathbb R$ to $(0,\infty)$ bijectively. If $k=\frac{\lambda}{1-e^{-2\lambda}}$, then $k^*=\frac{-\lambda}{1-e^{2\lambda}}$. Thus $e^{-ik^*x}_{\pm\infty}$ solve the same equation as $e^{-ikx}_{\pm\infty}$, but is asymptotic to $e^{-ik^*x}$ at $\pm\infty$. As we will show in Section \ref{sec: RH}, when $k>0$, $e^{-ikx}_{-\infty}$ is a linear combination of $e^{-ikx}_{+\infty}$ and $e^{-ik^*x}_{+\infty}$. We denote the coefficients by $a(k)$ and $b(k)$:
\begin{equation}
    e^{-ikx}_{-\infty}=a(k)e^{-ikx}_{+\infty}+b(k)e^{-ik^* x}_{+\infty} \quad \text{ for }k>0, k\ne \tfrac12.
\end{equation}
In scattering theory it is useful to write the above equation as 
\begin{equation}
    \tau(k)e^{-ikx}_{-\infty}= e^{-ikx}_{+\infty}+\rho(k)e^{-ik^* x}_{+\infty} \quad \text{ for }k>0, k\ne \tfrac12.
\end{equation}
 by defining $\tau = \frac1a$, $\rho=\frac{b}{a}$. We will also show in Section \ref{sec: scat data} that for $k<0$, $e^{-ikx}_{-\infty}$ is proportional to $e^{-ikx}_{+\infty}$. We denote the coefficient still by $a$, and write
 \begin{equation}
     e^{-ikx}_{-\infty}=a(k)e^{-ikx}_{+\infty}\quad \text{ for }k<0.
 \end{equation}
In order to study the above relations between Jost solutions,
for $k>0$, $k\ne\tfrac12$, we also define
\begin{equation}
    M_e(x,k)=e^D(e^{ikx}e^{-ik^* x}_{-\infty})=e^{ik(x-i)}e^{-ik^*(x-i)}_{-\infty},
\end{equation}
\begin{equation}
    N_e(x,k)=e^D(e^{ikx}e^{-ik^* x}_{+\infty})=e^{ik(x-i)}e^{-ik^*(x-i)}_{+\infty}.
\end{equation}
It follows that $M_e,N_e\in \mathbb H^\infty(\R+i(0,2))$ and satisfy 
\begin{equation}\label{eq: main Me}
    [D-k(1-e^{-2D})]M_e=uM_e,
\end{equation}
\begin{equation}\label{eq: main Ne}
    [D-k(1-e^{-2D})]N_e=uN_e,
\end{equation}
and
\begin{equation}
    M_e(x+iy,k)\sim e^{i\lambda (x+iy)}\quad \text{ as }x\to -\infty \text{ for all }y\in (0,2),
\end{equation}
\begin{equation}
    N_e(x+iy,k)\sim e^{i\lambda (x+iy)}\quad \text{ as }x\to +\infty \text{ for all }y\in (0,2).  
\end{equation}
Here once again $\lambda$ is the unique real number such that $k=\frac{\lambda}{1-e^{-2\lambda}}$. 

As we will show in Section \ref{sec: existence}, $M_1(x,k)$ and $a(k)$ has meromorphic extension to $M_1(x,\zeta)$, $a(\zeta)$ for $\zeta\in \mathbb C\setminus \mathbb R^+$, with continuous boundary values on $\mathbb R^+\pm i0$ such that the $M_1(x,k)$, $a(k)$ above coincides with $M_1(x,k+i0)$, $a(k+i0)$ for $k>0$. Similarly, $N_1(x,k)$ has meromorphic extension to $N_1(x,\zeta)$ for $\zeta\in \mathbb C\setminus \R^+$, with continuous boundary values on $\mathbb R^+\pm i0$ such that the $N_1(x,k)$ above coincides with $N_1(x,k-i0)$ for $k>0$.

We give a brief outline of the sections to follow. In Section \ref{sec.green-estimates}, we estimate the Green's function inverting the symbol on the left hand side of \eqref{eq: main M1}, \eqref{eq: main N1}, \eqref{eq: main Me}, \eqref{eq: main Ne}, in general when $k$ is continued to $\zeta\in\mathbb C\setminus \mathbb R^+$. These estimates lay the groundwork for existence and properties of the Jost solutions and scattering coefficients. In Section \ref{sec: prop of jost}, we assume $u\in L^1_1\cap L^2_1$, and show  that the integral equations involving the Green's function are equivalent to the differential RH problems such as \eqref{eq: main M1}, \eqref{eq: main N1}, \eqref{eq: main Me}, \eqref{eq: main Ne}. This includes establishing $x$-space analytic continuations of the Jost solutions into the strip $0<\imag z<2$. In Section \ref{sec: existence}, we show existence of the Jost solutions by inverting the integral operators, assuming $u$ has small norm in $L^1_1\cap L^2_1$ . We will show that invertibility are closedly related to the scattering coefficient $a(\zeta)$. The Jost solutions will be solved not only for the spectral parameter $k$ on the real line, but also for its extension to $\zeta$ in the complex plane. In Section \ref{sec: scat data}, we study $a(\zeta)$, together with other useful scattering coefficients. In Section \ref{sec: bound states}, we study the bounds states of the $L_u$ operator, and show finiteness and simplicity of the discrete spectrum, under various assumptions on $u$. In Section \ref{sec: RH}, we show that $\tau(k)$ can be recovered from $\rho(k)$ via an RH problem, that the Jost solutions $M_1$, $N_1$ satisfy certain nonlocal RH problems in the complex spectral parameter $\zeta$. The singular data of the RH problem are given by the scattering data of $L_u$. We also obtain the reconstruction formula for $u$ in terms of the $N_1$. This completes the picture of the direct scattering problem for the ILW equation for small data, and points to the reconstruction of solution to ILW by solving the inverse scattering problem, which we leave to a future paper.								%% 	Lax representation
%%%%%%%%%%%%%%%%%%%
%
%		green.tex	- Green's Function
%
%%%%%%%%%%%%%%%%%%%

\section{Green's function for the spectral problem}
%% Added by PAP 2022.08.04:
\label{sec.green-estimates}

Note that \eqref{eq: main M1}, \eqref{eq: main N1}, \eqref{eq: main Me}, \eqref{eq: main Ne} and their generalizations to $\zeta\in \mathbb C\setminus \mathbb R^+$ can be formally solved by inverting the symbol $D-\zeta(1-e^{-D})$. This prompts us to study the integral kernel Green's functions
\begin{equation}\label{def: G}
G_{L,R}(x,\zeta) = \frac1{2\pi}\int_{\Gamma_{L,R}} \frac{e^{ix\xi}}{\xi-\zeta(1-e^{-2\xi})}~d\xi
\end{equation} 
where the contours $\Gamma_{L,R}$ are shown in Figure \ref{fig: 1}. In particular, both contours go from $-\infty$ to $\infty$ along the real line, dodging zero along a semi-circles that is so small that $\zeta$ is outside the full circle.
\begin{figure}[ht]  %% change by PAP to eliminate error messages - was [h]
\begin{tikzpicture}[
    scale=0.5,
    ->-/.style={decoration={
      markings,
      mark=at position #1 with {\arrow{stealth}}},postaction={decorate}},
    ->-/.default=0.5
]

%%
%%	\Gamma_L (Semicircle BELOW pole)
%%
\begin{scope}
    % Label at upper left
    \node at (-4, 1.5) {$\Gamma_L$};
    
    % Path from -4 to 4 (Total width 8 units)
    \draw[very thick, black, ->-] (-4,0) -- (-0.5,0);
    \draw[very thick, black] (-0.5,0) arc(180:360:0.5);
    \draw[very thick, black, ->-] (0.5,0) -- (4,0);
    
    % Pole at 0
    \draw[black,fill=black] (0,0) circle(0.10cm) node[anchor=south] {$0$};
\end{scope}

%%
%%	\Gamma_R (Semicircle ABOVE pole)
%%
\begin{scope}[xshift=10cm] % Reduced shift from 15cm to 10cm
    % Label at upper left
    \node at (-4, 1.5) {$\Gamma_R$};
    
    % Path from -4 to 4
    \draw[very thick, black, ->-] (-4,0) -- (-0.5,0);
    \draw[very thick, black] (-0.5,0) arc(180:0:0.5);
    \draw[very thick, black, ->-] (0.5,0) -- (4,0);
    
    % Pole at 0
    \draw[black,fill=black] (0,0) circle(0.10cm) node[anchor=north] {$0$};
\end{scope}

\end{tikzpicture}
\caption{Integration contours for $G$}\label{fig: 1}
\end{figure}

We also define the analytic continuation of $G_{L,R}$ in $x+iy$:
\begin{equation}
G_{L,R}(x+iy,\zeta) = \frac1{2\pi}\int_{\Gamma_{L,R}} \frac{e^{ix\xi}e^{-y\xi}}{\xi-\zeta(1-e^{-2\xi})}~d\xi.
\end{equation}
In the following, most of the analyses are done on $G_L$, while the estimates on $G_R$ can be obtained accordingly, as
\begin{equation}\label{GL-GR}
G_L(x+iy,\zeta)-G_R(x+iy,\zeta)= \frac{i}{1-2\zeta} 
\end{equation}
for $\zeta\ne \frac12\pm i0$.
We also have the symmetry relation
\begin{equation}\label{eq: G_L bar}
    \overline{G_L(x+iy,\zeta)}=G_R(-x+iy,\overline{\zeta}).
\end{equation}
In the above equations, $\zeta$ belongs to the cut plane $\mathbb C\setminus \mathbb R^+$, but we extend it continuously to $\zeta\in \mathbb R^+\pm i0$. When $\zeta=k+i0\in \mathbb R^+ + i0$, we have $G_{L,R}(x,k+i0) = \lim_{\epsilon\searrow 0}G_{L,R}(x,k+i\epsilon)$. Similar convention applies when $\zeta=k-i0\in \R^+-i0$. We also use notation such as Im $\zeta\ge 0$ to mean that Im $\zeta >0$ or $\zeta\in \R^++i0$. For the above reason, we will refer to the set of allowed values for $\zeta$ as $\widetilde{\C}=(\C\setminus \R^+)\cup (\R^+\pm i0)$, where the topology is defined such that a local basis around a point  $x\in \R^++i0$ contains only the upper semi-discs $\{z~|~|z-x|<r,~\imag z\ge 0\}$, and vice versa for a point $x\in \R^+-i0$.

\subsection{\texorpdfstring{Mapping properties of $Z$}{Mapping Properties of Z}}

In order to construct scattering functions under a small norm condition on $u$, we need estimates on the Green's functions that are uniform in $\zeta$. Such estimates can be obtained by deforming the contour of integration. %To state the estimates, we define the following four regions, for a small constant $\epsilon_0>0$, a large constant $R_0>0$ and a small angle $\alpha>0$. The value of these constants will be defined in the proofs of the estimates.
%\begin{align}
%\text{region I}: &\left\{\zeta\in (\C\setminus \R^+) \cup \R^+\pm i0~:~0<\left|\zeta-\frac12\right|<\epsilon_0\right\}. \label{def: re 1}\\
%\text{region II}: &\left\{\zeta\in (\C\setminus \R^+) \cup \R^+\pm i0~:~\left|\zeta-\frac12\right|\ge\epsilon_0, |\zeta|<R_0\right\}.\\
%\text{region III}_\alpha: &\left\{\zeta\in (\C\setminus \R^+) \cup \R^+\pm i0~:~|\zeta|\ge R_0, \text{Re }\zeta>0,|\text{Im }\zeta|\le \tan \alpha ~\text{Re }\zeta\right\}. \label{def: re 3}\\
%\text{region IV}_\alpha: &~\C\setminus (\text{region I}\cup\text{region II}\cup\text{region III}_\alpha). \label{def: re 4}
%\end{align}
%Note that for $\zeta$ in regions I and III$_\alpha$, there is a well-defined $\lambda$ such that $\zeta(\lambda) = \frac{\lambda}{1-e^{-2\lambda}}=\zeta$. To see this in region I, note that $\zeta(0)=\frac12$, and $\zeta'(0)\ne 0$. $\zeta(\lambda)$ is locally one to one near $\lambda=0$ and $\zeta=\frac12$. For $\lambda$ near region III$_\alpha$, the real part of $\lambda$ is large, so that $\frac1{1-e^{-2\lambda}}$ is close to 1. One can easily see that for $\zeta$ in region III$_\alpha$, there is a unique $\lambda$ near region III$_\alpha$ such that $\zeta(\lambda) = \zeta$. For $\zeta$ in these regions, we define $\zeta^* = \zeta(-\lambda)$.
As we deform the contour, the poles of the integrand occurs when $(1-e^{-2\xi})(Z(\xi)-\zeta)=0$, where 
\begin{equation}\label{def: Z}
    Z(\xi) = \frac{\xi}{1-e^{-2\xi}}.
\end{equation}
It will also be important for us to define an involution $\zeta\mapsto\zeta^*$ such that if $\zeta=Z(\lambda)$, $\zeta^*=Z(-\lambda)$. However, since $Z$ is not injective, a complete definition of it will require a description of the Riemann surface of the inverse of $Z$.
%Thus we need to describe mapping properties of $Z$. It is will also 
%To that end and for studies of the singularity of the Green's function, we describe the Riemann surface for $Z^{-1}$. 
We first fix the notation for the domain of $Z$.
Denote by $\C^*=\C\setminus\{0\}$ the punctured plane, and $\mathbb C\setminus i\pi\mathbb Z^*$, where $\mathbb Z^*=\mathbb Z\setminus \{0\}$, the periodically punctured plane. Note that $0\in \mathbb C\setminus i\pi\mathbb Z^*$. 
\begin{lemma}\label{lem: Z holo cover}
    $Z:\C\setminus i\pi \mathbb Z^*\to \mathbb C^{*}$ is a branched holomorphic map with double branch points at 
    \begin{equation}
    \lambda_n = -\tfrac12(W_{-n-1}(-\tfrac1e)+1), \quad n=1,2,\dots, 
    \end{equation}
    and $\overline{\lambda_n}$. Here $W_k$ is the $k$-th branch of the Lambert $W$ function. Furthermore, $Z(\lambda_n)=\lambda_n+\frac12$, $Z(\overline{\lambda_n})=\overline{\lambda_n}+\frac12$. Denote by 
    \begin{equation}
        B=\{\lambda_n,\overline{\lambda_n}~|~ n=1,2,\dots\}
    \end{equation}
    the branch points of $Z$, and 
    \begin{equation}
        A=Z(B)=\{\lambda_n+\tfrac12,\overline{\lambda_n}+\tfrac12~|~ n=1,2,\dots\}
    \end{equation} 
    the critical values. Then $Z:\C\setminus i\pi \mathbb Z^*\setminus Z^{-1}(A)\to \C^*\setminus A$ is a covering map.
\end{lemma}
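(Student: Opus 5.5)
Write $Z(\lambda)=\lambda/(1-e^{-2\lambda})$. The plan has four steps: check that $Z$ is holomorphic and takes values in $\C^*$, find its critical points, identify them with the $\lambda_n$, and then get the covering statement by removing critical points and applying a properness argument.

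\emph{Holomorphy and image.} The poles of $\lambda/(1-e^{-2\lambda})$ are at $e^{-2\lambda}=1$, that is, $\lambda\in i\pi\mathbb Z$. At $\lambda=0$ the singularity is removable, with $Z(0)=\tfrac12$. So $Z$ is holomorphic on $\C\setminus i\pi\mathbb Z^*$. It never vanishes there: the numerator vanishes only at $\lambda=0$, and $Z(0)=\tfrac12\ne0$. Hence $Z$ maps into $\C^*$.

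\emph{Critical points.} Since
\[
Z'(\lambda)=\frac{1-e^{-2\lambda}-2\lambda e^{-2\lambda}}{(1-e^{-2\lambda})^2},
\]
the critical points are the solutions of $e^{2\lambda}=1+2\lambda$. Setting $w=-(1+2\lambda)$ turns this into $we^{w}=-e^{-1}$, so $w=W_k(-1/e)$ and $\lambda=-\tfrac12(W_k(-1/e)+1)$.

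The branches $W_0$ and $W_{-1}$ both give $W=-1$, hence $\lambda=0$. At $\lambda=0$ a Taylor expansion shows $Z'(0)=\tfrac12\ne0$, so $0$ is not a critical point; it is a spurious root coming from the removable singularity, where the numerator has a double zero that cancels against the squared denominator.

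The remaining branches $W_{-n-1}$ and $W_{n}$, for $n\ge1$, give complex conjugate pairs, because $\overline{W_k(x)}=W_{-k}(x)$ for real $x$. Indexing these as $\lambda_n$ and $\overline{\lambda_n}$ needs a short check that the two branch families match up correctly. None of these points lies in $i\pi\mathbb Z$, since $\imag W_k(-1/e)\notin 2\pi\mathbb Z$ for $k\ne 0,-1$.

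\emph{Critical values and order.} At a critical point we have $1-e^{-2\lambda}=2\lambda e^{-2\lambda}=2\lambda/(1+2\lambda)$. Therefore $Z(\lambda)=(1+2\lambda)/2=\lambda+\tfrac12$. To see that each critical point is a double branch point, I will compute $Z''$ there and show it is nonzero. Differentiating $N(\lambda)=1-e^{-2\lambda}-2\lambda e^{-2\lambda}$ gives $N'(\lambda)=4\lambda e^{-2\lambda}$, which is nonzero at every $\lambda_n$ since $\lambda_n\ne0$. So $Z''(\lambda_n)=N'(\lambda_n)/(1-e^{-2\lambda_n})^2\ne0$.

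\emph{Covering map.} This is the main obstacle. After removing $Z^{-1}(A)$, the map $Z$ is a local homeomorphism onto its image, so it remains to show that $Z:\C\setminus i\pi\mathbb Z^*\to\C^*$ is surjective and proper. With the critical values deleted, properness then yields a covering by the standard lemma that a proper local homeomorphism is a covering.

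For properness, I would show that $|Z(\lambda)|\to0$ or $\infty$ whenever $\lambda$ leaves every compact set of the domain:
\begin{itemize}
\item As $\lambda\to i\pi m$ with $m\ne0$, the denominator vanishes, so $|Z|\to\infty$.
\item As $\real\lambda\to+\infty$, we have $Z\approx\lambda$, so $|Z|\to\infty$.
\item As $\real\lambda\to-\infty$, we have $|Z|\approx|\lambda|e^{2\real\lambda}\to0$.
\item As $|\imag\lambda|\to\infty$ with $\real\lambda$ bounded, $|1-e^{-2\lambda}|$ stays bounded above, so $|Z|\ge c|\lambda|$ except near the poles, and the poles are already handled.
\end{itemize}
Hence preimages of compact subsets of $\C^*$ are compact.

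There is one subtlety: we must also check that $Z$ does not approach $0$ or $\infty$ along sequences that stay away from the poles while $\real\lambda$ remains bounded. I will handle this with the explicit estimate $|1-e^{-2\lambda}|\ge c\,\dist(\lambda,i\pi\mathbb Z)$ on bounded vertical strips.

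Properness also gives surjectivity: the image of $Z$ is open, because $Z$ is a nonconstant holomorphic map, and it is closed in $\C^*$, because $Z$ is proper. Since $\C^*$ is connected, the image is all of $\C^*$.

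Finally, $Z^{-1}(A)$ is discrete and closed, so deleting it from the domain and deleting $A$ from the target preserves properness of the restricted map. The restriction is therefore a proper local homeomorphism onto $\C^*\setminus A$, hence a covering map.
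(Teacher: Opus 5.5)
Your treatment of holomorphy, the critical points, the critical values $Z(\lambda_n)=\lambda_n+\frac12$, and $Z''(\lambda_n)=e^{2\lambda_n}/\lambda_n\neq 0$ is fine. One small correction: $-\frac1e$ lies on the branch cut of the nonprincipal branches, where the conjugation rule is $\overline{W_k(-\frac1e)}=W_{-k-1}(-\frac1e)$, not $W_{-k}(-\frac1e)$; this is what pairs $W_n$ with $W_{-n-1}$.

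The covering part, however, rests on a false claim: $Z$ is not proper, because every fiber of $Z$ is infinite. Indeed, $Z(\lambda)=\zeta$ is equivalent to $2(\lambda-\zeta)e^{2(\lambda-\zeta)}=-2\zeta e^{-2\zeta}$. So the points $\zeta+\frac12W_k(-2\zeta e^{-2\zeta})$, $k\in\mathbb Z$, are all preimages of $\zeta$ (discarding the spurious value $\lambda=0$ when $\zeta\neq\frac12$). They escape to infinity with $\imag\lambda\approx\pi k$ and $\real\lambda\approx-\frac12\log|k|$.

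The step that breaks is your third bullet. The quantity $|\lambda|e^{2\real\lambda}$ need not tend to $0$ as $\real\lambda\to-\infty$ when $|\imag\lambda|$ grows like $e^{-2\real\lambda}$, and none of your four cases covers this regime. The subtlety you flag, with $\real\lambda$ bounded, is not where the trouble lies. Since a proper local homeomorphism has compact discrete, hence finite, fibers, no properness argument can prove this lemma: $Z$ is an infinite-sheeted covering. Your surjectivity argument via closedness of the image fails for the same reason, although surjectivity itself follows from the formula above.

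What the lemma actually requires is, for each $\zeta\in\C^*\setminus A$, an evenly covered disc whose radius is uniform over all of the infinitely many preimages. The paper obtains this by enumerating $Z^{-1}(\zeta)=\{\xi_n\}$ and proving three uniform estimates:
\begin{enumerate}
\item[(a)] the $\xi_n$ are uniformly separated, using the identity $Z(\xi_m-\xi_n)=\zeta-\xi_n$ together with $\zeta\notin A$;
\item[(b)] $Z$ is uniformly bounded on the discs $D(\xi_m,\delta)$;
\item[(c)] $|Z'(\xi_m)|=|\zeta|\,|2+(1-2\zeta)/\xi_m|$ is bounded above and below.
\end{enumerate}
These yield a single $r>0$ such that each $D(\xi_m,\delta)\cap Z^{-1}(D(\zeta,r))$ maps bijectively onto $D(\zeta,r)$. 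Then, redoing the estimates uniformly for $\tilde\zeta$ near $\zeta$, the paper shows $Z^{-1}(D(\zeta,r))\subset\bigcup_n D(\xi_n,\delta)$; this is the step that replaces properness.

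Alternatively, you could show via path lifting that $Z$ has no asymptotic values in $\C^*$. That route too requires analyzing precisely the regime $\real\lambda\to-\infty$, $|\imag\lambda|\to\infty$ that your bullets omit.
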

\begin{proof}
    Since $Z'(\lambda)=\frac{e^{2\lambda}(e^{2\lambda}-2\lambda-1)}{(e^{2\lambda}-1)^2}$, it is easy to see that $Z'(\lambda)=0$ if and only if $\lambda\ne 0$ and $e^{2\lambda}-2\lambda-1=0$. This happens if
    $-(2\lambda+1)e^{-(2\lambda+1)}=-\frac1e$. By the definition of the Lambert $W$ function, $-(2\lambda+1)=W_k(-\frac1e)$ for different $k$. Note that $W_0(-\frac1e)=W_{-1}(\frac1e)=-1$, giving rising to $\lambda=0$. The branch points are therefore given by the other values of $k$. By definition of different branches of $W$, one has $W_{n}(-\frac1e)=\overline{W_{-n-1}(-\frac1e)}$, $n=1,2,\dots$. Thus the formulas for the branch points follow. Since $e^{2\lambda_n}=1+2\lambda_n$, we easily get  
    $Z(\lambda_n)=\lambda_n+\frac12$. Since $Z''(\lambda)=\frac{4 e^{2 \lambda} [e^{2 \lambda} ( \lambda-1) + \lambda+1]}{(e^{2\lambda}-1 )^3}$, we get $Z''(\lambda_n)=\frac{1+2\lambda_n}{\lambda_n}\ne 0$. Thus all the branch points are of multiplicity 2.

    We want to show that $Z:\C\setminus i\pi \mathbb Z^*\setminus Z^{-1}(A)\to \C^*\setminus A$ is a covering map. Let $\zeta\in \C^*\setminus A$ be given. Let $\xi_n, n=1,2,\dots$ be an enumeration of all the points in $\C\setminus i\pi \mathbb Z^*\setminus Z^{-1}(A)$ such that $Z(\xi_n)=\zeta$. Note that $Z$ is meromorphic on compact subsets of $\C$, thus any compact subset of $\C$ can contain only finitely many $\xi_n$'s. Without loss of generality, we may require $\xi_n\to \infty$ as $n\to\infty$. We claim that there exists a $\delta>0$ such that $|\xi_m-\xi_n|>\delta$ for all $m\ne n$. Indeed, let's assume $\xi_m,\xi_n\ne 0$ without loss of generality. It follows from $Z(\xi_m)=\zeta$ and $Z(\xi_n)=\zeta$ that 
    \begin{equation}
        \xi_m-\xi_n=\frac{\xi_m e^{-2\xi_m}}{1-e^{-2\xi_m}}-\frac{\xi_n e^{-2\xi_n}}{1-e^{-2\xi_n}}=\zeta(e^{-2\xi_m}-e^{-2\xi_n}).
    \end{equation}
    \begin{equation}\label{eq: Z(xi_m-xi_n)}
        Z(\xi_m-\xi_n)=\frac{\xi_m-\xi_n}{1-e^{-2(\xi_m-\xi_n)}}=\zeta e^{-2\xi_n}=\zeta-\xi_n.
    \end{equation}
    It is easy to see that $\zeta-\xi_n=\frac12$ implies either $\xi_n=0$ or $\xi_n\in B$, neither of which is the case. It follows that $|\zeta-\xi_n-\frac12|$ is bounded away from zero. This implies $\xi_m-\xi_n$ is bounded away from zero, as $Z(0)=\frac12$. 

    By shrinking $\delta$ by a factor of 2, we see that the discs $D(\xi_m,\delta)$ are disjoint. We claim that by shrinking $\delta$ further if necessary, we may assume $Z$ is uniformly bounded on $\bigcup_{m}D(\xi_m,\delta)$. We denote this uniform upper bound by $M$. Indeed, 
    \begin{equation}\label{eq: Z(xi_m+xi}
        Z(\xi_m+\xi)= \frac{\xi_m+\xi}{1-e^{-2(\xi_m+\xi)}}=\zeta e^{2\xi}\frac{\xi_m+\xi}{\xi_m+\zeta(e^{2\xi}-1)}.
    \end{equation}
    It is easy to see that \eqref{eq: Z(xi_m+xi} is uniformly bounded when $|\xi_m|$ is sufficiently large and $|\xi|<\delta$ is sufficiently small. 

    We next observe that $|Z'(\xi_m)|$ is bounded and bounded away from zero. Indeed, since $\xi_m\notin B$, $Z'(\xi_m)\ne 0$, and
    \begin{equation}\label{eq: Z'(xi_m)}
        Z'(\xi_m)=\frac{1-e^{-2\xi_m}-2\xi_m e^{-2\xi_m}}{(2-e^{-2\xi_m})^2}=\frac{\frac{\xi_m}{\zeta}-2\xi_m(1-\frac{\xi_m}{\zeta})}{(\frac{\xi_m}{\zeta})^2}=\zeta\left(2+\frac{1-2\zeta}{\xi_m}\right)
    \end{equation}
    is obviously bounded and bounded away from zero as $\xi_m\to\infty$. Denote by $\eta$ a positive lower bound of $|Z'(\xi_m)|$, and by enlarging the bound $M$ above, an upper bound of $|Z'(\xi_m)|$.

    By further shrinking $\delta$ if necessary, we claim that for every $\xi$ with $0<|\xi|\le\delta$ and all $m$,  
    \begin{equation}\label{eq: Z annular bd}
        \frac{\eta}{2}|\xi|<|Z(\xi_m+\xi)-\zeta|<2M |\xi|.
    \end{equation}
    Indeed, by the Cauchy integral formula, we can write
    \begin{equation}
        Z(\xi_m+\xi)-\zeta = Z'(\xi_m)\xi+\frac{\xi^2}{2\pi i}\int_{C(\xi_m,\delta_0)}\frac{Z(\tau)}{(\tau-\xi_m)^2(\tau-\xi_m-\xi)}~d\tau,
    \end{equation}
    where $\delta_0$ is the fixed small value of $\delta$ enforced in the above proof. We have $\eta\le |Z'(\xi_m)|\le M$ and 
    \begin{equation}\label{eq: Z'' bound}
        \left|\frac{\xi^2}{2\pi i}\int_{C(\xi_m,\delta_0)}\frac{Z(\tau)}{(\tau-\xi_m)^2(\tau-\xi_m-\xi)}~d\tau\right|\le \frac{ \delta_0 M|\xi|^2}{\delta_0^2(\delta_0-\delta)}\le \frac{2 M\delta |\xi|}{\delta_0^2}.
    \end{equation}
    if $|\xi|\le\delta<\frac{\delta_0}2$. By choosing $\delta$ small enough, we can make \eqref{eq: Z'' bound} smaller than $\frac\eta2|\xi|$, and \eqref{eq: Z annular bd} follows. 

    %By further shrinking $\delta$ if necessary, we may assume $|Z(\lambda)-\zeta|<\frac{|\zeta|}2$ for all $\lambda\in \bigcup_m D(\xi_m,\delta)$. 
    We now claim that $Z$ restricted to $Z^{-1}\left(D(\zeta, \frac{\eta \delta}4)\right)\cap D(\xi_m,\delta)$ is a homeomorphism onto $D(\zeta,\frac{\eta \delta}4)$ for every $m$. Indeed, for each $\zeta'\in D(\zeta,\frac{\eta \delta}4)$, the number of points $\lambda\in D(\xi_m,\delta)$ such that $Z(\lambda)=\zeta'$ is equal to $\text{ind}\left(Z\left(C(\xi_m,\delta)\right),\zeta'\right)$, the winding number of $Z\left(C(\xi_m,\delta)\right)$ around $\zeta'$. By \eqref{eq: Z annular bd}, $\text{dist}(Z(C(\xi_m,\delta)),\zeta)>\frac{\eta\delta}2$. Thus $\zeta'$ and $\zeta$ are in the same connected component of the complement of $Z(C(\xi_m,\delta))$. It follows that 
    \begin{equation}
        \text{ind}(C(\xi_m,\delta),\zeta')=\text{ind}(C(\xi_m,\delta),\zeta)=1.
    \end{equation}
    Thus $Z$ is bijective from $Z^{-1}\left(D(\zeta, \frac{\eta \delta}4)\right)\cap D(\xi_m,\delta)$ to $D(\zeta,\frac{\eta \delta}4)$. Since $Z$ is an unbranched holomorphic function on $\C\setminus i\pi \mathbb Z^*\setminus B$, it is obviously a homeomorphism when restricted to the above set.

    Next, we repeat the above proof with $\zeta$ replaced by $\widetilde{\zeta}$, where $\widetilde\zeta\in D(\zeta,\epsilon)$ for some small $\epsilon$. The goal is to obtain constants $\eta$, $\delta$, $M$ that are uniform for $\widetilde\zeta\in D(\zeta,\epsilon)$. Since most of the arguments can be repeated, we only mention the steps that need to be modified. Enumerate the points in $Z^{-1}(\widetilde\zeta)$ by $\widetilde{\xi_n}$. The right hand side of \eqref{eq: Z(xi_m-xi_n)} now becomes $\widetilde{\zeta}-\widetilde{\xi_m}$. We claim that there exists some $\epsilon>0$ such that $|\widetilde{\zeta}-\widetilde{\xi_m}-\frac12|$ is bounded uniformly away from zero for all $\widetilde\zeta\in D(\zeta,\epsilon)$ and all $m$. If not, there would exist a sequence of points $\{\widetilde\zeta^{(k)}\}$ and preimages $\{\widetilde{\xi}^{(k)}\}$ under $Z$ such that $\widetilde\zeta^{(k)}\to \zeta$, and $\widetilde{\xi}^{(k)}-\widetilde\zeta^{(k)}-\frac12\to 0$. Taking the limit of $Z(\widetilde{\xi}^{k})=\widetilde{\zeta}^{(k)}$, we get $Z(\zeta+\frac12)=\zeta$. This implies that $\zeta\in A$, which contradicts the choice of $\zeta$. The proof above can now be repeated, and we get a $\delta>0$ such that the discs $D(\widetilde{\xi_m},\delta)$ are disjoint for all $\widetilde{\zeta}\in D(\zeta,\epsilon)$. The next step in the proof will establish a uniform upper bound of $Z(\widetilde{\xi_m}+\xi)$ for $|\xi|<\delta$. Such a bound is obvious when $\widetilde{\xi_m}\in D(0,1)$. If $|\widetilde{\xi_m}|>1$, this is also obvious by observing \eqref{eq: Z(xi_m+xi} with $\xi_m$ and $\zeta$ replaced by $\widetilde{\xi_m}$ and $\widetilde{\zeta}$. Next, we need uniform upper and lower bounds on $|Z'(\widetilde{\xi_m})|$. We again have \eqref{eq: Z'(xi_m)} with $\xi_m$ and $\zeta$ replaced by $\widetilde{\xi_m}$ and $\widetilde{\zeta}$. If it's not bounded below, there would exist a sequence of points $\{\widetilde\zeta^{(k)}\}$ and preimages $\{\widetilde{\xi}^{(k)}\}$ under $Z$ such that $\widetilde{\zeta}^{(k)}\to \zeta$, and $2+\frac{1-2\widetilde{\zeta}^{(k)}}{\widetilde{\xi}^{(k)}}\to 0$. This again implies $Z(\zeta+\frac12)=\zeta$, which contradicts the choice of $\zeta$. The upper bound is obvious. The rest of the proof can be carried out without change. As a result, we obtain $\delta>0$ and $\eta>0$  such that for every $m$ and all $\widetilde\zeta\in D(\zeta,\epsilon)$, $Z$ restricted to $Z^{-1}\left(D(\widetilde{\zeta}, \frac{\eta \delta}4)\right)\cap D(\widetilde{\xi_m},\delta)$ is a homeomorphism onto $D(\widetilde{\zeta},\frac{\eta \delta}4)$.
    By further shrinking $\delta$, we may take $\epsilon=\delta$.

    Finally, we will show $Z^{-1}(D(\zeta,\frac{\eta\delta}{4}))\subset \bigcup_n D(\xi_n,\delta)$, so that the proof that $Z:\C\setminus i\pi \mathbb Z^*\setminus Z^{-1}(A)\to \C^*\setminus A$ is a covering map will be complete. Indeed, suppose there exists a $\widetilde{\xi_m}\notin \bigcup_n D(\xi_n,\delta)$ such that $Z(\widetilde{\xi_m})=\widetilde\zeta\in D(\zeta,\frac{\eta\delta}{4})$. Note that we also have $\zeta\in D(\widetilde\zeta,\frac{\eta\delta}{4})$. By the above result, there exists a $\lambda\in D(\widetilde{\xi_m},\delta)$ such that $Z(\lambda)=\zeta$. But this means that $\lambda=\xi_n$ for some $n$, which implies that $\widetilde{\xi_m}\in D(\xi_n,\delta)$, causing a contradiction.
\end{proof}

We can now regard $\C\setminus i\pi \mathbb Z^*$ as the Riemann surface for $Z^{-1}$. More specifically, denote by $f_a$ the holomorphic function germ of $f$ at $a$, where $f$ is holomorphic in a neighborhood of $a$, and let $I$ be the identity map on $\C$. Denote by $Z^{-1}_{\frac12}=Z_*I_0$ the pushforward of $I_0$. This is the holomorphic function germ at $\frac12$ of the local inverse of $Z$. We have
\begin{lemma}\label{lem: glob ana cont}
    $(\mathbb C\setminus i\pi\mathbb Z^*\setminus B, Z, I, 0)$ is the global analytic continuation of $Z^{-1}_{\frac12}$. In other words, a holomorphic function germ $\varphi$ can be obtained from $Z^{-1}_{\frac12}$ by analytic continuation along a curve in $\C$ if and only if there exists a unique point $a\in \C\setminus i\pi \mathbb Z^*\setminus B$ such that $\varphi = Z_*I_a$. 
\end{lemma}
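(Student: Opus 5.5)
The plan is to identify $(X,Z,I,0)$, with $X=\C\setminus i\pi\mathbb Z^*\setminus B$, as the sheaf-theoretic global analytic continuation. Three things must be checked: the triple is an analytic continuation of $Z^{-1}_{\frac12}$, it is connected, and it is maximal. On $X$, $Z$ is locally biholomorphic, since $Z'\neq0$ off $B$ and $Z'(0)=\frac12\neq 0$; the latter also gives $0\notin B$. So for each $a\in X$ the germ $Z_*I_a$ (the germ at $Z(a)$ of the local inverse of $Z$ near $a$) is well defined. The map $a\mapsto Z_*I_a$ is a continuous map of $X$ into the sheaf of germs, and it is injective because $Z_*I_a$ evaluated at $Z(a)$ returns $a$. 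This gives uniqueness of $a$ immediately.

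\emph{Step 1 (continuation along curves exists and stays in the family).} First I note that $X$ is connected, being the plane minus a discrete set. Given $a\in X$, I take a path $\gamma$ in $X$ from $0$ to $a$. Then $Z_*I_{\gamma(t)}$ is an analytic continuation of $Z^{-1}_{\frac12}$ along the curve $Z\circ\gamma$, since locally these germs are restrictions of a single local inverse. Hence every $Z_*I_a$ is obtainable.

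\emph{Step 2 (conversely, every continuation is of this form).} Let $\varphi_t$, $t\in[0,1]$, be a continuation of $Z^{-1}_{\frac12}$ along a curve $c$ in $\C$. I show by a connectedness argument that the set of $t$ with $\varphi_t=Z_*I_{a(t)}$ for some continuous path $a(t)\in X$ is open and closed. Openness is clear because each $\varphi_t$ is a local inverse of $Z$. The identity $Z\circ\varphi_t=\mathrm{id}$ persists by the identity theorem, so $\varphi_t$ takes values in the domain of $Z$, and it is injective near $c(t)$. The point $a(t)=\varphi_t(c(t))$ therefore satisfies $Z'(a(t))\neq0$ (since $Z\circ\varphi=\mathrm{id}$ forces $Z'(\varphi)\varphi'=1$), so $a(t)\notin B$. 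Moreover $a(t)\notin i\pi\mathbb Z^*$, since $Z$ has poles there and $\varphi_t$ is holomorphic. Closedness needs $a(t)$ not to escape to infinity or to $i\pi\mathbb Z^*$ as $t\to t_0$. Here I would use that $\varphi_{t_0}$ exists as a germ, so $a(t_0)=\varphi_{t_0}(c(t_0))$ is finite, and that near $t_0$ the germs $\varphi_t$ are all restrictions of $\varphi_{t_0}$. Hence $a(t)=\varphi_{t_0}(c(t))\to a(t_0)$, and $\varphi_{t_0}=Z_*I_{a(t_0)}$ by uniqueness of the local inverse near a point of $X$.

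\emph{Step 3 (the key subtlety).} The main obstacle is the claim that one can also continue along any curve $c$ in $\C^*\setminus A$ starting at any germ $Z_*I_a$. This is exactly the path-lifting property, and it is what makes the family genuinely the \emph{global} continuation. Here I would invoke Lemma~\ref{lem: Z holo cover}: $Z\colon \C\setminus i\pi\mathbb Z^*\setminus Z^{-1}(A)\to\C^*\setminus A$ is a covering map, so curves lift uniquely and the lifted germs give the continuation. Points of $Z^{-1}(A)\setminus B$ are regular points of $Z$ and are handled by the local inverse directly. Continuation is impossible through $0$ and $A$, or at least does not stay single-valued there. This is consistent with the statement, which only asserts the equivalence for germs that \emph{can} be obtained, so I would present Step 3 as a remark confirming maximality rather than a needed ingredient.
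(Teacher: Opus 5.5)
Your proposal is correct and is essentially the paper's proof: the forward direction uses a path in $X$ and the germs $Z_*I_{\gamma(t)}$, and the converse tracks $a(t)=\varphi_t(c(t))$ (the paper's $\widetilde{\gamma}$), shows it avoids $B$ (your chain-rule observation $Z'(\varphi)\varphi'=1$ replaces the paper's multiplicity count) and avoids $i\pi\mathbb Z^*$, and then identifies $\varphi$ with $Z_*I_{a(1)}$. The one step to state more carefully is excluding poles: the reason is not that $\varphi_t$ is holomorphic, but that $Z(a(t))=c(t)$ stays bounded up to the first time $a(t)$ could reach $i\pi\mathbb Z^*$ (this is the paper's argument, equivalently the identity theorem for the meromorphic germs $Z\circ\varphi_t$); also, your Step~3 is not needed for the stated equivalence, and its aside that continuation through $A$ is impossible is inaccurate for branches that do not pass through $B$.
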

\begin{proof}
    Suppose there is a unique point $a\in \C\setminus i\pi \mathbb Z^*\setminus B$ such that $\varphi = Z_*I_a$. Since $\C\setminus i\pi \mathbb Z^*\setminus B$ is path-connected, take a path $\widetilde{\gamma}:[0,1]\to \C\setminus i\pi \mathbb Z^*\setminus B$ connecting $0$ to $a$. Since $Z$ is a local homeomorphism in a neighborhood of $\widetilde{\gamma}([0,1])$, $Z_*I_{\widetilde{\gamma}(t)}$ is defined and is an analytic continuation of $Z_*I_0=Z^{-1}_{\frac12}$ along $\gamma(t)=Z(\widetilde{\gamma}(t))$ with ending germ $Z_*I_{\widetilde{\gamma}(1)}=Z_*I_a = \varphi$.
    
    Now suppose $\varphi$ can be obtained from $Z^{-1}_{\frac12}$ from an analytic continuation along a curve $\gamma:[0,1]\to \C$, $\gamma(0)=\frac12$. Denote the function germs along $\gamma$ by $\varphi_t$, so $\varphi_0=Z^{-1}_{\frac12}$, and $\varphi_1=\varphi$. Note that $\varphi_t$ is a function germ at $\gamma(t)$, so $\widetilde{\gamma}(t) = \varphi_t(\gamma(t))$ is well-defined. We first show that $\widetilde{\gamma}(t)\in \mathbb C\setminus i\pi \mathbb Z^*$ for all $t\in[0,1]$. If not, by reparametrization, we may assume that $\varphi_1(\gamma(1))\in i\pi \mathbb Z^*$, while $\varphi_t(\gamma(t))\in \mathbb C\setminus i\pi \mathbb Z^*$ for all $0\le t<1$. Consider $Z\circ\varphi_t$, which is another analytic continuation along $\gamma$ for $0\le t<1$, with $Z\circ \varphi_0=Z\circ Z^{-1}_{\frac12}=I_{\frac12}$. By uniqueness of analytic continuation along curves, it is easy to see that $Z\circ \varphi_t= I_{\gamma(t)}$. This implies that  $Z(\widetilde{\gamma}(t))=I_{\gamma(t)}(\gamma(t))=\gamma(t)$ for $0\le t<1$. Thus $\displaystyle\lim_{t\nearrow 1}\gamma(t) = \infty$, since $\widetilde{\gamma}(1)\in i\pi \mathbb Z^*$ is a pole for $Z$. This contradicts $\gamma(1)\in \mathbb C$. We obtain from the above argument that $\widetilde{\gamma}$ is a lift of $\gamma$ into $(\C\setminus i\pi \mathbb Z^*,0)$ under $Z:(\C\setminus i\pi \mathbb Z^*,0)\to (\C,\frac12)$. In particular, this implies $\gamma([0,1])\subset \C^*$. We claim that $\widetilde{\gamma}([0,1])\subset \C\setminus i\pi \mathbb Z^*\setminus B$. Indeed, suppose  $\widetilde{\gamma}(1)=\varphi_{1}(\gamma(1))=\lambda_n\in B$. Since $Z$ has multiplicity 2 at $\lambda_n$, $Z\circ \varphi_{1}$ should have multiplicity at least 2. However, $Z\circ \varphi_{1}=I_{\gamma(1)}$ obviously has multiplicity 1. This contradiction proves our claim.

    By the above discussion, we see that $Z$ is a local homeomorphism in a neighborhood of $\widetilde{\gamma}([0,1])$. Thus $Z_*I_{\widetilde{\gamma}(t)}$ is well-defined. Since $Z_*I_{\widetilde{\gamma}(0)}=Z_* I_0=Z^{-1}_{\frac12}$, $Z_*I_{\widetilde{\gamma}(t)}$ is obviously an analytic continuation along $\gamma$ of $Z^{-1}_{\frac12}$. By uniqueness of analytic continuation along curves, $\varphi=\varphi_1=Z_*I_{\widetilde{\gamma}(1)}$. 
\end{proof}

\begin{figure}[ht]  %% changed by PAP to eliminate error message 2026-09-18
\includegraphics[width=0.9\textwidth]{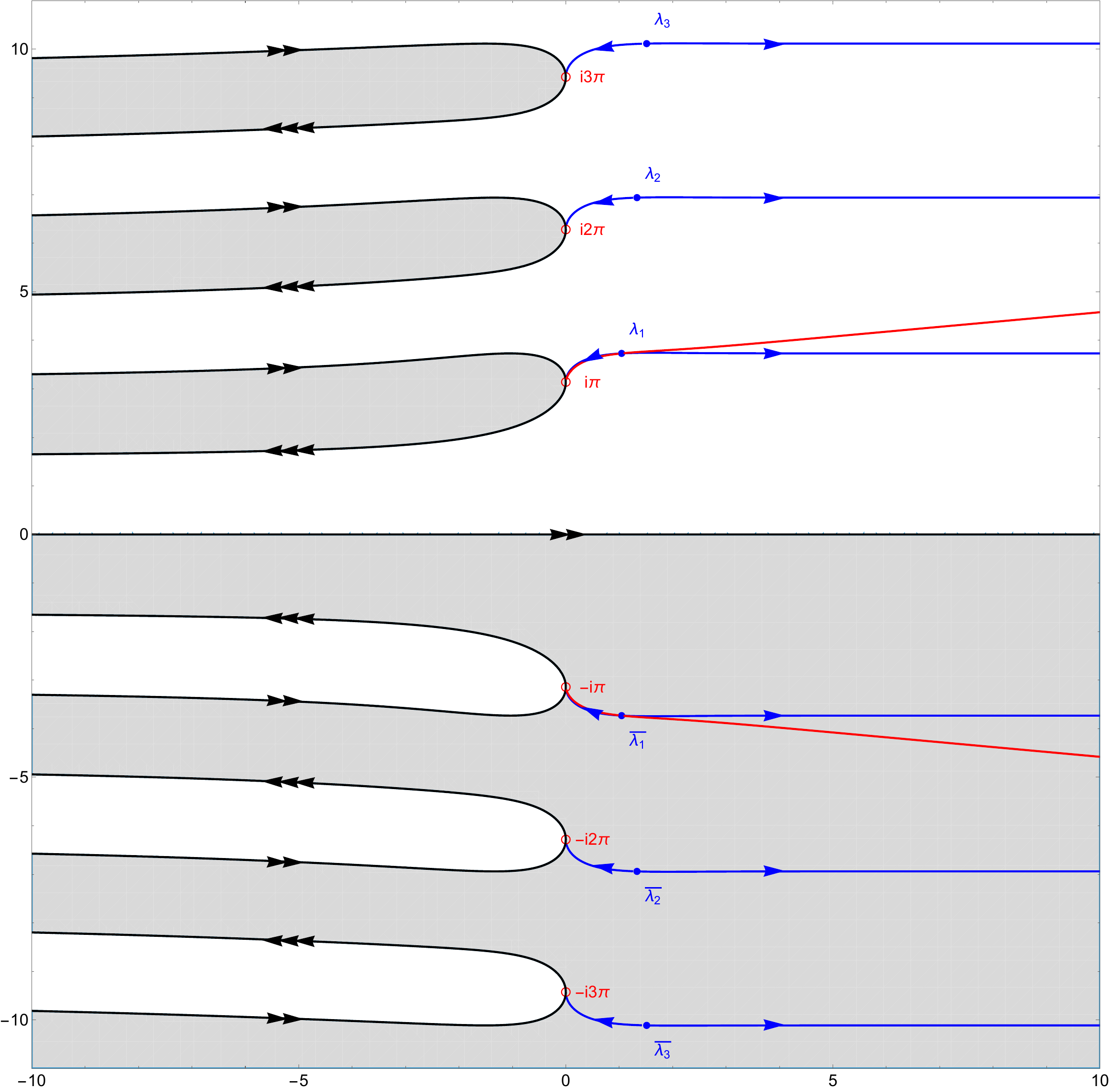}
\caption{Fundamental regions of $Z$}
\label{fig: Z_lambda}
\end{figure}

Figure \ref{fig: Z_lambda} shows the fundamental regions of $Z$ in $\mathbb C\setminus i\pi\mathbb{Z}^*$. We describe them as follows. Each curve marked with a double arrow is mapped to $\mathbb R^+$ from $0$ to $e^{i0}\infty$; each curve marked with a triple arrow is mapped to $\mathbb R^-$ from $e^{i\pi}\infty$ to $0$; the blue curve through $\lambda_n$ (or $\overline{\lambda_n}$) marked with single arrows is doubly folded onto the horizontal ray from $\lambda_n+\tfrac12$ (or $\overline{\lambda_n}+\tfrac12$) to $e^{i0}\infty$. Thus each white region bounded by the plotted curves is mapped biholomorphically to the upper half plane (with one or two cuts if bounded by one or two blue curves), and each grey region bounded by the plotted curves is mapped biholomorphically to the lower half plane (with one or two cuts if bounded by one or two blue curves). As Lemma \ref{lem: glob ana cont} shows, any open subset of $\mathbb C\setminus i\pi\mathbb Z^*\setminus B$ on which $Z$ is one-to-one corresponds to a branch of the inverse of $Z$. To define the branch of $Z$ inverse we need, we single out the following region in the domain of $Z$. %For convenience in the following discussion, we define a particular branch of $Z$ inverse, together with the involution corresponding to $\lambda\mapsto-\lambda$. 

\begin{lemma}\label{lem: Z inv domain}
    There exist uniquely two simple curves $\gamma_1$, $\gamma_2$ in the upper half plane (the black curve marked with triple arrows approaching $i\pi$ and the red unmarked curve in Figure \ref{fig: Z_lambda}) having the following properties: $\gamma_1$ connects $i\pi$ to $\infty$ (not including end points), and is mapped one-to-one to $\mathbb R^-$; $\gamma_2$ connects $i\pi $ to $\infty$ (not including end points) through $\lambda_1$, and is mapped two-to-one (except for $\lambda_1$) to the ray from $\lambda_1+\frac12$ to $\infty$ in the direction $e^{i\alpha_0}$ for some small $\alpha_0>0$ (the red branch cut in the upper half plane in Figure \ref{fig: branch_cut}). Moreover, $\gamma_1\cup\{i\pi\}\cup \gamma_2$ divides $\mathbb C$ into two parts, as does $\overline{\gamma_1}\cup\{-i\pi\}\cup \overline{\gamma_2}$. Let $U$ be the intersection of the parts containing 0 (the region "between" $\gamma_1\cup\{i\pi\}\cup \gamma_2$ and $\overline{\gamma_1}\cup\{-i\pi\}\cup \overline{\gamma_2}$). Let $V$ be $\mathbb C$ with three branch cuts removed: $(-\infty, 0]$, and two others starting at $\lambda_1+\frac12$ and $\overline{\lambda_1}+\frac12$ and going to $\infty$ in the directions $e^{i\alpha_0}$ and $e^{-i\alpha_0}$ (see Figure \ref{fig: branch_cut}). Then $Z$ maps $U$ biholomorphically onto $V$.
\end{lemma}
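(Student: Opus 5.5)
The plan is to build $U$ and the curves $\gamma_1,\gamma_2$ from level sets of $Z$, and then get bijectivity onto $V$ from a lifting argument based on the covering property in Lemma \ref{lem: Z holo cover}.

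\emph{Step 1: the curve $\gamma_1$.} I will use the symmetry $Z(\overline\lambda)=\overline{Z(\lambda)}$, which lets me work in the upper half plane and reflect at the end. The real axis is mapped bijectively onto $(0,\infty)$ by $Z$. I then consider the preimage $Z^{-1}(\R^-)$ near $i\pi$. Since $i\pi$ is a simple pole with
\[
Z(\lambda)\approx \frac{i\pi}{2(\lambda-i\pi)},
\]
exactly one arc of $Z^{-1}(\R^-)$ leaves $i\pi$ into the upper half plane. I continue this arc as a level curve $\imag Z=0$, $\real Z<0$. Along it $Z'\neq 0$, because the critical values $A$ are nonreal, so the arc is a smooth curve on which $\real Z$ is monotone. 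As $\real Z\to -\infty$ the arc must go to $\infty$: it cannot return to a pole, since near a pole $Z\to\infty$ only along one direction, and it cannot close up. A large-$|\lambda|$ asymptotic check shows that for $\real\lambda\to-\infty$ we have $Z\approx -\lambda e^{2\lambda}$, which fixes the direction. This gives existence of $\gamma_1$. Uniqueness follows because any curve with the stated properties is a connected component of $Z^{-1}(\R^-)$ ending at $i\pi$.

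\emph{Step 2: the curve $\gamma_2$.} Let $\ell$ denote the ray from $\lambda_1+\tfrac12$ in direction $e^{i\alpha_0}$. Near the simple critical point $\lambda_1$ (Lemma \ref{lem: Z holo cover}), $Z^{-1}(\ell)$ consists of two arcs emanating from $\lambda_1$. Since $\ell$ avoids $A$ away from its endpoint, the covering property lets me lift $\ell$ starting at $\lambda_1$ along each of the two local branches. I must then show that one lift ends at $i\pi$ (that is, $Z\to\infty$ approaching the pole from the appropriate side) and the other goes to $\infty$. This should again come from the local form $\frac{i\pi}{2(\lambda-i\pi)}$ at the pole, comparing the direction $e^{i\alpha_0}$ with the direction in which $\infty$ is approached. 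The small angle $\alpha_0>0$ is exactly what guarantees that $\ell$ misses the other critical values $\lambda_n+\tfrac12$ for $n\geq 2$ and $\overline{\lambda_n}+\tfrac12$.

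\emph{Step 3: $Z$ maps $U$ biholomorphically onto $V$.} Once both curves are in hand, $\Gamma=\gamma_1\cup\{i\pi\}\cup\gamma_2$ is a simple curve from $\infty$ to $\infty$, so it separates $\C$ by Jordan's theorem, and $U$ is well defined. By construction, $Z$ maps $\partial U$ onto the three cuts bounding $V$, each cut covered twice with opposite orientations, so $\partial U$ maps to $\partial V$. Moreover $U$ contains no poles and no branch points: $\lambda_1$ and $\overline{\lambda_1}$ lie on the boundary, and the points $\lambda_n$ with $n\ge2$ should lie beyond $\Gamma$. Hence $Z|_U$ is a local homeomorphism into $\C^*$. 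I will argue that $Z|_U\colon U\to V$ is proper, since boundary points and $\infty$ map to $\partial V\cup\{\infty\}$. A proper local homeomorphism onto the simply connected set $V$ is a covering, hence a bijection. Alternatively, I can count preimages of $\tfrac12$ in $U$ via the argument principle on large truncations of $\partial U$; the only preimage is $0$.

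\emph{Main obstacle.} The hardest part will be the global topology: showing that the lifted curves actually land at $i\pi$ and at $\infty$ rather than wandering, and that the $\lambda_n$ with $n\ge2$ lie outside $U$. For this I would first try explicit asymptotics of $Z$ in the half strips $0<\imag\lambda<\pi$ as $\real\lambda\to\pm\infty$, combined with the explicit location of the $\lambda_n$ from the Lambert $W$ formula; in particular $\imag\lambda_n\approx n\pi$, which places $\lambda_n$ above the line $\imag\lambda=\pi$ for $n\ge2$.
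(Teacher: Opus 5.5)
\textbf{Genuine gap: Steps 2 and 3.} Your skeleton (lift the cuts via Lemma \ref{lem: Z holo cover}, close up with Jordan, then prove bijectivity) matches the paper. The real content of the lemma, however, sits in Steps 2 and 3, and you leave both open.

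\emph{Step 2.} The local form $Z(\lambda)\approx\frac{i\pi}{2(\lambda-i\pi)}$ only shows that a tail of $\ell$ has exactly one lift ending at $i\pi$. But every other pole $i\pi m$, $m\neq0$, is likewise the endpoint of a lift of a tail of $\ell$. There is also one lift escaping to $\infty$ with $\lambda\approx\zeta$, and $\lambda_1+\frac12$ has preimages other than $\lambda_1$. Nothing local at the pole or at $\lambda_1$ tells you which of these lifts, continued along $\ell$, actually ends at $\lambda_1$. So ``comparing the direction $e^{i\alpha_0}$ with the direction in which $\infty$ is approached'' cannot prove that one local branch at $\lambda_1$ reaches $i\pi$ and the other reaches $\infty$; you need a global description of the preimages. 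The paper rewrites $Z(\lambda)=\zeta$ as $2(\lambda-\zeta)e^{2(\lambda-\zeta)}=-2\zeta e^{-2\zeta}$, i.e.\ $\lambda=\zeta+\frac12W_k(-2\zeta e^{-2\zeta})$. At $\zeta=\lambda_1+\frac12$ the argument equals $-1/e$ and $W_0(-1/e)=W_{-1}(-1/e)=-1$. Hence for $\zeta\in\ell$:
\begin{itemize}
\item the $W_0$ branch gives the half of $\gamma_2$ from $\lambda_1$ to $\infty$, with $\lambda-\zeta\to0$;
\item the $W_{-1}$ branch gives the half from $\lambda_1$ to $i\pi$;
\item likewise the $W_1$ branch over $\R^-$ gives $\gamma_1$.
\end{itemize}
There is also a slip in Step 1: the end of $\gamma_1$ escaping to $\infty$ (with $\real\lambda\to-\infty$) is where $Z\to0^-$; the end where $Z\to-\infty$ is $i\pi$.

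\emph{Step 3.} Your properness/covering argument needs three inputs: $U$ contains no poles, $U$ contains no critical points, and $Z(U)$ misses the three cuts. You give no argument for the poles $i\pi m$, $m\ge2$. Your proposed exclusion of $\lambda_n$, $n\ge2$, via $\imag\lambda_n\approx n\pi>\pi$ cannot work, because $U$ is not confined to $\{|\imag\lambda|<\pi\}$:
\begin{itemize}
\item already $\lambda_1\in\partial U$ has $\imag\lambda_1>\pi$;
\item the far end of $\gamma_2$ satisfies $\lambda-\zeta\to0$ with $\zeta\in\ell$, so $U$ contains points of arbitrarily large imaginary part.
\end{itemize}
The paper never locates these points. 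It lifts a closed curve in $V$ hugging the three cuts to a contour $\gamma_*\subset U$ and applies the argument principle. Since $Z$ never vanishes, $n(Z(\gamma_*),0)=0$ rules out poles inside $\gamma_*$. Then $n(Z(\gamma_*),w)=0$ for $w$ on a cut and $n(Z(\gamma_*),w)=1$ for $w\in V$ give $Z(U)\subset V$ with exactly one preimage of each $w\in V$; injectivity then excludes the critical points automatically. Your alternative of counting preimages of $\frac12$ on truncations of $\partial U$ points this way. But the winding number about $\frac12$ counts zeros minus poles of $Z-\frac12$, so it is only conclusive once the poles have been ruled out.
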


\begin{figure}[ht]  %% changed by PAP 2026-09-18 to avoid error message 
\includegraphics[width=0.7\textwidth]{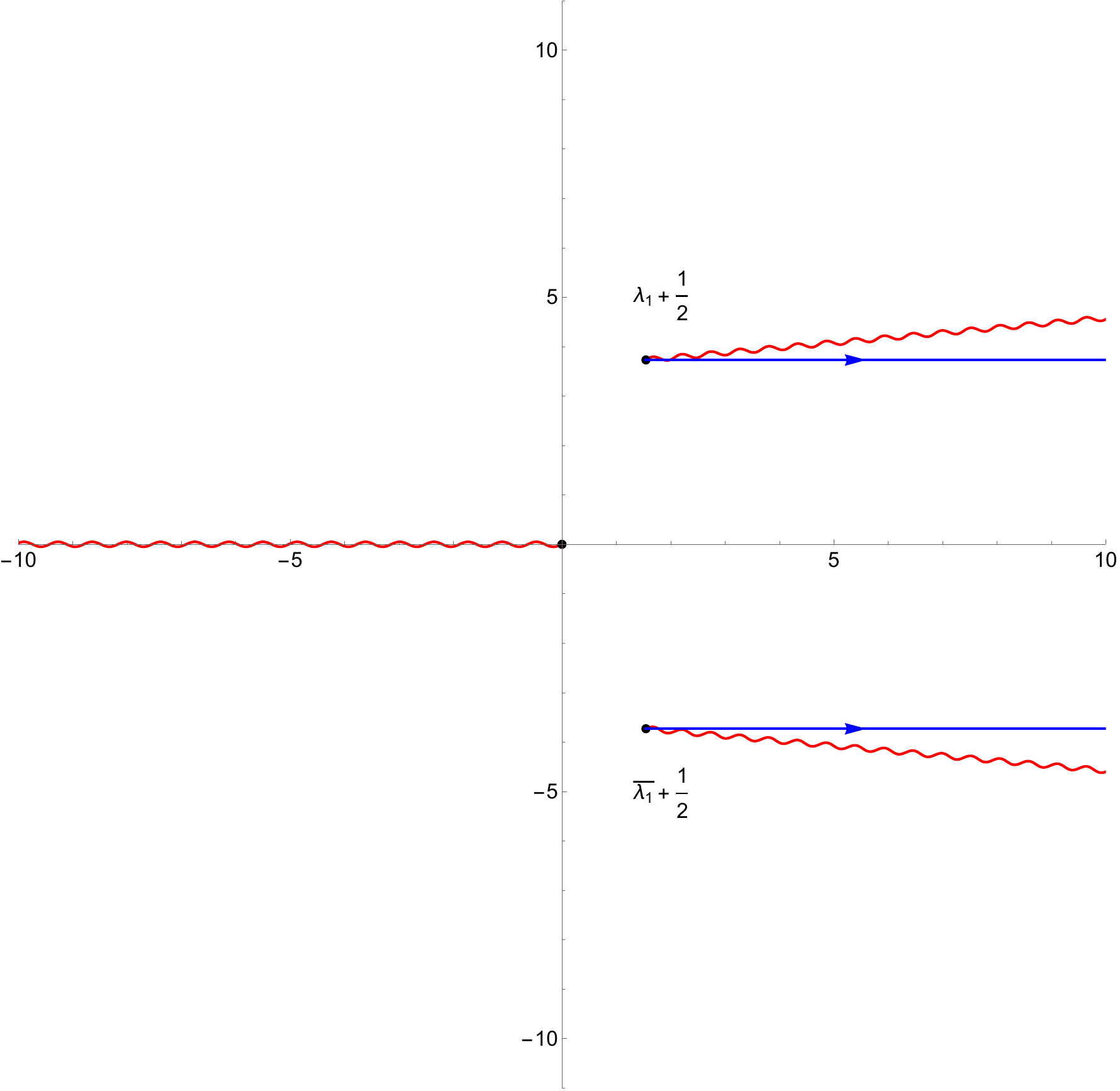}
\caption{$V$ with branch cuts: domain for the principal branch of $Z^{-1}$}
\label{fig: branch_cut}
\end{figure}

\begin{proof}
    First note that $i\pi$ is an order 1 pole. So in a neighborhood of it there is unique lift of the far ends of $\mathbb R^-$ and the branch cut at $\lambda_1+\frac12$. We observe that $\mathbb R^-$ and the branch cut at $\lambda_1+\frac12$ are both in $\mathbb C^*\setminus A$, as long as $\alpha_0$ is sufficiently small. Indeed, by (4.20) in \cite{corless1996lambert}, we get
    \begin{equation}
        \lambda_n=\tfrac12\log(2n\pi)+\tfrac{i\pi}{2}(2n+\tfrac12)+o(1).    
    \end{equation}
    Thus the points $\lambda_n+\tfrac12$ can all be avoided if $\alpha_0$ is small enough.
    By Lemma \ref{lem: Z holo cover} the branch cuts without the end points $0$ and $\lambda_1+\frac12$ can be uniquely lifted to $\mathbb C\setminus i\pi \mathbb Z^*\setminus Z^{-1}(A)$ in continuation of their lifts into the small neighborhood of $i\pi$. Let $\gamma_1$ be the lift of $\mathbb R^-$. The fact that $\gamma_1$ connects to $\infty$ follows from the lower bound $|Z(\lambda)|>\epsilon>0$ if $\real \lambda>\lambda_0$. In fact, $Z(0)=\frac12>0$, and if $|\lambda|>\delta$ for some small $\delta$, $|Z(\lambda)|\ge \frac{\delta}{|1-e^{-2\lambda}|}\ge\frac{\delta}{1+e^{-2\lambda_0}}$ for $\real \lambda>\lambda_0$. In fact, we can get an exact formula for $\gamma_1$, if we rewrite 
    $Z(\lambda)=\zeta$ as
    \begin{equation}
        2(\lambda-\zeta)e^{2(\lambda-\zeta)}=-2\zeta e^{-2\zeta}.
    \end{equation}
    It follows that 
    \begin{equation}
        \lambda=\zeta+\tfrac12 W_k(-2\zeta e^{-2\zeta})
    \end{equation}
    for some branch $k$ of Lambert W function $W_k$. We see that $\lambda=\zeta+\frac12 W_1(-2\zeta e^{-2\zeta})$ gives $\gamma_1$ when $\zeta\in\mathbb R^-$, since it's a lift of $\mathbb R^-$ and $\lambda\to i\pi$ as $\zeta\to-\infty$ in $\mathbb R^-$.
    To see that the lift of the branch cut at $\lambda_1+\frac12$ connects $i\pi$ to $\lambda_1$, we observe that $\lambda = \zeta +\frac12W_{-1}(-2\zeta e^{-2\zeta})$ gives a lift as $\zeta$ moves along the branch cut. Indeed, by standard asymptotics of $W_{-1}$ we know $\lambda \to i\pi$ as $\zeta\to\infty$ along the branch cut. Thus this must be the unique lift in a neighborhood of $i\pi$ of the branch cut. As $\zeta\to \lambda_1+\frac12$ along the branch cut, we get $\lambda\to \lambda_1+\frac12+\frac12W_{-1}(-\frac1e)=\lambda_1$. Since $Z$ has multiplicity 2 at $\lambda_1$, the branch cut near $\lambda_1+\frac12$ has exactly two lifts in a neighborhood of $\lambda_1$, and the other lift can then be continued uniquely by lifting through the covering map $Z$. In fact, one sees that \begin{equation}\label{eq: formula gamma 2}
        \lambda=\zeta+\frac12W_0(-2\zeta e^{-2\zeta})
    \end{equation} gives this lift, and 
    \begin{equation}\label{eq: close to id on gamma 2}
        \lambda-\zeta\to 0 \quad \text{ as }\quad \zeta\to\infty
    \end{equation}  along the branch cut. The union of the above described lifts of the branch cut are now combined to be $\gamma_2$. It follows that $\gamma_2$ connects $i\pi$ to $\infty$ through $\lambda_1$. Now $\gamma_1\cup \{i\pi\}\cup \gamma_2\cup\{\infty\}$ is a simple closed curve on the Riemann sphere, thus it divides $\mathbb C$ into two components. Since $0$ is not on the curve, it belongs to exactly one component, denoted $P$. The same is true to the complex conjugated curve, with the complex conjugated region denoted by $\overline{P}$. Thus $U=A\cap \overline{P}$ is well-defined as an open set in $\mathbb C$. We claim that $U$ is connected. In fact, denote the upper and lower half planes by $\mathbb H^\pm$. Since $\overline{\gamma_1}\cup\{-i\pi\}\cup\overline{\gamma_2}$ is contained in $\mathbb H^-$, $(\mathbb H^+)^{\text{cl}}$, the closure of $\mathbb H^+$, is contained in $\overline{P}$. Thus $U\cap (\mathbb H^+)^{\text{cl}}=P\cap \overline{P}\cap \mathbb (H^+)^{\text{cl}}=P\cap (\mathbb H^+)^{\text{cl}}$. Thus any point $z$ in $U\cap \mathbb H^+$ can be path-connected to the real line with a path in $P\cap (\mathbb H^+)^{\text{cl}}=U\cap (\mathbb H^+)^{\text{cl}}$, since $P$ is path-connected and contains $0$. The same conclusion can be draw for any $z\in U\cap \mathbb H^-$. 
    
    It remains to show that $Z$ maps $U$ to $V$ biholomorphically. This can be obtained by using the argument principle on a contour that goes counterclockwise around the boundary of $U$. More precisely, let $z\in U$ be given. It can be connected to $0$ by a curve $\gamma_0$ in $U$. We take a contour $\gamma_*$ defined as follows. It starts at a point slightly below $\gamma_2$ and sufficiently close to $\infty$, and is the lift of the following curve in Figure \ref{fig: branch_cut}: starting at a point slightly below the branch cut in the first quadrant, it stays close to the lower edge of the branch cut, until it enters a small neighborhood of $\lambda_1+\frac12$. It then follows a small arc around $\lambda_1+\frac12$ to the other side of the branch cut, and stays closely above the it to reach a point close to $\infty$. At this point, the curve traverses a great arc and arrives near the upper edge of $\mathbb R^-$, after which it stays close to $\mathbb R^-$ and reaches a neighborhood of 0. Traversing a small arc, it reaches the other side of $\mathbb R^-$, and basically follows the complex conjugate of the above described curve, until it reaches a point near $\infty$ on the upper edge of the branch cut in the fourth quadrant. Finally it closes on itself by traversing a great arc to reach the starting point. It's easy to see that the lift $\gamma_*$ is a simple closed curve and is either close to $\gamma_1$, $\gamma_2$, $i\pi$, their complex conjugate, or has very positive or very negative real part. Also, $\gamma_*$ doesn't intersect $\gamma_0$. It follows that the winding numbers $n(\gamma_*,z)=n(\gamma_*,0)=1$, and $z$ is inside $\gamma_*$. Also note that $\gamma_*$ is disjoint from $\gamma_1\cup \{i\pi\}\cup \gamma_2$ and its complex conjugate. So the inside of $\gamma_*$ cannot contain a point of $\gamma_1\cup \{i\pi\}\cup \gamma_2$ or its complex conjugate, as they approach $\infty$ at ends. It follows that the inside of $\gamma_*$ is contained in $U$, since it's connected and contains $0$. We now prove that $Z$ maps $U$ onto $V$. We first claim that there are no poles of $Z$ in $U$. Otherwise, call the pole $z$, and construct $\gamma_*$ as above. Since $Z$ never takes the value $0$, $n(Z(\gamma_*),0)=0$ gives the number of poles inside $\gamma_*$. This contradicts the assumption that $z$ is a pole inside $\gamma_*$. Similarly, $Z(z)$ cannot appear in the branch cuts in Figure \ref{fig: branch_cut}, otherwise $n(Z(\gamma_*),Z(z))=0$, implying $Z-Z(z)$ has no zeros inside $\gamma_*$. Thus $Z$ maps $U$ into $V$. Moreover, for any $w\in V$, we can take the above described contour $\gamma_*$ with $n(Z(\gamma_*),w)=1$, so there is one point $z$ inside $\gamma_*$ (which is inside $U$), such that $Z(z)=w$. No other point in $U$ is mapped to $w$, since we can construct $\gamma_*$ sufficiently close to the boundary to enclose other points that are mapped to $w$, but the winding number $n(Z(\gamma_*),w)$ is still 1. This concludes the proof.
\end{proof}
%Modulo points in $Z^{-1}(A)\setminus B$, we can regard the domain of $Z^{-1}$ to be equivalence classes of curves $\gamma:[0,1]\to \mathbb C^*\setminus A$, with $\gamma(0)=\frac12$. Two such curves $\gamma_1$, $\gamma_2$ are equivalent if and only if $\gamma_1(1)=\gamma_2(1)$, and  $[\gamma_1\cdot\gamma_2^{-1}]\in Z_*\pi_1(\mathbb C\setminus i\pi\mathbb Z^*\setminus Z^{-1}(A), 0)$. This happens precisely when the lifts of both paths to $(\mathbb C\setminus i\pi\mathbb Z^*\setminus Z^{-1}(A),0)$ (which exist by Lemma \ref{lem: Z holo cover}), denoted by $\widetilde{\gamma_1}, \widetilde{\gamma_2}$, have the same endpoints: $\widetilde{\gamma_1}(1)=\widetilde{\gamma_2}(1)$. 

\begin{definition}\label{def: Z inv}
   Let $U$, $V$ be given as in Lemma \ref{lem: Z inv domain}. We define $Z^{-1}: V\to U$, the principal branch of $Z$ inverse, to be the inverse map of the restriction of $Z$ to $U$. In addition, we define $*:V\to \mathbb C$ by $\zeta^*=Z(-Z^{-1}(\zeta))$.
   % $Z^{-1}$, the principal Let $[\gamma]$ be an equivalence class of curves in $\C^*\setminus A$ as above. We define $Z^{-1}([\gamma])$ to be the end point of the lift of any of its representative curve. Moreover, we define $[\gamma]^*=Z(-Z^{-1}([\gamma]))$. When there is no ambiguity about the equivalence class of curves $[\gamma]$ connecting $\frac12$ to a point $\zeta$, we will abuse notation and simply write $Z^{-1}(\zeta)$ and $\zeta^*$. 
\end{definition}

\begin{remark}
For $k\in (-\infty,0)$, we refer to $Z^{-1}(k\pm i0)$ and $(k\pm i0)^*$ by taking limits on the branch cut in the usual way. Of course $*$ also has a global analytic continuation to the Riemann surface of $Z^{-1}$. 
\end{remark}

\begin{remark}
The red curve $\gamma_2$ in Figure \ref{fig: Z_lambda} eventually crosses all the blue curves through $\lambda_n$, $n=1,2,\dots$. %Since $U\subset C^*\setminus A$, by Lemma \ref{lem: Z holo cover}, any curve in $U$ starting at $\frac12$ can be uniquely lifted through the covering map $Z$ to a curve in $\mathbb C\setminus i\pi \mathbb Z^*\setminus Z^{-1}(A)$ starting at 0. So the analytic continuations along curves exist. Since $U$ is simply connected, $Z^{-1}$ is well-defined as a biholomorphic map from $U$ to $Z^{-1}(U)$. It's not hard to see that $Z^{-1}(U)$ is close to the region in Figure \ref{fig: Z_lambda} consisting of the union of $\mathbb R$ and the white and grey regions directly above and below it, with the blue curves slightly perturbed (the originally blue curves corresponding to horizontal cuts starting at $\lambda_1+\frac12$ and $\overline{\lambda_1}+\frac12$). 
\end{remark}

%\begin{definition}\label{def: lambda}
%Let $\zeta\in \{0<\imag \zeta<\epsilon\}\cup \R\setminus \{0\}$ for a small $\epsilon>0$.  In this case, we write $\zeta_\up$ for $\zeta$ when it is regarded as the end point of the piecewise linear curve connecting $\frac12$ to $\frac{i\epsilon}{2}$ to $\zeta$. Define $\lambda_\up=Z^{-1}(\zeta_\up)$ (see Definition \ref{def: Z inv}), and $\zeta_\up^* = Z(-\lambda_\up)$. Conjugately, let $\zeta\in \{0>\imag \zeta>-\epsilon\}\cup \R\setminus \{0\}$. In this case, we write $\zeta_\dn$ for $\zeta$ when it is regarded as the end point of the piecewise linear curve connecting $\frac12$ to $-\frac{i\epsilon}{2}$ to $\zeta$. Define $\lambda_\dn=Z^{-1}(\zeta_\dn)$, and $\zeta_\dn^* = Z(-\lambda_\dn)$.  
%\end{definition}

%Note that according to the above definition, $\lambda_\up$ is only defined when $\zeta=\zeta_\up$ is in a subset of the closed upper half plane. The ``reflected" $\zeta^*_\up$ is in the closed lower half plane. In complex conjugation, $\lambda_\dn$ is only defined when $\zeta=\zeta_\dn$ is in a subset of the closed lower half plane, and $\zeta^*_\dn$ is in the closed upper half plane. Also note that if $\zeta_\up\in \R^++i0$ and $\zeta_\dn \in \R^+-i0$ have the same real part, then $\lambda_\up=\lambda_\dn\in \R$. However, if $\zeta_\up =\zeta_\dn=\zeta\in \R^-$, $\lambda_\up$ and $\lambda_\dn $ are different. One is the complex conjugate of the other.

For $\zeta\in V$, denote $\lambda = Z^{-1}(\zeta)$. We have the following elementary identities
\begin{equation}\label{eq: zeta-zeta*}
    \zeta-\zeta^*=\lambda,
\end{equation}
\begin{equation}\label{eq: zeta e^zeta vs zeta* e^zeta*}
    -\zeta e^{-2\zeta}=-\zeta^* e^{-2\zeta^*}.
\end{equation}
Moreover, we have the following asymptotics for $Z^{-1}$.

\begin{lemma}\label{lem: zeta}
If $\zeta\in V$ as in Definition \ref{def: Z inv}, then
%curves $\gamma_{\R^-}$, $\gamma_{\R^+}$ such that 
%\begin{enumerate}[(i)]
%\item $\gamma_{\R^\pm}$ are smooth curves. $\zeta(\gamma_{\R^\pm}) = \R^\pm$.
%As the real part of $\gamma_{\R^-}$ increases from $-\infty$ to $0$, the imaginary part of $\gamma_{\R^-}$ increases from $\frac\pi 2$ to $\pi$. As the real part of $\gamma_{\R^+}$ increases from $-\infty$ to $0$, the imaginary part of $\gamma_{\R^+}$ increases from $\pi$ to a unique maximum, and then decreases to $\pi$.
%\item %Denote by $\Delta_1$ the domain bounded by $\gamma_{\R^-}$, the real line, and $\R^++i\pi$. $\zeta(\xi)$ maps $\Delta_1$ onto the domain bounded by $\R$ and $\zeta(\R^++i\pi)$ biholomorphically. As $\xi$ approaches $-\infty$ within the above mentioned domain, or equivalently, as $\zeta$ approaches 0 from the upper half plane, we have
\begin{equation}\label{eq: zeta near 0}
\lim_{\zeta\to 0}\frac{2Z^{-1}(\zeta)}{\log \zeta}= 1,
\end{equation}
%Here $\imag \log\zeta_\up \in [0,\pi]$. %Moreover, $\zeta(\xi)$ maps $\Delta_1^{\text{cl}}\setminus \{i\pi\}$ to $\left(\zeta(\Delta_1)\right)^{\text{cl}}\setminus \{0\}$ bijectively.
%\item There is a small $\epsilon_0>0$ and a domain $\Omega_{\epsilon_0}$ between $\gamma_{\R^-}$, $\R^-$ and the imaginary axis, such that $\zeta:\Omega_{\epsilon_0}\to \{\zeta\in \C~:~|\zeta|<\epsilon_0, \text{Im } \zeta>0\}$ is biholomorphic.
%\item Let $\epsilon_0$ be given as above. 
%\item %Denote by $\Delta_2$ the domain bounded by $\gamma_{\R^\pm}$. Then $\zeta(\xi)$ maps $\Delta_2$ to the lower half plane biholomorphically. As $\xi$ approaches $-\infty$ within the domain bounded by $\gamma_{\R^\pm}$, or equivalently, as $\zeta$ approaches $0$ from the lower half plane, 
%Let $\zeta$ be in a small lower semi-disc around $0$, and regard it as the end point of the piecewise linear curve connecting $\frac12$ to $i\epsilon$ to $-\epsilon$ to $\zeta$ for some small $\epsilon>0$. Then
%\begin{equation}
%\lim_{\zeta\to 0}\frac{2Z^{-1}(\zeta)}{\log \zeta}= 1.
%\end{equation}
%Here $\imag \log \zeta \in [\pi,2\pi]$.
where $\log\zeta$ is the principal branch. If $\zeta\in V$ and is between the two branch cuts in the first and fourth quadrant, then 
%\item For $\alpha_0>0$ small enough, the wedge domain $\{0<\arg \zeta<\alpha_0\}$ contains no points in $P$. Let $\zeta$ be in this wedge domain, and regard it as the end point of the straight line segment connecting $\frac12$ to $\zeta$. Then% onto the domain bounded by $[\frac12,\infty)$ and $\zeta(\{\arg \xi=\alpha_0\})$ biholomorphically. The image domain is contained in the small wedge $\{0<\arg \zeta<2\alpha_0\}$. In this wedge domain of $\xi$, one has
\begin{equation}\label{eq: zeta near +inf}
\lim_{\zeta\to \infty}Z^{-1}(\zeta)-\zeta = 0.
\end{equation}
%\item $\zeta(\xi)$ is also biholomorphic on the union of the above two domains.

%\item $\zeta$ is also biholomorphic on $\Omega_{\epsilon_0}\cup \Omega_{\epsilon_0,\alpha_0}$.
%\item $\zeta$ maps the domain between $\gamma_{\R^+}$ and $\gamma_{\R^-\}$ biholomorphically onto the lower half plane.
%\end{enumerate}
\end{lemma}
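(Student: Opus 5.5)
We prove both limits from the identity $Z(\lambda)=\zeta$, rewritten as
\begin{equation*}
\lambda-\zeta=\lambda e^{-2\lambda}\cdot\frac{1}{1-e^{-2\lambda}}=\zeta e^{-2\lambda},
\end{equation*}
and from the geometry of $U=Z^{-1}(V)$ in Lemma \ref{lem: Z inv domain}. The first step is to locate $\lambda$ as $\zeta$ approaches $0$ or $\infty$. Since $Z$ has no poles in $U$ and maps $U$ onto $V$ homeomorphically, $\zeta\to0$ or $\zeta\to\infty$ in $V$ forces $\lambda=Z^{-1}(\zeta)\to\infty$ in $U$. Here we use that $Z$ is bounded on compact subsets of $\overline U\setminus\{\pm i\pi\}$ and bounded away from $0$ there. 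We also use that near $\pm i\pi$ the values of $Z$ lie near the cuts, which for large $|\zeta|$ are the red cuts. The points $\pm i\pi$ therefore have to be excluded with a little care.

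\textbf{The case $\zeta\to 0$.} For $\real\lambda\ge -C$ with $|\lambda|$ large we have $|1-e^{-2\lambda}|\le 1+e^{2C}$, so $|Z(\lambda)|\gtrsim|\lambda|$ and $Z$ cannot be small. Hence $\real\lambda\to-\infty$. The region $U$ is bounded by $\gamma_1,\overline{\gamma_1}$ on the left, and these are asymptotic to horizontal lines of height at most $\pi$, as the explicit formula $\lambda=\zeta+\frac12W_1(-2\zeta e^{-2\zeta})$ shows. So $\imag\lambda$ stays bounded. Then $1-e^{-2\lambda}=-e^{-2\lambda}(1+o(1))$, which gives
\begin{equation*}
\zeta=-\lambda e^{2\lambda}(1+o(1)).
\end{equation*}
Taking logarithms, $\log\zeta=2\lambda+\log(-\lambda)+o(1)+2\pi i m$ for some integer $m$. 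Since $|\imag\lambda|$ is bounded, $|\imag\log\zeta|\le\pi$, and $\imag\log(-\lambda)$ is bounded, $m$ is bounded. Dividing by $\lambda$ and using $\log|\lambda|/|\lambda|\to0$ gives $2\lambda/\log\zeta\to1$. No precise branch bookkeeping is needed, only boundedness of $m$.

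\textbf{The case $\zeta\to\infty$ between the red cuts.} I would first show that $\real\lambda\to+\infty$. If $\real\lambda\le C$, then $\imag\lambda$ is bounded inside $U$, and for $|\lambda|$ large this forces $\real\lambda\to-\infty$. In that regime $Z(\lambda)\approx-\lambda e^{2\lambda}\to0$, which is a contradiction. Hence $e^{-2\lambda}\to0$. But $\lambda-\zeta=\zeta e^{-2\lambda}$ need not be small a priori, because $|\zeta|\approx|\lambda|$ while $e^{-2\lambda}$ may be only polynomially small. This is the main obstacle.

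To handle it, I would use the restriction to the sector between the cuts, $|\arg\zeta|<\alpha_0$ roughly. From $\zeta=\lambda/(1-e^{-2\lambda})$ with $e^{-2\lambda}\to0$ we get $\lambda=\zeta(1+o(1))$, so $\real\lambda\ge\frac12|\zeta|\cos\alpha_0$ eventually. Then
\begin{equation*}
|\lambda-\zeta|=|\zeta|e^{-2\real\lambda}\le|\zeta|e^{-|\zeta|\cos\alpha_0}\to0.
\end{equation*}
The reason the sector matters is this. Points of $U$ with large real part but $\imag\lambda$ comparable to $\real\lambda$ would lie near $\gamma_2$. There, by \eqref{eq: close to id on gamma 2} and the formula \eqref{eq: formula gamma 2}, $\lambda$ is still close to $\zeta$, but the argument of $\zeta$ is near $\alpha_0$. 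I would make this uniform by noting that between the cuts $\lambda=\zeta+\frac12W_0(-2\zeta e^{-2\zeta})$; this is the branch continuing the real axis, by continuity from $\zeta>\frac12$ real. Since $-2\zeta e^{-2\zeta}\to0$ in the sector, $W_0(w)\sim w\to0$. This gives the second proof directly, and I would present it as the main argument, with the estimate above as a consistency check.
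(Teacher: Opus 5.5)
Your treatment of $\zeta\to0$ is essentially the paper's. You get $\real\lambda\to-\infty$ from $|Z(\lambda)|\gtrsim|\lambda|$ on $\{\real\lambda\ge -C\}$, then boundedness of $\imag\lambda$, then compare $\log\zeta$ with $2\lambda$; your bookkeeping with the integer $m$ is fine. The one input you assert rather than prove is that, far to the left, $U$ lies between $\gamma_1$ and $\overline{\gamma_1}$. This is essential, because a small $\zeta$ has preimages under $Z$ at heights roughly $k\pi$ for every $k$. The paper proves it with a short topological argument:
\begin{itemize}
\item $\gamma_1\cup\{i\pi\}\cup\gamma_2$ misses $Q=\{\real\lambda\le 0,\ \imag\lambda\ge k\pi\}$ for large $k$, since the infinite end of $\gamma_2$ lies in the first quadrant;
\item $Q$ contains the pole $ik\pi$, which is not in $U$;
\item the closed upper half plane lies in $\overline P$ (the conjugate of $P$), so $ik\pi\notin P$, hence $Q\subset P^c$ and $Q\cap U=\emptyset$.
\end{itemize}
You should include an argument of this kind.

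For $\zeta\to\infty$ your main route is genuinely different from the paper's. The paper localizes $\lambda$ by the argument principle inside a contour built from pieces of $\gamma_2$, $\overline{\gamma_2}$ and two circular arcs. It deduces $\real\lambda\to\infty$ with $|\imag\lambda|\lesssim\real\lambda$, and concludes from $\zeta-\lambda=\lambda e^{-2\lambda}/(1-e^{-2\lambda})$. You instead work with $g(\zeta)=\zeta+\tfrac12W_0(-2\zeta e^{-2\zeta})$:
\begin{itemize}
\item $g$ is analytic on the part of $V$ between the cuts with $|\zeta|>R$, since there $\real\zeta\gtrsim|\zeta|$ and the argument of $W_0$ stays in $|w|<1/e$;
\item $g=Z^{-1}$ on $(R,\infty)$, because $2(\lambda-\zeta)=-2\lambda/(e^{2\lambda}-1)\in(-1,0)$ for real $\lambda>0$;
\item the identity theorem (the precise form of your ``by continuity'') gives $g=Z^{-1}$ on the whole region, and $W_0(w)\sim w$ finishes.
\end{itemize}
This is shorter and needs no information about where $\lambda$ sits in $U$. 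It even yields the rate $|Z^{-1}(\zeta)-\zeta|\lesssim|\zeta|e^{-2\real\zeta}$. The paper's argument stays inside the $U$--$V$ geometry of Lemma \ref{lem: Z inv domain}, at the cost of a more delicate contour construction.

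Two corrections. First, the identity should read $\zeta-\lambda=\zeta e^{-2\lambda}$; the sign slip is harmless. Second, and more substantively, your opening claim that $\zeta\to\infty$ in $V$ forces $\lambda\to\infty$ is false. Near $i\pi$ one has $Z(\lambda)\approx i\pi/(2(\lambda-i\pi))$, and the part of $U$ adjacent to $i\pi$ is mapped onto a whole neighbourhood of $\infty$ in the region of $V$ between the upper red cut and $\R^-$. For instance, $\zeta\to+i\infty$ gives $\lambda\to i\pi$, so those values do not merely ``lie near the cuts''. For $\zeta$ between the cuts the claim is true, and your $W_0$ argument does not use it. Only your secondary estimate via $\real\lambda\to+\infty$ depends on it, and there you would have to exclude $\lambda\to\pm i\pi$ using exactly this local picture.
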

%\begin{remark}
%Since $\overline{Z(\lambda)}=Z(\overline\lambda)$, Lemma \ref{lem: zeta} has a complex conjugate counterpart. 
%\end{remark}
\begin{proof}
    Denote $\lambda = Z^{-1}(\zeta)$. We first observe that as $\zeta\to 0$ in $V$, $\real \lambda\to-\infty$, while $\imag \lambda$ stays bounded. This is clear from Figure \ref{fig: Z_lambda}, but can also be proven rigorously. Indeed, the estimates in the proof of Lemma \ref{lem: Z inv domain} shows $\real \lambda\to-\infty$. By the formula of $\gamma_1$ in terms of $W_1$ in the same proof, the infinite end of $\gamma_1$ asymptotes to the line at height $\frac{i\pi}2$. On the other hand, the infinite end of $\gamma_1$ is in the first quadrant. It follows that $\gamma_1\cup\{i\pi\}\cup \gamma_2$ doesn't intersect the region $Q=\{\real \zeta \le 0, \imag \zeta\ge ik\pi\}$ for some large enough $k$. Since $ik\pi$ is a pole, and we know $U$ contains no pole, we have $ik\pi\in U^c = P^c\cup (\overline P)^c$, where $P$ is the component containing 0 in the complement of $\gamma_1\cup\{i\pi\}\cup \gamma_2$, as in the proof of Lemma \ref{lem: Z inv domain}. As we observed there, $\overline P$ contains the upper half plane. Thus $ik\pi \in P^c$. It follows that $Q$ is contained in the component not containing 0 in the complement of $\gamma_1\cup\{i\pi\}\cup \gamma_2$. In other words, $Q\cup U=\phi$. Similarly $\overline{Q}\cap U=\phi$, thus $\imag \lambda$ stays bounded. \eqref{eq: zeta near 0} now follows easily, as 
    \begin{equation}
        \log\zeta\sim \real \log\zeta =\log|\zeta|=2\real\lambda+\log\frac{|\lambda|}{|1-e^{2\lambda}|}\sim 2\real\lambda
    \end{equation}
    as $\zeta\to 0$.

    We next show that if $\zeta$ is between the two branch cuts in the first and fourth quadrants and $\zeta\to\infty$, then $\real\lambda\to\infty$, and $|\imag \lambda|\lesssim \real\lambda$. In fact, given such $\zeta$, we can form a contour $\gamma_*$ in the domain of $Z$ that starts at the point on $\gamma_2$ with magnitude $2|\zeta|$, following $\gamma_2$ to a point with magnitude $\tfrac{|\zeta|}{2}$, following a circular arc centered at the origin to $\overline{\gamma_2}$, and then along $\overline{\gamma_2}$ to the point of magnitude $2|\zeta|$, and finally along a circular arc centered at the origin back to where it starts. Since $Z(\xi)=\frac{\xi}{1-e^{-2\xi}}$ is very close to $\xi$ in this region, and the inside of $\gamma_*$ is connected, disjoint from the boundary of $U$, and intersects $\mathbb R$, it's easy to see that the inside of $\gamma_*$ is contained in $U$, and the winding number $n(Z(\gamma_*),\zeta)=1$. It follows from the argument principle that $\lambda$ is inside $\gamma_*$. By the description \eqref{eq: formula gamma 2}, \eqref{eq: close to id on gamma 2} of $\gamma_2$, we see that $\real\lambda\to\infty$ and $|\imag\lambda|\lesssim\real\lambda$ as $\zeta\to\infty$. \eqref{eq: zeta near +inf} now follows easily since
    \begin{equation}
        \zeta -\lambda= \frac{\lambda e^{-2\lambda}}{1-e^{-2\lambda}}\to 0
    \end{equation}
    as $\real\lambda\to\infty$.
\end{proof}

\subsection{\texorpdfstring{Green's function estimates uniform in $\zeta$}{Green's function estimates uniform in zeta}}

We now start the estimates of the Green's functions $G_L, G_R$. Due to \eqref{GL-GR}, we only show estimates of $G_L$.

\begin{proposition}\label{prop: green est}
For any sufficiently small  $\epsilon_0>0$, there exist $\epsilon_1>0$ and $C>0$ such that 
\begin{enumerate}[(i)]
\item If Re $\zeta\ge -\epsilon_0$, $0<|\text{Im }\zeta|\le \epsilon_0$, then
%\item If $\zeta\in$ region I,
\begin{equation}\label{eq: G Lp est 1}
G_L(x,\zeta) = \begin{cases}\frac i{1-2\zeta}\chi_{\R^+}(x)+\frac{i}{1-2\zeta^*}\chi_{\R^+}(x)e^{i\lambda x}+H(x,\zeta), &\text{if Im }\zeta> 0,\\ \frac i{1-2\zeta}\chi_{\R^+}(x)-\frac{i}{1-2\zeta^*}\chi_{\R^-}(x)e^{i\lambda x}+H(x,\zeta), &\text{if Im }\zeta< 0.\end{cases}
\end{equation}
Here $\lambda=Z^{-1}(\zeta)$, with $Z^{-1}$ defined in Definition \ref{def: Z inv} and
\begin{align}
H(x,\zeta) =&~\mathcal F^{-1}\left[\tfrac1{\xi-\zeta(1-e^{-2\xi})} - \tfrac{1}{1-2\zeta}\tfrac{1}{\xi}-\tfrac1{1-2\zeta^*}\tfrac1{\xi-\lambda}\right]\\
=&~\tfrac{\chi_{\R^+}(x)}{2\pi}\int_{\gamma_1\cup\{i\pi\}\cup\gamma_2}\tfrac{e^{ix\xi}}{\xi-\zeta(1-e^{-2\xi})}~d\xi\notag\\&~+\tfrac{\chi_{\R^-}(x)}{2\pi}\int_{\overline{\gamma_1}\cup\{-i\pi\}\cup\overline{\gamma_2}}\tfrac{e^{ix\xi}}{\xi-\zeta(1-e^{-2\xi})}~d\xi,\label{def: H}
\end{align}
%\begin{equation}
%H_{2}(x,\zeta) = \tfrac{\chi_{\R^+}(x)}{2\pi}\int_{\R+i\frac\pi2}\tfrac{e^{ix\xi}}{\xi-\zeta(1-e^{-2\xi})}~d\xi+\tfrac{\chi_{\R^-}(x)}{2\pi}\int_{\R-i\pi}\tfrac{e^{ix\xi}}{\xi-\zeta(1-e^{-2\xi})}~d\xi,
%\end{equation}
where $\gamma_1,\gamma_2$ are defined in Lemma \ref{lem: Z inv domain}, with estimates
\begin{equation}\label{eq: G est 2}
\|e^{\epsilon_1 |x|}H(x,\zeta)\|_{L^2_x}\le C.
\end{equation}

\item If $\dist(\zeta,\mathbb R^+)\ge\epsilon_0$, we can write $G_L$ as
%\item If $\zeta\in$ region II,
%\begin{equation}
%|G_L(x,\zeta)|\le C\left(1+\log^+\frac1{|x|}\right).
%\end{equation}
%\item If $\zeta\in $ region III$_\alpha$, we again define $H_{1}(x,\zeta)$ by \eqref{eq: est re 1} and $\lambda$ as above, but this time we have
%\begin{equation}
%\|H_{1}(\cdot,\zeta)\|_2\le \frac{C}{\sqrt{|\zeta|}}.
%\end{equation}
%\item If $\zeta\in$ region IV$_\alpha$, 
\begin{equation}\label{eq: G Lp est 3}
G_L(x,\zeta) = G_L(x,\zeta)\chi_{\mathbb R^-}(x)+\left(\tfrac{i}{1-2\zeta}+G_R(x,\zeta)\right)\chi_{\R^+}(x),
\end{equation}
%Here
%\begin{equation}
%H_2(x,\zeta) = \tfrac{\chi_{\R^+}(x)}{2\pi}\int_{\Gamma_R}\tfrac{e^{ix\xi}}{\xi-\zeta(1-e^{-2\xi})}~d\xi+\tfrac{\chi_{\R^-}(x)}{2\pi}\int_{\Gamma_L}\tfrac{e^{ix\xi}}{\xi-\zeta(1-e^{-2\xi})}~d\xi
%\end{equation}
%where $\Gamma_{L,R}$ are as in \eqref{def: G}.
with estimates
\begin{equation}\label{eq: G est 4}
\|e^{\epsilon_1 |x|}G_L(x,\zeta)\|_{L^2_x(-\infty,0)}\le C, \quad \|e^{\epsilon_1 |x|}G_R(x,\zeta)\|_{L^2_x(0,\infty)}\le C.
\end{equation}
\end{enumerate}
%Similar estimates for $G_R(x,\zeta)$ hold. The only difference is that for $G_R$, one replaces all appearances of $\frac{i}{1-2\zeta}\chi_{\R^+}(x)$ in the above estimates by $\frac{-i}{1-2\zeta}\chi_{\R^-}(x)$.
\end{proposition}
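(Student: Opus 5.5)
The plan is contour deformation in the $\xi$-plane, using the mapping properties of $Z$ from Lemmas \ref{lem: Z inv domain} and \ref{lem: zeta}. Write the integrand as $e^{ix\xi}/p(\xi,\zeta)$ with $p(\xi,\zeta)=\xi-\zeta(1-e^{-2\xi})=(1-e^{-2\xi})(Z(\xi)-\zeta)$. Its poles are at $\xi=0$ and at the preimages $Z^{-1}(\zeta)$ on the various sheets. For $x>0$ I will push $\Gamma_L$ upward, and for $x<0$ downward. The deformed contours are $\gamma_1\cup\{i\pi\}\cup\gamma_2$ and its conjugate, i.e.\ the two boundary pieces of the region $U$.

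On $U$, Lemma \ref{lem: Z inv domain} says $Z$ is a bijection onto $V$. So the only zeros of $Z(\xi)-\zeta$ in $U$ occur at $\xi=\lambda=Z^{-1}(\zeta)$, and in addition $1-e^{-2\xi}$ vanishes in $U$ only at $\xi=0$. The residues come from $\partial_\xi p$:
\[
\partial_\xi p(0)=1-2\zeta,\qquad \partial_\xi p(\lambda)=1-2\zeta e^{-2\lambda}=1-2(\zeta-\lambda)=1-2\zeta^*,
\]
using $\zeta e^{-2\lambda}=\zeta-\lambda$ and \eqref{eq: zeta-zeta*}.

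\textbf{Case (i).} Since $\Gamma_L$ passes below $0$, the pole at $0$ is crossed when we deform upward, giving $\frac{i}{1-2\zeta}\chi_{\R^+}$. Whether $\lambda$ lies above or below the real axis depends on the sign of $\imag\zeta$: a real $k$ is approached from $\imag\zeta>0$ exactly when $\imag\lambda>0$, by orientation of the white and grey regions. When $\imag\zeta>0$, $\lambda$ lies in the upper half and its residue is collected for $x>0$. When $\imag\zeta<0$, it is collected for $x<0$ with the opposite orientation sign. This reproduces \eqref{eq: G Lp est 1}.

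The distributional identity for $H$ follows by subtracting the two polar parts $\frac{1}{1-2\zeta}\frac1\xi$ and $\frac{1}{1-2\zeta^*}\frac1{\xi-\lambda}$, whose inverse Fourier transforms, with the appropriate contour prescriptions, are exactly those characteristic-function terms.

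To get the estimate \eqref{eq: G est 2}, I will not integrate on $\gamma_1\cup\gamma_2$ directly. Instead I will deform to the horizontal lines $\imag\xi=\pm\epsilon_1'$, where $\epsilon_1'$ is small but larger than $\epsilon_1$, and apply the polar-part subtraction there. The remainder $r(\xi,\zeta)=1/p-\frac{1}{(1-2\zeta)\xi}-\frac{1}{(1-2\zeta^*)(\xi-\lambda)}$ is analytic in the strip $|\imag\xi|<\pi/2$ minus possible extra zeros of $p$. It needs to be bounded in $L^2$ on each horizontal line, uniformly in $\zeta$. Paley--Wiener/Plancherel along the shifted line then gives $e^{\epsilon_1|x|}H\in L^2$.

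The main obstacle is uniformity in this step. There are three regimes to control:
\begin{itemize}
\item near $\zeta=\frac12$ (so $\lambda\to0$), the two poles coalesce and $1-2\zeta$, $1-2\zeta^*$ both vanish;
\item $\zeta\to\infty$ along $\R^+$, where $\lambda\approx\zeta$ by \eqref{eq: zeta near +inf};
\item $\zeta\to 0$ and negative real $\zeta$ in $[-\epsilon_0,0)$, where $\real\lambda\to-\infty$ by \eqref{eq: zeta near 0} and $\lambda$ leaves the strip.
\end{itemize}

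Near $\frac12$, I will write the sum of the two polar terms as a difference quotient and use the analyticity of $1/((1-e^{-2\xi})(Z(\xi)-\zeta))$ near $\{0,\lambda\}$. The residues have opposite-sign singular coefficients, since $\zeta^*-\frac12\approx-(\zeta-\frac12)$, so they cancel to a bounded quantity. This cancellation must be checked via Cauchy's integral formula on a fixed circle.

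For large $\zeta$, $|p|\gtrsim|\xi-\lambda|$ away from $\lambda$, together with the $1/\xi$ decay, gives a uniform $L^2$ bound. For $\zeta$ near $0$ the pole at $\lambda$ is far away, and the estimate reduces to $|p|\gtrsim |\xi|$ on the shifted lines.

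\textbf{Case (ii).} Here $\dist(\zeta,\R^+)\ge\epsilon_0$, and I expect $p$ has no zeros in a strip $|\imag\xi|\le 2\epsilon_1$ other than $\xi=0$, because real zeros would force $\zeta=Z(\xi)\in\R^+$. Then for $x<0$, shifting $\Gamma_L$ down to $\imag\xi=-\epsilon_1'$ gives the $G_L$ bound. The split \eqref{eq: G Lp est 3} is just \eqref{GL-GR}. For $x>0$, I shift $\Gamma_R$ up to $\imag \xi=\epsilon_1'$, which crosses no pole. The $L^2$ bound on the shifted line, uniform in $\zeta$, uses $|p(\xi,\zeta)|\gtrsim\langle\xi\rangle$ there for large $|\xi|$, and compactness in the remaining range, with a separate large-$|\zeta|$ argument where $p\approx -\zeta(1-e^{-2\xi})$ dominates.
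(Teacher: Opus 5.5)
Your derivation of \eqref{eq: G Lp est 1} matches the paper's: residues at $0$ and $\lambda$, with $\partial_\xi p(\lambda)=1-2\zeta^*$, and $\imag\lambda$ having the sign of $\imag\zeta$. The gap is in the uniform bound \eqref{eq: G est 2} in the regime $\zeta\to0$. There you assert that $\lambda$ is far away and that the estimate on the lines $\imag\xi=\pm\epsilon_1'$ reduces to $|p|\gtrsim|\xi|$. This is false.

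Every $\zeta=re^{i\theta}$ with $r$ small and $\theta\in(0,\pi)$ is admissible in (i). Since $\lambda\approx\frac12\log\zeta$, you get $\imag\lambda\approx\theta/2$, which sweeps $(0,\frac\pi2)$. So for suitable $\theta$, $\lambda$ lies exactly on the line $\imag\xi=\epsilon_1'$, however small $\epsilon_1'$ is; only $\real\lambda\to-\infty$. At such points $p$ vanishes on the line. The subtracted term $\frac{1}{(1-2\zeta^*)(\xi-\lambda)}$ has $L^2$ norm $|1-2\zeta^*|^{-1}\sqrt{\pi/|\imag\lambda-\epsilon_1'|}$ there, which is unbounded.

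Only the difference $1/p-\frac{1}{(1-2\zeta^*)(\xi-\lambda)}$ is controlled, and that needs a local argument you have not given. For example, take the box $|\real(\xi-\lambda)|\le1$, $-\frac\pi4\le\imag\xi\le\frac{3\pi}4$. By \eqref{eq: p lam comp}, $|p|\gtrsim|\lambda|$ on its boundary. Since $|1-2\zeta^*|\approx2|\lambda|$, the maximum principle makes the difference $O(|\lambda|^{-1})$ inside.

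The paper avoids this, and also your coalescence analysis at $\zeta=\frac12$, by not using thin lines. Write $\zeta=\mu+i\epsilon$ and take $x>0$:
\begin{itemize}
\item If $\epsilon>0$, it moves the contour defining $H$ to $\R+i\pi$, where $\imag p(\xi+i\pi,\zeta)=\pi-\epsilon(1-e^{-2\xi})\ge\pi-\epsilon_0$.
\item If $\epsilon<0$, it uses $\R+i\frac\pi2$, where $\imag p\ge\frac\pi2$.
\end{itemize}
The case $x<0$ is conjugate. All poles crossed lie strictly between $\Gamma_L$ and the new line, and $|p|^2\gtrsim(\real p)^2+1$ on it. The uniform $L^2$ bound then follows from the substitution $y=\eta\pm\mu e^{-2\mu}e^{-2\eta}$, $\eta=\xi-\mu$.

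Part (ii) has a similar uniformity hole. Take the admissible $\zeta=R+i\epsilon_0$ with $R\to\infty$, and the line $\imag\xi=\epsilon_1'<\epsilon_0$. Near $\real\xi=R$ one has $p\approx\xi-\zeta$, so $|p|\approx|\real\xi-R|+\epsilon_0-\epsilon_1'$. Neither $|p|\gtrsim\langle\xi\rangle$ nor domination by $\zeta(1-e^{-2\xi})$ holds there.

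Your remark that real zeros would force $\zeta\in\R^+$ has to be made quantitative, uniformly over unbounded $\zeta$. The paper does this by choosing $\epsilon$ so small that $Z(\R+i\epsilon)$ lies within $\frac{\epsilon_0}2$ of $\R^+$. Then $|Z(\xi+i\epsilon)-\zeta|\ge\frac{\epsilon_0}2$ for all real $\xi$ and all admissible $\zeta$, and $|Z(\xi+i\epsilon)-\zeta|\gtrsim|\xi-\real\zeta|+\epsilon_0$ for large $\xi$. Combined with the factorization $p(\xi+i\epsilon,\zeta)=e^{-2i\epsilon}(e^{2i\epsilon}-e^{-2\xi})(Z(\xi+i\epsilon)-\zeta)$, this gives the uniform $L^2$ bound directly.
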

\begin{remark}
$H$ is defined and depends continuously on $\zeta\in V$, even though $G_L$, $G_R$ jump as $\zeta$ moves across $\mathbb R^+$. In fact, we have for $k\in\R^+\setminus \{\frac12\}$, 
 \begin{equation}\label{eq: G up-down}
     G_{L,R}(x,k+i0)-G_{L,R}(x,k-i0)=\frac{ie^{i\lambda x}}{1-2k^*}.
 \end{equation}
 The other terms  in \eqref{eq: G Lp est 1} are defined for $\zeta\in V$, $\zeta\ne \frac12$ (equivalently $\lambda\ne 0$). As $\zeta$ approaches $\frac12$ from the upper half plane, the function $\frac{i}{1-2\zeta}\chi_{\R^+}(x)+\frac{i}{1-2\zeta^*}\chi_{\R^+}(x)e^{i\lambda x}$ has a removable singularity and approaches $\left(-x+i\frac23\right)\chi_{\R^+}(x)+i\frac23$. It will be understood here and in the following that 
\begin{equation}
G_L(x,\tfrac12+i0)=\left(-x+i\tfrac23\right)\chi_{\R^+}(x)+H(x,\tfrac12).
\end{equation}
 In comparison, as $\zeta$ approaches $\frac12$ from the lower half plane, the corresponding function $\frac{i}{1-2\zeta}\chi_{\R^+}(x)+\frac{i}{1-2\zeta^*}\chi_{\R^-}(x)e^{i\lambda x}$, and as a result $G_L(x,\zeta)$, has a pole and blows up. The situation for $G_R$ is reversed. We therefore use a similar convention and define 
 \begin{equation}
     G_R(x,\tfrac12-i0)=(x-i\tfrac23)\chi_{\mathbb R^-}(x)+H(x,\tfrac12).
 \end{equation}
\end{remark}
\begin{remark}\label{rmk: G at k pm 0}
    For $k\in[-\epsilon_0,0)\subset \mathbb R$, the limits $H(x,k\pm i0)$ exists, obeying the same uniform estimate \eqref{eq: G est 2}. So \eqref{eq: G Lp est 1} remains valid for $\zeta=k\pm i0$, $k\in [-\epsilon_0,0)$.
\end{remark}
%\begin{remark}
%When $\zeta_\up\in \R^++i0$ and $\zeta\in \R^+-i0$ have the same real part, $H_\up(x,\zeta_\up)$ and $H_\dn(x,\zeta)$ are the same function. In fact, it's easy to see that when $\zeta\in \R^+$, $\frac1{\xi-\zeta(1-e^{-2\xi})}$ has no pole between $\R+i\frac\pi 2$ and $\R+i\pi$, and between $\R-i\frac\pi2$ and $\R-i\pi$.
%\end{remark}
%\begin{remark}\label{rmk: green est 2}
%The $\epsilon_0$ in Proposition \ref{prop: green est} can be taken arbitrarily small, while the constant $C$ will depend on $\epsilon_0$ and can become large for very small $\epsilon_0$.
%\end{remark}

\begin{proof}
The main argument is to shift the contour of integration so that it stays away from the poles of the integrand. We only show the proof for $x>0$, and the case $x<0$ can be treated similarly. We first consider $\zeta$ in a small neighborhood of $\R^+$ in $V$. If $\imag \zeta> 0$, we will shift the contour to $\R+i\pi$; if $\imag \zeta< 0$, we will shift the contour to $\R+i\frac\pi2$. Finally, if $\zeta$ stays at a positive distance away from $\R^+$, we will shift the contour to $\R+i\epsilon$ for some small $\epsilon>0$.

We first show \eqref{def: H}. The denominator of the integrand in $G_L$ is $\xi-\zeta(1-e^{-2\xi})=(1-e^{-2\xi})\left(Z(\xi)-\zeta\right)$. The zeros of this function are $0$ and the preimages of $\zeta$ under $Z$. It is clear from Figure \ref{fig: Z_lambda} that if $\zeta\in V, \imag \zeta>0$,  $\lambda=Z^{-1}(\zeta)$ and 0 are the only poles of the integrand between $\Gamma_L$ and $\gamma_1\cup\{i\pi\}\cup\gamma_2$; if $\zeta\in V, \imag\zeta<0$, then 0 is the only pole of the integrand between $\Gamma_L$ and $\gamma_1\cup\{i\pi\}\cup\gamma_2$, from which \eqref{eq: G Lp est 1} follows. The above descriptive contour shift in Figure \ref{fig: Z_lambda} can be made rigorous if needed by using the argument principle in a way similar to the proof of Lemma \ref{lem: Z inv domain}, by knowledge of the image of the contours under $Z$. The details are omitted.

%In each case, the contour chosen will dodge the region where the preimages of $\zeta$ may appear (see figure on page 45 of my handwritten notes), although as $\zeta\to +\infty$ near $\R^+$, the preimages of $\zeta$ will approach $i\pi$. Note that even if the distance of $\xi$ and the preimage of $\zeta$ tends to zero, the function $\xi-\zeta(1-e^{-2\xi})$ may not. In fact, when $\zeta$ is near $\R^+$ and $\xi$ is near $i\pi$, the imaginary part of this function is safely away from zero.

Now let's estimate the $L^2$ norm of $H$. First consider the case $\imag \zeta\ge 0$. Let $\zeta=\mu+i\epsilon$, where $\mu>-\mu_0$, $0\le \epsilon<\epsilon_0$ for some small positive $\mu_0, \epsilon_0$. In this case, we can further shift the contour in the definition of $H$ from $\gamma_1\cup\{i\pi\}\cup\gamma_2$ to $\R+i\pi$. This is again clear from Figure \ref{fig: Z_lambda}, but can also be rigorously established by finding the winding number around $\zeta$ of the image curve under $Z$ of a large rectangle whose upper and lower edges are along $\R+i\pi$ and $\Gamma_L$ (dodging $i\pi$ by a small lower semicircle). The curves all have simple explicit formulas, and it's easy to see that the winding number is 1. It follows that there are two poles of the integrand between $\Gamma_L$ and $\mathbb R+i\pi$, one of which is obviously 0, while the other one is mapped to $\zeta$ under $Z$. To see that the other pole is at $\lambda=Z^{-1}(\zeta)$, lift the line segment connecting $\frac12$ to $\zeta$ through $Z$ to a curve starting at $0$. Since the line segment is disjoint from the image under $Z$ of either the rectangle described above or the branch cuts in Figure \ref{fig: branch_cut}, the lift is disjoint from either $\mathbb R+i\pi$ or $\gamma_1\cup\{i\pi\}\cup\gamma_2$. Thus the pole is at $\lambda$. We can now write 
\begin{equation}\label{eq: H contour R+i pi}
H (x,\zeta)=\frac{e^{-\pi x}}{2\pi}\int_\R \frac{e^{ix\xi}}{\xi+i\pi-\zeta(1-e^{-2\xi})}~d\xi.
\end{equation}
We only need to estimate the $L^{2}$ norm of $\frac1{\xi+i\pi-\zeta(1-e^{-2\xi})}$. That amounts to estimating
\begin{align}
&~\int_\R \frac{1}{(|\xi-\mu(1-e^{-2\xi})|+|\pi-\epsilon(1-e^{-2\xi})|)^{2}}~d\xi\notag\\
\lesssim&~ \int_\R \frac{1}{|\xi-\mu(1-e^{-2\xi})|^{2}+1}~d\xi\notag\\
=&~\int_\R\frac{1}{|\eta+\mu e^{-2\mu}e^{-2\eta} |^{2}+1}~d\eta\qquad\qquad (\eta = \xi-\mu)\notag\\
\lesssim &~\int_\R \frac{1}{1+|y|^{2}}~dy=C.
\end{align}
The last step in the above estimates needs more explanation. We let $y=\eta+\mu e^{-2\mu}e^{-2\eta}$. When $-\mu_0<\mu<0$, $\frac{dy}{d\eta} = 1-2\mu e^{-2\mu}e^{-2\eta}\ge 1$. In this case $\eta$ depends monotonically on $y$, and $0\le \frac{d\eta}{dy}\le 1$, so the estimates follow. When $\mu>0$, $\frac{dy}{d\eta}= 1-2\mu e^{-2\mu}e^{-2\eta}$ has a unique zero at $\eta_0 = -\mu+\frac12\log(2\mu)$. On either side of $\eta_0$, $\eta$ depends monotonically on $y$, and $\left|\frac{dy}{d\eta}\right|=|1-e^{-2(\eta-\eta_0)}|\gtrsim 1$ when $|\eta-\eta_0|\ge 1$. Thus the estimates follow.

Next, we consider the case $\imag \zeta\le 0$ and let $\zeta=\mu-i\epsilon$, where $\mu>-\mu_0$, $0\le\epsilon<\epsilon_0$ for some small positive $\mu_0,\epsilon_0$. In this case, we shift the contour of integration to $\R+i\frac\pi 2$. The explanation is similar to the case when $\zeta=\mu+i\epsilon$ above, and is omitted. We have 
\begin{equation}
H(x,\zeta)=e^{-\frac{\pi x}2}\int_\R\frac{e^{ix\xi}}{\xi+i\frac\pi2-\zeta(1+e^{-2\xi})}~d\xi. \notag
\end{equation}
We estimate the $L^{2}$ norm of $\frac{1}{\xi+i\frac\pi2-\zeta(1+e^{-2\xi})}$ as follows:
\begin{align*}
&~\int_\R \frac{1}{(|\xi-\mu(1+e^{-2\xi})|+|\frac\pi 2+\epsilon (1+e^{-2\xi})|)^{2}}~d\xi\\
\lesssim &~\int_\R\frac{1}{|\xi-\mu(1+e^{-2\xi})|^{2}+1}~d\xi\\
\lesssim &~\int_\R\frac{1}{|\eta-\mu e^{-2\mu}e^{-2\eta}|^{2}+1}~d\eta\\
\lesssim &~\int_\R \frac{1}{1+|y|^{2}}~dy=C
\end{align*}
by a similar estimate as above.

\begin{comment}
The $L^2$ norm of $\partial_\zeta H$ can be estimated in a similar way. In particular, when $\imag \zeta\ge 0$, we write as in \eqref{eq: H contour R+i pi}
\begin{align}
    \partial_\zeta H(x,\zeta)&=\frac{e^{-\pi x}}{2\pi}\int_\R \frac{e^{ix\xi}(1-e^{-2\xi})}{[\xi+i\pi-\zeta(1-e^{-2\xi})]^2}~d\xi\notag\\
    &=\frac{e^{-\pi x}}{2\pi}\int_\R \frac{e^{ix\xi}}{[\xi+i\pi-\zeta(1-e^{-2\xi})][Z(\xi)+\frac{i\pi}{1-e^{-2\xi}}-\zeta]}~d\xi
\end{align}
Let $\zeta=\mu+i\epsilon$ with $\mu>-\mu_0$, $0\le \epsilon<\epsilon_0$ as above,
\begin{equation}
    |Z(\xi)+\tfrac{i\pi}{1-e^{-2\xi}}-\zeta|\ge \tfrac{\pi}{1-e^{-2\xi}}-\epsilon
\end{equation}
\end{comment}

Finally, let $\text{dist}(\zeta,\R^+)\ge\epsilon_0>0$. Again we only consider the case $x>0$. Shift the contour of integration in $G_R$ to $\R+i\epsilon$. We may choose $\epsilon\in(0,\frac{\epsilon_0}4)$ so small that $\dist(Z(\R+i\epsilon),\mathbb R^+)<\frac{\epsilon_0}{2}$. We have 
\begin{equation}
G_R(x,\zeta) = \frac{e^{-\epsilon x}}{2\pi}\int_\R\frac{e^{ix\xi}}{\xi+i\epsilon-\zeta(1-e^{-2\xi-2i\epsilon})}~d\xi.
\end{equation}
We estimate the $L^{2}$ norm of 
\begin{equation}
\frac{1}{\xi+i\epsilon-\zeta(1-e^{-2\xi-2i\epsilon})}=\frac{e^{2i\epsilon}}{(e^{2i\epsilon}-e^{-2\xi})(Z(\xi+i\epsilon)-\zeta)}
\end{equation}
Choose $\xi_0>0$ large enough so that 
\begin{equation}
\left|(\xi+i\epsilon)\left(\frac1{1-e^{-2\xi-2i\epsilon}}-1\right)\right|<\frac{\epsilon_0}4
\end{equation}
when $\xi>\xi_0$. In this range of $\xi$, 
\begin{equation}
\left|\frac{\xi+i\epsilon}{1-e^{-2\xi-2i\epsilon}}-\zeta\right|\approx |\xi+i\epsilon-\zeta|\gtrsim |\xi-\text{Re }\zeta|+{\epsilon_0} ,
\end{equation}
and
$|e^{2i\epsilon}-e^{-2\xi}|\gtrsim 1$. On the other hand, when $\xi<-1$, $|e^{2i\epsilon}-e^{-2\xi}|\approx e^{-2\xi}$, and 
$$\left|\frac{\xi+i\epsilon}{1-e^{-2\xi-2i\epsilon}}-\zeta\right|\ge\frac{\epsilon_0} 2.$$
When $-1<\xi<\xi_0$, the function is uniformly bounded. In summary, we conclude that
\begin{equation}
\int_\R \frac{1}{|\xi+i\epsilon-\zeta(1-e^{-2\xi-2i\epsilon})|^{2}}\lesssim \int_{-\infty}^0e^{4\xi}~d\xi+\int_\R\frac{1}{1+|\xi|^{2}}~d\xi.
\end{equation}
The proof is now complete.
\end{proof}

\subsection{\texorpdfstring{Continuous dependence on $\zeta$}{Continuous dependence on zeta}}

The above uniform estimates on the Green's functions will be used to establish  existence of Jost solutions. By using different contour shifts, we can get further continuity estimates of $G_L$ and $G_R$ in $\zeta$, which will be useful in showing continuous dependence of the Jost solutions on $\zeta$. We start with estimates on the $H$ function. %In particular, we will find that $G_L$ and $G_R$ are continuous in suitable spaces for $\zeta$ in a small neighborhood of $0$, and for $\zeta$ on either side of the positive real line.  More precisely, we have 
We define for $\zeta_1,\zeta_2\in V$
\begin{equation}
    \dist_V(\zeta_1,\zeta_2)=\inf\{\text{length}(\gamma)~|~\gamma:[0,1]\to V, \gamma(0)=\zeta_1,\gamma(1)=\zeta_2\}.
\end{equation}
\begin{proposition}\label{prop: G cont}
Let $\epsilon_0>0$ be sufficiently small. 
Let $\zeta,\zeta_1,\zeta_2\in V$ and let $H$ be defined as in \eqref{def: H}. Then there exists $C>0$ such that %for  $\real \zeta>-\epsilon_0$,  $|\imag\zeta|<\epsilon_0$, $\zeta\ne 0$, we have
%\begin{equation}\label{eq: G cont 1}
%G_L(x,\zeta) =  \begin{cases}\frac {i}{1-2\zeta}\chi_{\R^+}(x)+\frac{i}{1-2\zeta_\up ^*}\chi_{\R^+}(x)e^{i\lambda_\up x}+H_\up(x,\zeta), &\text{if Im }\zeta\ge 0,\\ \frac {i}{1-2\zeta}\chi_{\R^+}(x)-\frac{i}{1-2\zeta_\dn^*}\chi_{\R^-}(x)e^{i\lambda_\dn x}+H_\dn(x,\zeta), &\text{if Im }\zeta\le 0,\end{cases}
%\end{equation}
%and we have the following continuity estimates.
\begin{enumerate}[(i)]
\item If $0<|\zeta|<\epsilon_0$,
\begin{equation}\label{G cont near 0}
\|H(x,\zeta)\|_{L^2}\le\frac{C}{\sqrt{\log^+|\zeta|}}.
\end{equation}
\item If $0<|\zeta_2|\le|\zeta_1|<\epsilon_0$, $\dist_V(\zeta_1,\zeta_2)<\frac12|\zeta_1|$, we have
\begin{equation}\label{eq: G cont 2}
\|H(x,\zeta_1)-H(x,\zeta_2)\|_{ L^2\cap L^\infty}\le \frac{C\dist_V(\zeta_1,\zeta_2)}{|\zeta_1|\log^+|\zeta_1|}.
\end{equation}
%and
%\begin{equation}\label{eq: G cont 3}
%\|G_L^2(\cdot,\zeta)\|_2\le  \frac{C}{\sqrt{|\log|\zeta||}},
%\end{equation}
%\begin{equation}\label{eq: G cont 4}
%\|G_L^2(\cdot,\zeta)*f\|_2\le \frac{C}{\left|\log |\zeta|\right|}\|f\|_2.
%\end{equation}
\item If $\real\zeta_j>\frac{\epsilon_0} 2$, $|\imag \zeta_j|\le \epsilon_0$, $j=1,2$,% Im $\zeta_1 \cdot \text{Im }\zeta_2\ge 0$,
\begin{equation}\label{eq: G cont 6}
\|H(x,\zeta_1)-H(x,\zeta_2)\|_{L^2\cap L^\infty}\le C|\zeta_1-\zeta_2|.
\end{equation}
\end{enumerate}
Finally, let $\zeta_1, \zeta_2 \in \mathbb C$.

\begin{enumerate}[(i)]
\setcounter{enumi}{3}
\item If $\dist(\zeta_j,\mathbb R^+)\ge \epsilon_0$, $|\zeta_1-\zeta_2|<\epsilon_0$, one has
\begin{equation}\label{eq: G cont 7}
\|G_L(x,\zeta_1)-G_L(x,\zeta_2)\|_{L^\infty}\le C|\zeta_1-\zeta_2|.
\end{equation}
%as long as Im $\zeta_1 \cdot \text{Im }\zeta_2\ge 0$.
\end{enumerate}

\end{proposition}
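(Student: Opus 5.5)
The approach is to work directly with the contour representation of $H$ obtained in the proof of Proposition \ref{prop: green est}, and with the Fourier-side formula
\[
\widehat{H}(\xi,\zeta)=\tfrac1{\xi-\zeta(1-e^{-2\xi})}-\tfrac{1}{1-2\zeta}\tfrac1\xi-\tfrac{1}{1-2\zeta^*}\tfrac{1}{\xi-\lambda}.
\]
By Plancherel, the $L^2$ bounds in (i)--(iii) reduce to $L^2$ bounds of this symbol, or of its $\zeta$-derivative, after a contour shift. The $L^\infty$ bounds come from $L^1$ bounds of the shifted symbols, since $|\mathcal F^{-1}g|\le \frac1{2\pi}\|g\|_{L^1}$ on any horizontal line, with the factor $e^{-y x}$ harmless for $x>0$. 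Differences are handled through
\[
H(\cdot,\zeta_1)-H(\cdot,\zeta_2)=\int_\gamma \partial_\zeta H(\cdot,\zeta)\,d\zeta
\]
along a path $\gamma$ in $V$ of length close to $\dist_V(\zeta_1,\zeta_2)$. Here
\[
\partial_\zeta\Big[\tfrac{1}{\xi-\zeta(1-e^{-2\xi})}\Big]=\tfrac{1-e^{-2\xi}}{(\xi-\zeta(1-e^{-2\xi}))^2},
\]
and the two pole subtractions are smooth in $\zeta$ whenever $\lambda\neq 0$.

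For (iii) and (iv) I would reuse the same contours as in Proposition \ref{prop: green est}: $\R+i\pi$ or $\R+i\frac\pi2$ for $x>0$ in (iii), and $\R+i\epsilon$ in (iv). These stay uniformly away from the poles for all $\zeta$ in the region.
\begin{itemize}
\item The $\zeta$-derivative of the integrand is then bounded by $|1-e^{-2\xi}|/|\text{denominator}|^2$. From the lower bounds established there (denominator $\gtrsim \langle y\rangle$, and $\gtrsim e^{-2\xi}$ for $\xi\ll0$), this is integrable and square integrable uniformly. This gives (iii).
\item The only subtlety in (iii) is that $\zeta_1,\zeta_2$ may lie on opposite sides of $\R^+$. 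But $H$ is continuous across $\R^+$ by the remark following Proposition \ref{prop: green est}, so one may integrate along the straight segment.
\item For (iv), $G_L$ itself is used. The relation \eqref{GL-GR} and the term $\frac{i}{1-2\zeta}$ are Lipschitz since $\zeta$ stays away from $\frac12$.
\item There is one further caveat for (iv): for $x<0$ one shifts downward, and the $L^\infty$ bound needs the shifted symbol in $L^1$. The $L^2$-type estimate of Proposition \ref{prop: green est} only gives $1/\langle\xi\rangle$ decay. So I would first subtract the explicit $\frac{1}{\xi+i}$-type tail, whose difference in $\zeta$ is zero, and then note that the derivative decays like $\langle\xi\rangle^{-2}$, which is integrable.
\end{itemize}

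For (i) and (ii), with $\zeta\to0$ and $\lambda=Z^{-1}(\zeta)\approx\frac12\log\zeta$ by Lemma \ref{lem: zeta}, the pole $\lambda$ runs off to $-\infty$ with $\real\lambda\sim\frac12\log|\zeta|$. I would write $H$ as the contour integral over $\R+i\pi$ (for $\imag\zeta\ge0$, as in \eqref{eq: H contour R+i pi}), or its analogue, and estimate
\[
\int_\R \frac{d\xi}{|\xi+i\pi-\zeta(1-e^{-2\xi})|^2}.
\]
The denominator behaves like $|\xi+i\pi|$ for $\xi\gg\real\lambda$ and like $|\zeta|e^{-2\xi}$ for $\xi\ll\real\lambda$, with a transition region of width $O(1)$ near $\real\lambda$. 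Splitting the integral at $\real\lambda$ gives a $\langle\xi\rangle^{-2}$ tail contribution of size $1/|\real\lambda|\sim 1/\log^+(1/|\zeta|)$. This is the intended $\frac{C}{\sqrt{\log^+|\zeta|}}$ (reading $\log^+$ as $|\log|\zeta||$), after taking the square root.

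Care is needed that the shift to $\R+i\pi$ crosses only $0$ and $\lambda$, which is the same argument-principle check as before. Near $0$, however, the lines $\gamma_1,\gamma_2$ and $\R+i\pi$ may differ, and the contour may instead need to run at height $\sim\imag\lambda\in(0,\pi)$. I would choose the line $\R+i\theta$ with $\theta$ midway between $\imag\lambda$ and the next preimage, using that $\imag\lambda$ stays bounded (proof of Lemma \ref{lem: zeta}).

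For (ii) I would differentiate in $\zeta$. The derivative carries an extra factor $\frac{1-e^{-2\xi}}{\xi-\zeta(1-e^{-2\xi})}$, which on the shifted contour is $\lesssim 1/|\zeta|$ in the region $\xi\lesssim\real\lambda$. Combined with the same splitting, this yields an $L^1\cap L^2$ bound on the $\xi$-side of order $\frac{1}{|\zeta|\log(1/|\zeta|)}$. The $L^1$ bound gives the $L^\infty$ estimate. The condition $\dist_V(\zeta_1,\zeta_2)<\frac12|\zeta_1|$ keeps the path in the annulus $|\zeta|\approx|\zeta_1|$, so the bound integrates to \eqref{eq: G cont 2}.

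The main obstacle is this near-zero regime. One must choose the contour uniformly in $\zeta$ as $\lambda$ escapes to $-\infty$ and verify the lower bound on the denominator near the transition point. This is done via the substitution $y=\eta+\mu e^{-2\mu}e^{-2\eta}$ from the proof of Proposition \ref{prop: green est}, now with complex $\zeta$ and tracking the factor $|\zeta|$. The goal there is precise rather than merely uniform constants, since the logarithmic gain must be extracted.
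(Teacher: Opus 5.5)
Your argument for (i) has a genuine gap. On $\R+i\pi$, or any line $\R+i\theta$ passing above $\lambda$, one has $|\zeta(1-e^{-2\xi})|\le\frac12|\xi|$ for $\real\xi\ge\real\lambda+1$. So the integrand is comparable to $1/\xi$ there, and for small $\zeta$
\[
\int_{\real\lambda+1}^{\infty}\frac{d\eta}{|\eta+i\theta-\zeta(1-e^{-2(\eta+i\theta)})|^{2}}\;\ge\;\frac49\int_{-1}^{1}\frac{d\eta}{\eta^{2}+\theta^{2}}\;\gtrsim\;1 .
\]
The region $\xi>\real\lambda$ is therefore not a ``$\langle\xi\rangle^{-2}$ tail of size $1/|\real\lambda|$''; it contains all of $|\xi|\lesssim1$. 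The shifted symbol has $L^2$ norm of order one. Plancherel applied to it only reproduces the uniform bound \eqref{eq: G est 2} and gives no decay as $\zeta\to0$.

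The decay in \eqref{G cont near 0} comes from a cancellation that Plancherel on the whole line cannot see. On a line above $0$, the bulk $1/\xi$ is the transform of a function supported in $x<0$: $\int_{\R+i\theta}\frac{e^{ix\xi}}{\xi}\,d\xi=0$ for $x>0$. Symmetrically, the integral over a line below $0$ vanishes for $x<0$; this is why the paper uses $\R+i\frac{3\pi}4$ and $\R-i\frac\pi4$. After subtracting this vanishing integral, the piece over $\real\xi>\real\lambda+1$ splits into two parts:
\begin{itemize}
\item the term $\frac1{2\pi}\int\frac{e^{ix\xi}\,\zeta(1-e^{-2\xi})}{[\xi-\zeta(1-e^{-2\xi})]\,\xi}\,d\xi$, which is $O(|\zeta|+|\lambda|^{-1})$ in $L^2\cap L^\infty$ on $x>0$ by the computation \eqref{H131 tail est}, using $|\zeta|e^{-2\real\lambda}\approx|\lambda|$;
\item up to sign, the truncated integral $\frac1{2\pi}\int_{-\infty+i\theta}^{\real\lambda+1+i\theta}\frac{e^{ix\xi}}{\xi}\,d\xi$, whose $L^2$ norm is $\bigl(\int_{-\infty}^{\real\lambda+1}\frac{d\eta}{\eta^2+\theta^2}\bigr)^{1/2}\approx|\real\lambda|^{-1/2}$.
\end{itemize}
The truncated piece is the actual source of the square root in \eqref{G cont near 0}. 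Without this subtraction step your method yields no smallness at all.

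Parts (ii)--(iv) do not suffer from this problem. There you work with differences or $\zeta$-derivatives, whose symbol $\frac{1-e^{-2\xi}}{(\xi-\zeta(1-e^{-2\xi}))^2}$ has no $1/\xi$ bulk. Your integrated-derivative approach is then equivalent to the paper's difference-quotient formula.

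For (ii), however, ``the same splitting'' as in (i) is not enough. Your bound of the extra factor by $1/|\zeta|$ only covers $\real\xi\lesssim\real\lambda$. Multiplying it by the order-one $L^2$ norm of the undifferentiated symbol on $\real\xi>\real\lambda+1$ would lose the logarithm. Estimate that region directly:
\[
\int_{\real\lambda+1}^{\infty}\frac{1+e^{-2\eta}}{\eta^{2}+\theta^{2}}\,d\eta\;\lesssim\;1+\frac{e^{-2\real\lambda}}{|\lambda|^{2}}\;\approx\;\frac{1}{|\zeta|\,|\lambda|},
\]
which is exactly the paper's estimate. With that, (ii)--(iv) go through as in the paper.
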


Before proving this proposition, we first derive two  corollaries describing continuity of the Green's functions in a way that's convenient for later applications. As it turns out, the limit for $G_L$ is a bit more singular as $\zeta$ approaches $\mathbb R^--i0$ or $\frac12+i0$, and similarly for $G_R$ at the conjugate values.

\begin{corollary}\label{cor: G cont 1}
For every $\zeta_1\in( \mathbb C\setminus \R^+)\cup (\R^+ +  i0)\setminus \{\frac12+i0\}$, we have 
\begin{equation}
\|G_L(x,\zeta_1)-G_L(x,\zeta_2)\|_{(L^2_{-1,+}+L^\infty_{-1,+})}\to 0
\end{equation}
as $\zeta_2\to\zeta_1$ from within the set $( \mathbb C\setminus \R^+)\cup (\R^+ +  i0)$.
Correspondingly, for every $\zeta_1\in( \mathbb C\setminus \R^+)\cup (\R^+ -  i0)\setminus \{\frac12-i0\}$, we have 
\begin{equation}
\|G_R(x,\zeta_1)-G_R(x,\zeta_2)\|_{(L^2_{-1,-}+L^\infty_{-1,-})}\to 0
\end{equation}
as $\zeta_2\to\zeta_1$ from within the set $( \mathbb C\setminus \R^+)\cup (\R^+ -  i0)$.

\end{corollary}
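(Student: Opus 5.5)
The plan is to derive the corollary from the decompositions in Proposition \ref{prop: green est} and the continuity estimates in Proposition \ref{prop: G cont}, splitting according to where $\zeta_1$ sits. We treat only $G_L$; the $G_R$ statement then follows from the symmetry \eqref{eq: G_L bar}, which exchanges $x\mapsto -x$ (so $L^p_{-1,+}$ becomes $L^p_{-1,-}$) and $\zeta\mapsto\overline{\zeta}$ (so $\R^++i0$ becomes $\R^+-i0$). The weight $(1+x_+)^{-1}$ in $L^\infty_{-1,+}$ is what absorbs the non-decaying pieces of $G_L$ on $x>0$, and the continuity must be shown for each piece separately.

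\emph{Case 1: $\dist(\zeta_1,\R^+)>0$.} Choose $\epsilon_0$ smaller than this distance. For $\zeta_2$ close to $\zeta_1$, estimate \eqref{eq: G cont 7} gives $\|G_L(\cdot,\zeta_1)-G_L(\cdot,\zeta_2)\|_{L^\infty}\le C|\zeta_1-\zeta_2|\to0$. This is stronger than convergence in $L^\infty_{-1,+}$. It covers all of $\C\setminus\R^+$ except a neighborhood of $\R^+\cup\{0\}$. The point $\zeta_1=0$ is excluded here, since it lies on the closure of $\R^+$, and is handled in Case 2.

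\emph{Case 2: $\zeta_1$ near $\R^+$ (including $\zeta_1=0$ and $\zeta_1\in\R^+ +i0$, $\zeta_1\neq\tfrac12+i0$) or near $\R^-$ close to $0$.} Use \eqref{eq: G Lp est 1}. On the upper side ($\imag\zeta\ge0$, which includes the boundary values $k+i0$), write
\[
G_L=\tfrac{i}{1-2\zeta}\chi_{\R^+}+\tfrac{i}{1-2\zeta^*}\chi_{\R^+}e^{i\lambda x}+H .
\]
\begin{itemize}
\item The $H$ part converges in $L^2$ by \eqref{eq: G cont 6}, or by \eqref{eq: G cont 2} near $0$. At $\zeta_1=0$ itself, \eqref{G cont near 0} gives $H\to0$, which is consistent with the formal limit.
\item The coefficients $\frac{i}{1-2\zeta}$ and $\frac{i}{1-2\zeta^*}$ are continuous in $\zeta\in V$ away from $\zeta=\tfrac12$, because $Z^{-1}$ and $*$ are holomorphic on $V$ and continuous up to the cut $\R^+ +i0$.
\item For $x>0$ we need $\sup_x (1+x)^{-1}|e^{i\lambda_1 x}-e^{i\lambda_2 x}|\to0$. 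This holds because $\imag\lambda\ge0$ on this side, so $|e^{i\lambda x}|\le1$, and because $|e^{i\lambda_1x}-e^{i\lambda_2x}|\le |\lambda_1-\lambda_2|\,x$.
\end{itemize}

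\emph{Crossing the negative axis near $0$.} If $\zeta_2$ approaches $\zeta_1$ from the lower half plane, the form of \eqref{eq: G Lp est 1} changes: the pole contribution moves to $-\frac{i}{1-2\zeta^*}\chi_{\R^-}e^{i\lambda x}$. For $\zeta_1\in\C\setminus\R^+$ this is not a genuine jump, because $G_L$ is analytic in $\zeta$ off $\R^+$; the two representations must agree there, so one can use whichever side $\zeta_1$ lies on. The only subtle point is $\zeta_1=0$ (or $\zeta_1\in\R^-$ near $0$). There I would instead use Case 1's analyticity where possible, and at $0$ use \eqref{G cont near 0} together with the fact that $e^{i\lambda x}$, with $\real\lambda\to-\infty$ by \eqref{eq: zeta near 0}, is bounded by $1$ on the relevant half-line. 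The term $\frac{1}{1-2\zeta^*}$ tends to $0$ there, since $\zeta^*=\zeta-\lambda\to\infty$.

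\emph{Main obstacle.} The main obstacle is controlling the $e^{i\lambda x}$ term uniformly. On the lower side, $\imag\lambda<0$ makes $e^{i\lambda x}$ grow on $x<0$; this is harmless only because it is multiplied by $\chi_{\R^-}$ and $\imag\lambda<0$ gives decay there. One must also check that near $\R^+ +i0$ only the upper representation is used, since $\zeta_2$ stays in $(\C\setminus\R^+)\cup(\R^++i0)$. Points of $\R^+ -i0$ are excluded from the set, and $\tfrac12+i0$ is excluded because of the $-x\chi_{\R^+}$ removable-singularity limit, whose continuity would require more than the $L^\infty_{-1,+}$ weight to absorb uniformly.
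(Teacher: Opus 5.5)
Your proposal is correct and follows essentially the paper's proof. It uses the same case split: \eqref{eq: G cont 7} away from $\R^+$; the upper representation in \eqref{eq: G Lp est 1} together with \eqref{eq: G cont 6} and the bound $(1+x)^{-1}|e^{i\lambda_1x}-e^{i\lambda_2x}|\le|\lambda_1-\lambda_2|$ on $\R^++i0$; and \eqref{G cont near 0} with $|1-2\zeta_2^*|^{-1}\to0$ and $G_L(\cdot,0)=i\chi_{\R^+}$ at $\zeta_1=0$.

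Because you choose $\epsilon_0<\dist(\zeta_1,\R^+)$, Case 1 already covers every $\zeta_1\in\C\setminus[0,\infty)$, including all of $\R^-\setminus\{0\}$. Your ``crossing the negative axis'' discussion is therefore needed only at $\zeta_1=0$. Also, when $\imag\lambda<0$, $e^{i\lambda x}$ grows on $x>0$, not $x<0$.
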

\begin{comment}
    The weight here can be any s<0, doesn't have to be -1. I only put -1 for simplicity, as the weight on u is enough to balance out the -1 anyway.
\end{comment}
\begin{proof}
We only show the proof regarding $G_L$. Firstly if $\zeta_1=0$, $G_L(x,0)=i\chi_{\R^+}(x)$. By \eqref{eq: G Lp est 1} and \eqref{G cont near 0}, 
\begin{align}
\|G_L(x,\zeta_2)-G_L(x,0)\|_{L^2+L^\infty}&\le C\left|\frac{1}{1-2\zeta_2}-1\right|+\frac{C}{|\zeta_2^*|}+\frac{C}{\sqrt{\log^+|\zeta_2|}}\notag\\
&\le \frac{C}{\sqrt{\log^+|\zeta_2|}}\to 0
\end{align}
as $\zeta_2\to 0$. If $\zeta_1$ is in a small neighborhood of 0, and $\imag \zeta_1>0$, we can assume that $\zeta_2$ also has positive imaginary part. By \eqref{eq: G Lp est 1} and \eqref{eq: G cont 2}, as $\zeta_2\to\zeta_1$,  
\begin{align}
\|G_L(x,\zeta_1)-G_L(x,\zeta_2)\|_{L^\infty}&\le \left|\tfrac1{1-2\zeta_1}-\tfrac1{1-2\zeta_2}\right|+ \left|\tfrac1{1-2\zeta_1^*}-\tfrac1{1-2\zeta_2^*}\right|+\tfrac{C|\zeta_1-\zeta_2|}{|\zeta_1|\log^+|\zeta_1|}\notag\\
&\quad +\left|\tfrac{1}{1-2\zeta_1^*}\right|\left\|e^{i\lambda_1 x}\chi_{\R_+}(x)\left(e^{i[\lambda_1-\lambda_2] x}-1\right)\right\|_{L^\infty}.
\end{align}
The last term above can be estimated as 
\begin{align}
\left|e^{i\lambda_1 x}\chi_{\R_+}(x)\left(e^{i[\lambda_1-\lambda_2] x}-1\right)\right|&\le \chi_{\R_+}(x) e^{-\imag \lambda_1 x}\min\left(2,|\lambda_1-\lambda_2| x\right) \notag\\
&\le 2e^{-\imag \lambda_1 x_0}+|\lambda_1-\lambda_2| x_0
\end{align}
for any $x_0>0$. Taking $x_0=|\lambda_1-\lambda_2|^{-\frac12}$, and noting the fact that $\imag \lambda_1>0$, we can bound the above by 
\begin{equation}
2e^{-\imag \lambda_1 |\lambda_1-\lambda_2|^{-\frac12}}+|\lambda_1-\lambda_2| ^{\frac12}\to 0
\end{equation}
as $\zeta_2\to\zeta_1$. As a consequence, $\|G_L(x,\zeta_1)-G_L(x,\zeta_2)\|_{L^\infty}\to 0$ as $\zeta_2\to\zeta_1$. The case when $\zeta_1$ is near 0 and in the lower half plane can be handled in a similar way, with $\chi_{\R^+}(x)$ replaced with $\chi_{\R^-}(x)$. The above argument still works since $\lambda_1, \lambda_2$ are now in the lower half plane and $x<0$. Now suppose $\zeta_1$ is near zero and on $\R^-$. If $\zeta_2\to \zeta_1$ and $\imag \zeta_2\ge 0$, we can use the first representation for $G_L(x,\zeta_1)$ and $G_L(x,\zeta_2)$ in \eqref{eq: G Lp est 1} (see Remark \ref{rmk: G at k pm 0}. Note that even in this case, $\imag \lambda_1>0, \imag \lambda_2>0$. So the above argument can again be repeated to show $\|G_L(x,\zeta_1)-G_L(x,\zeta_2)\|_{L^\infty}\to 0$. On the other hand, if $\zeta_2\to \zeta_1$ and $\imag \zeta_2\le 0$, we can use the second representation for $G_L(x,\zeta_1)$ and $G_L(x,\zeta_2)$ in \eqref{eq: G Lp est 1} and repeat the above argument to show convergence.

Let us now consider the case when $\zeta_1$ is in a small neighborhood of $ \R^++i0$ away from the origin and $\zeta_1\ne \frac12+i0$. In this case, we must have $\imag \zeta_1\ge 0$. If $\zeta_2\to \zeta_1$ and $\zeta_2\in ( \mathbb C\setminus \R^+)\cup (\R^+ +  i0)$, we must have $\imag \zeta_2\ge 0$. Use the first representation of $G_L(x,\zeta_1)$ and $G_L(x,\zeta_2)$ in \eqref{eq: G Lp est 1} and \eqref{eq: G cont 6} to get
\begin{align}\label{eq: G cont in zeta near 1/2}
\|G_L(x,\zeta_1)-G_L(x,\zeta_2)\|_{L^\infty_{-1,+}}&\le \left|\tfrac1{1-2\zeta_1}-\tfrac1{1-2\zeta_2}\right|+ \left|\tfrac1{1-2\zeta_1^*}-\tfrac1{1-2\zeta_2^*}\right|+C|\zeta_1-\zeta_2|\notag\\
&\quad +\left|\tfrac{1}{1-2\zeta_1^*}\right|\left\|\chi_{\R_+}(x)\left(e^{i[\lambda_1-\lambda_2] x}-1\right)\right\|_{L^\infty_{-1,+}}.
\end{align}
Note that we no longer have an exponential gain from $e^{-\imag \lambda_1 x}$, since $\imag \lambda_1$ may now be zero. We can estimate the last term above as
\begin{align}
\frac{\chi_{\R^+}(x)}{1+x}\left|e^{i[\lambda_1-\lambda_2] x}-1\right|&= \frac{\chi_{\R^+}(x)|\lambda_1-\lambda_2|x}{1+x}\frac{\left|e^{i[\lambda_1-\lambda_2] x}-1\right|}{|\lambda_1-\lambda_2|x}\notag\\
&\lesssim |\lambda_1-\lambda_2|.
\end{align}
 Thus $\|G_L(x,\zeta_1)-G_L(x,\zeta_2)\|_{L^\infty_{-1,+}}\to 0$ as $\zeta_2\to\zeta_1$.

%Next we consider the case $\zeta_1=\frac12+i0$.

Now if $\zeta_1$ is in a small neighborhood of $\R^+-i0$ and away from the origin, and if $\imag \zeta_1<0$, we can take $\epsilon_0$ in Proposition \ref{prop: G cont} so small that $\zeta_1$, $\zeta_2$ both belong to the region covered in case (iv), where the result follows obviously. Finally if $\zeta_1$ is away from $\R^+$, we can again assume $\zeta_1$, $\zeta_2$ both belong to the region covered in case (iv) of Proposition \ref{prop: G cont}, and the result follows.
\end{proof}

Note that in the above corollary, we need to exclude the cases $\zeta_1\in \R^+-i0$ or $\zeta_1=\frac12+i0$ for $G_L$, and similar but conjugate values for $G_R$. As $\zeta\to \frac12+i0$, $G_L(x,\zeta)-G_L(x,\frac12+i0)$ contains a term that essentially resembles $\frac{e^{i\lambda x}-1-i\lambda x}{\lambda}\chi_{\R^+}(x)$. Its $L_{-1,+}^\infty$ norm, or the $L^\infty$ norm of 
$
\frac{e^{i\lambda x}-1-i\lambda x}{\lambda(1+x_+)}\chi_{\R^+}(x)
$
doesn't tend to 0 when $\lambda\to 0$. On the other hand, if $\zeta_2\to\zeta_1\in (\R^+-i0)\setminus \{\frac12-i0\}$, $G_L(x,\zeta_1)-G_L(x,\zeta_2)$ contains a term $\left(e^{i\lambda_1 x}-e^{i\lambda_2 x}\right)\chi_{\R^-}(x)$. Its $L_{-1,+}^\infty$ norm is the same as its $L^\infty$ norm, which doesn't tend to zero as $\zeta_2\to \zeta_1$. However, we still have uniform convergence on compact sets. In particular, we have
\begin{lemma}\label{lem: G weak cont 1/2}
As $\zeta\to\frac12+i0$, $G_L(x,\zeta)-G_L(x,\tfrac12+i0)-H(x,\zeta)+H(x,\tfrac12+i0)$ is bounded in $L_{-1,+}^\infty$, and tends to zero uniformly on compact sets. As $\zeta_2\to \zeta_1 \in( \R^+-i0)\setminus \{\frac12-i0\}$, $G_L(x,\zeta_2)-G_L(x,\zeta_1)-H(x,\zeta_2)+H(x,\zeta_1)$ is bounded in $L^\infty$, and tends to zero uniformly on compact sets.

Correspondingly, As $\zeta\to\frac12-i0$, $G_R(x,\zeta)-G_R(x,\tfrac12-i0)-H(x,\zeta)+H(x,\tfrac12-i0)$ is bounded in $L^\infty_{-1,-}$, and tends to zero uniformly on compact sets. As $\zeta_2\to \zeta_1 \in( \R^++i0)\setminus \{\frac12+i0\}$, $G_R(x,\zeta_2)-G_R(x,\zeta_1)-H(x,\zeta_2)+H(x,\zeta_1)$ is bounded in $L^\infty$, and tends to zero uniformly on compact sets.
\end{lemma}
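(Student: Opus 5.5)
The plan is to subtract off $H$ using the explicit decomposition \eqref{eq: G Lp est 1} of Proposition \ref{prop: green est}(i), so that only the elementary pole terms remain, and then estimate those by hand. We treat $G_L$; the statements for $G_R$ follow from \eqref{GL-GR} or from the conjugation symmetry \eqref{eq: G_L bar}.

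\emph{The limit $\zeta\to\frac12+i0$.} Here $\imag\zeta\ge 0$ and $\zeta$ is close to $\frac12$, so the first representation in \eqref{eq: G Lp est 1} applies. Combined with the convention for $G_L(x,\frac12+i0)$, it gives
\begin{equation*}
G_L(x,\zeta)-G_L(x,\tfrac12+i0)-H(x,\zeta)+H(x,\tfrac12+i0)=\chi_{\R^+}(x)\Big[\tfrac{i}{1-2\zeta}+\tfrac{i}{1-2\zeta^*}e^{i\lambda x}+x-\tfrac{2i}3\Big],
\end{equation*}
with $\lambda=Z^{-1}(\zeta)\to 0$ and $\imag\lambda\ge 0$.
\begin{itemize}
\item Write $\zeta=Z(\lambda)$ and $\zeta^*=Z(-\lambda)$, and Taylor expand $Z$ at $0$. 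Since $\zeta-\zeta^*=\lambda$ by \eqref{eq: zeta-zeta*}, we get $1-2\zeta=-\lambda+O(\lambda^2)$ and $1-2\zeta^*=\lambda+O(\lambda^2)$.
\item The bracket is then $\frac{i}{\lambda}(e^{i\lambda x}-1)+x+c_1(\lambda)+c_2(\lambda)e^{i\lambda x}$. Here $c_1,c_2$ are bounded and analytic near $\lambda=0$, and they are arranged so that the limit of the whole expression is $0$ for each fixed $x$; this is exactly the removable-singularity computation in the Remark after Proposition \ref{prop: green est}.
\item Boundedness in $L^\infty_{-1,+}$: since $\imag\lambda\ge0$ and $x\ge0$, we have $|e^{i\lambda x}|\le 1$ and $|e^{i\lambda x}-1|\le|\lambda|x$. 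Hence the bracket is $O(1+x)$, uniformly in $\zeta$.
\item Uniform convergence on compacts: for $|x|\le R$, expand $e^{i\lambda x}=1+i\lambda x+O(\lambda^2x^2)$. Then $\frac{i}{\lambda}(e^{i\lambda x}-1)+x=O(|\lambda|R^2)$, and the constant terms converge to their limiting values at rate $O(|\lambda|(1+R))$. So the whole expression tends to zero uniformly on $[-R,R]$.
\end{itemize}

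\emph{The limit $\zeta_2\to\zeta_1\in(\R^+-i0)\setminus\{\frac12-i0\}$.} Here $\imag\zeta_2\le0$. For $\zeta_2$ near $\zeta_1$, use the second representation in \eqref{eq: G Lp est 1}; the contour argument in the proof of Proposition \ref{prop: green est} allows this for any $\real\zeta$ bounded below, not only small $|\imag\zeta|$ near $\frac12$. The difference becomes
\begin{equation*}
i\Big(\tfrac1{1-2\zeta_2}-\tfrac1{1-2\zeta_1}\Big)\chi_{\R^+}-i\chi_{\R^-}(x)\Big(\tfrac{e^{i\lambda_2x}}{1-2\zeta_2^*}-\tfrac{e^{i\lambda_1x}}{1-2\zeta_1^*}\Big).
\end{equation*}
\begin{itemize}
\item Away from $\zeta_1=\frac12$, $\lambda_1\ne0$, so the coefficients are continuous in $\zeta$.
\item For $x<0$, $\imag\lambda_2\le0$ gives $|e^{i\lambda_2x}|\le1$. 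This yields a uniform $L^\infty$ bound.
\item On $[-R,0]$, $|e^{i\lambda_2x}-e^{i\lambda_1x}|\le R|\lambda_1-\lambda_2|\to0$ by continuity of $Z^{-1}$ on $V$ up to the cut, which gives uniform convergence on compacts.
\end{itemize}

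\emph{The main obstacle.} I expect the main difficulty to be keeping $e^{i\lambda x}$ uniformly bounded on the relevant half-line. This requires $\imag\lambda\ge0$ for $x>0$ in the first case and $\imag\lambda\le0$ for $x<0$ in the second. It follows from the description of $U$ in Lemma \ref{lem: Z inv domain}: the upper half $\zeta$-plane near $\R^+$ corresponds to $\imag\lambda>0$, with $Z^{-1}$ real on $\R^+$. I would verify this sign property from $Z(\bar\lambda)=\overline{Z(\lambda)}$ and the fact that $Z$ is a bijection of $\R$ onto $\R^+$.
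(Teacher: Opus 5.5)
Your proposal is correct and is essentially the paper's proof: both subtract $H$ using the representations in \eqref{eq: G Lp est 1}, bound the remaining explicit pole terms using $\imag\lambda\ge 0$ on $x>0$ (resp.\ $\imag\lambda\le 0$ on $x<0$), and obtain convergence on compact sets from the Taylor expansion of $e^{i\lambda x}$. The only difference is cosmetic: for $\zeta\to\frac12+i0$ the paper regroups the expression as $i\bigl(\frac1{1-2\zeta}+\frac1{1-2\zeta^*}-\frac23\bigr)-x\bigl(\frac{\lambda}{1-2\zeta^*}-1\bigr)+\frac{i\lambda}{1-2\zeta^*}\bigl(\frac{e^{i\lambda x}-1}{\lambda}-ix\bigr)$, rather than Laurent-expanding the coefficients in $\lambda$ as you do.
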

\begin{proof}
We only show the proof for $G_L$. As $\zeta\to\frac12+i0$,
\begin{align}
&~G_L(x,\zeta)-G_L(x,\tfrac12+i0) - H(x,\zeta)+H(x,\tfrac12+i0)\notag\\
= &~\left(\frac{i}{1-2\zeta}+\frac{i}{1-2\zeta^*}e^{i\lambda x}+x-i\frac23\right)\chi_{\R^+}(x)\notag\\
=&~i\chi_{\R^+}(x)\left(\frac{1}{1-2\zeta}+\frac1{1-2\zeta^*}-\frac23\right) -x\chi_{\R^+}(x)\left(\frac{\lambda}{1-2\zeta^*}-1\right)\label{G 1/2 diff line 1}\\
&\qquad + i\chi_{\R^+}(x)\frac{\lambda }{1-2\zeta^*}\left(\frac{e^{i\lambda x}-1}{\lambda}-ix\right).
\end{align}
It's easy to see that $\lambda\to 0$, $\frac{1}{1-2\zeta}+\frac1{1-2\zeta^*}-\frac23\to 0$, and $\frac{\lambda}{1-2\zeta^*}-1\to 0$ as $\zeta\to \frac12+i0$. Thus \eqref{G 1/2 diff line 1} tends to zero in $L^\infty_{-1,+}$. Since $\lambda$ has nonnegative imaginary part, $\chi_{\R^+}(x) \frac{e^{i\lambda x}-1}\lambda$ is bounded in $L^\infty_{-1,+}$. On the other hand, it's easy to see that
\begin{equation}
\chi_{\R^+}(x)\left(\frac{e^{i\lambda x}-1}{\lambda}-ix\right)=\chi_{\R^+}(x)\left(\frac{e^{i\lambda x} - 1 -i\lambda x}{i\lambda x}ix\right)
\end{equation}
converges to zero uniformly on compact sets.

In the case when $\zeta_2\to \zeta_1\in( \R^+-i0)\setminus \{\frac12-i0\}$
\begin{align}
&~G_L(x,\zeta_2)-G_L(x,\zeta_1) -H(x,\zeta_2)+H(x,\zeta_1)\notag\\
=&~ \left(\tfrac{i}{1-2\zeta_2}-\tfrac{i}{1-2\zeta_1} \right)\chi_{\R^+}(x) + \left(\tfrac{ie^{i\lambda_2 x}}{1-2\zeta_2^*}-\tfrac{ie^{i\lambda_1 x}}{1-2\zeta_1^*}\right)\chi_{\R^-}(x)\notag\\
=&~\left(\tfrac{i}{1-2\zeta_2}-\tfrac{i}{1-2\zeta_1} \right)\chi_{\R^+}(x) +ie^{i\lambda_2 x} \left(\tfrac{1}{1-2\zeta_2^*}-\tfrac{1}{1-2\zeta_1^*}\right)\chi_{\R^-}(x)\\
&\qquad + \tfrac{i}{1-2\zeta_1^*}\left(e^{i\lambda_2 x}-e^{i\lambda_1 x}\right)\chi_{\R^-}(x)
\end{align}
Note that $\lambda_1$ and $\lambda_2$ has negative imaginary parts. It follows that $e^{i\lambda_1 x}\chi_{\R^-}(x)$ and $e^{i\lambda_2 x}\chi_{\R^-}(x)$ are bounded. On the other hand,
\begin{equation}
\left(e^{i\lambda_2 x}-e^{i\lambda_1 x}\right)\chi_{\R^-}(x)  = e^{i\lambda_1 x}\left(e^{i[\lambda_2- \lambda_1]x}-1\right)\chi_{\R^-}(x)
\end{equation}
tends to zero uniformly on compact sets as $\zeta_2\to \zeta_1$. The assertion now follows.
\end{proof}

We now return to the proof of 
Proposition \ref{prop: G cont}. It will be proven in several steps via the following lemmas. We start by showing a few estimates regarding the relative sizes of the terms in $\xi-\zeta(1-e^{-2\xi})$.

\begin{lemma}\label{lem: rel size est}
For any $\mu\in\R^+$, let $\lambda=Z^{-1}(\mu)\in \R$. %Note that $\mu$ being small implies $\lambda$ large in size and negative, while $\mu$ being large implies $\lambda$ large and positive. 
There is a small number $\mu_0>0$ and a large number $\mu_1=Z(\lambda_1)>0$ such that 
\begin{enumerate}[(i)]
\item Suppose $0<\mu<\mu_0$, then 

if $\lambda-1<\xi<\lambda+1$, 
\begin{equation}\label{eq: p lam}
|\lambda(\xi-\lambda)|<\left|\xi-\mu(1-e^{-2\xi})\right|<4|\lambda(\xi-\lambda)|;
\end{equation}

if $\xi<\lambda-1$, $\mu(1-e^{-2\xi})$ dominates, i.e.
\begin{equation}\label{eq: p lam-1}
|\xi|<\frac12\left|\mu(1-e^{-2\xi})\right|;
\end{equation}

if $\xi>\lambda+1$, $\xi$ dominates, i.e.
\begin{equation}\label{eq: p lam+1}
\left|\mu(1-e^{-2\xi})\right|<\frac12|\xi|;
\end{equation}

if $\xi<2\lambda$, $\mu(1-e^{-2\xi})$ dominates super polynomially, i.e. for every $N>0$, there is a $C_N>0$ such that
\begin{equation}\label{eq: p 2lam}
|\xi|^N<C_N\left|\mu(1-e^{-2\xi})\right|.
\end{equation}

if $\xi>\frac\lambda 2$, $\mu e^{-2\xi}$ is super polynomially small compared with $\frac{1}{|\xi|+|\lambda|}$, i.e. for every $N>0$, there is a $C_N>0$ such that
\begin{equation}\label{eq: p lam/2}
\mu e^{-2\xi}\le \frac{C_N}{(|\xi|+|\lambda|)^N}.
\end{equation}

\item Suppose $\mu>\mu_1$, $\xi>\lambda_1$, then 
\begin{equation}\label{eq: p pos lam}
\frac12|\xi-\lambda|<\left|\xi-\mu(1-e^{-2\xi})\right|<2|\xi-\lambda|.
\end{equation}
\end{enumerate}
\end{lemma}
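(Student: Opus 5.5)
The plan is to use the defining identity $\lambda = \mu(1-e^{-2\lambda})$ for real $\lambda=Z^{-1}(\mu)$, which holds because $\lambda\neq 0$ when $\mu\neq\frac12$. With $p(\xi)=\xi-\mu(1-e^{-2\xi})$ this gives $p(\lambda)=0$ and
\begin{equation*}
p(\xi)=(\xi-\lambda)+\mu e^{-2\lambda}\bigl(e^{-2(\xi-\lambda)}-1\bigr), \qquad \mu e^{-2\lambda}=\mu-\lambda .
\end{equation*}
For part (i), Lemma \ref{lem: zeta}, equation \eqref{eq: zeta near 0}, gives $\lambda\sim\frac12\log\mu\to-\infty$ as $\mu\to0$, so $\mu e^{-2\lambda}=\mu-\lambda\approx|\lambda|$ is large. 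Set $t=\xi-\lambda$. Then
\begin{equation*}
p=t+(\mu-\lambda)(e^{-2t}-1)=t+(|\lambda|+\mu)(e^{-2t}-1).
\end{equation*}

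\textbf{Step 1: $|t|<1$, giving \eqref{eq: p lam}.} Since $(e^{-2t}-1)/t$ lies between $2e^{-2}$ and $2e^{2}$ in absolute value for $|t|<1$, and $|\lambda|$ dominates both $t$ and $\mu$, we get $|p|\approx|\lambda|\,|t|\,|(e^{-2t}-1)/t|$. This bound is up to constants, so the stated constants $1$ and $4$ need care. I would check that the ratio $(e^{-2t}-1)/(-2t)$ stays within a factor compatible with $[1,4]$ after shrinking $\mu_0$, or accept slightly adjusted constants. I expect pinning down these explicit constants to be the only fiddly point.

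\textbf{Step 2: $t<-1$ and $t>1$, giving \eqref{eq: p lam-1} and \eqref{eq: p lam+1}.}
\begin{itemize}
\item For $t<-1$: $\mu|1-e^{-2\xi}|\ge(|\lambda|+\mu)(e^{2|t|}-1)$ minus a lower-order term, which exceeds $2|\xi|=2(|\lambda|+|t|)$ because $e^{2|t|}-1\ge e^2-1$ and $e^{2s}$ grows faster than $s$.
\item For $t>1$, i.e. $\xi>\lambda+1$: $\mu|1-e^{-2\xi}|\le\mu+\mu e^{-2\xi}$, and $\mu e^{-2\xi}\le(|\lambda|+\mu)e^{-2}$. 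We need this below $|\xi|/2$. This holds when $|\xi|$ is comparable to $|\lambda|$ or larger, since $2e^{-2}<1$. Near $\xi\approx0$, where $|\xi|$ is small, $\mu e^{-2\xi}\approx\mu$ is also tiny, so this region needs a separate check. I would split into the subregions $\xi\le\lambda/2$ and $\xi\ge\lambda/2$ and treat each directly.
\end{itemize}

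\textbf{Step 3: the super-polynomial bounds.}
\begin{itemize}
\item For $\xi<2\lambda$, i.e. $t<\lambda$: $\mu e^{-2\xi}\ge(|\lambda|+\mu)e^{2|\lambda|}\cdot e^{2(|\xi|-2|\lambda|)}$. This gives exponential growth in $|\xi|$, using $e^{2|\lambda|}\approx|\lambda|/\mu$. That yields \eqref{eq: p 2lam}.
\item For $\xi>\lambda/2$: $\mu e^{-2\xi}\le\mu e^{|\lambda|}\approx\mu^{1/2}|\lambda|^{1/2}\approx e^{-|\lambda|}|\lambda|^{1/2}$, combined with the additional factor $e^{-2(\xi-\lambda/2)}$. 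Both are super-polynomially small in $|\xi|+|\lambda|$, giving \eqref{eq: p lam/2}.
\end{itemize}

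\textbf{Step 4: part (ii), \eqref{eq: p pos lam}.} For large $\mu$, equation \eqref{eq: zeta near +inf} gives $\lambda\to+\infty$ with $\mu-\lambda=\mu e^{-2\lambda}\to0$, so $\lambda>\lambda_1$. By the mean value theorem, $p(\xi)=p'(\eta)(\xi-\lambda)$ with $p'(\eta)=1-2\mu e^{-2\eta}$ for some $\eta$ between $\xi$ and $\lambda$.
\begin{itemize}
\item For $\eta\ge\min(\xi,\lambda)$ large, $2\mu e^{-2\eta}$ is small, so $p'(\eta)\in(\frac12,2)$.
\item For $\xi$ much smaller than $\lambda$ but still above the threshold $\lambda_1$ (understood as a fixed real threshold), use instead the direct bound $|p(\xi)|\ge|\xi-\mu|-\mu e^{-2\xi}$. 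This requires choosing $\lambda_1$ large enough that $\mu e^{-2\xi}$ is negligible compared with $|\xi-\lambda|$ whenever $\xi>\lambda_1$, possibly after adjusting the threshold.
\end{itemize}
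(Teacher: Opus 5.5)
Your Step~1 hedge has to be resolved in favor of adjusted constants. The constants $1$ and $4$ in \eqref{eq: p lam} are false, and no choice of $\mu_0$ rescues them. Your own expansion shows this exactly. With $t=\xi-\lambda$,
\[
\frac{p(\xi)}{\lambda(\xi-\lambda)}=\Bigl(1+\frac{\mu}{|\lambda|}\Bigr)\frac{1-e^{-2t}}{t}-\frac{1}{|\lambda|},
\]
which tends to $(1-e^{-2t})/t$ uniformly for $|t|\le 1$ as $\mu\to0$. This function is decreasing, from $e^{2}-1\approx 6.39$ at $t=-1$ to $1-e^{-2}\approx 0.86$ at $t=1$. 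So near $\xi=\lambda\pm1$ the ratio leaves $[1,4]$; for instance, with $\lambda=-10$, the points $\xi=\lambda-0.99$ and $\xi=\lambda+0.99$ give ratios of about $6.2$ and $0.77$.

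What your argument does establish, for $\mu_0$ small, is the two-sided bound with constants such as $\frac12$ and $8$. That is all the paper needs, since \eqref{eq: p lam} only enters later estimates up to multiplicative constants (e.g.\ in the proof of Lemma~\ref{lem: Dk G uniform}). The paper's own proof has the same defect. It applies the mean value theorem to $Z$ and asserts the quotient tends to $2$. But $\xi_1$ lies between $\xi$ and $\lambda$, so the factor $e^{-2\xi_1}(e^{-2\xi}-1)/(e^{-2\xi_1}-1)^2$ tends to $e^{2(\xi_1-\xi)}$ rather than to $1$. That argument therefore also gives only comparability, with constants near $2e^{\mp2}$; your direct expansion is the sharper of the two.

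The rest of your plan works, but you apply monotonicity and the mean value theorem to different objects than the paper does.

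For \eqref{eq: p lam-1} and \eqref{eq: p lam+1}, the paper writes $|\xi|/|\mu(1-e^{-2\xi})|=Z(\xi)/\mu$ with $Z$ increasing on $\R$. It then only needs $Z(\lambda-1)/Z(\lambda)\to e^{-2}$ and $Z(\lambda)/Z(\lambda+1)\to e^{-2}$, with $e^{-2}<\frac12$. This handles the region near $\xi=0$ automatically. Your bound $\mu|1-e^{-2\xi}|\le\mu+\mu e^{-2\xi}$ is too crude there and needs the extra split you mention, e.g.\ $|1-e^{-2\xi}|\le 2|\xi|e^{2|\xi|}$ on $[\lambda/2,0]$ and $1-e^{-2\xi}\le 2\xi$ for $\xi>0$.

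For part (ii), the paper uses $p(\xi)=(1-e^{-2\xi})(Z(\xi)-Z(\lambda))$ and $Z'\to1$ at $+\infty$, which is uniform in $\mu$. Applying the mean value theorem to $p$ itself gives $p'(\eta)=1-2\mu e^{-2\eta}$ with $\mu$ unbounded. So your first bullet's claim, that $2\mu e^{-2\eta}$ is small once $\min(\xi,\lambda)$ is large, holds only when $\min(\xi,\lambda)\ge\frac12\log(8\mu)$, say. The split between your two bullets must be made at a $\mu$-dependent point such as $\xi=\frac12\log(8\mu)$, not at the fixed $\lambda_1$. With that choice and $\lambda_1$ large, the argument closes: for $\lambda_1<\xi<\frac12\log(8\mu)$ one has $\lambda-\xi\ge\lambda/2$ and $\mu e^{-2\xi}\le\mu e^{-2\lambda_1}\le 4e^{-2\lambda_1}(\lambda-\xi)$.

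There is also a harmless slip in Step~3: $\mu e^{|\lambda|}\approx|\lambda|e^{-|\lambda|}$, not $|\lambda|^{1/2}e^{-|\lambda|}$.
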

\begin{proof}
    Suppose $0<\mu<\mu_0$, for a small $\mu_0$ to be determined. For $\xi\in(\lambda-1,\lambda+1)$, we have for some $\xi_1\in(\lambda-1,\lambda+1)$:
    \begin{align}
        \left|\frac{\xi-\mu(1-e^{-2\xi})}{\lambda(\xi-\lambda)}\right|&=\frac{|Z(\xi)-Z(\lambda)||1-e^{-2\xi}|}{|\lambda(\xi-\lambda)|}\notag\\
        &=\frac{|Z'(\xi_1)||e^{-2\xi}-1|}{|\lambda|}\notag\\
        &=\frac{|1-2\xi_1-e^{2\xi_1}|}{|\lambda|}\frac{e^{-2\xi_1}(e^{-2\xi}-1)}{(e^{-2\xi_1}-1)^2}\to 2
    \end{align}
    as $\mu_0\searrow 0$ or $\lambda\to-\infty$. Thus \eqref{eq: p lam} follows for $\mu_0$ sufficiently small.

    Recall that $Z$ maps $\mathbb R$ to $\mathbb R^+$ as an increasing function. As a result, if $\xi<\lambda-1$, 
    \begin{align}
        \left|\frac{\xi}{\mu(1-e^{-2\xi})}\right|=\frac{Z(\xi)}{\mu}< \frac{Z(\lambda-1)}{Z(\lambda)}&=\frac{1-e^{-2\lambda}}{\lambda}\frac{\lambda-1}{1-e^{-2(\lambda-1)}}\notag\\
        &\to e^{-2}<\frac12
    \end{align}
    as $\lambda\to-\infty$. Thus \eqref{eq: p lam-1} follows.
    Similarly, if $\xi>\lambda+1$, 
    \begin{align}
        \left|\frac{\mu(1-e^{-2\xi})}{\xi}\right|=\frac{\mu}{Z(\xi)}< \frac{Z(\lambda)}{Z(\lambda+1)}&=\frac{\lambda}{1-e^{-2\lambda}}\frac{1-e^{-2(\lambda+1)}}{\lambda+1}\notag\\
        &\to e^{-2}<\frac12
    \end{align}
    as $\lambda\to-\infty$. Thus \eqref{eq: p lam+1} follows. 

    Now assume $\xi<2\lambda$, we have
    \begin{align}
        \left|\frac{\xi^N}{\mu(1-e^{-2\xi})}\right|\lesssim e^{2(\xi-\lambda)}\frac{|\xi|^N}{|\lambda|}&\lesssim_N e^{2(\xi-\lambda)}\frac{|\xi-\lambda|^N+|\lambda|^N}{|\lambda|}\notag\\
        &\lesssim_N e^{2(\xi-\lambda)}|\xi-\lambda|^{N-1}+e^{2\lambda}|\lambda|^{N-1}\notag\\
        &\lesssim_N1.
    \end{align}
    Similarly, if $\xi>\frac\lambda2$,
    \begin{align}
        \mu e^{-2\xi}(|\xi|+|\lambda|)^N&\lesssim_N|\lambda|e^{-2(\xi-\lambda)}\left(|\xi-\lambda|^N+2|\lambda|^N\right)\notag\\
        &\lesssim_{N}|\lambda|e^{\frac{\lambda}{2}}e^{-(\xi-\lambda)}|\xi-\lambda|^N+|\lambda|^{N+1}e^{\lambda}\notag\\
        &\lesssim_N 1.
    \end{align}

    Finally, we switch to the second case, when $\mu>\mu_1=Z(\lambda_1)$, and $\xi>\lambda_1$. By the mean value theorem, there is a $\xi_1$ between $\xi$ and $\lambda$ such that 
    \begin{align}
        \left|\frac{\xi-\mu(1-e^{-2\xi})}{\xi-\lambda}\right|&=(1-e^{-2\xi})\frac{|Z(\xi)-Z(\lambda)|}{|\xi-\lambda|}\notag\\
        &=(1-e^{-2\xi})|Z'(\xi_1)|\to 1
    \end{align}
    as $\mu_1\to\infty$. Thus \eqref{eq: p pos lam} follows.
\end{proof}

For the next lemma, we need the function 
\begin{equation}
    Z_1(\lambda) = \frac{\lambda}{1+e^{-2\lambda}}.
\end{equation}
It is easy to see with a direct calculation that $Z_1$ regarded as a function on $\mathbb R$ is strictly decreasing on $(-\infty, -\frac12(1+W_0(\frac1e))]$, with range $[-\frac12W_0(\frac1e),0)$, and is strictly increasing on $[-\frac12(1+W_0(\frac1e)),\infty)$, with range $[-\frac12W_0(\frac1e),\infty)$. We denote the inverses of the two pieces by $Z_{1,-}^{-1}$ and $Z_{1,+}^{-1}$ respectively.
\begin{lemma}\label{lem: rel size est 1}
There is a small $\mu_0>0$ and a large $\mu_1>0$ such that
\begin{enumerate}[(i)]
    \item Suppose $0<\mu<\mu_0$, and let $\lambda = Z_{1,-}^{-1}(-\mu)$, then
    
    if $\lambda-1<\xi<\lambda+1$, 
\begin{equation}
|\lambda(\xi-\lambda)|<\left|\xi+\mu(1+e^{-2\xi})\right|<4|\lambda(\xi-\lambda)|;
\end{equation}

if $\xi<\lambda-1$, 
\begin{equation}
|\xi|<\frac12\left|\mu(1+e^{-2\xi})\right|;
\end{equation}

if $\xi>\lambda+1$, 
\begin{equation}
\left|\mu(1+e^{-2\xi})\right|<\frac12(1+|\xi|);
\end{equation}

if $\xi<2\lambda$,  for every $N>0$, there is a $C_N>0$ such that
\begin{equation}
|\xi|^N<C_N\left|\mu(1+e^{-2\xi})\right|.
\end{equation}

if $\xi>\frac\lambda 2$, for every $N>0$, there is a $C_N>0$ such that
\begin{equation}
\mu e^{-2\xi}\le \frac{C_N}{(|\xi|+|\lambda|)^N}.
\end{equation}

\item Suppose $\mu>\mu_1$. Let $\lambda=Z_{1,+}^{-1}(\mu)$, $\lambda_1=Z_{1,+}^{-1}(\mu_1)$.
 If  $\xi>\lambda_1$, then 
\begin{equation}
\frac12|\xi-\lambda|<\left|\xi-\mu(1+e^{-2\xi})\right|<2|\xi-\lambda|.
\end{equation}
\end{enumerate}
%(i) of Lemma \ref{lem: rel size est} continues to hold (with the extra restriction $|\xi|\ge 1$ in \eqref{eq: p lam+1}) if $-\mu(1-e^{-2\xi})$ is replaced everywhere by $\mu(1+e^{-2\xi})$, and $\lambda\ll -1$ is the unique large negative number such that $\frac{\lambda}{1+e^{-2\lambda}}=-\mu$.

\end{lemma}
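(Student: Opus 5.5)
I will prove Lemma \ref{lem: rel size est 1} by following the proof of Lemma \ref{lem: rel size est} line by line, with $Z$ replaced by $Z_1$. The basic identity is
$$\xi+\mu(1+e^{-2\xi})=(1+e^{-2\xi})\bigl(Z_1(\xi)-Z_1(\lambda)\bigr),$$
which holds because $Z_1(\lambda)=-\mu$. For case (ii) the corresponding identity is $\xi-\mu(1+e^{-2\xi})=(1+e^{-2\xi})(Z_1(\xi)-\mu)$ with $Z_1(\lambda)=\mu$.

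\emph{Asymptotics of $\lambda$.} As $\mu\searrow0$ we have $\lambda=Z_{1,-}^{-1}(-\mu)\to-\infty$ and $\mu\approx|\lambda|e^{2\lambda}$. This follows because $Z_1(\lambda)=\lambda e^{2\lambda}/(1+e^{2\lambda})$.

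\emph{Near estimate, $|\xi-\lambda|<1$.} By the mean value theorem,
$$\left|\frac{\xi+\mu(1+e^{-2\xi})}{\lambda(\xi-\lambda)}\right|=\frac{(1+e^{-2\xi})|Z_1'(\xi_1)|}{|\lambda|}.$$
Here
$$Z_1'(\xi_1)=\frac{1+e^{-2\xi_1}+2\xi_1e^{-2\xi_1}}{(1+e^{-2\xi_1})^2}\approx 2\xi_1e^{2\xi_1}$$
for $\xi_1\ll0$. Multiplying by $1+e^{-2\xi}\approx e^{-2\xi}$ and dividing by $|\lambda|$, the ratio is $\approx 2e^{2(\xi_1-\xi)}|\xi_1|/|\lambda|$. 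This tends to $2e^{2(\xi_1-\xi)}\in(2e^{-4},2e^{4})$, which is not close to $2$.

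So, unlike the unprimed lemma, the stated constants $1$ and $4$ are not automatic. I will recheck this step carefully, since it is the main obstacle. I expect that the same computation done in the paper's proof of Lemma \ref{lem: rel size est} (where the limit is taken jointly) is what is intended. If needed, I would restrict to a smaller window $|\xi-\lambda|<\delta$, or accept constants $c,C$ in place of $1$ and $4$, which is all that the later applications use. The cleanest route is to write $Z_1(\xi)-Z_1(\lambda)$ directly. With $\eta=\xi-\lambda$,
$$\xi+\mu(1+e^{-2\xi})\approx \eta+|\lambda|(e^{-2\eta}-1)$$
up to lower order terms. For $|\lambda|$ large and $|\eta|<1$, this is comparable to $|\lambda||e^{-2\eta}-1|\asymp|\lambda||\eta|$.

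\emph{Far regions.} For $\xi<\lambda-1$ and $\xi>\lambda+1$ I will use monotonicity of $Z_1$.
\begin{itemize}
\item For $\xi<\lambda-1$: $Z_1$ is decreasing and negative there, so $|\xi|/(\mu(1+e^{-2\xi}))=|Z_1(\xi)|/\mu$. Using $|Z_1(\xi)|\approx|\xi|e^{2\xi}$, this is at most $|Z_1(\lambda-1)|/|Z_1(\lambda)|\to e^{-2}<\tfrac12$.
\item For $\xi>\lambda+1$: we need $\mu(1+e^{-2\xi})<\tfrac12(1+|\xi|)$. On $\lambda+1<\xi<0$, use $|Z_1(\xi)|\ge \min\bigl(|Z_1(\lambda+1)|,\cdot\bigr)$ away from the minimum point $-\tfrac12(1+W_0(1/e))$. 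The ratio $\mu/|Z_1(\lambda+1)|\to e^{-2}$ handles $\xi$ near $\lambda$. For $\xi$ bounded, or $\xi\ge0$, the left side is at most $\mu(1+e^{2C})$, which is small; this is why the $1+|\xi|$ correction appears.
\end{itemize}

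\emph{Super-polynomial bounds.} For $\xi<2\lambda$ and $\xi>\lambda/2$ the bounds are verbatim the chains in the proof of Lemma \ref{lem: rel size est}, using $\mu\approx|\lambda|e^{2\lambda}$.

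\emph{Case (ii).} For $\mu>\mu_1$ and $\xi>\lambda_1$, the ratio is $(1+e^{-2\xi})|Z_1'(\xi_1)|$ with $\xi_1$ between $\xi$ and $\lambda$. We have $(1+e^{-2\xi})Z_1'(\xi_1)\to1$ as $\lambda_1\to\infty$, so it lies in $(\tfrac12,2)$ for $\mu_1$ large.
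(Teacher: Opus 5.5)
Your route is the paper's route: its proof of this lemma only says the estimates are ``very similar to the proof of Lemma \ref{lem: rel size est}.'' Your treatment of the far regions, the super-polynomial bounds and part (ii) is correct. For $\xi>\lambda+1$, the clean split is at the minimum point $m=-\frac12(1+W_0(\frac1e))$ of $Z_1$. On $(\lambda+1,m]$, monotonicity of $|Z_1|$ gives $\mu/|Z_1(\xi)|<\mu/|Z_1(\lambda+1)|\to e^{-2}$. On $[m,\infty)$ the left side is at most $\mu(1+e^{-2m})<\frac12$.

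You were right to distrust the near-region step, but it is not something to ``recheck'': with the constants $1$ and $4$ it is false for every choice of $\mu_0$. Write $\eta=\xi-\lambda$ and $\mu=|\lambda|/(1+e^{-2\lambda})$. Your ``cleanest route'' then becomes an exact identity:
\[
\frac{\xi+\mu(1+e^{-2\xi})}{\lambda(\xi-\lambda)}=\frac1\lambda+\frac{1-e^{-2\eta}}{\eta\,(1+e^{2\lambda})}.
\]
Here $(1-e^{-2\eta})/\eta=\int_0^2 e^{-s\eta}\,ds$ decreases from $e^2-1\approx 6.39$ at $\eta=-1$ to $1-e^{-2}\approx 0.86$ at $\eta=1$. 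So as $\mu\searrow0$, the upper bound $4$ fails near $\xi=\lambda-1$ and the lower bound $1$ fails near $\xi=\lambda+1$. For example, $\lambda=-10$ and $\xi=\lambda\mp0.99$ give ratios $\approx 6.21$ and $\approx 0.77$.

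Two of your side remarks are also wrong, and they affect how you should resolve this.
\begin{itemize}
\item The defect is not special to $Z_1$, so ``unlike the unprimed lemma'' is inaccurate. In the proof of Lemma \ref{lem: rel size est}, the factor $\frac{e^{-2\xi_1}(e^{-2\xi}-1)}{(e^{-2\xi_1}-1)^2}$ tends to $e^{2(\xi_1-\xi)}$, not $1$. The same computation gives the identity above with $1+e^{2\lambda}$ replaced by $1-e^{2\lambda}$, so \eqref{eq: p lam} fails in exactly the same way.
\item No ``joint limit'' rescues the constants; the identity shows the limit genuinely depends on $\eta$.
\end{itemize}

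What your argument (and the paper's) actually proves is a comparability statement. On $|\xi-\lambda|<1$,
\[
c\,|\lambda(\xi-\lambda)|\le|\xi+\mu(1+e^{-2\xi})|\le C\,|\lambda(\xi-\lambda)|
\]
holds for any $c<1-e^{-2}$ and $C>e^2-1$, say $c=\frac12$, $C=7$, once $\mu_0$ is small. Alternatively, the original constants $1,4$ hold on the smaller window $|\xi-\lambda|<\frac12$, where the limit function lies in $(2(1-e^{-1}),2(e-1))\approx(1.26,3.44)$. The later uses of these lemmas, e.g.\ in Lemma \ref{lem: Dk G uniform}, only need this $\lesssim$-type comparability. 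So state and prove the corrected inequality rather than trying to recover $1$ and $4$.
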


\begin{proof}
    We omit the details of the proof of this lemma, as it involves estimates very similar to the proof of Lemma \ref{lem: rel size est}.
\end{proof}
%\begin{remark}\label{rem: rel size est}
%(ii) of Lemma \ref{lem: rel size est} continues to hold if $-\mu(1-e^{-2\xi})$ is replaced everywhere by $-\mu(1+e^{-2\xi})$, and $\lambda>0$ is the unique number such that $\frac{\lambda}{1+e^{-2\lambda}}=\mu$.
%\end{remark}

We will also need a complex version of (i) of Lemma \ref{lem: rel size est}. The following lemma is stated for $\zeta$ in the upper half plane, but its complex conjugated counterpart also holds.
\begin{lemma}
For $\zeta\in V$, let $\lambda=Z^{-1}(\zeta)$ (see Definition \ref{def: Z inv}). If $|\zeta|<\epsilon_0$ sufficiently small, we have:
\begin{enumerate}[(i)]

\item if $\real\xi\le \real\lambda-1$, $-\pi\le \imag \xi\le \pi$, then $\zeta(1-e^{-2\xi})$ dominates, i.e.
\begin{equation}\label{eq: p lam-1 comp}
|\xi|\le \frac12\left|\zeta(1-e^{-2\xi})\right|;
\end{equation}

\item if $\real\xi\ge \real\lambda+1$, $-\pi\le \imag \xi\le \pi$, then $\xi$ dominates, i.e.
\begin{equation}\label{eq: p lam+1 comp}
\left|\zeta(1-e^{-2\xi})\right|\le \frac12|\xi|;
\end{equation}

\item if $\real\lambda - 1\le\real \xi \le \real\lambda+1$, $-\frac{\pi}4\le \imag \xi\le \frac{3\pi} 4$, there is a constant $C>0$ such that
\begin{equation}\label{eq: p lam comp}
\frac1C |\lambda(\xi-\lambda)|\le \left|\xi-\zeta(1-e^{-2\xi})\right|\le C|\lambda(\xi-\lambda)|;
\end{equation}

\item if $\real\xi\le 2\real\lambda$, $\zeta(1-e^{-2\xi})$ dominates super polynomially, i.e. for every $N>0$, there is a $C_N>0$ such that
\begin{equation}\label{eq: p 2lam comp}
|\xi|^N\le C_N\left|\zeta(1-e^{-2\xi})\right|.
\end{equation}

\item if $\real\xi\ge\real\frac\lambda 2$, $\zeta e^{-2\xi}$ is super polynomially small compared with $\frac{1}{|\xi|+|\lambda|}$, i.e. for every $N>0$, there is a $C_N>0$ such that
\begin{equation}\label{eq: p lam/2 comp}
|\zeta e^{-2\xi}|\le \frac{C_N}{(|\xi|+|\lambda|)^N}.
\end{equation}
\end{enumerate}
\end{lemma}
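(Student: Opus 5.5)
The plan is to work throughout with the identity $\xi-\zeta(1-e^{-2\xi})=(1-e^{-2\xi})(Z(\xi)-\zeta)$ and with the equation $\lambda=\zeta(1-e^{-2\lambda})$ satisfied by $\lambda=Z^{-1}(\zeta)$. This gives $\zeta=\lambda/(1-e^{-2\lambda})$, so $|\zeta|\approx|\lambda|e^{2\real\lambda}$. By Lemma \ref{lem: zeta}, as $\zeta\to0$ in $V$ we have $\real\lambda\to-\infty$, $\real\lambda\sim\frac12\log|\zeta|$, and $\imag\lambda$ stays bounded. I would first record this precisely: for $|\zeta|<\epsilon_0$, $|\lambda|\approx|\real\lambda|\gg1$, and
\begin{equation*}
|\zeta e^{-2\xi}|=|\lambda|\,\frac{e^{-2\real(\xi-\lambda)}}{|1-e^{-2\lambda}|e^{2\real\lambda}}\approx|\lambda|e^{-2\real(\xi-\lambda)}.
\end{equation*}
This is uniform for $|\imag\xi|\le\pi$ because $e^{-2\lambda}$ dominates $1$. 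Items (i), (ii), (iv), (v) then reduce to real-variable comparisons, essentially as in Lemma \ref{lem: rel size est}, with $\real\xi$ replacing $\xi$.

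For (i), take $\real\xi\le\real\lambda-1$. Then $|\zeta(1-e^{-2\xi})|\gtrsim|\lambda|e^{2}$, and more precisely $\gtrsim|\lambda|e^{2(\real\lambda-\real\xi)}$ since $e^{-2\real\xi}\gg1$. Also $|\xi|\le|\real\xi|+\pi\le|\lambda|+(\real\lambda-\real\xi)+\pi+O(1)$. So it suffices that $e^{2t}|\lambda|\ge 2(|\lambda|+t+C)$ for $t\ge1$, which holds once $|\lambda|$ is large. This is where a careful constant is needed: $e^2\approx7.4$ beats $2$, and the ratio $Z(\lambda-1)/Z(\lambda)\to e^{-2}$ mirrors the real proof.

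For (ii), take $\real\xi\ge\real\lambda+1$. We have $|\zeta|\,|1-e^{-2\xi}|\le|\zeta|+|\lambda|e^{-2(\real\xi-\real\lambda)}(1+o(1))$ and $|\zeta|\to0$. If $\real\xi\le0$ then $|\xi|\ge|\real\xi|$, and $|\lambda|e^{-2t}\le\frac12(|\lambda|-t)$ for $t\in[1,|\real\lambda|]$ with $|\lambda|$ large. If $\real\xi>0$, the bound $|\zeta|(1+1)\le\frac12|\xi|$ fails only for tiny $|\xi|$. Near $\xi=0$ one instead uses $1-e^{-2\xi}=O(\xi)$, giving $|\zeta(1-e^{-2\xi})|\le C|\zeta||\xi|\le\frac12|\xi|$. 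Items (iv) and (v) follow exactly as in the real proof, using the displayed size of $|\zeta e^{-2\xi}|$ and $|\imag\xi|\le\pi$ to replace $|\xi|$ by $|\real\xi|+O(1)$.

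The main obstacle is (iii), the two-sided comparison with $|\lambda(\xi-\lambda)|$ on the box $|\real(\xi-\lambda)|\le1$, $-\frac\pi4\le\imag\xi\le\frac{3\pi}4$. I would write
\begin{equation*}
\xi-\zeta(1-e^{-2\xi})=(\xi-\lambda)+\zeta(e^{-2\xi}-e^{-2\lambda})=(\xi-\lambda)+\zeta e^{-2\lambda}\bigl(e^{-2(\xi-\lambda)}-1\bigr).
\end{equation*}
Since $\zeta e^{-2\lambda}=\lambda e^{-2\lambda}/(1-e^{-2\lambda})=-\lambda(1+O(e^{2\real\lambda}))$, the expression equals $-\lambda\bigl(e^{-2w}-1\bigr)(1+o(1))+w$ with $w=\xi-\lambda$. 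Dividing by $\lambda w$, this becomes $-\frac{e^{-2w}-1}{w}(1+o(1))+\frac1\lambda$.

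The function $g(w)=(1-e^{-2w})/w$ is continuous and zero-free on the compact set of $w$ allowed, since its zeros are at $w\in i\pi\mathbb Z^*$. Hence $|g|$ is bounded above and below, and the $1/\lambda$ term is negligible. The delicate point is that the box is stated for $\xi$, not $w$. Since $\imag\lambda$ is bounded, I would need to check that $\imag\lambda\in[0,\pi/4]$ roughly for $\zeta$ in the upper half plane near $0$. This follows from the $W_1$ description of $\gamma_1$, which is asymptotic to height $\frac{i\pi}2$, together with $\real\lambda\to-\infty$. It guarantees $\imag w\in(-\pi,\pi)$, away from $\pm i\pi$. If that bound is unavailable, I would shrink the box accordingly.
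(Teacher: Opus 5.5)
Your proof is correct. For (i), (ii), (iv), (v) you follow essentially the paper's route: compare $|\xi|\le|\real\xi|+\pi$ with $|\zeta e^{-2\xi}|=|\lambda|e^{-2\real(\xi-\lambda)}/|e^{2\lambda}-1|$.

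Two small imprecisions in (ii), neither affecting correctness:
\begin{itemize}
\item For $\real\xi\le 0$ you should compare with $|\real\xi|=|\real\lambda|-t$, not $|\lambda|-t$.
\item Near $\real\xi=0$ the bound via $|\real\xi|$ cannot absorb the $|\zeta|$ term. Your Taylor estimate $|1-e^{-2\xi}|\le C|\xi|$ for $|\xi|\le 1$ covers that region whatever the sign of $\real\xi$.
\end{itemize}
The paper instead splits at $\real\xi=\real\lambda/2$ and uses $|1-e^{-2\xi}|/|\xi|\lesssim 1+e^{-\real\lambda}$ on the right piece, which is tidier.

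For (iii) your route is genuinely different from the paper's, and cleaner. The paper argues as follows:
\begin{itemize}
\item the upper bound comes from a mean-value bound on the derivative over the box $R$;
\item for the lower bound, it first asserts, via the contour-shift and figure argument, that $\xi-\zeta(1-e^{-2\xi})$ has no zero in $R$;
\item it then applies the maximum modulus principle to $\lambda(\xi-\lambda)/(\xi-\zeta(1-e^{-2\xi}))$;
\item the boundary of $R$ is handled with (i), (ii) on the vertical edges and a law-of-cosines estimate on $\arg e^{-2(\xi-\lambda)}$ on the horizontal edges.
\end{itemize}
You instead factor $\xi-\zeta(1-e^{-2\xi})=\lambda w\bigl[\lambda^{-1}+(1+O(e^{2\real\lambda}))g(w)\bigr]$, with $g(w)=(1-e^{-2w})/w$ and $w=\xi-\lambda$. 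Both bounds then follow at once from continuity and zero-freeness of $g$ on a compact set. This gives the absence of zeros in $R$ as an output rather than an input, and it explains the asymmetry of the box.

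One correction: $\imag\lambda$ is not confined to roughly $[0,\pi/4]$. Since $\zeta\approx-\lambda e^{2\lambda}$, you have $2\imag\lambda\approx\arg\zeta$, so $\imag\lambda$ ranges over roughly $[0,\pi/2]$ for $\zeta$ in the upper half plane. This matches $\gamma_1$ asymptoting to height $\pi/2$, which you yourself cite. The paper uses $0\le\imag\lambda\le 5\pi/8$.

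The slip is harmless. With the correct bound, $w$ lies in $\{|\real w|\le 1,\ -7\pi/8\le\imag w\le 3\pi/4\}$, a compact set missing $i\pi\mathbb Z^*$. So your argument goes through with the box exactly as stated; shrinking it, your fallback, would not prove the lemma anyway. Finally, (iii) holds as written only for $\imag\zeta\ge 0$; for the lower half plane the box must be reflected, as the paper remarks.
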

\begin{remark}
    The asymmetry of the range of $\imag \xi$ in (iii) is necessary. As is shown in Figure \ref{fig: Z_lambda}, if $\imag \xi$ is allowed to vary in the symmetric range $(-\frac{3\pi}{4},\frac{3\pi}{4})$, there may exist $\xi_1\ne \lambda$ such that $Z(\xi_1)=Z(\lambda)=\zeta$, for which $\xi_1-\zeta(1-e^{-2\xi_1})=0$ but $\xi_1-\lambda\ne 0$.
\end{remark}
\begin{proof}
In the following proof, we always assume $-\pi\le \imag \xi\le \pi$. By Lemma \ref{lem: zeta}, if $\epsilon_0$ is chosen small enough, we have $0\le \imag \lambda\le \frac\pi 2+\frac\pi 8$, and $\lambda\sim \frac12\log \zeta$ as $\zeta\to 0$ from the upper half plane. 
Thus $\real\lambda \sim \frac12\log|\zeta|$. For $\real\xi\le \real\lambda-1$, $-\pi\le \imag \xi\le \pi$, we compute
\begin{align}
\frac{|2\xi|}{|\zeta(1-e^{-2\xi})|}&= \frac{2|\xi|}{|\lambda|e^{-2\real (\xi-\lambda)}}\left|\frac{1-e^{2\lambda}}{1-e^{2\xi}}\right|\notag\\
&\le \left(2e^{2\real(\xi-\lambda)} +\frac{2|\xi-\lambda|e^{2\real(\xi-\lambda)}}{|\lambda|}\right)\left|\frac{1-e^{2\lambda}}{1-e^{2\xi}}\right|\notag\\
&\le \left(2e^{-2}+2\frac{|\real(\xi-\lambda)|+2\pi}{|\lambda|}e^{2\real(\xi-\lambda)}\right)\left|\frac{1-e^{2\lambda}}{1-e^{2\xi}}\right|.\label{comp lambda-1 est 1}
\end{align}
Since $2e^{-2}<1$, $\real \lambda\sim \frac12\log|\zeta|$, and $\real(\xi-\lambda)\le -1$, \eqref{comp lambda-1 est 1} can be made less than 1 if we choose $\epsilon_0$ small enough. This completes the proof of (i). Similarly, for any $N>0$, and $\real\xi<2\real\lambda$, we have 
\begin{align}
\frac{|\xi|^N}{|\zeta(1-e^{-2\xi})|}&= \frac{|\xi|^N}{|\lambda|e^{-2\real (\xi-\lambda)}}\left|\frac{1-e^{2\lambda}}{1-e^{2\xi}}\right|\notag\\
&\lesssim_N \left(|\lambda|^{N-1}e^{2\real(\xi-\lambda)} +\frac{|\xi-\lambda|^Ne^{2\real(\xi-\lambda)}}{|\lambda|}\right)\notag\\
&\lesssim_N \left(|\lambda|^{N-1}e^{2\real\lambda}+\frac{|\real(\xi-\lambda)|^N+(2\pi)^N}{|\lambda|}e^{2\real(\xi-\lambda)}\right)\notag\\
&\lesssim_N 1.
\end{align}
This completes the proof of (iv).

If $\real\lambda+1\le \real \xi\le \frac{\real\lambda}2$, we can estimate
\begin{align}
\frac{2|\zeta(1-e^{-2\xi})|}{|\xi|}&=\frac{2|\lambda| e^{-2\real(\xi-\lambda)}}{|\xi|}\left|\frac{1-e^{2\xi}}{1-e^{2\lambda}}\right|\notag\\
&\le \left(2e^{-2\real(\xi-\lambda)}+2\frac{|\real(\xi-\lambda)|+2\pi}{|\xi|}e^{-2\real(\xi-\lambda)}\right)\left|\frac{1-e^{2\xi}}{1-e^{2\lambda}}\right|\notag\\
&\le \left(2e^{-2}+2\frac{|\real(\xi-\lambda)|+2\pi}{|\xi|}e^{-2\real(\xi-\lambda)}\right)\left|\frac{1-e^{2\xi}}{1-e^{2\lambda}}\right|,
\end{align}
which can be made less than 1 for $\xi$ in the above mentioned range and $\epsilon_0$ chosen small enough. When $\real\xi\ge \frac{\real\lambda}{2}$, we have
\begin{equation}
\frac{|1-e^{-2\xi}|}{|\xi|}\lesssim (1+e^{-\real\lambda}).
\end{equation}
Thus 
\begin{equation}
\frac{2|\zeta(1-e^{-2\xi})|}{|\xi|}\lesssim \frac{2|\lambda|(1+e^{-\real\lambda})}{|1-e^{-2\lambda}|},
\end{equation}
which can be made less than 1 if $\epsilon_0$ is chosen sufficiently small. This completes the proof of (ii). Similarly, for any $N>0$ and $\real\xi\ge \frac{\real\lambda}{2}$, we have
\begin{align}
|\zeta e^{-2\xi} (|\xi|+|\lambda|)^N|&\lesssim_N |\lambda| e^{-2\real(\xi-\lambda)}(|\xi-\lambda|^N+|\lambda|^N)\notag\\
&\lesssim_N |\lambda| e^{-2\real(\xi-\lambda)} |\xi-\lambda|^N e^{-2\real(\xi-\lambda)}+|\lambda|^{N+1}e^{-2\real(\xi-\lambda)}\notag\\
&\lesssim_N |\lambda| e^{\real\lambda} +|\lambda|^{N+1}\notag\\
&\lesssim_N 1.
\end{align}
This completes the proof of (iv).

Denote $R=\{\xi\in \C~|~|\real \xi-\real\lambda|\le 1, -\frac\pi4\le \imag \xi\le \frac{3\pi}4\}$. Noting that $\lambda-\zeta(1-e^{-2\lambda})=0$, we have
\begin{align}
\frac{\left|\xi-\zeta(1-e^{-2\xi})\right|}{|\lambda(\xi-\lambda)|}&\le \frac{\sup_R |1+2\zeta e^{-2\xi}|}{|\lambda|}\notag\\
&\le \frac{1+2|\zeta|e^{-2(\real \lambda-1)}}{|\lambda|}\notag\\
&\le \frac{1}{|\lambda|}+\frac{2 e^{-2(\real \lambda-1)}}{e^{-2\real \lambda}-1}\lesssim 1.
\end{align}
This proves the second inequality in \eqref{eq: p lam comp}. Since $$\xi-\zeta(1-e^{-2\xi})=(1-e^{-2\xi})\left(Z(\xi)-\zeta\right),$$ by the same argument as for the contour shift in the proof of Proposition \ref{prop: green est} (see also Figure \ref{fig: Z_lambda}), we know that it has no zero in $R$. By the maximum modulus principle, we only need to bound $\frac{\lambda(\xi-\lambda)}{\xi-\zeta(1-e^{-2\xi})}$ on $\partial R$. When $\xi\in\partial R$ and $\real\xi = \real\lambda-1$, we can use \eqref{eq: p lam-1 comp} to get
\begin{equation}
\frac{|\lambda(\xi-\lambda)|}{|\xi-\zeta(1-e^{-2\xi})|}\lesssim \frac{|\lambda|}{|\zeta(1-e^{-2\xi})|}\lesssim\frac{|\lambda|}{|\zeta(1-e^{-2\lambda})|}= 1.
\end{equation}
If  $\xi\in\partial R$ and $\real\xi = \real\lambda+1$, we can use \eqref{eq: p lam+1 comp} to get
\begin{equation}
\frac{|\lambda(\xi-\lambda)|}{|\xi-\zeta(1-e^{-2\xi})|}\lesssim \frac{|\lambda|}{|\xi|}\lesssim 1.
\end{equation}
To estimate the bound on the remaining edges of $R$, we need the following elementary inequality:
\begin{lemma}
Let $z_1,z_2\in \C$ be such that $\cos(\arg z_1-\arg z_2)\le 1-2\delta$ for some $\delta\in(0,\frac12)$. Then
\begin{equation}\label{law of cos}
|z_1-z_2|\ge \sqrt\delta(|z_1|+|z_2|).
\end{equation}
\end{lemma}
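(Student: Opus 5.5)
The plan is to reduce the inequality to the law of cosines. Write $\theta = \arg z_1 - \arg z_2$. Then
\begin{equation*}
|z_1-z_2|^2 = |z_1|^2+|z_2|^2-2|z_1||z_2|\cos\theta .
\end{equation*}
If either $z_1$ or $z_2$ is zero, the inequality is trivial because $\sqrt\delta<1$. So I assume both are nonzero, in which case the hypothesis $\cos\theta\le 1-2\delta$ is meaningful. Substituting it gives
\begin{equation*}
|z_1-z_2|^2 \ge |z_1|^2+|z_2|^2-2(1-2\delta)|z_1||z_2| = (|z_1|-|z_2|)^2+4\delta|z_1||z_2|.
\end{equation*}

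It then suffices to compare this lower bound with $\delta(|z_1|+|z_2|)^2$. Set $a=|z_1|$ and $b=|z_2|$. I want to show
\begin{equation*}
(a-b)^2+4\delta ab\ge \delta(a+b)^2 .
\end{equation*}
The right side equals $\delta(a-b)^2+4\delta ab$. The difference between the two sides is therefore $(1-\delta)(a-b)^2$, which is nonnegative because $\delta<\tfrac12<1$.

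Taking square roots yields \eqref{law of cos}. The argument is elementary, and the only point requiring care is the degenerate case of a zero argument, which the first step handles.
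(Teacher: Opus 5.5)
Your proof is correct and is essentially the paper's argument: both apply the law of cosines, use the hypothesis to bound $|z_1-z_2|^2$ below by $|z_1|^2+|z_2|^2-2(1-2\delta)|z_1||z_2|$, and then rewrite this as $\delta(|z_1|+|z_2|)^2+(1-\delta)(|z_1|-|z_2|)^2$. Your separate treatment of the case $z_1z_2=0$ is a small extra precaution that the paper omits; it is harmless, since the cosine term vanishes in that case anyway.
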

\begin{proof}
\begin{align*}
|z_1-z_2|^2&=|z_1|^2+|z_2|^2-2|z_1z_2|\cos(\arg z_1-\arg z_2)\\
&\ge |z_1|^2+|z_2|^2-2|z_1||z_2|(1-2\delta)\\
&=\delta(|z_1|^2+|z_2|^2+2|z_1||z_2|)+(1-\delta)(|z_1|^2+|z_2|^2-2|z_1||z_2|)\\
&\ge \delta(|z_1|+|z_2|)^2.
\end{align*}
\end{proof}
We continue our proof of case (iii).
If $\xi\in \partial R$ and $\imag \xi = \frac{3\pi}4$,
\begin{align}
|\xi-\zeta(1-e^{-2\xi}) |&=\left| \xi-\lambda e^{-2(\xi-\lambda)}\frac{1-e^{2\xi}}{1-e^{2\lambda}}\right|\notag\\
&=|\lambda|\left|\frac{1-e^{2\xi}}{1-e^{2\lambda}}\right|\left|\frac{\xi}{\lambda}\frac{1-e^{2\lambda}}{1-e^{2\xi}}-e^{-2(\xi-\lambda)}\right|\notag\\
&\gtrsim |\lambda|\left|\frac{\xi}{\lambda}\frac{1-e^{2\lambda}}{1-e^{2\xi}}-e^{-2(\xi-\lambda)}\right|.\label{comp near lam est 1}
\end{align}
By taking $\epsilon_0$ small enough, we can make sure $\frac{\xi}{\lambda}\frac{1-e^{2\lambda}}{1-e^{2\xi}}$ is in a small neighborhood of $1$. On the other hand, since $0\le\imag \lambda\le \frac\pi2+\frac\pi8$, we have
\begin{equation}\label{green: exp arg est}
\arg\left(e^{-2(\xi-\lambda)}\right) = -2\imag (\xi-\lambda)\in (-\tfrac{3\pi}2,-\tfrac\pi 4).
\end{equation}
Noting that $\cos (-\frac\pi4)<\frac34$, we may assume that 
\begin{equation}
\cos \left(\arg \frac{\xi}{\lambda}\frac{1-e^{2\lambda}}{1-e^{2\xi}}-\arg e^{-2(\xi-\lambda)}\right)<\frac34.
\end{equation}
By \eqref{law of cos} and \eqref{comp near lam est 1}, we get
\begin{equation}
|\xi-\zeta(1-e^{-2\xi}) |\gtrsim |\lambda|.
\end{equation}
Thus
\begin{equation}
\frac{|\lambda(\xi-\lambda)|}{|\xi-\zeta(1-e^{-2\xi})|}\lesssim \frac{|\lambda|}{|\lambda|}=1.
\end{equation}
If $\xi\in R$ and $\imag \xi=-\frac\pi 4$, we can repeat the above estimates, replacing \eqref{green: exp arg est} by 
\begin{equation}
\arg\left(e^{-2(\xi-\lambda)}\right) = -2\imag (\xi-\lambda)\in (\tfrac{\pi}2,\tfrac{7\pi} 4).
\end{equation}
The proof of (iii) is now complete.
\end{proof}

We have similar estimates on the relative sizes of $\xi$ and $\mu e^{-2\xi}$.
\begin{lemma}
There exists a $\mu_0\in (0,1)$ small enough, such that for all $\mu\in(0,\mu_0)$, if we let $\tilde \lambda$ be the unique large and negative number such that $|\tilde\lambda| = \mu e^{-2\tilde\lambda}$, then

if $\tilde\lambda-1<\xi<\tilde\lambda+1$, there is a constant $C>0$ such that
\begin{equation}
\frac1 C |\xi|\le \mu e^{-2\xi}\le C|\xi|;
\end{equation}

if $\xi<\tilde\lambda-1$, $\mu e^{-2\xi}$ dominates, i.e.
\begin{equation}
|\xi|<\frac12 \mu e^{-2\xi};
\end{equation}

if $\xi>\tilde\lambda+1$, $1+|\xi|$ dominates, i.e. 
\begin{equation}
\mu e^{-2\xi}<\frac12 (1+|\xi|);
\end{equation}

if $\xi<2\tilde\lambda$, $\mu e^{-2\xi}$ dominates super polynomially, i.e. for every $N>0$, there is a $C_N>0$ such that
\begin{equation}\label{eq: xi e super poly 1}
|\xi|^N\le C_N \mu e^{-2\xi};
\end{equation}

if $\xi>\frac{\tilde \lambda}{2}$, $\mu e^{-2\xi}$ is super polynomially small compared with $\frac1{1+|\xi|}$, i.e.  for every $N>0$, there is a $C_N>0$ such that
\begin{equation}\label{eq: xi e super poly 2}
\mu e^{-2\xi}\le \frac{C_N}{1+|\xi|^N}.
\end{equation}
\end{lemma}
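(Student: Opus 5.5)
The plan is to reduce everything to elementary one-variable estimates in the shifted variable $s=\xi-\tilde\lambda$. By definition $\mu e^{-2\tilde\lambda}=|\tilde\lambda|=-\tilde\lambda$, so
\begin{equation*}
\mu e^{-2\xi}=|\tilde\lambda|e^{-2s}.
\end{equation*}
Existence and uniqueness of $\tilde\lambda$ come first. On $(-\infty,0)$ the function $g(t)=-te^{2t}$ increases from $0$ to $\frac{1}{2e}$ on $(-\infty,-\frac12]$. Hence for $\mu<\mu_0<\frac1{2e}$ there is a unique root with $t<-\frac12$. As $\mu\to0$ this root tends to $-\infty$, with $\tilde\lambda\sim\frac12\log\mu$. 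The other root lies in $(-\frac12,0)$ and is excluded by the phrase ``large and negative''. Throughout I will use that $|\tilde\lambda|$ can be made as large as desired by shrinking $\mu_0$.

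\textbf{Step 1: the window $|s|<1$.} Here $\mu e^{-2\xi}\in(e^{-2}|\tilde\lambda|,e^{2}|\tilde\lambda|)$. Since $|\xi|\in(|\tilde\lambda|-1,|\tilde\lambda|+1)$ and $|\tilde\lambda|\ge2$, the two-sided bound follows with $C=2e^2$.

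\textbf{Step 2: the regime $\xi<\tilde\lambda-1$.} Write $|\xi|=|\tilde\lambda|+|s|$ with $s<-1$. We need
\begin{equation*}
|\tilde\lambda|+|s|<\tfrac12|\tilde\lambda|e^{2|s|}.
\end{equation*}
At $|s|=1$ this reads $1+1/|\tilde\lambda|<e^2/2$, which holds. Differentiating in $|s|$, the right side grows at rate $|\tilde\lambda|e^{2|s|}\ge|\tilde\lambda|\ge1$, which dominates the left side's rate of $1$. So the inequality persists for all $s<-1$.

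\textbf{Step 3: the regime $\xi>\tilde\lambda+1$.} For $\tilde\lambda+1<\xi\le0$ we have $1+|\xi|\ge|\xi|=|\tilde\lambda|-s$. We need
\begin{equation*}
|\tilde\lambda|e^{-2s}<\tfrac12\bigl(|\tilde\lambda|-s+1\bigr).
\end{equation*}
Divide by $|\tilde\lambda|$ and set $t=s/|\tilde\lambda|\in(1/|\tilde\lambda|,1]$. The left side becomes $e^{-2s}\le e^{-2}$ for $s\ge1$. The right side becomes $\frac12(1-t+1/|\tilde\lambda|)$, which is close to $\frac12$ when $s\le|\tilde\lambda|/2$, say.

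For $s\in[|\tilde\lambda|/2,|\tilde\lambda|]$ the left side is at most $|\tilde\lambda|e^{-|\tilde\lambda|}$, which is tiny. Meanwhile the right side satisfies $\frac12(1+|\xi|)\ge\frac12$. For $\xi\ge0$, we have $\mu e^{-2\xi}\le\mu<\frac12$. These cases together cover the whole regime.

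\textbf{Step 4: the superpolynomial bounds.} These follow the same bookkeeping, in the way the analogous parts of Lemma~\ref{lem: rel size est} were handled.

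For \eqref{eq: xi e super poly 1}, take $\xi<2\tilde\lambda$, so $s<\tilde\lambda$. Then $|\xi|^N\lesssim_N|s|^N+|\tilde\lambda|^N$. We have $|s|^N\le C_Ne^{2|s|}\le C_N|\tilde\lambda|e^{2|s|}$. Also $|\tilde\lambda|^N\le|\tilde\lambda|^{N}e^{-|s|}\cdot e^{2|s|}$, and this is $\lesssim_N|\tilde\lambda|e^{2|s|}$ since $|s|>|\tilde\lambda|$.

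For \eqref{eq: xi e super poly 2}, take $\xi>\tilde\lambda/2$, so $s>|\tilde\lambda|/2$. Then
\begin{equation*}
\mu e^{-2\xi}(1+|\xi|^N)\lesssim_N|\tilde\lambda|e^{-2s}\bigl(1+s^N+|\tilde\lambda|^N\bigr)\lesssim_N|\tilde\lambda|^{N+1}e^{-|\tilde\lambda|}+|\tilde\lambda|\,s^Ne^{-2s}.
\end{equation*}
Both terms are bounded uniformly: the second because $|\tilde\lambda|\lesssim s$ in this regime.

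\textbf{Main obstacle.} The only delicate point is Step 3. The comparison there is against $1+|\xi|$ rather than $|\xi|$, because $\xi$ can pass through $0$, where $|\xi|$ vanishes but $\mu e^{-2\xi}\approx\mu$ is not small relative to $|\xi|$. So one must split at $\xi\approx\tilde\lambda/2$ and at $\xi=0$ as described, and use $\mu<\mu_0<\frac12$ near the origin.
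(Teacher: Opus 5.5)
Your proof is correct and follows the route the paper intends (its proof is omitted as ``similar to Lemma~\ref{lem: rel size est}''): the shift $s=\xi-\tilde\lambda$ with $\mu e^{-2\xi}=|\tilde\lambda|e^{-2s}$, two-sided bounds in each regime, and the bookkeeping $|\xi|^N\lesssim_N|s|^N+|\tilde\lambda|^N$ for the superpolynomial estimates. One small correction in Step 3: for $1<s\le|\tilde\lambda|/2$ the right side $\frac12(1-t+1/|\tilde\lambda|)$ is only guaranteed to be at least $\frac14$, not close to $\frac12$, but $\frac14>e^{-2}$, so the argument stands.
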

\begin{proof}
    The proof is straightforward using similar ideas as in the above lemmas, and is omitted.
\end{proof}

We now show part (i) of Proposition \ref{prop: G cont} for $\zeta$ near $0$ and in the upper half plane.
\begin{lemma}\label{lem: green cont case 1}
For any $\epsilon_0>0$ sufficiently small, if $|\real \zeta|<\epsilon_0$, $|\imag \zeta|<\epsilon_0$, part (i) of Proposition \ref{prop: G cont} holds.
\end{lemma}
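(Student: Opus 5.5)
We prove \eqref{G cont near 0} for $\imag\zeta\ge 0$; the lower half plane is the complex conjugate case. By Plancherel,
$$\|H(\cdot,\zeta)\|_{L^2}\simeq\|h(\cdot,\zeta)\|_{L^2(\R)},\qquad h(\xi,\zeta)=\frac{1}{\xi-\zeta(1-e^{-2\xi})}-\frac{1}{1-2\zeta}\frac1\xi-\frac{1}{1-2\zeta^*}\frac{1}{\xi-\lambda}.$$
Here $\lambda=Z^{-1}(\zeta)$. Since $\lambda$ may be complex, the real line is not the natural contour near $\lambda$. Instead we work with the contour representation \eqref{def: H}, deformed to a horizontal line in the strip. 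By \eqref{eq: zeta near 0}, $\real\lambda\sim\frac12\log|\zeta|\to-\infty$ while $0\le\imag\lambda\le\frac{5\pi}8$.

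For $x>0$ we move the contour for $H$ to $\R+i\frac{3\pi}4$, and for $x<0$ to $\R-i\frac\pi4$. The integrand of \eqref{def: H} has no poles between these lines and the contours $\gamma_1\cup\{i\pi\}\cup\gamma_2$ and $\overline{\gamma_1}\cup\{-i\pi\}\cup\overline{\gamma_2}$, exactly as in the contour shift in the proof of Proposition \ref{prop: green est}. Then $H(x,\zeta)=e^{-cx}\mathcal F^{-1}[q(\cdot+ic)](x)$ with $c=\frac{3\pi}4$ or $-\frac\pi4$, where $q(\xi)=(\xi-\zeta(1-e^{-2\xi}))^{-1}$. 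Since $e^{-cx}$ is bounded on the relevant half line, it suffices to show
$$\int_\R|q(\xi+ic)|^2\,d\xi\lesssim\frac{1}{\log^+|\zeta|}\qquad\text{for }c\in\{\tfrac{3\pi}4,-\tfrac\pi4\}.$$

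We split $\xi+ic$ according to $\real\xi$ and use the complex relative-size lemma (complex version of Lemma \ref{lem: rel size est}).
\begin{itemize}
\item On $\real\xi\le\real\lambda-1$, \eqref{eq: p lam-1 comp} gives $|q|\lesssim|\zeta(1-e^{-2\xi})|^{-1}\approx|\lambda|^{-1}e^{2(\real\xi-\real\lambda)}$, using $|\zeta|e^{-2\real\lambda}\approx|\lambda|$. This contributes $O(|\lambda|^{-2})$.
\item On $\real\xi\ge\real\lambda+1$, \eqref{eq: p lam+1 comp} gives $|q|\lesssim|\xi+ic|^{-1}\lesssim(1+|\xi|)^{-1}$. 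The integral over $\xi\ge\real\lambda+1$ is $O(1)$, which is too crude. On the range $\real\xi\ge\real\lambda/2$ we therefore use \eqref{eq: p lam/2 comp}: there $q$ equals $1/\xi$ up to a super-polynomially small error. Combined with the pole at $0$ this is problematic unless we keep the subtraction $\frac{1}{1-2\zeta}\frac1\xi$.
\end{itemize}
Because of this, the cleaner route is to do the contour shift only for the $\lambda$-singularity. Concretely, we keep the real-line formula on $\real\xi\ge\real\lambda/2$, where the poles $\xi=0$ and $\xi=\lambda$ are handled by the subtracted terms. There
$$q-\frac1\xi=\frac{\zeta(1-e^{-2\xi})}{\xi(\xi-\zeta(1-e^{-2\xi}))},$$
whose $L^2$ norm away from $0$ is $O(|\zeta|)$ plus super-polynomially small terms by \eqref{eq: p lam/2 comp}. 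Near $0$ we get a smooth remainder of size $O(|\zeta|)$, and $|\frac{1}{1-2\zeta}-1|=O(|\zeta|)$. The term $\frac{1}{(1-2\zeta^*)(\xi-\lambda)}$ on $\real\xi\ge\real\lambda/2$ is $O(|\zeta^*|^{-1}|\lambda|^{-1/2})$ in $L^2$, and $|\zeta^*|\approx|\lambda|$ since $\zeta-\zeta^*=\lambda$.

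The main obstacle is the window $\real\xi\in[\real\lambda-1,\real\lambda+1]$, together with the intermediate range $[\real\lambda+1,\real\lambda/2]$. In the window, \eqref{eq: p lam comp} gives $q\approx\frac{1}{\lambda(\xi-\lambda)}$ and $\frac{1}{1-2\zeta^*}\approx\frac{-1}{2\lambda}$. Here I would show that
$$q(\xi)-\frac{1}{(1-2\zeta^*)(\xi-\lambda)}$$
is analytic in $R$ (the residues match, since the residue of $q$ at $\lambda$ is $\frac{1}{1+2\zeta e^{-2\lambda}}=\frac1{1-2\zeta^*}$ by \eqref{eq: zeta-zeta*}). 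By the maximum principle on $\partial R$ it is bounded by $O(|\lambda|^{-1})$. Its integral over the window is then $O(|\lambda|^{-2})$, uniformly as $\imag\lambda\to0$. In the intermediate range, $|q|\lesssim|\xi|^{-1}\lesssim|\lambda|^{-1}$ over an interval of length $\approx|\lambda|/2$, giving $O(|\lambda|^{-1})$; the subtracted terms there are likewise $O(|\lambda|^{-1})$ in $L^2$.

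Summing all pieces gives $\|h\|_{L^2}^2\lesssim|\lambda|^{-1}\approx(\log^+|\zeta|)^{-1}$ by \eqref{eq: zeta near 0}, which is \eqref{G cont near 0}. The real-axis variants for $\zeta=k\pm i0$ follow by continuity, as in Remark \ref{rmk: G at k pm 0}.
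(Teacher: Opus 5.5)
Your final argument is correct, but it takes a different route from the paper. The paper never returns to the real line. It moves the contour in \eqref{def: H} to $\R+i\frac{3\pi}{4}$ (resp.\ $\R-i\frac{\pi}{4}$ for $x<0$) and splits at $\real\lambda\pm1$. It then cures exactly the defect you hit in your first reduction: the $1/\xi$ tail of $q$ on the shifted line makes $\int_\R|q(\xi+ic)|^2\,d\xi$ of order one. The paper removes this by subtracting $\frac{1}{2\pi}\int_{\R+i\frac{3\pi}{4}}\frac{e^{ix\xi}}{\xi}\,d\xi=0$. What remains is $q-\frac1\xi$ on $\real\xi>\real\lambda+1$ plus the truncated tail $\int_{-\infty}^{\real\lambda+1}\frac{e^{ix\xi}}{\xi}\,d\xi$, and the latter, of $L^2$ size $|\real\lambda|^{-1/2}$, produces the rate. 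The pole at $\lambda$ never enters, since the shifted line stays at distance at least $\frac{\pi}{8}$ from it.

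You instead apply Plancherel to the defining formula $H=\mathcal F^{-1}[h]$ and bound $h$ pointwise on $\R$. Two substitutions make this work:
\begin{itemize}
\item The built-in subtraction $\frac{1}{(1-2\zeta)\xi}$ replaces the vanishing integral. On $\xi<\real\lambda$ it leaves the same truncated $1/\xi$ tail.
\item The maximum principle for $q-\frac{1}{(1-2\zeta^*)(\xi-\lambda)}$ on $R$ replaces keeping the contour away from $\lambda$. This function is analytic in $R$ because \eqref{eq: p lam comp} forces $\lambda$ to be the only zero of $\xi-\zeta(1-e^{-2\xi})$ there, and the residue at $\lambda$ is $1/(1-2\zeta e^{-2\lambda})=1/(1-2\zeta^*)$.
\end{itemize}
Your version is more economical for a pure $L^2$ statement. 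It is also automatically uniform as $\imag\lambda\to0$, including $\zeta\in\R^+\pm i0$, with no contour-deformation bookkeeping. The paper's version additionally gives $L^\infty$ bounds and $e^{-\frac{3\pi}{4}x}$ decay for $H_1,H_2,H_{31}$. That is the template it reuses for the $L^2\cap L^\infty$ continuity estimate in part (ii).

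When writing it up, make the following corrections.
\begin{itemize}
\item Delete the first reduction to $\int_\R|q(\xi+ic)|^2\,d\xi$, which is false, and the phrase about shifting the contour only near $\lambda$. Nothing is shifted in the argument you actually run.
\item On $\xi\le\real\lambda-1$ you estimated only $q$. The subtracted term $\frac{1}{(1-2\zeta)\xi}$ contributes about $|\real\lambda|^{-1}$ to $\|h\|_{L^2}^2$ there, the same order as the intermediate range. This is in fact where the sharp rate comes from, though your conclusion is unchanged.
\item Fix two sign slips: $\partial_\xi[\xi-\zeta(1-e^{-2\xi})]$ at $\xi=\lambda$ equals $1-2\zeta e^{-2\lambda}$, and $\frac{1}{1-2\zeta^*}\approx\frac{1}{2\lambda}$.
\end{itemize}
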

\begin{proof}
Assume in addition that $\imag \zeta\ge 0$. Let $\lambda=Z^{-1}(\zeta)$. By Lemma \ref{lem: zeta}, we have $0\le \imag \lambda<\frac{\pi}{2}+\frac{\pi}{8} = \frac{5\pi}{8}$, if the $\epsilon_0$ above is chosen sufficiently small. %By \eqref{eq: zeta near 0}, $\lambda \sim \frac12\log \zeta$.

%If $x>0$, we shift the integration contour to the sum of the following three pieces
%\begin{equation}
%\Gamma_{L_1} = \{\real\xi \le \real \lambda-1,\imag \xi =0\},
%\end{equation}
%\begin{equation}
%\Gamma_{L_2} = \left\{\real\xi = \real \lambda-1,0\le \imag \xi \le \frac{3\pi }{4}\right\},
%\end{equation}
%\begin{equation}
%\Gamma_{L_3} = \left\{\real\xi \ge \real \lambda-1,\imag \xi =\frac{3\pi}4\right\}.
%\end{equation}
%If $x<0$, we shift the integration contour to the sum of $\Gamma_{L_1}$ and
%\begin{equation}
%\Gamma_{L_2'} = \left\{\real\xi = \real \lambda-1,-\frac\pi 4\le \imag \xi \le 0\right\},
%\end{equation}
%\begin{equation}
%\Gamma_{L_3'} = \left\{\real\xi \ge \real \lambda-1,\imag \xi =-\frac{\pi}4\right\}.
%\end{equation}
It then follows from \eqref{def: H} and a contour shift argument similar to that in the proof of Proposition \ref{prop: green est} that for $x>0$
\begin{align}\label{Green H1 def}
H(x,\zeta) = \frac1{2\pi}\int_{\R+i\frac{3\pi}4}\frac{e^{ix\xi}}{[\xi-\zeta(1-e^{-2\xi})]}~d\xi,
%&:=H_{11}(x,\zeta)+H_{12}(x,\zeta)+H_{13}(x,\zeta).
\end{align}
and for $x<0$
\begin{align}\label{Green H1 def x neg}
H(x,\zeta)=\frac1{2\pi}\int_{\R-i\frac\pi 4}\frac{e^{ix\xi}}{[\xi-\zeta(1-e^{-2\xi})]}~d\xi.
%&:=H_{11}(x,\zeta)+H_{12}(x,\zeta)+H_{13}(x,\zeta).
\end{align}
%By Lemma \ref{lem: zeta}, the deformation from $\Gamma_L$ to $\R+i\frac{3\pi}4$ passes one pole at $\xi=0$ and another pole at $\xi=\lambda$, while the deformation from $\Gamma_L$ to $\R-i\frac\pi 4$ doesn't pass by any poles. It follows that
%\begin{align}
%G_L(x,\zeta)  = \frac{i}{1-2\zeta}\chi_{\R^+}(x) + \frac{i}{1-2\zeta^*}e^{i\lambda x}\chi_{\R^+}(x)+ H_1(x,\zeta).
%\end{align}
%where 
%\begin{align}
%&~H_1(x,\zeta) -H_1(x,0)\notag\\
% = &~\frac{e^{-\pi x}}{2\pi}\int_\R\frac{e^{ix\xi}(\mu+i\epsilon)(1-e^{-2\xi})}{\left[\xi-\mu(1-e^{-2\xi})+i\left[\pi - \epsilon(1-e^{-2\xi})\right]\right][\xi+i\pi]}~d\xi.\label{eq: H1 cont case 1}
%\end{align}
%Observe that $|\frac{i}{1-2\zeta}-i|\lesssim |\zeta|$, and $\left|\frac{1}{1-2\zeta^*}\right|\lesssim \frac{1}{|\log|\zeta||}$ by \eqref{eq: zeta near 0} and \eqref{eq: zeta near +inf}. Also, $\imag \lambda>0$ implies $e^{i\lambda x}$ is bounded. What remain to be estimated are $H_L^\infty$ and $G_L^2$. 
Denote by $H_{1}(x,\zeta)$ the part of integral in \eqref{Green H1 def} with $\real \xi<\real\lambda-1$, by $H_{2}(x,\zeta)$ the part of integral with $\real\lambda-1<\real \xi<\real\lambda+1$, and by $H_{3}(x,\zeta)$ the part of integral  with $\real \xi>\real\lambda+1$.
By \eqref{eq: p lam-1 comp}, when $\xi\in \R+i\frac{3\pi}4$, $\real \xi<\real\lambda-1$, 
\begin{equation}
\left|\frac{1}{[\xi-\zeta(1-e^{-2\xi})]}\right|\lesssim \frac{1}{|\zeta(1-e^{-2\xi})|}\lesssim \frac{e^{2\xi}}{|\zeta|}. 
\end{equation}
Thus 
\begin{equation}
\|H_{1}(x,\zeta)\|_{L^2\cap L^\infty}\lesssim \frac{e^{2\real\lambda}}{|\zeta|}\lesssim \frac1{\log^+|\zeta|}.
\end{equation}
Here we used $\zeta=\frac{\lambda}{1-e^{-2\lambda}}$ and $\lambda\sim \frac12\log \zeta$. By \eqref{eq: p lam comp}, for $\xi\in \R+i\frac{3\pi}4$, with $\real\lambda-1<\real \xi<\real\lambda+2$, we have
\begin{equation}\label{GL2 est 1}
\left|\frac{ 1}{[\xi-\zeta(1-e^{-2\xi})]}\right|\lesssim \frac1{|\lambda||\xi-\lambda|}\lesssim \frac{1}{|\lambda|}.
\end{equation}
Here we have used $|\xi-\lambda|\ge |\imag(\xi-\lambda)|\ge\frac{3\pi}4-\frac{5\pi}8=\frac\pi8$ for $\xi\in \R+i\frac{3\pi}4$. It follows that
\begin{equation}
|H_{2}(x,\zeta)|\lesssim \int_{\real\lambda-1}^{\real\lambda+1} \frac{e^{-\frac{4\pi}3 x}e^{-ix\eta}}{|\lambda|}~d\eta\lesssim e^{-\frac{4\pi}3 x} \frac{1}{\log^+|\zeta|}.
\end{equation}
Thus 
\begin{equation}
\|H_{2}(x,\zeta)\chi_{\R^+}(x)\|_{L^2\cap L^\infty}\lesssim \frac1{\log^+|\zeta|}.
\end{equation}
Noting that 
\begin{equation}
\int_{\R+i\frac{3\pi}4} \frac{e^{ix\xi}}\xi ~d\xi = 0
\end{equation}
for $x>0$,
we write for $x>0$
\begin{align}
H_{3}(x,\zeta)&=H_{3}(x,\zeta)-\frac1{2\pi}\int_{\R+i\frac{3\pi}4} \frac{e^{ix\xi}}\xi ~d\xi\notag\\
&=\frac1{2\pi}\int_{\real\lambda+1+i\frac{4\pi}3}^{\infty+i\frac{4\pi}3}\frac{e^{ix\xi}\zeta(1-e^{-2\xi})}{[\xi-\zeta(1-e^{-2\xi})]\xi}~d\xi + \frac1{2\pi}\int_{-\infty+i\frac{3\pi}4}^{\real \lambda +1+i\frac{3\pi}4}\frac{e^{ix\xi}}{\xi}~d\xi.\label{eq: H31 H32}%\notag\\
%&:=\chi_{\R^+}(x)[H_{31}(x,\zeta)+H_{32}(x,\zeta)].
\end{align}
We define $H_{31}$ and $H_{32}$ to be the two terms in \eqref{eq: H31 H32}.
%For $\xi\in \Gamma_{L_3}$ and $|\real \xi-\real\lambda|\le 1$, we have \eqref{GL2 est 1} since $|\xi-\lambda|\ge |\imag(\xi-\lambda)|\ge \frac{3\pi }{4}-\frac{5\pi}{8}=\frac\pi 8$. Therefore for $\xi$ in this range
%\begin{align}
%\left|\frac{e^{ix\xi}\zeta(1-e^{-2\xi})}{[\xi-\zeta(1-e^{-2\xi})]\xi}\right|&=e^{-\frac{3\pi}4x}\left|\frac1{\xi-\zeta(1-e^{-2\xi})}-\frac1\xi\right|\notag\\
%&\lesssim e^{-\frac{3\pi}4x}\left(\frac{1}{|\lambda|}+\frac1{|\xi|}\right)\notag\\
%&\lesssim e^{-\frac{3\pi}4x}\frac{1}{\log^+|\zeta|}.
%\end{align}
For $\xi\in \R+i\frac{3\pi}4$ and $\real \xi-\real\lambda> 1$, we have by \eqref{eq: p lam+1 comp}
\begin{equation}
\frac1{|\xi-\zeta(1-e^{-2\xi})|}\lesssim \frac1{|\xi|}.
\end{equation}
Thus for  $\xi$ in this range, we have 
\begin{equation}
\left|\frac{e^{ix\xi}\zeta(1-e^{-2\xi})}{[\xi-\zeta(1-e^{-2\xi})]\xi}\right|\lesssim e^{-\frac{3\pi}4x}\left(\frac{|\zeta|}{|\xi|^2}+\frac{|\zeta e^{-2\xi}|}{|\xi|^2}\right).
\end{equation}
The first term above is easy to control. We estimate the integral of the last term above as
\begin{align}
\int_{\real\lambda+1+i\frac{3\pi}4}^{\infty+i\frac{3\pi}4}\frac{|\zeta e^{-2\xi}|}{|\xi|^2}~d\xi =&~\int_{\real\lambda+1+i\frac{3\pi}4}^{\infty+i\frac{3\pi}4}\frac{|\zeta e^{-2\lambda}e^{-2(\xi-\lambda)}|}{|\xi|^2}~d\xi \label{H131 tail est}\\
\lesssim&~ \int_{\real \lambda+1}^\infty \frac{|\lambda| e^{-2(\xi-\real\lambda)}}{|\xi+i\frac{3\pi}4|^2}~d\xi\notag\\
\lesssim&~ \int_{1}^\infty \frac{|\lambda| e^{-2\xi}}{|\xi+\real\lambda|^2+1}~d\xi\notag\\
\lesssim &~\int_1^{\log |\lambda|} \frac{e^{-2\xi}}{|\lambda|}~d\xi + \int_{\log|\lambda|}^\infty |\lambda| e^{-2\xi}~d\xi\notag\\
\lesssim &~\frac1{|\lambda|}\approx \frac1{\log^+|\zeta|}.
\end{align}
With the above estimates, we may conclude that
\begin{align}
\chi_{\R^+}(x)|H_{31}(x,\zeta)|&\lesssim\chi_{\R^+}(x)e^{-\frac{3\pi}4x}\left(\frac1{\log^+|\zeta|}+|\zeta|\right)\notag\\
&\lesssim\chi_{\R^+}(x)e^{-\frac{3\pi}4x}\frac1{\log^+|\zeta|}.
\end{align}
On the other hand, 
\begin{align}
\|H_{32}(x,\zeta)\|_{L^2}&\lesssim \left(\int_{-\infty}^{\real \lambda+1} \frac1{|\xi|^2+1}~d\xi\right)^{\frac12}\notag\\
&\lesssim \frac1{\sqrt{|\real \lambda|}}\lesssim \frac1{\sqrt{\log^+|\zeta|}}.
\end{align}
In summary, we have
\begin{equation}
\|\chi_{\R^+}(x)H_{3}(x,\zeta)\|_{L^2}\lesssim \frac1{\log^+|\zeta|}.
\end{equation}
Now consider the case when $x<0$. In this case, we can break the integral in \eqref{Green H1 def x neg} into the three pieces $H_{1}$, $H_{2}$, $H_{3}$ in a similar way as before, with the real part of $\xi$ belonging to the three intervals broken up by $\real\lambda\pm 1$. We can then estimate $H_{1}$, $H_{2}$ in the same way as above, while using
%\begin{equation}
%|H_{12}(x,\zeta)|\lesssim \int_{-\frac\pi 4}^0\frac{e^{-x\eta}}{|\lambda|}~d\eta\lesssim \frac{1}{\log^+|\zeta|}\frac{e^{\frac\pi 4 x}-1}{x}.
%\end{equation}
%Thus 
%\begin{equation}
%\|H_{12}(x,\zeta)\chi_{\R^-}(x)\|_{L^2}\lesssim \frac1{\sqrt{\log^+|\zeta|}}.
%\end{equation} 
\begin{equation}
\int_{\R-i\frac\pi 4}\frac{e^{ix\xi}}{\xi}~d\xi=0
\end{equation}
for $x<0$ to rewrite
\begin{align}
H_{3}(x,\zeta)&=H_{3}(x,\zeta)-\frac1{2\pi}\int_{\R-i\frac{\pi}4} \frac{e^{ix\xi}}\xi ~d\xi\notag\\
&=\frac1{2\pi}\int_{\real\lambda+1-i\frac{\pi}4}^{\infty+i\frac\pi 4}\frac{e^{ix\xi}\zeta(1-e^{-2\xi})}{[\xi-\zeta(1-e^{-2\xi})]\xi}~d\xi + \frac1{2\pi}\int_{-\infty-i\frac{\pi}4}^{\real \lambda +1-i\frac{\pi}4}\frac{e^{ix\xi}}{\xi}~d\xi
\end{align}
and estimate the terms as before. For $\zeta$ in the lower half plane, take the complex conjugate of the above argument. The proof is complete.
\end{proof}

%\begin{lemma}
%There is an $\epsilon_0>0$ small, such that if $|\real \zeta|<\epsilon_0$, $-\epsilon_0< \imag \zeta\le 0$, part (i) of Proposition \ref{prop: G cont} holds.
%\end{lemma}
Next we show (ii) of Proposition \ref{prop: G cont} for the continuity estimate for $\zeta$ near zero in the upper half plane. The case for when $\zeta$ is in the lower half plane is completely analogous.

\begin{lemma}\label{lem: G cont case ii}
For any $\epsilon_0>0$ sufficiently small, case (ii) of Proposition \ref{prop: G cont} holds for $\zeta_{1},\zeta_2$ in the specified range.
\end{lemma}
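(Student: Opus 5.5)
We fix the contours and write the difference as a single integral. Assume $\imag\zeta_1,\imag\zeta_2\ge 0$; the lower half plane case is the complex conjugate. Since $\dist_V(\zeta_1,\zeta_2)<\frac12|\zeta_1|$, a near-geodesic $\gamma$ in $V$ from $\zeta_1$ to $\zeta_2$ stays in the annulus $\frac12|\zeta_1|\le|\zeta|\le\frac32|\zeta_1|$. By Lemma \ref{lem: zeta}, every $\zeta$ on it has $\lambda=Z^{-1}(\zeta)$ with $\real\lambda=\frac12\log|\zeta_1|+O(1)$ and $0\le\imag\lambda\le\frac{5\pi}8$. So the contours $\R+i\frac{3\pi}4$ (for $x>0$) and $\R-i\frac\pi4$ (for $x<0$) in \eqref{Green H1 def}, \eqref{Green H1 def x neg} are valid simultaneously for all $\zeta$ on $\gamma$. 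Write
\begin{equation*}
H(x,\zeta_1)-H(x,\zeta_2)=\int_\gamma \partial_\zeta H(x,\zeta)\,d\zeta,\qquad \partial_\zeta H(x,\zeta)=\frac1{2\pi}\int_{\R+i\frac{3\pi}4}\frac{e^{ix\xi}(1-e^{-2\xi})}{[\xi-\zeta(1-e^{-2\xi})]^2}\,d\xi
\end{equation*}
for $x>0$, and similarly for $x<0$. It then suffices to prove
\begin{equation*}
\sup_{\zeta\in\gamma}\|\partial_\zeta H(\cdot,\zeta)\|_{L^2\cap L^\infty}\lesssim\frac{1}{|\zeta_1|\log^+|\zeta_1|}.
\end{equation*}
For the $L^\infty$ bound I would bound the $\xi$-integrand in $L^1$. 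For the $L^2$ bound I would use Plancherel on the contour together with the factor $e^{-\frac{3\pi}4x}$ (resp. $e^{\frac\pi4x}$), which makes the one-sided restriction harmless.

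I split $\real\xi$ into the same three ranges as in Lemma \ref{lem: green cont case 1}. On $\real\xi<\real\lambda-1$, \eqref{eq: p lam-1 comp} gives an integrand $\lesssim |1-e^{-2\xi}|/|\zeta(1-e^{-2\xi})|^2\lesssim e^{2\real\xi}/|\zeta|^2$. Integrating up to $\real\lambda-1$ gives $e^{2\real\lambda}/|\zeta|^2\approx|\lambda|/|\zeta|\approx 1/(|\zeta|\log^+|\zeta|)$, using $|\zeta|\approx|\lambda|e^{2\real\lambda}$ (if needed, weakened by a harmless constant). On $|\real\xi-\real\lambda|<1$, \eqref{eq: p lam comp} with $|\xi-\lambda|\ge\pi/8$ bounds the denominator below by $|\lambda|^2$. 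The numerator is $\lesssim e^{-2\real\lambda}\approx|\lambda|/|\zeta|$, so this piece is $\lesssim 1/(|\zeta||\lambda|)$, as required.

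The main obstacle is the range $\real\xi>\real\lambda+1$. There, the crude bound by $1/|\xi|^2$ gives only $1/|\lambda|$ from the "$1$" part of the numerator. That is actually better than needed, since $|\zeta|<1$. The part $e^{-2\xi}/|\xi|^2$ is handled exactly as in \eqref{H131 tail est}, giving $|\lambda|^{-1}e^{-2\real\lambda}\approx 1/|\zeta|$ times $1/|\lambda|$. I would check this carefully: the piece of the tail with $\real\xi$ just above $\real\lambda+1$ has $e^{-2\xi}\approx e^{-2\real\lambda}$ and $|\xi|\approx|\lambda|$, so it gives $e^{-2\real\lambda}/|\lambda|^2\approx 1/(|\zeta||\lambda|)$. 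That is exactly the claimed rate, while beyond $\real\xi\ge\real\lambda/2$ estimate \eqref{eq: p lam/2 comp} makes the contribution negligible. No subtraction of $1/\xi$ is needed here, unlike in Lemma \ref{lem: green cont case 1}, because $1/\xi^2$ is integrable away from $0$. For the $L^2$ norm no $\sqrt{\,\cdot\,}$ loss arises, for the same reason.

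Collecting the three ranges and both signs of $x$ gives the uniform derivative bound. Integrating along $\gamma$, whose length is comparable to $\dist_V(\zeta_1,\zeta_2)$, and using $|\zeta|\approx|\zeta_1|$ on $\gamma$ yields \eqref{eq: G cont 2}.
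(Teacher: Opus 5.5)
Your proposal is correct and is essentially the paper's argument: the same contours $\R+i\frac{3\pi}{4}$ and $\R-i\frac{\pi}{4}$ and the same three-range split from the relative-size lemma. The only variation is that you bound $\partial_\zeta H$ (a single $\lambda$, squared denominator) and integrate along $\gamma$, whereas the paper estimates the two-denominator difference directly and therefore first proves $|\real(\lambda_1-\lambda_2)|\lesssim 1$.

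There are a few points to fix, none of them harmful:
\begin{itemize}
\item You should state the case where $\zeta_1,\zeta_2$ lie on opposite sides of $\R^+$. Your path argument covers it once $\gamma$ is split where it crosses $\R^+$ and the conjugate contours are used on the lower piece; the paper instead passes through a point $\zeta_3\in\R^+$.
\item The asymptotic for $\real\lambda$ is off: in fact $\real\lambda=\frac12\log|\zeta|-\frac12\log|\log|\zeta||+O(1)$, not $\frac12\log|\zeta_1|+O(1)$. This does not matter, since you only use $|\lambda|\approx|\lambda_1|$ and $|\zeta|\approx|\lambda|e^{2\real\lambda}$.
\item On $\real\xi>\real\lambda+1$, the ``$1$'' part of the numerator contributes $O(1)$ rather than $1/|\lambda|$. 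This is still far below the target $1/(|\zeta||\lambda|)$.
\end{itemize}
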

\begin{proof}
We first assume $\zeta_1,\zeta_2$ are in the upper half plane. We claim that if we choose $\epsilon_0$ small enough, $|\real (\lambda_1-\lambda_2)|\lesssim 1$. In fact, by definition of $\lambda$, we have
$$\log|\zeta|=\log|\lambda|-\log|1-e^{-2\lambda}|=\log|\lambda|+2\real \lambda-\log|1-e^{2\lambda}|.$$
It follows that
\begin{equation}\label{lam1-lam2}
2\real(\lambda_1-\lambda_2) = \log\frac{|\lambda_2|}{|\lambda_1|}+\log\frac{|\zeta_1|}{|\zeta_2|}-\log\left|\frac{1-e^{2\lambda_2}}{1-e^{2\lambda_1}}\right|
\end{equation}
Let's assume $|\zeta_2|\le|\zeta_1|$ without loss of generality. When $|\zeta_1-\zeta_2|\le \frac12|\zeta_1|=\max(|\zeta_1|,|\zeta_2|)$, we have $\frac12\le \frac{|\zeta_2|}{|\zeta_1|}\le 1$. By Lemma \ref{lem: zeta}, $\lambda\sim \frac12\log\zeta$ as $\zeta\to 0$. We see easily that \eqref{lam1-lam2} is bounded for $\epsilon_0$ sufficiently small. Denote by $\Delta$ an upper bound of $|\real (\lambda_1-\lambda_2)|$.   
%
%the sum of 
%\begin{equation}
%\Gamma_{L_1} = \{\real\xi \le \min(\real\lambda_1,\real\lambda_2)-1,\imag \xi =0\},
%\end{equation}
%\begin{equation}
%\Gamma_{L_2} = \left\{\real\xi = \min(\real\lambda_1,\real\lambda_2)-1,0\le \imag \xi \le \frac{3\pi }{4}\right\},
%\end{equation}
%\begin{equation}
%\Gamma_{L_3} = \left\{\real\xi \ge \min(\real\lambda_1,\real\lambda_2)-1,\imag \xi =\frac{3\pi}4\right\}.
%\end{equation}
We shift the integration contours as in \eqref{Green H1 def} and \eqref{Green H1 def x neg}
%the sum of $\Gamma_{L_1}$ and
%\begin{equation}
%\Gamma_{L_2'} = \left\{\real\xi = \min(\real\lambda_1,\real\lambda_2)-1,-\frac\pi 4\le \imag \xi \le 0\right\},
%\end{equation}
%\begin{equation}
%\Gamma_{L_3'} = \left\{\real\xi \ge \min(\real\lambda_1,\real\lambda_2)-1,\imag \xi =-\frac{\pi}4\right\}.
%\end{equation}
and write for $x>0$
\begin{align}
H(x,\zeta_1)-H(x,\zeta_2) &= \frac1{2\pi}\int_{\R+i\frac{3\pi}4}\frac{e^{ix\xi}(\zeta_1-\zeta_2)(1-e^{-2\xi})}{[\xi-\zeta_1(1-e^{-2\xi})][\xi-\zeta_2(1-e^{-2\xi})]}~d\xi,%\notag\\
%&:=H_{11}(x,\zeta)+H_{12}(x,\zeta)+H_{13}(x,\zeta).
\end{align}
and for $x<0$
\begin{align}
H(x,\zeta_1)-H(x,\zeta_2) &=\frac1{2\pi}\int_{\R-i\frac{\pi}4}\frac{e^{ix\xi}(\zeta_1-\zeta_2)(1-e^{-2\xi})}{[\xi-\zeta_1(1-e^{-2\xi})][\xi-\zeta_2(1-e^{-2\xi})]}~d\xi.%\notag\\
%&:=H_{11}(x,\zeta)+H_{12}(x,\zeta)+H_{13}(x,\zeta).
\end{align}
As in the proof of Lemma \ref{lem: green cont case 1}, we break the integrals into three pieces $H_1$, $H_2$, $H_3$, where the dividing points are at $\real\xi=\min(\real\lambda_1,\real\lambda_2)-1$ and $\real\xi=\max(\real\lambda_1,\real\lambda_2)+1$. We denote the three pieces of integration domains by $\Gamma_{L_1}$, $\Gamma_{L_2}$ and $\Gamma_{L_3}$ respectively.
Let us now focus on the case $x>0$. For $\xi \in \Gamma_{L_1}$, by \eqref{eq: p lam-1 comp}, 
\begin{align}
\left|\frac{(\zeta_1-\zeta_2)(1-e^{-2\xi})}{[\xi-\zeta_1(1-e^{-2\xi})][\xi-\zeta_2(1-e^{-2\xi})]}\right|&\lesssim \left|\frac{(\zeta_1-\zeta_2)(1-e^{-2\xi})}{\zeta_1(1-e^{-2\xi})\zeta_2(1-e^{-2\xi})}\right|\notag\\
&\lesssim \left|\frac{\zeta_1-\zeta_2}{\zeta_1\zeta_2}e^{2\xi}\right|.
\end{align}
It follows that 
\begin{align}
\left\|\frac{(\zeta_1-\zeta_2)(1-e^{-2\xi})}{[\xi-\zeta_1(1-e^{-2\xi})][\xi-\zeta_2(1-e^{-2\xi})]}\right\|_{(L^1\cap L^2)(\Gamma_{L_1})}&\lesssim \left|\frac{\zeta_1-\zeta_2}{\zeta_1\zeta_2} e^{2\real \lambda_2}\right|\notag\\
&\lesssim \left|\frac{\zeta_1-\zeta_2}{\zeta_1 \log^+|\zeta_1|} \right|,
\end{align}
\begin{equation}
\|\chi_{\mathbb R^+}(x)H_{1}(x,\zeta)\|_{L^2\cap L^\infty}\lesssim \left|\frac{\zeta_1-\zeta_2}{\zeta_1\log^+|\zeta_1|}\right|.
\end{equation}
When $\xi \in \Gamma_{L_2}$, by \eqref{eq: p lam comp} (which continues to hold when $|\real\xi-\real\lambda|<\Delta+1$ for fixed $\Delta$), we have 
\begin{equation}
\left|\frac{e^{ix\xi}(\zeta_1-\zeta_2)(1-e^{-2\xi})}{[\xi-\zeta_1(1-e^{-2\xi})][\xi-\zeta_2(1-e^{-2\xi})]}\right|\lesssim\left| \frac{\zeta_1-\zeta_2}{\zeta_1\lambda_1}\right|e^{-\frac{3\pi}4x}.
\end{equation}
%\begin{align}
%|H_{12}(x,\zeta)|&\lesssim\int_0^{\frac{3\pi}4} e^{-x\eta}\left|\frac{(\zeta_1-\zeta_2)[\zeta_1(1-e^{-2\xi})-\xi+\xi]}{\zeta_1\lambda_1\lambda_2}\right|~d\eta\notag\\
%&\lesssim \left|\frac{\zeta_1-\zeta_2}{\lambda_1\zeta_1}\right|\int_0^{\frac{3\pi}4} e^{-x\eta}~d\eta\notag\\
%&\lesssim \left|\frac{\zeta_1-\zeta_2}{\lambda_1\zeta_1}\right|\frac{1-e^{-\frac{3\pi}4x}}{x}.
%\end{align}
Thus 
\begin{equation}
\|\chi_{\R^+}(x)H_{2}(x,\zeta)\|_{L^2\cap L^\infty}\lesssim \left|\frac{\zeta_1-\zeta_2}{\zeta_1\log^+|\zeta_1|}\right|.
\end{equation}
%For $\xi\in \Gamma_{L_3}$, when $\min(\real \lambda_1,\real \lambda_2)-1\le \real \xi\le \max(\real \lambda_1,\real \lambda_2)+1$, by \eqref{eq: p lam comp} again, we have
%\begin{equation}
%\left|\frac{e^{ix\xi}(\zeta_1-\zeta_2)(1-e^{-2\xi})}{[\xi-\zeta_1(1-e^{-2\xi})][\xi-\zeta_2(1-e^{-2\xi})]}\right|\lesssim\left| \frac{\zeta_1-\zeta_2}{\zeta_1\lambda_1}\right|e^{-\frac{3\pi}4x}.
%\end{equation}
For $\xi\in \Gamma_{L_3}$, we have by \eqref{eq: p lam+1 comp}
\begin{equation}
\left|\frac{e^{ix\xi}(\zeta_1-\zeta_2)(1-e^{-2\xi})}{[\xi-\zeta_1(1-e^{-2\xi})][\xi-\zeta_2(1-e^{-2\xi})]}\right|\lesssim\left| \frac{\zeta_1-\zeta_2}{\zeta_1}\right|\frac{|\zeta_1(1-e^{-2\xi})|}{|\xi|^2}e^{-\frac{3\pi}4x}.
\end{equation}
We can estimate in the same way as we did for \eqref{H131 tail est} to get
\begin{equation}
|\chi_{\R^+}(x)H_{3}(x,\zeta)|\lesssim \left| \frac{\zeta_1-\zeta_2}{\zeta_1\lambda_1}\right|e^{-\frac{3\pi}4x}.
\end{equation}
Thus 
\begin{equation}
\|\chi_{\R^+}(x)H_{3}(x,\zeta)\|_{L^2\cap L^\infty}\lesssim \left| \frac{\zeta_1-\zeta_2}{\zeta_1\log^+|\zeta_1|}\right|.
\end{equation}
The proof for the case $x<0$ is similar.

If $\zeta_1,\zeta_2$ are both in the lower half plane, use the complex conjugate of the above argument. If only one of $\zeta_1$, $\zeta_2$ is in the upper half plane, since $\dist_V(\zeta_1,\zeta_2)<\frac12|\zeta_1|$, the line segment connecting $\zeta_1$, $\zeta_2$ must intersect $\mathbb R^+$ at some $\zeta_3$, with $|\zeta_1|\approx|\zeta_2|\approx|\zeta_3|$. We just use \eqref{eq: G cont 2} to estimate the norms of $H(x,\zeta_1)-H(x,\zeta_3)$ and $H(x,\zeta_2)-H(x,\zeta_3)$, and combine the estimates.
\end{proof}

Next we show (iii) of Proposition \ref{prop: G cont} for $\zeta$ near a large positive number.

\begin{lemma}
For any sufficiently large $\mu^*>0$ and small $\epsilon_0>0$, if $\real\zeta_1,\real\zeta_2>\mu^*$, $|\imag \zeta_1|,|\imag \zeta_2|\le \epsilon_0$, Case (iii) of Proposition \ref{prop: G cont} holds.
\end{lemma}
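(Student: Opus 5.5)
The plan is to follow the template of Lemma \ref{lem: G cont case ii}: shift the contour, write the difference $H(x,\zeta_1)-H(x,\zeta_2)$ as a single integral, and bound the integrand uniformly.

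\textbf{Contour choice.} For $\real\zeta_j>\mu^*$ and $|\imag\zeta_j|\le\epsilon_0$, Lemma \ref{lem: zeta} gives $\lambda_j=Z^{-1}(\zeta_j)$ with $\lambda_j-\zeta_j\to 0$. So $\lambda_j$ is close to the real axis with large real part. As in the proof of Proposition \ref{prop: green est}, for $x>0$ I shift the contour in \eqref{def: H} to $\R+i\pi$ if $\imag\zeta_j\ge 0$, and to $\R+i\frac\pi2$ if $\imag\zeta_j<0$. For $x<0$ I use the conjugate contours $\R-i\pi$ and $\R-i\frac\pi2$. Since $H$ is continuous across $\R^+$ (Remark after Proposition \ref{prop: green est}), I first treat $\zeta_1,\zeta_2$ on the same side of $\R^+$. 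If they lie on opposite sides, I insert the intersection point $\zeta_3$ of the segment with $\R^+$ and use the triangle inequality, exactly as at the end of Lemma \ref{lem: G cont case ii}. On a common contour $\Gamma=\R+ic$, we have
\[
H(x,\zeta_1)-H(x,\zeta_2)=\frac{\zeta_1-\zeta_2}{2\pi}\int_{\Gamma}\frac{e^{ix\xi}(1-e^{-2\xi})}{[\xi-\zeta_1(1-e^{-2\xi})][\xi-\zeta_2(1-e^{-2\xi})]}\,d\xi .
\]
The factor $e^{ix\xi}$ contributes $e^{-c|x|}$. It therefore suffices to show that the integrand, without $e^{ix\xi}$, has $L^1(\Gamma)$ norm bounded uniformly. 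This gives the $L^\infty$ bound and, with the exponential decay in $x$, the $L^2$ bound with constant $C|\zeta_1-\zeta_2|$.

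\textbf{Bounding the integrand.} Write $\xi=\eta+ic$. On $\R+i\pi$ we have $e^{-2\xi}=e^{-2\eta}$, and on $\R+i\frac\pi2$ we have $e^{-2\xi}=-e^{-2\eta}$. So the denominators are
\[
\xi-\zeta_j(1-e^{-2\xi})=\bigl[\eta-\mu_j(1\mp e^{-2\eta})\bigr]+i\bigl[c-\epsilon_j(1\mp e^{-2\eta})\bigr],
\]
where $\zeta_j=\mu_j+i\epsilon_j$. I split the $\eta$-integration into three ranges.

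For $\eta\le -1$, I use $|1-e^{-2\xi}|\lesssim e^{-2\eta}$ and bound each denominator below by $\mu_j e^{-2\eta}/2$. The quotient is then $\lesssim e^{2\eta}/\mu^{*2}$, which is integrable.

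For $-1\le\eta\le\eta_0$, with a fixed $\eta_0$ and $\mu^*$ large, the imaginary parts stay $\gtrsim 1$ (because $\epsilon_j\le\epsilon_0$ is small). The integrand is then bounded on a compact range, and this piece is harmless. It actually contributes $O(1/\mu^*)$, but I only need boundedness.

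For $\eta\ge\eta_0$, this is the regime of Lemma \ref{lem: rel size est}(ii) (on $\R+i\pi$) and Lemma \ref{lem: rel size est 1}(ii) (on $\R+i\frac\pi2$). Those lemmas give that the real part is comparable to $|\eta-\tilde\lambda_j|$, where $\tilde\lambda_j=Z^{-1}(\mu_j)$ or $Z^{-1}_{1,+}(\mu_j)$ respectively. The imaginary parts are $\ge c/2$. Hence each denominator is $\gtrsim 1+|\eta-\tilde\lambda_j|$. The integrand is then $\lesssim \bigl[(1+|\eta-\tilde\lambda_1|)(1+|\eta-\tilde\lambda_2|)\bigr]^{-1}$, which is in $L^1$ uniformly, since a product of two such factors is integrable regardless of the centers.

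\textbf{Main obstacle.} The hard part is the uniform lower bound on the denominators when $\zeta_j$ is slightly off the real axis, where the lemmas were stated only for real $\mu$. The complex part $\epsilon_j(1\mp e^{-2\eta})$ must not cancel $c$. This is safe for $\eta\ge -1$ because $\epsilon_0$ is small. For $\eta\le -1$ the term $\zeta_j e^{-2\eta}$ dominates in modulus anyway, since $|\zeta_j|\ge\mu^*$. A secondary point is justifying the contour shift for both $\zeta_1$ and $\zeta_2$ simultaneously, i.e. that no other preimages of $\zeta_j$ lie between $\Gamma_L$ and the chosen line. I would argue this by the winding-number argument from Proposition \ref{prop: green est}. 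Finally, the cases $\real\zeta_j\in(\frac{\epsilon_0}2,\mu^*]$ needed for the full statement of (iii) follow separately by compactness, since the same shifted representations depend smoothly on $\zeta$ there.
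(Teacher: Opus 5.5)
Your proposal is correct and follows essentially the same route as the paper: shift the contour to $\R+i\pi$ (and to a lower horizontal line when $x<0$), write $H(x,\zeta_1)-H(x,\zeta_2)$ as a single integral with prefactor $\zeta_1-\zeta_2$, split into $\eta\le -1$, a compact middle range and a tail controlled by Lemma \ref{lem: rel size est}(ii) and Lemma \ref{lem: rel size est 1}(ii), and pass through a point of $\R^+$ when $\zeta_1,\zeta_2$ lie on opposite sides. The differences are cosmetic: the paper uses $\R-i\frac\pi2$ for $x<0$ when $\imag\zeta_j\ge 0$ (either lower line works, since no preimage of $\zeta_j$ lies in between), and it reduces to $|\zeta_1-\zeta_2|\le 1$ and estimates around the midpoint $\bar\mu$, whereas you bound the product of the two tail factors uniformly by Cauchy--Schwarz.
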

\begin{proof}
First assume $\imag\zeta_1,\imag\zeta_2\ge 0$. We again shift the contour of integration by an argument similar to the proof of Proposition \ref{prop: green est}. This time if $x>0$, we shift the contour to $\mathbb R+i\pi$ and write
\begin{equation}\label{eq: H cont x>0 case 3} 
H(x,\zeta) = \frac{e^{-\pi x}}{2\pi}\int_\R\frac{e^{ix\xi}}{\xi-\zeta(1-e^{-2\xi})+i\pi}~d\xi.
\end{equation}
Let $\zeta_{j}=\mu_j+i\epsilon_j$, $j=1,2$, where $\mu_j>\mu^*$, $0\le \epsilon_j\le \epsilon_0$. We have
\begin{align}
&~H(x,\zeta_1) -H(x,\zeta_2) \notag\\
= &~\frac{e^{-\pi x}(\zeta_1-\zeta_2)}{2\pi}\int_\R\frac{e^{ix\xi}(1-e^{-2\xi})}{[\xi-\zeta_1(1-e^{-2\xi})+i\pi][\xi-\zeta_2(1-e^{-2\xi})+i\pi]}~d\xi. \label{eq: H cont case 3 1}
\end{align}
We only need to show that the integral in \eqref{eq: H cont case 3 1} is bounded by a constant. Let $\lambda_1$ be the large number given in (ii) of Lemma \ref{lem: rel size est}. Choose $\mu^*\ge 2\lambda_1$.  Writing the integrand in \eqref{eq: H cont case 3 1} as
\begin{equation}
    \frac{e^{ix\xi}}{(1-e^{-2\xi})[Z(\xi)-\zeta_1-\frac{i\pi}{1-e^{-2\xi}}][Z(\xi)-\zeta_2-\frac{i\pi}{1-e^{-2\xi}}]},
\end{equation} we can bound the integral from $-\infty$ to $\lambda_1+1$ by a constant multiple of 
%\begin{equation}
%\int_{-\infty}^{\lambda_1+1}\frac{e^{ix\xi}(1-e^{-2\xi})}{[\xi-\zeta_1(1-e^{-2\xi})+i\pi][\xi-\zeta_2(1-e^{-2\xi})+i\pi]}~d\xi
%\end{equation}
\begin{equation}
\int_{-\infty}^{-1} \frac{e^{2\xi}}{|Z(\xi) - \mu_1||Z(\xi)-\mu_2|}~d\xi + \int_{-1}^{\lambda_1+1}\frac1{\left|\xi+i\pi\right|^2}~d\xi\lesssim 1.
\end{equation}
Without loss of generality, assume $|\zeta_1-\zeta_2|\le 1$. By (ii) of Lemma \ref{lem: rel size est}, the part of the integral in \eqref{eq: H cont case 3 1} from $\lambda_1+1$ to $\infty$ is bounded by a constant multiple of  
\begin{equation}
\int_{|\xi-\bar\mu|>2}\frac1{|\xi-\bar\mu|^2}~d\xi + \int_{|\xi-\bar\mu|<2}\frac1{\left|i\pi \right|^2}~d\xi\lesssim 1.
\end{equation}
Here $\bar\mu = \frac{\mu_1+\mu_2}{2}$.

If $x<0$, we shift the contour of integration to $\R-i\frac\pi 2$ and write
\begin{equation}\label{eq: H cont x<0 case 3}
H(x,\zeta) = \frac{e^{\frac{\pi x}2}}{2\pi}\int_\R\frac{e^{ix\xi}}{\xi-\zeta(1+e^{-2\xi})-i\frac{\pi}{2}}~d\xi.
\end{equation}
\eqref{eq: H cont x<0 case 3} can be treated in the same way \eqref{eq: H cont x>0 case 3} was, as we note that $\left|\frac{\xi}{1+e^{-2\xi}}-\mu\right|\gtrsim \mu$ for $\xi<0$, and Lemma \ref{lem: rel size est 1} for $\xi$ large and positive.

Finally if $\zeta_1$, $\zeta_2$ are not both in the upper half plane, we can complete the proof in the same way as that of Lemma \ref{lem: G cont case ii}.
\end{proof}

%\begin{lemma}
%There are large $\mu^*>0$ and small $\epsilon_0>0$ such that if $\real\zeta>\mu^*$, $-\epsilon_0<\imag \zeta\le 0$, (ii) of Proposition \ref{prop: G cont} holds.
%\end{lemma}
 We can now fill the middle gap left open by the previous lemmas.
\begin{lemma}
Let $0<\epsilon_0<\mu^*$ be given, where $\epsilon_0$ is sufficiently small and $\mu^*$ is sufficiently large. If $\epsilon_0\le\real \zeta_1,\real\zeta_2\le \mu^*$, $|\imag \zeta_1|,|\imag \zeta_2|\le\epsilon_0$, case (iii) of Proposition \ref{prop: G cont} holds.
\end{lemma}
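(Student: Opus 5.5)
We prove the Lipschitz bound \eqref{eq: G cont 6} on the compact middle region $K=\{\epsilon_0\le\real\zeta\le\mu^*,\ |\imag\zeta|\le\epsilon_0\}$. As in the previous lemma, we first treat $\imag\zeta_1,\imag\zeta_2\ge0$; the lower half is the complex conjugate argument, and mixed signs are handled by splitting at a point of $\R^+$ as in the proof of Lemma \ref{lem: G cont case ii}. For $x>0$ we shift the contour in \eqref{def: H} to $\R+i\pi$, exactly as in the proof of Proposition \ref{prop: green est}, and write $H(x,\zeta_1)-H(x,\zeta_2)$ in the form \eqref{eq: H cont case 3 1}. For $x<0$ we shift to $\R-i\frac\pi2$, as in \eqref{eq: H cont x<0 case 3}. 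Since the factor $e^{-\pi x}$ (respectively $e^{\frac{\pi x}2}$) gives decay in $x$, both the $L^2$ and the $L^\infty$ norm reduce to showing that
$$\int_\R\frac{|1-e^{-2\xi}|}{|\xi-\zeta_1(1-e^{-2\xi})+i\pi|\,|\xi-\zeta_2(1-e^{-2\xi})+i\pi|}\,d\xi$$
(and its $x<0$ analogue) is bounded uniformly for $\zeta_1,\zeta_2\in K$.

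The key step is a uniform lower bound $|\xi+i\pi-\zeta(1-e^{-2\xi})|\ge c>0$ for all $\xi\in\R$ and $\zeta\in K$. Taking imaginary parts gives $\pi-\imag\zeta\,(1-e^{-2\xi})$, which is at least $\pi/2$ whenever $\epsilon_0 e^{-2\xi}$ is small, i.e. for $\xi\ge -C$. For $\xi<-C$, the term $\zeta e^{-2\xi}$ dominates, since $|\zeta|\ge\epsilon_0$, so the modulus is at least of order $\epsilon_0 e^{-2\xi}$. With this bound we split the integral into three ranges.

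\begin{enumerate}[(a)]
\item On $\xi<-C$, the integrand is $\lesssim e^{-2\xi}/(\epsilon_0^2e^{-4\xi})=\epsilon_0^{-2}e^{2\xi}$, which is integrable.
\item On $-C\le\xi\le 2\mu^*+C$, the integrand is bounded by $c^{-2}\sup|1-e^{-2\xi}|$ on a compact interval.
\item On $\xi>2\mu^*+C$, we have $|\xi-\zeta(1-e^{-2\xi})|\ge\xi-2\mu^*\gtrsim\xi$, so the integrand is $\lesssim\xi^{-2}$.
\end{enumerate}

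For $x<0$ the same scheme applies to $\xi-i\frac\pi2-\zeta(1+e^{-2\xi})$. Its imaginary part is $-\frac\pi2-\imag\zeta(1+e^{-2\xi})$. When $\imag\zeta\ge0$ this has modulus at least $\pi/2$ for all $\xi$, so the lower bound is immediate. At large negative $\xi$ we again use the dominance of $\zeta e^{-2\xi}$, and at large positive $\xi$ the linear term; this is the regime covered by Lemma \ref{lem: rel size est 1}.

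The main obstacle is making sure that the contour shifts to $\R+i\pi$ and $\R-i\frac\pi2$ are legitimate uniformly on all of $K$. This means checking that no zero of $\xi-\zeta(1-e^{-2\xi})$ other than $0$ and $\lambda=Z^{-1}(\zeta)$ crosses these lines, and in particular that the second preimage near $i\pi$ stays off the line. The lower bound above shows exactly that the integrand on $\R\pm i\pi$ and $\R-i\frac\pi2$ has no zeros, with a margin uniform in $K$. The pole count between the contours then follows from the winding-number argument of Proposition \ref{prop: green est}, which I would cite. The bound so obtained is a uniform constant times $|\zeta_1-\zeta_2|$, giving \eqref{eq: G cont 6}.
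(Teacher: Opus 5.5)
Your proposal is correct and is essentially the paper's proof. You use the same contour shifts ($\R+i\pi$ for $x>0$, $\R-i\frac\pi2$ for $x<0$), the same reduction to a uniform bound on the difference-quotient integral, and the same three-range split in $\xi$; your elementary right-tail bound $|\xi-\zeta(1-e^{-2\xi})|\ge\xi-2\mu^*$ replaces the paper's appeal to \eqref{eq: p pos lam}.

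The one step to tighten is the lower bound on $\R+i\pi$. Since $\imag\zeta\ge0$, the imaginary part $\pi-\imag\zeta\,(1-e^{-2\xi})$ is $\ge\pi$ for $\xi\le0$ and $\ge\pi-\epsilon_0$ for $\xi>0$, hence $\ge\pi/2$ for every real $\xi$. This is exactly the sign argument you already used for $\R-i\frac\pi2$. Your split into ``$\epsilon_0e^{-2\xi}$ small'' versus ``$\zeta e^{-2\xi}$ dominant'' leaves a window of $\xi$ uncovered when $|\zeta|$ is close to $\epsilon_0$, so use the sign argument there instead.
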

\begin{proof}
First assume $\imag \zeta_1,\imag\zeta_2\ge 0$. If $x>0$, we shift the contour of integration to $\R+i\pi$ and again have \eqref{eq: H cont x>0 case 3} and \eqref{eq: H cont case 3 1}. Let $\lambda_0=Z^{-1}(\epsilon_0)$, $\lambda^*=Z^{-1}(\mu^*)$. Let $\zeta_{j} = \mu_j+i\epsilon_j$ with $\epsilon_0\le\mu_j\le \mu^*$, $0\le \epsilon_j\le \epsilon_0$. We split the integral into three pieces:
\begin{align}
&~\left|\int_{-\infty}^{\lambda_0-1}\frac{e^{ix\xi}(1-e^{-2\xi})}{[\xi-\zeta_1(1-e^{-2\xi})+i\pi][\xi-\zeta_2(1-e^{-2\xi})+i\pi]}~d\xi\right|\notag\\
\lesssim &~\int_{-\infty}^{\lambda_0-1}\frac{e^{2\xi}}{|Z(\xi)-\mu_1||Z(\xi)-\mu_2|}~d\xi\lesssim 1.
\end{align}
\begin{align}
&~\left|\int_{\lambda_0-1}^{\lambda^*+1}\frac{e^{ix\xi}(1-e^{-2\xi})}{[\xi-\zeta_1(1-e^{-2\xi})+i\pi][\xi-\zeta_2(1-e^{-2\xi})+i\pi]}~d\xi\right|\notag\\
\lesssim &~\int_{\lambda_0-1}^{\lambda^*+1} \frac{1}{|i[\pi - \epsilon_1(1-e^{-2\xi})]||i[\pi - \epsilon_2(1-e^{-2\xi})]|}~d\xi\lesssim 1.
\end{align}
In fact, $\pi -\epsilon_j(1-e^{-2\xi})>\pi$ when $\xi<0$ and $\pi-\epsilon_j(1-e^{-2\xi})>\frac\pi 2$ when $\xi>0$
if $\epsilon_0$ is sufficiently small. The last piece is estimated by using \eqref{eq: p pos lam} as
\begin{align}
&~\left|\int_{\lambda^*+1}^\infty\frac{e^{ix\xi}(1-e^{-2\xi})}{[\xi-\zeta_1(1-e^{-2\xi})+i\pi][\xi-\zeta_2(1-e^{-2\xi})+i\pi]}~d\xi\right|\notag\\
\lesssim &~\int_{\lambda^*+1}^\infty \frac{1}{|\xi-\lambda_1||\xi-\lambda_2|}~d\xi\lesssim 1.
\end{align}
Here $\lambda_j=Z^{-1}(\mu_j)$.
The other cases, when $x<0$ or when $\imag\zeta_j\le 0$ can be treated similarly. The details are omitted.
\end{proof}

We close the continuity estimates by proving
\begin{lemma}
Case (iv) of Proposition \ref{prop: G cont} holds.
\end{lemma}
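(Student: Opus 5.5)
Case (iv) asks for a Lipschitz bound in $\zeta$ on $G_L(x,\zeta)$ in $L^\infty_x$ when both $\zeta_1,\zeta_2$ stay at distance at least $\epsilon_0$ from $\mathbb R^+$. I will reuse the contour shift from the last part of the proof of Proposition \ref{prop: green est}.

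\textbf{Setup.} Treat $x>0$ and $x<0$ separately.
\begin{itemize}
\item For $x<0$, work with $G_L$ directly.
\item For $x>0$, use \eqref{GL-GR} to write $G_L=\frac{i}{1-2\zeta}+G_R$. Since $\dist(\zeta_j,\mathbb R^+)\ge\epsilon_0$ forces $|1-2\zeta_j|\ge 2\epsilon_0$, we get
\begin{equation*}
\left|\tfrac{i}{1-2\zeta_1}-\tfrac{i}{1-2\zeta_2}\right|\lesssim \epsilon_0^{-2}|\zeta_1-\zeta_2|.
\end{equation*}
\end{itemize}
It therefore suffices to bound $G_R(x,\zeta_1)-G_R(x,\zeta_2)$ for $x>0$, and symmetrically $G_L$ for $x<0$.

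\textbf{Contour shift.} For $x>0$, shift $\Gamma_R$ to $\R+i\epsilon$ with $\epsilon\in(0,\epsilon_0/4)$ chosen so that $\dist(Z(\R+i\epsilon),\R^+)<\epsilon_0/2$. No poles are crossed: the zero of the denominator at $\xi=0$ lies below $\Gamma_R$, and preimages of $\zeta_j$ cannot lie in the thin strip because $Z$ maps it near $\R^+$. The resolvent-type identity then gives
\begin{equation*}
G_R(x,\zeta_1)-G_R(x,\zeta_2)=\frac{e^{-\epsilon x}(\zeta_1-\zeta_2)}{2\pi}\int_\R\frac{e^{ix\xi}\,(1-e^{-2\xi-2i\epsilon})}{p_1(\xi)\,p_2(\xi)}\,d\xi,
\end{equation*}
where $p_j(\xi)=\xi+i\epsilon-\zeta_j(1-e^{-2\xi-2i\epsilon})=(1-e^{-2\xi-2i\epsilon})(Z(\xi+i\epsilon)-\zeta_j)$. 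Since $e^{-\epsilon x}\le 1$, the $L^\infty$ bound reduces to showing
\begin{equation*}
\int_\R \frac{d\xi}{|1-e^{-2\xi-2i\epsilon}|\,|Z(\xi+i\epsilon)-\zeta_1|\,|Z(\xi+i\epsilon)-\zeta_2|}\le C
\end{equation*}
uniformly in $\zeta_1,\zeta_2$ in the stated range.

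\textbf{Estimating the integral.} By the choice of $\epsilon$, $|Z(\xi+i\epsilon)-\zeta_j|\ge\epsilon_0/2$ for all $\xi$. I split into three ranges, as in the proof of Proposition \ref{prop: green est}:
\begin{itemize}
\item For $\xi<-1$, $|1-e^{-2\xi-2i\epsilon}|\approx e^{-2\xi}$, so the integrand is $\lesssim \epsilon_0^{-2}e^{2\xi}$, which is integrable.
\item For $-1\le\xi\le\xi_0$, the integrand is bounded because $|1-e^{-2\xi-2i\epsilon}|\gtrsim\epsilon$ there. This makes the constant depend on $\epsilon$, which is harmless.
\item For $\xi>\xi_0$, $Z(\xi+i\epsilon)\approx\xi+i\epsilon$, so $|Z(\xi+i\epsilon)-\zeta_j|\gtrsim|\xi-\real\zeta_j|+\epsilon_0$. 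The integrand is then bounded by $\big[(|\xi-\real\zeta_1|+\epsilon_0)(|\xi-\real\zeta_2|+\epsilon_0)\big]^{-1}$.
\end{itemize}

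\textbf{Main obstacle.} The last range is the only delicate point. The weight $(|\xi-a|+\epsilon_0)^{-1}(|\xi-b|+\epsilon_0)^{-1}$ is not trivially integrable uniformly in $a,b$, but Cauchy--Schwarz handles it:
\begin{equation*}
\int_\R \frac{d\xi}{(|\xi-a|+\epsilon_0)(|\xi-b|+\epsilon_0)}\le \left\|\frac{1}{|\cdot-a|+\epsilon_0}\right\|_{L^2}\left\|\frac{1}{|\cdot-b|+\epsilon_0}\right\|_{L^2}\lesssim\epsilon_0^{-1},
\end{equation*}
uniformly in $a,b$. This is exactly the $L^2$ bound already obtained in Proposition \ref{prop: green est}(ii), so combining the three ranges gives the uniform constant.

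\textbf{The case $x<0$.} Shift $\Gamma_L$ to $\R-i\epsilon$ and take the complex conjugate of the above argument, using \eqref{eq: G_L bar}. Together with the $x>0$ bound and the explicit estimate on $\frac{i}{1-2\zeta}$, this yields \eqref{eq: G cont 7}.
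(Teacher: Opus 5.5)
Your argument is correct and follows essentially the paper's route: for $x>0$ you pass to $G_R$ via \eqref{GL-GR} and move to a contour hugging the real axis on which $Z$ stays within $\epsilon_0/2$ of $\R^+$, then write the difference with the resolvent identity and split the $\xi$-integral into very negative $\xi$, a bounded middle range, and large $\xi$ where $Z(\xi)\approx\xi$. The differences from the paper are cosmetic. The paper keeps the real axis with an upper semicircle of radius $\epsilon$ instead of using $\R+i\epsilon$. For large $\xi$ it compares both factors with $|\xi-\bar\mu+i\epsilon_0|$, which uses $|\zeta_1-\zeta_2|<\epsilon_0$, whereas your Cauchy--Schwarz step does not need that hypothesis. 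Finally, the paper handles $x<0$ with a lower semicircle, while you get it from \eqref{eq: G_L bar}.
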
 
\begin{proof}
We assume $\imag \zeta_j\ge 0$. The case when both $\zeta_j$ are in the lower half plane can be proven similarly. If $x>0$, we estimate using $G_R(x,\zeta)$ and shift the contour of integration to $\Gamma_\epsilon=\R\setminus (-\epsilon,\epsilon)\cup C_\epsilon$. Here $C_\epsilon$ is the upper semicircle of radius $\epsilon$ centered at the origin, and $\epsilon$ is chosen so small that $\epsilon>\frac{\epsilon_0}{8}$ and $\imag [Z(C_\epsilon)]<\frac{\epsilon_0}2$. We have
\begin{equation}
G_R(x,\zeta) = \frac1{2\pi}\int_{\Gamma_\epsilon}\frac{e^{ix\xi}}{\xi-\zeta(1-e^{-2\xi})}~d\xi.
\end{equation}
We just need to show \eqref{eq: G cont 7} with $G_L$ replaced by $G_R$. As before, we just need a uniform bound on
\begin{align}
&~\int_{\Gamma_\epsilon}\frac{e^{ix\xi}(1-e^{-2\xi})}{[\xi-\zeta_1(1-e^{-2\xi})][\xi-\zeta_1(1-e^{-2\xi})]}~d\xi\notag\\
 =&~ \int_{\Gamma_\epsilon}\frac{e^{ix\xi}}{(1-e^{-2\xi})[Z(\xi)-\zeta_1][Z(\xi)-\zeta_2]}~d\xi. \label{eq: H cont x>0 case 6}
\end{align}
\eqref{eq: H cont x>0 case 6} is bounded by a multiple of 
\begin{align}
&~\int_{-\infty}^{-\epsilon}\frac{e^{2\xi}}{\epsilon_0^3}~d\xi+\int_{C_\epsilon}\frac1{(\epsilon_0/2)^3}~|d\xi|+ \frac{\lambda_1}{\epsilon_0^3}+ \int_{\lambda_1}^\infty \frac1{|\xi-\zeta_1||\xi-\zeta_2|}~d\xi\notag\\
\lesssim &~1+\int_{\lambda_1}^\infty \frac1{|\xi-\zeta_1||\xi-\zeta_2|}~d\xi.
\end{align}
Here $\lambda_1$ is chosen large enough so that $|Z(\xi)-\xi|<\frac{\epsilon_0}{4}$ for $\xi>\lambda_1$. By \eqref{eq: zeta near +inf}, this can be done. Recall that $|\zeta_1-\zeta_2|\le \epsilon_0<1$. By the range of $\zeta_j$, we have for $\xi>\lambda_1$, $|\xi-\zeta_j|\gtrsim |\xi-\real\zeta_j +i\epsilon_0|\approx |\xi-\bar\mu + i\epsilon_0|$, where $\bar\mu=\frac12\real(\zeta_1+\zeta_2)$. Thus
\begin{equation}
\int_{\lambda_1}^\infty \frac1{|\xi-\zeta_1||\xi-\zeta_2|}~d\xi\lesssim \int_\R\frac1{|\xi-\bar\mu+i\epsilon_0|^2}~d\xi\lesssim 1.
\end{equation}
If $x<0$, we use a lower semicircle in the contour, and the proof is similar.
\end{proof}

\subsection{\texorpdfstring{Differentiability in $\zeta$ on $\mathbb R^+\pm i0$}{Differentiability in zeta}}
Using complex analysis, we may avoid extra estimates of differentiability of Green's functions in $\zeta$  when $\zeta\in \mathbb C\setminus \R^+$. However, such estimates are needed when $\zeta\in\mathbb R^+\pm i0$. For simplicity, we denote the difference quotient of a function $f(x)$ by 
\begin{equation}
    \Delta^h_x f(x)=\frac{f(x+h)-f(x)}{h}.
\end{equation}

\begin{lemma}\label{lem: Del G - DG on R+}
    Let $k\in \mathbb R^+$, $k\ne \frac12$. Then $\partial_k G_L(\cdot,k+i0)\in L^\infty_{-1,+}$, and as $h\to 0$, $\Delta^h_k G_L(\cdot,k+i0)-\partial_k G_L(\cdot,k+i0)$ is bounded in  $L^\infty_{-1,+}$ and converges to 0 uniformly on compact sets.
    Similarly, $\partial_k G_R(\cdot,k-i0)\in L^\infty_{-1,-}$, and as $h\to 0$, $\Delta^h_k G_R(\cdot,k-i0)-\partial_k G_R(\cdot,k-i0)$ is bounded in $L^\infty_{-1,-}$ and converges to 0 uniformly on compact sets.
\end{lemma}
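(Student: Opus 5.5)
I will work with the decomposition \eqref{eq: G Lp est 1} for $\imag\zeta>0$, extended to $\zeta=k+i0$ with $k$ near $\R^+$ and $k\neq\frac12$. For $x>0$ this reads
\begin{equation*}
G_L(x,k+i0)=\frac{i}{1-2k}\chi_{\R^+}(x)+\frac{i}{1-2k^*}\chi_{\R^+}(x)e^{i\lambda x}+H(x,k),
\end{equation*}
where $\lambda=Z^{-1}(k)\in\R$, and for $x<0$ only the $H$ term survives. The statement for $G_R(\cdot,k-i0)$ follows from the symmetry \eqref{eq: G_L bar}, or directly from \eqref{GL-GR}, which shows that $G_L-G_R$ is independent of $x$. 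So it suffices to treat $G_L$.

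The explicit terms are handled first. Since $k\ne\frac12$, we have $\lambda\ne0$ and $k^*\ne\frac12$. Moreover $k\mapsto\lambda$ and $k\mapsto k^*=k-\lambda$ are real-analytic near $k$: $Z$ is a real-analytic increasing bijection $\R\to\R^+$ with $Z'\neq0$ on $\R$. Hence $\frac{1}{1-2k}$ and $\frac{1}{1-2k^*}$ are smooth in $k$. For $e^{i\lambda x}$ we have
\begin{equation*}
\partial_k e^{i\lambda x}=ix\lambda'(k)e^{i\lambda x},
\end{equation*}
which lies in $L^\infty_{-1,+}$ on $\R^+$. The remainder of the difference quotient is
\begin{equation*}
\Delta^h_k e^{i\lambda x}-ix\lambda'e^{i\lambda x}
= e^{i\lambda x}\left(\frac{e^{i(\lambda(k+h)-\lambda(k))x}-1}{h}-ix\lambda'\right).
\end{equation*}
With $\delta=\lambda(k+h)-\lambda(k)=O(h)$ real, we use $|e^{i\delta x}-1|\le|\delta| x$. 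This shows the remainder is bounded by $Cx$, so it is bounded in $L^\infty_{-1,+}$ uniformly in $h$. On compact sets the expansion $e^{i\delta x}-1=i\delta x+O(\delta^2x^2)$ together with $\delta/h\to\lambda'$ gives uniform convergence to $0$. This is the same mechanism as in the proof of Corollary \ref{cor: G cont 1} and Lemma \ref{lem: G weak cont 1/2}, and it is exactly why only the weak $L^\infty_{-1,+}$ and compact-uniform convergence is claimed.

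The remaining task, which I expect to be the main obstacle, is the $H$ term. I will show that $\zeta\mapsto H(\cdot,\zeta)$ is differentiable into $L^\infty$ (in fact into $L^2\cap L^\infty$) near $k$, with a difference quotient converging in norm. That is stronger than needed. I use the contour representations from the proofs of Proposition \ref{prop: green est} and the lemmas for case (iii) of Proposition \ref{prop: G cont}.
\begin{itemize}
\item For $x>0$, $H(x,\zeta)=\frac{e^{-\pi x}}{2\pi}\int_\R \frac{e^{ix\xi}}{\xi+i\pi-\zeta(1-e^{-2\xi})}\,d\xi$.
\item For $x<0$, the analogous integral on $\R-i\frac\pi2$ applies.
\end{itemize}
These representations are valid for all $\zeta$ in a complex neighborhood of $k$, on both sides of $\R^+$, since no poles cross the shifted contours. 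Indeed, for $x>0$ the denominator $\xi+i\pi-\zeta(1-e^{-2\xi})$ has imaginary part at least $\pi/2$ in size when $|\imag\zeta|\le\epsilon_0$, except where $e^{-2\xi}$ is large. There $|\zeta(1-e^{-2\xi})|\gtrsim e^{-2\xi}$ dominates, by the estimates in Lemma \ref{lem: rel size est}.

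Consequently $H(x,\zeta)$ is holomorphic in $\zeta$ in a full disc around $k$. Its $\zeta$-derivative is given by differentiating under the integral, which produces the integrand $\frac{e^{ix\xi}(1-e^{-2\xi})}{[\cdots]^2}$. This integrand is bounded in $L^1_\xi$ uniformly for $\zeta$ in the disc, by the same splitting used in \eqref{eq: H cont case 3 1}: the region $\xi<-1$, the middle range, and the region $\xi>\lambda_1$ handled via \eqref{eq: p pos lam}. The second $\zeta$-derivative is similarly bounded in $L^1_\xi$, and this bound is uniform in $x$.

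Taylor's formula then gives $\|\Delta^h_k H(\cdot,k)-\partial_kH(\cdot,k)\|_{L^\infty}\le C|h|$. Alternatively, Cauchy estimates applied to the uniform bound \eqref{eq: G cont 6} give the same conclusion. Finally, $L^\infty$ convergence implies $L^\infty_{-1,+}$ convergence, and adding the explicit terms completes the proof for $G_L$.
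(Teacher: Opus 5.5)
Your proposal is correct and follows essentially the paper's own route: you split $G_L(\cdot,k+i0)=I+H$, where $I=\bigl(\tfrac{i}{1-2k}+\tfrac{i}{1-2k^*}e^{i\lambda x}\bigr)\chi_{\R^+}(x)$ is treated via $|e^{i\tilde h x}-1|\le|\tilde h|\,x$, and $H$ is written on the shifted contour $\R+i\pi$ for $x>0$, where a second derivative bounded uniformly in $x$ gives $L^\infty$ convergence of the difference quotient (you get that bound from holomorphy and Cauchy estimates, the paper from the explicit formula for $\partial_k^2H$ and the mean value theorem). One small correction: reduce the $G_R(\cdot,k-i0)$ case via \eqref{eq: G_L bar}, which gives $G_R(x,k-i0)=\overline{G_L(-x,k+i0)}$, and not via \eqref{GL-GR}, since the latter only relates $G_R(\cdot,k-i0)$ to $G_L(\cdot,k-i0)$, whose oscillatory term sits on $x<0$ rather than $x>0$.
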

\begin{comment}
    Just like in Cor \ref{cor: G cont 1}, the weight just needs to be any s<-1. I put -2 only because the assumption on u is enough to balance out that weight anyway.
\end{comment}
\begin{proof}
    We only show the proof for $G_L$. Let $x>0$. Formally differentiating \eqref{eq: H contour R+i pi}, we get 
    \begin{equation}\label{eq: Dk H}
        \partial_k H(x,k)=\frac{e^{-\pi x}}{2\pi}\int_\R \frac{e^{ix\xi}(1-e^{-2\xi})}{(\xi+i\pi -k(1-e^{-2\xi}))^2}~d\xi,
    \end{equation}
    \begin{equation}
        \partial_k^2 H(x,k)=\frac{e^{-\pi x}}{\pi}\int_\R \frac{e^{ix\xi}(1-e^{-2\xi})^2}{(\xi+i\pi -k(1-e^{-2\xi}))^3}~d\xi.
    \end{equation}
    By using the mean value theorem on the difference quotient, we see that $H(x,k)$ is differentiable in $k$, and 
    \begin{equation}\label{eq: Del H - DH}
        \|\Delta_k^h H(\cdot,k)-\partial_k H(\cdot,k)\|_{L^\infty(0,\infty)}\to 0
    \end{equation}
    as $h\to 0$. If $x<0$, we deform the contour in the definition of $H$ to $\R-i\pi$ and prove similarly. We now consider 
    \begin{align}
        I(x,k+i0)&=G_L(x,k+i0)-H(x,k)\notag\\
        &=\left(\tfrac{i}{1-2k}+\tfrac{i}{1-2k^*}e^{i\lambda x}\right)\chi_{\R^+}(x),\label{def: I}
    \end{align}
    where $\lambda=Z^{-1}(k)$.
    It's clear that $\partial_k I(x,k+i0)\in L^\infty_{1,+}$ for $k\in \R^+$, $k\ne \frac12$. To estimate $\Delta_k^h I(x,k+i0)$ in $L^\infty_{-1,+}$, we only need to estimate in $L^\infty$:
    \begin{align}
        %&\frac{\chi_{\R^+}(x)}{x(1+x)}\left|\frac1h\left(e^{i Z^{-1}(k+h) x}-e^{iZ^{-1}(k)x}\right)-ix (Z^{-1})'(k)e^{i\lambda x}\right|\notag\\
        %= &\frac{\chi_{\R^+}(x)}{1+x}\left|\frac{e^{ix\tilde h}-1-ix\tilde h +ix[\tilde h-(Z^{-1})'(k)h]}{xh}\right|,
        &\qquad \frac{\chi_{\R^+}(x)}{x}\left|\frac1h\left(e^{i Z^{-1}(k+h) x}-e^{iZ^{-1}(k)x}\right)\right|\notag\\
        &= \chi_{\R^+}(x)\left|\frac{e^{ix\tilde h}-1 }{xh}\right|\notag\\
        &\lesssim_k 1.
    \end{align}
    where we denote $\tilde h = Z^{-1}(k+h)-Z^{-1}(k)$. Finally, uniform convergence on compact sets of $\Delta_k^h G_L(\cdot,k+i0)-\partial_k G_L(\cdot,k+i0)$ follows from \eqref{eq: Del H - DH} and the fact that the terms in the parentheses of \eqref{def: I} are analytic in $(x,k)$.
    
    %\begin{equation}
     %   \frac{\chi_{\R^+}(x)}{1+x}\left|\frac{e^{ix\tilde h}-1-ix\tilde h }{xh}\right|=\frac{\chi_{\R^+}(x)x|h|}{1+x}\left|\frac{e^{ix\tilde h}-1-ix\tilde h }{(xh)^2}\right|\lesssim_k |h|,
    %\end{equation}
    %and
   % \begin{equation}
    %    |\tilde h-(Z^{-1})'(k)h|=|Z^{-1}(k+h)-Z^{-1}(k)-(Z^{-1})'(k)h|\lesssim_k h^2.
    %\end{equation}
    %It follows that $\|\Delta_k^h I(x,k+i0)-\partial_k I(x,k+i0)\|_{L^\infty_{-2,+}}\lesssim_k |h|$ as $h\to 0$.
\end{proof}

We provide estimates on $\partial_k G_L$, $\partial_k G_R$ that are uniform in $k$ as $k\searrow 0$ and $k\nearrow \infty$.
\begin{lemma}\label{lem: Dk G uniform}
    There exists $C>0$ such that 
    \begin{equation}\label{eq: Dk G k small}
        \|\partial_k G_L(\cdot,k+i0)\|_{L^\infty_{-1,+}}\le \tfrac{C}{k},~\|\partial_k G_R(\cdot,k+i0)\|_{L^\infty_{-1,-}}\le \tfrac{C}{k}
    \end{equation}
    for $0<k<\frac14$, and 
    \begin{equation}\label{eq: Dk G k large}
        \|\partial_k G_L(\cdot,k+i0)\|_{L^\infty_{-1,+}}\le C, ~\|\partial_k G_R(\cdot,k+i0)\|_{L^\infty_{-1,-}}\le C
    \end{equation}
    for $k>1$.
\end{lemma}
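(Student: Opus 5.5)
We split $G_L(\cdot,k+i0)=I(\cdot,k+i0)+H(\cdot,k)$ as in \eqref{def: I}, with $I(x,k+i0)=\bigl(\tfrac{i}{1-2k}+\tfrac{i}{1-2k^*}e^{i\lambda x}\bigr)\chi_{\R^+}(x)$ and $\lambda=Z^{-1}(k)\in\R$. The two pieces are estimated separately in the two regimes $0<k<\frac14$ and $k>1$. For $G_R$ we use \eqref{GL-GR}: since $G_R=G_L-\frac{i}{1-2\zeta}$, we get $\partial_kG_R=\partial_kG_L-\frac{2i}{(1-2k)^2}$. On $x<0$ the function $G_L$ reduces to $H$, so the $G_R$ estimate in $L^\infty_{-1,-}$ follows from the same bounds on $\partial_k H$ plus a bounded constant. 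The constant $\frac{2i}{(1-2k)^2}$ is harmless because $k$ stays away from $\frac12$ in both regimes.

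\textbf{The term $I$.} We compute
\begin{equation}
\partial_k I=\Bigl(\tfrac{2i}{(1-2k)^2}+\tfrac{2i\,\partial_k k^*}{(1-2k^*)^2}e^{i\lambda x}-\tfrac{x\,\lambda'(k)}{1-2k^*}e^{i\lambda x}\Bigr)\chi_{\R^+}(x).
\end{equation}
Since $\lambda$ is real, $|e^{i\lambda x}|=1$, and the factor $x$ is absorbed by the weight $(1+x_+)^{-1}$. It therefore suffices to bound $|\partial_kk^*|/|1-2k^*|^2$ and $|\lambda'(k)|/|1-2k^*|$.

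We use $\lambda=k-k^*$ from \eqref{eq: zeta-zeta*} and $\lambda'(k)=1/Z'(\lambda)$. For small $k$, $\lambda\sim\frac12\log k\to-\infty$ by \eqref{eq: zeta near 0}. The explicit formula for $Z'$ gives $Z'(\lambda)\sim 2k$, because $Z'\approx 2|\lambda|e^{2\lambda}\approx 2Z(\lambda)$. Hence $\lambda'(k)\approx\frac{1}{2k}$. Also $k^*=k-\lambda\approx|\lambda|$, so $\frac{|\lambda'|}{|1-2k^*|}\lesssim\frac{1}{k|\log k|}\le\frac Ck$. The relation $\partial_kk^*=1-\lambda'$ gives the same type of bound for the remaining term.

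For $k>1$, \eqref{eq: zeta near +inf} gives $\lambda\approx k$, $Z'(\lambda)\to1$ and $k^*=Z(-\lambda)$ exponentially small. Hence $1-2k^*\approx1$ and $\partial_kk^*=1-\lambda'$ is bounded, so all terms are $O(1)$.

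\textbf{The term $H$.} We bound $\partial_kH$ in $L^\infty$ using \eqref{eq: Dk H} for $x>0$ (and the analogous contour $\R-i\pi$ or $\R-i\frac\pi2$ for $x<0$). We estimate
\begin{equation}
\int_\R\frac{|1-e^{-2\xi}|}{|\xi+i\pi-k(1-e^{-2\xi})|^2}\,d\xi=\int_\R\frac{d\xi}{|1-e^{-2\xi}|\,|Z(\xi)+\frac{i\pi}{1-e^{-2\xi}}-k|^2}.
\end{equation}
For $k>1$ we split at the values used for case (iii) of Proposition \ref{prop: G cont}, i.e. $\xi<\lambda_1$ and $\xi>\lambda_1$, using \eqref{eq: p pos lam}. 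This reproduces the uniform bound obtained there with $\zeta_1=\zeta_2$.

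For small $k$ we do not shift to $\R+i\pi$. Instead we differentiate the representations \eqref{Green H1 def} and \eqref{Green H1 def x neg} on $\R+i\frac{3\pi}4$ and $\R-i\frac\pi4$. We then repeat the three-piece split at $\real\lambda\pm1$ from the proof of Lemma \ref{lem: G cont case ii}, with $\zeta_1=\zeta_2=k$. That proof bounds exactly this integrand by $C/(|\zeta|\log^+|\zeta|)$, so we obtain $\|\partial_kH\|_{L^\infty}\lesssim\frac{1}{k|\log k|}\le\frac Ck$. Alternatively, \eqref{eq: G cont 2} with $\zeta_2\to\zeta_1=k$ directly yields the difference-quotient bound, and passing to the limit gives the bound on $\partial_kH$.

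\textbf{Expected obstacle.} The main difficulty is the small-$k$ regime. There we must control $\lambda'(k)\sim\frac1{2k}$ and verify that the contour representation of $\partial_kH$ used in Lemma \ref{lem: G cont case ii} is valid on the boundary $k+i0$, including the term $H_3$. For $H_3$ we need to check that the tail estimate \eqref{H131 tail est} still closes after differentiation. The large-$k$ case should be routine.
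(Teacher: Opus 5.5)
Your argument for $G_L(\cdot,k+i0)$ works, but your reduction for $G_R$ fails. The weight in $L^\infty_{-1,-}$ is $(1+x_-)^{-1}$, which equals $1$ on $x>0$. So you must also bound $\partial_kG_R$ on the positive half-line, and checking $x<0$ is not enough. On $x>0$, \eqref{GL-GR} gives $G_R(x,k+i0)=\tfrac{i}{1-2k^*}e^{i\lambda x}+H(x,k)$. Its $k$-derivative contains $-\tfrac{x\lambda'(k)}{1-2k^*}e^{i\lambda x}$ with $\lambda$ real, which grows linearly as $x\to+\infty$. Hence $\partial_kG_R(\cdot,k+i0)\notin L^\infty_{-1,-}$: the $G_R$ half of the statement is false as literally written. (The paper's proof also treats only $G_L$.) The intended object is $G_R(\cdot,k-i0)$, consistent with Lemma \ref{lem: Del G - DG on R+} and with the operator $\partial_kB(k-i0)$ in Lemma \ref{lem: A differntiable k>0}. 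For it, the pole term $-\tfrac{i}{1-2k^*}e^{i\lambda x}\chi_{\R^-}(x)$ lives on the negative half-line, where the factor $x$ is absorbed by the weight. The quickest proof is the symmetry \eqref{eq: G_L bar}, which gives $G_R(x,k-i0)=\overline{G_L(-x,k+i0)}$. Therefore $\|\partial_kG_R(\cdot,k-i0)\|_{L^\infty_{-1,-}}=\|\partial_kG_L(\cdot,k+i0)\|_{L^\infty_{-1,+}}$, and both bounds transfer at once.

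For $G_L$, your computation of $\partial_kI$ and the asymptotics of $\lambda'$, $k^*$, $\partial_kk^*$ are exactly the paper's key estimates \eqref{eq: k* bounds k small}, \eqref{eq: k* bounds k large}. For $H$ your route differs from the paper's. The paper differentiates the $\R+i\pi$ representation to get \eqref{eq: Dk H} and estimates it from scratch. For small $k$ it splits at $\lambda\pm1$ using Lemma \ref{lem: rel size est}(i); the piece $|\xi-\lambda|<1$ costs $e^{-2\lambda}/|\lambda|\approx 1/k$, because there the denominator is only of size $\pi$. For large $k$ it splits at $-1$ and $\lambda_1$ using \eqref{eq: p pos lam}, and it handles the intermediate range by compactness. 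Your alternative, passing to the limit in the Lipschitz bounds \eqref{eq: G cont 2} and \eqref{eq: G cont 6}, is legitimate, since $(0,\infty)\subset V$ and $H(\cdot,k\pm i0)=H(\cdot,k)$. It reuses estimates already proved and even gives $\|\partial_kH\|_{L^\infty}\lesssim\frac{1}{k|\log k|}$, because on $\R+i\frac{3\pi}{4}$ the denominator stays $\gtrsim|\lambda|$ near $\real\xi=\lambda$. Note that \eqref{eq: G cont 2} only reaches $k<\epsilon_0$. The ranges $\epsilon_0\le k<\frac14$ and $1<k\le\mu_1$ must come from \eqref{eq: G cont 6}, which is uniform for all $\real\zeta>\frac{\epsilon_0}{2}$. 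Along this route, the $H_3$-tail issue you flag as the expected obstacle never needs to be revisited.
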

\begin{proof}
    We only show the proof for $G_L$. Given $\mu_0<\mu_1$ in $\mathbb R^+$, there exists $K>0$ such that for $\mu_0\le k\le \mu_1$, if $\xi<-K$, 
    \begin{equation}
        2|\xi|<k|1-e^{-2\xi}|;
    \end{equation}
    if $\xi>K$,
    \begin{equation}
        2k|1-e^{-2\xi}|\le |\xi|.
    \end{equation}
    It follows easily from the above estimates and \eqref{eq: Dk H} that there exists a $C=C(\mu_0,\mu_1)$ such that
    \begin{equation}
        \|\partial_k H(\cdot,k)\|_{L^\infty}\le C
    \end{equation}
    for $\mu_0\le k\le \mu_1$.

    Now consider $0<k<\mu_0$, where $\mu_0$ is given by Lemma \ref{lem: rel size est} (i). Let $\lambda= Z^{-1}(k)$. Lemma \ref{lem: rel size est} (i) suggests we break the integral in \eqref{eq: Dk H} into three parts.
    \begin{align}
        &~\int_{\xi<\lambda-1}\frac{|1-e^{-2\xi}|}{|\xi-k(1-e^{-2\xi})+i\pi|^2}~d\xi\notag\\
        \lesssim &~\int_{\xi<\lambda-1}\frac{|1-e^{-2\xi}|}{k^2|1-e^{-2\xi}|^2}~d\xi\notag\\
        \lesssim&~ \frac{e^{2\lambda}}{k^2}\approx \frac{1}{k|\lambda|}\approx\frac{1}{k|\log k|}.
    \end{align}
    \begin{align}
        &~\int_{\lambda-1<\xi<\lambda+1}\frac{|1-e^{-2\xi}|}{|\xi-k(1-e^{-2\xi})+i\pi|^2}~d\xi\notag\\
        \lesssim &~\int_{\lambda-1<\xi<\lambda+1}\frac{e^{-2\lambda}}{1+|\lambda(\xi-\lambda)|^2}~d\xi\notag\\
        =&~\frac{e^{-2\lambda}}{|\lambda|}\int_{-|\lambda|}^{|\lambda|}\frac{1}{1+\eta^2}~d\eta\notag\\
        \lesssim &~\frac{e^{-2\lambda}}{|\lambda|}\approx \frac{1}{k}.
    \end{align}
    \begin{align}
        &~\int_{\xi>\lambda+1}\frac{|1-e^{-2\xi}|}{|\xi-k(1-e^{-2\xi})+i\pi|^2}~d\xi\notag\\
        \lesssim &~\int_{\xi>\lambda+1}\frac{|1-e^{-2\xi}|}{1+\xi^2}~d\xi\notag\\
        \lesssim &\int_{\lambda}^1 \frac{e^{-2\xi}}{\xi^2}~d\xi+1\lesssim \frac{e^{-2\lambda}}{|\lambda|^2}\approx\frac{1}{k|\log k|}.
    \end{align}
    In summary, 
    $ \|\partial_k H(\cdot,k)\|_{L^\infty}\lesssim \frac{1}{k}$
    for $0<k<\mu_0$.

    Now let's consider $k>\mu_1$, where $\mu_1$ is given as in Lemma \ref{lem: rel size est} (ii). This time we split the integral in \eqref{eq: Dk H} into the following pieces and estimate:
        \begin{align}
            &~\int_{-\infty}^{-1}\frac{|1-e^{-2\xi}|}{|\xi-k(1-e^{-2\xi})+i\pi|^2}~d\xi\notag\\
            \lesssim &~\int_{-\infty}^{-1}\frac{e^{2\xi}}{k^2}~d\xi\lesssim \frac1{k^2}.
        \end{align}
    Let $\lambda_1=Z^{-1}(\mu_1)$, then
    \begin{align}
        &~\int_{-1}^{\lambda_1}\frac{|1-e^{-2\xi}|}{|\xi-k(1-e^{-2\xi})+i\pi|^2}~d\xi\notag\\
            \lesssim &~\int_{-1}^{\lambda_1}\frac{|1-e^{-2\xi}|}{\pi^2}~d\xi\lesssim_{\lambda_1}1.
    \end{align}
    By Lemma \ref{lem: rel size est} (ii)
    \begin{align}
        &~\int_{\lambda_1}^\infty \frac{|1-e^{-2\xi}|}{|\xi-k(1-e^{-2\xi})+i\pi|^2}~d\xi\notag\\
            \lesssim &~ \int_{\lambda_1}^\infty\frac{1}{1+|\xi-\lambda|^2}~d\xi \lesssim 1.
    \end{align} 
    In summary $\|\partial_k H(\cdot,k)\|_{L^\infty}\lesssim 1$ for $k>\mu_1$. 

    \eqref{eq: Dk G k small} and \eqref{eq: Dk G k large} now follow by taking a $k$-derivative explicitly in \eqref{def: I}. Some key estimates are 
    \begin{equation}\label{eq: k* bounds k small}
        |k^*|\approx |\log k|, ~|\tfrac{dk^*}{dk}|\approx \tfrac{1}{k},~|\tfrac{d\lambda}{dk}|\approx \tfrac{1}{k} 
    \end{equation}
    for $k<\frac14$, and 
    \begin{equation}\label{eq: k* bounds k large}
        |k^*|\approx ke^{-2k}, ~|\tfrac{dk^*}{dk}|\approx ke^{-2k},~|\tfrac{d\lambda}{dk}|\approx 1 
    \end{equation}
    for $k>1$.
\end{proof}

The estimates at $k=\frac12$ have one extra order of growth in $x$.

\begin{lemma}\label{lem: Del G - DG at 1/2}
    $\partial_k G_L(\cdot,\frac12+i0)\in L^\infty_{-2,+}$. As $h\to 0$, $\Delta_k^h G_L(\cdot,\frac12+i0)-\partial_k G_L(\cdot,\frac12+i0)$ is bounded in $L^\infty_{-2,+}$ and converges to 0 uniformly on compact sets. 

    Similarly, $\partial_k G_R(\cdot,\frac12-i0)\in L^\infty_{-2,-}$. As $h\to 0$, $\Delta_k^h G_R(\cdot,\frac12-i0)-\partial_k G_R(\cdot,\frac12-i0)$ is bounded in $L^\infty_{-2,-}$ and converges to 0 uniformly on compact sets. 
\end{lemma}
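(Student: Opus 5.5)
We follow the structure of the proof of Lemma \ref{lem: Del G - DG on R+} and treat only $G_L$; the statement for $G_R$ then follows from \eqref{GL-GR} and \eqref{eq: G_L bar}. The plan is to split
\[
G_L(x,\zeta)=I(x,\zeta)+H(x,\zeta),\qquad I(x,\zeta)=\Big(\tfrac{i}{1-2\zeta}+\tfrac{i}{1-2\zeta^*}e^{i\lambda x}\Big)\chi_{\R^+}(x),
\]
for $\zeta$ near $\tfrac12$ with $\imag\zeta\ge 0$. By the remark following Proposition \ref{prop: green est}, $I$ extends continuously to $\zeta=\tfrac12+i0$, where it equals $(-x+\tfrac{2i}3)\chi_{\R^+}(x)$.

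\emph{The $H$ part.} $H(x,k)$ is defined through the contour $\R+i\pi$ for $x>0$ and $\R-i\pi$ for $x<0$, as in \eqref{eq: H contour R+i pi}. These contours stay away from every zero of $\xi-k(1-e^{-2\xi})$ when $k$ is near $\tfrac12$, because $\lambda=Z^{-1}(k)$ is near $0$. The formulas \eqref{eq: Dk H} for $\partial_kH$ and $\partial_k^2H$ therefore hold verbatim. The integrands are uniformly bounded by $(1+|\xi|)^{-2}$ for $\xi\ge0$ and by $e^{2\xi}$-type decay for $\xi\le0$, with constants uniform for $k$ near $\tfrac12$. The mean value theorem then gives $\partial_kH(\cdot,\tfrac12)\in L^\infty$ and $\|\Delta^h_kH-\partial_kH\|_{L^\infty}\lesssim |h|\to0$. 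This part is harmless.

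\emph{The $I$ part.} Here we use that $\lambda=Z^{-1}(\zeta)$ is analytic near $\zeta=\tfrac12$, with $\lambda(\tfrac12)=0$ and $\lambda'(\tfrac12)=1/Z'(0)\neq0$. Expanding $\zeta^*=\zeta-\lambda$ gives $1-2\zeta^*=-\lambda\,c(\lambda)$ with $c$ analytic and $c(0)\neq0$; in fact $1-2\zeta^*=-\lambda+O(\lambda^2)$ and $1-2\zeta=\lambda+O(\lambda^2)$. We then write
\[
I=i\chi_{\R^+}(x)\Big[\tfrac{1}{1-2\zeta}-\tfrac{1}{\lambda c(\lambda)}+\tfrac{1}{\lambda c(\lambda)}\big(1-e^{i\lambda x}\big)\Big].
\]
The first two terms combine into a function $A(\lambda)$ analytic at $\lambda=0$ and independent of $x$. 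The last term is $-\tfrac{i}{c(\lambda)}\,x\,\phi(\lambda x)$ with $\phi(w)=(e^{iw}-1)/(iw)$ entire. Hence $I(x,\zeta)=\chi_{\R^+}(x)F(\lambda,x)$, where $F$ is jointly analytic in $(\lambda,x)$.

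Differentiating in $k$ at $\tfrac12$ via $\partial_k=\lambda'(k)\partial_\lambda$ gives terms $A'(0)$, $x\,\partial_\lambda(1/c)(0)$ and $x^2\phi'(0)/c(0)$. So $\partial_kI(\cdot,\tfrac12+i0)$ grows like $x^2$, which places it in $L^\infty_{-2,+}$; this is the extra order of growth compared with $k\ne\tfrac12$. Uniform convergence of $\Delta^h_kI-\partial_kI$ to $0$ on compact sets is immediate from joint analyticity.

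\emph{Main step: boundedness in $L^\infty_{-2,+}$.} The $A$ and $1/c$ contributions are smooth in $\lambda$ and at most linear in $x$, so only the term $x\,\Delta^h[\phi(\lambda x)]$ needs care. Set $\tilde h=\lambda(\tfrac12+h)-\lambda(\tfrac12)$, so that $|\tilde h|\approx|h|$ and $\imag\tilde h\ge0$. Then $\Delta^h[\phi(\lambda x)]=h^{-1}\big(\phi(\tilde hx)-\phi(0)\big)$. We need $|\phi'(w)|\lesssim(1+|w|)^{-1}\cdot$const on the closed upper half plane, or at least the weaker bound $|\phi(w)-\phi(0)|\lesssim\min(|w|,1)$ there. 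Both follow from the boundedness of $e^{iw}$ for $\imag w\ge0$, the same fact used for $G_L$ in Lemma \ref{lem: G weak cont 1/2}. This yields $|x\,h^{-1}(\phi(\tilde hx)-\phi(0))|\lesssim x\cdot\min(x,|h|^{-1})\le x^2$, uniformly in $h$, which is the required bound in $L^\infty_{-2,+}$. The only thing to check is that $\imag\lambda\ge0$ along the approach $\zeta=\tfrac12+h$ from $\R^++i0$, and this holds because the local inverse maps the upper half plane near $\tfrac12$ into itself.
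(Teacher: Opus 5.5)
Your proof is correct and follows essentially the paper's argument: the same splitting $G_L=I+H$, the same treatment of $H$ via the contours $\R\pm i\pi$ and the mean value theorem, and for $I$ the same cancellation of the poles of $\frac{1}{1-2k}$ and $\frac{1}{1-2k^*}$ at $\lambda=0$, combined with the bound on $e^{i\lambda x}-1-i\lambda x$ (which your $\phi$-estimate encodes), with convergence on compact sets from analyticity in $(x,k)$. There are two harmless slips. First, the expansions are actually $1-2\zeta=-\lambda+O(\lambda^2)$ and $1-2\zeta^*=\lambda+O(\lambda^2)$, so your $c(0)=-1$; this changes nothing, since only the cancellation of the two residues is used. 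Second, because $h$ is real, $\lambda$ and $\tilde h$ are real, and the $G_R$ statement follows from \eqref{eq: G_L bar} alone; \eqref{GL-GR} is singular at $\zeta=\frac12$ and should not be invoked there.
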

\begin{proof}
    The estimates on $\partial_k H$ continues to work when $k=\frac12$. We only need to look at $I$ as defined by \eqref{def: I}. Taking the limit as $k\to \frac12$ or $\lambda\to 0$, we have 
    \begin{equation}
        I(x,\tfrac12+i0)=\left(-x+i\tfrac23\right)\chi_{\R^+}(x),
    \end{equation}
    and
    \begin{equation}
        \partial_k I(x,\tfrac12+i0)=\left(-\tfrac23 x -ix^2\right)\chi_{\R^+}(x).
    \end{equation}
    We obviously have $\partial_k I(x,\frac12+i0)\in L^\infty_{-2,+}$. To estimate $\Delta_k^h I(x,\tfrac12+i0)$, we consider as $k\to \frac12$ or $\lambda\to 0$
    \begin{align}
       & \frac{\chi_{\R^+}(x)}{|\lambda|(1+x)^2}\left|\frac{i}{1-2k}+\frac{i}{1-2k^*}e^{i\lambda x}+x-i\frac23\right|\notag\\
       \lesssim &\frac{\chi_{\R^+}(x)}{\lambda^2(1+x)^2}\left|e^{i\lambda x}-1+\frac{1-2k^*}{i}x+\frac{1-2k^*}{1-2k}+1-\frac{2(1-2k^*)}{3}\right|\notag\\
       \lesssim & \frac{\chi_{\R^+}(x)}{\lambda^2(1+x)^2}\left(\left|e^{i\lambda x}-1-i\lambda x\right|+\lambda^2 x+\lambda^2\right)\notag\\
       \lesssim &1.
    \end{align}
    Uniform convergence on compact sets of $\Delta_k^h I(x,\tfrac12+i0)-\partial_k I(x,\tfrac12+i0)$ is obvious, as the function in front of $\chi_{\R^+}$ in the definition of $I$ is analytic in $(x,k)$.
\end{proof}

\subsection{\texorpdfstring{Decay estimates as $\zeta\to\infty$}{Decay estimates as zeta goes to infinity}}

The Green's functions also have certain decay properties as $\zeta$ tends to infinity. We start by showing the following decay estimates for the Fourier symbol of $G_*$.

Recall that the principal argument $\arg z\in (-\pi,\pi]$.
\begin{lemma}
For any $\alpha>0$ sufficiently small, and $R>0$ sufficiently large, there exists a $C>0$  %be a sufficiently small angle. Let $\Omega_{<\alpha}$ be the domain outside a large disc around the origin, and that the angle with $\R^+$ is less than $\alpha$. Let $\Omega_{>\alpha}$ be the domain outside the large disc and not in $\Omega_{<\alpha}$. There exists a $C>0$ such that 
such that
\begin{enumerate}[(i)]
\item If $|\arg\zeta|\ge \alpha$, $|\zeta|>R$,
\begin{equation}\label{eq: G sym decay 1}
\left|\frac1{\xi-\zeta(1-e^{-2\xi})} - \frac{1}{1-2\zeta}\frac{1}{\xi}\right|\le\frac{C}{|\zeta|+|\xi|}, 
\end{equation}
\begin{equation}\label{eq: G sym decay 2}
\left| \frac1{\xi-\zeta(1-e^{-2\xi})} - \frac{1}{1-2\zeta}\frac{2}{1-e^{-2\xi}}\right|\le \frac{C\langle\xi\rangle}{|\zeta|(|\zeta|+|\xi|)}.
\end{equation}

\item If $|\arg\zeta|<\alpha$, $|\zeta|>R$, let $\lambda=Z^{-1}(\zeta)$ (see Definition \ref{def: Z inv}). Then
\begin{equation}\label{eq: G sym decay 3}
\left|\frac1{\xi-\zeta(1-e^{-2\xi})} -\frac1{1-2\zeta^*}\frac1{\xi-\lambda}- \frac{1}{1-2\zeta}\frac{1}{\xi}\right|\le\frac{C}{|\zeta|+|\xi|},
\end{equation}
\begin{equation}\label{eq: G sym decay 4}
\left|\frac1{\xi-\zeta(1-e^{-2\xi})} -\frac1{1-2\zeta^*}\frac1{\xi-\lambda}-\frac1{1-2\zeta}\frac{2}{e^{2\xi}-1}\right|\le\frac{C\langle \xi\rangle}{|\zeta|(|\zeta|+|\xi|)}.
\end{equation}
\end{enumerate}
\end{lemma}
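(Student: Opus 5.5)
The plan is to write $p(\xi)=\xi-\zeta(1-e^{-2\xi})=(1-e^{-2\xi})(Z(\xi)-\zeta)$ for real $\xi$. I will subtract the explicit polar parts and estimate what remains by splitting $\xi\in\R$ into three ranges: $|\xi|\le 1$, $|\xi|\ge 1$ with $|\xi|\le |\zeta|/2$ or $|\xi|\ge 2|\zeta|$ (``far''), and $|\xi|\approx|\zeta|$ (``comparable'').

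The first step is the algebraic identity
\begin{equation*}
\frac1{p(\xi)}-\frac1{(1-2\zeta)\xi}=\frac{(1-2\zeta)\xi-p(\xi)}{(1-2\zeta)\xi\,p(\xi)}=\frac{\zeta(1-2\xi-e^{-2\xi})}{(1-2\zeta)\xi\,p(\xi)}.
\end{equation*}
The numerator $1-2\xi-e^{-2\xi}$ is $O(\xi^2)$ near $0$. Hence for $|\xi|\le1$ the difference is $O(|\xi|/|p(\xi)|)$.

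Next I need lower bounds on $|p|$.
\begin{itemize}
\item Near $0$, $p(\xi)=(1-2\zeta)\xi+O(|\zeta|\xi^2)$, so $|p(\xi)|\gtrsim|\zeta||\xi|$ for $|\xi|\le 1$, provided $\xi$ stays away from the other root $\lambda$. In case (i) this holds automatically. The bound then gives $O(1/|\zeta|)$.
\item For $\xi\le -1$, $|p|\approx|\zeta|e^{-2\xi}$. For $\xi\ge1$, $|p|\approx|\xi-\zeta(1-e^{-2\xi})|$.
\item In case (i), $|\arg\zeta|\ge\alpha$ and $\xi>0$ is real. The law of cosines inequality \eqref{law of cos} then gives $|\xi-\zeta(1-e^{-2\xi})|\gtrsim_\alpha |\xi|+|\zeta|$, since $1-e^{-2\xi}\in(0,1)$ keeps the argument of $\zeta(1-e^{-2\xi})$ equal to $\arg\zeta$.
\end{itemize}

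Plugging these into the identity:
\begin{itemize}
\item For $\xi\ge1$ the difference is $O(1/(|\xi|+|\zeta|))\cdot O(1)$, using $|\zeta(2\xi)|/(|\zeta||\xi||p|)$.
\item For $\xi\le-1$, $e^{-2\xi}$ dominates both numerator and denominator, giving $O(1/(|\xi||\zeta|))\le C/(|\zeta|+|\xi|)$.
\end{itemize}
This proves \eqref{eq: G sym decay 1}.

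For \eqref{eq: G sym decay 2}, I will subtract \eqref{eq: G sym decay 1}'s approximant from the new one:
\begin{equation*}
\frac{1}{1-2\zeta}\Big(\frac{2}{1-e^{-2\xi}}-\frac1\xi\Big)=\frac1{1-2\zeta}\,O\!\Big(\frac{1}{\langle\xi\rangle}\Big)
\end{equation*}
on $\xi\ge0$, smooth at $0$ after cancellation. On $\xi<0$ the same expression is $O(1)/|\zeta|$. This is why the claimed bound only needs to be $\langle\xi\rangle/(|\zeta|(|\zeta|+|\xi|))$. It is cleaner to redo the identity directly:
\begin{equation*}
\frac1p-\frac{2}{(1-2\zeta)(1-e^{-2\xi})}=\frac{(1-e^{-2\xi})-2\xi+2\zeta(1-e^{-2\xi})-\ldots}{\cdots}.
\end{equation*}
Reorganizing gives a numerator $(1-e^{-2\xi})-2\xi$ with no $\zeta$ factor. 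This yields the extra $1/|\zeta|$, with $\langle\xi\rangle$ coming from $|2\xi|$.

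Case (ii) is the main obstacle. Now $p$ has the real-ish root $\lambda\approx\zeta$, using \eqref{eq: zeta near +inf}. I will subtract its polar part $\frac{1}{p'(\lambda)(\xi-\lambda)}$ and check that $p'(\lambda)=1-2\zeta e^{-2\lambda}=1-2(\zeta-\lambda)=1-2\zeta^*$ by \eqref{eq: zeta-zeta*}. This matches the stated coefficient.

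The remainder $R=1/p-\frac1{(1-2\zeta^*)(\xi-\lambda)}-\frac1{(1-2\zeta)\xi}$ is analytic near $\xi=\lambda$. To bound it:
\begin{itemize}
\item Away from $|\xi-\lambda|\le |\lambda|/2$, the previous case-(i) estimates go through. The polar term itself is $O(1/(|\xi|+|\zeta|))$ there, and $|p|\gtrsim|\xi-\lambda|$ by Lemma \ref{lem: rel size est}(ii) (its complex analogue for small $|\arg\zeta|$, using $e^{-2\xi}$ tiny when $\xi\approx\zeta$).
\item Inside the disc $|\xi-\lambda|\le|\lambda|/2$ (complexified), I will use the maximum modulus principle for $R$ on a slightly larger circle $|\xi-\lambda|=|\lambda|/2$, where the outside estimates give $|R|\lesssim 1/|\zeta|$. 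This requires knowing $p$ has no other zeros in that disc. That follows because $p(\xi)=\xi-\zeta+O(|\zeta|e^{-2\real\xi})$ there, so Rouché gives exactly one zero.
\end{itemize}

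\eqref{eq: G sym decay 4} follows from \eqref{eq: G sym decay 3} as in case (i), noting $\frac{2}{e^{2\xi}-1}=\frac{2}{1-e^{-2\xi}}-2$. The constant $-2/(1-2\zeta)$ must be absorbed. This is consistent because the $\langle\xi\rangle/(|\zeta|(|\zeta|+|\xi|))$ bound is $\gtrsim 1/|\zeta|$ for $|\xi|\gtrsim |\zeta|$. For $|\xi|\ll|\zeta|$, I expect the $\zeta e^{-2\xi}$ structure of the numerator to produce this combination exactly. This bookkeeping is the step I would double-check first.
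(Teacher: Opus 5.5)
\textbf{Estimates \eqref{eq: G sym decay 1}--\eqref{eq: G sym decay 3}.} Your treatment of these is sound. For \eqref{eq: G sym decay 3} near $\xi=\lambda$ you take a different route from the paper. The paper expands $f(\xi)=\xi-\zeta(1-e^{-2\xi})$ to second order about $\lambda$ on $|\lambda|/2<\xi<2|\lambda|$, which gives an exponentially small remainder $O(|\zeta|e^{-|\lambda|})$. Your Rouch\'e plus maximum-modulus argument on the disc $|\xi-\lambda|\le|\lambda|/2$ yields only $O(1/|\zeta|)$, but that is all the estimate requires.

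Two small repairs are needed:
\begin{itemize}
\item On the complexified circle $|\xi-\lambda|=|\lambda|/2$, the bound $|R|\lesssim 1/|\zeta|$ does not follow from your real-line estimates. It follows directly from $\real\xi\gtrsim|\lambda|$ there: then $|\zeta e^{-2\xi}|$ is exponentially small and $|p|\gtrsim|\lambda|$.
\item The bound $|p(\xi)|\gtrsim|\zeta||\xi|$ for $|\xi|\le1$ should be read off from $p=(1-e^{-2\xi})(Z(\xi)-\zeta)$, with $Z([-1,1])$ a bounded real interval. The Taylor error $O(|\zeta|\xi^2)$ you use is as large as the main term when $|\xi|\sim1$.
\end{itemize}

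\textbf{The gap: \eqref{eq: G sym decay 4}.} This estimate cannot be deduced from \eqref{eq: G sym decay 3}. The two approximants differ by $\frac{1}{1-2\zeta}\big(\frac1\xi-\frac{2}{e^{2\xi}-1}\big)$, which tends to $\frac1{1-2\zeta}$ as $\xi\to0$. So the difference has exact size $\sim1/|\zeta|$ for bounded $\xi$, while \eqref{eq: G sym decay 4} demands $O(|\zeta|^{-2})$ there. (In case (i) you likewise did not get the second estimate from the first; you had to redo the identity.)

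A genuine cancellation is therefore needed when $|\xi|\ll|\zeta|$, and you have not exhibited it. Its source is also not the ``$\zeta e^{-2\xi}$ structure of the numerator.'' What works is to split the left side as
\[
\frac{1-e^{-2\xi}-2\xi}{(1-2\zeta)(1-e^{-2\xi})\,p(\xi)}+\Big[\frac{2}{1-2\zeta}-\frac{1}{(1-2\zeta^*)(\xi-\lambda)}\Big].
\]
Treat the first term exactly as in your proof of \eqref{eq: G sym decay 2}, off the disc. Using $\zeta-\lambda=\zeta^*$, the bracket equals
\[
\frac{1}{1-2\zeta}\Big(\frac{2\xi}{\xi-\lambda}+\frac{2\zeta^*-1+4\zeta^*\lambda}{(1-2\zeta^*)(\xi-\lambda)}\Big).
\]
Since $\zeta^*=\lambda/(e^{2\lambda}-1)$ is exponentially small, the numerator $2\zeta^*-1+4\zeta^*\lambda$ is bounded. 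Off the disc $|\xi-\lambda|\gtrsim|\xi|+|\zeta|$, so the bracket is $O\big(\langle\xi\rangle/(|\zeta|(|\zeta|+|\xi|))\big)$.

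So the constant $\frac{2}{1-2\zeta}\approx-\frac1\zeta$ cancels against the value $\approx\frac1\lambda$ of the $\lambda$-polar term near $\xi=0$, because $\lambda=\zeta-\zeta^*$ with $\zeta^*$ negligible. Your remark that the target is $\gtrsim1/|\zeta|$ suffices only where $|\xi|\gtrsim|\zeta|$, which includes your disc.
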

\begin{proof}
First consider case (i). We can write the left hand side of \eqref{eq: G sym decay 1} in several equivalent ways:
\begin{align}
&~\left|\frac1{\xi-\zeta(1-e^{-2\xi})} - \frac{1}{1-2\zeta}\frac{1}{\xi}\right|\notag\\
=&~\left|\frac{1}{[Z(\xi)-\zeta](1-e^{-2\xi})}-\frac1{1-2\zeta}\frac1{\xi}\right| \label{eq: G sym decay 11}\\
=&~\left|\frac{\zeta}{(1-2\zeta)[Z(\xi)-\zeta]}\frac{1-2Z(\xi)}{\xi}\right|. \label{eq: G sym decay 12}
\end{align}
If $\xi<-1$, the two terms in \eqref{eq: G sym decay 11} are bounded by 
$$
\frac{C}{|\zeta|}e^{2\xi}\lesssim \frac{1}{|\zeta|+|\xi|}
$$
and
$$
\frac{C}{|\zeta||\xi|}\lesssim \frac{1}{|\zeta|+|\xi|}
$$
respectively. If $-1\le\xi\le1$, \eqref{eq: G sym decay 12} is bounded by $\frac{C}{|\zeta|}$. If $\xi>1$, the second term in \eqref{eq: G sym decay 11} is again bounded by $\frac{C}{|\zeta||\xi|}$, while the first term is bounded by $\frac{C}{|\xi-\zeta|}$. Now since $\zeta = |\zeta|e^{i\theta}$, with $\alpha<\theta<2\pi-\alpha$, and $\xi>1$, there exists a $C$ depending on $\alpha$ such that $|\xi-\zeta|\ge C(|\xi|+|\zeta|)$. The bound \eqref{eq: G sym decay 1} is now established.

We now write the left hand side of \eqref{eq: G sym decay 2} as 
\begin{equation}\label{eq: G sym decay 21}
\left|\frac{1-2Z(\xi)}{(1-2\zeta)[Z(\xi)-\zeta](1-e^{-2\xi})}\right|.
\end{equation}
If $\xi<-1$, \eqref{eq: G sym decay 21} is bounded by 
$$
\frac{Ce^{2\xi}}{|\zeta|^2 }\lesssim \frac{1\langle \xi\rangle}{|\zeta|(|\zeta|+|\xi|)}.
$$
If $-1\le\xi\le 1$, $\frac{1-2Z(\xi)}{1-e^{-2\xi}}$ is continuous. Thus \eqref{eq: G sym decay 21} is bounded by $\frac{C}{|\zeta|^2}$. If $\xi>1$, \eqref{eq: G sym decay 21} is bounded by 
$$
\frac{C|\xi|}{|\zeta||\xi-\zeta|}\lesssim\frac{|\xi|}{|\zeta|(|\zeta|+|\xi|)}.
$$
The bound \eqref{eq: G sym decay 2} is now established.

Now we consider case (ii). If $\xi<-1$, the terms in \eqref{eq: G sym decay 3} can be estimated in a similar way as  above for \eqref{eq: G sym decay 1}, except for the additional term $\frac1{1-2\zeta^*}\frac1{\xi-\lambda}$, which is bounded by 
$$
\frac{C}{|\xi-\lambda|}\lesssim \frac{1}{|\zeta|+|\xi|}.
$$
Note that $\lambda$ is large and in a sector about the positive real line, while $\xi$ is negative in the above estimate. If $-1\le\xi\le 1$, the estimate is similar to that of \eqref{eq: G sym decay 1}. The additional term  $\frac1{1-2\zeta^*}\frac1{\xi-\lambda}$ is bounded by $\frac{C}{|\zeta|}$. If 
$\xi>1$,
$$
\left|\frac{1}{1-2\zeta}\frac1\xi\right|\lesssim \frac{1}{|\zeta||\xi|}\lesssim \frac{1}{|\zeta|+|\xi|}.
$$
Thus we only focus on the remaining terms in \eqref{eq: G sym decay 3}.
If $1<\xi<\frac{|\lambda|}{2}$,  both $\frac{1}{\xi-\zeta(1-e^{-2\xi})}$ and $\frac1{1-2\zeta^*}\frac1{\xi-\lambda}$ are bounded by $\frac{C}{|\zeta|}\approx \frac{C}{|\zeta|+|\xi|}$. If $\frac{|\lambda|}{2}<\xi<2|\lambda|$, we write
\begin{align}
&~\frac{1}{\xi-\zeta(1-e^{-2\xi})}-\frac1{1-2\zeta^*}\frac1{\xi-\lambda}\notag\\
=&~-\frac{[\xi-\zeta(1-e^{-2\xi})]-(1-2\zeta^*)(\xi-\lambda)}{(1-2\zeta^*)(1-e^{-2\xi})(\xi-\lambda)[Z(\xi)-\zeta]}\label{eq: G sym decay 22}
\end{align}
If we let $f(\xi) = \xi-\zeta(1-e^{-2\xi})$, the numerator of \eqref{eq: G sym decay 22} is of the form \begin{equation}\label{eq: G sym decay 23}
f(\xi)-f(\lambda)-f'(\lambda)(\xi-\lambda).
\end{equation}
By Taylor's theorem, \eqref{eq: G sym decay 23} is bounded by 
\begin{equation}
C\sup_{\eta = \theta\xi+(1-\theta)\lambda}|f''(\eta)||\xi-\lambda|^2\lesssim |\zeta|e^{-|\lambda|} |\xi-\lambda|^2.
\end{equation}
Thus \eqref{eq: G sym decay 22} is bounded by 
\begin{equation}\label{eq: G sym decay 24}
C|\zeta|e^{-|\lambda|}\left|\frac{\xi-\lambda}{Z(\xi)-\zeta}\right|.
\end{equation}
To estimate \eqref{eq: G sym decay 24}, we write
\begin{equation}\label{eq: G sym decay 25}
\frac{\xi-\lambda}{Z(\xi)-\zeta}=\frac1{Z'(\lambda) +\frac{Z(\xi)-\zeta}{\xi-\lambda}-Z'(\lambda)}
\end{equation}
Again by Taylor's theorem, we have
\begin{equation}\label{eq: G sym decay 26}
\left|\frac{Z(\xi)-\zeta}{\xi-\lambda}-Z'(\lambda)\right|\lesssim\sup_{\eta=\theta\xi+(1-\theta)\lambda}|Z''(\eta)||\xi-\lambda|\lesssim  |\lambda|e^{-|\lambda|}|\lambda|.
\end{equation}
\eqref{eq: G sym decay 26} is very small when $|\lambda|$ is large, while $Z'(\lambda)\approx 1$ when $|\lambda|$ is large. As a result, \eqref{eq: G sym decay 25} is bounded, and \eqref{eq: G sym decay 24} is bounded by 
$$C|\zeta|e^{-|\lambda|}\lesssim \frac{1}{|\zeta|}\approx \frac{1}{|\zeta|+|\xi|}.$$ 
Finally, if $\xi>2|\lambda|$, both $\frac{1}{\xi-\zeta(1-e^{-2\xi})}$ and $\frac1{1-2\zeta^*}\frac1{\xi-\lambda}$ are bounded by $\frac{C}{|\xi|}\approx \frac{1}{|\zeta|+|\xi|}$. \eqref{eq: G sym decay 3} is now established.

The left hand side of \eqref{eq: G sym decay 4} can be written as 
\begin{equation}\label{eq: G sym decay 40}
\frac1{\xi-\zeta(1-e^{-2\xi})}  -\frac1{1-2\zeta}\frac{2}{1-e^{-2\xi}} +\frac2{1-2\zeta}- \frac1{1-2\zeta^*}\frac1{\xi-\lambda}.
\end{equation}
If $\xi<1$, the first two terms can be handled in a similar way as in \eqref{eq: G sym decay 2}, while the last two terms can be written as
\begin{equation}\label{eq: G sym decay 41}
\frac2{1-2\zeta}- \frac1{1-2\zeta^*}\frac1{\xi-\lambda} = \frac{1}{1-2\zeta}\left(\frac{2\xi}{\xi-\lambda}+\frac{2(\zeta-\lambda)-1+4\zeta^*\lambda}{(1-2\zeta^*)(\xi-\lambda)}\right).
\end{equation}
Observe that 
\begin{equation}
\left|\frac{2\xi}{\xi-\lambda}\right| \lesssim \frac{\langle \xi\rangle}{|\zeta|+|\xi|},
\end{equation}
and $\zeta-\lambda$, $\zeta^*\lambda$ are bounded. It follows that \eqref{eq: G sym decay 41} is bounded by $\frac{C\langle \xi\rangle}{|\zeta|+|\xi|}$. If $1<\xi<\frac{|\lambda|}2$, the first two terms of \eqref{eq: G sym decay 40} can be written as \eqref{eq: G sym decay 21}, which is bounded by $\frac{C|\xi|}{|\zeta|^2}\approx \frac{|\xi|}{|\zeta|(|\zeta|+|\xi|)}$. The last two terms of \eqref{eq: G sym decay 40} can be treated as \eqref{eq: G sym decay 41}, which is also bounded by $\frac{C|\xi|}{|\zeta|^2}\approx \frac{C|\xi|}{|\zeta|(|\zeta|+|\xi|)}$. If $\xi>\frac{|\lambda|}2$, the term $\frac1{1-2\zeta}\frac{2}{e^{2\xi}-1}$ in \eqref{eq: G sym decay 4} is bounded by 
\begin{equation}
\frac{C}{|\zeta|e^{2\xi}}\lesssim \frac{1}{|\zeta|}\lesssim \frac{|\xi|}{|\zeta|(|\zeta|+|\xi|)}.
\end{equation}
The first two terms in \eqref{eq: G sym decay 4} can be treated in the same way as was done for \eqref{eq: G sym decay 22}, thus is bounded by $\frac{C}{|\zeta|+|\xi|}\lesssim \frac{|\xi|}{|\zeta|(|\zeta|+|\xi|)}$. \eqref{eq: G sym decay 4} is now established.
\end{proof}
We can now use the above symbol estimates to show the corresponding estimates on the decay of $G_*$ and $H$ (defined in \eqref{def: H}).

\begin{proposition}\label{prop: G decay}
Let $f$ be a Schwartz class function For any $\alpha>0$ sufficiently small, and $R>0$ sufficiently large, there exists a $C>0$ such that 
\begin{enumerate}[(i)]
\item If $|\arg\zeta|\ge \alpha$, $|\zeta|>R$, %let
%\begin{equation}\label{eq: G decay 4 split}
%H_3(x,\zeta)=G_L(x,\zeta)-\frac{i}{1-2\zeta}\chi_{\mathbb R^+}(x).
%\end{equation}
then
\begin{equation}\label{eq: G decay 4}
\|G_L(\cdot,\zeta)\|_{L^2(-\infty,0)}\le \frac{C}{\sqrt{|\zeta|}},\quad \|G_R(\cdot,\zeta)\|_{L^2(0,\infty)}\le \frac{C}{\sqrt{|\zeta|}}.
\end{equation}
Furthermore,
\begin{equation}\label{eq: G decay 5}
\left\|G_L(\cdot,\zeta)*f - \frac{i}{2\zeta-1}\left(T_+f-\frac12\int_\R f\right)\right\|_{L^\infty} \le \frac{C}{|\zeta|}\left\|\frac{\langle\xi\rangle}{|\zeta|+|\xi|}\widehat{f}(\xi)\right\|_{L^1}.
\end{equation}
%\begin{equation}\label{eq: G decay 6}
%\left\|G_R(\cdot,\zeta)*f - \frac{i}{2\zeta-1}\left(T_+f+\frac12\int_\R f~dx\right)\right\|_\infty \le \frac{C}{|\zeta|^2}\left\|\langle\xi\rangle\widehat{f}(\xi)\right\|_1.
%\end{equation}

\item  If $|\arg\zeta|<\alpha$, $|\zeta|>R$, let $\lambda=Z^{-1}(\zeta)$ (see Definition \ref{def: Z inv}). Then 
\begin{equation}\label{eq: G decay 1}
\|H(\cdot,\zeta)\|_{L^2}\le \frac{C}{\sqrt{|\zeta|}}.
\end{equation}
Furthermore,
\begin{align}
\bigg\|\left(G_L(\cdot,\zeta) -\frac{\pm i\chi_{\R^\pm}}{1-2\zeta^*}e^{i\lambda\cdot}\right)*f - &\frac{i}{2\zeta-1}\left(T_-f-\frac12\int_\R f\right)\bigg\|_{L^\infty} \notag\\
&\le \frac{C}{|\zeta|}\left\|\frac{\langle\xi\rangle}{|\zeta|+|\xi|}\widehat{f}(\xi)\right\|_{L^1}.\label{eq: G decay 2}
\end{align}
Here $H$ is defined by \eqref{def: H}; the plus sign is taken when $\zeta$ is in the upper half plane, and the minus sign is taken when $\zeta$ is in the lower half plane.
%\begin{equation}\label{eq: G decay 3}
%\left\|G_R(\cdot,\zeta)*f -\left(\frac{i\chi_{\R^+}}{1-2\zeta^*}e^{i\lambda\cdot}\right)*f - \frac{i}{2\zeta-1}\left(T_-f+\frac12\int_\R f~dx\right)\right\|_\infty \le \frac{C}{|\zeta|^2}\|\langle\xi\rangle\widehat{f}(\xi)\|_1.
%\end{equation}

\end{enumerate}
\end{proposition}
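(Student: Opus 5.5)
The plan is to pass everything to the Fourier side and use the symbol estimates \eqref{eq: G sym decay 1}--\eqref{eq: G sym decay 4} of the preceding lemma, together with Lemma \ref{lem: FT of coth}, which identifies the Fourier multipliers of $T_\pm$ and of the constant projection $\frac12\int f$.

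\textbf{Case (i).} Since $\zeta$ stays a fixed angle away from $\R^+$, the contour $\Gamma_L$ can be flattened to $\R$ with a principal value at $0$ plus the residue there. This gives
\[
\widehat{G_L}(\xi,\zeta)=\mathrm{P.V.}\frac{1}{\xi-\zeta(1-e^{-2\xi})}+\pi\,\frac{\delta_0}{1-2\zeta}
\]
(up to the sign fixed by the lower semicircle). We then subtract the corresponding decomposition of $\frac{i}{1-2\zeta}\chi_{\R^+}$, whose transform is $\frac{1}{1-2\zeta}\bigl(\mathrm{P.V.}\frac1\xi+\pi\delta_0\bigr)$. The $\delta_0$ terms cancel, so on $(-\infty,0)$ the function $G_L$ coincides with the inverse transform of a symbol bounded by $C/(|\zeta|+|\xi|)$, by \eqref{eq: G sym decay 1}. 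Plancherel then gives
\[
\|G_L-\tfrac{i}{1-2\zeta}\chi_{\R^+}\|_{L^2}\lesssim\Bigl(\int\frac{d\xi}{(|\zeta|+|\xi|)^2}\Bigr)^{1/2}\approx|\zeta|^{-1/2},
\]
and restricting to $x<0$ yields \eqref{eq: G decay 4}. The bound for $G_R$ on $(0,\infty)$ follows from \eqref{GL-GR}.

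For \eqref{eq: G decay 5}, we write the multiplier of $\frac{i}{2\zeta-1}(T_+f-\frac12\int f)$. By Lemma \ref{lem: FT of coth}, $T_+$ has symbol $i\coth\xi-i$, taken with the sign convention that matches; equivalently $\frac{2i\cdot e^{2\xi}}{e^{2\xi}-1}$ with a P.V. The map $f\mapsto\frac12\int f$ has symbol $\pi\delta_0$. Combined with the $\frac{i}{2\zeta-1}$ factor, this reproduces $\frac{1}{1-2\zeta}\frac{2}{1-e^{-2\xi}}$ together with the same $\delta_0$ term as in $\widehat{G_L}$. Hence the difference has a smooth symbol, bounded by $\frac{C\langle\xi\rangle}{|\zeta|(|\zeta|+|\xi|)}$ by \eqref{eq: G sym decay 2}. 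Using $\|\mathcal F^{-1}(m\hat f)\|_{L^\infty}\le\frac1{2\pi}\|m\hat f\|_{L^1}$ then closes the estimate.

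\textbf{Case (ii).} Here we use the definition \eqref{def: H}, where $\widehat H$ is exactly the symbol in \eqref{eq: G sym decay 3}. The formula \eqref{def: H} shows $H$ is an honest $L^2$ function, with both poles removed, so Plancherel and \eqref{eq: G sym decay 3} give \eqref{eq: G decay 1} as before. For \eqref{eq: G decay 2} we use \eqref{eq: G Lp est 1} in the form valid near $\R^+$ for large $|\zeta|$ (the same contour argument as in Proposition \ref{prop: green est}). This form is
\[
G_L=\frac{i}{1-2\zeta}\chi_{\R^+}\pm\frac{i\chi_{\R^\pm}}{1-2\zeta^*}e^{i\lambda x}+H.
\]
Subtracting the exponential term leaves a kernel whose transform is
\[
\frac{1}{1-2\zeta}\Bigl(\mathrm{P.V.}\frac1\xi+\pi\delta_0\Bigr)+\widehat H
=\frac{1}{\xi-\zeta(1-e^{-2\xi})}-\frac{1}{1-2\zeta^*}\frac{1}{\xi-\lambda}+\frac{\pi\delta_0}{1-2\zeta}.
\]
This time we compare with $T_-$, whose symbol corresponds to $\frac{2}{e^{2\xi}-1}$, and again the $\delta_0$ parts cancel against the constant term. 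Estimate \eqref{eq: G sym decay 4} and the $L^1\to L^\infty$ Fourier bound then finish the proof.

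\textbf{Main obstacle.} The main difficulty is the bookkeeping of the distributional parts at $\xi=0$. We must check that the P.V. and $\delta_0$ contributions of $\Gamma_L$, of $\chi_{\R^+}$, of $T_\pm$ (from Lemma \ref{lem: FT of coth}) and of $\frac12\int f$ cancel exactly, with correct signs, and that the remaining symbol is genuinely locally integrable. The paired combinations are regular at $0$, as in \eqref{eq: G sym decay 2}. For the half-line $L^2$ bounds we also need that the added step function is supported on $\R^+$, so that it does not affect the norm on $(-\infty,0)$.
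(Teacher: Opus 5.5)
Your proposal is correct and is essentially the paper's proof: work on the Fourier side, use \eqref{eq: G sym decay 1}--\eqref{eq: G sym decay 4} with Plancherel for the $L^2$ bounds and $\|\mathcal F^{-1}(m\hat f)\|_{L^\infty}\le\frac1{2\pi}\|m\hat f\|_{L^1}$ for the convolution bounds, after checking that the principal-value and $\delta_0$ parts at $\xi=0$ cancel. Two constants need fixing, though neither affects the argument. $\widehat{G_L}=\mathrm{P.V.}\frac{1}{\xi-\zeta(1-e^{-2\xi})}+\frac{i\pi}{1-2\zeta}\delta_0$, and $\frac{i}{1-2\zeta}\chi_{\R^+}$ likewise carries $\frac{i\pi}{1-2\zeta}\delta_0$, not $\frac{\pi}{1-2\zeta}\delta_0$. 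Also, $T_+$ has symbol $i\coth\xi+i=\frac{2i}{1-e^{-2\xi}}$, whereas $i\coth\xi-i$ is the symbol of $T_-$. With these corrections the $\delta_0$ terms cancel exactly as you claim.
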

\begin{proof}
First consider case (i). Note that
\begin{align}\label{eq: H3 Fourier int}
&\chi_{\mathbb R^-}(x)G_L(x,\zeta)+\chi_{\mathbb R^+}(x)G_R(x,\zeta)\notag\\
= &~\frac1{2\pi}\int_{\Gamma_L}e^{ix\xi}\left(\frac{1}{\xi-\zeta(1-e^{-2\xi})}-\frac{1}{(1-2\zeta)\xi}\right)~d\xi.
\end{align}
Since the integrand is bounded near 0, one can change the contour from $\Gamma_L$ to $\mathbb R$. By \eqref{eq: G sym decay 1} and Plancherel's theorem, 
\begin{equation}
\|\chi_{\mathbb R^-}(\cdot)G_L(\cdot,\zeta)+\chi_{\mathbb R^+}(\cdot)G_R(\cdot,\zeta)\|_{L^2}\le C\bigg\|\frac1{|\xi|+|\zeta|}\bigg\|_2=\frac C{\sqrt{|\zeta|}}.
\end{equation}
Next we observe that 
$$
G_L(x,\zeta)=\frac1{2\pi}P.V.\int_\mathbb{R}\frac{e^{ix\xi}}{\xi-\zeta(1-e^{-2\xi})}~d\xi+\frac{i}{2(1-2\zeta)},
$$
and recall from \eqref{def: T pm} and Lemma \ref{lem: FT of coth} that $\widehat{T_+f }= P.V.\frac{2i }{1-e^{-2\xi}}\hat f$.
Denote by $\kappa(T_+)$ the convolution kernel of $T_+$. The above formulas imply that the Fourier transform of $G_L(\cdot,\zeta)-\frac{i}{2\zeta-1}\left(\kappa(T_+)-\frac12\right)$ is
\begin{equation}
\frac1{\xi-\zeta(1-e^{-2\xi})}-\frac2{(1-2\zeta)(1-e^{-2\xi})}.
\end{equation}
\eqref{eq: G decay 5} now follows from \eqref{eq: G sym decay 2}.

The proof of \eqref{eq: G decay 1} and \eqref{eq: G decay 2} is similar. We only mention the argument for \eqref{eq: G decay 2}. If $|\arg\zeta|<\alpha$, $|\zeta|>R$, one can write the Fourier transform of $G_L(x,\zeta)-\frac{\pm i\chi_{\R^\pm}(x)}{1-2\zeta^*}e^{i\lambda x}-\frac{i}{2\zeta-1}(\kappa(T_-)-\frac12)$ as
\begin{equation}
\frac1{\xi-\zeta(1-e^{-2\xi})}-\frac1{1-2\zeta^*}\frac{1}{\xi-\lambda}-\frac1{1-2\zeta}\frac2{e^{2\xi}-1}. 
\end{equation}
Now \eqref{eq: G decay 2} follows from \eqref{eq: G sym decay 4}.
\end{proof}
\begin{remark}
Other decay estimates resulting from \eqref{eq: G sym decay 2} and \eqref{eq: G sym decay 4} may require less decay on $\hat f$. For instance, if we replace the $L^\infty$ norm in \eqref{eq: G decay 5} by the $L^2$ norm, we may conclude that it is of order $o_{f}\left(\frac{1}{|\zeta|}\right)$ by applying the dominated convergence theorem.
\end{remark}

\subsection{\texorpdfstring{Pointwise estimates for fixed $\zeta$}{Pointwise estimates for fixed zeta}}

 Before constructing the Jost functions, we will first show their analytic continuation properties in $x$-space. Such proofs rely on the corresponding  continuation properties of the Green's functions. Here we prove estimates on the symbol of the Green's kernel that are useful for such purposes. This is achieved by simply subtracting off the poles in the symbol. Such a direct approach makes it easy to apply Fourier analysis to the remainder. Note that we do not attempt to get estimates that are uniform in $\zeta$ in the following proposition, so the constants could blow up as $\zeta$ approaches the boundary of the domains.

\begin{proposition}\label{prop: remainder}
Let $\xi\in \mathbb R$ and $y\in[0,2]$. For $\zeta\in V$, let $\lambda = Z^{-1}(\zeta)$ (see Lemma \ref{lem: Z inv domain} and Definition \ref{def: Z inv}), and define %$\zeta\ne \frac12$, $0\le y\le 2$, $\xi$ in a small neighborhood of $\R$, $\xi\ne 0$, $\xi\ne \lambda$, define
\begin{equation}\label{eq: def R11}
R_1(\xi,y,\zeta) = \frac{e^{-y\xi}}{\xi-\zeta(1-e^{-2\xi})}-\frac1{(1-2\zeta)\xi} - \frac{e^{-\lambda y}}{(1-2\zeta^*)(\xi-\lambda)}.
\end{equation}
For $\zeta\in\mathbb C\setminus \mathbb R^+$, define
\begin{equation}\label{eq: def R2}
R_2(\xi,y,\zeta) = \frac{e^{-y\xi}}{\xi-\zeta(1-e^{-2\xi})}-\frac1{(1-2\zeta)\xi}.
\end{equation}
Then for each $\zeta$ in the above ranges of definitions of $R_1$ and $R_2$, there is a constant $C(\zeta)>0$ such that for $i=1,2$, $\xi\in \R$ and $y,y_1,y_2\in [0,2]$, one has
\begin{equation}\label{eq: R bound}
|R_i(\xi,y,\zeta)|\le C(\zeta)\left(\frac{1}{\langle \xi\rangle}+e^{(2-y)\xi}\chi_{\R^-}(\xi)\right),
\end{equation}
\begin{align}\label{R diff y}
&~|R_i(\xi,y_1,\zeta)-R_i(\xi,y_2,\zeta)| \\ 
\le &~C(\zeta)\bigg[\frac{|y_1-y_2|\chi_{\R^+}(\xi)}{\langle \xi\rangle}\notag\\
&\qquad \qquad +(|y_1-y_2|+e^{2-\min(y_1,y_2)}|e^{(y_1-y_2)\xi}-1|)\chi_{\R^-}(\xi)\bigg],\notag
\end{align}
\begin{equation}\label{eq: deriv R bound}
|\partial_\xi R_i(\xi,y,\zeta)|\le C(\zeta)\left(\frac{1}{\langle \xi\rangle^2}+(2-y)e^{(2-y)\xi}\chi_{\R^-}(\xi)\right),
\end{equation}
\begin{equation}\label{eq: d_y R bound}
    |\partial_y^k R_i(\xi,y,\zeta)|\le C(\zeta)\left(1+|\xi|^k e^{(2-y)\xi}\chi_{\R^-}(\xi)\right),
\end{equation}
\begin{equation}\label{eq: d_zeta R bound}
    |\partial_\zeta \partial_\xi R_2(\xi,y,\zeta)|\le C(\zeta)\left(\frac{1}{\langle \xi\rangle^2}+(2-y)e^{(2-y)\xi}\chi_{\R^-}(\xi)\right).
\end{equation}
Moreover, the constant $C(\zeta)$ for $R_1$ can be taken to be uniform when $\zeta$ is in a compact subset of $V$, and similarly for $R_2$ when $\zeta$ is in a compact subset of $\C\setminus [0,\infty)$.
\end{proposition}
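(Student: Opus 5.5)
The approach is a direct pointwise analysis of the symbol, with $\zeta$ fixed, splitting $\xi\in\R$ into three regimes: $\xi\le -K$, $|\xi|\le K$, and $\xi\ge K$, where $K=K(\zeta)$ is large. The key structural fact is that for fixed $\zeta\in V$ the only zeros of $p(\xi)=\xi-\zeta(1-e^{-2\xi})$ on or near the real axis, in the relevant sense, are $\xi=0$ and $\xi=\lambda$. For $R_2$ with $\zeta\in\C\setminus\R^+$, the only real zero is $\xi=0$, since $Z(\R)=(0,\infty)$.

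\textbf{Middle region.} On $|\xi|\le K$, I show that the subtracted terms in \eqref{eq: def R11} and \eqref{eq: def R2} are exactly the polar parts of $e^{-y\xi}/p(\xi)$ at its poles near $\R$.
\begin{itemize}
\item The residue of $e^{-y\xi}/p(\xi)$ at $0$ is $1/p'(0)=1/(1-2\zeta)$.
\item The residue at $\lambda$ is $e^{-y\lambda}/p'(\lambda)$, and $p'(\lambda)=1-2\zeta e^{-2\lambda}$.
\item Since $\zeta e^{-2\lambda}=\zeta-\lambda=\zeta^*$ by \eqref{eq: zeta-zeta*}, this residue equals $e^{-y\lambda}/(1-2\zeta^*)$.
\end{itemize}
Hence $R_i$ is holomorphic in $\xi$ on a complex neighborhood of $[-K,K]$, jointly smooth in $(\xi,y,\zeta)$. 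If $\lambda$ is non-real, the term $1/(\xi-\lambda)$ is smooth on $\R$ anyway. In this region every stated bound, including those on $\partial_\xi$, $\partial_y^k$ and $\partial_\zeta\partial_\xi$, follows from compactness and Cauchy estimates. The case $\zeta=\frac12$ (so $\lambda=0$, a double pole) is handled by the same continuity. Uniformity on compact subsets of $V$, or of $\C\setminus[0,\infty)$, comes from continuity of $\lambda=Z^{-1}(\zeta)$ and a uniform lower bound on $|p|$ away from the removed poles.

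\textbf{Region $\xi\ge K$.} Here $e^{-2\xi}$ is negligible and $|p(\xi)|\gtrsim\xi$, so $|e^{-y\xi}/p|\lesssim 1/\langle\xi\rangle$. Each subtracted pole term is also $O(1/\langle\xi\rangle)$, which gives \eqref{eq: R bound}. Differentiating in $\xi$ gains one power of $\langle\xi\rangle^{-1}$, giving \eqref{eq: deriv R bound}. The $y$-difference \eqref{R diff y} follows from $|e^{-y_1\xi}-e^{-y_2\xi}|\le|y_1-y_2|\xi e^{-\min(y_1,y_2)\xi}$ divided by $|p|\gtrsim\xi$, with the pole terms treated similarly. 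For \eqref{eq: d_y R bound}, $\partial_y^k$ produces $\xi^k e^{-y\xi}/p$; this is only bounded near $y=0$, but the bound allows a $C(\zeta)$ (not decaying) term, and I would verify it using $|\xi|^k e^{-y\xi}$ bounded... This is the one place to be careful: for $y=0$ one gets $\xi^{k-1}$. I would therefore first check whether the intended range for that estimate is $y>0$ or $k\le1$, and if needed prove it with constants blowing up as $y\to0$.

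\textbf{Region $\xi\le -K$.} This is the main obstacle, because $e^{-y\xi}$ grows. I write
\[
\frac{e^{-y\xi}}{p(\xi)}=\frac{e^{(2-y)\xi}}{\xi e^{2\xi}-\zeta(e^{2\xi}-1)}.
\]
For $\xi\to-\infty$ the denominator tends to $\zeta\neq0$, so the term is $O(e^{(2-y)\xi})$, and the pole terms are $O(1/\langle\xi\rangle)$. This gives \eqref{eq: R bound}. Differentiating $e^{(2-y)\xi}/q(\xi)$, with $q$ the denominator and $q'$ exponentially small, yields $(2-y)e^{(2-y)\xi}$ plus smaller terms; I expect \eqref{eq: deriv R bound} and \eqref{eq: d_zeta R bound} to come out this way, the latter by also differentiating $q$ in $\zeta$. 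For \eqref{R diff y} I factor $e^{(2-y_1)\xi}-e^{(2-y_2)\xi}=e^{(2-\min)\xi}\cdot(\text{phase factor})$, with the exponent handled as in the statement. The pole terms contribute the $|y_1-y_2|$ part through $e^{-\lambda y}$. Finally, $\partial_y^k$ produces $\xi^k e^{(2-y)\xi}/q$, matching \eqref{eq: d_y R bound}.
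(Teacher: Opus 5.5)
Your decomposition is the same as the paper's. You subtract the polar parts at $0$ and $\lambda$; your residue computation via $\zeta e^{-2\lambda}=\zeta^*$ is exactly why $R_1$ has the form it does. You use $|p(\xi)|\gtrsim\xi$ for $\xi$ large, where $p(\xi)=\xi-\zeta(1-e^{-2\xi})$. For $\xi\to-\infty$ you write $e^{-y\xi}/p=e^{(2-y)\xi}/q$ with $q\to\zeta$.

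This handles \eqref{eq: R bound}, \eqref{eq: deriv R bound} and \eqref{eq: d_zeta R bound}. On $\xi\ge K$, note that the term $-ye^{-y\xi}/p$ gains its extra power of $\langle\xi\rangle^{-1}$ only through $y\xi e^{-y\xi}\le e^{-1}$.

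The step that fails is \eqref{R diff y} on $\xi\ge K$. The quantity $|y_1-y_2|\,\xi e^{-\min(y_1,y_2)\xi}/|p(\xi)|$ is only $\lesssim|y_1-y_2|$, not $|y_1-y_2|/\langle\xi\rangle$. No argument can do better, because the stated bound is false. Take $y_1=0$, $y_2=h$, $\xi=1/h$:
\[
R_i(\xi,0,\zeta)-R_i(\xi,h,\zeta)=\frac{1-e^{-1}}{h^{-1}-\zeta+\zeta e^{-2/h}}+O(h^2)=(1-e^{-1})h+O(h^2),
\]
while the right side of \eqref{R diff y} is $O(h^2)$. What is true on $\xi>0$ is $C(\zeta)\min(|y_1-y_2|,\langle\xi\rangle^{-1})$. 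The only place the estimate is used, Lemma \ref{lem: M1 improved regularity}, needs just the $|y_1-y_2|$ that your computation gives. On $\xi\le-K$ there is no phase factor: pull out $e^{(2-\max(y_1,y_2))\xi}\le 1$ and use $1-e^{-|y_1-y_2||\xi|}\le|e^{(y_1-y_2)\xi}-1|$.

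Your suspicion about \eqref{eq: d_y R bound} is correct, and you should not try to prove it as stated. As $\xi\to+\infty$, $\partial_y^kR_i(\xi,0,\zeta)=(-\xi)^k/p(\xi)+O(\langle\xi\rangle^{-1})\sim(-1)^k\xi^{k-1}$, so the bound fails at $y=0$ for every $k\ge 2$. The correct version is $C(\zeta)\bigl(\langle\xi\rangle^{k-1}+|\xi|^ke^{(2-y)\xi}\chi_{\R^-}(\xi)\bigr)$, which follows from your three-region argument. It is all that Lemma \ref{lem: C^inf regularity jost fcn} uses, since there it is paired with a Schwartz function. The paper's proof disposes of the $y$-estimates by saying they follow ``in a similar way'', so it addresses neither point.

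One more caveat: your $\xi\le-K$ step needs $\zeta\neq 0$. This is automatic on $V$, but not on $\C\setminus\R^+$ as the paper uses it. At $\zeta=0$ one has $R_2=(e^{-y\xi}-1)/\xi$, which grows exponentially as $\xi\to-\infty$ when $y>0$. So $\zeta=0$ must be excluded from the $R_2$ claims.
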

\begin{remark}
\eqref{eq: def R11} is orignally defined only for $\xi\ne0, \lambda$, and $\zeta\ne\frac12$.   
But $R_1(\xi,y,\zeta)$ can be extended analytically to include the removable singularities $\xi=0$, $\xi=\lambda$, and $\zeta=\frac12$. In particular, by taking the limit of \eqref{eq: def R11} we get
\begin{align}\label{eq: def R12}
R_1\left(\xi,y,\frac12\right) %= \frac{e^{-y\xi}}{\xi-\frac12(1-e^{-2\xi})}-\frac{1}{2\zeta'(0)\xi^2}-\left(\frac{1-y}{2\zeta'(0)}-\frac{\zeta''(0)}{4[\zeta'(0)]^2}\right)\frac1\xi\\
=\frac{e^{-y\xi}}{\xi-\frac12(1-e^{-2\xi})}- \frac1{\xi^2}-\left(\frac23-y\right)\frac1\xi.
\end{align}
\end{remark}
\begin{proof}
For fixed $\zeta$ and $y$, $R_1$ and $R_2$ can be thought of as meromorphic functions of $\xi$. These meromorphic functions are analytic near the real line, as the principal parts of the poles on the real line are subtracted off. Let us consider the case for $R_2$, for which $\zeta\notin \R^+$. We can write 
\begin{equation}
R_2(\xi,y,\zeta) = \frac{(1-2\zeta)(e^{-y\xi}-1)+\zeta\left(\frac{1-e^{-2\xi}}{\xi}-2\right)}{(1-2\zeta)[Z(\xi)-\zeta](1-e^{-2\xi})}.
\end{equation}
It follows that 
\begin{equation}
|R_2(\xi,y,\zeta)|\le C(\zeta), ~|\partial_\xi R_2(\xi,y,\zeta)|\le C(\zeta)
\end{equation} 
for $-1<\xi<1$. When $\zeta>1$, we can write
\begin{equation}\label{eq: xi far from 1 form}
\frac{e^{-y\xi}}{\xi-\zeta(1-e^{-2\xi})} = \frac{e^{-y\xi}}{(1-e^{-2\xi})(Z(\xi)-\zeta)},
\end{equation}
which is bounded by 
\begin{equation}
\frac{1}{|Z(\xi)-\zeta|}\le \frac{C(\zeta)}{|\xi|}
\end{equation}
for $\xi>1$.
When $\xi<-1$, \eqref{eq: xi far from 1 form} is bounded by
\begin{equation}
\frac{Ce^{(2-y)\xi}}{|Z(\xi)-\zeta|}\le C(\zeta) e^{(2-y)\xi}.
\end{equation}
The bound on the $\partial_\xi$ derivative of \eqref{eq: xi far from 1 form} can be proven in a similar way. In fact, we can write \eqref{eq: xi far from 1 form} as
\begin{equation}
e^{(2-y)\xi}\frac{e^{-2\xi}}{(1-e^{-2\xi})(Z(\xi)-\zeta)}.
\end{equation}
The derivative of $e^{(2-y)\xi}$ will produce the $(2-y)e^{(2-y)\xi}$ contribution in \eqref{eq: deriv R bound}. The derivartive of the rest will produce an exponentially small term which can be absorbed into the first term in \eqref{eq: deriv R bound}.
 Estimates on the $\partial_y$ derivatives of $R_2$ and the other inequalities of $R_1$ can be proven in a similar way. In the case $\zeta>0$, $\zeta\neq \frac12$, one just treats the singularity at $\lambda$ in the same way as the singularity at $0$.
\end{proof}

We will also need the following more specific estimate for $R_1(\xi,y,\frac12)$.

\begin{lemma}\label{lem: inv Fourier R1}
    For any $0<\epsilon<2$, there exists a $C(\epsilon)>0$ such that for all $0\le y\le 2-\epsilon$,
    \begin{equation}
        \|\mathcal F^{-1}[R_1(\cdot,y,\tfrac12)]\|_{L^\infty}\le C(\epsilon).
    \end{equation}
\end{lemma}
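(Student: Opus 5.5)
The function $R_1(\xi,y,\tfrac12)$ is bounded by $C(\langle\xi\rangle^{-1}+e^{(2-y)\xi}\chi_{\R^-})$ by \eqref{eq: R bound}, but $\langle\xi\rangle^{-1}\notin L^1$, so $\mathcal F^{-1}$ cannot be bounded by the $L^1$ norm alone. I would instead isolate the non-integrable tail explicitly. For $\xi\to+\infty$ we have
\[
\frac{e^{-y\xi}}{\xi-\frac12(1-e^{-2\xi})}=\frac{e^{-y\xi}}{\xi-\tfrac12}+O(e^{-(2+y)\xi}/\xi^2).
\]
For $0<y\le 2-\epsilon$ this is exponentially decaying as $\xi\to+\infty$. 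So the only $1/\xi$-type tails at $+\infty$ come from the subtracted terms $-\xi^{-2}-(\frac23-y)\xi^{-1}$, and for $y=0$ also from $1/(\xi-\frac12)$. As $\xi\to-\infty$, the denominator behaves like $\tfrac12 e^{-2\xi}$. The first term is then $\approx 2e^{(2-y)\xi}$, which is exponentially small uniformly for $y\le 2-\epsilon$; only the subtracted $-(\frac23-y)\xi^{-1}-\xi^{-2}$ survive.

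The plan is therefore to write $R_1(\xi,y,\tfrac12)=A(\xi,y)+B(\xi,y)$. Here
\[
B(\xi,y)=-\left(\tfrac23-y\right)\mathrm{P.V.}\frac{1}{\xi}+c(y)\,\mathrm{P.V.}\frac{1}{\xi}\chi_{|\xi|>1}\ \text{(or a similar explicit model)}
\]
is chosen so that $A$ is integrable. Concretely, I would take
\[
B=-\left(\tfrac23-y\right)\chi_{|\xi|>1}\frac1\xi\;(+\chi_{\xi>1}\tfrac{1}{\xi}\text{ when } y=0).
\]
Then I would check three things. First, $A$ is bounded on $|\xi|\le1$, uniformly in $y\in[0,2-\epsilon]$; this holds by the removable-singularity expansion \eqref{eq: def R12}, since $R_1$ is analytic in $\xi$ near $0$, jointly continuous in $y$. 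Second, $|A|\lesssim \xi^{-2}+y$-uniform exponentially small terms for $|\xi|>1$. One has to handle $y$ near $0$ carefully: $e^{-y\xi}/\xi$ is not uniformly integrable as $y\to0$. I would therefore keep the model
\[
\chi_{\xi>1}\frac{e^{-y\xi}}{\xi}
\]
inside $B$ rather than in $A$, with remainder $\lesssim \chi_{\xi>1}e^{-y\xi}/\xi^2$, integrable uniformly. With these, $\|\mathcal F^{-1}A\|_{L^\infty}\le\|A\|_{L^1}/2\pi\le C(\epsilon)$.

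It then remains to bound $\mathcal F^{-1}$ of the explicit models uniformly in $L^\infty$. The term $\chi_{|\xi|>1}\xi^{-1}$ has inverse Fourier transform
\[
\frac{i}{\pi}\int_1^\infty\frac{\sin(x\xi)}{\xi}\,d\xi,
\]
which is bounded because the sine integral is bounded. The term $\chi_{\xi>1}e^{-y\xi}\xi^{-1}$ gives
\[
\frac1{2\pi}\int_1^\infty e^{(ix-y)\xi}\xi^{-1}\,d\xi=\frac1{2\pi}E_1\big((y-ix)\big)-\text{(lower limit correction)},
\]
that is, an exponential integral along a ray. Its uniform boundedness in $x\in\R$ and $y\in[0,2]$ follows by integrating by parts once for $|x|\ge1$, which gives $\lesssim 1/|x-iy|$ plus an integrable term. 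For $|x|\le1$, split at $\xi=1/|x-iy|$: the lower part is of size $\log$, and one has to check it is bounded. I expect this to be the main obstacle, since $\int_1^{1/|x|}\xi^{-1}$ grows logarithmically as $x\to0$ when $y=0$.

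To resolve this, I would avoid the bare $e^{-y\xi}/\xi$ model and instead pair it with the negative-frequency subtraction. I would combine the $-(\frac23-y)\xi^{-1}$ tail with the $e^{-y\xi}/\xi$ tail as an odd-type combination, $\chi_{\xi>1}\frac{e^{-y\xi}-1}{\xi}+\chi_{|\xi|>1}\frac{c}{\xi}$. I would first verify the coefficients: the total coefficient of $1/\xi$ at $+\infty$ (at $y=0$) is $1-\frac23=\frac13$, and at $-\infty$ it is $-\frac23$. These mismatched coefficients produce $\mathrm{sgn}$-type symbols $c_1\chi_{\xi>1}/\xi+c_2\chi_{\xi<-1}/\xi$, whose inverse transforms contain a $\log|x|$ singularity unless $c_1=-c_2$. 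If the coefficients do not cancel, the statement must exploit that for $y>0$ the term $e^{-y\xi}$ truly dominates, with $C(\epsilon)$ nonuniform near $y=0$. I would then recheck the asymptotics; the claimed uniform $L^\infty$ bound forces exact cancellation of the $\log$ coefficient, and verifying this coefficient identity is the crux. In practice I would compute $\mathcal F^{-1}R_1(\cdot,y,\frac12)$ directly by the contour shift of Proposition \ref{prop: green est}. The contribution of the poles at $0$ yields polynomial terms in $x$, which are exactly removed by the subtraction, leaving $H(x,\frac12)$-type integrals along $\R\pm i\pi$. Those are bounded pointwise using the estimates behind \eqref{eq: G est 2}, now with the factor $e^{-y\xi}$ absorbed because $y\le 2-\epsilon$ keeps the shifted integrand decaying.
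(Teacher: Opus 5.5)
Your proposal does not close, and the step you fall back on would fail. After isolating the tails, you argue that the claimed uniform bound ``forces'' the $\log$ coefficient to cancel, which assumes the conclusion, and you then propose a contour shift. But the singularity sits at $x=0$ and comes from the tail $\xi\to+\infty$. Shifting to $\R\pm i\pi$ only gains $e^{\mp\pi x}$, which equals $1$ at $x=0$. At $y=0$ the shifted integrand still decays only like $1/\real\xi$ as $\real\xi\to+\infty$, so the estimates behind \eqref{eq: G est 2} give $L^2$ control, not pointwise control. The restriction $y\le 2-\epsilon$ only helps as $\xi\to-\infty$. Your cancellation criterion is also reversed: $c_1\chi_{\xi>1}\xi^{-1}+c_2\chi_{\xi<-1}\xi^{-1}=\tfrac{c_1+c_2}{2}\chi_{|\xi|>1}\xi^{-1}+\tfrac{c_1-c_2}{2}\chi_{|\xi|>1}|\xi|^{-1}$. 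Only the even part produces $\log|x|$, so the tail is harmless iff $c_1=c_2$, not $c_1=-c_2$.

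The real issue is that your coefficient computation is correct and shows the statement is false near $y=0$, so no argument can prove it as written. With $c_1=\tfrac13\neq c_2=-\tfrac23$ and an integrable remainder,
\[
\real\mathcal F^{-1}[R_1(\cdot,0,\tfrac12)](x)=\frac1{2\pi}\int_{|x|}^\infty\frac{\cos t}{t}\,dt+O(1)=\frac1{2\pi}\log\frac1{|x|}+O(1)\qquad(x\to0).
\]
This is just the logarithmic singularity of $G_L(\cdot,\tfrac12+i0)$ at the origin, cf.\ \eqref{G_L split 1/2}. For $0<y\le 2-\epsilon$, your own first decomposition $R_1=A+\chi_{\xi>1}\frac{e^{-y\xi}}{\xi}-(\tfrac23-y)\chi_{|\xi|>1}\frac1\xi$, with $\|A\|_{L^1}\le C(\epsilon)$, gives $\|\mathcal F^{-1}[R_1(\cdot,y,\tfrac12)]\|_{L^\infty}\le C(\epsilon)(1+|\log y|)$. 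This is sharp, since the real part at $x=0$ is $\frac1{2\pi}\int_1^\infty\frac{e^{-y\xi}}{\xi}\,d\xi+O(1)=\frac1{2\pi}\log\frac1y+O(1)$.

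The paper's proof fails at exactly this point. It removes only $-(\tfrac23-y)\chi_{|\xi|>1}\xi^{-1}$ and asserts the remainder is $\le C(\langle\xi\rangle^{-2}+e^{(2-y)\xi}\chi_{\R^-})$. But for $\xi>1$ the remainder is $\frac{e^{-y\xi}}{\xi-\frac12(1-e^{-2\xi})}-\xi^{-2}$, which is $\sim\xi^{-1}$ at $y=0$ and only $O(y^{-1}\xi^{-2})$ for $y>0$. What is true, by either argument, is the bound for $\eta\le y\le 2-\epsilon$ with a constant $C(\epsilon,\eta)$. That is all the proof of Theorem \ref{thm: genericity} uses, since it needs the lemma only on compact subsets of $0<y<2$.
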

\begin{proof}\label{eq: 189}
    From \eqref{eq: R bound} and the form \eqref{eq: def R12} it's easy to see that
    \begin{equation}
        \left|R_1(\xi,y,\tfrac12)+(\tfrac23-y)\tfrac{\chi_{|\xi|>1}}{\xi}\right|\le C\left(\tfrac{1}{\langle\xi\rangle^2}+e^{(2-y)\xi}\chi_{\mathbb R^-}(\xi)\right).
    \end{equation}
    Since the right hand side of \eqref{eq: 189} is bounded in $L^1$ for $0\le y\le 2-\epsilon$, it remains to show $\mathcal F^{-1}\left(\tfrac{\chi_{|\xi|>1}}{\xi}\right)\in L^\infty$. The latter is a straightforward calculation, as 
    \begin{equation}
        \mathcal F^{-1}\left(\tfrac{\chi_{|\xi|>1}}{\xi}\right)(x) = \frac i{\pi}\text{sgn}(x)\int_{|x|}^\infty\frac{\sin \xi}{\xi}~d\xi.
    \end{equation}
\end{proof}
\section{Properties of Jost solutions}
\label{sec: prop of jost}

In this section, we show that the differential RH problems such as \eqref{eq: main M1}, \eqref{eq: main N1}, \eqref{eq: main Me}, \eqref{eq: main Ne}, and their continuations to complex values of $\zeta$, are equivalent to Fredholm integral equations of the form
\begin{align}
    M_1&=1+G_L*(uM_1),\\
    N_1&=1+G_R*(uN_1),\\
    M_e&=e^{i\lambda \cdot}+G_L*(uM_e),\\
    N_e&=e^{i\lambda \cdot}+G_R*(uN_e).
\end{align}
A first step in doing so is to prove that solutions to the above integral equations have analytic continuations in the $x$-variable to a strip of height 2. This we do in Section \ref{sec: ana cont}.

To help analyze the $x$-space properties, we consider the analytically continued Green's functions $G_L(x+iy,\zeta)$, $G_R(x+iy,\zeta)$, and split them into a main part and a remainder part by subtracting off the poles from the Green's kernel. In particular, When $\zeta\in \C\setminus\R^+$, we write
\begin{equation}\label{G_L split 2}
G_L(x+iy,\zeta) = \tfrac{i}{1-2\zeta}\chi_{\R^+}(x) + \mathcal F^{-1}[R_2(\xi,y,\zeta)](x),
\end{equation}
\begin{equation} \label{G_R split 2}
G_R(x+iy,\zeta) = -\tfrac{i}{1-2\zeta}\chi_{\R^-}(x)+\mathcal F^{-1}[R_2(\xi,y,\zeta)](x),
\end{equation}
where $R_2$ is given as in \eqref{eq: def R2}.
When $k\in \R^+$ and $k\ne \frac12$, we write 
\begin{align}\label{G_L split +}
G_L(x+iy,k+i0) &= \tfrac{i}{1-2k}\chi_{\R^+}(x)+ \tfrac{i}{1-2k^*}e^{-\lambda y}e^{i\lambda x}\chi_{\R^+}(x)\notag\\
&\qquad +\mathcal F^{-1}[R_1(\xi,y,k)](x),
\end{align}
\begin{align}\label{G_L split -}
G_L(x+iy,k-i0) &= \tfrac{i}{1-2k}\chi_{\R^+}(x)-\tfrac{i}{1-2k^*}e^{-\lambda y}e^{i\lambda x}\chi_{\mathbb R^-}(x)\notag\\
&\qquad +\mathcal F^{-1}[R_1(\xi,y,k)](x),
\end{align}
\begin{align}\label{G_R split +}
G_R(x+iy,k+i0) &= -\tfrac{i}{1-2k}\chi_{\R^-}(x)+ \tfrac{i}{1-2k^*}e^{-\lambda y}e^{i\lambda x}\chi_{\R^+}(x)\notag\\
&\qquad +\mathcal F^{-1}[R_1(\xi,y,k)](x),
\end{align}
\begin{align}\label{G_R split -}
G_R(x+iy,k-i0) &= -\tfrac{i}{1-2k}\chi_{\mathbb R^-}(x)-\tfrac{i}{1-2k^*}e^{-\lambda y}e^{i\lambda x}\chi_{\mathbb R^-}(x)\notag\\
&\qquad +\mathcal F^{-1}[R_1(\xi,y,k)](x),
\end{align}
where $\lambda=Z^{-1}(k)$ (see Definition \ref{def: Z inv}), and $R_1$ is given as in \eqref{eq: def R11}.
Finally, note that \eqref{G_L split +} and \eqref{G_R split -} have limits as $k\to \frac12$, and we write
\begin{equation}\label{G_L split 1/2}
G_L(x+iy,\tfrac12+i0)=-x\chi_{\R^+}(x)+i(\tfrac23-y)\chi_{\R^+}(x)+\mathcal F^{-1}[R_1(\xi,y,\tfrac12)](x),
\end{equation}
\begin{equation}\label{G_R split 1/2}
G_R(x+iy,\tfrac12-i0)=x\chi_{\mathbb R^-}(x)-i(\tfrac23-y)\chi_{\mathbb R^-}(x)+\mathcal F^{-1}[R_1(\xi,y,\tfrac12)](x),
\end{equation}
where $R_1(\xi,y,\tfrac12)$ is given as in \eqref{eq: def R12}.

\subsection{\texorpdfstring{Analytic continuation in $x$-space}{Analytic continuation in x-space}}
\label{sec: ana cont}

Through the following Lemmas, we show analytic continuation in $x$-space of the solutions to the integral equations. For results at most values of $\zeta$, we assume $u\in L^1\cap L^2$. However, for the result at $\zeta=\frac12\pm i0$, we require an extra degree of decay on $u$ and assume $u\in L^1_1\cap L^2_1$ (see Lemma \ref{lem: anal cont 1/2}). %We remark that $H^{2,2}\subset L^1_1\cap H^s_1$. We will use the following fact to get a simpler proof of analyticity:
We will establish the results in three steps, first for $\zeta\in \mathbb C\setminus \mathbb R^+$, then for $\zeta\in \mathbb (\mathbb R^+\pm i0)\setminus \{\frac12\pm i0\}$, and finally for $\zeta=\frac12\pm i0$.
\begin{lemma}\label{lem: anal cont 1}
Let $u\in L^1\cap L^2$. Fix $\zeta\in \C\setminus \R^+$. Let $M_1(\cdot,\zeta), N_1(\cdot,\zeta)\in L^\infty$ be solutions to 
\begin{equation}\label{jost prty: def M}
M_1(x,\zeta) = 1+G_L(\cdot,\zeta)*[u(\cdot)M_1(\cdot,\zeta)](x),
\end{equation}

\begin{equation}\label{jost prty: def N}
N_1(x,\zeta) = 1+G_R(\cdot,\zeta)*[u(\cdot)N_1(\cdot,\zeta)](x).
\end{equation}
Then $\partial_x M_1, \partial_x N_1\in L^2$. Moreover, $M_1(x,\zeta), N_1(x,\zeta)$ are non-tangential boundary values of holomorphic functions $M_1(x+iy,\zeta), N_1(x+iy,\zeta)\in \mathbb H^2(S)+\mathbb H^\infty(S)$, where $S=\{x+iy~|~0<y<2\}$, and 
%\begin{enumerate}[(i)]
%\item 
\begin{align}
M_1(x+iy,\zeta) &=1 + o(1) & & \text{ as }x\to -\infty~ \text{ for each }y\in [0,2),\label{jost prty: M1 limit left}\\
M_1(x+iy,\zeta) &= a(\zeta) + o(1) & & \text{ as }x\to+\infty~\text{ for each }y\in [0,2),\label{jost prty: M1 limit right}\\
N_1(x+iy,\zeta)& =1 + o(1) & & \text{ as }x\to+\infty~\text{ for each }y\in [0,2), \label{jost prty: N1 limit right}\\
N_1(x+iy,\zeta)&=\breve{a}(\zeta) +o(1) & & \text{ as }x\to-\infty~\text{ for each }y\in [0,2).\label{jost prty: N1 limit left}
\end{align}
where the implied limits are uniform for $y$ in compact subsets of $[0,2)$. Here 
\begin{equation}\label{def: a}
a(\zeta)=1+\frac{i}{1-2\zeta}\int_\R u(x)M_1(x,\zeta)~dx,
\end{equation}
\begin{equation}\label{def: breve a}
\breve a(\zeta) = 1-\frac{i}{1-2\zeta}\int_\R u(x)N_1(x,\zeta)~dx.
\end{equation}

%\item $$\|M_1(\cdot+iy)-M_1(\cdot+i0)\|_\infty\to 0,~\|N_1(\cdot+iy)-N_1(\cdot+i0)\|_\infty\to 0$$
%as $y\searrow 0$.
%$$\|M_1(\cdot+iy)-M_1(\cdot+i2)\|_\infty\to 0,~\|N_1(\cdot+iy)-N_1(\cdot+i2)\|_\infty\to 0$$
%as $y\nearrow 2$.
%\end{enumerate}
\end{lemma}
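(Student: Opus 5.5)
I will treat $M_1$ in detail; $N_1$ is entirely symmetric, with $G_R$ in place of $G_L$ and the roles of $\pm\infty$ exchanged. Set $g=uM_1$. Since $M_1\in L^\infty$ and $u\in L^1\cap L^2$, we have $g\in L^1\cap L^2$. The natural candidate for the continuation is
\begin{equation*}
M_1(x+iy,\zeta)=1+G_L(\cdot+iy,\zeta)*g(x),
\end{equation*}
and I will study it through the splitting \eqref{G_L split 2}.

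First I identify the main term. It is $\frac{i}{1-2\zeta}\chi_{\R^+}*g(x)=\frac{i}{1-2\zeta}\int_{-\infty}^x g$, which is independent of $y$ and is trivially entire in $x$ as a function of the real variable. For the continuation it is cleaner to work on the Fourier side. The Fourier transform of $G_L(\cdot+iy,\zeta)*g$ is $\frac{e^{-y\xi}}{\xi-\zeta(1-e^{-2\xi})}\hat g$, understood with the $\Gamma_L$ prescription at $\xi=0$. This is exactly $e^{-y\xi}$ times the $y=0$ transform, which matches the Fourier characterization in Lemma \ref{lemma:FT.strip}.

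Next I split the remainder. By \eqref{eq: R bound}, the symbol $R_2(\xi,y,\zeta)$ is bounded by $C(\zeta)(\langle\xi\rangle^{-1}+e^{(2-y)\xi}\chi_{\R^-})$, which is in $L^2$ uniformly for $y\in[0,2)$. Hence $\mathcal F^{-1}[R_2\hat g]$ is a sum of an $L^2$ piece and an $L^\infty$ piece: the $L^2$ piece comes from $\langle\xi\rangle^{-1}\hat g$ with $\hat g\in L^\infty\cap L^2$, and the $L^\infty$ piece from $\hat g\,e^{(2-y)\xi}\chi_{\R^-}\in L^1$.

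To get holomorphy in the open strip, I use the definition $F(x+iy)=\frac{1}{2\pi}\int e^{i(x+iy)\xi}(\cdots)\hat g\,d\xi$. For $\xi<0$ the factor $e^{2\xi}$ in $\frac{1}{\xi-\zeta(1-e^{-2\xi})}$ tames $e^{-y\xi}$ whenever $y<2$. Morera's theorem and dominated convergence then give holomorphy in $S$ and membership in $\mathbb H^2(S)+\mathbb H^\infty(S)$. The case $y$ near $2$ is handled by $\hat g\in L^\infty$ together with the $L^2$ bound. The main term $\int_{-\infty}^x g$ (entire in $x+iy$ only formally) I will instead keep as its Fourier multiplier $\frac{i}{(1-2\zeta)}\,\mathrm{P.V.}\frac{1}{i\xi}+\pi\delta$-type piece. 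That piece carries no $y$-dependence problem, since $\frac{1}{(1-2\zeta)\xi}$ was subtracted with factor $1$ and not $e^{-y\xi}$; I therefore need to absorb the difference $\frac{e^{-y\xi}-1}{\xi}$, which is smooth. Nontangential boundary values at $y=0$ then follow from $\mathbb H^p$ theory and continuity of the symbol in $y$, using \eqref{R diff y}.

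For the derivative, I observe that $\partial_x M_1=\frac{i}{1-2\zeta}g+\mathcal F^{-1}[i\xi R_2(\xi,0,\zeta)\hat g]$. By \eqref{eq: R bound}, $\xi R_2$ is bounded on $\R^+$ and bounded by $|\xi|e^{2\xi}$ on $\R^-$, so $\partial_x M_1\in L^2$.

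For the asymptotics, I first note that $\mathcal F^{-1}[R_2(\cdot,y,\zeta)\hat g]$ tends to $0$ as $x\to\pm\infty$, locally uniformly in $y\in[0,2)$. The reason is that it equals $\mathcal F^{-1}[R_2(\cdot,y,\zeta)]*g$, and by \eqref{eq: deriv R bound} the kernel $\mathcal F^{-1}[R_2]$ is in $L^2$ and decays: $\partial_\xi R_2\in L^1$ gives $x\,\mathcal F^{-1}[R_2]\in L^\infty$. Convolving with $g\in L^1\cap L^2$ therefore gives $o(1)$ at both ends, and the bounds are uniform for $y\le 2-\epsilon$. The main term tends to $0$ as $x\to-\infty$ and to $\frac{i}{1-2\zeta}\int g$ as $x\to+\infty$. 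This yields \eqref{jost prty: M1 limit left} and \eqref{jost prty: M1 limit right} with $a(\zeta)$ as in \eqref{def: a}. For $N_1$ the main term is $-\frac{i}{1-2\zeta}\int_x^\infty g$, which produces $\breve a$.

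I expect the main obstacle to be making the $y$-dependence rigorous near $y=2$. There the bound $e^{(2-y)\xi}$ degenerates, and only the $\mathbb H^2+\mathbb H^\infty$ statement, rather than uniform decay, survives. Because of this, the asymptotic statements are restricted to compact subsets of $[0,2)$.
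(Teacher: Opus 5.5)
\textbf{Verdict: genuine gap.} The derivative and the asymptotics are fine, but holomorphy and the Hardy-class membership are not established.

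\textbf{What works.} Your computation of $\partial_x M_1$ is the paper's. Your asymptotic argument is sound and arguably slicker than the paper's, which approximates $\widehat{uM_1}$ by $C_0^\infty$ functions. Since $\partial_\xi R_2\in L^1$ uniformly in $y$, you get $|\mathcal F^{-1}[R_2(\cdot,y,\zeta)](x)|\lesssim |x|^{-1}$. Together with the $L^2$ bound for $y\le 2-\epsilon$ and $g\in L^2$, this yields the uniform $o(1)$.

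\textbf{The gap: holomorphy.} You claim that $1+G_L(\cdot+iy,\zeta)*g$ is holomorphic in $x+iy$ and lies in $\mathbb H^2(S)+\mathbb H^\infty(S)$. None of the representations you write down is both convergent and manifestly holomorphic.
\begin{itemize}
\item The integral $\frac{1}{2\pi}\int e^{i(x+iy)\xi}\frac{\hat g(\xi)}{\xi-\zeta(1-e^{-2\xi})}\,d\xi$, to which Morera and dominated convergence would apply, has the pole at $\xi=0$ with the $\Gamma_L$ prescription. Since $\hat g$ is merely continuous, the contour cannot be taken literally, and $\hat g(0)=\int uM_1$ need not vanish.
\item Once the pole is subtracted as in \eqref{G_L split 2}, neither piece is holomorphic. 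A direct computation gives $(\partial_x+i\partial_y)\mathcal F^{-1}[R_2(\cdot,y,\zeta)\hat g]=-\frac{i}{1-2\zeta}\,g$, which is cancelled only by the main term $\frac{i}{1-2\zeta}\int_{-\infty}^x g$.
\item Your repair, attaching $e^{-y\xi}$ to the subtracted pole and ``absorbing'' $\frac{e^{-y\xi}-1}{\xi}$, produces divergent integrals. That function is smooth at $0$ but grows like $e^{y|\xi|}/|\xi|$ as $\xi\to-\infty$, where $\hat g$ is only in $L^2\cap L^\infty$. The factor $e^{2\xi}$ tames only the unsubtracted symbol.
\item Lemma \ref{lemma:FT.strip} runs the wrong way for your purpose: it derives the Fourier relation from holomorphy and does not produce holomorphy.
\end{itemize}

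\textbf{The gap: uniformity near $y=2$.} Your claim that $R_2(\cdot,y,\zeta)\in L^2$ uniformly for $y\in[0,2)$ is false, since $\|e^{(2-y)\xi}\chi_{\R^-}\|_{L^2}=(2(2-y))^{-1/2}$. The same defect affects your $L^1$ piece.

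\textbf{How the paper closes it.} It uses a frequency split $g=P_{\le1}g+P_{>1}g$.
\begin{itemize}
\item The low-frequency part is entire with $|P_{\le1}g(x+iy)|\lesssim e^{2y}\|g\|_{L^2}$. So $\int_{-\infty}^{x+iy}P_{\le1}g$ and $\mathcal F^{-1}[R_2(\xi,0,\zeta)\chi(\xi)\hat g](x+iy)$ are honest $\mathbb H^\infty(S)$ functions.
\item In the high-frequency part there is no pole. The whole symbol $\frac{(1-\chi(\xi))e^{-y\xi}}{\xi-\zeta(1-e^{-2\xi})}$ is \emph{bounded} uniformly for $y\in[0,2]$, hence an $L^2$ multiplier, which gives $\mathbb H^2(S)$.
\item To identify this extension with the split formula \eqref{jost prty: M1 ext}, which your asymptotic argument relies on, the paper approximates $g$ by $f_n\in C_0^\infty$. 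Then $\hat f_n$ is entire, and $G_L(\cdot+iy)*f_n=\frac{1}{2\pi}\int_{\Gamma_L}\frac{e^{i(x+iy)\xi}\hat f_n(\xi)}{\xi-\zeta(1-e^{-2\xi})}\,d\xi$ is genuinely holomorphic. By \eqref{eq: R bound}, $G_L(\cdot+iy)*f_n$ converges to $G_L(\cdot+iy)*g$ uniformly for $y$ in compact subsets of $[0,2)$.
\end{itemize}
Alternatively, the Cauchy--Riemann computation above, applied to the sum, would give holomorphy directly, but you would still need the frequency split for the Hardy-class bounds.

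\textbf{A caveat shared with the paper.} The bound \eqref{eq: R bound} fails at $\zeta=0$, where $R_2=\frac{e^{-y\xi}-1}{\xi}$. There $M_1=\exp\bigl(i\int_{-\infty}^x u\bigr)$, which in general admits no such extension, so $\zeta=0$ must be excluded.
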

%\begin{remark}
%The above Lemma is true for either $M_1$ or $N_1$. We do not need $M_1$ and $N_1$ to exist at the same time for the results to hold.
%\end{remark}
\begin{proof}
We only show the proof for $M_1$, as that of $N_1$ will be analogous. For simplicity of notation, we also suppress the functional dependence on $\zeta$ when we refer to Jost and Green's functions in this proof. Using the splitting formula \eqref{G_L split 2}, we can rewrite \eqref{jost prty: def M} as 
\begin{equation}\label{jost prty: M1 split}
M_1(x)=1+\frac{i}{1-2\zeta}\int_{-\infty}^x u(s)M_1(s)~ds+\mathcal F^{-1}[R_2(\xi,0)\widehat{uM_1}(\xi)](x).
\end{equation}
We get from \eqref{jost prty: M1 split}
\begin{equation}\label{jost prty: D M1 split}
D M_1(x) = \tfrac1{1-2\zeta}u(x)M_1(x)+\mathcal F^{-1}[\xi R_2(\xi,0)\widehat{uM_1}(\xi)](x).
\end{equation}
Note that $\widehat{uM_1}\in L^2$.  It follows from \eqref{jost prty: D M1 split} and \eqref{eq: R bound} that $\partial_x M_1\in L^2$. Let $P_{\le N}=\chi(\frac{D}{N})$ be a Littlewood-Paley type projector, where $\chi$ is a standard cutoff function supported on $|\xi|\le 2$, and $\chi(\xi)=1$ for $|\xi|\le 1$. Let $P_{>N}=I-P_{\le N}$. We rewrite \eqref{jost prty: def M} as 
\begin{equation}\label{jost prty: M1 split sub 1}
    M_1(x)=1+G_L*(P_{\le 1}(uM_1))+G_L*(P_{>1}(uM_1))
\end{equation}
Using \eqref{G_L split 2} again, we have
\begin{align}\label{jost prty: M1 split sub 2}
    [G_L*(P_{\le 1}(uM_1))](x)=&\frac{i}{1-2\zeta}\int_{-\infty}^x P_{\le 1}(uM_1)~ds \notag\\
    &\quad +\mathcal F^{-1}[R_2(\xi,0)\chi(\xi)\widehat{uM_1}(\xi)](x)
\end{align}
It's easy to see that
\begin{equation}
    P_{\le 1}(uM_1)(x+iy) = \frac{1}{2\pi}\int_\R \chi(\xi)\widehat{uM_1}(\xi)e^{i(x+iy)\xi}~d\xi
\end{equation}
is entire with estimates $\|P_{\le 1}(uM_1)\|_{L^1}\le C\|uM_1\|_{L^1}$ and
\begin{equation}\label{eq: entire P(uM) bound}
    |P_{\le 1}(uM_1)(x+iy)|\le C e^{2y}\|uM_1\|_{L^2}.
\end{equation}
It follows that $\int_{-\infty}^x P_{\le 1}(uM_1)~ds$ can be continuously extended to an $\mathbb H^\infty(S)$ function
$\int_{-\infty}^{x+iy} P_{\le 1}(uM_1)~ds$. Similarly, $\mathcal F^{-1}[R_2(\xi,0)\chi(\xi)\widehat{uM_1}(\xi)](x)$ can be extended to
\begin{equation}\label{eq: M1 holo 2}
    \mathcal F^{-1}[R_2(\xi,0)\chi(\xi)\widehat{uM_1}(\xi)](x+iy) =\frac{1}{2\pi}\int_\R R_2(\xi,0)\chi(\xi)\widehat{uM_1}(\xi) e^{i(x+iy)\xi}~d\xi
\end{equation}
Using \eqref{eq: R bound}, similar estimates as above show that \eqref{eq: M1 holo 2} is in $ \mathbb H^\infty(S)$.
Note that $P_{\le 1}$ is a convolution operator. It follows that
\begin{equation}
    G_L*(P_{>1}(uM_1))=G_L*(P_{>\frac12}P_{>1}(uM_1))=(P_{>\frac12}G_L)*(P_{>1}(uM_1)).
\end{equation}
By \eqref{def: G} we see that
\begin{equation}
    G_L(x)- \mathcal F^{-1}\left[\text{P.V.}\tfrac{1}{\xi-\zeta(1-e^{-2\xi})}\right](x)=\tfrac{i}{2(1-2\zeta)}.
\end{equation}
Thus
\begin{equation}
    \mathcal F[P_{>\frac12}G_L](\xi)=\frac{1-\chi(2\xi)}{\xi-\zeta(1-e^{-2\xi})},
\end{equation}
\begin{equation}
    \mathcal F\left[(P_{>\frac12}G_L)*(P_{>1}(uM_1))\right]=\frac{(1-\chi(\xi))\widehat{uM_1}(\xi)}{\xi-\zeta(1-e^{-2\xi})}.
\end{equation}
Note that 
$$\left|\frac{(1-\chi(\xi))e^{-y\xi}}{\xi-\zeta(1-e^{-2\xi})}\right|\le C(\zeta)$$
for all $y\in [0,2]$. Thus for the same values of $y$,
\begin{equation}
   \| \mathcal F\left[G_L*(P_{>1}(uM_1))\right](\xi)e^{-y\xi}\|_{L^2}\le C(\zeta)\|uM_1\|_{L^2}.
\end{equation}
By standard $\mathbb H^2$ theory, $G_L*(P_{>1}(uM_1))$ is the non-tangential boundary value of a holomorphic function in $\mathbb H^2(S)$. We conclude that $M_1(x)$ is the non-tangential boundary value of a function $M_1(x+iy)\in \mathbb H^2(S)+\mathbb H^\infty(S)$.

We now rewrite the values of $M_1(x+iy)$ for $0\le y\le 2$ as
\begin{align}
M_1(x+iy)&=1+G_L(x+iy)*(uM_1)(x)\notag\\
&=1+\frac{i}{1-2\zeta}\int_{-\infty}^x u(s)M_1(s)~ds+\mathcal F^{-1}[R_2(\xi,y)\widehat{uM_1}(\xi)](x).\label{jost prty: M1 ext}
\end{align}
Note that the $M_1$ on the right hand side of \eqref{jost prty: M1 ext} is the original $M_1(x)$. To see that $M_1(x+iy)$ is indeed given by the right hand side of \eqref{jost prty: M1 ext}, we just need to show that it is holomorphic on $S$, and the non-tangential lower boundary value is $M_1(x)$. The latter is easy to see, as $R_2(\xi,y)\widehat{uM_1}\to R_2(\xi,0)\widehat{uM_1}$ in $L^1$ by \eqref{eq: R bound}. To see the analyticity, we could try to asseble the above decompositions, but instead we argue more directly as follows. Take a sequence of $f_n\in C_0^\infty$ such that $f_n\to uM_1$ in $L^1\cap L^2$. We have
\begin{equation}\label{eq: G_L * fn}
    G_L(x+iy)*f_n(x)=\frac{i}{1-2\zeta}\int_{-\infty}^x f_n(s)~ds+\mathcal F^{-1}[R_2(\xi,y)\widehat{f_n}(\xi)](x)
\end{equation}
The first term in \eqref{eq: G_L * fn} obviously converges in $L^\infty$ to the integral term in \eqref{jost prty: M1 ext}. By \eqref{eq: R bound}, the last term in \eqref{eq: G_L * fn} converges to the last term in \eqref{jost prty: M1 ext} in $L^\infty$ uniformly for $y$ in any compact subset of $[0,2)$. Thus it suffices to show analyticity of $G_L(x+iy)*f_n(x)$. Since $f_n\in C_0^\infty$, $\widehat{f_n}$ is entire. We have
\begin{align*}
G_L(x+iy)*f_n(x)&=\frac{i}{1-2\zeta}\int_{-\infty}^x f_n(s)~ds+\mathcal F^{-1}[R_2(\xi,y)\widehat{f_n}(\xi)](x)\\
&=\frac1{2\pi} \int_{\Gamma_L} \frac{e^{ix\xi}}{1-2\zeta}\frac{\widehat{f_n}(\xi)}{\xi}~d\xi +\frac1{2\pi}\int_\R e^{ix\xi}R_2(\xi,y)\widehat{f_n}(\xi)~d\xi\\
&=\frac1{2\pi} \int_{\Gamma_L} \frac{e^{ix\xi}}{1-2\zeta}\frac{\widehat{f_n}(\xi)}{\xi}~d\xi +\frac1{2\pi}\int_{\Gamma_L} e^{ix\xi}R_2(\xi,y)\widehat{f_n}(\xi)~d\xi\\
&=\frac1{2\pi}\int_{\Gamma_L}\frac{e^{i(x+iy)\xi}\widehat{f_n}(\xi)}{\xi-\zeta(1-e^{-2\xi})}~d\xi.
\end{align*}
The analyticity of the above function in $x+iy$ now follows easily from Morrera's theorem.

It remains to prove \eqref{jost prty: M1 limit right}. From \eqref{jost prty: M1 ext}, it suffices to prove that for any given $\delta\in (0,2) $, $$\mathcal F^{-1}[R_2(\xi,y)\widehat{uM_1}(\xi)](x)\to 0$$ uniformly for $y\in [0,2-\delta]$ as $|x|\to\infty$. For any $\epsilon>0$, let $\varphi \in C^\infty_0$ such that $\|\widehat{uM_1}-\varphi\|_{L^2}< \epsilon$. Since 
\begin{align*}
\frac1{2\pi}\int_\R R_2(\xi,y)\varphi(\xi)e^{ix\xi}~d\xi &= \frac1{2\pi}\int_\R R_2(\xi,y)\varphi(\xi)\left(\frac1{ix}\partial_\xi e^{ix\xi}\right)~d\xi\\
&=-\frac1{2\pi ix}\int_\R \left(\varphi(\xi)\partial_\xi R_2(\xi,y)+R_2(\xi,y)\varphi'(\xi)\right)e^{ix\xi}~d\xi.
\end{align*}
By \eqref{eq: R bound}, \eqref{eq: deriv R bound} and $\varphi\in C_0^\infty$ we see from the above that 
$$\left|\frac1{2\pi}\int_\R R_2(\xi,y)\varphi(\xi)e^{ix\xi}~d\xi \right|\le \frac{ C(\zeta,\varphi,\delta) }{|x|}.$$
 Thus there exists an $M=M(\zeta,\varphi)>0$ such that for all $|x|>M$ and $y\in [0,2]$, we have $|\mathcal F^{-1}[R_2(\xi,y)\varphi(\xi)](x)|<\epsilon$. Using \eqref{eq: R bound} again, we have $\|R_2(\xi,y)\|_{L^2}\le C(\zeta,\delta)$ for all $y\in [0,2-\delta]$. Thus
 \begin{align}\label{eq: inv Fourier R2 phi - uM}
     |\mathcal F^{-1}[R_2(\xi,y)(\varphi-\widehat{uM_1})(\xi)](x)|&<C(\zeta,\delta)\|\widehat{uM_1}-\varphi\|_{L^2}\notag\\
     &< C(\zeta,\delta)\epsilon
 \end{align}
for all $x\in \R$ and $y\in [0,2-\delta]$. Thus $|\mathcal F^{-1}[R_2(\xi,y)\widehat{uM_1}(\xi)](x)|<(C(\zeta,\delta)+1)\epsilon$ when $|x|>M$ and $y\in [0,2-\delta]$. The proof is now complete.
\end{proof}

We next turn to the cases where $\zeta$ is on $\R^+\pm i0$.

\begin{lemma}\label{lem: anal cont R+}
Let $u\in L^1\cap L^2$. Fix $k\in \R^+$, $k\ne\frac12$. Let $M_1(\cdot,k\pm i0), N_1(\cdot,k\pm i0)\in L^\infty$ be solutions to \eqref{jost prty: def M}, \eqref{jost prty: def N} with $\zeta=k\pm i0$, and $M_e(\cdot,k\pm i0), N_e(\cdot,k\pm i0)\in L^\infty$ be solutions to 
\begin{equation}\label{jost prty: def M_e}
M_e(x,k\pm i0) = e^{i\lambda x} +G_L(\cdot,k\pm i0)*[u(\cdot)M_e(\cdot,k\pm i0)](x),
\end{equation}
\begin{equation}\label{jost prty: def N_e}
N_e(x,k\pm i0) = e^{i\lambda x} +G_R(\cdot,k\pm i0)*[u(\cdot)N_e(\cdot,k\pm i0)](x).
\end{equation}
Here $\lambda=Z^{-1}(k)$ (see Definition \ref{def: Z inv}). Then $\partial_x M_1, \partial_x N_1, \partial_x M_e, \partial_x N_e\in L^2+L^\infty$. Moreover, $M_1(x,k\pm i0), N_1(x,k\pm i0), M_e(x,k\pm i0), N_e(x,k\pm i0)$ are non-tangential boundary values of holomorphic functions $M_1(x+iy,k\pm i0), N_1(x+iy,k\pm i0), M_e(x+iy,k\pm i0), N_e(x+iy,k\pm i0) \in \mathbb H^2(S)+\mathbb H^\infty(S)$, and 
\begin{align}
M_1(x+iy,k+i0) &=1+o(1) & & \text{ as }x\to-\infty,\label{eq: M1 k+i0 limit left}\\
M_1(x+iy,k+i0) &= a(k+i0)+b(k+i0)e^{i\lambda(x+iy)}+o(1) & & \text{ as }x\to+\infty,\\
N_1(x+iy,k-i0)&=1+o(1)& & \text{ as }x\to+\infty,\label{eq: N1 k-i0 limit right}\\
M_e(x+iy,k+i0) &= e^{i\lambda(x+iy)}+o(1)& & \text{ as }x\to-\infty,\label{eq: Me k+i0 limit left}\\
N_e(x+iy,k-i0) &=e^{i\lambda(x+iy)}+o(1)& & \text{ as }x\to+\infty,\label{eq: Ne k-i0 limit right}
\end{align}
for each $y\in [0,2)$,
where the implied limits are uniform for $y$ in compact subsets of $[0,2)$. Here  $a(k+i0)$ is given as in \eqref{def: a}, and%, $\breve a(\zeta)$ are given as in \eqref{def: a} and \eqref{def: breve a}, and 
\begin{equation}\label{def: b}
b(k\pm i0) = \frac{i}{1-2k^*}\int_\R u(x)M_1(x,k\pm i0)e^{-i\lambda x}~dx.
\end{equation}
\end{lemma}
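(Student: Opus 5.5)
The plan is to repeat the proof of Lemma \ref{lem: anal cont 1}, replacing the splitting \eqref{G_L split 2} by the splittings \eqref{G_L split +}--\eqref{G_R split -}. The only new feature is the extra oscillatory pole term $\pm\frac{i}{1-2k^*}e^{-\lambda y}e^{i\lambda x}\chi_{\R^\pm}(x)$, where $\lambda=Z^{-1}(k)$ is real. We treat $M_1(\cdot,k+i0)$ in detail; $N_1$, $M_e$, $N_e$ and the $k-i0$ cases are analogous, with the roles of $\chi_{\R^+}$ and $\chi_{\R^-}$ swapped where appropriate. Write $f=uM_1\in L^1\cap L^2$, or $f=uM_e$; this works because $M_1,M_e\in L^\infty$.

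\textbf{Step 1: rewrite the equation and control $\partial_x M_1$.} Using \eqref{G_L split +} at $y=0$, the equation becomes
\begin{equation*}
M_1(x)=1+\tfrac{i}{1-2k}\int_{-\infty}^x f\,ds+\tfrac{i}{1-2k^*}e^{i\lambda x}\int_{-\infty}^x e^{-i\lambda s}f(s)\,ds+\mathcal F^{-1}[R_1(\xi,0,k)\hat f](x).
\end{equation*}
Differentiating gives
\begin{equation*}
D M_1=\tfrac{f}{1-2k}+\tfrac{f}{1-2k^*}+\tfrac{i\lambda}{1-2k^*}e^{i\lambda x}\int_{-\infty}^x e^{-i\lambda s}f\,ds+\mathcal F^{-1}[\xi R_1\hat f].
\end{equation*}
The first two terms lie in $L^2$. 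The third is in $L^\infty$, since $f\in L^1$. The last is in $L^2$ by \eqref{eq: R bound}. Hence $\partial_x M_1\in L^2+L^\infty$, which is exactly why the statement asserts $L^2+L^\infty$ rather than $L^2$.

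\textbf{Step 2: analytic continuation into $S$.} Define, for $0\le y\le2$,
\begin{equation*}
M_1(x+iy)=1+\tfrac{i}{1-2k}\int_{-\infty}^x f+\tfrac{i}{1-2k^*}e^{i\lambda(x+iy)}\int_{-\infty}^x e^{-i\lambda s}f\,ds+\mathcal F^{-1}[R_1(\xi,y,k)\hat f](x).
\end{equation*}
This is $1+G_L(x+iy,k+i0)*f$.
\begin{enumerate}[(a)]
\item \emph{Holomorphy.} Approximate $f$ by $f_n\in C_0^\infty$, so that $\hat f_n$ is entire. Then $G_L(\cdot+iy)*f_n=\frac1{2\pi}\int_{\Gamma}e^{i(x+iy)\xi}\hat f_n(\xi)/(\xi-k(1-e^{-2\xi}))\,d\xi$, where $\Gamma$ is the real line dodging $0$ and $\lambda$ by small semicircles, on the sides dictated by $k+i0$. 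Morera's theorem gives holomorphy of each such function. The approximations converge uniformly on compact subsets of $\{0\le y<2\}$ by \eqref{eq: R bound}. On the pole terms the convergence is even uniform, because $e^{-\lambda y}$ is bounded for real $\lambda$.
\item \emph{Membership in $\mathbb H^2(S)+\mathbb H^\infty(S)$.} The two primitive terms are bounded; for the first, continue $P_{\le1}f$ as in the proof of Lemma \ref{lem: anal cont 1}. For the high-frequency part, use the Littlewood--Paley splitting of that proof. If $|\lambda|\ge1$, enlarge the cutoff so that $P_{\le N}$ covers $\lambda$; the symbol then satisfies $|(1-\chi(\xi/N))e^{-y\xi}/(\xi-k(1-e^{-2\xi}))|\le C$, and the $\mathbb H^2$ theory applies.
\item \emph{Boundary values.} The non-tangential limit at $y=0$ is $M_1(x)$, because $R_1(\cdot,y)\hat f\to R_1(\cdot,0)\hat f$ in $L^1$.
\end{enumerate}

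\textbf{Step 3: asymptotics.} The $R_1$ term tends to $0$ as $|x|\to\infty$, uniformly for $y$ in compact subsets of $[0,2)$. This follows from the density and integration-by-parts argument of Lemma \ref{lem: anal cont 1}, using \eqref{eq: R bound} and \eqref{eq: deriv R bound}.
\begin{itemize}
\item As $x\to-\infty$, both primitives vanish, which gives \eqref{eq: M1 k+i0 limit left}. For $M_e$ we get $e^{i\lambda(x+iy)}+o(1)$, which is \eqref{eq: Me k+i0 limit left}.
\item As $x\to+\infty$, the primitives tend to $\int f$ and $\int e^{-i\lambda s}f$. The resulting error term is $e^{i\lambda(x+iy)}\int_x^\infty e^{-i\lambda s}f$, which is $o(1)$ because $|e^{i\lambda(x+iy)}|=e^{-\lambda y}$ is bounded. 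This yields $a(k+i0)+b(k+i0)e^{i\lambda(x+iy)}$, with $a$ as in \eqref{def: a} and $b$ as in \eqref{def: b}.
\item For $N_1(\cdot,k-i0)$ and $N_e(\cdot,k-i0)$, both pole terms in \eqref{G_R split -} carry $\chi_{\R^-}$. They therefore contribute integrals $\int_x^\infty$, which vanish as $x\to+\infty$ and give \eqref{eq: N1 k-i0 limit right} and \eqref{eq: Ne k-i0 limit right}.
\end{itemize}

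\textbf{Main obstacle.} The delicate step is (b), the $\mathbb H^2+\mathbb H^\infty$ membership of the high-frequency part, now that the symbol has a second real pole at $\lambda$. Subtracting that pole (already done in $R_1$) and enlarging the cutoff $N$ past $|\lambda|$ should suffice. For $k$ near $\frac12$ the constants degenerate, but $k\neq\frac12$ is fixed, so this is harmless.
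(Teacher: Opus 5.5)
Your proposal is correct and is essentially the paper's proof. The paper also reruns the argument of Lemma~\ref{lem: anal cont 1} with the splittings \eqref{G_L split +}--\eqref{G_R split -}, obtains the same formula for $DM_1$ with the extra bounded term $\frac{i\lambda e^{i\lambda x}}{1-2k^*}\int_{-\infty}^x e^{-i\lambda s}uM_1\,ds$, and treats the high-frequency part with the enlarged cutoff $P_{\le 2|\lambda|}$.

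The one thing to tidy is step (b). The same cutoff $P_{\le N}$ has to be applied to both primitive terms and to the low-frequency $R_1$ term, not $P_{\le 1}$ for the first primitive only. The primitives of $f$ itself do not continue into $S$. By contrast, $e^{i\lambda z}\int_{-\infty}^z e^{-i\lambda s}P_{\le N}f(s)\,ds$ is entire and bounded on $S$ because $\lambda$ is real.
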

\begin{remark}
We will not need the following, but one can also compute other limits of the Jost solutions:
    \begin{align*}
        N_1(x+iy,k+i0)&=\breve{a}(k+i0)+o(1)& & \text{ as }x\to-\infty,\\
        N_1(x+iy,k+i0)&=1+\breve b(k+i0)e^{i\lambda(x+iy)}+o(1)& & \text{ as }x\to+\infty,\\
        M_e(x+iy,k+i0) &=a_e(k+i0)+[1+b_e(k+i0)]e^{i\lambda(x+iy)}+o(1)& & \text{ as }x\to+\infty,\\
        N_e(x+iy,k+i0)&=\breve{a}_e(k+i0)+e^{i\lambda( x+iy)}+o(1)& & \text{ as }x\to-\infty,\\
N_e(x+iy,k+i0)&=[1+\breve{b}_e(k+i0)]e^{i\lambda (x+iy)}+o(1)& & \text{ as }x\to+\infty,\\
M_1(x+iy,k-i0) &=1-b(k-i0)e^{i\lambda(x+iy)}+o(1) & & \text{ as }x\to-\infty,\\
M_1(x+iy,k-i0) &= a(k-i0)+o(1)& & \text{ as }x\to+\infty,\\
N_1(x+iy,k-i0)&=\breve{a}(k-i0)-\breve b(k-i0) e^{i\lambda(x+iy)}+o(1) & & \text{ as }x\to-\infty,\\
M_e(x+iy,k-i0) &=[1-b_e(k-i0)]e^{i\lambda(x+iy)} +o(1) & & \text{ as }x\to-\infty,\\
M_e(x+iy,k-i0) &=a_e(k-i0)+e^{i\lambda(x+iy)}+o(1)& & \text{ as }x\to+\infty,\\
N_e(x+iy,k-i0) &=\breve a_e(k-i0) +[1-\breve b_e(k-i0)]e^{i\lambda(x+iy)}+o(1)& & \text{ as }x\to-\infty,
    \end{align*}
    for each $y\in [0,2)$,
where
    \begin{align}
\breve b(k\pm i0) &= \frac{i}{1-2k^*}\int_\R u(x)N_1(x,k\pm i0)e^{-i\lambda x}~dx,\label{def: breve b}\\
a_e(k\pm i0) &= \frac{i}{1-2k}\int_\R u(x)M_e(x,k\pm i0)~dx,\\
b_e(k\pm i0) &= \frac i{1-2k^*}\int_\R u(x)M_e(x,k\pm i0)e^{-i\lambda x}~dx,  \label{def: b_e}\\
\breve a_e(k\pm i0) &= -\frac{i}{1-2k}\int_\R u(x)N_e(x,k\pm i0)~dx,  \label{def: breve a_e}\\ 
\breve b_e(k\pm i0) &=\frac i{1-2k^*}\int_\R u(x)N_e(x,k\pm i0)e^{-i\lambda x}~dx.\label{def: breve b_e}
\end{align}
\end{remark}
\begin{proof}
This lemma can be proven by basically the same arguments as in Lemma \ref{lem: anal cont 1}. We only show the proof for $M_1(x,k\pm i0)$, and the other functions can be handled similarly. We will use the splitting formulas \eqref{G_L split +} to \eqref{G_R split -} to replace \eqref{G_L split 2} and \eqref{G_R split 2}, and note that $R_1$ and $R_2$ satisfy the same estimates in Proposition \ref{prop: remainder}. In particular, for $M_1(x)=M_1(x,k+i0)$, when $k\in \R^+$ and $k\ne \frac12$, \eqref{jost prty: M1 split} can be replaced by 
\begin{align}\label{jost prty: M1 split k-i0}
M_1(x) =&~1+\frac{i}{1-2k}\int_{-\infty}^x u(s)M_1(s)~ds\\
&\quad +\frac{ie^{i\lambda x}}{1-2k^*}\int_{-\infty}^x u(s)M_1(s)e^{-i\lambda s}~ds+\mathcal F^{-1}[R_1(\xi,0)\widehat{uM_1}](x).\notag
\end{align}
\eqref{jost prty: D M1 split} is replaced by
\begin{align}
D M_1(x) = &~\left(\frac1{1-2k}+\frac1{1-2k^*}\right)u(x)M_1(x)\notag\\
&\quad +\frac{i\lambda e^{i\lambda x}}{1-2k^*}\int_{-\infty}^x u(s)M_1(s)e^{-i\lambda s}~ds+\mathcal F^{-1}[\xi  R_1(\xi,0)\widehat{uM_1}](x).\label{jost prty: D M1 R+}
\end{align} 
\eqref{jost prty: M1 split sub 1} is replaced by 
\begin{equation}
    M_1(x) = 1+G_L*(P_{\le 2|\lambda|}(uM_1))+G_L*(P_{>2|\lambda|}(uM_1)).
\end{equation}
\eqref{jost prty: M1 split sub 2} is replaced by 
\begin{align}\label{eq: G_L * P< sub 1}
    G_L*(P_{\le 2|\lambda|}(uM_1)) &=\frac{i}{1-2\zeta}\int_{-\infty}^x P_{\le 2|\lambda|}(uM_1)~ds+\frac{ie^{i\lambda x}}{1-2k^*}\int_{-\infty}^x P_{\le 2|\lambda|}(uM_1)(s)e^{-i\lambda s}~ds\notag\\
    &\quad + \mathcal F^{-1}\left[R_1(\xi,0)\chi\left(\tfrac{\xi}{2|\lambda|}\right)\widehat{uM_1}\right].
\end{align}
\eqref{jost prty: M1 ext} is replaced by 
\begin{align}
M_1(x+iy)= &~1+\frac{i}{1-2k}\int_{-\infty}^x u(s)M_1(s)~ds+\frac{ie^{i\lambda (x+iy)}}{1-2k^*}\int_{-\infty}^x u(s)M_1(s)e^{-i\lambda s}~ds\notag\\
&\quad +\mathcal F^{-1}[R_1(\xi,y)\widehat{uM_1}](x).\label{jost prty: M1 ext R+}
\end{align}
On the other hand, for $M_1(x)=M_1(x,k-i0)$, if $k\in \R^+$ and $k\ne \frac12$, \eqref{jost prty: M1 split k-i0} can be replaced by 
\begin{align}
M_1(x) =&~1+\frac{i}{1-2k}\int_{-\infty}^x u(s)M_1(s)~ds\notag\\
&\quad -\frac{ie^{i\lambda x}}{1-2k^*}\int_x^{\infty} u(s)M_1(s)e^{-i\lambda s}~ds+\mathcal F^{-1}[R_1(\xi,0)\widehat{uM_1}](x).
\end{align}
\eqref{jost prty: D M1 R+} becomes
\begin{align}
D M_1(x) = &~\left(\frac1{1-2k}+\frac1{1-2k^*}\right)u(x)M_1(x)\notag\\
&\quad -\frac{i\lambda e^{i\lambda x}}{1-2k^*}\int_x^{\infty} u(s)M_1(s)e^{-i\lambda s}~ds+\mathcal F^{-1}[\xi  R_1(\xi,0)\widehat{uM_1}](x).
\end{align} 
\eqref{eq: G_L * P< sub 1} becomes
\begin{align}
    G_L*(P_{\le 2|\lambda|}(uM_1)) &=\frac{i}{1-2\zeta}\int_{-\infty}^x P_{\le 2|\lambda|}(uM_1)~ds-\frac{ie^{i\lambda x}}{1-2k^*}\int_x^{\infty} P_{\le 2|\lambda|}(uM_1)(s)e^{-i\lambda s}~ds\notag\\
    &\quad + \mathcal F^{-1}\left[R_1(\xi,0)\chi\left(\tfrac{\xi}{2|\lambda|}\right)\widehat{uM_1}\right].
\end{align}
\eqref{jost prty: M1 ext} becomes
\begin{align}
M_1(x+iy)= &~1+\frac{i}{1-2k}\int_{-\infty}^x u(s)M_1(s)~ds\notag\\
&\quad -\frac{ie^{i\lambda (x+iy)}}{1-2k^*}\int_x^{\infty} u(s)M_1(s)e^{-i\lambda s}~ds+\mathcal F^{-1}[R_1(\xi,y)\widehat{uM_1}](x).
\end{align}
The rest of the proof can be given in the same way as in Lemma \ref{lem: anal cont 1}.
\end{proof}
We finally turn to the case when $k=\frac12$. Only $M_1(x,\frac12+i0)$ and $N_1(x,\frac12-i0)$ will be considered. Note that we require stronger decay of $u$ in this case. We also define $\mathbb H^\infty_{-1,\pm}(S)$ to be the space of holomorphic functions $F$ on $S$ such that 
\begin{equation}
    |F(x+iy)|\le C(1+x_{\pm})
\end{equation}
for some $C>0$ and all $x+iy\in S$. If $F\in \mathbb H^\infty_{-1,\pm}(S)$, then $F(z)(1+e^{\pm z})^{-1}\in \mathbb H^\infty(S)$. Thus such $F(z)$ has almost everywhere boundary values on $\partial S$.
\begin{lemma}\label{lem: anal cont 1/2}
Let $u\in L^1_1\cap L^2_1$. Let $M_1(\cdot,\frac12+i0) \in L^\infty_{1,+}$, $N_1(\cdot,\frac12-i0)\in L^\infty_{1,-}$ be solutions to \eqref{jost prty: def M}, \eqref{jost prty: def N} with $\zeta=\frac12+i0$ and $\zeta = \frac12-i0$ respectively. Then $\partial_x M_1, \partial_x N_1\in L^2+L^\infty$. Moreover, $M_1(x,\frac12+i0), N_1(x,\frac12-i0)$ are non-tangential boundary values of holomorphic functions $M_1(x+iy,\frac12+i0)\in \mathbb H^2(S)+\mathbb H^\infty_{-1,+}(S), N_1(x+iy,\frac12-i0) \in \mathbb H^2(S)+\mathbb H^\infty_{-1,-}(S)$, with $M_1(\cdot+iy,\frac12+i0)\in L^\infty_{-1,+}, N_1(\cdot+iy,\frac12-i0)\in L^\infty_{-1,-}$ bounded for each $y\in [0,2)$, and 
\begin{align}
M_1(x+iy,\tfrac12+i0)&=1+o(1) & & \text{ as }x\to-\infty,\label{jost prty: M1 1/2 left}\\
M_1(x+iy,\tfrac12+i0)&=-x \int_\R u(s)M_1(s,\tfrac12+i0)~ds+o\left(|x|\right)& & \text{ as } x\to+\infty,\label{jost prty: M1 1/2 right}\\
N_1(x+iy,\tfrac12-i0)&=x\int_\R u(s)N_1(s,\tfrac12-i0)~ds +o(|x|), & & \text{ as }x\to-\infty,\\
N_1(x+iy,\tfrac12-i0)&=1+o(1), & & \text{ as }x\to+\infty,\label{jost prty: N1 1/2 right}
\end{align}
where the implied limits  are uniform for $y$ in compact subsets of $[0,2)$.
\end{lemma}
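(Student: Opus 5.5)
We follow the scheme of Lemma \ref{lem: anal cont 1} and Lemma \ref{lem: anal cont R+}, using the splitting formula \eqref{G_L split 1/2} in place of \eqref{G_L split 2}. We treat only $M_1=M_1(\cdot,\tfrac12+i0)$; the case of $N_1(\cdot,\tfrac12-i0)$ is the mirror image, based on \eqref{G_R split 1/2}. The first task is to check that all convolutions make sense. Since $M_1\in L^\infty_{1,+}$ and $u\in L^1_1\cap L^2_1$, we have $uM_1\in L^1\cap L^2$. Moreover $s\,u(s)M_1(s)\in L^1$, because $|u(s)M_1(s)|\lesssim \langle s\rangle|u(s)|$ only grows linearly on $\R^+$ and $u\in L^1_1$. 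Convolving $uM_1$ with the polynomial part $(-(x-s)+i(\tfrac23-y))\chi_{\R^+}(x-s)$ of $G_L(\cdot+iy,\tfrac12+i0)$ then gives
\begin{equation*}
M_1(x+iy)=1-\int_{-\infty}^x (x-s)u(s)M_1(s)\,ds+i(\tfrac23-y)\int_{-\infty}^x uM_1\,ds+\mathcal F^{-1}[R_1(\xi,y,\tfrac12)\widehat{uM_1}](x).
\end{equation*}
We take this as the definition of the extension. Setting $y=0$ recovers the original equation \eqref{jost prty: def M}.

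Differentiating this formula at $y=0$ gives
\begin{equation*}
DM_1=i\int_{-\infty}^x uM_1\,ds+(\tfrac23)uM_1+\mathcal F^{-1}[\xi R_1\widehat{uM_1}].
\end{equation*}
The first term is in $L^\infty$ and the second is in $L^2$. For the third, \eqref{eq: R bound} at $y=0$ shows that $\xi R_1$ is bounded for $\xi>0$. For $\xi<0$ the bound $|\xi|e^{2\xi}$ is also bounded. Hence the third term is in $L^2$, and $\partial_xM_1\in L^2+L^\infty$. Each term is also controlled in $y$. The polynomial terms are bounded by $C(1+x_+)$. The remainder term is bounded for $y\in[0,2-\epsilon]$ by \eqref{eq: R bound}, since $R_1\in L^2$ and $\widehat{uM_1}\in L^2$. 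This yields $M_1(\cdot+iy)\in L^\infty_{-1,+}$ for each $y\in[0,2)$.

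To get $\mathbb H^2(S)+\mathbb H^\infty_{-1,+}(S)$ membership up to $y=2$ and analyticity, we split $uM_1=P_{\le1}(uM_1)+P_{>1}(uM_1)$ as in Lemma \ref{lem: anal cont 1}.
\begin{itemize}
\item \emph{Low-frequency part.} $P_{\le 1}(uM_1)$ is entire and satisfies \eqref{eq: entire P(uM) bound}. Also $x\,P_{\le1}(uM_1)\in L^1$, because the Fourier side is a smooth cutoff times $\widehat{uM_1}\in C^1$; the latter holds since $s\,uM_1\in L^1$. So the integrals $\int_{-\infty}^{x+iy}(x+iy-s)P_{\le1}(uM_1)\,ds$ extend with linear growth on the right.
\item \emph{High-frequency part.} The symbol $(1-\chi(\xi))e^{-y\xi}/(\xi-\tfrac12(1-e^{-2\xi}))$ is uniformly bounded for $y\in[0,2]$. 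So $G_L*P_{>1}(uM_1)$ lies in $\mathbb H^2(S)$ by the standard $\mathbb H^2$ theory.
\end{itemize}
Analyticity of the formula itself then follows exactly as in Lemma \ref{lem: anal cont 1}: approximate $uM_1$ by $f_n\in C_0^\infty$ in the weighted norm $L^1_1\cap L^2$, rewrite $G_L(\cdot+iy)*f_n$ as a single $\Gamma_L$ contour integral, and apply Morera's theorem. The convergence $G_L(\cdot+iy)*f_n\to G_L(\cdot+iy)*(uM_1)$ holds locally uniformly. This uses $\int(x-s)f_n\to\int(x-s)uM_1$ locally uniformly, which requires the $L^1_1$ weight.

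For the asymptotics, take first $x\to-\infty$. The polynomial terms are bounded by $\int_{-\infty}^x(|x-s|+1)|uM_1|\,ds\le\int_{-\infty}^x(1+|s|)|uM_1|\,ds$, since $|x-s|\le|s|$ for $s<x<0$; this tends to $0$. The $R_1$ term tends to $0$ by the Riemann–Lebesgue-type argument of Lemma \ref{lem: anal cont 1}, using \eqref{eq: R bound} and \eqref{eq: deriv R bound}. Together these give \eqref{jost prty: M1 1/2 left}. As $x\to+\infty$, we have
\begin{equation*}
\int_{-\infty}^x(x-s)uM_1\,ds=x\int_\R uM_1-x\int_x^\infty uM_1-\int_{-\infty}^x s\,uM_1.
\end{equation*}
The middle term is $o(|x|)$ and the last is $O(1)$. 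The remaining terms are bounded. This gives \eqref{jost prty: M1 1/2 right}, with uniformity for $y$ in compact subsets of $[0,2)$.

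The main obstacle is the bookkeeping of the linear growth: placing the $-x\chi_{\R^+}$ part of the kernel into $\mathbb H^\infty_{-1,+}(S)$ up to $y=2$, and justifying the approximation in the Morera argument. Both rest on the $L^1_1$ weight, which exactly compensates the growth of $M_1\in L^\infty_{1,+}$ together with the linear kernel.
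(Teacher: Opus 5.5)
Your overall route is the paper's: the splitting \eqref{G_L split 1/2}, the formula for $DM_1$, the $P_{\le1}/P_{>1}$ decomposition with a Morera argument, and the same asymptotic bookkeeping. But one claim you lean on is false under the hypotheses: $s\,u(s)M_1(s)\in L^1$. On $\R^+$ you only know $|M_1(s)|\lesssim 1+s$. There $|s\,uM_1|$ is of size $s^2|u|$, which would need $u\in L^1_2$, not $L^1_1$. Nothing better is available: when $\int_\R uM_1\neq 0$ (the generic case), \eqref{jost prty: M1 1/2 right} shows the growth really is linear.

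What you do have is $uM_1\in L^1\cap L^2$ together with the one-sided weight, $uM_1\in L^1_{1,-}\cap L^2_{1,-}$. The $L^1_1$ weight of $u$ is spent on the growth of $M_1$ on the right and on the linear kernel on the left. It is never spent on both on the same side, contrary to your closing remark. As written, three steps fail:
\begin{enumerate}
\item[(i)] $x\,P_{\le1}(uM_1)\in L^1$ is false in general. Your justification also runs backwards, since $\widehat{uM_1}\in C^1$ does not give a finite first moment.
\item[(ii)] You cannot approximate $uM_1$ in $L^1_1\cap L^2$, because in general $uM_1\notin L^1_1$.
\item[(iii)] $\int_{-\infty}^x s\,uM_1\,ds$ need not be $O(1)$.
\end{enumerate}

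Each step is repaired with the one-sided weight, which is exactly what the paper does.
\begin{enumerate}
\item[(i)] Lemma \ref{lem: weighted Holder Young} gives $\|\check\chi*(uM_1)\|_{L^1_{1,-}}\le\|\check\chi\|_{L^1_{1,-}}\|uM_1\|_{L^1_{1,-}}$. Since the integrals run from $-\infty$, this yields $\bigl|\int_{-\infty}^x(x-s)P_{\le1}(uM_1)\,ds\bigr|\lesssim(1+x_+)\|P_{\le1}(uM_1)\|_{L^1_{1,-}}$. That is all membership in $\mathbb H^\infty_{-1,+}(S)$ requires; the vertical piece of the path is handled by \eqref{eq: entire P(uM) bound}.
\item[(ii)] Approximate instead in $L^1_{1,-}\cap L^2$. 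For $-A\le x\le A$ and $s\le x$ one has $0\le x-s\le A+s_-$. Hence the polynomial terms still converge uniformly on compact sets, and the Morera argument goes through.
\item[(iii)] Show only $\int_{-\infty}^x s\,uM_1\,ds=o(x)$. Choose $x_0$ with $\int_{x_0}^\infty|uM_1|<\epsilon$. The integral over $(-\infty,x_0]$ is a fixed finite number, finite because $uM_1\in L^1_{1,-}$. On $[x_0,x]$ you have $|s|/x\le 1$, so that part divided by $x$ is at most $\epsilon$.
\end{enumerate}
Step (iii) gives \eqref{jost prty: M1 1/2 right} as stated. With these three corrections your argument coincides with the paper's proof.
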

\begin{proof}
The proof is similar to that of Lemma \ref{lem: anal cont 1}. We can use \eqref{G_R split 1/2} to write \eqref{jost prty: def M} as 
\begin{align}
M_1(x) = &~1-\int_{-\infty}^x (x-s)u(s)M_1(s)~ds + i\left(\frac23\right)\int_{-\infty}^x u(s)M_1(s)~ds\\
&\quad +\mathcal F^{-1}[R_1(\xi,0)\widehat{uM_1}](x).\notag
\end{align}
Differentiate to get 
\begin{align}\label{jost prty: M1 1/2 diff split}
D M_1(x) = &~i\int_{-\infty}^x u(s)M_1(s)~ds +\frac23u(x)M_1(x)\\
&\quad +\mathcal F^{-1}[\xi R_1(\xi,0)\widehat{uM_1}](x).\notag
\end{align}
Note that $ u\in L^1_1\cap L^2_1$, and $M_1\in L^\infty_{1,+}$. It follows that $uM_1\in L^1_{1,-}\cap L^2_{1,-}$. We can argue as in Lemma \ref{lem: anal cont 1} and obtain  from \eqref{jost prty: M1 1/2 diff split} and \eqref{eq: R bound} that $\partial_x M_1\in L^2+L^\infty$. For analyticity, we write $G_L*(uM_1)$ again as \eqref{jost prty: M1 split sub 1}. But for $G_L=G_L(\cdot,\frac12+i0)$, we have
\begin{align}\label{eq: GL * P(uM) k=1/2}
    [G_L*(P_{\le 1}(uM_1))](x)&=-\int_{-\infty}^x (x-s)P_{\le 1}(uM_1)~ds + i\frac{2}{3}\int_{-\infty}^x P_{\le 1}(uM_1)~ds\notag\\
    &\quad +\mathcal F^{-1}[R_1(\xi,0)\chi(\xi)\widehat{uM_1}](x).
\end{align}
Note that $P_{\le 1}(uM_1)=\check{\chi}*(uM_1)$. By Lemma \ref{lem: weighted Holder Young},
\begin{equation}
    \|P_{\le 1}(uM_1)\|_{L^1_{1,-}}\le \|\check\chi\|_{L^1_{1,-}}\|uM_1\|_{L^1_{1,-}}.
\end{equation}
It follows that when $x<0$, 
\begin{equation}
    \left|\int_{-\infty}^x (x-s)P_{\le 1}(uM_1)~ds\right|\le \int_{-\infty}^x|sP_{\le 1}(uM_1)|~ds\le \|P_{\le 1}(uM_1)\|_{L^1_{1,-}}.
\end{equation}
When $x>0$,
\begin{align}
    \left|\int_{-\infty}^x (x-s)P_{\le 1}(uM_1)~ds\right|&\le \int_{-\infty}^0 (x+|s|)|P_{\le 1}(uM_1)|~ds+2x\int_0^x |P_{\le 1}(uM_1)|~ds\notag\\
    &\lesssim (1+x_+)\|P_{\le 1}(uM_1)\|_{L^1_{1,-}}.
\end{align}
As before, we note that $P_{\le 1}(uM_1)(x+iy)$ is entire with bound \eqref{eq: entire P(uM) bound}. Thus the integral terms in \eqref{eq: GL * P(uM) k=1/2} can be extended holomorphically to 
\begin{equation}
    -\int_{-\infty}^{x+iy}(x+iy-s)P_{\le 1}(uM_1)~ds + i\frac23 \int_{-\infty}^{x+iy}P_{\le 1}(uM_1)~ds.
\end{equation}
which is bounded in size by a constant multiple of $1+x_+$. Thus it is in $\mathbb H^\infty_{-1,+}$.

We now turn to the proof of \eqref{jost prty: M1 1/2 left} and \eqref{jost prty: M1 1/2 right}. With the same argument as in Lemma \ref{lem: anal cont 1}, we can rewrite the values of $M_1(x+iy)$ as  
\begin{align}
M_1(x+iy) &= 1+G_L(x+iy)*(uM_1)(x)\notag\\
&=1-\int_{-\infty}^x (x-s)u(s)M_1(s)~ds + i\left(\frac23-y\right)\int_{-\infty}^x u(s)M_1(s)~ds \label{jost prty: M1 1/2 ext}\\
&\qquad +\mathcal F^{-1}[R_1(\xi,y)\widehat{uM_1}](x).\notag
\end{align}
 By the same argument as in Lemma \ref{lem: anal cont 1}, we get that $\mathcal F^{-1}[ R_1(\xi,y)\widehat{uM_1}](x)$ tends to 0 uniformly in $y\in [0,2-\delta]$ as $x\to\pm\infty$. Since $uM_1\in L^1_{1,-}$, both $\int_{-\infty}^x u(s)M_1(s)~ds$ and $\int_{-\infty}^x su(s)M_1(s)~ds$ tend to 0 as $x\to-\infty$. Also, when $x<0$,
$$\left|x\int_{-\infty}^x u(s)M_1(s)~ds\right|\le \int_{-\infty}^x |su(s)M_1(s)|~ds.$$ 
Thus $x\int_{-\infty}^x u(s)M_1(s)~ds$ tends to 0 as $x\to-\infty$. When $x\to \infty$, it remains only to show that $\int_{-\infty}^x su(s)M_1(s)~ds=o(x)$. Let $\epsilon>0$ be given. Since $uM_1\in L^1$, there exists an $x_0>0$ such that $\int_{x_0}^\infty|u(s)M_1(s)|~ds<\frac\epsilon2$. By the previous discussions, $\int_{-\infty}^{x_0}|su(s)M_1(s)|~ds$ is finite, thus there exists an $M>0$ such that for all $x>M$, $\frac1x |\int_{-\infty}^{x_0}su(s)M_1(s)|~ds<\frac\epsilon 2$. On the other hand, for $x>x_0$,
$$\left|\frac1x \int_{x_0}^x su(s)M_1(s)\right|\le \left|\int_{x_0}^x u(s)M_1(s)~ds\right|<\frac\epsilon2.$$ 
It follows that $\frac1x|\int_{-\infty}^x su(s)M_1(s)|~ds<\epsilon$. The proof is complete.
\end{proof}

If we assume more regularity of $u$, we can obtain more regularity and boundedness of the Jost functions near the boundary of $S$.

\begin{lemma}\label{lem: M1 improved regularity}
    Suppose the assumptions on $u$ in Lemma \ref{lem: anal cont 1} and Lemma \ref{lem: anal cont R+} are strengthened to $u\in L^1_1\cap H^s_1$ for some $s>\frac12$. Then the following improvements in the conclusions can be obtained. $\partial_x M_1, \partial_x N_1$, $\partial_x M_e$, $\partial_x N_e\in L^\infty$. $M_1(x,\zeta),N_1(x,\zeta), M_e(x,k\pm i0), N_e(x,k\pm i0)\in \mathbb H^\infty(S)$, and are uniformly continuous on $\overline{S}$. The asymptotic limits as $x\to\pm\infty$ hold uniformly for all $y\in [0,2]$.
\end{lemma}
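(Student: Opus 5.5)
The plan is to return to the representation formulas \eqref{jost prty: M1 ext} (and \eqref{jost prty: M1 ext R+} with its $k-i0$ analogue, plus the corresponding formulas for $N_1$, $M_e$, $N_e$). We then show that, under the stronger hypothesis $u\in L^1_1\cap H^s_1$, every term on the right-hand side is bounded and uniformly continuous on the closed strip $\overline S$, including $y=2$. The explicit terms are easy. Integrals such as $\int_{-\infty}^x uM_1\,ds$ and $e^{i\lambda(x+iy)}\int_{-\infty}^x uM_1 e^{-i\lambda s}\,ds$ are bounded and uniformly continuous in $(x,y)\in\overline S$, since $uM_1\in L^1$, $|e^{i\lambda(x+iy)}|=e^{-\lambda y}$, and $\lambda$ is real and fixed. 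All of the difficulty is therefore in the remainder term $\mathcal F^{-1}[R_i(\xi,y)\widehat{uM_1}](x)$ as $y\nearrow 2$. There the bound \eqref{eq: R bound} contains the factor $e^{(2-y)\xi}\chi_{\R^-}(\xi)$, which loses all decay on $\xi<0$ when $y=2$.

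The first step is to upgrade the regularity of $uM_1$ to $uM_1\in H^s$ with $s>\frac12$, so that $\langle\xi\rangle^{s}\widehat{uM_1}\in L^2$ and hence $\widehat{uM_1}\in L^1$.
\begin{itemize}
\item By Lemma \ref{lem: anal cont 1} (resp.\ \ref{lem: anal cont R+}) we already know $M_1\in L^\infty$ with $\partial_xM_1\in L^2+L^\infty$.
\item To control $\partial_x M_1$ locally in $L^2$, use \eqref{jost prty: D M1 split}: the term $\frac{1}{1-2\zeta}uM_1$ lies in $L^2$, and $\xi R_2(\xi,0)$ is bounded by \eqref{eq: R bound}, so $\partial_xM_1\in L^2$ (plus a bounded term $e^{i\lambda x}\int_{-\infty}^x\cdots$ in the $R^+$ case).
\item A product estimate in $H^s$ for $u\in H^s_1$ times $M_1\in L^\infty$ with $\partial_x M_1\in L^2+L^\infty$ then gives $uM_1\in H^s$, at least for $s\le 1$. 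This is enough, because only some $s>\frac12$ is needed and we may lower $s$.
\end{itemize}

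With $\widehat{uM_1}\in L^1$ in hand, \eqref{eq: R bound} shows that $R_i(\xi,y)\widehat{uM_1}(\xi)$ is dominated by $C(\zeta)\bigl(\langle\xi\rangle^{-1}+1\bigr)|\widehat{uM_1}|\in L^1$, uniformly for $y\in[0,2]$. Three consequences follow.
\begin{itemize}
\item The remainder is bounded on $\overline S$.
\item Using \eqref{R diff y} and dominated convergence, the remainder is continuous in $y$ in $L^\infty_x$, uniformly on $[0,2]$. Combined with uniform continuity in $x$ (the Fourier transform of an $L^1$ function is uniformly continuous), this gives uniform continuity on $\overline S$.
\item Hence $M_1\in\mathbb H^\infty(S)$.
\end{itemize}
For the decay as $|x|\to\infty$ uniformly in $y\in[0,2]$, repeat the density argument at the end of Lemma \ref{lem: anal cont 1}, with approximation in the weighted $L^1$ norm in place of $L^2$. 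Pick $\varphi\in C_0^\infty$ with $\|\widehat{uM_1}-\varphi\|_{L^1}<\epsilon$ and integrate by parts once using \eqref{eq: deriv R bound}. Since $\varphi$ has compact support, $(2-y)e^{(2-y)\xi}$ is bounded on that support uniformly in $y\in[0,2]$, which yields an $O(1/|x|)$ bound.

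The bound $\partial_xM_1\in L^\infty$ comes from \eqref{jost prty: D M1 split}. We need $uM_1\in L^\infty$, which holds since $H^s\subset L^\infty$, and $\xi R_i(\xi,0)\widehat{uM_1}\in L^1$, which holds because $|\xi R_i(\xi,0)|\lesssim 1+|\xi|e^{2\xi}\chi_{\R^-}$ is bounded. The same argument applies to $N_1$, $M_e$ and $N_e$. \textbf{The main obstacle} is the first step, the product estimate giving $uM_1\in H^s$ from the weak a priori information on $M_1$. If the fractional product rule is awkward, the fallback is to take $s\in(\frac12,1)$ and verify the Gagliardo seminorm of $uM_1$ directly, using that $M_1$ is bounded and locally $H^1$ and that $u\in H^s_1$.
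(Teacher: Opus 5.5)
Your proposal is correct and follows essentially the same route as the paper: upgrade to $uM_1\in H^s$ (taking $s\le 1$) so that $\widehat{uM_1}\in L^1$, then use the bounds on $R_i$, which are uniform for $y\in[0,2]$, to get boundedness, uniform continuity via \eqref{R diff y} and dominated convergence, the uniform asymptotics via an $L^1$ density argument, and $\partial_x M_1\in L^\infty$ via \eqref{jost prty: D M1 split}. For the step you flag as the main obstacle, the paper simply cites the inequality $\|uM_1\|_{H^s}\le C(\|u\|_{H^s}\|M_1\|_{L^\infty}+\|u\|_{L^\infty}\|DM_1\|_{L^2})$ and only writes out the case $\zeta\in\mathbb C\setminus\mathbb R^+$, so your remark about the extra bounded term in $\partial_x M_1$ on $\mathbb R^+\pm i0$ fills in a detail the paper leaves implicit.
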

\begin{proof}
    We only focus on the case in Lemma \ref{lem: anal cont 1}. The improvement on Lemma \ref{lem: anal cont R+} can be obtained in the same way. By \eqref{eq: R bound}, $|\xi R_2(\xi,0)|\le C(\zeta)$. Recall that $\partial_x M_1\in L^2$. Without loss of generality, assume $s\le 1$. We now use the following version of the Gagliardo-Nirenberg inequality
\begin{equation}
\|uM_1\|_{H^s}\le C(\|u\|_{H^s}\|M_1\|_{L^\infty}+\|u\|_{L^\infty}\|D M_1\|_{L^2})
\end{equation}
to conclude that $uM_1\in H^s$. Thus $\widehat{uM_1}\in L^1$. It follows from \eqref{jost prty: D M1 split} that $D M_1\in L^\infty$. 

To obtain uniform continuity of $M_1(x+iy)$ on $S$, we only need to show the last term in \eqref{jost prty: M1 ext} is uniformly continuous. To that end, note that
\begin{equation*}
\left|\int_\R R_2(\xi,y)\widehat{uM_1}(\xi)(e^{ix_1\xi}-e^{ix_2\xi})\right|~d\xi\le C(\zeta)\int_\R |\widehat{uM_1}(\xi)||e^{i(x_1-x_2)\xi}-1|~d\xi,
\end{equation*}
where the integral on the right hand side only depends on $x_1-x_2$ and tends to 0 as $x_1-x_2\to 0$ by the dominated convergence theorem. On the other hand, by \eqref{R diff y},
\begin{align*}
&~\left|\int_\R (R(\xi,y_1)-R(\xi,y_2))\widehat{uM_1}(\xi)e^{ix\xi}\right|\\
\le&~ C(\zeta)\int_\R|\widehat{uM_1}(\xi)|(|y_1-y_2|+|e^{(y_1-y_2)\xi}-1|\chi_{\R^-}(\xi))~d\xi,
\end{align*}
where the latter integral depends only on $y_1-y_2$ and tends to zero as $y_1-y_2\to 0$. The above estimates show that $M_1(x+iy)$ is uniformly continuous in the closed strip $0\le y\le 2$.

It remains to prove \eqref{jost prty: M1 limit right} holds uniformly for $y\in [0,2]$. We can basically repeat the proof in Lemma \ref{lem: anal cont 1}, with the improved estimate $\widehat{uM_1}\in L^1$. Thus we may choose $\varphi\in C_0^\infty$ such that $\|\widehat{uM_1}-\varphi\|_{L^1}<\epsilon$. \eqref{eq: inv Fourier R2 phi - uM} can be improved to
\begin{align}
     |\mathcal F^{-1}[R_2(\xi,y)(\varphi-\widehat{uM_1})(\xi)](x)|&<C(\zeta)\|\widehat{uM_1}-\varphi\|_{L^1}\notag\\
     &< C(\zeta)\epsilon
 \end{align}
 for all $y\in [0,2]$, since $\|R_2(\cdot,y)\|_{L^\infty}<C(\zeta)$ by \eqref{eq: R bound}. The rest follows as in Lemma \ref{lem: anal cont 1}.
\end{proof}

In the same way, we obtain improved regularity of $M_1$ and $N_1$ at $\zeta=\frac{1}{2}\pm i0$.
\begin{lemma}\label{lem: improved regularity k=1/2}
    Suppose the assumptions on $u$ in Lemma \ref{lem: anal cont 1/2} is strengthened to $u\in L^1_1\cap H^s_1$ for some $s>\tfrac12$. Then the following improvements can be given to the conclusions. $M_1(x+iy,\frac12+i0) \in \mathbb H^\infty_{-1,+}(S)$, $N_1(x+iy,\frac12-i0)\in \mathbb H^\infty_{-1,-}(S)$, and are uniformly continuous on $\overline{S}$. The asymptotic limits as $x\to\pm\infty$ hold uniformly for all $y\in [0,2]$.
\end{lemma}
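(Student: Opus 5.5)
The plan is to follow the proof of Lemma \ref{lem: M1 improved regularity}, now using the representation \eqref{jost prty: M1 1/2 ext} in place of \eqref{jost prty: M1 ext}. I treat only $M_1(\cdot,\frac12+i0)$; the case of $N_1(\cdot,\frac12-i0)$ is symmetric, using \eqref{G_R split 1/2}.

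\textbf{Step 1: $uM_1\in H^s$, so $\widehat{uM_1}\in L^1$.} Here $M_1\in L^\infty_{1,+}$ may grow linearly, so the Gagliardo--Nirenberg argument must be run with weights. Lemma \ref{lem: anal cont 1/2} gives $\partial_x M_1\in L^2+L^\infty$. From \eqref{jost prty: M1 1/2 diff split}, the $L^\infty$ part is the term $i\int_{-\infty}^x uM_1\,ds$, which is bounded because $uM_1\in L^1$.

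Take $s\le 1$ and write $uM_1=(\langle x\rangle u)(\langle x\rangle^{-1}M_1)$. Then $\langle x\rangle u\in H^s$ since $u\in H^s_1$. The factor $w:=\langle x\rangle^{-1}M_1$ lies in $L^\infty$, and $\partial_x w\in L^2+L^\infty$ because the derivative falling on the weight is harmless.

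The inequality $\|fg\|_{H^s}\lesssim \|f\|_{H^s}\|g\|_{L^\infty}+\|f\|_{L^\infty}\|\partial_x g\|_{L^2}$ needs $\partial_x g\in L^2$, which we do not have. To handle the bounded part of $\partial_x w$, I would use instead the product estimate $\|fg\|_{H^s}\lesssim\|f\|_{H^s}\|g\|_{W^{1,\infty}}$, valid for $s\le 1$. Concretely, split $M_1=1-x\chi_{\R^+}A+\tilde M$ with $A=\int_{\R}uM_1$. Here $\tilde M$ is bounded with $\partial_x\tilde M\in L^2+L^\infty$ controlled by $u\in L^1_1$, and the linear piece is handled through $xu\in H^s$. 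I expect this splitting to be the main technical point, but it is routine bookkeeping. The conclusion is $\widehat{uM_1}\in L^1$.

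\textbf{Step 2: bounds and uniform continuity on $\overline S$.} With $\widehat{uM_1}\in L^1$, the bound $\|R_1(\cdot,y,\tfrac12)\|_{L^\infty}\le C$ uniformly in $y\in[0,2]$ from \eqref{eq: R bound} shows that $\mathcal F^{-1}[R_1(\xi,y,\tfrac12)\widehat{uM_1}]$ is bounded on $\overline S$. It is uniformly continuous there by dominated convergence, using \eqref{R diff y} exactly as in Lemma \ref{lem: M1 improved regularity}.

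The explicit terms in \eqref{jost prty: M1 1/2 ext} are
\[
-\int_{-\infty}^x(x-s)uM_1\,ds \quad\text{and}\quad i(\tfrac23-y)\int_{-\infty}^x uM_1\,ds .
\]
They are continuous in $(x,y)$ and bounded by $C(1+x_+)$, as in the proof of Lemma \ref{lem: anal cont 1/2}. To get uniform continuity of the quotient by $(1+x_+)$, I would note that the $x$-derivative of the first term is bounded, since $\int_{-\infty}^x uM_1$ is bounded, and that the second term is Lipschitz in $y$. Together these give $M_1\in\mathbb H^\infty_{-1,+}(S)$ and uniform continuity on $\overline S$ in the weighted sense.

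\textbf{Step 3: uniformity of the asymptotics in $y\in[0,2]$.} The explicit terms depend on $y$ only through the factor $(\frac23-y)$, which is bounded on $[0,2]$. So the limits \eqref{jost prty: M1 1/2 left} and \eqref{jost prty: M1 1/2 right} are already uniform in $y$ for these terms.

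For the remainder, choose $\varphi\in C_0^\infty$ with $\|\widehat{uM_1}-\varphi\|_{L^1}<\epsilon$. Then
\[
\sup_{y\in[0,2]}\bigl\|\mathcal F^{-1}[R_1(\cdot,y,\tfrac12)(\widehat{uM_1}-\varphi)]\bigr\|_{L^\infty}\le C\epsilon .
\]
For the $\varphi$ part, integrate by parts in $\xi$ using \eqref{eq: deriv R bound}. The resulting factor $(2-y)e^{(2-y)\xi}\chi_{\R^-}$ is integrable on the compact support of $\varphi$ uniformly in $y\in[0,2]$. This gives a bound $C(\varphi)/|x|$, and hence uniform decay of the remainder as $|x|\to\infty$.
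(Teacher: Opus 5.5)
Your Steps 2 and 3 follow the route the paper intends; the paper proves this lemma only by saying it follows ``in the same way'' as Lemma~\ref{lem: M1 improved regularity}. Once $\widehat{uM_1}\in L^1$, three ingredients give the $\mathbb H^\infty_{-1,+}$ bound, the continuity on $\overline S$, and the $y$-uniform asymptotics:
the $y$-uniform $L^\infty$ bound on $R_1(\cdot,y,\tfrac12)$ from \eqref{eq: R bound};
the estimate \eqref{R diff y};
and $L^1$-approximation followed by integration by parts with \eqref{eq: deriv R bound}.
Your own observations in Step 2 in fact give plain uniform continuity on $\overline S$, so no weighted reading is needed. The problem is Step 1, the only place where $k=\tfrac12$ genuinely differs from the earlier case.

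\textbf{Step 1: the obstruction is not real.} You drop the Gagliardo--Nirenberg estimate because you think $\partial_x w\notin L^2$ for $w=\langle x\rangle^{-1}M_1$. In fact $\partial_x w\in L^2$:
the bounded part $-\int_{-\infty}^x uM_1\,ds$ of $\partial_xM_1$ gets multiplied by $\langle x\rangle^{-1}\in L^2$;
and $|(\partial_x\langle x\rangle^{-1})M_1|\lesssim\langle x\rangle^{-1}$, because $|M_1|\lesssim 1+x_+$.
Using $\langle x\rangle u\in H^s\subset L^\infty$, you then get directly
\[
\|uM_1\|_{H^s}\lesssim\|\langle x\rangle u\|_{H^s}\|w\|_{L^\infty}+\|\langle x\rangle u\|_{L^\infty}\|\partial_xw\|_{L^2}<\infty .
\]

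\textbf{Step 1: the splitting you substitute fails.} For $x>0$ one has
\[
\tilde M(x)=x\int_x^\infty uM_1\,ds+\int_{-\infty}^x s\,uM_1\,ds+\tfrac{2i}{3}\int_{-\infty}^x uM_1\,ds+\mathcal F^{-1}[R_1(\xi,0,\tfrac12)\widehat{uM_1}](x).
\]
Lemma~\ref{lem: anal cont 1/2} gives only $M_1(s)=-As+o(s)$. So when $A=a_1(\tfrac12+i0)\neq0$, i.e.\ for upper-generic $u$ (Definition~\ref{def: genericity}), boundedness of $\tilde M$ essentially needs $\int_0^\infty s^2|u|\,ds<\infty$. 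That is $L^1_2$-type decay on the right, which is not assumed.

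Even granting boundedness, the splitting would not remove the difficulty you were trying to avoid. On $\R^+$ the non-$L^2$ part of $\partial_x\tilde M=\partial_xM_1+A\chi_{\R^+}$ is $\int_x^\infty uM_1\,ds$, which again fails to be square-integrable without extra decay. So neither of your product inequalities applies to $u\tilde M$.

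Replace the splitting by the observation $\langle x\rangle^{-1}\in L^2$; Step 1, and with it the whole proof, then goes through.
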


\subsection{Equivalence of integral and differential equations}

We will again establish the equivalence in three steps for different values of $\zeta$: $\zeta\in \mathbb C\setminus \mathbb R^+$, $\zeta\in (\mathbb R^+\pm i0)\setminus \{\frac12\pm i0\}$, $\zeta=\frac12\pm i0$.

\begin{lemma}\label{lem: equiv int diff}
Let $u\in L^1\cap L^2$. Fix $\zeta\in \C\setminus \R^+$. 
\begin{enumerate}[(a)]
\item Let $M_1(\cdot,\zeta), N_1(\cdot,\zeta)\in L^\infty$ be solutions to \eqref{jost prty: def M}, \eqref{jost prty: def N}, and let $M_1(x+iy,\zeta), N_1(x+iy,\zeta)$ be the analytic extensions given in Lemma \ref{lem: anal cont 1}. Then it follows that they satisfy the asymptotic conditions \eqref{jost prty: M1 limit left}, \eqref{jost prty: N1 limit right} and the following differential equations:
\begin{equation}\label{M diff eqn}
D M_1(x+i0,\zeta)-\zeta[M_1(x+i0,\zeta)-M_1(x+i2,\zeta)]=u(x)M_1(x+i0,\zeta),
\end{equation}
\begin{equation}\label{N diff eqn}
D N_1(x+i0,\zeta)-\zeta[N_1(x+i0,\zeta)-N_1(x+i2,\zeta)]=u(x)N_1(x+i0,\zeta),
\end{equation}

\item Let $M_1(x+iy,\zeta), N_1(x+iy,\zeta) \in \mathbb H^2(S)+\mathbb H^\infty(S)$. Assume that $M_1(x+iy,\zeta), N_1(x+iy,\zeta)$ satisfy the differential equations \eqref{M diff eqn}, \eqref{N diff eqn} and the asymptotic conditions 
\begin{align}
    M_1(x+i0,\zeta)&=1+o(1)\quad  & &\text{ as }x\to-\infty,\\
    N_1(x+i0,\zeta)&=1+o(1)\quad  & &\text{ as }x\to+\infty.
\end{align}
Then it follows that $M_1(x+i0,\zeta)$, $N_1(x+i0,\zeta)$ satisfy \eqref{jost prty: def M}, \eqref{jost prty: def N}.
\end{enumerate}
\end{lemma}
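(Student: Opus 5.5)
The plan is to work on the Fourier side, using Lemma \ref{lemma:FT.strip} to translate between the boundary values $M_1(\cdot+i0)$ and $M_1(\cdot+i2)$. I treat only $M_1$; $N_1$ is analogous with $G_R$ and $\Gamma_R$.

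\textbf{Part (a).} Let $f=uM_1\in L^1\cap L^2$. Lemma \ref{lem: anal cont 1} already gives $M_1(x+iy)\in\mathbb H^2(S)+\mathbb H^\infty(S)$, the asymptotics \eqref{jost prty: M1 limit left}, and the representation \eqref{jost prty: M1 ext}. By Lemma \ref{lemma:FT.strip},
\begin{equation*}
\mathcal F[M_1(\cdot+i2)](\xi)=e^{-2\xi}\mathcal F[M_1(\cdot+i0)](\xi)
\end{equation*}
as distributions. Hence the left side of \eqref{M diff eqn} has Fourier transform $[\xi-\zeta(1-e^{-2\xi})]\widehat{M_1}$.

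Write $M_1=1+G_L*f$. The constant $1$ is annihilated, since its transform is $2\pi\delta$ and the symbol $\xi-\zeta(1-e^{-2\xi})$ vanishes at $0$. For $G_L*f$, I would use the split \eqref{G_L split 2}. Its transform is $\hat f$ times $\bigl(\frac{1}{1-2\zeta}\mathcal F[i\chi_{\R^+}]+R_2(\xi,0)\bigr)$, and this product equals $\hat f$ times the distribution $\frac{1}{\xi-\zeta(1-e^{-2\xi})}$ regularized at $0$ along $\Gamma_L$. Multiplying by the symbol gives exactly $\hat f$, because the symbol kills the $\delta$-part and cancels the simple pole.

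To make this rigorous, I would approximate $f$ by $f_n\in C_0^\infty$ as in the proof of Lemma \ref{lem: anal cont 1}. For these, $G_L(x+iy)*f_n$ is the contour integral over $\Gamma_L$ with $\widehat{f_n}$ entire. Then
\begin{equation*}
(D-\zeta(1-e^{-2D}))(G_L*f_n)=f_n
\end{equation*}
holds by direct differentiation under the integral, where $e^{-2D}$ is realized as evaluation at $x+i2$. I would then pass to the limit. The terms $G_L(\cdot+iy)*f_n$ converge in $L^\infty$ uniformly for $y\in[0,2-\delta]$, using \eqref{eq: R bound}. The derivatives converge in $L^2$ by \eqref{jost prty: D M1 split}.

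\textbf{Main obstacle in (a): the boundary value at $y=2$.} The bound \eqref{eq: R bound} degenerates there through the term $e^{(2-y)\xi}\chi_{\R^-}$. The simplest fix is to verify the identity in the distributional sense. I would test it against Schwartz $\varphi$ with $\hat\varphi\in C_0^\infty$, where Lemma \ref{lemma:FT.strip} applies directly. The identity then holds for all such $\varphi$, and hence in $\mathcal S'$ by density, since both sides are tempered (a remark after Lemma \ref{lemma:FT.strip} notes the $y=2$ side is tempered).

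\textbf{Part (b).} Set $W=M_1-1-G_L*(uM_1)$. By part (a) applied to the bare convolution (the argument there does not use the integral equation), $G_L*(uM_1)$ extends to $\mathbb H^2(S)+\mathbb H^\infty(S)$ and satisfies the same inhomogeneous equation. Hence $W\in\mathbb H^2(S)+\mathbb H^\infty(S)$ solves the homogeneous equation, and by Lemma \ref{lemma:FT.strip} this reads
\begin{equation*}
[\xi-\zeta(1-e^{-2\xi})]\widehat{W}=0.
\end{equation*}
For $\zeta\notin\R^+$, the only real zero of the symbol is $\xi=0$, and it is simple: the other preimages of $\zeta$ under $Z$ are nonreal, as can be checked from Figure \ref{fig: Z_lambda} or from the fact that $Z$ maps $\R$ into $\R^+$. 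So $\widehat W$ is supported at $0$ with order $0$, i.e.\ $W$ is constant. Since $W\to0$ as $x\to-\infty$, by the assumed asymptotics together with \eqref{jost prty: M1 limit left} for $G_L*(uM_1)$, we get $W\equiv0$.

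\textbf{Obstacle in (b).} The delicate point is justifying that a distribution annihilated by a smooth function with a single simple real zero must be $c\delta$. Here the symbol is only locally smooth on the real line, but it is analytic, so a local division argument works. The growth of $\widehat W$ is controlled because $W$ is tempered.
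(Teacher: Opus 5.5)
Your proposal is correct. It follows a different route from the paper, mainly in part (b).

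\textbf{Part (a).} The paper does no approximation. It subtracts the representation \eqref{jost prty: M1 ext} at $y=0$ and $y=2$; the term $\frac{i}{1-2\zeta}\int_{-\infty}^x uM_1$ cancels. It then uses the identity
\[
R_2(\xi,0)-R_2(\xi,2)=\frac{1-e^{-2\xi}}{\xi-\zeta(1-e^{-2\xi})}=\frac1\zeta\Bigl(-1+\frac{1}{1-2\zeta}+\xi R_2(\xi,0)\Bigr)
\]
together with \eqref{jost prty: D M1 split}. The multipliers $R_2(\cdot,0)$, $R_2(\cdot,2)$ and $\xi R_2(\xi,0)$ are all bounded and $\widehat{uM_1}\in L^2$, so everything takes place in $L^2$. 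The degeneration at $y=2$ that you flag only costs $L^\infty$ convergence, not $L^2$ convergence. Your $C_0^\infty$ approximation with distributional testing is therefore valid but unnecessary.

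\textbf{Part (b).} The paper argues constructively. It mollifies, multiplies the transformed equation by $R_2(\xi,0)+\frac{1}{1-2\zeta}\frac{1}{\xi-i\epsilon}$, inverts, and lets $\epsilon\searrow0$. The left asymptotic condition enters through the Abel-type average $\int_0^\infty e^{-s}(\check\psi*M_1)(x-s/\epsilon)\,ds\to1$, and then the mollifier is removed.

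You instead reduce (b) to (a) plus a uniqueness statement. $W$ solves the homogeneous equation, so $(\xi-\zeta(1-e^{-2\xi}))\widehat W=0$ in $\mathcal D'$. Since $\xi=0$ is the only real zero of the symbol and it is simple (the derivative there is $1-2\zeta\neq0$), you get $\widehat W=c\delta$. This is shorter and shows exactly why one asymptotic condition suffices. The division step you worry about is the standard one: write the symbol as $\xi h(\xi)$ with $h$ smooth and nonvanishing near $0$.

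Your argument also adapts to $\zeta=k\pm i0$ and $\zeta=\frac12\pm i0$, where the kernels are spanned by $1,e^{i\lambda x}$ and by $1,x$ respectively. The paper instead reuses its $\epsilon$-regularization, with $\frac{1}{\xi-\lambda-i\epsilon}$ and $\frac{1}{(\xi-i\epsilon)^2}$, as the template for the next two lemmas.

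\textbf{One point to add.} Applying (a) to $G_L*(uM_1)$ requires $uM_1\in L^1\cap L^2$, but the hypothesis only gives $M_1(\cdot+i0)\in L^2+L^\infty$. The fix is short. The equation places $\partial_xM_1$ in $L^1+L^2+L^\infty$, so $|M_1|$ and $|\partial_xM_1|$ have uniformly bounded integrals over unit intervals. Hence $M_1(\cdot+i0)\in L^\infty$. The paper relies on the same boundedness.
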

\begin{proof}
We again suppress the dependence on $\zeta$, and show only the argument for $M_1$ in the following proof. First assume $M_1(x)\in L^\infty$ solves \eqref{jost prty: def M}. By \eqref{jost prty: M1 ext}, 
\begin{equation}
M_1(x)-M_1(x+i2)=\mathcal F^{-1}\left([R_2(\xi,0)-R_2(\xi,2)]\widehat{uM_1}(\xi)\right)(x).
\end{equation}
Note that
\begin{align}
R_2(\xi,0)-R_2(\xi,2) &= \tfrac{1-e^{-2\xi}}{\xi-\zeta(1-e^{-2\xi})}\notag\\
&=\tfrac1\zeta\tfrac{\zeta(1-e^{-2\xi})-\xi+\xi}{\xi-\zeta(1-e^{-2\xi})}\notag\\
&=\tfrac1\zeta\left(-1+\tfrac{\xi}{\xi-\zeta(1-e^{-2\xi})}\right)\notag\\
&=\tfrac1\zeta\left(-1+\tfrac1{1-2\zeta}+\xi\left[\tfrac{1}{\xi-\zeta(1-e^{-2\xi})}-\tfrac1{1-2\zeta}\tfrac1\xi\right]\right)\notag\\
&=\tfrac1\zeta\left(-1+\tfrac1{1-2\zeta}+\xi R_2(\xi,0)\right).
\end{align}
It follows that 
\begin{align}
\zeta[M_1(x)-M_1(x+i2)]&=\left(\tfrac1{1-2\zeta}-1\right)u(x)M_1(x) + \mathcal F^{-1}[\xi R_2(\xi,0)\widehat{uM_1}](x)\notag\\
&=-u(x)M_1(x) + \frac1i\partial_x M_1(x),
\end{align}
where we have used \eqref{jost prty: D M1 split} for the last step. The proof of part (a) is now complete.

We now consider the reversed direction. Let $M_1(x+iy)$ satisfy the assumptions in part (b). Let $\varphi(\xi)$ be a smooth cutoff function such that $\varphi(\xi)=1$ when $|\xi|\le 1$ and $\varphi(\xi)=0$ when $|\xi|\ge 2$. For $\delta>0$, denote the mollification of a function $f$ by $f^\delta = \varphi(\delta D) f$. Note that 
\begin{equation}
M_1^\delta(x+iy)=\int_\R \tfrac1\delta \check\varphi(\tfrac s \delta)M_1(x+iy-s)~ds
\end{equation}
is still in $\mathbb H^2(S)+\mathbb H^\infty(S)$. By Lemma \ref{lemma:FT.strip}, we have
\begin{equation}\label{M1 anal Fourier}
\mathcal F[M_1^\delta(\cdot+i2)](\xi)=e^{-2\xi}\mathcal F[M_1^\delta](\xi).
\end{equation}
We mollify \eqref{M diff eqn} to get 
\begin{equation}
D M_1^\delta (x) -\zeta[M_1^\delta (x)-M_1^\delta(x+i2)]=(uM_1)^\delta(x).
\end{equation}
Take its Fourier transform and use \eqref{M1 anal Fourier} to get
\begin{equation}
[\xi-\zeta(1-e^{-2\xi})]\widehat{M_1^\delta} = \mathcal F[(uM_1)^\delta].
\end{equation}
Multiply both sides by the $C^\infty$ function $R_2(\xi,0)+\frac1{1-2\zeta}\frac{1}{\xi-i\epsilon}$, where $R_2$ is given as in \eqref{eq: def R2} and $\epsilon>0$. Since
\begin{equation*}
[\xi-\zeta(1-e^{-2\xi})][R_2(\xi,0)+\tfrac1{1-2\zeta}\tfrac{1}{\xi-i\epsilon}]=1+\tfrac{\xi-\zeta(1-e^{-2\xi})}{(1-2\zeta)\xi}\tfrac{i\epsilon}{\xi-i\epsilon},
\end{equation*}
we get
\begin{align}\label{M1 delta Fourier}
\widehat{M_1^\delta} = &~\tfrac{-i\epsilon}{\xi-i\epsilon}
\tfrac{\xi-\zeta(1-e^{-2\xi})}{(1-2\zeta)\xi}\varphi(\delta \xi)\widehat{M_1}\\
&~\quad + [R_2(\xi,0)+\tfrac1{1-2\zeta}\tfrac{1}{\xi-i\epsilon}]\mathcal F[(uM_1)^\delta].\notag
\end{align}
Note that even with the presence of the $e^{-2\xi}$ terms, both sides are still tempered distributions because they are compactly supported. %% This is the advantage of the compactly supported cutoff. This way don't have to combine e^{-2\xi} with \hat{M_1} and only work the Fourier transform of M_1(x+2i). For that reason, we can dispense with the condition on the left limit of M_1(x+2i).
Define $\psi(\xi)$ by
\begin{equation}
\psi(\xi)=\tfrac{\xi-\zeta(1-e^{-2\xi})}{(1-2\zeta)\xi}\varphi(\delta \xi).
\end{equation} 
Note that $\psi\in C_0^\infty$ and $\psi(0)=1$. By \eqref{jost prty: M1 limit left} and the dominated convergence theorem, we obtain
\begin{equation}\label{mllfd M1 left lim}
\lim_{x\to-\infty} (\check\psi*M_1)(x) = \int_\R\check \psi(s)~ds=\psi(0)=1.
\end{equation}
Let us now take the inverse Fourier transform of \eqref{M1 delta Fourier} to get
\begin{align}\label{M1 delta Fourier 2}
M_1^\delta = \mathcal F^{-1}\left[\tfrac{-i\epsilon}{\xi-i\epsilon}
\psi\widehat{M_1}\right]
+\mathcal F^{-1}\left([R_2(\xi,0)+\tfrac1{1-2\zeta}\tfrac{1}{\xi-i\epsilon}]\mathcal F[(uM_1)^\delta]\right).
\end{align}
A direct calculation gives
\begin{equation}\label{FT of 1/xi}
\mathcal F^{-1}\left(\tfrac{1}{\xi-i\epsilon}\right)(x)=i\chi_{\R^+}(x)e^{-\epsilon x}.
\end{equation}
Thus \eqref{M1 delta Fourier 2} can be written as 
\begin{align}\label{M1 delta pre int}
M_1^\delta(x) = &~\int_0^{\infty} \epsilon e^{-\epsilon s}(\check\psi*M_1)(x-s)~ds\\
&~+\frac{i}{1-2\zeta} \int_{-\infty}^x e^{-\epsilon(x-s)}(uM_1)^\delta(s)~ds+\mathcal F^{-1}[R_2(\xi,0)\mathcal F[(uM_1)^\delta]](x).\notag
\end{align}
The first line of \eqref{M1 delta pre int} can be written as
\begin{equation}\label{M1 delta pre int 1}
\int_0^\infty e^{-s}(\check\psi*M_1)(x-\tfrac s\epsilon)~ds
\end{equation}
Since $\check\psi*M_1(x)$ is bounded and tends to 1 as $x\to-\infty$ (see \eqref{mllfd M1 left lim}), we can use the dominated convergence theorem to conclude that \eqref{M1 delta pre int 1} converges to 1 for every $x$ as $\epsilon\searrow 0$. Since $(uM_1)^\delta\in L^1\cap L^2$, we again apply the dominated convergence theorem to conclude that as $\epsilon\searrow 0$, the second line of \eqref{M1 delta pre int} converges for every $x$ to 
$$\frac i{1-2\zeta}\int_{-\infty}^x (uM_1)^\delta (s)~ds + [R_2(\xi,0)\mathcal F[(uM_1)^\delta]](x)=G_L*(uM_1)^\delta(x),$$
where we have used the splitting formula \eqref{G_L split 2}. As a result of the above limits, we obtain
\begin{equation}
M_1^\delta (x)= 1+G_L*(uM_1)^\delta(x).
\end{equation}
We now send $\delta$ to 0. By \eqref{M diff eqn}, $\partial_xM_1\in L^2+L^\infty$. Thus $M_1$ is uniformly continuous in addition to being bounded. Thus $M_1^\delta$ converges to $M_1$ uniformly. Recall that $uM_1\in L^1\cap L^2$. Thus $(uM_1)^\delta$ converges to $uM_1$ in $L^1\cap L^2$. By the splitting \eqref{G_L split 2}, and the fact that $R_2(\xi,0)\in L^2$ (see \eqref{eq: R bound}), we obtain that $G_L*(uM_1)^\delta$ converges to $G_L*(uM_1)$ uniformly. We have now obtained \eqref{jost prty: def M}.
\end{proof}

\begin{lemma}\label{lem: equiv int diff on R+}
Let $u\in L^1\cap L^2$. Fix $k\in \R^+$, $k\ne \frac12$. 
\begin{enumerate}[(a)]
\item Let $M_1(\cdot,k+i0), N_1(\cdot,k-i0), M_e(\cdot,k+i0), N_e(\cdot,k-i0) \in L^\infty$ be solutions to \eqref{jost prty: def M}, \eqref{jost prty: def N}, \eqref{jost prty: def M_e}, \eqref{jost prty: def N_e} and let $M_1(x+iy,k+i0), N_1(x+iy,k-i0), M_e(x+iy,k+i0),N_e(x+iy,k-i0)$ be the analytic extensions given in Lemma \ref{lem: anal cont R+}. Then it follows that they satisfy the asymptotic conditions \eqref{eq: M1 k+i0 limit left}, \eqref{eq: N1 k-i0 limit right}, \eqref{eq: Me k+i0 limit left}, \eqref{eq: Ne k-i0 limit right}, and the differential equations \eqref{M diff eqn}, \eqref{N diff eqn} and
\begin{align}
&D M_e(x+i0,k+i0)-k[M_e(x+i0,k+i0)-M_e(x+i2,k+i0)]\notag\\
& =u(x)M_e(x+i0,k+i0),\label{M_e diff eqn}
\end{align}
\begin{align}
&D N_e(x+i0,k-i0)-k[N_e(x+i0,k-i0)-N_e(x+i2,k-i0)]\notag\\
& =u(x)N_e(x+i0,k-i0).\label{N_e diff eqn}
\end{align}

\item Let $M_1(x+iy,k+i0), N_1(x+iy,k-i0), M_e(x+iy,k+i0), N_e(x+iy,k-i0)$ be functions in $\mathbb H^2(S)+\mathbb H^\infty(S)$. Assume that $M_1(x+iy,k+i0), N_1(x+iy,k-i0), M_e(x+iy,k+i0), N_e(x+iy,k-i0)$ satisfy the differential equations \eqref{M diff eqn}, \eqref{N diff eqn}, \eqref{M_e diff eqn},  \eqref{N_e diff eqn}, together with the asymptotic conditions
\begin{align}
    M_1(x+i0,k+i0)&= 1+o(1) & & \text{ as }x\to-\infty,\\
    N_1(x+i0,k-i0)&= 1+o(1) & & \text{ as }x\to+\infty,\\
    M_e(x+i0,k+i0)&= e^{i\lambda x} + o(1) & &\text{ as }x\to-\infty,\\
    N_e(x+i0,k-i0)&= e^{i\lambda x} +o(1) & & \text{ as }x\to+\infty,
\end{align}
Then it follows that $M_1(x+i0,k+i0)$, $N_1(x+i0,k-i0)$, $M_e(x+i0,k+i0)$, $N_e(x+i0,k-i0)$ satisfy \eqref{jost prty: def M}, \eqref{jost prty: def N}, \eqref{jost prty: def M_e}, \eqref{jost prty: def N_e}.
\end{enumerate}
\end{lemma}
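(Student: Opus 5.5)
The plan is to follow the proof of Lemma \ref{lem: equiv int diff} essentially verbatim, replacing the splitting \eqref{G_L split 2} by \eqref{G_L split +} for $M_1(\cdot,k+i0)$, $M_e(\cdot,k+i0)$ and by \eqref{G_R split -} for $N_1(\cdot,k-i0)$, $N_e(\cdot,k-i0)$. We only treat $M_1(\cdot,k+i0)$ and $M_e(\cdot,k+i0)$; the $N$ functions are handled by the conjugate argument.

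\textbf{Part (a).} The asymptotic conditions \eqref{eq: M1 k+i0 limit left}, \eqref{eq: Me k+i0 limit left} are already contained in Lemma \ref{lem: anal cont R+}. For the differential equation, subtract the values of \eqref{jost prty: M1 ext R+} at $y=0$ and $y=2$. Since the constant term and the $\int_{-\infty}^x uM_1$ term are independent of $y$, this gives
\[
M_1(x)-M_1(x+i2)=\tfrac{i(1-e^{-2\lambda})e^{i\lambda x}}{1-2k^*}\int_{-\infty}^x uM_1e^{-i\lambda s}\,ds+\mathcal F^{-1}\big([R_1(\xi,0)-R_1(\xi,2)]\widehat{uM_1}\big).
\]
The same algebra as before gives
\[
R_1(\xi,0)-R_1(\xi,2)=\tfrac1k\Big(-1+\tfrac1{1-2k}+\tfrac{\lambda}{1-2k^*}\cdot\tfrac{1}{\xi-\lambda}\cdot\big[\cdots\big]+\xi R_1(\xi,0)\Big).
\]
More precisely, one writes $\frac{1-e^{-2\xi}}{\xi-k(1-e^{-2\xi})}=\frac1k\big(-1+\frac{\xi}{\xi-k(1-e^{-2\xi})}\big)$. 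Then $\frac{\xi}{\xi-k(1-e^{-2\xi})}$ is expanded as $\xi R_1(\xi,0)+\frac1{1-2k}+\frac{\xi}{(1-2k^*)(\xi-\lambda)}$. The last fraction equals $\frac{1}{1-2k^*}+\frac{\lambda}{(1-2k^*)(\xi-\lambda)}$, and the remaining part of the pole subtraction $e^{-2\lambda}/((1-2k^*)(\xi-\lambda))$ must be matched against it. Using $k(1-e^{-2\lambda})=\lambda$, these pole parts recombine exactly into the inverse Fourier transform of the $e^{i\lambda x}\int_{-\infty}^x$ term. Comparing with \eqref{jost prty: D M1 R+} then yields \eqref{M diff eqn}. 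Checking this coefficient identity, namely that the $e^{i\lambda x}$ terms cancel with the right constants, is the only real computation. It should reduce to $\frac1{1-2k}+\frac1{1-2k^*}-1=\ldots$ together with $k(1-e^{-2\lambda})=\lambda$. For $M_e$ the same computation applies, with the inhomogeneous term $e^{i\lambda x}$ itself satisfying $[D-k(1-e^{-2D})]e^{i\lambda x}=0$.

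\textbf{Part (b).} Mollify the equation and use Lemma \ref{lemma:FT.strip} exactly as before, obtaining
\[
[\xi-k(1-e^{-2\xi})]\widehat{M_1^\delta}=\mathcal F[(uM_1)^\delta],
\]
valid as compactly supported distributions. The symbol now vanishes at both $\xi=0$ and $\xi=\lambda$, so I multiply by
\[
R_1(\xi,0)+\tfrac1{1-2k}\tfrac{1}{\xi-i\epsilon}+\tfrac{1}{1-2k^*}\tfrac{1}{\xi-\lambda-i\epsilon}.
\]
This produces $\widehat{M_1^\delta}$ plus two error terms of the form $\frac{-i\epsilon}{\xi-i\epsilon}\psi_0\widehat{M_1}$ and $\frac{-i\epsilon}{\xi-\lambda-i\epsilon}\psi_\lambda\widehat{M_1}$. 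Here $\psi_0,\psi_\lambda\in C_0^\infty$, $\psi_0(0)=1$, $\psi_\lambda(\lambda)=1$, and $\psi_0(\lambda)=\psi_\lambda(0)=0$. Inverting with \eqref{FT of 1/xi} and its modulated version $\mathcal F^{-1}(\frac1{\xi-\lambda-i\epsilon})=i\chi_{\R^+}e^{i\lambda x-\epsilon x}$, the error terms become averages $\int_0^\infty e^{-s}(\cdots)(x-s/\epsilon)\,ds$ of $\check\psi_0*M_1$ and of $e^{-i\lambda\cdot}(\check\psi_\lambda*M_1)$ far to the left.

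The main obstacle is this limiting step. Using $M_1\to1$ at $-\infty$, the first average tends to $\psi_0(0)=1$ and the second to $\psi_\lambda(0)=0$; for $M_e$, with $M_e\sim e^{i\lambda x}$, the roles swap and give $e^{i\lambda x}$. One must verify that $e^{-i\lambda x}(\check\psi_\lambda*1)=0$ indeed, and that no oscillatory term $e^{-i\lambda x}$ fails to average out; the factor $e^{-\epsilon(x-s)}$ and dominated convergence should handle this. The remaining terms converge by dominated convergence as $\epsilon\searrow0$ to $G_L(\cdot,k+i0)*(uM_1)^\delta$ via \eqref{G_L split +}. Finally, $\delta\to0$ as before, using $\partial_xM_1\in L^2+L^\infty$ for uniform continuity.
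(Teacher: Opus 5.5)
Your proposal follows the paper's proof essentially step for step: the same splitting \eqref{G_L split +}, the same multiplier producing cutoffs $\psi_0,\psi_\lambda$ with $\psi_0(0)=\psi_\lambda(\lambda)=1$ and $\psi_0(\lambda)=\psi_\lambda(0)=0$, the same $\epsilon\searrow0$ limit (including rewriting $M_e=(M_e-e^{i\lambda\cdot})+e^{i\lambda\cdot}$), and the same $\delta\searrow0$ limit. One correction to your bookkeeping in (a): the $\frac{1}{\xi-\lambda}$ terms in $R_1(\xi,0)-R_1(\xi,2)$ cancel identically, because $\lambda/k=1-e^{-2\lambda}$, leaving $R_1(\xi,0)-R_1(\xi,2)=\frac1k\big(\frac1{1-2k}+\frac1{1-2k^*}-1\big)+\frac1k\,\xi R_1(\xi,0)$; the $e^{i\lambda x}\int_{-\infty}^x$ term of \eqref{jost prty: D M1 R+} comes instead from $k$ times the explicit term in your first display.
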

\begin{remark}\label{rmk: diff int equiv k-i0}
     We will not need this in the following, but the same results hold for $M_1(\cdot, k-i0)$, $N_1(\cdot, k+i0)$, $M_e(\cdot, k-i0)$, $N_e(\cdot,k+i0)$, where the asymptotic conditions in part (b) are given by 
\begin{align}
M_1(x+i0,k-i0)&=1-b(k-i0) e^{i\lambda x}+o(1) & &\text{ as }x\to-\infty, \label{M1 left limit R-i0}\\
N_1(x+i0,k+i0)&=1+\breve{b}(k+i0)e^{i\lambda x}+o(1)& & \text{ as }x\to+\infty,\\
M_e(x+i0,k-i0)&=[1-b_e(k-i0)]e^{i\lambda x}+o(1) & & \text{ as }x\to-\infty, \\
N_e(x+i0,k+i0)&=[1+\breve b_e(k+i0)]e^{i\lambda x}+o(1)& & \text{ as }x\to+\infty,
\end{align}
where $b(k-i0)$, $\breve b(k+i0)$, $ b_e(k-i0)$, $\breve b_e(k+i0)$ are given by \eqref{def: b}, \eqref{def: breve b}, \eqref{def: breve b},  \eqref{def: breve b_e}.
\end{remark}
\begin{proof}
We give a proof analogous to Lemma \ref{lem: equiv int diff}. As before, we focus mainly on $M_1$ and will suppress $k+i0$ in the equations. Recall that $k>0$, $k\ne \frac12$. Assume $M_1(x) \in L^\infty$ solves \eqref{jost prty: def M}. By \eqref{jost prty: M1 ext R+}, we have
\begin{align}
M_1(x)-M_1(x+i2)=&~\frac{ie^{i\lambda x}(1-e^{-2\lambda})}{1-2k^*}\int_{-\infty}^x u(s)M_1(s)e^{-i\lambda s}~ds\\
&\quad + \mathcal F^{-1}\left([R_1(\xi,0)-R_1(\xi,2)]\widehat{uM_1}(\xi)\right)(x).
\end{align}
A direct calculation as before shows
\begin{align}
R_1(\xi,0)-R_1(\xi,2) &= R_2(\xi,0)-R_2(\xi,2) -\tfrac{1-e^{-2\lambda} }{1-2k^*}\tfrac1{\xi-\lambda}\notag\\
&=\tfrac1k\left(\tfrac1{1-2k}+\tfrac1{1-2k^*}-1\right)+\tfrac1{k}\xi R_1(\xi,0).\notag
\end{align}
It follows that
\begin{align*}
k [M_1(x)-M_1(x+i2)]=&~\frac{i\lambda e^{i\lambda x}}{1-2k^*}\int_{-\infty}^x u(s)M_1(s)e^{-i\lambda s}~ds\\
&\quad + \left(\tfrac1{1-2k}+\tfrac1{1-2k^*}-1\right)u(x)M_1(x)\\
&\quad +\mathcal F^{-1}[\xi R_1(\xi,0)\widehat{uM_1}(\xi)](x)\\
=&D M_1(x)-u(x)M_1(x),
\end{align*}
where we have used \eqref{jost prty: D M1 R+} in the last step.

We now consider the reverse direction and assume $M_1(x+iy)$ satisfy all the assumptions of part (b). In other words, the limit at $-\infty$ is given by \eqref{eq: M1 k+i0 limit left}. We mollify \eqref{M diff eqn} and take the Fourier transform as in Lemma \ref{lem: equiv int diff} to obtain 
\begin{equation}
[\xi-k(1-e^{-2\xi})]\widehat{M_1^\delta} = \mathcal F[(uM_1)^\delta].
\end{equation}
Multiply by the $C^\infty$ function $R_1(\xi,0)+\frac1{1-2k}\frac1{\xi-i\epsilon}+\frac1{1-2k^*}\frac{1}{\xi-\lambda-i\epsilon}$. Noting that
\begin{align}
&~[\xi-k(1-e^{-2\xi})][R_1(\xi,0)+\tfrac1{1-2k}\tfrac{1}{\xi-i\epsilon}+\tfrac1{1-2k^*}\tfrac{1}{\xi-\lambda-i\epsilon}]\varphi(\delta\xi)\notag\\
=&~\left[1+\tfrac{i\epsilon}{\xi-i\epsilon}\tfrac{\xi-k(1-e^{-2\xi})}{(1-2k)\xi}+\tfrac{i\epsilon}{\xi-\lambda-i\epsilon}\tfrac{\xi-k(1-e^{-2\xi})}{(1-2k^*)(\xi-\lambda)}\right]\varphi(\delta\xi)\notag\\
=&~\varphi(\delta \xi)+\tfrac{i\epsilon}{\xi-i\epsilon}\psi_1(\xi)+\tfrac{i\epsilon}{\xi-\lambda-i\epsilon}\psi_2(\xi),\label{def: psi_j}
\end{align}
where $\psi_1$ and $\psi_2$ are $C_0^\infty$ functions defined as above so that $\psi_1(0)=1$, $\psi_2(0)=0$, we get 
\begin{align}
\widehat{M_1^\delta}=&~-\tfrac{i\epsilon}{\xi-i\epsilon}\psi_1 \widehat{M_1} -\tfrac{i\epsilon}{\xi-\lambda-i\epsilon}\psi_2 \widehat{M_1}\\
& \quad +[R_1(\xi,0)+\tfrac1{1-2k}\tfrac{1}{\xi-i\epsilon}+\tfrac1{1-2k^*}\tfrac{1}{\xi-\lambda-i\epsilon}]\mathcal F[(uM_1)^\delta].\notag
\end{align}
Using \eqref{FT of 1/xi} and
\begin{equation}
\mathcal F^{-1}\left(\tfrac1{\xi-\lambda-i\epsilon}\right) = i\chi_{\R^+}(x)e^{-\epsilon x}e^{i\lambda x},
\end{equation}
we get as in Lemma \ref{lem: equiv int diff} that 
\begin{align}\label{M1 delta int eq with e}
M_1^\delta(x) = &~\int_0^\infty  e^{- s}(\check{\psi_1}*M_1)(x-\tfrac{s}\epsilon)~ds + \int_0^\infty  e^{-s}e^{i\lambda \tfrac s\epsilon}(\check{\psi_2}*M_1)(x-\tfrac s\epsilon)~ds\\
&\quad + \frac{i}{1-2k}\int_{-\infty}^x e^{-\epsilon (x-s)}(uM_1)^\delta(s)~ds+ \frac{i}{1-2k^*}\int_{-\infty}^x e^{-\epsilon (x-s)}(uM_1)^\delta(s)e^{i\lambda (x-s)}~ds\notag\\
&\quad +\mathcal F^{-1}\left(R_1(\xi,0)\mathcal F[(uM_1)^\delta]\right).\notag
\end{align}
Passing to pointwise limit as $\epsilon\searrow 0$ and arguing as in Lemma \ref{lem: equiv int diff}, we get 
\begin{align*}
M_1^\delta(x)= &~1+ \frac{i}{1-2k}\int_{-\infty}^x (uM_1)^\delta(s)~ds+ \frac{i}{1-2k^*}\int_{-\infty}^x (uM_1)^\delta(s)e^{i\lambda (x-s)}~ds\\
&\quad +\mathcal F^{-1}\left(R_1(\xi,0)\mathcal F[(uM_1)^\delta]\right)(x)\\
=&~1+G_L*(uM_1)^\delta(x)
\end{align*}
Now take the limit as $\delta\searrow 0$, and we get \eqref{jost prty: def M} as before.

We briefly comment on how the above argument can be adapted to treat the case of $M_e$. The proof can basically be repeated, except that we need to rewrite $M_e(x) = M_e(x)-e^{i\lambda x}+e^{i\lambda x}$ and  
\begin{align}
\check{\psi_1}*M_e(x) &= \check{\psi_1}*(M_e-e^{i\lambda\cdot})+\psi_1(\lambda)e^{i\lambda x}=\check{\psi_1}*(M_e-e^{i\lambda\cdot}),\\
\check{\psi_2}*M_e(x) &= \check{\psi_2}*(M_e-e^{i\lambda\cdot})+\psi_2(\lambda)e^{i\lambda x}=\check{\psi_2}*(M_e-e^{i\lambda\cdot})+e^{i\lambda x}.
\end{align}
Since $M_e(x)-e^{i\lambda x}\to 0$ as $x\to -\infty$, we can repeat the argument above to get the desired result.

We finally turn to the case of $M_1(\cdot,k-i0)$ in Remark \ref{rmk: diff int equiv k-i0}. Assume $M(x+iy,k-i0)$ satisfies all assumptions of part (b) including \eqref{M1 left limit R-i0}. We have been suppressing the dependence on $k$ in the above proofs, but in this case, the proof will need to go through the difference of convolutions with $G_L(x,k\pm i0)$, thus in the following we will emphasize the functional dependence on $k\pm i0$ when appropriate. We first obtain \eqref{M1 delta int eq with e} for $M_1(x)=M_1(x,k-i0)$ using the same calculation as before. In this case, however, the limit of the first line of \eqref{M1 delta int eq with e} needs to be calculated differently as we have different end behavior \eqref{M1 left limit R-i0} for $M_1(x,k-i0)$. For $j=1,2$, and $b=b(k-i0)$, we write
\begin{align}
\check{\psi_j}*M_1(x ,k-i0)&=\check{\psi_j}*[M_1(x ,k-i0)+be^{i\lambda x}-be^{i\lambda x}]\notag\\
& = \check{\psi_j}*[M_1(x ,k-i0)+be^{i\lambda x}]-b\psi_j(\lambda)e^{i\lambda x}.
\end{align} 
By their definitions in \eqref{def: psi_j}, $\psi_1(\lambda)=0$, $\psi_2(\lambda)=1$. Thus the first line of \eqref{M1 delta int eq with e} can be written as 
\begin{align}\label{left limit pre 1}
&~ \int_0^\infty  e^{- s}(\check{\psi_1}*[M_1(\cdot)+be^{i\lambda\cdot}])(x-\tfrac{s}\epsilon)~ds \\
&\quad + \int_0^\infty  e^{-s}e^{i\lambda \tfrac s\epsilon}(\check{\psi_2}*[M_1(\cdot)+be^{i\lambda\cdot}])(x-\tfrac s\epsilon)~ds\notag\\
&\quad -b(k-i0)e^{i\lambda x}.\notag
\end{align}
Since $M_1(x,k-i0)+b(k-i0)e^{i\lambda x}\to 1$ as $x\to-\infty$, we can use the same argument as above to conclude that the sum of the first two terms in \eqref{left limit pre 1} tends to 1 pointwise as $\epsilon\searrow 0$. We thus obtain
\begin{align*}
M_1^\delta(x,k-i0)= &~1-b(k-i0)e^{i\lambda x}\\
&\quad+ \frac{i}{1-2k}\int_{-\infty}^x (uM_1)^\delta(s)~ds+ \frac{i}{1-2k^*}\int_{-\infty}^x (uM_1)^\delta(s)e^{i\lambda (x-s)}~ds\\
&\quad +\mathcal F^{-1}\left(R_1(\xi,0)\mathcal F[(uM_1)^\delta]\right)(x)\\
=&~1-b(k-i0)e^{i\lambda x}+G_L(\cdot,k+i0)*(uM_1)^\delta(x).
\end{align*}
Take the limit as $\delta\searrow 0$ to get
\begin{equation}\label{M1 int alternate G_L}
M_1(x,k-i0)=1-b(k-i0)e^{i\lambda x}+G_L(\cdot,k+i0)*(uM_1(\cdot,k-i0))(x).
\end{equation}
By definition, we have $G_L(x,k+i0)=G_L(x,k-i0)+\frac{i}{1-2k^*}e^{i\lambda x}$. \eqref{jost prty: def M} now follows from \eqref{M1 int alternate G_L} if we recall the definition of $b(k-i0)$ in \eqref{def: b}.
\end{proof}

To prepare for proving the last case of the equivalence, we need a lemma for the convergence of the mollified functions to the original in weighted spaces. 

\begin{lemma}\label{lem: approx id weighted}
Let $M_1\in L^\infty_{-1,+}$, $ u\in L^1_{1}$. Let $f^\delta=\varphi(\delta D)f$ be the mollification of $f$ as in Lemma \ref{lem: equiv int diff}. Then $(M_1)^\delta\in L^\infty_{-1,+}$, $(uM_1)^\delta\in L^1_{1,-}$ and $\|(uM_1)^\delta-uM_1\|_{L^1_{1,-}}\to 0$ as $\delta\searrow 0$. If $M_1$ is continuous, then $M_1^\delta$ converges to $M_1$ uniformly on compact sets.
\end{lemma}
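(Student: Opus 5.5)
The plan is to write the mollification as a convolution, $f^\delta=\check\varphi_\delta*f$ with $\check\varphi_\delta(x)=\frac1\delta\check\varphi(\frac x\delta)$ a Schwartz kernel, and to compare weights via the elementary inequalities $1+x_\pm\le (1+(x-s)_\pm)(1+|s|)$ and $1+(x-s)_+\le (1+x_+)(1+|s|)$.

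\textbf{Membership claims.} For $M_1\in L^\infty_{-1,+}$ we bound
\[
|M_1^\delta(x)|\le \|M_1\|_{L^\infty_{-1,+}}\int|\check\varphi_\delta(s)|(1+(x-s)_+)\,ds\le C(1+x_+),
\]
using $\int|\check\varphi_\delta(s)|(1+|s|)\,ds\lesssim 1+\delta$. Next, $uM_1\in L^1_{1,-}$ because $(1+x_-)(1+x_+)\le 2\langle x\rangle^{1}\cdot$const and $u\in L^1_1$; more precisely $|uM_1|(1+x_-)\lesssim |u|\langle x\rangle$. Then $(uM_1)^\delta\in L^1_{1,-}$ follows from the weighted Young inequality already used in the paper (Lemma \ref{lem: weighted Holder Young}), namely $\|g*h\|_{L^1_{1,-}}\le\|g\|_{L^1_{1,-}}\|h\|_{L^1_{1,-}}$, with $\|\check\varphi_\delta\|_{L^1_{1,-}}$ bounded uniformly for $\delta\le1$.

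\textbf{Convergence in $L^1_{1,-}$.} Set $w=1+x_-$. Approximate $f=uM_1$ in $L^1_{1,-}$ by $g\in C_c^\infty$, with $\|f-g\|_{L^1_{1,-}}<\epsilon$; this is possible since $wf\in L^1$. Then
\[
\|f^\delta-f\|_{L^1_{1,-}}\le \|(f-g)^\delta\|_{L^1_{1,-}}+\|g^\delta-g\|_{L^1_{1,-}}+\|g-f\|_{L^1_{1,-}} .
\]
The first and third terms are $\lesssim\epsilon$ by the uniform Young bound. For the middle term, $g^\delta-g=\int\check\varphi_\delta(s)(g(\cdot-s)-g)\,ds$, which uses $\int\check\varphi_\delta=\varphi(0)=1$. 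Hence
\[
\|g^\delta-g\|_{L^1_{1,-}}\le\int|\check\varphi(t)|\,\|g(\cdot-\delta t)-g\|_{L^1_{1,-}}\,dt .
\]
The integrand tends to $0$ pointwise in $t$ by continuity of translation for compactly supported smooth $g$. It is dominated by $C_g|\check\varphi(t)|(1+|t|)$, so dominated convergence gives the claim.

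\textbf{Local uniform convergence.} Assume $M_1$ is continuous and fix a compact $K$ and $\eta>0$. Write
\[
M_1^\delta(x)-M_1(x)=\int\check\varphi(t)\,(M_1(x-\delta t)-M_1(x))\,dt .
\]
Split this into $|t|\le T$ and $|t|>T$. On $|t|>T$ the growth bound gives
\[
|M_1(x-\delta t)-M_1(x)|\lesssim (1+x_+)(1+\delta|t|),
\]
which is bounded on $K$, so the tail is $\lesssim\int_{|t|>T}|\check\varphi|(1+|t|)<\eta$ for large $T$. On $|t|\le T$ we use uniform continuity of $M_1$ on the compact $\delta T$-neighborhood of $K$.

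The only mildly delicate point is the $L^1_{1,-}$ convergence, since the weight is unbounded. The density-plus-uniform-Young argument above handles it; everything else is routine.
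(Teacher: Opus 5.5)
Your proof is correct. The membership claims and the locally uniform convergence run essentially as in the paper. The paper also obtains the memberships from the weighted H\"older/Young inequalities of Lemma~\ref{lem: weighted Holder Young}. It proves local uniform convergence by splitting the convolution at a fixed physical scale $|s|<s_0$, chosen from uniform continuity of $M_1$ on $[-A-1,A+1]$, and letting the tail $\int_{|s|\ge s_0/\delta}|\check\varphi(s)|(1+2A+|s|)\,ds$ vanish. That is your $|t|\le T$ split before rescaling.

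The genuine difference is in proving $\|(uM_1)^\delta-uM_1\|_{L^1_{1,-}}\to 0$.
\begin{itemize}
\item The paper argues directly, without a dense class. It uses continuity of translation in $L^1$ for the weighted function $(1+x_-)uM_1$. It absorbs the mismatch between the weights $1+x_-$ and $1+(x-s)_-$ with the elementary bound $|(x-s)_--x_-|\le |s|$, and then splits the integral into $|s|<s_0$ and $|s|\ge s_0$.
\item You use the standard $\epsilon/3$ argument instead. The maps $f\mapsto f^\delta$ are uniformly bounded on $L^1_{1,-}$ for $\delta\le 1$, by weighted Young together with $\|\check\varphi_\delta\|_{L^1_{1,-}}\le\|\check\varphi\|_{L^1_{1,-}}$. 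So it suffices to check convergence on a dense class, where dominated convergence in $t$ is immediate.
\end{itemize}
Your version is more modular: it never has to handle the interaction between translation and the weight, and it carries over unchanged to other moderate weights. The paper's version is self-contained and avoids the density step.

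One small precision on that density step. If you approximate $wf$ by $h\in C_c^\infty$ in $L^1$ and set $g=h/w$, then $g$ is only Lipschitz, since $w=1+x_-$ has a kink at $0$. Compact support and continuity are all your translation argument uses, so nothing breaks.
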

\begin{proof}
By Lemma \ref{lem: weighted Holder Young}, we get
$\|M_1^\delta\|_{L^\infty_{-1,+}}\le \|M_1\|_{L^\infty_{-1,+}}\|\check\varphi\|_{L^1_{1,-}}$, $\|(uM_1)^\delta\|_{L^1_{1,-}}\le \|uM_1\|_{L^1_{1,-}}\|\check\varphi\|_{L^\infty_{1,-}}$.
For any $A>0$, $M_1$ is uniformly continuous on $[-A-1,A+1]$. Thus for every $\epsilon>0$, there exists an $s_0>0$ such that $|M_1(x)-M_1(x-s)|<\epsilon$ for all $|s|<s_0$, $x\in [-A,A]$. We have for $x\in [-A,A]$
\begin{align*}
|M_1^\delta(x)-M_1(x)|&\le \int_{|s|<s_0}\tfrac1\delta \left|\check\varphi(\tfrac s\delta)\right||M(x-s)-M(x)|~ds \\
&\quad +  \int_{|s|\ge s_0}\tfrac1\delta \left|\check\varphi(\tfrac s\delta)\right||M(x-s)-M(x)|~ds \\
&\le \epsilon+C\int_{|s|\ge s_0}\tfrac1\delta \left|\check\varphi(\tfrac s\delta)\right|[1+(x-s)_++x_+]~ds\\
&\le \epsilon+C\int_{|s|\ge s_0}\tfrac1\delta \left|\check\varphi(\tfrac s\delta)\right|[1+|s|+2A]~ds\\
&\le  \epsilon+C\int_{|s|\ge \frac {s_0}\delta}\left|\check\varphi( s)\right|[1+2A+|s|]~ds.
\end{align*}
Since $\check\varphi$ is in Schwartz class, the integral above is less than $\epsilon$ for $\delta$ sufficiently small. This proves all the assertions on $M_1$.
Since $uM_1\in L^1_{-1,-}$, we have $(1+x_-)(uM_1)(x)\in L^1$. For every $\epsilon>0$, there exists an $s_0>0$ such that for all $|s|<s_0$,
\begin{equation}
\|[1+(x-s)_-](uM_1)(x-s)-(1+x_-)(uM_1)(x)\|_{L^1}<\epsilon.
\end{equation}
We have for $\delta$ small
\begin{align}
&~\|(1+x_-)[(uM_1)^\delta(x)-(uM_1)(x)\|_{L^1}\notag\\
\le &~\int_{|s|<s_0}\tfrac1\delta\left| \check\varphi(\tfrac s\delta)\right|\int_\R (1+x_-)|(uM_1)(x-s)-(uM_1)(x)|~dx~ds\notag\\
&\quad \int_{|s|\ge s_0}\tfrac1\delta\left| \check\varphi(\tfrac s\delta)\right|\int_\R (1+x_-)|(uM_1)(x-s)-(uM_1)(x)|~dx~ds\notag\\
:=&~I_1+I_2.
\end{align}
We estimate the above two integrals as follows. Writing $$1+x_-=[1+(x-s)_-]+[x_--(x-s)_-]$$ and by the elementary inequality 
\begin{equation}
|(x-s)_--x_-|\le |s|
\end{equation}
we have
\begin{align}
I_1\le &~\int_{|s|<s_0}\tfrac1\delta\left| \check\varphi(\tfrac s\delta)\right|\int_\R |[1+(x-s)_-](uM_1)(x-s)-(1+x_-)(uM_1)(x)|~dx~ds\notag\\
&\quad+\int_{|s|<s_0}\tfrac1\delta\left| \check\varphi(\tfrac s\delta)\right|\int_\R |(x-s)_--x_-||(uM_1)(x-s)|~dx~ds\notag\\
=&~\int_{|s|<s_0}\tfrac1\delta\left| \check\varphi(\tfrac s\delta)\right|\big\|[1+(x-s)_-](uM_1)(x-s)-(1+x_-)(uM_1)(x)\big\|_{L^1}~ds\notag\\
&\quad+\int_{|s|<s_0}\tfrac1\delta\left| \check\varphi(\tfrac s\delta)\right| |s|\|uM_1\|_{L^1}~ds\notag\\
\le &~\epsilon + s_0\|uM_1\|_{L^1}\notag
\end{align}
Decreasing $s_0$ if necessary to make $s_0\|uM_1\|_{L^1}<\epsilon$, we get $I_1\le 2\epsilon$. We can estimate $I_2$ in a similar way and get 
\begin{align}
I_2\le &~2\|(1+x_-)(uM_1)(x)\|_{L^1}\int_{|s|\ge s_0}\tfrac1\delta\left| \check\varphi(\tfrac s\delta)\right| ~ds\notag\\
&\quad+\|uM_1\|_{L^1}\int_{|s|\ge s_0}\tfrac1\delta\left| \check\varphi(\tfrac s\delta)\right| |s|~ds\notag\\
&\le C \int_{|s|\ge \frac{s_0}\delta }\left| \check\varphi(s)\right| ~ds+C\delta \int_{|s|\ge \frac{s_0}\delta }\left| s\check\varphi( s)\right|~ds
\end{align}
We see that $I_2<\epsilon$ if $\delta$ is sufficiently small.
\end{proof}

We now return to the proof of the last case of the equivalence of the differential and integral equations. Once again, we require stronger decay on $u$ for this case.

\begin{lemma}\label{lem: equiv diff int eq 1/2}
Let $ u \in L^1_1\cap L^2_1$. 
\begin{enumerate}[(a)]
\item Let $M_1(\cdot,\frac12+i0)\in L^\infty_{-1,+}, N_1(\cdot,\zeta)\in L^\infty_{-1,-} $ be solutions to \eqref{jost prty: def M}, \eqref{jost prty: def N} with $\zeta=\frac12\pm i0$ respectively, and let $M_1(x+iy,\frac12+i0), N_1(x+iy,\frac12-i0)$ be the analytic extensions given in Lemma \ref{lem: anal cont 1/2}. Then it follows that they satisfy the asymptotic conditions \eqref{jost prty: M1 1/2 left}, \eqref{jost prty: N1 1/2 right} and differential equations \eqref{M diff eqn}, \eqref{N diff eqn} respectively.

\item Let $M_1(x+iy,\frac12+i0) \in \mathbb H^2(S)+\mathbb H^\infty_{-1,+}(S)$, $N_1(x+iy,\frac12-i0)\in \mathbb H^2(S)+\mathbb H^\infty_{-1,-}(S)$. Assume that $M_1(x+iy,\frac12+i0), N_1(x+iy,\frac12-i0)$ satisfy the differential equations \eqref{M diff eqn}, \eqref{N diff eqn} for $\zeta=\frac12\pm i0$ and the asymptotic conditions
\begin{align}
M_1(x+i0,\tfrac12+i0)&=1+o(1), & & \text{ as }x\to-\infty,\label{equiv M1 left limit 1/2}\\
N_1(x+i0,\tfrac12-i0)&=1 +o(1), & & \text{ as }x\to+\infty.
\end{align}
Then it follows that $M_1(x+i0,\frac12+i0)$, $N_1(x+i0,\frac12-i0)$ satisfy \eqref{jost prty: def M}, \eqref{jost prty: def N}. 
\end{enumerate}
\end{lemma}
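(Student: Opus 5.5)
We follow the scheme of Lemmas \ref{lem: equiv int diff} and \ref{lem: equiv int diff on R+}, now using the splitting \eqref{G_L split 1/2} (and \eqref{G_R split 1/2} for $N_1$). Only $M_1=M_1(\cdot,\tfrac12+i0)$ is treated; $N_1$ is analogous by conjugate symmetry.

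\emph{Part (a).} The asymptotic condition \eqref{jost prty: M1 1/2 left} is already part of Lemma \ref{lem: anal cont 1/2}. For the differential equation, subtract the values $y=0$ and $y=2$ in \eqref{jost prty: M1 1/2 ext}. The term $-\int_{-\infty}^x(x-s)uM_1\,ds$ does not depend on $y$ and cancels. The term $i(\tfrac23-y)\int_{-\infty}^x uM_1$ leaves $2i\int_{-\infty}^x uM_1\,ds$. The remainder gives $\mathcal F^{-1}\big([R_1(\xi,0)-R_1(\xi,2)]\widehat{uM_1}\big)$. Using \eqref{eq: def R12}, an algebraic computation as in Lemma \ref{lem: equiv int diff} yields
\[
R_1(\xi,0)-R_1(\xi,2)=\tfrac{1-e^{-2\xi}}{\xi-\frac12(1-e^{-2\xi})}-\tfrac{2}{\xi}=2\Big(-1+\tfrac{2}{3}+\xi R_1(\xi,0)\Big).
\]
This identity is to be checked by expanding $\xi/(\xi-\tfrac12(1-e^{-2\xi}))=\xi^2 R_1+\dots$. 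Multiplying by $\zeta=\tfrac12$ then gives
\[
\tfrac12[M_1(x)-M_1(x+2i)]=i\int_{-\infty}^x uM_1-\tfrac13 uM_1+\mathcal F^{-1}[\xi R_1\widehat{uM_1}].
\]
Comparing with \eqref{jost prty: M1 1/2 diff split}, this equals $DM_1-uM_1$, which is \eqref{M diff eqn}. Here $uM_1\in L^1_{1,-}\cap L^2$ makes all terms meaningful.

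\emph{Part (b).} Mollify with $\varphi(\delta D)$. Since $M_1\in\mathbb H^2+\mathbb H^\infty_{-1,+}$, Lemma \ref{lemma:FT.strip} applies with $N=1$, giving $\widehat{M_1^\delta(\cdot+2i)}=e^{-2\xi}\widehat{M_1^\delta}$. Hence $[\xi-\tfrac12(1-e^{-2\xi})]\widehat{M_1^\delta}=\mathcal F[(uM_1)^\delta]$, where both sides are compactly supported distributions.

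The symbol now has a \emph{double} zero at $\xi=0$. We therefore multiply by the regularized inverse
\[
R_1(\xi,0,\tfrac12)+\tfrac{1}{(\xi-i\epsilon)^2}+\tfrac23\tfrac{1}{\xi-i\epsilon}.
\]
This produces error terms $\tfrac{c\epsilon}{\xi-i\epsilon}\psi_1\widehat{M_1}+\tfrac{c'\epsilon}{(\xi-i\epsilon)^2}\psi_2\widehat{M_1}+\tfrac{\epsilon^2}{(\xi-i\epsilon)^2}\psi_3\widehat{M_1}$, with $\psi_j\in C_0^\infty$ and their values at $0$ computed explicitly. Inverting with $\mathcal F^{-1}[(\xi-i\epsilon)^{-1}]=i\chi_{\R^+}e^{-\epsilon x}$ and $\mathcal F^{-1}[(\xi-i\epsilon)^{-2}]=-x\chi_{\R^+}e^{-\epsilon x}$, each error term becomes an average of $\check\psi_j*M_1$ over $x-s/\epsilon$ against a density such as $e^{-s}$ or $se^{-s}$.

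Using \eqref{equiv M1 left limit 1/2} and $M_1\in L^\infty_{-1,+}$ (dominated convergence is valid since the weight is only linear on the right and the kernels decay exponentially), these averages converge pointwise as $\epsilon\searrow0$ to the constant $1$. The main terms converge to
\[
-\int_{-\infty}^x(x-s)(uM_1)^\delta\,ds+\tfrac{2i}{3}\int_{-\infty}^x(uM_1)^\delta\,ds+\mathcal F^{-1}[R_1\mathcal F(uM_1)^\delta],
\]
which is $G_L*(uM_1)^\delta$ by \eqref{G_L split 1/2}. Here we need $(uM_1)^\delta\in L^1_{1,-}$ so that the $(x-s)$ integral converges, and Lemma \ref{lem: approx id weighted} supplies this. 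This gives $M_1^\delta=1+G_L*(uM_1)^\delta$.

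Finally let $\delta\searrow0$. By \eqref{M diff eqn}, $\partial_xM_1\in L^2+L^\infty_{-1,+}$ locally, so $M_1$ is continuous, and Lemma \ref{lem: approx id weighted} gives $M_1^\delta\to M_1$ locally uniformly. The same lemma gives $(uM_1)^\delta\to uM_1$ in $L^1_{1,-}$, which controls the linear-growth part of $G_L$ on compact sets, while the $R_1$ part is handled by $L^2$ convergence. This yields \eqref{jost prty: def M}.

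\emph{Main obstacle.} The hard step is the $\epsilon\to0$ limit in part (b). Because the pole at the origin is double, the correct regularizer must be chosen (including the $\tfrac23$ coefficient), the constants $\psi_j(0)$ and $\psi_j'(0)$ must combine to give exactly $1$, and the $\epsilon^{-1}$ growth from $x e^{-\epsilon x}$ must be cancelled by the factor $\epsilon^2$. If these constants do not combine correctly, we would instead subtract $1$ from $M_1$ first (working with $M_1-1$, which tends to $0$ at $-\infty$) before dividing by the symbol.
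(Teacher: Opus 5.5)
Your proposal follows the paper's proof essentially step for step. Part (a) is the same subtraction of \eqref{jost prty: M1 1/2 ext} at $y=0$ and $y=2$, with the identity $R_1(\xi,0)-R_1(\xi,2)=2\xi R_1(\xi,0)-\tfrac23$; part (b) uses the same regularizer $R_1(\xi,0)+(\xi-i\epsilon)^{-2}+\tfrac23(\xi-i\epsilon)^{-1}$, the same inverse Fourier transforms, and Lemma \ref{lem: approx id weighted} for the $\delta\searrow0$ limit. The only slip is in your bookkeeping of the error terms: no $\epsilon(\xi-i\epsilon)^{-2}$ term arises, which is just as well since it would blow up like $\epsilon^{-1}$. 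The errors are exactly $-\frac{2i\epsilon\xi+\epsilon^2}{(\xi-i\epsilon)^2}\psi_1\widehat{M_1}-\frac{i\epsilon}{\xi-i\epsilon}\psi_2\widehat{M_1}$, with $\psi_1=\frac{\xi-\frac12(1-e^{-2\xi})}{\xi^2}\varphi(\delta\xi)$ and $\psi_2=\frac23\,\frac{\xi-\frac12(1-e^{-2\xi})}{\xi}\varphi(\delta\xi)$. After rescaling, the averaging densities are $2e^{-s}-se^{-s}$ and $e^{-s}$, so only $\psi_1(0)=1$ and $\psi_2(0)=0$ enter, the limit is exactly $1$, and your fallback of subtracting $1$ from $M_1$ is not needed.
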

\begin{proof}
In the following proof, we only consider the function $M_1(x)$, suppressing the dependence on $\zeta=\frac12+i0$. If $M_1(x)\in L^\infty_{-1,+}$ solves \eqref{jost prty: def M}, by \eqref{jost prty: M1 1/2 ext}, 
\begin{align}
M_1(x)-M_1(x+i2)=&~2i\int_{-\infty}^x u(s)M_1(s)~ds\\
&\quad +\mathcal F^{-1}\left([R_1(\xi,0)-R_1(\xi,2)]\widehat{uM_1}\right)(x).\notag
\end{align} 
We compute 
\begin{align}
R_1(\xi,0)-R_1(\xi,2)&=\frac{1-e^{-2\xi}}{\xi-\frac12(1-e^{-2\xi})}-\frac2\xi\\
&=2\xi R(\xi,0)-\frac23.\notag
\end{align}
It follows that
\begin{align}
\frac12[M_1(x)-M_1(x+i2)]=&~i\int_{-\infty}^x u(s)M_1(s)~ds-\frac13 u(x)M_1(x)\\
&\quad +\mathcal F^{-1}[\xi R_1(\xi,0)\widehat{uM_1}](x)\notag\\
=&~DM_1(x)-u(x)M_1(x).\notag
\end{align}
Here we used \eqref{jost prty: M1 1/2 diff split} in the last step.

In the reverse direction, we assume $M_1(x+iy)$ satisfies all the assumptions of part (b). We mollify and take the Fourier transform of \eqref{jost prty: def M} as in Lemma \ref{lem: equiv int diff} to get
\begin{equation}\label{M1 1/2 Fourier}
[\xi-\tfrac12(1-e^{-2\xi})]\widehat{M_1^\delta}=\mathcal F[(uM_1)^\delta].
\end{equation}
Multiply \eqref{M1 1/2 Fourier} by the $C^\infty$ function $R_1(\xi,0)+\frac1{(\xi-i\epsilon)^2}+\frac23\frac{1}{\xi-i\epsilon}$. Note that
\begin{align}
&~[\xi-\tfrac12(1-e^{-2\xi})][R_1(\xi,0)+\tfrac1{(\xi-i\epsilon)^2}+\tfrac23\tfrac{1}{\xi-i\epsilon}]\varphi(\delta\xi) \notag\\
=&~\left[1+\tfrac{2i\epsilon \xi+\epsilon^2}{(\xi-i\epsilon)^2}\tfrac{\xi-\frac12(1-e^{-2\xi})}{\xi^2}+\tfrac{i\epsilon}{\xi-i\epsilon}\tfrac{2}{3}\tfrac{\xi-\frac12(1-e^{-2\xi})}{\xi}\right]\varphi(\delta\xi)\notag\\
=&~\varphi(\delta\xi)+\tfrac{2i\epsilon \xi+\epsilon^2}{(\xi-i\epsilon)^2}\psi_1(\xi)+\tfrac{i\epsilon}{\xi-i\epsilon}\psi_2(\xi).
\end{align}
Here $\psi_1$ and $\psi_2$ are $C_0^\infty$ functions defined by the above equations. We can easily check that $\psi_1(0)=1$, $\psi_2(0)=0$. It follows that 
\begin{align}
\widehat{M_1^\delta}=&~-\tfrac{2i\epsilon \xi+\epsilon^2}{(\xi-i\epsilon)^2}\psi_1 \widehat{M_1}-\tfrac{i\epsilon}{\xi-i\epsilon}\psi_2\widehat{M_1}\\
&\quad + [R_1(\xi,0)+\tfrac1{(\xi-i\epsilon)^2}+\tfrac23\tfrac{1}{\xi-i\epsilon}]\mathcal F[(uM_1)^\delta].\notag
\end{align}
By \eqref{FT of 1/xi} and 
\begin{equation}
\mathcal F^{-1}\left[\tfrac1{(\xi-i\epsilon)^2}\right](x) = -\chi_{\R^+}(x)x e^{-\epsilon x},
\end{equation}
\begin{equation}
\mathcal F^{-1}\left[\tfrac{\xi}{(\xi-i\epsilon)^2}\right](x)=i\chi_{\R^+}(x)\left(e^{-\epsilon x}-\epsilon x e^{-\epsilon x}\right),
\end{equation}
we get
\begin{align}\label{M1 delta 1/2 int}
M_1^\delta(x) = &~\int_0^\infty \epsilon \left(2e^{\epsilon s}-\epsilon s e^{-\epsilon s}\right)(\check{\psi_1}*M_1)(x-s)~ds+ \int_0^\infty \epsilon e^{-\epsilon s} (\check{\psi_2}*M_1)(x-s)~ds\\
&\quad -\int_{-\infty}^x (x-s)e^{-\epsilon(x-s)}(uM_1)^\delta(s)~ds +\frac{2i}3\int_{-\infty}^x e^{-\epsilon (x-s)}(uM_1)^\delta(s)~ds\notag\\
&\quad +\mathcal F^{-1}\left(R_1(\xi,0)\mathcal F[(uM_1)^\delta]\right)(x).\notag
\end{align}
For any $A\in \R$, if $x\le A$, $s\in \R$, we have $(x-s)_+\le A-s$. Thus for $j=1,2$,
\begin{equation}
|\check{\psi_j}(s)M_1(x-s)|\le C|\check{\psi_j}(s)|(1+|A|+|s|).
\end{equation}
It follows that $(\check{\psi_j}*M_1)(x)$ is bounded for $x\le A$. By \eqref{equiv M1 left limit 1/2} and the dominated convergence theorem,
\begin{equation}
\lim_{x\to-\infty}(\check{\psi_j}*M_1)(x)=\int_\R \check{\psi_j}(s)~ds=\psi_j(0).
\end{equation}
The first line on the right hand side of \eqref{M1 delta 1/2 int} can be written as 
\begin{equation}
\int_0^\infty  \left(2e^{ s}- s e^{- s}\right)(\check{\psi_1}*M_1)(x-\tfrac s\epsilon)~ds+ \int_0^\infty e^{-s} (\check{\psi_2}*M_1)(x-\tfrac s\epsilon)~ds,
\end{equation}
which converges to 
\begin{equation}
\psi_1(0)\int_0^\infty  \left(2e^{ s}- s e^{- s}\right)~ds+\psi_2(0) \int_0^\infty e^{-s} ~ds=1
\end{equation}
as $\epsilon\searrow 0$ by the dominated convergence theorem.
%We also claim that $\langle x\rangle (uM_1)^\delta(x)\in L^1(-\infty, A)$ for every $A\in \R$. In fact, 
%\begin{align*}
%&\int_{-\infty}^A \langle x\rangle \left|\int_\R \widecheck{\varphi(\delta\cdot)}(s)u(x-s)M_1(x-s)~ds\right|~dx\\
%\le &~C\int_{-\infty}^A\int_\R (\langle x-s\rangle +\langle s\rangle) \left|\widecheck{\varphi(\delta\cdot)}(s)\right||u(x-s)|(1+|A|+|s|)~ds~dx\\
%\le &~C\int_\R\int_\R (\langle x-s\rangle +\langle s\rangle) \left|\widecheck{\varphi(\delta\cdot)}(s)\right||u(x-s)|(1+|A|+|s|)~ds~dx\\
%\le &~C\int_\R \langle s\rangle(1+|A|+|s|)\left|\widecheck{\varphi(\delta\cdot)}(s)\right|\int_\R \langle x-s\rangle |u(x-s)|~dx~ds\\
%=&~C\int_\R \langle s\rangle(1+|A|+|s|)\left|\widecheck{\varphi(\delta\cdot)}(s)\right|~ds\int_\R \langle x\rangle |u(x)|~dx\\
%<&~\infty.
%\end{align*}
By Lemma \ref{lem: approx id weighted}, $\langle x\rangle (uM_1)^\delta(x)\in L^1(-\infty, A)$ for every $A\in \R$.
Thus the second line of \eqref{M1 delta 1/2 int} converges to 
$$
-\int_{-\infty}^x (x-s)(uM_1)^\delta(s)~ds +\frac{2i}3\int_{-\infty}^x (uM_1)^\delta(s)~ds
$$
by the dominated convergence theorem. By \eqref{G_L split 1/2} this means
\begin{equation}
M_1^\delta(x) = 1+G_L*(uM_1)^\delta(x).
\end{equation}
We now send $\delta$ to 0. From \eqref{M diff eqn} and the conditions on $u$ and $M_1$, we know that $\partial_x M_1(x)$ is bounded on compact sets. Thus $M_1(x)$ is continuous. By Lemma \ref{lem: approx id weighted}, $M_1^\delta \to M_1$ locally uniformly, $\langle x\rangle (uM_1)^\delta\to \langle x \rangle (uM_1)$ in $L^1(-\infty,A)$ for any $A$. Furthermore, since $uM_1\in L^2(\R)$, we have $(uM_1)^\delta \to uM_1$ in $L^2$. By \eqref{G_L split 1/2} and the fact that $R(\xi,0)\in L^2$, we conclude that $G_L*(uM_1)^\delta\to G_L*(uM_1)$ locally uniformly, and we obtain \eqref{jost prty: def M}.
\end{proof}

                 %%  Properties of Jost Solutions
\section{Existence of Jost solutions}\label{sec: existence}

In this section, we show existence of solutions to the integral equations \eqref{jost prty: def M}, \eqref{jost prty: def N}, \eqref{jost prty: def M_e}, \eqref{jost prty: def N_e} for appropriate ranges of $\zeta$, assuming smallness of $u$ in $L^1_1\cap L^2_1$. In order to describe different regimes of $\zeta$ more efficiently, we introduce the following notations. Let
\begin{equation}
\widetilde \C =\C\setminus {\R^+}\cup(\R^+\pm i0)
\end{equation}
be the cut plane glued to two copies of $\R^+$. For $\delta>0$, let
\begin{equation}
D^\pm _\delta = \left\{w\in \C~\big|~|w-\tfrac12|<\delta, ~\pm\text{Im }w>0\right\}\cup (\tfrac12-\delta\pm i0,\tfrac12+\delta\pm i0)
\end{equation}
be the upper and lower half discs of radius $\delta$ about $\frac12$ glued to the corresponding copy of the diameter.

We first need the following uniform structural estimates on the Green's functions, which follows easily from Proposition \ref{prop: green est}. 

\begin{lemma}\label{lem: green uniform split}
$G_L(x,\zeta)$ and $G_R(x,\zeta)$ can be written structurally as
\begin{equation}\label{eq: green uniform split 1}
G_L(x,\zeta) = G_L^1(x,\zeta)\chi_{\R^+}(x)+G_L^2(x,\zeta)+G_L^3(x,\zeta),
\end{equation}
\begin{equation}
G_R(x,\zeta) = G_R^1(x,\zeta)\chi_{\R^-}(x)+G_R^2(x,\zeta)+G_R^3(x,\zeta),
\end{equation}
where there exists a constant $C>0$ such that
\begin{equation}
\|G_{L,R}^1(\cdot,\zeta)\|_{  L^\infty_{-1}}\le C, \quad \|G_{L,R}^3(\cdot,\zeta)\|_{L^2}\le C\quad \text{ for all }\zeta\in \widetilde{\mathbb C},
\end{equation}
and there are constants $C(\zeta)$ depending on $\zeta$ such that
\begin{equation}
    \|G^1_{L,R}(\cdot,\zeta)\|_{L^\infty}\le C(\zeta)\quad \text{ for all }\zeta\in \widetilde{\mathbb C}\setminus \{\tfrac{1}{2}\pm i0\};
\end{equation}
\begin{equation}
    \|G_L^2(\cdot,\zeta)\|_{L^\infty}\le C(\zeta)\quad \text{ for all }\zeta\in \widetilde{\mathbb C}\setminus \{\tfrac{1}{2}-i0\};
\end{equation}
\begin{equation}
    \|G_R^2(\cdot,\zeta)\|{L^\infty}\le C(\zeta) \quad \text{ for all }\zeta\in \widetilde{\mathbb C}\setminus \{\tfrac{1}{2}+i0\}.
\end{equation}
Here $C(\zeta)$ can be taken to be uniform for $\zeta$ on compact subsets of $\widetilde{\C}\setminus\{\frac12\pm i0\}$.
Furthermore, for every $\delta>0$, there exists a $C>0$ such that 
\begin{enumerate}[(i)]
\item For all $\zeta\in \widetilde\C\setminus D^-_\delta$, we have
\begin{equation}
\|G_L^2(\cdot,\zeta)\|_{L^\infty}\le C, 
\end{equation}

\item For all $\zeta\in \widetilde\C\setminus D^+_\delta$, we have
\begin{equation}
\|G_R^2(\cdot,\zeta)\|_{L^\infty}\le C.
\end{equation}
\end{enumerate}
\end{lemma}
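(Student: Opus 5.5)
The plan is to read off the decomposition directly from Proposition \ref{prop: green est}, with the pieces chosen by region of $\zeta$, and then take care of uniformity. Throughout, \eqref{GL-GR} transfers statements about $G_L$ to $G_R$, so it suffices to treat $G_L$. The $G_R$ version is obtained by the conjugate argument, using \eqref{eq: G_L bar} if convenient; there the roles of $\frac12\pm i0$ and of $\chi_{\R^\pm}$ are swapped. We fix $\epsilon_0$ small as in Proposition \ref{prop: green est} and cover $\widetilde\C$ by two regions: region A, where $\real\zeta\ge-\epsilon_0$ and $|\imag\zeta|\le\epsilon_0$, together with the boundary copies $\R^+\pm i0$ and Remark \ref{rmk: G at k pm 0}; and region B, where $\dist(\zeta,\R^+)\ge\epsilon_0$.

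In region A with $\imag\zeta\ge0$, we use the first line of \eqref{eq: G Lp est 1}. We set $G_L^1=\frac{i}{1-2\zeta}+\frac{i}{1-2\zeta^*}e^{i\lambda x}$, $G_L^2=0$ and $G_L^3=H$. Since $\imag\lambda\ge0$, the term $e^{i\lambda x}$ is bounded on $\R^+$, so $\|G_L^1\chi_{\R^+}\|_{L^\infty}\le C(\zeta)$ away from $\frac12+i0$. The uniform $L^\infty_{-1}$ bound needs more care. Away from $\zeta=\frac12$, both coefficients are bounded once we use $|1-2\zeta^*|\gtrsim1$ for large $|\zeta|$ and small $|\zeta|$ (this follows from \eqref{eq: zeta-zeta*} and Lemma \ref{lem: zeta}). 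Near $\frac12$, we rewrite the sum as in the proof of Lemma \ref{lem: G weak cont 1/2}: it equals $\frac{i}{1-2\zeta}+\frac{i}{1-2\zeta^*}+\frac{i}{1-2\zeta^*}(e^{i\lambda x}-1)$. The first two terms combine into a bounded function, because $\frac1{1-2\zeta}+\frac1{1-2\zeta^*}\to\frac23$. For the last term, $|e^{i\lambda x}-1|\le|\lambda|x$ for $x>0$ and $\imag\lambda\ge0$, and $\lambda/(1-2\zeta^*)\to-1$, so it is $O(1+x)$ uniformly; at $\frac12+i0$ it is exactly $-x+\frac{2i}3$. The bound $\|H\|_{L^2}\le C$ is \eqref{eq: G est 2}.

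In region A with $\imag\zeta<0$, we use the second line of \eqref{eq: G Lp est 1}. We put $G_L^1=\frac{i}{1-2\zeta}$ and $G_L^2=-\frac{i}{1-2\zeta^*}\chi_{\R^-}e^{i\lambda x}$, which is bounded because $\imag\lambda\le0$ on $\R^-$. Its size is $|1-2\zeta^*|^{-1}\sim|2\lambda|^{-1}$ near $\frac12$, which blows up only at $\frac12-i0$. Hence it is uniformly bounded off $D^-_\delta$ and bounded by $C(\zeta)$ off $\frac12-i0$. Here $G_L^1$ is uniformly bounded in $L^\infty$ except near $\frac12$. To fix that, we split off the pole: near $\frac12$ we move $\frac{i}{1-2\zeta}\chi_{\R^+}$ into $G^2_L$ as well. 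That is harmless, since $G^2_L$ is allowed to be large only in $D^-_\delta$, which is exactly where $\frac{i}{1-2\zeta}$ is large. Concretely, in $D^-_\delta$ we put everything singular into $G_L^2$ and set $G^1_L=0$.

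In region B, we use \eqref{eq: G Lp est 3}. We take $G_L^1=\frac{i}{1-2\zeta}$, which is bounded since $\dist(\zeta,\frac12)\ge\epsilon_0$, $G_L^2=0$, and $G_L^3=G_L\chi_{\R^-}+G_R\chi_{\R^+}$, which is in $L^2$ uniformly by \eqref{eq: G est 4}. For $\zeta\in\R^-\pm i0$ near $0$ we appeal to Remark \ref{rmk: G at k pm 0}.

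The main obstacle is getting the constants uniform across the seams. The first seam is the bounded middle part of region A, where $\frac1{1-2\zeta^*}$ must stay bounded except at $\zeta=\frac12$; this uses that $\zeta^*=\frac12$ only at $\lambda=0$. The second is the uniform $L^\infty_{-1}$ control of $G^1_L$ as $\zeta\to\frac12$ from above. Local uniformity of $C(\zeta)$ on compacts follows from the continuity of $\lambda$, $\zeta^*$ and $H$ in $\zeta$ (Proposition \ref{prop: G cont}).
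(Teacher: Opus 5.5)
Your argument is correct. For $\imag\zeta\ge 0$ near $\R^+$ and for $\dist(\zeta,\R^+)\ge\epsilon_0$ it uses exactly the paper's pieces; the one real difference is how you handle $\imag\zeta\le 0$ near $\frac12$.

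You excise a half-disc there and put both singular terms into $G_L^2$. The paper uses no cutoff. It adds and subtracts the reflected term $\frac{i}{1-2\zeta^*}e^{-i\lambda x}\chi_{\R^+}$, taking $G_L^1=\frac{i}{1-2\zeta}+\frac{i}{1-2\zeta^*}e^{-i\lambda x}$ and $G_L^2=-\frac{i}{1-2\zeta^*}e^{-i\lambda|x|}$ throughout the region below the axis. Since $\imag\lambda\le 0$ there, the same cancellation $\frac{1}{1-2\zeta}+\frac{1}{1-2\zeta^*}\to\frac23$ that you use above the axis makes $G_L^1$ uniformly $O(\langle x\rangle)$ on $\R^+$. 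The whole singularity at $\frac12-i0$ is then isolated in the single factor $|1-2\zeta^*|^{-1}$, multiplying a function of modulus at most $1$. The paper's choice buys one formula with the same structure on both sides of the axis. Yours is more elementary and suffices for every later use of the lemma (Lemma \ref{lem: jost exist 1}, Theorem \ref{thm: inv T off disc}), since those use only the norm bounds, never continuity of the individual pieces in $\zeta$. You also make explicit, as the paper does not, that $|1-2\zeta^*|$ stays bounded below away from $\zeta=\frac12$ along the whole strip near $\R^+$.

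Two things to tidy:
\begin{itemize}
\item The excised half-disc must be a fixed $D^-_{\delta_0}$ with $\delta_0\le\epsilon_0$, chosen once, not the $D^-_\delta$ of (i). Otherwise the uniform $L^\infty_{-1}$ constant for $G_L^1$ depends on $\delta$. With $\delta_0$ fixed, (i) for $\delta<\delta_0$ follows from $|G_L^2|\lesssim|1-2\zeta|^{-1}+|1-2\zeta^*|^{-1}\lesssim\delta^{-1}$ on $D^-_{\delta_0}\setminus D^-_\delta$.
\item Since $1-2\zeta^*=\lambda+O(\lambda^2)$, one has $\lambda/(1-2\zeta^*)\to 1$ rather than $-1$, and $|1-2\zeta^*|^{-1}\sim|\lambda|^{-1}$ rather than $|2\lambda|^{-1}$. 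The correct sign is what produces the limit $-x+\frac{2i}{3}$ you state, and neither slip affects the bounds.
\end{itemize}
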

\begin{proof}
We show only the proof for $G_L$, and the proof for $G_R$ will be similar. Let $\epsilon_0>0$ be given as in Proposition \ref{prop: green est}. We first consider $\zeta$ in the range $\text{Re } \zeta\ge -\epsilon_0$, $|\text{Im }\zeta|\le \epsilon_0$. If $\text{Im }\zeta\ge 0$, define
\begin{align}
G_L^1(x,\zeta)&=\left(\frac{i}{1-2\zeta}+\frac{i}{1-2\zeta^*}e^{i\lambda x}\right)\chi_{\R^+}(x),\\
    G_L^2(x,\zeta)&=0,\quad G_L^3(x,\zeta)=H(x,\zeta). 
\end{align}
We have \eqref{eq: green uniform split 1} from Proposition \ref{prop: green est}.
Note that when Im $\zeta\ge 0$, Im $\lambda\ge 0$. Thus $|e^{i\lambda x}|\le 1$ when $x>0$. If $\zeta$ is close to $\frac12$, $\lambda$ is close to $0$, and for $x>0$,
\begin{align}
\left|\frac{i}{1-2\zeta}+\frac{i}{1-2\zeta^*}e^{i\lambda x}\right| &\le \left|\frac{i}{1-2\zeta}+\frac{i}{1-2\zeta^*}\right|+\left|\frac{i}{1-2\zeta^*}(e^{i\lambda x}-1)\right|\\
&\le C\langle x\rangle. 
\end{align}
The estimate on $G_L^3$ follows from \eqref{eq: G est 2}. 
If $\text{Im }\zeta\le 0$, define
\begin{align}
G_L^1(x,\zeta) &= \left(\frac{i}{1-2\zeta}+\frac{i}{1-2\zeta^*}e^{-i\lambda x}\right)\chi_{\R^+}(x),\\
G_L^2(x,\zeta) &= -\frac{i}{1-2\zeta^*}\left[e^{-i\lambda x}\chi_{\R^+}(x)+e^{i\lambda x}\chi_{\R^-}(x)\right],\\
G_L^3(x,\zeta)&=H(x,\zeta).
\end{align}
$G_L^1(x,\zeta)$ can be estimated as before, while $|G_L^2(x,\zeta)|\le \frac{C}{|1-2\zeta|}$. Thus there exists a $C>0$ such that for $\zeta\ne D^-_\delta$, $|G_L^2(x,\zeta)|\le C$. 
Finally, for $\zeta$ with $\dist(\zeta,\mathbb R^+)\ge \epsilon_0$, define
\begin{align}
G_L^1(x,\zeta) &= \frac{i}{1-2\zeta},\quad G_L^2(x,\zeta)=0,\notag\\
G_L^3(x,\zeta)&=G_L(x,\zeta)\chi_{\mathbb R^-}(x)+G_R(x,\zeta)\chi_{\mathbb R^+}(x).
\end{align}
The estimates follow from  \eqref{eq: G est 4}.
\end{proof}

For later estimates, it's convenient to have the following weighted inequalities.

\begin{lemma}\label{lem: weighted Holder Young}
Let $p,q,r\in [1,\infty]$ satisfy $\frac1r=\frac1p+\frac1{q}-1$, then there exists a $C>0$ such that for any measurable functions $f$, $g$ on $\mathbb R$,
    \begin{align}
        \|fg\|_{L^p_{1,-}}&\le C\|f\|_{L^p_1}\|g\|_{L^\infty_{-1,+}},\label{eq: weighted Holder L}\\
        \|fg\|_{L^p_{1,+}}&\le C\|f\|_{L^p_1}\|g\|_{L^\infty_{-1,-}}.\label{eq: weighted Holder R}
    \end{align}
    \begin{align}
        \|f*g\|_{L^r_{-1,+}}\le \|f\|_{L^p_{-1,+}}\|g\|_{L^q_{1,-}},\label{eq: weighted Young R}\\
        \|f*g\|_{L^r_{-1,-}}\le \|f\|_{L^p_{-1,-}}\|g\|_{L^q_{1,+}}.\label{eq: weighted Young L}
    \end{align}
    \begin{align}
        \|f*g\|_{L^r_{1,+}}&\le \|f\|_{L^p_{1,+}}\|g\|_{L^q_{1,+}},\label{eq: weighter Young R 1}\\
        \|f*g\|_{L^r_{1,-}}&\le \|f\|_{L^p_{1,-}}\|g\|_{L^q_{1,-}},
    \end{align}
\end{lemma}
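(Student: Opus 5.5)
The plan is to reduce everything to two pointwise inequalities between the one-sided weights, and then apply the unweighted H\"older and Young inequalities. Recall that $x_\pm=\max(0,\pm x)$ and $\langle x\rangle=(1+|x|^2)^{1/2}$.

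For the H\"older-type bounds \eqref{eq: weighted Holder L} and \eqref{eq: weighted Holder R}, the key observation is the pointwise inequality
\[
(1+x_-)(1+x_+)=1+|x|\le \sqrt2\,\langle x\rangle .
\]
From this,
\[
(1+x_-)|f(x)g(x)|\le \sqrt2\,\langle x\rangle |f(x)|\cdot \frac{|g(x)|}{1+x_+}.
\]
Taking the $L^p$ norm, and bounding the second factor by $\|g\|_{L^\infty_{-1,+}}$, gives \eqref{eq: weighted Holder L} with $C=\sqrt2$. Exchanging $x_+$ and $x_-$ gives \eqref{eq: weighted Holder R}.

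For the Young-type bounds, the key step is a subadditivity property of the one-sided weights. The map $x\mapsto x_+$ is subadditive: $(a+b)_+\le a_++b_+$. Also $b_+\le b_-$ fails in general, so instead I would use the following.

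For \eqref{eq: weighted Young R}, write $x=(x-s)+s$. Then
\[
1+x_+\le 1+(x-s)_++s_+\le (1+(x-s)_+)(1+s_+).
\]
This, however, puts the weight $1+s_+$ on $g$, whereas we want $g$ to carry the weight $1+s_-$ and $f$ to carry a decaying weight. So I would instead aim to prove
\[
\frac{1}{1+x_+}\le \frac{1+s_-}{1+(x-s)_+}.
\]
This is equivalent to $1+(x-s)_+\le (1+x_+)(1+s_-)$, which follows from $(x-s)_+\le x_++(-s)_+=x_++s_-$. Hence
\[
\frac{|f*g|(x)}{1+x_+}\le \int \frac{|f(x-s)|}{1+(x-s)_+}\,(1+s_-)|g(s)|\,ds,
\]
and the ordinary Young inequality yields \eqref{eq: weighted Young R} with constant $1$. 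The bound \eqref{eq: weighted Young L} follows by the reflection $x\mapsto -x$.

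For \eqref{eq: weighter Young R 1} and its companion, the submultiplicativity $1+x_+\le(1+(x-s)_+)(1+s_+)$ noted above gives
\[
(1+x_+)|f*g|\le \big((1+\cdot_+)|f|\big)*\big((1+\cdot_+)|g|\big),
\]
and Young's inequality finishes the argument. The $-$ case is symmetric.

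The only genuine point is choosing the right elementary weight inequality for each case. The one for \eqref{eq: weighted Young R}, $(x-s)_+\le x_++s_-$, is the step most easily misoriented, so I would check it directly in each sign case.
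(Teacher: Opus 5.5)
Your proof is correct and is essentially the paper's argument: the same pointwise weight inequalities $1+(x-s)_+\le(1+x_+)(1+s_-)$ and $1+x_+\le(1+(x-s)_+)(1+s_+)$ reduce the weighted Young bounds to the unweighted Young inequality, and the mirrored cases follow by symmetry. For the H\"older bounds, which the paper only calls straightforward, your identity $(1+x_-)(1+x_+)=1+|x|\le\sqrt2\,\langle x\rangle$ supplies the missing detail; you can delete the stray remark about $b_+\le b_-$ and the initial misoriented attempt.
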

\begin{remark}
    \eqref{eq: weighted Holder L}, \eqref{eq: weighted Holder R} can obviously be generalized to other H\"older indices, but we will not need these generalizations.
\end{remark}
\begin{proof}
    \eqref{eq: weighted Holder L} and \eqref{eq: weighted Holder R} are straightforward. We only show \eqref{eq: weighted Young R} and \eqref{eq: weighter Young R 1}. By the elementary inequality
\begin{equation}\label{eq: elementary ineq 1}
1+(x-y)_+\le (1+x_+)(1+y_-),
\end{equation}
we get
\begin{align}
&~(1+x_+)^{-1}\left|\int_\R f(x-y)g(y)~dy\right|\label{wgt Young 1}\\
\le&~\int_\R\frac{1+(x-y)_+}{(1+x_+)(1+y_-)}| (1+(x-y)_+)^{-1}f(x-y)(1+y_-)g(y)|~dy\notag\\
\le&~\int_\R| (1+(x-y)_+)^{-1}f(x-y)(1+y_-)g(y)|~dy\notag
\end{align}
By the standard Young inequality, The $L^r$ norm of \eqref{wgt Young 1} is bounded by 
$$\left\|(1+x_+)^{-1}f(x)\right\|_{L^p}\left\|(1+x_-)g(x)\right\|_{L^q}=\|f\|_{L^p_{-1,+}}\|g\|_{L^q_{1,-}}.$$
This proves \eqref{eq: weighted Young R}. We can prove \eqref{eq: weighter Young R 1} in a similar way, if we replace \eqref{eq: elementary ineq 1} by
\begin{equation}
    1+x_+\le [1+(x-y)_+](1+y_+).
\end{equation}
\begin{comment}
   \begin{align}
        \frac{1}{1+x_+}|(f\chi_{\R^+})*g|(x)&\le\frac{1}{1+x_+}\int_{-\infty}^x |f(x-s)g(s)|~ds\notag\\
        &\le \frac{\|f\|_{L^\infty_{-1}}\|g\|_{L^1_{1,-}}}{1+x_+}\sup_{s\le x}\frac{\langle x-s\rangle}{1+s_-}\notag\\
        &\lesssim \|f\|_{L^\infty_{-1}}\|g\|_{L^1_{1,-}}\sup_{s\le x}\frac{1+x_++s_-}{(1+x_+)(1+s_-)}\notag\\
        &\lesssim\|f\|_{L^\infty_{-1}}\|g\|_{L^1_{1,-}}.
    \end{align}
\end{comment}
\end{proof}

We define the linear operators $T_L(\zeta)$ for $\zeta\in \widetilde{\C}$, $\zeta\ne \frac12-i0$, and $T_R(\zeta)$ for $\zeta\in \widetilde{\C}$, $\zeta\ne \frac12+i0$ by 
\begin{align}
T_L(\zeta)&: L^\infty_{-1,+}\to L^\infty_{-1,+},\notag\\
T_L(\zeta) m&= G_L(\cdot,\zeta)*(um).\label{def: T_L}
\end{align}
\begin{align}
T_R(\zeta)&: L^\infty_{-1,-}\to L^\infty_{-1,-},\notag\\
T_R(\zeta) m&= G_R(\cdot,\zeta)*(um).
\end{align}
Lemmas \ref{lem: green uniform split} and \ref{lem: weighted Holder Young} guarantees the above mapping space properties provided $u\in L^1_1\cap L^2_1$. With these notations, we can write \eqref{jost prty: def M}, \eqref{jost prty: def N}, \eqref{jost prty: def M_e}, \eqref{jost prty: def N_e} as
\begin{align}
(I-T_L(\zeta))M_1(\cdot,\zeta)&=1,\label{jost exist: def M}\\
(I-T_R(\zeta))N_1(\cdot,\zeta)&=1,\label{jost exist: def N}\\
(I-T_L(\zeta))M_e(\cdot,\zeta)&= e^{i\lambda x},\\
(I-T_R(\zeta))N_e(\cdot,\zeta)&= e^{i\lambda x}.
\end{align}
We can reduce the solvability of the integral equations to invertibility of $I-T_L(\zeta)$ and $I-T_R(\zeta)$. More precisely, we have

\begin{lemma}\label{lem: jost exist 1}
Let $u\in  L^1_1\cap L^2_1$. Suppose $\zeta\in\widetilde{\C}\setminus \{\frac12-i0\}$ and that $I-T_L(\zeta)$ is invertible on $L^\infty_{-1,+}$. Then 
\begin{enumerate}[(i)]
    \item \eqref{jost prty: def M} is uniquely solvable for $M_1(x,\zeta)\in L^\infty_{-1,+}$.
    \item If $\zeta=k\pm i0$, $k>0$ (excluding $\zeta=\frac12-i0$), then \eqref{jost prty: def M_e} is uniquely solvable for $M_e(x,k\pm i0)\in L^\infty_{-1,+}$. 
\end{enumerate}
In each of the above cases, as long as $\zeta\ne \frac12\pm i0$, then we have the improved estimates $M_1(x,\zeta)\in L^\infty$ or $M_e(x,\zeta)\in L^\infty$.

Similarly, suppose $\zeta\in\widetilde{\C}\setminus \{\frac12+i0\}$ and that $I-T_R(\zeta)$ is invertible on $L^\infty_{-1,-}$, then 
\begin{enumerate}[(i)]
    \item \eqref{jost prty: def N} is uniquely solvable for $N_1(x,\zeta)\in L^\infty_{-1,+}$.
    \item If $\zeta=k\pm i0$, $k>0$ (excluding $\zeta=\frac12+i0$), \eqref{jost prty: def N_e} is uniquely solvable for $N_e(x,\zeta)\in L^\infty_{-1,-}$. 
\end{enumerate}
In each of the above cases, as long as $\zeta\ne \frac12-i0$, then we have the improved estimates $N_1(x,\zeta)\in L^\infty$ or $N_e(x,\zeta)\in L^\infty$.
\end{lemma}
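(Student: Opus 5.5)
Parts (i) and (ii) come straight from invertibility once the right-hand sides are known to lie in $L^\infty_{-1,+}$. The constant $1$ does. For $\zeta=k\pm i0$ with $k>0$, $\lambda=Z^{-1}(k)$ is real, so $e^{i\lambda x}\in L^\infty\subset L^\infty_{-1,+}$. Lemmas \ref{lem: green uniform split} and \ref{lem: weighted Holder Young} already show that $T_L(\zeta)$ maps $L^\infty_{-1,+}$ boundedly into itself, since $u\in L^1_1\cap L^2_1$. Hence $M_1=(I-T_L(\zeta))^{-1}1$ and $M_e=(I-T_L(\zeta))^{-1}e^{i\lambda\cdot}$ exist in $L^\infty_{-1,+}$. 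Any solution in that space equals this one, because $I-T_L(\zeta)$ is injective. The statements for $N_1$, $N_e$ and $T_R(\zeta)$ follow the same way, with the roles of $\pm$ interchanged.

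For the improved bound with $\zeta\ne\frac12\pm i0$, I would bootstrap using the splitting $G_L=G_L^1\chi_{\R^+}+G_L^2+G_L^3$ of Lemma \ref{lem: green uniform split}. Write $m\in L^\infty_{-1,+}$ for either solution, so that $m=f+T_L(\zeta)m$ with $f\in L^\infty$. Then:
\begin{enumerate}[(a)]
\item By \eqref{eq: weighted Holder L}, $um\in L^1_{1,-}\cap L^2_{1,-}\subset L^1\cap L^2$.
\item Since $\zeta\ne\frac12\pm i0$, we have $G_L^1\in L^\infty$ with norm at most $C(\zeta)$. Likewise $G_L^2\in L^\infty$ because $\zeta\ne\frac12-i0$, and $G_L^3\in L^2$.
\item By Young's inequality, $G_L*(um)$ is bounded by $C(\zeta)\|um\|_{L^1}+C\|um\|_{L^2}$, so it lies in $L^\infty$.
\end{enumerate}
Therefore $m=f+G_L*(um)\in L^\infty$.

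The main point needing care is (a). Unweighted integrability of $um$ is not automatic when $m$ may grow linearly as $x\to+\infty$. It holds because $u$ carries a full weight $\langle x\rangle$, which \eqref{eq: weighted Holder L} uses to absorb the factor $(1+x_+)$. A second point is that at $\zeta=\frac12+i0$ only $G_L^1$ fails to be bounded. This is exactly why that value is excluded from the improved estimate, while it is still permitted in (i). With these two observations the bootstrap is immediate.
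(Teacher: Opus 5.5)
Your proof is correct and follows the paper's argument. Unique solvability comes directly from invertibility, since $1$ and $e^{i\lambda x}$ (with $\lambda$ real) lie in $L^\infty_{-1,+}$. The improved $L^\infty$ bound is the same bootstrap the paper uses: by Lemma \ref{lem: green uniform split}, $G_L^1,G_L^2\in L^\infty$ and $G_L^3\in L^2$ when $\zeta\ne\frac12\pm i0$, and Lemma \ref{lem: weighted Holder Young} gives $um\in L^1\cap L^2$.
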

\begin{proof}
The unique solvability is obvious. We only need to show the improved estimates. By Lemma \ref{lem: green uniform split}, $G_L^1(x,\zeta), G_L^2(x,\zeta)$ are actually in $L^\infty$ for each fixed $\zeta$, as long as $\zeta\ne \frac12\pm i0$. It follows from Lemma \ref{lem: weighted Holder Young} that 
\begin{equation}\label{eq: T(m) L inf}
    \|T_L(\zeta) m\|_{L^\infty}\le C(\zeta)\|um\|_{L^1\cap L^2}\le C(\zeta)\|u\|_{L^1_1\cap L^2_1}\|m\|_{L^\infty_{-1,+}}.
\end{equation} 
The conclusion follows.
\end{proof}
%\begin{lemma}
%For every $\delta>0$, there exists $\epsilon>0$ such that for all $u\in \langle x\rangle^{-1} (L^1\cap L^2)$ with $\|u\|_{\langle x\rangle^{-1} (L^1\cap L^2)}<\epsilon$, 
%
%\begin{enumerate}[(i)]
%\item If $\zeta\in \widetilde{\C}\setminus D^-_\delta$, $$\|T_L(\zeta)\|_{ \mathcal {L} (L^\infty_{-1,+})}<\frac12;$$
%
%\item If $\zeta\in \widetilde{\C}\setminus D^+_\delta$, $$\|T_R(\zeta)\|_{ \mathcal {L} (L^\infty_{-1,-})}<\frac12.$$
%
%\end{enumerate}
%
%\end{lemma}
%\begin{proof}
%
%\end{proof}

We therefore focus our attention on the invertibility of $I-T_L(\zeta)$ and $I-T_R(\zeta)$. 

\begin{theorem}\label{thm: inv T off disc}
Let $\delta>0$ be given. Then there exists $\epsilon>0$ such that for all $u\in L^1_1\cap L^2_1$ with $\|u\|_{L^1_1\cap L^2_1}<\epsilon$, 
\begin{enumerate}[(i)]
\item If $\zeta\in \widetilde{\C}\setminus D^-_\delta$, then $I-T_L(\zeta)$ is invertible on $L^\infty_{-1,+}$, with $$\left\|\left(I-T_L(\zeta)\right)^{-1}\right\|_{L^\infty_{-1,+}\to L^\infty_{-1,+}}\le 2;$$ 

\item If $\zeta\in \widetilde{\C}\setminus D^+_\delta$, then $I-T_R(\zeta)$ is invertible on $L^\infty_{-1,-}$, with $$\left\|\left(I-T_R(\zeta)\right)^{-1}\right\|_{L^\infty_{-1,-}\to L^\infty_{-1,-}}\le 2.$$

\end{enumerate}

\end{theorem}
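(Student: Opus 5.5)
The plan is a Neumann series argument: I will show that $\|T_L(\zeta)\|_{L^\infty_{-1,+}\to L^\infty_{-1,+}}\le C\|u\|_{L^1_1\cap L^2_1}$ with $C$ depending only on $\delta$ and uniform over $\zeta\in\widetilde{\C}\setminus D^-_\delta$. Choosing $\epsilon=1/(2C)$ then gives $\|T_L(\zeta)\|\le\frac12$, so $I-T_L(\zeta)$ is invertible with inverse $\sum_n T_L(\zeta)^n$ of norm at most $2$. The statement for $T_R$ on $\widetilde{\C}\setminus D^+_\delta$ follows in the same way; one can also reduce to it by reflection $x\mapsto -x$ together with \eqref{eq: G_L bar}.

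For the operator bound I will use the splitting of Lemma \ref{lem: green uniform split}, $G_L=G_L^1\chi_{\R^+}+G_L^2+G_L^3$, and bound the three pieces separately for $m\in L^\infty_{-1,+}$. By \eqref{eq: weighted Holder L} with $p=1,2$,
\[
\|um\|_{L^1_{1,-}\cap L^2_{1,-}}\le C\|u\|_{L^1_1\cap L^2_1}\|m\|_{L^\infty_{-1,+}}.
\]

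\textbf{First piece.} This one is supported on $\R^+$ and satisfies $|G_L^1(x)|\le C\langle x\rangle$ uniformly in $\zeta$. Since it lives on $x>0$, $\langle x\rangle\lesssim 1+x_+$, so $\|G_L^1\chi_{\R^+}\|_{L^\infty_{-1,+}}\le C$. Then \eqref{eq: weighted Young R} with $p=\infty$, $q=1$, $r=\infty$ gives
\[
\|(G_L^1\chi_{\R^+})*(um)\|_{L^\infty_{-1,+}}\le C\|um\|_{L^1_{1,-}}.
\]

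\textbf{Second piece.} On $\widetilde{\C}\setminus D^-_\delta$ we have $\|G_L^2\|_{L^\infty}\le C(\delta)$. Hence $\|G_L^2*(um)\|_{L^\infty}\le C\|um\|_{L^1}$, and $L^\infty\subset L^\infty_{-1,+}$.

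\textbf{Third piece.} Here $\|G_L^3\|_{L^2}\le C$ uniformly, so Cauchy--Schwarz gives $\|G_L^3*(um)\|_{L^\infty}\le C\|um\|_{L^2}$.

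Summing the three pieces gives the required bound with $C=C(\delta)$.

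The main point needing care is uniformity in $\zeta$, which must be read off from Lemma \ref{lem: green uniform split}. The $L^\infty_{-1}$ bound on $G^1_L$ is uniform over all of $\widetilde\C$, including near $\frac12+i0$, where $G_L$ grows linearly on $\R^+$; this is why the weight $1+x_+$ and the hypothesis $u\in L^1_1$ enter. The $G_L^2$ bound degenerates only inside $D^-_\delta$, where $G_L(x,\zeta)$ has the pole $\frac{1}{1-2\zeta^*}$ approaching $\frac12-i0$; this is exactly why that disc is excluded. For large $|\zeta|$ the constants are controlled by the explicit $\frac{1}{1-2\zeta}$ factors and by \eqref{eq: G est 2} and \eqref{eq: G est 4}, both of which are already built into that lemma. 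I therefore expect no obstacle beyond verifying that each constant there is genuinely independent of $\zeta$ outside $D^-_\delta$.
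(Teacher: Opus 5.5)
Your proposal is correct and is essentially the paper's own proof. You split $G_L$ via Lemma \ref{lem: green uniform split} and bound the three convolution pieces with Lemma \ref{lem: weighted Holder Young}, so that $\|T_L(\zeta)\|\le C(\delta)\|u\|_{L^1_1\cap L^2_1}$ uniformly off $D^-_\delta$; a Neumann series then finishes once this is below $\frac12$. Your optional reduction of the $T_R$ case via \eqref{eq: G_L bar} also works, provided you combine the reflection with complex conjugation, replacing $u$ by $\overline{u(-\cdot)}$ and $\zeta$ by $\overline{\zeta}$; the paper simply states that the $T_R$ case is analogous.
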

\begin{proof}
We only show the proof for $T_L$. Using the splitting formula in Lemma \ref{lem: green uniform split}, we have
\begin{equation*}
[T_L(\zeta)m](x) =[  G_L^1(\cdot,\zeta)\chi_{\mathbb R^+}+G_L^2(\cdot,\zeta)+G_L^3(\cdot,\zeta)]*(um)(x)
\end{equation*}
Let $\zeta\in \widetilde{\C}\setminus D^-_\delta$. By Lemmas \ref{lem: green uniform split} and \ref{lem: weighted Holder Young}, we have
\begin{align}
    \|[G_L^1(\cdot,\zeta)\chi_{\R^+}]*(um)\|_{L^\infty_{-1,+}}&\le C\|G_L(\cdot,\zeta)\|_{L^\infty_{-1}}\|um\|_{L^1_{1,-}}\notag\\
    &\lesssim_\delta \| u\|_{L^1_1}\|m\|_{L^\infty_{-1,+}},
\end{align}
\begin{equation}
\|G_L^2(\cdot,\zeta)*(um)\|_{L^\infty}\le \|G_L^2(\cdot,\zeta)\|_{L^\infty}\|um\|_{L^1}\lesssim_\delta\|u\|_{L^1_1}\|m\|_{L^\infty_{-1,+}}, 
\end{equation}
\begin{equation}
\|G_L^3(\cdot,\zeta)*(um)\|_{L^\infty}\le \|G_L^3(\cdot,\zeta)\|_{L^2}\|um\|_{L^2}\lesssim_\delta \|u\|_{ L^2_1}\|m\|_{L^\infty_{-1,+}}.
\end{equation}
Combining the above estimates, we obtain a constant $C=C(\delta)$ such that for all $\zeta\in \widetilde{\C}\setminus D^-_\delta$
\begin{equation}\label{eq: est T_L via u}
\|T_L(\zeta) m \|_{L^\infty_{-1,+}}\le C\|u\|_{L^1_1\cap L^2_1}\|m\|_{L^\infty_{-1,+}}.
\end{equation}
Thus there exists $\epsilon>0$ such that for all $u$ with $\|u\|_{L^1_1\cap L^2_1}<\epsilon$, $$\|T_L(\zeta)\|_{L^\infty_{-1,+}\to L^\infty_{-1,+}}<\tfrac12.$$ Thus $I-T_L(\zeta)$ is invertible on $L^\infty_{-1,+}$ with the claimed norm bound.
\end{proof}

We next study conditions for invertibility of $I-T_L(\zeta)$ for $\zeta\in D^-_\delta\setminus \{\frac12-i0\}$ and of $I-T_R(\zeta)$ for $\zeta\in  D^+_\delta\setminus \{\frac12+i0\}$.

\begin{lemma}\label{lem: compact T}
Let $u\in L^1_1\cap L^2_1$. Each $T_L(\zeta)$ for $\zeta\in\widetilde{\C}\setminus \{\frac12-i0\}$ is compact on $L^\infty_{-1,+}$; each $T_R(\zeta)$ for $\zeta\in\widetilde{\C}\setminus \{\frac12+i0\}$  is compact on $L^\infty_{-1,-}$.
\end{lemma}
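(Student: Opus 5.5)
The plan is to treat the three pieces of the splitting in Lemma \ref{lem: green uniform split} separately, showing each gives a compact operator on $L^\infty_{-1,+}$ (we do $T_L$; $T_R$ is symmetric). Compactness is preserved under norm limits, so it suffices to approximate $u$ by $u_n\in C_0^\infty$ in $L^1_1\cap L^2_1$. By \eqref{eq: est T_L via u} (the argument there works for fixed $\zeta\ne\frac12-i0$, with constants depending on $\zeta$), $T_L(\zeta)$ depends linearly and boundedly on $u$ in operator norm. Hence we may assume $u$ is smooth with support in $[-A,A]$.

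With $u$ compactly supported, $m\mapsto um$ maps $L^\infty_{-1,+}$ boundedly into $L^\infty([-A,A])$; indeed $\|um\|_{L^\infty}\le (1+A)\|u\|_{L^\infty}\|m\|_{L^\infty_{-1,+}}$. It therefore suffices to show that $f\mapsto G_L(\cdot,\zeta)*f$ is compact from $L^\infty([-A,A])$ (functions supported there) into $L^\infty_{-1,+}$. Take a bounded sequence $f_n$. By Banach--Alaoglu, pass to a subsequence converging weak-$*$ in $L^\infty([-A,A])$, hence weakly in $L^2([-A,A])$. The goal is to show $G_L*f_n$ converges in $L^\infty_{-1,+}$, and we treat this in two regimes.

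\textbf{Local regime.} For $|x|\le B$, the value $(G_L*f_n)(x)=\int G_L(x-s)f_n(s)\,ds$ is a pairing of $f_n$ with $G_L(x-\cdot)\chi_{[-A,A]}$. This is in $L^1\cap L^2$ by the splitting of Lemma \ref{lem: green uniform split} (piece $G^3\in L^2$, pieces $G^1,G^2\in L^\infty_{\loc}$), so we get pointwise convergence. Equicontinuity in $x$ follows from $L^2$-continuity of translation of $G_L\chi_{[-B-A,B+A]}$. Arzel\`a--Ascoli then gives uniform convergence on $[-B,B]$.

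\textbf{Tails.} For $|x|>B\gg A$ we use the explicit structure from Proposition \ref{prop: green est}.
\begin{itemize}
\item The pieces $\frac{i}{1-2\zeta}\chi_{\R^+}$ and $c\,e^{i\lambda x}\chi_{\R^\pm}$ (present when $\zeta$ is near $\R^+$) give, for $x>A$ respectively $x<-A$, finite-rank expressions: $c\int f_n$ and $c\,e^{i\lambda x}\int e^{-i\lambda s}f_n$. These converge, since $|e^{i\lambda x}|$ is bounded on the relevant half-line when $\imag\lambda$ has the right sign, or is at most $1$ if $\lambda$ is real. For $\zeta=\frac12+i0$ the term $-x\chi_{\R^+}$ similarly gives $x\int f_n-\int sf_n$, which converges in $L^\infty_{-1,+}$.
\item The remaining part, $H$ or the $G_R$, $G_L$ half-line pieces, is exponentially decaying in $L^2$ by \eqref{eq: G est 2} and \eqref{eq: G est 4}. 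Its contribution for $|x|>B$ is therefore bounded by $Ce^{-\epsilon_1(B-A)}\sup_n\|f_n\|_{L^2}$, uniformly small.
\end{itemize}
Combining the two regimes with a diagonal choice of $B$ gives the Cauchy property in $L^\infty_{-1,+}$.

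The main obstacle is ensuring that the exponential-decay bounds and the explicit pole terms cover every $\zeta\in\widetilde\C\setminus\{\frac12-i0\}$ at fixed $\zeta$. This includes $\zeta$ far from $\R^+$, via case (ii) of Proposition \ref{prop: green est}, and large $\zeta$ near $\R^+$, where we also use the fixed-$\zeta$ contour shifts. Since $\zeta$ is fixed, non-uniform constants suffice.
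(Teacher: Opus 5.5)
Your proof is correct, but it is organized differently from the paper's.

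The paper never approximates $u$. It works directly with $u\in L^1_1\cap L^2_1$ and uses the fixed-$\zeta$ splittings \eqref{G_L split 2}, \eqref{G_L split +}, \eqref{G_L split -}, \eqref{G_L split 1/2}: explicit pole terms plus an $L^2$ kernel $h=\mathcal F^{-1}[R_{1,2}(\cdot,0,\zeta)]$. Local compactness comes from an $H^1_{\loc}$ bound on $T_L(\zeta)m_n$, available because $\xi R_2$ is bounded. The tails are checked against a hand-made convergence criterion for $L^\infty_{-1,+}$. The delicate step there is $h*(um_n)$ as $x\to-\infty$: the paper splits $h$ into $|s|<a$ and $|s|>a$ and uses the weight $\langle x\rangle u\in L^2$.

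Your density reduction uses the operator-norm bound \eqref{eq: est T_L via u}. That bound holds at each fixed $\zeta\neq\frac12-i0$ with a $\zeta$-dependent constant, by Lemma \ref{lem: green uniform split}. The reduction removes exactly the delicate step. Once $\mathrm{supp}\,u\subset[-A,A]$:
\begin{itemize}
\item the tail of $G_L*(um)$ depends only on the tail of the kernel;
\item the pole terms become finitely many scalar functionals of $um_n$, namely $\int um_n$, $\int e^{-i\lambda s}um_n$ and $\int s\,um_n$;
\item local compactness follows from weak compactness in $L^2([-A,A])$ and $L^2$-continuity of translation, with no derivative bounds.
\end{itemize}

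The cost is an appeal to norm-closedness of the compact operators and a case-by-case reading of Proposition \ref{prop: green est}. Exponential decay of the remainder is more than you need; $L^2$-smallness of its tails suffices. You could therefore use the paper's fixed-$\zeta$ splittings instead. That avoids checking that the cases of Proposition \ref{prop: green est}, together with Remark \ref{rmk: G at k pm 0} and the trivial case $G_L(x,0)=i\chi_{\R^+}(x)$, cover every $\zeta$.

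A minor slip that does not affect the argument: for $x>A$ the $\frac12+i0$ term is $-x\int f_n+\int s f_n$, not $x\int f_n-\int sf_n$.
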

\begin{proof}
We only show the proof for $T_L(\zeta)$. To prepare for our proof, we first claim that in order to show that a sequence $f_n\in L^\infty_{-1,+}$ converges, it suffices to show 
\begin{enumerate}[(a)]
\item $\{f_n\}$ converges uniformly on every compact set.
\item For every $\epsilon>0$, there exist $M>0$, $N\in \mathbb N$, and two functions $A_-\in L^\infty(-\infty,-M)$, $A_+\in L^\infty(M,\infty)$ such that for all $n>N$, $x<-M$, we have $|f_n(x)-A_-(x)|<\epsilon$, and for all $n>N$, $x>M$, we have $|\frac{f_n(x)}x-A_+(x)|<\epsilon$.
\end{enumerate}
In fact, if $f_n$ satisfies the above two conditions, let $f$ be the pointwise limit of $f_n$. Then for every $\epsilon>0$, by taking pointwise limit of condition (b) above, we get that there exists $M>0$ and $L^\infty$ functions $A_\pm(x)$, such that for all $x<-M$, $|f(x)-A_-(x)|\le \epsilon$, and for all $x>M$, $|\frac{f(x)}x-A_+(x)|\le\epsilon$. For $x\in [-M,M]$, there exists $N_1>N$, such that when $n>N_1$, $|f_n(x)-f(x)|<\epsilon$ by condition (a). This implies that $f\in L^\infty_{-1,+}$. Furthermore, There exists $N_1>0$ such that for $n>N_1$, $x<-M$, 
$$\frac1{1+x_+}|f_n(x)-f(x)|=|f_n(x)-f(x)|\le |f_n(x)-A_-(x)|+|f(x)-A_-(x)|< 2\epsilon.$$
For $n>N_1$, $x>M$,
$$\frac1{1+x_+}|f_n(x)-f(x)|\le \frac{|f_n(x)-f(x)|}{x}\le \left|\frac{f_n(x)}x-A_+(x)\right|+\left|\frac{f(x)}x-A_+(x)\right|< 2\epsilon.$$
For $n>N_1$, $x\in [-M,M]$,
$$\frac1{1+x_+}|f_n(x)-f(x)|\le |f_n(x)-f(x)|<\epsilon.$$
In summary, when $n>N_1$, $\|f_n-f\|_{L^\infty_{-1,+}}< 2\epsilon$. Thus $f_n$ converges to $f$ in $L^\infty_{-1,+}$, and the claim is proven.

Let $\{m_n\}$ be a bounded sequence in $L^\infty_{-1,+}$, with $\sup_n\|m_n\|_{L^\infty_{-1,+}}\le C$. We want to show $\{T_L(\zeta) m_n\}$ has a convergent subsequence in $L^\infty_{-1,+}$. If $\zeta\in \widetilde{\C}\setminus (\R^+\pm i0)$, use \eqref{G_L split 2} to write 
\begin{equation}\label{T_L m split 1}
[T_L(\zeta)m_n](x) = \frac{i}{1-2\zeta}\int_{-\infty}^x u(s)m_n(s)~ds +\mathcal F^{-1}[R_2(\xi,0,\zeta)\widehat{um_n}(\xi)](x).
\end{equation}
We have 
\begin{equation}
D[T_L(\zeta)m_n](x) = \frac1{1-2\zeta}u(x)m_n(x)+\mathcal F^{-1}[\xi R_2(\xi,0,\zeta)\widehat{um_n}(\xi)](x)
\end{equation}
Note that $um_n$ is bounded in $L^1\cap L^2$, and $\xi R_2(\xi,0,\zeta)$ is bounded. It follows that $\{T_L(\zeta)m_n\}$ is bounded in $H^1[-N,N]\subset C^{0,\frac12}[-N,N]$ for any $N\in \mathbb N$. Since $C^{0,\frac12}[-N,N]$ is compactly embedded in $C[-N,N]$, by passing to a subsequence, we may assume $\{T_L(\zeta)m_n\}$ converges uniformly on any compact set. It remains to show that $\{T_L(\zeta)m_n\}$ satisfies condition (b) above. Note that
\begin{align}
\left|\int_{-\infty}^x u(s)m_n(s)~ds \right|&\le \|m_n\|_{L^\infty_{-1,+}}\int_{-\infty}^x |u(s)|(1+s_+)~ds\notag\\
&\le C\int_{-\infty}^x |u(s)|(1+s_+)~ds.
\end{align}
Since $u\in L^1_1$, for every $\epsilon>0$, there exists an $M>0$ such that for all $x<-M$, 
\begin{equation}
\left|\frac{i}{1-2\zeta}\int_{-\infty}^x u(s)m_n(s)~ds\right|<C\int_{-\infty}^{-M} |u(s)|~ds<\epsilon,
\end{equation}
and for $x>M$,
\begin{equation}
\frac1x\left|\frac{i}{1-2\zeta}\int_{-\infty}^x u(s)m_n(s)~ds\right|< \frac CM\int_{\R}\langle s\rangle |u(s)|~ds<\epsilon.
\end{equation}
Thus we only need to work on the second term in \eqref{T_L m split 1}. Denote $h=\mathcal F^{-1}[R_2(\xi,0,\zeta)]$. We have by the estimates on $R_2$ that $h\in L^2$, and the second term in \eqref{T_L m split 1} can be written as $h*(um_n)$. Since 
\begin{align*}
    \|um_n\|_{L^2}&\le \|u\|_{L^2_1}\|m_n\|_{L^\infty_{-1,+}}\le C\|u\|_{L^2_1},\\
    \|h*(um_n)\|_{L^\infty}&\le C\|h\|_{L^2}\|u\|_{L^2_1},
\end{align*}
it follows that for every $\epsilon>0$, there exists $M>0$ such that for $x>M$,
\begin{equation}
\frac1x|h*(um_n)(x)|\le \frac1M \|h*(um_n)\|_{L^\infty}< \epsilon.
\end{equation}
To study $h*(um_n)(x)$ as $x\to-\infty$, we write
\begin{equation}
h*(um_n)(x)=\int_{|s|>a}h(s) u(x-s)m_n(x-s)~ds+ \int_{|s|<a}h(s) u(x-s)m_n(x-s)~ds
\end{equation}
for some $a>0$ to be determined. Note that
\begin{align*}
&~\left|\int_{|s|>a}h(s) u(x-s)m_n(x-s)~ds\right|\\
\le&~ \left(\int_{|s|>a}|h(s)|^2~ds\right)^{\frac12}\|um_n\|_{L^2}\le C\left(\int_{|s|>a}|h(s)|^2~ds\right)^{\frac12}.
\end{align*}
Since $h\in L^2$, we can choose $a$ large enough so that the above integral is bounded by $\frac\epsilon 2$. Enlarge $M$ if necessary to make $M>a$. For $x<-M$, $|s|<a$, we have $x-s<0$, thus
\begin{align*}
&~\left|\int_{|s|<a}h(s) u(x-s)m_n(x-s)~ds\right|\\
\le &~\|m_n\|_{L^\infty_{-1,+} }\int_{|s|<a}|h(s)u(x-s)|~ds\\
\le &~C\int_{|s|<a}\frac{1}{\langle x-s\rangle }|h(s)\langle x-s\rangle u(x-s)|~ds\\
\le &~\frac{C}{\langle M-a\rangle}\|h*(\langle x\rangle u(x))\|_{L^\infty}\\
\le &~\frac{C}{\langle M-a\rangle}\|h\|_{L^2}\|u\|_{L^2_1}.
\end{align*}
Enlarge $M$ if necessary, we can bound the above integral by $\frac\epsilon 2$. Thus $|h*(um_n)(x)|<\epsilon$ if $x<-M$.

Now suppose $\zeta\in \R^++i0$, $\zeta\ne \frac12+i0$. Using \eqref{G_L split +}, we can split $T_L(\zeta)m_n$ in a way similar as before and argue in the same way for compactness. If $\zeta\in \R^+-i0$, $\zeta\ne \frac12-i0$, we can split $T_L(\zeta)m_n$ using \eqref{G_L split -}, and argue similarly as above for compactness, except for the term
\begin{equation}
e^{i\lambda x}\int_{x}^\infty u(s)m_n(s) e^{-i\lambda s}~ds.
\end{equation}
When $x\to\infty$, this term is uniformly small. However, it is not uniformly small when $x\to-\infty$. Instead, we observe that $\int_\R u(s)m_n(s) e^{-i\lambda s}~ds$ is a bounded sequence in $\R$. By passing to a subsequence, we may assume that it converges to $A_-\in \R$. We then have for $x<-M$
\begin{align*}
&~\left|e^{i\lambda x}\int_{x}^\infty u(s)m_n(s) e^{-i\lambda s}~ds-A_-e^{i\lambda x}\right|\\
\le &~\int_{-\infty }^x |u(s)m_n(s)|~ds +\left|\int_\R u(s)m_n(s) e^{-i\lambda s}~ds-A_-\right|\\
\le &~C\int_{-\infty}^{-M} |u(s)|~ds+\left|\int_\R u(s)m_n(s) e^{-i\lambda s}~ds-A_-\right|,
\end{align*}
which can be made less than any given $\epsilon>0$ by choosing $M>0$ and $n\in \mathbb N$ large enough.

Finally, suppose $\zeta=\frac12+i0$. In this case, we can use \eqref{G_L split 1/2} to split $T_L(\frac12+i0)m_n$. Then convergence of a subsequence can be shown in the same way for all the terms except 
\begin{equation}\label{T_L m split 2}
\int_{-\infty}^x (x-s)u(s)m_n(s)~ds.
\end{equation}
As before, by passing to a subsequence, we can assume \eqref{T_L m split 2} converges uniformly on compact sets, and it remains to show condition (b) above. When $x<-M<0$,
\begin{align*}
&~\left|\int_{-\infty}^x (x-s)u(s)m_n(s)~ds\right|\\
\le &~\|m_n\|_{L^\infty_{-1,+}}\int_{-\infty}^x (|x|+|s|)|u(s)|~ds\\
\le &~C\int_{-\infty}^x 2|s||u(s)|~ds\le C\int_{-\infty}^{-M} \langle s\rangle |u(s)|~ds.
\end{align*}
Since $u\in L^1_1$, the above integral can be made less than any given $\epsilon>0$ by choosing $M$ large enough. To study \eqref{T_L m split 2} as $x\to\infty$, we note that $\int_\R u(s)m_n(s)~ds$ is a bounded sequence in $\R$, and by passing to a subsequence, converges to $A_+\in \R$. Thus for $x>0$
\begin{align*}
&~\left|\frac1x\int_{-\infty}^x (x-s)u(s)m_n(s)~ds-A_+\right|\notag\\
\le &~\int_{x}^\infty |u(s)m_n(s)|~ds+\frac1x\int_{-\infty}^x |s| |u(s)m_n(s)|~ds + \left|\int_\R u(s)m_n(s)~ds-A_+\right|\notag\\
\le &~C\int_{x}^\infty \langle s\rangle |u(s)|~ds+\frac{C}x\int_{-\infty}^x |s|(1+s_+)|u(s)|~ds+ \left|\int_\R u(s)m_n(s)~ds-A_+\right|.
\end{align*}
Thus by choosing $M>0$ and $N\in \mathbb N$ large, for all $x>M$, $n>N$, we can make the first and the last terms in the above sum less than $\frac\epsilon 3$ each. We estimate the remaining term for $x>M>0$ as
\begin{align*}
&~\frac{C}{x}\int_{-\infty}^x |s|(1+s_+)|u(s)|~ds\\
\le &~\frac{C}x \left(\int_{-\infty}^0 \langle s\rangle |u(s)|~ds + \int_0^{\sqrt x}\sqrt x \langle s \rangle |u(s)|~ds+\int_{\sqrt x}^x x \langle s\rangle |u(s)|~ds\right)\\
\le &~\frac{C}M\|u\|_{L^1_1}+\frac{C}{\sqrt M}\|u\|_{L^1_1}+\int_{\sqrt M}^\infty \langle s\rangle |u(s)|~ds
\end{align*}
Since $u\in L^1_1$, the above sum can be made less than $\frac\epsilon3$ by choosing $M>0$ large enough. In summary, we can find $M>0$ and $N\in \mathbb N$ sufficiently large such that $\left|\frac1x\int_{-\infty}^x (x-s)u(s)m_n(s)~ds-A_+\right|<\epsilon$ for all $x>M$, $n>N$. The proof is complete.
\end{proof}

Let $\|u\|_{L^1_1\cap L^2_1}<\epsilon$ be sufficiently small. By Theorem \ref{thm: inv T off disc}, $I-T_L(\zeta)$ is invertible on $L^\infty_{-1,+}$, and $M_1(x,\zeta)$ is defined, for $\zeta \in \widetilde{\C}\setminus D^-_\delta$, and in particular, on $D^+_\delta$. $I-T_R(\zeta)$ is invertible on $L^\infty_{-1,-}$, and $N_1(x,\zeta)$ is defined, for $\zeta\in \widetilde{\C}\setminus D^+_\delta$, and in particular, on $D^-_\delta$. Let $a(\zeta)$ and $\breve a(\zeta)$ be given by \eqref{def: a}, \eqref{def: breve a}. It follows that $a(\zeta)$ is defined for $\zeta \in \widetilde{\C}\setminus (D^-_\delta\cup \{\frac12+i0\})$, and in particular, on $D^+_\delta\setminus \{\frac12+i0\}$. $\breve a(\zeta)$ is defined for $\zeta\in \widetilde{\C}\setminus (D^+_\delta\cup\{\frac12-i0\})$,  and in particular on $D^-_\delta\setminus \{\frac12-i0\}$. The next theorem shows that the invertibility of $I-T_L(\zeta)$ and $I-T_R(\zeta)$ in the critical semi-disc is determined by the scattering functions $\breve a$ and $a$ of each other.

\begin{theorem}\label{thm: inv T on disc}
Assume $u\in L^1_1\cap L^2_1$, and let $\zeta\in \widetilde{\C}\setminus \{\frac12\pm i0\}$ be given. Then
\begin{enumerate}[(i)]
\item Suppose $I-T_R(\zeta)$ is invertible on $L^\infty_{-1,-}$ and $\breve a(\zeta) $ is defined by \eqref{def: breve a}. Then $I-T_L(\zeta)$ is invertible on $L^\infty_{-1,+}$ if and only if $\breve a(\zeta)\ne 0$; 

\item Suppose $I-T_L(\zeta)$ is invertible on $L^\infty_{-1,+}$ and $a(\zeta) $ is defined by \eqref{def: a}. Then $I-T_R(\zeta)$ is invertible on $L^\infty_{-1,-}$ if and only if $a(\zeta)\ne 0$; 
\end{enumerate}
\end{theorem}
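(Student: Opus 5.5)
By Lemma \ref{lem: compact T}, $T_L(\zeta)$ is compact on $L^\infty_{-1,+}$. The Fredholm alternative therefore reduces invertibility of $I-T_L(\zeta)$ to injectivity. So for (i) I will show that $\ker(I-T_L(\zeta))\neq\{0\}$ if and only if $\breve a(\zeta)=0$. Part (ii) is the mirror argument with the roles of $L,R$ and $\pm$ exchanged, so I only describe (i).

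First suppose $\breve a(\zeta)=0$, and set $m=N_1(\cdot,\zeta)$, which exists because $I-T_R(\zeta)$ is invertible. I will show that $m$ is a nonzero element of the kernel.
\begin{itemize}
\item Since $\zeta\neq\frac12\pm i0$, Lemma \ref{lem: jost exist 1} gives $m\in L^\infty\subset L^\infty_{-1,+}$.
\item By Lemma \ref{lem: equiv int diff} (or Lemma \ref{lem: equiv int diff on R+} for $\zeta\in\R^+\pm i0$), $m$ extends to $\mathbb H^2(S)+\mathbb H^\infty(S)$ and satisfies the differential equation \eqref{N diff eqn}.
\item By \eqref{jost prty: N1 limit left}, $m\to\breve a(\zeta)=0$ as $x\to-\infty$.
\end{itemize}
Now I rerun the reverse-direction proof of Lemma \ref{lem: equiv int diff}(b) with $M_1$ replaced by $m$. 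The inhomogeneous term $1$ there came only from the left limit via \eqref{mllfd M1 left lim}. With left limit $0$ the first line of \eqref{M1 delta pre int} tends to $0$, which gives $m=G_L*(um)$, that is, $T_Lm=m$. Finally $m\neq0$ because $m\to1$ at $+\infty$.

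Conversely, suppose $m\in L^\infty_{-1,+}$ is nonzero with $T_L(\zeta)m=m$. I will show this forces $\breve a(\zeta)=0$. The argument of Lemma \ref{lem: anal cont 1} with the constant term removed shows the following.
\begin{itemize}
\item $m$ is bounded, using \eqref{eq: T(m) L inf}.
\item $m$ extends analytically to the strip and satisfies \eqref{N diff eqn}.
\item $m\to0$ as $x\to-\infty$ and $m\to c:=\frac{i}{1-2\zeta}\int um$ as $x\to+\infty$.
\end{itemize}
There are two cases.
\begin{itemize}
\item If $c\neq0$, then by Lemma \ref{lem: equiv int diff}(b) applied with $G_R$, $m/c$ solves the $N_1$ integral equation. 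Uniqueness from the invertibility of $I-T_R(\zeta)$ gives $m=cN_1$. Comparing limits at $-\infty$ yields $c\,\breve a(\zeta)=0$, hence $\breve a(\zeta)=0$.
\item If $c=0$, then the same reverse argument gives $(I-T_R(\zeta))m=0$. Since $m\in L^\infty\subset L^\infty_{-1,-}$, this contradicts invertibility of $I-T_R(\zeta)$ unless $m=0$.
\end{itemize}

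For $\zeta=k\pm i0$ the asymptotics from Lemma \ref{lem: anal cont R+} include oscillatory terms $e^{i\lambda x}$ on one side. These must be tracked using the $b$-coefficients and Remark \ref{rmk: diff int equiv k-i0}. For instance, a kernel element $m$ of $I-T_L(k+i0)$ behaves like $c+de^{i\lambda x}$ at $+\infty$, and I must show the $L^\infty$ kernel forces the right $N$-type normalization.

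The main obstacle is this bookkeeping. The branch $k\pm i0$ chosen for $G_L$ versus $G_R$ must be matched so that the reverse integral-equation argument applies, using $G_L(x,k+i0)-G_L(x,k-i0)=\frac{ie^{i\lambda x}}{1-2k^*}$ as in the proof of Lemma \ref{lem: equiv int diff on R+}. I would handle it by writing $m=cN_1+dN_e$ and using the uniqueness for both $N_1$ and $N_e$ supplied by Lemma \ref{lem: jost exist 1}.
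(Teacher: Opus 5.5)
Your outline matches the paper's: compactness of $T_L(\zeta)$ reduces invertibility to a trivial kernel; $\breve a(\zeta)=0$ puts $N_1(\cdot,\zeta)$ in the kernel; and a nonzero kernel element equals $cN_1(\cdot,\zeta)$ with $c\neq0$ and $c\,\breve a(\zeta)=0$. For $\zeta\in\C\setminus\R^+$, your detour through \eqref{N diff eqn}, the asymptotics of Lemma \ref{lem: anal cont 1} and the reverse argument of Lemma \ref{lem: equiv int diff}(b) does go through.

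\textbf{The gap is $\zeta=k\pm i0$.} The statement includes this case, and there you stop at the ``bookkeeping''. Already the forward direction fails as written. For $\zeta=k-i0$ the left limit of $N_1$ is $\breve a(k-i0)-\breve b(k-i0)e^{i\lambda x}$, not $\breve a(k-i0)$, so ``left limit $0$'' does not hold.

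Your proposed fix for the converse is also not right. Take $\zeta=k+i0$ and $m=T_L(k+i0)m$.
\begin{enumerate}
\item If $N_1,N_e$ are taken at $k+i0$, where invertibility of $I-T_R$ is assumed, then $(I-T_R(k+i0))m$ is exactly a constant. So the $N_e$-coefficient in $m=cN_1+dN_e$ is zero, not the oscillatory asymptotic coefficient $d=\frac{i}{1-2k^*}\int_\R u m e^{-i\lambda y}\,dy$.
\item A nonzero $d$ in front of $N_e$ arises only from $(I-T_R(k-i0))m=c+de^{i\lambda x}$, that is, from $T_R$ on the opposite branch. Its invertibility is not a hypothesis: no smallness of $u$ is assumed here, so Theorem \ref{thm: inv T off disc} is unavailable. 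Comparing limits at $-\infty$ would then involve the coefficients of $N_1(\cdot,k-i0)$ and $N_e(\cdot,k-i0)$ rather than $\breve a(k+i0)$.
\end{enumerate}

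\textbf{The missing idea is the exact identity \eqref{GL-GR}.} It states $G_L(\cdot,\zeta)-G_R(\cdot,\zeta)=\frac{i}{1-2\zeta}$ for every $\zeta\in\widetilde{\C}\setminus\{\frac12\pm i0\}$, including both boundary branches. It gives $T_L(\zeta)m-T_R(\zeta)m=\frac{i}{1-2\zeta}\int_\R um\,dx$, a constant function.
\begin{enumerate}
\item Forward direction: $N_1=1+T_R(\zeta)N_1=\breve a(\zeta)+T_L(\zeta)N_1$.
\item Converse: a kernel element $m$ is bounded by \eqref{eq: T(m) L inf}, hence lies in $L^\infty_{-1,-}$. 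It satisfies $(I-T_R(\zeta))m=c$ with $c=\frac{i}{1-2\zeta}\int_\R um\,dx$. So $m=cN_1$ with $c\neq0$, and then $m=c\,\breve a(\zeta)+T_L(\zeta)m$ forces $c\,\breve a(\zeta)=0$.
\end{enumerate}
This is the paper's proof. It needs no asymptotics, no differential equations and no branch matching, and it treats all $\zeta$ uniformly.
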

\begin{proof}
Let us only consider the first case. Let $\zeta\in \widetilde{\C}\setminus \{\frac12\pm i0\}$. By Lemma \ref{lem: compact T}, $T_L(\zeta)$ is compact on $L^\infty_{-1,+}$. Thus the invertibility of $I-T_L(\zeta)$ is equivalent to the triviality of its kernel. Now $I-T_R(\zeta)$ is invertible on $L^\infty_{-1,-}$ by assumption. By Lemma \ref{lem: jost exist 1}, $N_1(x,\zeta)$ exists. Define $\breve a(\zeta)$ by \eqref{def: breve a}. By \eqref{GL-GR}, we have
\begin{equation}\label{a=0 vs trivial kernel}
N_1(\cdot,\zeta)=1+T_R(\zeta) N_1(\cdot,\zeta)=\breve a(\zeta) + T_L(\zeta)N_1(\cdot,\zeta).
\end{equation} 
Note that since $\zeta\ne \frac12-i0$, $N_1(\cdot,\zeta)\in L^\infty$ by Lemma \ref{lem: jost exist 1}. From \eqref{a=0 vs trivial kernel} we see that if $\breve a(\zeta)=0$, $ N_1(\cdot,\zeta)\ne 0$ is in the kernel of $I-T_L(\zeta)$. On the contrary, suppose a nonzero $m\in L^\infty_{-1,+}$ satisfies 
\begin{equation}
m=T_L(\zeta)m= c+T_R(\zeta)m
\end{equation}
for some number $c\in \C$. Since $\zeta\ne \frac12-i0$, it follows as before that $m\in L^\infty$. Since $I-T_R(\zeta)$ is invertible on $L^\infty_{-1,-}$, we have that $m=cN_1(\cdot,\zeta)$. Since $m$ is nonzero, $c\ne 0$. We have 
\begin{equation}
m=cN_1(\cdot,\zeta) = c\breve a(\zeta) +T_L(\zeta)cN_1(\cdot,\zeta)=c\breve a(\zeta)+T_L(\zeta)m.
\end{equation}
It follows that $c\breve a(\zeta)=0$. Since $c\ne 0$, we conclude that $\breve a(\zeta)=0$. 
\end{proof}
\begin{remark}
The proof above actually shows that the kernels of $I-T_L(\zeta)$ and $I-T_R(\zeta)$ are at most one-dimensional.
\end{remark}
From Theorem \ref{thm: inv T on disc} we see that it is of interest to study the functions $a(\zeta)$ and $\breve a(\zeta)$. This we'll do in the next section. Before going in that direction, we first restate Theorem \ref{thm: inv T on disc} in a form that doesn't presuppose the invertibility of $I-T_L(\zeta)$ or $I-T_R(\zeta)$ for the implications to hold.
\begin{corollary}\label{cor: a=0 vs N1 nonexist}
    Assume $\|u\|_{L^1_1\cap L^2_1}<\epsilon$ for $\epsilon$ sufficiently small, and let $\zeta\in \widetilde{\C}\setminus \{\frac12\pm i0\}$ be given. Then
    \begin{enumerate}[(i)]
        \item $I-T_L(\zeta)$ is invertible on $L^\infty_{-1,+}$ and $a(\zeta)=0$ if and only if $I-T_R(\zeta)$ is not invertible on $L^\infty_{-1,-}$.
        \item $I-T_R(\zeta)$ is invertible on $L^\infty_{-1,-}$ and $\breve a(\zeta)=0$ if and only if $I-T_L(\zeta)$ is not invertible on $L^\infty_{-1,+}$.
    \end{enumerate}
\end{corollary}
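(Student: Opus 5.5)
The plan is to reduce Corollary \ref{cor: a=0 vs N1 nonexist} to Theorem \ref{thm: inv T on disc} combined with the smallness result Theorem \ref{thm: inv T off disc}. The key observation is that, for $\epsilon$ small, at least one of $I-T_L(\zeta)$, $I-T_R(\zeta)$ is always invertible. Fix some $\delta>0$ with $\delta<\frac12$, small enough that $D^+_\delta\cap D^-_\delta=\emptyset$ in $\widetilde{\C}$; the two half discs are disjoint by construction, since they carry opposite signs of the imaginary part and glue to different copies of the diameter. Choose $\epsilon$ as in Theorem \ref{thm: inv T off disc} for this $\delta$. Then $I-T_L(\zeta)$ is invertible for every $\zeta\notin D^-_\delta$, and $I-T_R(\zeta)$ is invertible for every $\zeta\notin D^+_\delta$. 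Since $\zeta$ cannot lie in both half discs, at least one operator is invertible for every $\zeta\in\widetilde{\C}\setminus\{\frac12\pm i0\}$.

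I will prove (i); (ii) is symmetric.

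For the forward direction, suppose $I-T_L(\zeta)$ is invertible and $a(\zeta)=0$. Then $M_1(\cdot,\zeta)$ exists by Lemma \ref{lem: jost exist 1}, and $a(\zeta)$ is well defined by \eqref{def: a}. By Theorem \ref{thm: inv T on disc}(ii), $I-T_R(\zeta)$ is invertible if and only if $a(\zeta)\neq0$. Since $a(\zeta)=0$, $I-T_R(\zeta)$ is not invertible.

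For the converse, suppose $I-T_R(\zeta)$ is not invertible. By the observation above, $I-T_L(\zeta)$ must then be invertible, so $a(\zeta)$ is defined. Applying Theorem \ref{thm: inv T on disc}(ii) again, non-invertibility of $I-T_R(\zeta)$ forces $a(\zeta)=0$. This proves (i).

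Part (ii) follows in the same way, interchanging the roles of $L$ and $R$ and of $a$ and $\breve a$, and using Theorem \ref{thm: inv T on disc}(i).

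The only potentially delicate step is the dichotomy. It has to hold uniformly, with a single $\epsilon$ working for all $\zeta$, and this is exactly what Theorem \ref{thm: inv T off disc} supplies once $\delta$ is fixed. The exclusion of $\frac12\pm i0$ is needed because Theorem \ref{thm: inv T on disc} is stated only for $\zeta\in\widetilde{\C}\setminus\{\frac12\pm i0\}$, and because at those points $M_1$ or $N_1$ may fail to lie in $L^\infty$, which that theorem's proof relies on.
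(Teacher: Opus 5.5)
Your proposal is correct and follows essentially the same route as the paper: the forward implication is read off from Theorem~\ref{thm: inv T on disc}, and for the converse Theorem~\ref{thm: inv T off disc} shows that non-invertibility of $I-T_R(\zeta)$ forces $\zeta\in D^+_\delta$, hence invertibility of $I-T_L(\zeta)$, after which Theorem~\ref{thm: inv T on disc} gives $a(\zeta)=0$. Your remark that $D^+_\delta$ and $D^-_\delta$ are disjoint for $\delta<\frac12$ simply makes explicit the dichotomy that the paper uses implicitly.
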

\begin{proof}
    The forward implication follows readily from Theorem \ref{thm: inv T on disc}. To get the backward implication, assume $I-T_R(\zeta)$ is not invertible on $L^\infty_{-1,-}$. We note that this can happen only when $\zeta\in D^+_\delta\setminus \{\frac12+i0\}$ by Theorem \ref{thm: inv T off disc}. By the same theorem, we see that for such $\zeta$, $I-T_L(\zeta)$ must be invertible on $L^\infty_{-1,+}$. We now apply Theorem \ref{thm: inv T on disc} again to conclude that $a(\zeta)=0$.
\end{proof}

We also easily get
\begin{corollary}\label{cor: a a breve duality}
    Assume $\|u\|_{L^1_1\cap L^2_1}<\epsilon$ for $\epsilon$ sufficiently small, and let $\zeta\in \widetilde \C\setminus \{\frac12\pm i0\}$ be given. Then
    \begin{enumerate}[(i)]
        \item $a(\zeta)=0$ if and only if $\breve a(\zeta)$ is undefined.
        \item $\breve a(\zeta)=0$ if and only if $a(\zeta)$ is undefined.
        \item $a(\zeta)$ is defined and nonzero if and only if $\breve a (\zeta)$ is defined and nonzero.
    \end{enumerate}
\end{corollary}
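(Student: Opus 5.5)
The plan is to combine Theorem \ref{thm: inv T off disc}, Theorem \ref{thm: inv T on disc} and Corollary \ref{cor: a=0 vs N1 nonexist}. First fix what ``defined'' means. By \eqref{def: a}, $a(\zeta)$ is defined exactly when $M_1(\cdot,\zeta)$ exists, i.e. when $I-T_L(\zeta)$ is invertible on $L^\infty_{-1,+}$ (Lemma \ref{lem: jost exist 1}). Likewise $\breve a(\zeta)$ is defined exactly when $I-T_R(\zeta)$ is invertible on $L^\infty_{-1,-}$. Throughout, $\zeta\ne\frac12\pm i0$, so both operators are compact perturbations of the identity (Lemma \ref{lem: compact T}), and both Theorem \ref{thm: inv T on disc} and the corollary apply. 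Take $\epsilon$ as in Theorem \ref{thm: inv T off disc} with some fixed small $\delta$.

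For (i): by Corollary \ref{cor: a=0 vs N1 nonexist}(i), ``$a(\zeta)$ is defined and $a(\zeta)=0$'' is equivalent to ``$I-T_R(\zeta)$ is not invertible,'' and the latter is exactly ``$\breve a(\zeta)$ is undefined.'' The statement ``$a(\zeta)=0$'' presupposes that $a(\zeta)$ is defined, so (i) follows. Part (ii) is the same argument using Corollary \ref{cor: a=0 vs N1 nonexist}(ii).

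For (iii), suppose $a(\zeta)$ is defined and nonzero. Since $a(\zeta)\ne0$, part (i) gives that $\breve a(\zeta)$ is defined. Since $a(\zeta)$ is defined, part (ii) gives $\breve a(\zeta)\ne0$. The converse is symmetric.

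The main point to handle carefully is that at least one of $a(\zeta)$, $\breve a(\zeta)$ is always defined, which is why no case is missed. This holds because the small-norm assumption makes $I-T_L(\zeta)$ invertible off $D^-_\delta$ and $I-T_R(\zeta)$ invertible off $D^+_\delta$, and these two half-discs are disjoint. That disjointness is already used inside Corollary \ref{cor: a=0 vs N1 nonexist}, so the argument reduces to this bookkeeping.
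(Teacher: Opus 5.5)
Your proposal is correct and follows the paper's intended route: the paper gives no separate proof, stating the result as an immediate consequence of Corollary \ref{cor: a=0 vs N1 nonexist}. Your reading of ``defined'' as invertibility of $I-T_L(\zeta)$ on $L^\infty_{-1,+}$ (resp.\ $I-T_R(\zeta)$ on $L^\infty_{-1,-}$), which is more than mere existence of a solution, is exactly the convention the paper uses, and your derivation of (iii) from (i) and (ii) is sound.
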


                  %%  Existnce of Jost Solutions
\section{Scattering coefficients}\label{sec: scat data}

We have seen from the previous section that the solvability of integral equations for the Jost solutions is closely related to zeros of the scattering functions $a(\zeta)$ and $\breve a(\zeta)$. The invertibility of $I-T_L(\zeta)$, $I-T_R(\zeta)$ and the definability of $a(\zeta)$, $\breve a(\zeta)$ are given by Theorem \ref{thm: inv T off disc}, Theorem \ref{thm: inv T on disc}, and Corollary \ref{cor: a a breve duality}. To better understand the scattering problem, we will study the scattering data in more detail in this section.

\begin{comment}
First recall that for $\zeta\in \widetilde{\C}\setminus \{\frac12\pm i0\}$, if $I-T_L(\zeta)$ is invertible on $L^\infty_{-1,+}$, we define
\begin{equation}
\label{alpha.zeta}
a(\zeta)	
	=	1	+	\frac{i}{1-2\zeta} \int_\R u(x) M_1(x,\zeta) \,  dx .
\end{equation}
For $\zeta+i0\in (\R^++ i0)\setminus \{\frac12+ i0\}$, we can define
\begin{equation}
\label{beta.zeta}
b(\zeta+i0)	
	=	\frac{i}{1-2\zeta^*} \int_\R e^{-i\lambda x} u(x) M_1(x,\zeta+i0) \,  dx.
\end{equation}
For $\zeta\in \widetilde{\C}\setminus \{\frac12\pm i0\}$, if $I-T_R(\zeta)$ is invertible on $L^\infty_{-1,-}$, we define
\begin{equation}
\label{alphab.zeta}
\breve{a}(\zeta)
	=	1	-	\frac{i}{1-2\zeta} \int_\R u(x) N_1(x,\zeta) \,  dx.
\end{equation}
For $\zeta-i0\in (\R^+-i0)\setminus \{\frac12-i0\}$,  we can define
\begin{equation}
\label{betab.zeta}
\breve{b}(\zeta-i0)	
	=	\frac{i}{1-2\zeta^*} \int e^{-i\lambda x} u(x) N_1(x,\zeta-i0) \, dx.
\end{equation} 

\end{comment}
We first show analyticity of $a(\zeta)$, $\breve a(\zeta)$ and continuity of $b(k+i0)$, $\breve b(k-i0)$.
For convenience, we define for $\zeta\in \widetilde{\C}\setminus D^-_\delta$
\begin{equation}\label{def: a_1}
a_1(\zeta)=\int_\R u(x) M_1(x,\zeta) \,  dx,
\end{equation}
and for $k>0$,
\begin{equation}\label{def: b1}
b_1(k+i0)=\int_\R e^{-i\lambda x} u(x) M_1(x,k+i0) \,  dx.
\end{equation}
Here $\lambda=Z^{-1}(k)$ (see Definition \ref{def: Z inv}).
It follows from \eqref{def: a} and \eqref{def: b} that $a(\zeta)=1+\frac{ia_1(\zeta)}{1-2\zeta}$, and $b(k+i0)=\frac{ib_1(k+i0)}{1-2k^*}$. Similarly for $\breve a_1$ and $\breve b_1$, we define for $\zeta\in \widetilde{\C}\setminus D^+_\delta$
\begin{equation}\label{def: breve a_1}
    \breve a_1(\zeta)=\int_\R u(x)N_1(x,\zeta)~dx,
\end{equation}
and for $k>0$,
\begin{equation}
    \breve b_1(k-i0)=\int_\R e^{-i\lambda x}u(x)N_1(x,k-i0)~dx.
\end{equation}
It follows from \eqref{def: breve a} and \eqref{def: breve b} that $\breve a(\zeta) = 1-\frac{i\breve a_1(\zeta)}{1-2\zeta}$, and $\breve b(k-i0)=\frac{i\breve b_1(k-i0)}{1-2k^*}$.

\begin{theorem}\label{thm: a b cont}
Let $u\in L^1_1\cap L^2_1$ with sufficiently small norm. $a_1(\zeta)$ is continuous on $\widetilde{\C}\setminus D^-_\delta$ and analytic on its interior. $b_1(k+i0)$ is continuous on $\R^+$. $\breve a_1(\zeta)$ is continuous on $\widetilde{\C}\setminus D^+_\delta$ and analytic on its interior. $\breve b_1(k-i0)$ is continuous on $\R^+$. 
\end{theorem}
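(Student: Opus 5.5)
The plan is to reduce everything to continuity (and, in the interior, analyticity) of the map $\zeta\mapsto M_1(\cdot,\zeta)\in L^\infty_{-1,+}$, and then pair it against $u\in L^1_1$. Since $|a_1(\zeta_1)-a_1(\zeta_2)|\le \|u\|_{L^1_{1,-}}\|M_1(\cdot,\zeta_1)-M_1(\cdot,\zeta_2)\|_{L^\infty_{-1,+}}$ (the weight $1+x_+$ is dominated by $\langle x\rangle$), continuity of $a_1$ follows directly from norm continuity of $M_1$. I will treat $M_1$ and $a_1$ throughout; the statements for $N_1$, $\breve a_1$ and $\breve b_1$ are mirror images obtained by interchanging the roles of $G_L$ and $G_R$.

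\textbf{Continuity of $M_1$.} On $\widetilde\C\setminus D^-_\delta$, Theorem \ref{thm: inv T off disc} gives $\|(I-T_L(\zeta))^{-1}\|\le 2$ uniformly. Applying the second resolvent identity to $M_1=(I-T_L)^{-1}1$ gives
\[
M_1(\zeta_1)-M_1(\zeta_2)=(I-T_L(\zeta_1))^{-1}\bigl(T_L(\zeta_1)-T_L(\zeta_2)\bigr)M_1(\zeta_2).
\]
It therefore suffices to show $\|(T_L(\zeta_1)-T_L(\zeta_2))m\|_{L^\infty_{-1,+}}\to0$ for the fixed $m=M_1(\zeta_2)$ as $\zeta_1\to\zeta_2$. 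By Lemma \ref{lem: weighted Holder Young}, $\|(G_L(\zeta_1)-G_L(\zeta_2))*(um)\|_{L^\infty_{-1,+}}$ is bounded by the $L^2_{-1,+}+L^\infty_{-1,+}$ norm of $G_L(\zeta_1)-G_L(\zeta_2)$ times $\|um\|_{L^1_{1,-}\cap L^2_{1,-}}$. Corollary \ref{cor: G cont 1} shows this tends to $0$ at every point except $\frac12+i0$. (Points of $\R^+-i0$ lie outside the domain except where they belong to $D^-_\delta$, which is excluded, so they cause no difficulty here.)

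\textbf{The point $\frac12+i0$.} This is the main obstacle, since there only Lemma \ref{lem: G weak cont 1/2} is available: the difference $G_L(\zeta)-G_L(\tfrac12+i0)-(H(\zeta)-H(\tfrac12))$ stays bounded in $L^\infty_{-1,+}$ and tends to $0$ only uniformly on compact sets, while $H$ itself is continuous by Proposition \ref{prop: G cont}(iii). I would first try to show that $(G_L(\zeta)-G_L(\tfrac12+i0))*(um)\to0$ in $L^\infty_{-1,+}$ by dominated convergence, splitting $um=(um)\chi_{|s|\le A}+(um)\chi_{|s|>A}$. The tail piece is small by the uniform bound together with $um\in L^1_{1,-}$. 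For the compactly supported piece, the explicit kernel is $\bigl(\frac{e^{i\lambda x}-1}{\lambda}-ix\bigr)\chi_{\R^+}$, with $\operatorname{Im}\lambda\ge0$. Dividing by $1+x_+$, I expect the convolution with a compactly supported $L^1$ function to be small uniformly in $x$: for large $x$, the ratio tends to $0$ after normalization by $x$, because $\frac{\lambda}{1-2\zeta^*}\to1$ and the nonlinear part grows sublinearly.

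If this fails, the fallback is to use only local-uniform convergence plus boundedness. That route gives convergence of $a_1=\int uM_1$ by dominated convergence, through the weak form of the resolvent identity, after extracting a subsequence via the compactness of Lemma \ref{lem: compact T} and identifying the limit by uniqueness of solutions.

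\textbf{Analyticity and $b_1$.} In the interior, $G_L(x,\zeta)$ is analytic in $\zeta$, so $\zeta\mapsto T_L(\zeta)$ is an analytic operator family on $L^\infty_{-1,+}$; hence $(I-T_L)^{-1}1$ is analytic, and so is $a_1$. Analyticity can alternatively be obtained from Morera's theorem applied to the continuous function $a_1$, with Fubini for the $\zeta$-integrals of $G_L$. For $b_1(k+i0)$, continuity in $k$ on $\R^+\setminus\{\frac12\}$ follows as above, since $e^{-i\lambda x}$ varies continuously in $L^\infty$ and $u\in L^1_1$. At $k=\frac12$ the same argument as at $\frac12+i0$ applies, using that $\lambda\to0$ there.
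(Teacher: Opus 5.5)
\textbf{There is a genuine gap: your reduction fails at exactly the points that make this theorem nontrivial.}

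\emph{The points of $\R^+-i0$.} Your parenthetical has the domain backwards. The set $\widetilde\C\setminus D^-_\delta$ \emph{contains} every $k-i0$ with $k>0$ and $|k-\tfrac12|\ge\delta$, and Corollary \ref{cor: G cont 1} says nothing at such points. There $\zeta\mapsto M_1(\cdot,\zeta)$ is not continuous in $L^\infty_{-1,+}$, not even via strong continuity of $T_L$. By the remark following Lemma \ref{lem: anal cont R+}, $M_1(x,k-i0)=1-b(k-i0)e^{i\lambda x}+o(1)$ as $x\to-\infty$ with $\lambda$ real. For $\imag\zeta<0$, instead, $M_1(x,\zeta)\to1$. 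Hence $\|M_1(\cdot,\zeta)-M_1(\cdot,k-i0)\|_{L^\infty_{-1,+}}\ge|b(k-i0)|$, which is nonzero in general. At the kernel level: the piece $(e^{i\lambda_2 x}-e^{i\lambda_1 x})\chi_{\R^-}$ convolved with a fixed $um$ is roughly $(e^{-i\lambda_2X}-e^{-i\lambda_1X})\int um$ at $x=-X$, which is not uniformly small.

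\emph{The point $\tfrac12+i0$.} Your first attempt fails for the same kind of reason. For $\imag\lambda>0$, the function $(1+x)^{-1}\bigl(\frac{e^{i\lambda x}-1}{\lambda}-ix\bigr)$ tends to $-i$, not $0$, as $x\to+\infty$. So its convolution with a compactly supported $um$ has $L^\infty_{-1,+}$ norm comparable to $|\int um|$. In fact, Lemmas \ref{lem: anal cont 1} and \ref{lem: anal cont 1/2} give $\|M_1(\cdot,\zeta)-M_1(\cdot,\tfrac12+i0)\|_{L^\infty_{-1,+}}\ge|a_1(\tfrac12+i0)|$ for all nearby $\zeta\ne\tfrac12+i0$. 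This bound is nonzero for every upper-generic $u$. So no argument through norm continuity of $M_1$ can work at either type of point.

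\textbf{The missing idea.} Since $a_1=\int uM_1$, you only need $M_1$ to converge when tested against $u$. The paper realizes this by duality. It writes $T_L(\zeta)=A(\zeta)^*$ with $A(\zeta)g=u\,[G_L(-\cdot,\zeta)*g]$ acting on $L^1_{1,+}$. Because the factor $u$ localizes, the bounded-plus-locally-uniform convergence of Lemma \ref{lem: G weak cont 1/2} yields strong continuity of $A(\zeta)$ at $\tfrac12+i0$ and on $(\R^+-i0)\setminus\{\tfrac12-i0\}$ (Lemma \ref{lem: strong cont}). Then
\[
a_1(\zeta)-a_1(\zeta_1)=\int\bigl[(A(\zeta)-A(\zeta_1))(I-A(\zeta_1))^{-1}u\bigr]M_1(\cdot,\zeta),
\]
and this needs only the uniform bound $\|M_1(\cdot,\zeta)\|_{L^\infty_{-1,+}}\le2$.

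\textbf{Your fallback.} Subsequence extraction, identification of the limit by uniqueness, and dominated convergence in $\int uM_1$ (using $|uM_1|\le2(1+x_+)|u|$) is a legitimate alternative. However, you must also run it at the points of $(\R^+-i0)\setminus D^-_\delta$. Moreover, Lemma \ref{lem: compact T} alone does not give the subsequence, because it concerns a single fixed $\zeta$. You would write $M_1(\cdot,\zeta)=1+T_L(\zeta_1)M_1(\cdot,\zeta)+(T_L(\zeta)-T_L(\zeta_1))M_1(\cdot,\zeta)$, use compactness on the middle term, and show the last term tends to $0$ locally uniformly.

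\textbf{A smaller error in the $b_1$ step.} For real $\lambda$, $e^{-i\lambda x}$ does not vary continuously in $L^\infty$. Continuity of $b_1$ on $\R^+\setminus\{\tfrac12\}$ follows instead by dominated convergence, using $u\in L^1_1$. Its continuity at $k=\tfrac12$ inherits the gap described above.
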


We will only show the proofs for $a_1(\zeta)$ and $b_1(k+i0)$. The argument for $\breve a_1(\zeta)$ and $\breve b_1(k-i0)$ will be completely analogous. 

We start by proving the analyticity properties of $T_L(\zeta)$ and $T_R(\zeta)$. 

We first find the range of $\zeta$ for which $T_L(\zeta)$ and $T_R(\zeta)$ are continuous in the operator norm topology.
\begin{lemma} \label{lem: T anlytc}
Let $u\in L^1_1\cap L^2_1$. Consider $T_L$ and $T_R$ as mappings
$$T_L:\widetilde{\C}\setminus \{\tfrac12-i0\} \to \mathcal {L} (L^\infty_{-1,+}),$$ and $$T_R:\widetilde{\C}\setminus \{\tfrac12+i0\} \to \mathcal L(L^\infty_{-1,-}).$$ Then $T_L$ is continuous on $\widetilde{\C}\setminus \left((\R^+-i0)\cup \{\frac12+i0\}\right)$ and analytic in its interior. $T_R$ is  continuous on $\widetilde{\C}\setminus\left((\R^++i0)\cup \{\frac12-i0\}\right)$ and analytic in its interior. 
\end{lemma}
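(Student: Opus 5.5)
The plan is to prove continuity in operator norm straight from the Green's function continuity results of Section \ref{sec.green-estimates}, and to get analyticity from a Morera-type argument for operator-valued functions. Throughout I treat only $T_L$; $T_R$ is the mirror image, using the $G_R$ statements.

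\textbf{Continuity.} By Lemma \ref{lem: weighted Holder Young}, for any kernel $K$ we have
\[
\|K*(um)\|_{L^\infty_{-1,+}}\le \|K\|_{L^\infty_{-1,+}}\|um\|_{L^1_{1,-}}+\|K\|_{L^2}\|um\|_{L^2}\lesssim \|K\|_{L^2_{-1,+}+L^\infty_{-1,+}}\,\|u\|_{L^1_1\cap L^2_1}\|m\|_{L^\infty_{-1,+}} .
\]
Indeed, for the $L^2$ part, $|(K_2*(um))(x)|\le\|K_2\|_{L^2}\|um\|_{L^2}$, and we may split any kernel in $L^2_{-1,+}+L^\infty_{-1,+}$ into a bounded-weighted piece and an $L^2$ piece near the bad region; more simply, one handles $L^2_{-1,+}$ by the same elementary inequality \eqref{eq: elementary ineq 1} used in Lemma \ref{lem: weighted Holder Young}. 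Applying this with $K=G_L(\cdot,\zeta_1)-G_L(\cdot,\zeta_2)$ gives
\[
\|T_L(\zeta_1)-T_L(\zeta_2)\|\lesssim \|u\|_{L^1_1\cap L^2_1}\,\|G_L(\cdot,\zeta_1)-G_L(\cdot,\zeta_2)\|_{L^2_{-1,+}+L^\infty_{-1,+}} .
\]
Corollary \ref{cor: G cont 1} says that the right side tends to $0$ as $\zeta_2\to\zeta_1$, for every $\zeta_1\in(\C\setminus\R^+)\cup(\R^++i0)\setminus\{\frac12+i0\}$. This is exactly the claimed domain of continuity, which is why $\R^+-i0$ and $\frac12+i0$ must be excluded.

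\textbf{Analyticity.} The interior of the domain is $\C\setminus\R^+$. There, for fixed $m\in L^\infty_{-1,+}$ and a test functional, the map $\zeta\mapsto T_L(\zeta)m$ should be holomorphic. I would show this pointwise. Using the splitting \eqref{G_L split 2},
\[
T_L(\zeta)m(x)=\tfrac{i}{1-2\zeta}\int_{-\infty}^x um\,ds+\mathcal F^{-1}\bigl[R_2(\xi,0,\zeta)\widehat{um}\bigr](x).
\]
The first term is obviously holomorphic in $\zeta$. For the second, $R_2$ is holomorphic in $\zeta\in\C\setminus[0,\infty)$ for each $\xi$, and the bound \eqref{eq: R bound} holds locally uniformly in $\zeta$ (the last assertion of Proposition \ref{prop: remainder}). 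Hence differentiation under the integral, or Morera's theorem combined with Fubini, gives holomorphy for each $x$.

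Combining pointwise holomorphy with the norm continuity already proved, integrals of $T_L(\zeta)$ over small triangles vanish. The operator-valued triangle integral is a norm-convergent Riemann integral, and applying it to $m$ and evaluating at $x$ reduces the question to the scalar Morera statement. Morera's theorem for Banach-space-valued functions then yields norm analyticity.

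\textbf{Main obstacle.} The main point needing care is that the continuity norm in Corollary \ref{cor: G cont 1} is $L^2_{-1,+}+L^\infty_{-1,+}$ rather than a pure $L^2+L^\infty$. The convolution estimate must therefore absorb the weight via $u\in L^1_1\cap L^2_1$, using \eqref{eq: weighted Young R} with $p=2$ and $p=\infty$ and $q$ chosen accordingly. I expect this to be routine given Lemma \ref{lem: weighted Holder Young}.
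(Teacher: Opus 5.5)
The continuity half of your proof is the paper's argument. The inequality \eqref{eq: weighted Young R}, with $(p,q)=(\infty,1)$ and $(2,2)$, combined with \eqref{eq: weighted Holder L}, gives
$\|T_L(\zeta_1)-T_L(\zeta_2)\|\le C\|u\|_{L^1_1\cap L^2_1}\|G_L(\cdot,\zeta_1)-G_L(\cdot,\zeta_2)\|_{L^2_{-1,+}+L^\infty_{-1,+}}$, and Corollary \ref{cor: G cont 1} then finishes. Your first display uses an unweighted $\|K\|_{L^2}$, which is not the norm Corollary \ref{cor: G cont 1} controls. The weighted Young inequality you invoke right afterwards is exactly what is needed.

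For analyticity you take a genuinely different route. The paper pairs $T_L(\zeta)m$ with $g\in C_0^\infty$ and applies Fubini. It then truncates the $\xi$-contour to $\Gamma_L^N$, using $G_L^N\to G_L$ in $L^2$ uniformly on the curve, so that only holomorphy of $\zeta\mapsto(\xi-\zeta(1-e^{-2\xi}))^{-1}$ for each fixed $\xi$ is needed.

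You instead use the pole-subtracted splitting \eqref{G_L split 2} together with the bound \eqref{eq: R bound}. By Proposition \ref{prop: remainder} that bound is uniform on compact subsets of $\C\setminus[0,\infty)$, so dominated convergence gives holomorphy of $(T_L(\zeta)m)(x)$ for every fixed $x$. This reuses estimates already in hand and avoids the truncation. You also spell out the step the paper leaves implicit: weak holomorphy plus norm continuity yields norm holomorphy via the vector-valued Morera theorem.

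One point needs a line of justification. Evaluation at a point is not a bounded functional on $L^\infty_{-1,+}$, so you cannot pass it through the operator-valued Riemann integral for free. You need two facts:
\begin{itemize}
\item each $T_L(\zeta)m$ is continuous in $x$, because $um\in L^1$ and $R_2(\cdot,0,\zeta)\widehat{um}\in L^1$;
\item $L^\infty_{-1,+}$-convergence of continuous functions is locally uniform convergence everywhere.
\end{itemize}
Alternatively, pair with $g\in C_0^\infty\subset L^1_{1,+}$ as the paper does; this is a bounded functional since $L^\infty_{-1,+}=(L^1_{1,+})^*$.

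Also, the interior where analyticity holds is $\C\setminus[0,\infty)$, not $\C\setminus\R^+$ with $0$ included. By \eqref{eq: G up-down}, $T_L$ jumps across $(0,\epsilon)$, so $0$ is not an interior point. Your argument only ever uses $\C\setminus[0,\infty)$, so this is a matter of wording.
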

\begin{remark}\label{rmk: cont at 0}
    Note that $T_L(\zeta)$ and $T_R(\zeta)$ are continuous at $\zeta=0$, with $\zeta\in \widetilde{\C}$ approaching 0.
\end{remark}
\begin{remark}
    Note that $T_L(\zeta)$ is not continuous in the  $\mathcal L(L^\infty)$ operator norm topology at $\zeta\in \R^++i0$, even though it is so for $\zeta$ away from $\R^+\pm i0$. In fact, For $k_1,k_2\in \R^+\setminus \{\frac12\}$, $[T_L(k_1+i0)-T_L(k_2+i0)]m(x)$ contains a term like $$\int_{-\infty}^x \left(e^{i(\lambda_1-\lambda_2)(x-y)}-1\right)u(y)m(y)e^{i\lambda_2y}~dy.$$ If $u$ is supported near zero, when $x$ is close to $\frac\pi{|\lambda_1-\lambda_2|}$, the integral is not small.
\end{remark}
\begin{proof}
We only show proofs for $T_L$. Consider $\zeta_1\in \widetilde{\C}\setminus \left((\R^+-i0)\cup \{\frac12+i0\}\right)$, and let $\zeta_2\to\zeta_1$ from within in the set $ \widetilde{\C}\setminus (\R^+-i0)$. We have by Lemma \ref{lem: weighted Holder Young}
\begin{align*}
&~\|[T_L(\zeta_1)-T_L(\zeta_2)]m\|_{L^\infty_{-1,+}}\\
 \le &~\|G_L(\cdot,\zeta_2)-G_L(\cdot,\zeta_1)\|_{L^2_{-1,+}+L^\infty_{-1,+}}\|um\|_{L^2_{1,-}\cap L^1_{1,-}}\\
\le &~C\|G_L(\cdot,\zeta_2)-G_L(\cdot,\zeta_1)\|_{L^2_{-1,+}+L^\infty_{-1,+}} \|u\|_{L^1_1\cap L^2_1}\|m\|_{L^\infty_{-1,+}}.
\end{align*}
Thus 
\begin{align}\label{eq: T cont in zeta}
&~\|[T_L(\zeta_1)-T_L(\zeta_2)]\|_{L^\infty_{-1,+}\to L^\infty_{-1,+}}\notag\\
\le&~ C\|G_L(\cdot,\zeta_2)-G_L(\cdot,\zeta_1)\|_{L^2_{-1,+}+L^\infty_{-1,+}} \|u\|_{L^1_1\cap L^2_1},
\end{align}
which tends to 0 by Corollary \ref{cor: G cont 1}.
 
Next we show analyticity of $T_L$. Note that we only need to show that for every $m\in L^\infty_{-1,+}$ and every $g\in C_0^\infty(\R)$, $\int_\R [T_L(\zeta)m] (x) g(x)~dx$ is analytic in $\zeta$. Since we have already shown continuity of $T_L(\zeta)$, by Morera's theorem, it suffices to show that for every closed curve $\Gamma$ contained in the interior of $\widetilde{\C}\setminus \left((\R^+-i0)\cup \{\frac12+i0\}\right)$,
\begin{equation}\label{anlytc int}
\int_\Gamma \int_\R [T_L(\zeta)m] (x) g(x)~dx ~d\zeta = 0.
\end{equation}
Note that $[T_L(\zeta)m] (x) = \int_\R G_L(x-y,\zeta)u(y)m(y)~dy$ with $\|G_L(\cdot,\zeta)\|_{L^2+L^\infty}$ bounded uniformly in $\zeta\in\Gamma$. Note that $um \in L^1\cap L^2$. Thus the $L^1(dy)$ norm of $G_L(x-y,\zeta)u(y)m(y)$ is bounded uniformly in $\zeta\in\Gamma$. It follows that $G_L(x-y,\zeta)u(y)m(y)g(x)$ is integrable on the product space of $(x,y,\zeta)$, so we can change the order of integration to get 
\begin{equation}
\int_\Gamma \int_\R [T_L(\zeta)m] (x) g(x)~dx ~d\zeta = \int_\R\int_\R \int_\Gamma G_L(x-y,\zeta)u(y)m(y)g(x)~d\zeta~dy  ~dx.
\end{equation}
Denote by $\Gamma_L^N$ the part of $\Gamma_L$ where $|\real \xi|\le N$, and define 
$$G_L^N(x,\zeta) = \frac1{2\pi}\int_{\Gamma_L^N}\frac{e^{ix\xi}}{\xi-\zeta(1-e^{-2\xi})}~d\xi.$$ 
It's easy to see that $\|G_L^N(\cdot,\zeta)-G_L(\cdot,\zeta)\|_{L^2}\to 0$ uniformly in $\zeta\in\Gamma$ as $N\to\infty$. Thus 
\begin{equation*}
\int_\R \int_\Gamma G_L(x-y,\zeta)u(y)m(y)g(x)~d\zeta~dy = \lim_{N\to\infty}\int_\R \int_\Gamma G_L^N(x-y,\zeta)u(y)m(y)g(x)~d\zeta~dy
\end{equation*}
Now apply Fubini's theorem again to see
\begin{equation}
\int_\Gamma G_L^N(x-y,\zeta)~d\zeta = \frac1{2\pi}\int_{\Gamma_L^N} \int_\Gamma \frac{e^{ix\xi}}{\xi-\zeta(1-e^{-2\xi})}~d\zeta~d\xi=0.
\end{equation} 
It follows now from the above calculation that \eqref{anlytc int} holds.
\end{proof}

We can now prove part of Theorem \ref{thm: a b cont}.

\begin{corollary}
Let $u\in L^1_1\cap L^2_1$ with sufficiently small norm. Then $a_1(\zeta)$ is continuous on $\widetilde{\C}\setminus \left(D^-_\delta\cup(\R^+-i0)\cup \{\frac12+i0\}\right)$ and analytic on its interior. $b_1(k+i0)$ is continuous on $\R^+\setminus \{\frac12\}$. $\breve a_1(\zeta)$ is continuous on $\widetilde{\C}\setminus \left(D^+_\delta\cup(\R^++i0)\cup \{\frac12-i0\}\right)$ and analytic on its interior. $\breve b_1(k-i0)$ is continuous on $\R^+\setminus \{\frac12\}$. 
\end{corollary}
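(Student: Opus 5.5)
We deduce the corollary from the operator-level statement, Lemma \ref{lem: T anlytc}, together with the uniform invertibility of Theorem \ref{thm: inv T off disc}. We treat $a_1$ and $b_1$; the claims for $\breve a_1$ and $\breve b_1$ follow by the symmetric argument with $T_R$, $L^\infty_{-1,-}$ and $D^+_\delta$.

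\emph{Step 1: continuity and analyticity of $M_1$ as an $L^\infty_{-1,+}$-valued function.} Let
\[
\Omega=\widetilde{\C}\setminus \left(D^-_\delta\cup(\R^+-i0)\cup \{\tfrac12+i0\}\right).
\]
By Theorem \ref{thm: inv T off disc}, $\|(I-T_L(\zeta))^{-1}\|\le 2$ for all $\zeta\in\Omega$, and $M_1(\cdot,\zeta)=(I-T_L(\zeta))^{-1}1$, where $1\in L^\infty_{-1,+}$. We use the resolvent identity
\[
(I-T_L(\zeta_1))^{-1}-(I-T_L(\zeta_2))^{-1}=(I-T_L(\zeta_1))^{-1}\,[T_L(\zeta_1)-T_L(\zeta_2)]\,(I-T_L(\zeta_2))^{-1}.
\]
Combined with the continuity from Lemma \ref{lem: T anlytc}, it shows that $\zeta\mapsto(I-T_L(\zeta))^{-1}$ is norm continuous on $\Omega$. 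On the interior of $\Omega$, $T_L$ is analytic, and inversion is analytic on the open set of invertible operators. Hence $\zeta\mapsto M_1(\cdot,\zeta)$ is continuous on $\Omega$ and analytic on its interior as an $L^\infty_{-1,+}$-valued map.

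\emph{Step 2: passing to $a_1$ and $b_1$.} The functional $m\mapsto\int_\R u\,m\,dx$ is bounded on $L^\infty_{-1,+}$, since $|\int um|\le\|u\|_{L^1_1}\|m\|_{L^\infty_{-1,+}}$. Composing it with the map from Step 1 gives continuity of $a_1$ on $\Omega$ and analyticity on its interior.

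For $b_1(k+i0)$ with $k\in\R^+\setminus\{\frac12\}$, we split
\[
b_1(k_2+i0)-b_1(k_1+i0)=\int e^{-i\lambda_2x}u\,[M_1(x,k_2+i0)-M_1(x,k_1+i0)]\,dx+\int (e^{-i\lambda_2x}-e^{-i\lambda_1x})\,u\,M_1(x,k_1+i0)\,dx .
\]
Since $\lambda_j\in\R$, we have $|e^{-i\lambda_2x}|=1$. The first term is therefore bounded by $\|u\|_{L^1_1}\|M_1(\cdot,k_2+i0)-M_1(\cdot,k_1+i0)\|_{L^\infty_{-1,+}}\to0$ by Step 1, since $\R^++i0\setminus\{\frac12+i0\}$ lies in $\Omega$ (it is off $D^-_\delta$). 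For the second term, $\lambda=Z^{-1}(k)$ is continuous in $k$, and $uM_1(\cdot,k_1+i0)\in L^1$. Dominated convergence then sends it to $0$.

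\emph{Main obstacle.} The delicate point is the topology at boundary points $\zeta\in\R^++i0$, where only one-sided approach is allowed. There we must check that the continuity in Lemma \ref{lem: T anlytc}, which rests on Corollary \ref{cor: G cont 1}, is exactly the one-sided continuity within $\widetilde\C\setminus(\R^+-i0)$. This requirement is also why $\frac12+i0$ is excluded. Beyond this bookkeeping, the argument is a formal consequence of the cited results.
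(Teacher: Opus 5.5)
Your proof is correct and follows the paper's route. The paper also deduces the corollary directly from Lemma~\ref{lem: T anlytc}, the representations $M_1(\cdot,\zeta)=(I-T_L(\zeta))^{-1}1$ and $N_1(\cdot,\zeta)=(I-T_R(\zeta))^{-1}1$ (with invertibility from Theorem~\ref{thm: inv T off disc}), and pairing with $u$. Your explicit resolvent identity and your dominated-convergence treatment of the factor $e^{-i\lambda x}$ in $b_1$ fill in the details the paper compresses into ``the continuity properties follow in the same way.''
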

\begin{proof} 
The analyticity of $a_1(\zeta)$, $\breve a_1(\zeta)$ in Theorem \ref{thm: a b cont} now follows directly from Lemma \ref{lem: T anlytc}, since $M_1(\cdot,\zeta) = (I-T_L(\zeta))^{-1}1$, and $N_1(\cdot,\zeta) = (I-T_R(\zeta))^{-1}1$. The continuity properties follow in the same way.
\end{proof}

Before we continue on the proof of continuity of $a$ and $b$. We here give a lemma about regularity of the Jost solutions which will later be used in Section \ref{sec: bound states}.
\begin{lemma}\label{lem: C^inf regularity jost fcn}
    Let $u\in L^1_1\cap L^2_1$ with sufficiently small norm. Assume further that $u\in C_0^\infty(\R)$. If $\zeta\in D^+_\delta\setminus (\R^+ + i0)$, then $M_1(x+iy,\zeta)$ is $C^\infty$ with all partial derivatives bounded for $(x,y)\in\R\times [0,2]$ and $\zeta$ in a compact subset of $D^+_\delta\setminus (\R^+ + i0)$. Similarly, if $\zeta\in D^-_\delta\setminus (\R^+-i0)$, then $N_1(x+iy,\zeta)$ is $C^\infty$ with all partial derivatives bounded for $(x,y)\in\R\times [0,2]$, and $\zeta$ in a compact subset of $D^-_\delta\setminus (\R^+-i0)$. Furthermore, the $M_1(x+iy,\zeta)$ and $N_1(x+iy,\zeta)$ are holomorphic in $(x+iy,\zeta)$ for $(x,y)\in \R\times (0,2)$ and $\zeta$ in the corresponding range.
\end{lemma}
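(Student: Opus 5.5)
The plan is to bootstrap regularity from the integral equation \eqref{jost prty: M1 ext}, using that $u\in C_0^\infty$ makes the source $uM_1$ smooth and compactly supported. I treat only $M_1$; the argument for $N_1$ is the complex conjugate one. Fix a compact set $K\subset D^+_\delta\setminus(\R^++i0)$. Such a $K$ stays a positive distance from $\R^+$, so $\zeta\in K$ lies in $\C\setminus\R^+$, where the splitting \eqref{G_L split 2} holds. By Theorem \ref{thm: inv T off disc}, $M_1(\cdot,\zeta)=(I-T_L(\zeta))^{-1}1$ exists with $\|M_1(\cdot,\zeta)\|_{L^\infty_{-1,+}}\le 2\|1\|$, uniformly in $\zeta\in K$. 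By Lemma \ref{lem: jost exist 1} and \eqref{eq: T(m) L inf}, together with the uniformity of $C(\zeta)$ on compacts from Lemma \ref{lem: green uniform split}, $M_1$ is also bounded in $L^\infty$, uniformly on $K$.

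\textbf{Step 1 (regularity in $x$ on the real line).} I differentiate \eqref{jost prty: D M1 split} repeatedly. Write $f=uM_1$. The identity
\[
D M_1=\tfrac{1}{1-2\zeta}f+\mathcal F^{-1}[\xi R_2(\xi,0,\zeta)\hat f]
\]
has $\xi R_2$ bounded by \eqref{eq: R bound}, uniformly on $K$. By induction, if $M_1\in W^{j,\infty}$ then $f\in H^{j}$ with compact support, so $\hat f\langle\xi\rangle^{j}\in L^2$. Since $f$ is compactly supported and $M_1\in W^{j,\infty}$, $f$ also lies in $W^{j,1}$, which puts $\xi^j\hat f$ in $L^\infty$. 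To upgrade, I use that $f$ is compactly supported with $H^{j+1}$ regularity coming from $M_1\in H^{j+1}_{\mathrm{loc}}$. This gives $\xi^{j}\hat f\in L^1$, and hence $D^{j+1}M_1\in L^\infty$. All bounds are uniform on $K$.

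\textbf{Step 2 (the strip).} In \eqref{jost prty: M1 ext}, $\partial_x^a\partial_y^b$ of the last term is $\mathcal F^{-1}[(i\xi)^a\partial_y^bR_2(\xi,y)\hat f]$. By \eqref{eq: d_y R bound}, $|\partial_y^bR_2|\lesssim 1+|\xi|^be^{(2-y)\xi}\chi_{\R^-}$. The factor $e^{(2-y)\xi}$ is at most $1$ for $\xi<0$ and $y\le 2$, so all such derivatives are bounded on $\R\times[0,2]$ once $\langle\xi\rangle^{a+b}\hat f\in L^1$, which Step 1 provides. The integral term $\int_{-\infty}^x f$ is smooth with bounded derivatives.

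\textbf{Step 3 (joint holomorphy).} Holomorphy in $x+iy$ is Lemma \ref{lem: anal cont 1}. Holomorphy in $\zeta$ follows from Lemma \ref{lem: T anlytc}, since $\zeta\mapsto M_1(\cdot,\zeta)$ is analytic into $L^\infty_{-1,+}$; also $\zeta\mapsto R_2(\xi,y,\zeta)$ is analytic, with $\zeta$-derivative bounds as in \eqref{eq: d_zeta R bound}. Hence the representation \eqref{jost prty: M1 ext} is separately holomorphic in $z$ and in $\zeta$, and locally bounded. Hartogs' theorem (or Osgood's lemma) then gives joint holomorphy.

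The main obstacle is Step 1: closing the induction, namely obtaining $\langle\xi\rangle^j\hat f\in L^1$ rather than merely $L^2$. I would handle this with a Gagliardo–Nirenberg/Leibniz estimate for $uM_1$ on the support of $u$, as in Lemma \ref{lem: M1 improved regularity}, applied one derivative at a time.
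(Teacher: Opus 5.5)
Your proposal is correct and follows the paper's proof essentially step for step: you bootstrap $x$-regularity from \eqref{jost prty: D M1 split} via the argument of Lemma \ref{lem: M1 improved regularity}, bound $\partial_x^k\partial_y^l$ of the last term of \eqref{jost prty: M1 ext} using \eqref{eq: d_y R bound} and the rapid decay of $\widehat{uM_1}$, and obtain joint holomorphy from Lemma \ref{lem: T anlytc} plus Osgood's lemma. The one step the paper adds is an explicit bound on the mixed $\zeta$-derivatives. It gets this from the uniform $(x,y)$-bounds by Cauchy's integral formula over a small circle in $\zeta$, and these bounds are used later (e.g. boundedness of $\partial_\zeta M_1$ in Lemma \ref{lem: a' for cpt u}), so you should include that one-line step.
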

\begin{proof}
    In Lemma \ref{lem: M1 improved regularity}, we have proven that $\partial_xM_1(x,\zeta)\in L^\infty$. Taking the $x$-derivative of \eqref{jost prty: D M1 split}, we get
    \begin{align}\label{eq: D^2 M1}
        \frac{1}{i}\partial_x^2 M_1(x,\zeta)=&\frac{i}{1-2\zeta}\partial_x[u(x)M_1(x,\zeta)]\notag\\
        &\quad + \mathcal F^{-1}[\xi R_2(\xi,0,\zeta)]*\partial_x[u(x)M_1(x,\zeta)].
    \end{align}
    We can now repeat the argument in the proof of Lemma \ref{lem: M1 improved regularity} on the last term of \eqref{eq: D^2 M1} to get $\partial_x^2 M_1(x,\zeta)\in L^\infty$. The higher $x$-derivatives can be estimated accordingly. Taking derivatives of \eqref{jost prty: M1 ext}, we get
    \begin{align}
        \partial_x^k \partial_y^l M_1(x+iy,\zeta)&=\frac{i}{1-2\zeta}\partial_x^{k-1}\partial_y^l[u(x)M_1(x,\zeta)]\notag\\
        &\quad +\mathcal F^{-1}\left[\partial_y^l R_2(\xi,y,\zeta)\mathcal F\left(\partial_s^k[u(s)M_1(s,\zeta)]\right)(\xi)\right](x).
    \end{align}
    The bound follows from \eqref{eq: d_y R bound}, since we already have that $\mathcal F\left(\partial_s^k[u(s)M_1(s,\zeta)]\right)$ is in Schwarz class.

    Next we estimate the $\zeta$-derivatives. By Lemma \ref{lem: T anlytc}, $\zeta\mapsto M_1(x,\zeta)$ is holomorphic as a map into $L^\infty_{-1,+}$. It follows from \eqref{jost prty: M1 ext} that $M_1(x+iy,\zeta)$ is holomorphic in $\zeta$ for fixed $x+iy$. As a result
    \begin{equation}
        \partial_x^k\partial_y^l\partial_\zeta^m M_1(x+iy,\zeta)=\frac{m!}{2\pi i}\int_{C(\zeta,\epsilon)}\frac{\partial_x^k\partial_y^l M_1(x,\tau)}{(\tau-\zeta)^{m+1}}~d\tau,
    \end{equation}
    where $C(\zeta,\epsilon)$ is the circle centered at $\zeta$ with a sufficiently small radius $\epsilon$. The bounds on $\partial_x^k\partial_y^l \partial_\zeta^m M_1(x+iy,\zeta)$ follow from the above bounds on $\partial_x^k \partial_y^l M_1(x,\zeta)$. Finally, we observe that $M_1(x+iy,\zeta)$ is separately holomorphic in $x+iy$ and in $\zeta$. By Osgood's lemma, it is jointly holomorphic in $(x+iy,\zeta)$.
\end{proof}

We are still missing the continuity of $a_1(\zeta)$, $b_1(k+i0)$, $\breve a_1(\zeta)$, $\breve b_1(k-i0)$ at a few specific locations. The reason for this gap is the loss of operator norm convergence of $T_L(\zeta)$ as $\zeta\to \frac12+i0$ or as $\zeta\to \R^+-i0$ and similarly for $T_R(\zeta)$. In fact, the convergence of $T_L(\zeta)$ and $T_R(\zeta)$ in these cases are much weaker, and we need to work on the integral expressions for $a_1(\zeta)$, $b_1(k+i0)$, etc. directly. To that end, we make the following observations. 

$L^\infty_{-1,+} = [L^1_{1,+}]^*$ and $L^\infty_{-1,-}=[L^1_{1,+}]^*$ under the identification $f\mapsto \Lambda_f$ where $\Lambda_f(g)= \int_\R fg ~dx$. For $\zeta\in \widetilde{\C}\setminus \{\frac12-i0\}$, define 
\begin{equation}\label{def: A(zeta)}
A(\zeta) g = u[G_L(-\cdot,\zeta)*g].
\end{equation}
For $\zeta\in \widetilde{\C}\setminus \{\frac12+i0\}$, define 
\begin{equation}\label{def: B(zeta)}
B(\zeta) g = u[G_R(-\cdot,\zeta)*g].
\end{equation}
We have 
\begin{lemma}\label{lem: strong cont}
Let $u\in L^1_1\cap L^2_1$. $A(\zeta): L^1_{1,+}\to L^1_{1,+}$, $B(\zeta):L^1_{1,-}\to L^1_{1,-}$ are bounded, and $T_L(\zeta)=[A(\zeta)]^*$. $T_R(\zeta) = [B(\zeta)]^*$. Moreover, $A(\zeta)\in \mathcal L(L^1_{1,+})$ is strongly continuous in $\zeta\in \widetilde{\C}\setminus \{\frac12-i0\}$, and $B(\zeta)\in \mathcal L(L^1_{1,-})$ is strongly continuous in $\zeta\in \widetilde{\C}\setminus \{\frac12+i0\}$.
\end{lemma}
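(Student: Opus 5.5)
The plan is to treat $A(\zeta)$ by hand, writing out its action and reading the adjoint identity off Fubini; $B(\zeta)$ is handled symmetrically by swapping the roles of $\pm$ and using \eqref{eq: weighted Young L} in place of \eqref{eq: weighted Young R}. For boundedness, take $g\in L^1_{1,+}$. By Lemma \ref{lem: green uniform split}, $G_L(\cdot,\zeta)\in L^\infty_{-1,+}+L^2$ for $\zeta\ne\frac12-i0$ (at $\frac12+i0$ only the weighted $L^\infty_{-1}$ piece grows, which is still in $L^\infty_{-1,+}$). Hence $G_L(-\cdot,\zeta)\in L^\infty_{-1,-}+L^2$. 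The weighted Young inequality \eqref{eq: weighted Young L} with $p=\infty$, $q=1$, together with ordinary Young for the $L^2$ piece, gives $G_L(-\cdot,\zeta)*g\in L^\infty_{-1,-}+L^2$. Multiplying by $u\in L^1_1\cap L^2_1$ and using a weighted H\"older bound of the type \eqref{eq: weighted Holder R} (plus Cauchy--Schwarz for the $L^2$ part, since $\langle x\rangle u\in L^2$), we land in $L^1_{1,+}$. This yields $\|A(\zeta)\|\le C(\zeta)\|u\|_{L^1_1\cap L^2_1}$.

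For the duality, take $m\in L^\infty_{-1,+}$ and $g\in L^1_{1,+}$ and compute
\[
\int_\R [T_L(\zeta)m]\,g\,dx=\int\!\!\int G_L(x-y,\zeta)u(y)m(y)g(x)\,dy\,dx=\int_\R m(y)\,u(y)\,[G_L(-\cdot,\zeta)*g](y)\,dy .
\]
Fubini is justified by absolute integrability, which follows from the same weighted estimates applied to $|G_L|$, $|u|$, $|m|$, $|g|$. This shows $T_L(\zeta)=A(\zeta)^*$ under the stated identification.

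For strong continuity, fix $g$ and let $\zeta_2\to\zeta_1$. Since $\|A(\zeta)\|$ is locally uniformly bounded (by the uniform constants in Lemma \ref{lem: green uniform split} on compact sets away from $\frac12-i0$) and $C_0^\infty$ is dense in $L^1_{1,+}$, an $\epsilon/3$ argument reduces the claim to $g\in C_0^\infty$. We then need
\[
u\,[(G_L(-\cdot,\zeta_2)-G_L(-\cdot,\zeta_1))*g]\to0\quad\text{in } L^1_{1,+}.
\]
Away from the bad limits, Corollary \ref{cor: G cont 1} gives $G_L$ continuous into $L^2_{-1,+}+L^\infty_{-1,+}$, and we are done by the same weighted bounds.

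The main obstacle is the bad limits $\zeta\to\frac12+i0$ and $\zeta\to\R^+-i0$, where only Lemma \ref{lem: G weak cont 1/2} is available. There, $D:=G_L(\cdot,\zeta_2)-G_L(\cdot,\zeta_1)-(H(\cdot,\zeta_2)-H(\cdot,\zeta_1))$ is bounded in $L^\infty_{-1,+}$ and tends to $0$ uniformly on compacts. The $H$-difference tends to $0$ in $L^2$ by Proposition \ref{prop: G cont}(iii), which is harmless. For the $D$-part, with $g$ compactly supported, $D(-\cdot)*g$ is bounded in $L^\infty_{-1,-}$ uniformly and tends to $0$ locally uniformly. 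Since $(1+x_+)(1+x_-)|u|\lesssim\langle x\rangle^{2}|u|$ is too strong, we instead split the integral over $|y|\le R$ and $|y|>R$. On $|y|\le R$ we use local uniform convergence. On the tail we use the uniform $L^\infty_{-1,-}$ bound: the weight $(1+y_-)$ from $D$ is only active for $y<0$, where the $L^1_{1,+}$ weight $(1+y_+)$ equals $1$. The tail is therefore controlled by $\int_{|y|>R}\langle y\rangle|u|$, which is small because $u\in L^1_1$. This dominated/tail argument is the delicate point, and it is where the exact placement of the one-sided weights matters.
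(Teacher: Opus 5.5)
Your proposal is correct and follows essentially the same route as the paper: weighted Young/H\"older for boundedness, Fubini for $T_L(\zeta)=[A(\zeta)]^*$, operator-norm continuity (via Corollary~\ref{cor: G cont 1}) at the good points, and at $\frac12+i0$ and $\R^+-i0$ the split into the $H$-difference (Proposition~\ref{prop: G cont}(iii)) plus the remainder controlled by Lemma~\ref{lem: G weak cont 1/2}. The only difference is that the paper skips your density reduction to compactly supported $g$ and applies dominated convergence directly (pointwise convergence of the remainder convolved with $f$, then domination by $\langle x\rangle|u(x)|$ using $(1+x_+)(1+x_-)=1+|x|$), so the tail step you flag as delicate is just this identity and needs no more than $u\in L^1_1$.
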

\begin{proof}
We only show the proof for $T_L(\zeta)$ and $A(\zeta)$.
In fact, by Proposition \ref{prop: green est}, $G_L(\cdot,\zeta)\in L^\infty_{-1,+}+L^2$. Thus $G_L(-\cdot,\zeta)\in L^\infty_{-1,-}+L^2$. 
We have by Lemma \ref{lem: weighted Holder Young} that 
\begin{align}\label{eq: A norm est}
\|u[G_L(-\cdot,\zeta)*g]\|_{L^1_{1,+}}&\le C\|u\|_{L^1_1\cap L^2}\|G_L(-\cdot,\zeta)*m\|_{L^\infty_{-1,-}+L^2}\notag\\
%&\le  C\|u\|_{L^1_1\cap L^2_1}\|G_L(-\cdot,\zeta)\|_{L^\infty_{-1,-}+L^2_{-1,-}}\|m\|_{L^1_{1,+}}\\
&\le  C\|u\|_{L^1_1\cap L^2}\|G_L(-\cdot,\zeta)\|_{L^\infty_{-1,-}+L^2}\|m\|_{L^1_{1,+}}.
\end{align}
This shows $A(\zeta)$ is bounded on $L^1_{1,+}$.
Using the above estimates and by Fubini's theorem, we have for $f\in L^\infty_{-1,+}$ and $g\in L^1_{1,+}$ that 
\begin{align*}
\int_\R [T_L(\zeta)f ]g~dx &=\int_\R\int_\R G_L(x-y,\zeta)u(y)f(y) g(x)~dy~dx\\
&=\int_\R\int_\R G_L(x-y,\zeta)u(y)f(y) g(x)~dx~dy\\
&=\int_\R f[A(\zeta)g]~dy.
\end{align*}
This verifies that $T_L(\zeta)=[A(\zeta)]^*$. 

We only need to show strong continuity of $A(\zeta)$ at $\zeta=\frac12+i0$ and $\zeta\in (\R^+-i0)\setminus \{\frac12-i0\}$. The other cases follow from Lemma \ref{lem: T anlytc} and duality. To estimate $[A(\zeta)-A(\frac12+i0)]$ for given $f\in L^1_{1,+}$, we write 
\begin{equation}
G_L(x,\zeta)-G_L(x,\tfrac12+i0)=H(x,\zeta)-H(x,\tfrac12+i0)+R(x,\zeta)
\end{equation}
where
\begin{equation}
R(x,\zeta) = [G_L(x,\zeta)-H(x,\zeta)]-[G_L(x,\tfrac12+i0)-H(x,\tfrac12+i0)].
\end{equation}
By \eqref{eq: G cont 6} in Proposition \ref{prop: G cont}, $\|H(-x,\zeta)-H(-x,\frac12+i0)\|_{L^2\cap L^\infty}\le C|\zeta-\tfrac12|$. Thus by Lemma \ref{lem: weighted Holder Young}
\begin{align*}
&~\|u[(H(-\cdot,\zeta)-H(-\cdot,\tfrac12+i0))*f]\|_{L^1_{1,+}}\\
\le&~ C\|u\|_{L^2_1}\| (H(-\cdot,\zeta)-H(-\cdot,\tfrac12+i0))*f\|_{L^2_{-1,-}}\\
\le &~ C\|u\|_{L^2_1}\| \|H(-\cdot,\zeta)-H(-\cdot,\tfrac12+i0))\|_{L^2_{-1,-}}\|f\|_{L^1_{1,+}}\\
\le &~ C\|u\|_{L^2_1}\| \|H(-\cdot,\zeta)-H(-\cdot,\tfrac12+i0))\|_{L^2}\|f\|_{L^1_{1,+}}\\
\le &~C|\zeta-\tfrac12|\|u\|_{L^2_1}\| \|f\|_{L^1_{1,+}}\to 0
\end{align*}
as $\zeta\to \tfrac12+i0$. 
By Lemma \ref{lem: G weak cont 1/2}, as $\zeta\to\frac12+i0$, $R(-\cdot,\zeta)$ is bounded in $L^\infty_{-1,-}$ and converges pointwise to zero. It follows that 
\begin{equation}
\|R(-\cdot,\zeta)*f\|_{L^\infty_{-1,-}}\le \|R(-\cdot,\zeta)\|_{L^\infty_{-1,-}}\|f\|_{L^1_{1,+}}\le C\|f\|_{L^1_{1,+}}.
\end{equation}
In fact, for any given $x$, we have from the proof of Lemma \ref{lem: weighted Holder Young}
\begin{align*}
|R(y-x,\zeta)f(y)|&\le \|R(-\cdot,\zeta)\|_{L^\infty_{-1,-}}\frac{1+(x-y)_-}{1+y_+}(1+y_+)|f(y)|\\
&\le \|R(-\cdot,\zeta)\|_{L^\infty_{-1,-}}(1+x_-)(1+y_+)|f(y)|
\end{align*}
Since $(1+y_+)|f(y)|\in L^1$, $[R(-\cdot,\zeta)*f](x)$ converges to zero for every $x$ as $\zeta\to\frac12+i0$ by the dominated convergence theorem. Now 
\begin{align*}
|(1+x_+)u(x)[R(-\cdot,\zeta)*f](x)|&\le C \langle x\rangle |u(x)|| (1+x_-)^{-1}[R(-\cdot,\zeta)*f](x)|\\
&\le C\|R(-\cdot,\zeta)*f\|_{L^\infty_{-1,-}} \langle x\rangle |u(x)|\\
&\le C\|f\|_{L^1_{1,+}} \langle x\rangle |u(x)|.
\end{align*}
Since $\langle x\rangle |u(x)|\in L^1$, we have $\|(1+x_+)u(x)[R(-\cdot,\zeta)*f](x)\|_{L^1}\to 0$ as $\zeta\to\frac12+i0$ by the dominated convergence theorem. In other words, $\|u[R(-\cdot,\zeta)*f]\|_{L^1_{1,+}} \to 0$ as $\zeta\to\frac12+i0$. This completes the proof that $[A(\zeta)-A(\frac12+i0)]f = u[G_L(-\cdot,\zeta)-G_L(-\cdot,\frac12+i0)]*f$ converges to zero in $L^1_{1,+}$. Thus $A(\zeta)$ is strongly continuous at $\zeta=\frac12+i0$. The case for $\zeta\in (\R^+-i0)\setminus \{\frac12-i0\}$ can be proven similarly.
\end{proof}
 
We are now ready to show 

\begin{lemma}\label{lem: a b cont k=1/2}
Let $u\in L^1_1\cap L^2_1$ with sufficiently small norm. $a_1(\zeta)$ is continuous at $\frac12+i0$ and on $(\R^+-i0)\setminus D^-_\delta$. $b_1(k+i0)$ is continuous at $k=\frac12+i0$. $\breve a_1(\zeta)$ is continuous at $\frac12-i0$ and on $(\R^++i0)\setminus D^+_\delta$. $\breve b_1(k-i0)$ is continuous at $k=\frac12$. 
\end{lemma}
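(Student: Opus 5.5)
We prove continuity of $a_1$ at $\zeta_0=\frac12+i0$ by duality, using Lemma \ref{lem: strong cont}. The argument at $\zeta_0\in(\R^+-i0)\setminus D^-_\delta$, and the arguments for $b_1$, $\breve a_1$, $\breve b_1$, are the same with the obvious changes. Let $\zeta\to\zeta_0$ in $\widetilde\C\setminus D^-_\delta$. By Theorem \ref{thm: inv T off disc}, $M_1(\cdot,\zeta)=(I-T_L(\zeta))^{-1}1$ with $\|M_1(\cdot,\zeta)\|_{L^\infty_{-1,+}}\le 2$ uniformly in $\zeta$. Since $T_L=A^*$, we get $(I-T_L(\zeta))^{-1}=[(I-A(\zeta))^{-1}]^*$. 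The adjoint identity $(I-A)^{-1}=\sum A^n$ converges, because $\|A(\zeta)\|=\|T_L(\zeta)\|<\frac12$. Therefore
\begin{equation*}
a_1(\zeta)=\int_\R M_1(x,\zeta)u(x)\,dx=\int_\R \big[(I-A(\zeta))^{-1}u\big](x)\,dx .
\end{equation*}
Here we used $u\in L^1_{1,+}$ and $1\in L^\infty_{-1,+}$. Likewise, $b_1(k+i0)=\int_\R [(I-A(k+i0))^{-1}(e^{-i\lambda\cdot}u)]\,dx$, with $e^{-i\lambda\cdot}u\in L^1_{1,+}$, since $\imag\lambda=0$ for $k>0$.

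The key step is that strong continuity passes to the inverses. We write
\begin{equation*}
(I-A(\zeta))^{-1}g-(I-A(\zeta_0))^{-1}g=(I-A(\zeta))^{-1}\big[A(\zeta)-A(\zeta_0)\big](I-A(\zeta_0))^{-1}g .
\end{equation*}
The uniform bound $\|(I-A(\zeta))^{-1}\|_{L^1_{1,+}\to L^1_{1,+}}\le 2$ holds by duality with Theorem \ref{thm: inv T off disc}. Combined with Lemma \ref{lem: strong cont} applied to the fixed vector $h=(I-A(\zeta_0))^{-1}g$, this shows the difference tends to $0$ in $L^1_{1,+}$. Integrating over $\R$ is a bounded functional on $L^1_{1,+}$, so $a_1(\zeta)\to a_1(\zeta_0)$.

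For $b_1$ the datum also varies, since $g_k=e^{-i\lambda(k)\cdot}u$. At $k=\frac12$ we have $\lambda\to0$, and $g_k\to g_{1/2}$ in $L^1_{1,+}$ by dominated convergence. Since the resolvents are uniformly bounded, the extra term $(I-A(k))^{-1}(g_k-g_{1/2})$ is harmless. We expect the main obstacle to be that $D^-_\delta$ is avoided, so these uniform bounds are available. In the statement, $\frac12+i0$ lies outside $D^-_\delta$ and in the closure of the allowed region; the relevant points of $\R^+-i0$ are likewise excluded from $D^-_\delta$. The delicate analytic input, the loss of operator-norm continuity of $G_L$ near $\frac12+i0$ and $\R^+-i0$, is already absorbed in Lemma \ref{lem: strong cont}.
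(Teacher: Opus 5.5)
Your proposal is correct and follows essentially the paper's argument. The paper also uses $T_L=A^*$, the second resolvent identity, and strong continuity of $A$ at $\zeta_0$ applied to the fixed vector $(I-A(\zeta_0))^{-1}u$. The only difference is that it pairs this vector against $M_1(\cdot,\zeta)$, which is uniformly bounded in $L^\infty_{-1,+}$, instead of moving $(I-A(\zeta))^{-1}$ to the $L^1_{1,+}$ side; the two are equivalent by duality. Your explicit handling of the $k$-dependent datum $e^{-i\lambda\cdot}u$ for $b_1$ fills in a step the paper only calls ``similar.''
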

\begin{proof}
Consider $\zeta$ close to $\frac12+i0$. We write using the second resolvent formula
\begin{align}
M_1(\cdot,\zeta)-M_1(\cdot,\tfrac12+i0)&=\left[(I-T_L(\zeta))^{-1}-\left(I-T_L\left(\tfrac12+i0\right)\right)^{-1}\right]1\notag\\
&=\left(I-T_L(\tfrac12+i0)\right)^{-1}\left[T_L(\zeta)-T_L(\tfrac12+i0)\right](I-T_L(\zeta))^{-1}1\notag\\
&=\left(I-T_L(\tfrac12+i0)\right)^{-1}\left[T_L(\zeta)-T_L(\tfrac12+i0)\right]M_1(\cdot,\zeta).
\end{align}
Note that $(I-T_L(\frac12+i0))^{-1}=(I-[A(\frac12+i0)]^*)^{-1}=[(I-A(\frac12+i0))^{-1}]^*$. Thus
\begin{align}\label{eq: diff a1}
a_1(\zeta)-a_1(\tfrac12+i0)&=\int_\R u [M_1(\cdot,\zeta)-M_1(\cdot,\tfrac12+i0)]\notag\\
&=\int_\R u[(I-A(\tfrac12+i0))^{-1}]^*\left[A(\zeta)-A(\tfrac12+i0)\right]^* M_1(\cdot,\zeta)\notag\\
&=\int_\R [(A(\zeta)-A(\tfrac12+i0))(I-A(\tfrac12+i0))^{-1}u]M_1(\cdot,\zeta).
\end{align}
Since $A(\zeta)$ is strongly continuous by Lemma \ref{lem: strong cont}, $[(A(\zeta)-A(\tfrac12+i0))(I-A(\tfrac12+i0))^{-1}u]$ converges to zero in $L^1_{1,+}$. In the meantime, $(I-T_L(\zeta))^{-1}1$ is bounded in $L^\infty_{-1,+}$. Thus $a_1(\zeta)-a_1(\tfrac12+i0)$ converges to zero. The cases for the continuity of $a_1(\zeta)$ on $(\R^+-i0)\setminus D^-_\delta$ and for $b_1(k+i0)$ are similar. 
%Using the identity $(I-A(\zeta))^{-1} = I+A(\zeta)(I-A(\zeta))^{-1}$, we can write
%\begin{equation}
%(I-A(\zeta))^{-1} u = u +A(\zeta)(I-A(\zeta))^{-1}u = u+uG_L(-\cdot,\zeta)*[(I-A(\zeta))^{-1}u].
%\end{equation}
%
%From Theorem \ref{thm: inv T off disc}, we see $\|T_L(\zeta)\|_{\mathcal L(L^\infty_{-1,+})}<\tfrac12$ for $\zeta$ near $\tfrac12+i0$. Since $\|A(\zeta)\|_{\mathcal L(L^1_{1,+})}=\|[A(\zeta)]^*\|_{\mathcal L(L^\infty_{-1,+})}<\tfrac12$, we have $$(I-A(\zeta))^{-1} = \sum_{k=0}^\infty [A(\zeta)]^k = I+A(\zeta)\sum_{k=0}^\infty [A(\zeta)]^k .$$
\end{proof}

The above lemma closes the gap left open in the previous lemmas in the proof of Theorem \ref{thm: a b cont}. Thus its proof is now complete. Moreover, we show differentiability of $a_1(k+i0)$, $b_1(k+i0)$, $\breve a_1(k-i0)$, $\breve b_1(k-i0)$. To that end, we recall $A(k+i0)$, $B(k-i0)$ as defined by \eqref{def: A(zeta)} and \eqref{def: B(zeta)}, and further define
\begin{equation}
    \partial_k A(k+i0) g = u[\partial_k G_L(-\cdot,k+i0)*g]
\end{equation}
\begin{equation}
    \partial_k B(k-i0)g = u[\partial_k G_R(-\cdot,k-i0)*g].
\end{equation}
\begin{lemma}\label{lem: A differntiable k>0}
    Let $u\in L^1_1$, $k\in \mathbb R^+$, $k\ne \frac12$. Then $\partial_k A(k+i0):L^1_{1,+}\to L^1_{1,+}$, $\partial_k B(k-i0):L^1_{1,-}\to L^1_{1,-}$ are bounded, such that $\Delta_k^h A(k+i0)\in \mathcal L (L^1_{1,+})$ converges strongly to $\partial_k A(k+i0)$, and $\Delta_h^k B(k-i0)\in \mathcal L (L^1_{1,-})$ converges strongly to $\partial_k B(k-i0)$ as $h\to 0$.
\end{lemma}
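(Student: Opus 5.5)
The plan is to mimic the proof of Lemma \ref{lem: strong cont}, with $G_L(-\cdot,\zeta)$ replaced by its $k$-derivative and the difference $G_L(\cdot,\zeta)-G_L(\cdot,\tfrac12+i0)$ replaced by the difference quotient error. I only treat $A$, since $B$ is symmetric.

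\textbf{Step 1: boundedness.} By Lemma \ref{lem: Del G - DG on R+}, $\partial_k G_L(\cdot,k+i0)\in L^\infty_{-1,+}$, so $\partial_k G_L(-\cdot,k+i0)\in L^\infty_{-1,-}$. For $g\in L^1_{1,+}$, \eqref{eq: weighted Young L} with $p=\infty$, $q=1$ gives
\[
\|\partial_k G_L(-\cdot,k+i0)*g\|_{L^\infty_{-1,-}}\le \|\partial_kG_L(-\cdot,k+i0)\|_{L^\infty_{-1,-}}\|g\|_{L^1_{1,+}}.
\]
Since $(1+x_+)(1+x_-)\le 2\langle x\rangle$, multiplying by $u$ gives $\|\partial_kA(k+i0)g\|_{L^1_{1,+}}\le C\|u\|_{L^1_1}\|g\|_{L^1_{1,+}}$. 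This is why only $u\in L^1_1$ is needed here.

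\textbf{Step 2: strong convergence.} Set $E_h(x)=\Delta^h_kG_L(x,k+i0)-\partial_kG_L(x,k+i0)$. Then
\[
(\Delta^h_kA(k+i0)-\partial_kA(k+i0))g = u\,[E_h(-\cdot)*g].
\]
By Lemma \ref{lem: Del G - DG on R+}, $E_h(-\cdot)$ is bounded in $L^\infty_{-1,-}$ uniformly for small $h$, and $E_h\to0$ uniformly on compact sets. For fixed $x$, as in the proof of Lemma \ref{lem: strong cont},
\[
|E_h(y-x)g(y)|\le C(1+x_-)(1+y_+)|g(y)|.
\]
The right side is integrable in $y$, and the left side tends to $0$ pointwise in $y$. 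Dominated convergence therefore gives $[E_h(-\cdot)*g](x)\to0$ for every $x$, together with the uniform bound
\[
|[E_h(-\cdot)*g](x)|\le C(1+x_-)\|g\|_{L^1_{1,+}}.
\]
Then
\[
(1+x_+)|u(x)|\,|[E_h(-\cdot)*g](x)|\le C\langle x\rangle|u(x)|\,\|g\|_{L^1_{1,+}},
\]
which is integrable. A second application of dominated convergence gives $\|u[E_h(-\cdot)*g]\|_{L^1_{1,+}}\to0$, which is the claimed strong convergence.

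\textbf{Main obstacle.} The difficulty is that $E_h$ does not go to zero in the weighted $L^\infty$ norm, only locally uniformly. This is why the argument yields strong convergence and not norm convergence. The pointwise domination in Step 2 requires exactly the weight pairing $L^\infty_{-1,-}$ against $L^1_{1,+}$ from Lemma \ref{lem: weighted Holder Young}, and that pairing is the key point to verify.
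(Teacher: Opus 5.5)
Your proof is correct and is essentially the paper's argument. Boundedness comes from Lemma \ref{lem: Del G - DG on R+} combined with the weighted H\"older/Young inequalities of Lemma \ref{lem: weighted Holder Young}. Strong convergence comes from dominated convergence with majorant $\langle x\rangle|u(x)|$, using only the local uniform convergence of the difference-quotient error. You spell out the inner dominated-convergence step, i.e.\ the pointwise convergence of $E_h(-\cdot)*g$, which the paper leaves implicit by analogy with the proof of Lemma \ref{lem: strong cont}.
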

\begin{proof}
    We show the proof for $A$ only. By Lemma \ref{lem: Del G - DG on R+} and Lemma \ref{lem: weighted Holder Young}, we get
    \begin{align}\label{eq: Dk G norm est}
        \|u[\partial_k G_L(-\cdot, k+i0)*g]\|_{L^1_{1,+}}&\lesssim \|u\|_{L^1_1}\|\partial_k G_L(-\cdot, k+i0)*g\|_{L^\infty_{-1,-}}\notag\\
        &\lesssim \|u\|_{L^1_1}\|\partial_k G_L(\cdot,k+i0)\|_{L^\infty_{-1,+}}\|g\|_{L^1_{1,+}}\notag\\
        &\lesssim_{k,u} \|g\|_{L^1_{1,+}}
    \end{align}
    and a similar estimate with $\partial_k G_L(-\cdot, k+i0)$ replaced by $\Delta_k^h G_L(-\cdot, k+i0)$. It remains to show the strong convergence. By the same lemmas above, we have as $h\to 0$
    \begin{align}
        &\qquad |(1+x_+)u(x)\left([\Delta_k^h G_L(-\cdot, k+i0)-\partial_k G_L(\cdot,k+i0)]*g\right)(x)|\notag\\
        &\lesssim \langle x\rangle |u(x)|\|\Delta_k^h G_L(\cdot, k+i0)-\partial_k G_L(\cdot,k+i0)\|_{L^\infty_{-1,+}}\|g\|_{L^1_{1,+}}\notag\\
        &\lesssim_{k,g}\langle x \rangle |u(x)| \in L^1.
    \end{align}
    Thus $[\Delta_k^hA(k+i0)-\partial_k A(k+i0)]g\to 0$ in $L^1_{1,+}$ by Lemma \ref{lem: Del G - DG on R+} and the dominated convergence theorem.
\end{proof}

\begin{lemma}\label{lem: A differentiable k=1/2}
    Let $u\in L^1_2\cap L^2$. Then 
    \begin{equation}\label{eq: A bound L^1_2+}
        \|A(\tfrac12+i0)\|_{L^1_{2,+}\to L^1_{2,+}}\lesssim \|u\|_{L^1_2\cap L^2}.
    \end{equation}
    $\partial_k A(\frac12+i0): L^1_{2,+}\to L^1_{2,+}$, $\partial_k B(\frac12-i0): L^1_{2,-}\to L^1_{2,-}$ are bounded, such that $\Delta_k^h A(\frac12+i)\in \mathcal L(L^1_{2,+})$ converges strongly to $\partial_k A(\frac12+i0)$, and $\Delta_k^h B(\frac12-i0)\in \mathcal L(L^1_{2,-})$ converges strongly to $\partial_k B(\frac12-i0)$ as $h\to 0$.
\end{lemma}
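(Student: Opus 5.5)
We mirror the proofs of Lemma \ref{lem: strong cont} and Lemma \ref{lem: A differntiable k>0}, with the weight raised by one order to absorb the extra growth of $G_L$ at $k=\frac12$. We treat only $A$; the argument for $B$ is the reflection $x\mapsto -x$.

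\textbf{Step 1: the weighted inequalities.} We first add to Lemma \ref{lem: weighted Holder Young} the analogues with weight $(1+x_\pm)^2$:
\begin{align*}
\|fg\|_{L^1_{2,+}}&\le C\|f\|_{L^1_2}\|g\|_{L^\infty_{-2,-}},\\
\|f*g\|_{L^\infty_{-2,-}}&\le \|f\|_{L^\infty_{-2,-}}\|g\|_{L^1_{2,+}}.
\end{align*}
The second follows from the elementary inequality $(1+(x-y)_-)^2\le (1+x_-)^2(1+y_+)^2$, exactly as in the proof of Lemma \ref{lem: weighted Holder Young}.

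\textbf{Step 2: the bound \eqref{eq: A bound L^1_2+}.} Split $G_L(\cdot,\frac12+i0)$ by \eqref{G_L split 1/2} as $I+H$, where
\[
I(x)=(-x+\tfrac{2i}{3})\chi_{\R^+}(x).
\]
The first piece $I$ lies in $L^\infty_{-1,+}\subset L^\infty_{-2,+}$. Hence $I(-\cdot)*g\in L^\infty_{-1,-}$, with norm bounded by $\|g\|_{L^1_{1,+}}\le\|g\|_{L^1_{2,+}}$. Multiplying by $u$ gives
\[
\|u\,[I(-\cdot)*g]\|_{L^1_{2,+}}\lesssim \|(1+x_+)^2(1+x_-)u\|_{L^1}\,\|g\|_{L^1_{2,+}}\lesssim \|u\|_{L^1_2}\|g\|_{L^1_{2,+}}.
\]
Here the weights of $u$ and of the convolution sit on opposite half-lines, so only a total power of $2$ is needed. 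For the second piece, $H\in L^2$ by \eqref{eq: G est 2}, so $\|H(-\cdot)*g\|_{L^2}\le \|H\|_{L^2}\|g\|_{L^1}$. Then
\[
\|u\,[H(-\cdot)*g]\|_{L^1_{2,+}}\le \|\langle x\rangle^2u\|_{L^2}\,\|H\|_{L^2}\,\|g\|_{L^1}.
\]
This requires $u\in L^2_2$ rather than only $L^2$. I would try to avoid that requirement by using $H\in L^\infty$ instead: $H$ is exponentially localized in $L^2$, and the $L^\infty$ bound holds from the proof of Proposition \ref{prop: G cont}. This gives $\|H(-\cdot)*g\|_{L^\infty}\le \|H\|_{L^\infty}\|g\|_{L^1}$, and therefore
\[
\|u\,[H(-\cdot)*g]\|_{L^1_{2,+}}\lesssim \|u\|_{L^1_2}\|g\|_{L^1_{2,+}}.
\]
The $L^2$ part of the norm of $u$ then enters only through the constant controlling $\|H\|_{L^\infty}$.

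\textbf{Step 3: boundedness of $\partial_kA(\frac12+i0)$ and strong convergence.} By Lemma \ref{lem: Del G - DG at 1/2}, $\partial_kG_L(\cdot,\frac12+i0)\in L^\infty_{-2,+}$, and so $\partial_kG_L(-\cdot,\frac12+i0)\in L^\infty_{-2,-}$. The inequalities of Step 1 then give
\[
\|\partial_kA(\tfrac12+i0)g\|_{L^1_{2,+}}\lesssim \|u\|_{L^1_2}\,\|\partial_kG_L\|_{L^\infty_{-2,+}}\,\|g\|_{L^1_{2,+}}.
\]
The same bound holds uniformly in small $h$ for $\Delta^h_kA(\frac12+i0)$, because $\Delta_k^hG_L-\partial_kG_L$ is bounded in $L^\infty_{-2,+}$.

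For strong convergence we argue as in Lemma \ref{lem: A differntiable k>0}. Fix $g\in L^1_{2,+}$. The difference quotient minus the derivative kernel converges to $0$ uniformly on compact sets and is dominated by $C(1+(x-y)_-)^2$. Using $(1+(x-y)_-)^2\le(1+x_-)^2(1+y_+)^2$, the integrand $[\ldots](y-x)g(y)$ is dominated by $C(1+x_-)^2(1+y_+)^2|g(y)|\in L^1(dy)$. Dominated convergence therefore gives pointwise convergence in $x$ of $[(\Delta_k^hG_L-\partial_kG_L)(-\cdot)*g](x)$ to $0$. Multiplying by $(1+x_+)^2u(x)$, the result is dominated by
\[
C\|g\|_{L^1_{2,+}}\,(1+x_+)^2(1+x_-)^2|u(x)|.
\]
Here lies the main obstacle: this dominating function carries the weight $\langle x\rangle^4$, while we only know $u\in L^1_2$.

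I would overcome it by keeping track of which half-line the kernel's growth actually lives on. The growth of $\partial_kI$ and of $\Delta_k^hI$ is supported on $\chi_{\R^+}$ of the kernel's argument, that is, on $y>x$ after reflection. For $y>x$ we have $(x-y)_-=y-x$. Bounding $(y-x)^2\lesssim (1+y_+)^2$ when $x\ge 0$, and $(y-x)^2\lesssim (1+y_+)^2+(1+x_-)^2$ when $x<0$, should give a total weight of only $\langle x\rangle^2$ on $u$. Note also that $\|g\|_{L^1}$ is finite even though the weight on $g$ is one-sided. The $H$ part is handled as in Step 2, using the uniform $L^\infty$ convergence from \eqref{eq: Del H - DH}.
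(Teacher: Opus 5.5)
Your plan follows the paper's route: split $G_L(\cdot,\tfrac12+i0)$ into $I=(-x+\tfrac{2i}{3})\chi_{\R^+}$ and $H$, apply squared-weight H\"older/Young, then dominated convergence. Your handling of $I$, of $\partial_kA(\tfrac12+i0)$ and of $\Delta_k^hA(\tfrac12+i0)$ is fine.

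\textbf{The gap is in Step 2.} You are right that an unweighted $L^2$ bound on $H$ would force $u\in L^2_2$. But your workaround, $H(\cdot,\tfrac12)\in L^\infty$, is false.
\begin{itemize}
\item The symbol of $G_L$ is $\sim 1/\xi$ as $\xi\to+\infty$ but exponentially small as $\xi\to-\infty$.
\item Concretely, $R_1(\xi,0,\tfrac12)\sim\frac{1}{3\xi}$ as $\xi\to+\infty$ and $\sim-\frac{2}{3\xi}$ as $\xi\to-\infty$. So its even part is $\sim\frac{1}{2|\xi|}$, whose inverse Fourier transform behaves like $-\frac{1}{2\pi}\log|x|$ near $x=0$.
\item Hence $H(\cdot,\tfrac12)=\mathcal F^{-1}[R_1(\cdot,0,\tfrac12)]$, like $G_L$ itself, is in $L^2$ but not in $L^\infty$.
\item Proposition \ref{prop: G cont} only bounds differences $H(\cdot,\zeta_1)-H(\cdot,\zeta_2)$ in $L^\infty$. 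There this $\zeta$-independent tail cancels, which is also why $\partial_kH$, and hence your Step 3, is unaffected.
\item Lemma \ref{lem: inv Fourier R1} cannot be used at $y=0$ either: there the term $e^{-y\xi}/\xi$ in $R_1$ is not $O(\langle\xi\rangle^{-2})$ as $\xi\to+\infty$.
\end{itemize}

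\textbf{The missing idea} is to let the kernel, not $u$, carry the weight. $H$ is exponentially localized by \eqref{eq: G est 2}, whose proof covers $\zeta=\tfrac12$. Combined with $1+x_+\le(1+(x-y)_+)(1+y_+)$, this gives
\[
\|(1+x_+)^2[H(-\cdot)*g]\|_{L^2}\le\|(1+x_-)^2H\|_{L^2}\|g\|_{L^1_{2,+}}\lesssim\|e^{\epsilon_1|x|}H\|_{L^2}\|g\|_{L^1_{2,+}}.
\]
Cauchy--Schwarz then yields $\|u\,[H(-\cdot)*g]\|_{L^1_{2,+}}\lesssim\|u\|_{L^2}\|g\|_{L^1_{2,+}}$. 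This is what the paper's appeal to Proposition \ref{prop: green est} provides, and it is why only unweighted $L^2$ is needed on $u$.

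\textbf{Separately, the ``main obstacle'' in Step 3 does not exist.} Since $x_+x_-=0$, we have $(1+x_+)^2(1+x_-)^2=(1+|x|)^2$. So your dominating function is $\lesssim\langle x\rangle^2|u(x)|\in L^1$ for $u\in L^1_2$. That is exactly the paper's one-line domination, and the same observation you already made for $I$ in Step 2. The half-line bookkeeping you propose is unnecessary.
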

\begin{proof}
    By Proposition \ref{prop: green est} and Lemma \ref{lem: weighted Holder Young},
    \begin{equation}
        \|u[G_L(-\cdot,\tfrac k+i0)*g]\|_{L^1_{2,+}}\lesssim \|u\|_{L^1_2\cap L^2}\|G_L(-\cdot, k+i0)*g\|_{L^1_{-2,-}+L^2},
    \end{equation}
    and
    \begin{align}
        \|G_L(-\cdot, k+i0)*g\|_{L^1_{-2,-}+L^2}& \le \|G_L(-\cdot, k+i0)*g\|_{L^1_{-1,-}+L^2}\notag\\
        &\lesssim \|G_L(\cdot, k+i0)\|_{L^\infty_{-1,+}+L^2}\|g\|_{L^1_{1,+}}\notag\\
        &\lesssim \|g\|_{L^1_{2,+}}.
    \end{align}
    This implies \eqref{eq: A bound L^1_2+}. Similarly, by Lemma \ref{lem: Del G - DG at 1/2} and Lemma \ref{lem: weighted Holder Young}, 
    \begin{equation}
        \|u[\partial_k G_L(-\cdot,\tfrac12+i0)*g]\|_{L^1_{2,+}}\lesssim \|u\|_{L^1_2}\|\partial_k G_L(-\cdot, \tfrac12+i0)*g\|_{L^\infty_{-2,-}},
    \end{equation}
    and 
    \begin{align}
        \|\partial_k G_L(-\cdot, \tfrac12+i0)*g\|_{L^\infty_{-2,-}}&\lesssim \|\partial_k G_L(\cdot,\tfrac12+i0)\|_{L^\infty_{-2,+}}\|g\|_{L^1_{2,+}}\notag\\
        &\lesssim \|g\|_{L^1_{2,+}}.
    \end{align}
    It remains to show strong convergence. By the above lemmas again, as $h\to 0$
    \begin{align}
        &\qquad |(1+x_+)^2u(x)\left([\Delta_k^h G_L(-\cdot, \tfrac12+i0)-\partial_k G_L(\cdot,\tfrac12+i0)]*g\right)(x)|\notag\\
        &\lesssim \langle x\rangle^2 |u(x)|\|\Delta_k^h G_L(\cdot, \tfrac12+i0)-\partial_k G_L(\cdot,\tfrac12+i0)\|_{L^\infty_{-2,+}}\|g\|_{L^1_{2,+}}\notag\\
        &\lesssim_{g}\langle x \rangle^2 |u(x)| \in L^1.
    \end{align}
    Thus $[\Delta_k^h A(\frac12+i0)-\partial_k A(\frac12+i0)]g\to 0$ in $L^1_{2,+}$ by Lemma \ref{lem: Del G - DG at 1/2} and the dominated convergence theorem.
\end{proof}

We now show differentiability of the scattering coefficients on $\R^+$.
\begin{theorem}\label{thm: a b diff k=1/2}
    Let $u\in L^1_1\cap L^2_1$ with sufficiently small norm. Then $a_1(k+i0)$, $b_1(k+i0)$ and $\breve a_1(k-i0)$, $\breve b_1(k-i0)$ are differentiable for $k\in \mathbb R^+$, $k\ne \frac12$. Furthermore, if $u\in L^1_2\cap L^2_1$ with sufficiently small norm, then they are also differentiable at $k=\frac12$.
\end{theorem}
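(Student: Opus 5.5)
We work in the dual (adjoint) formulation, as in the proof of Lemma \ref{lem: a b cont k=1/2}. Write $a_1(k+i0)=\int_\R u\,M_1(\cdot,k+i0)$ and $b_1(k+i0)=\int_\R e^{-i\lambda x}u\,M_1(\cdot,k+i0)$, with $M_1=(I-T_L)^{-1}1$ and $T_L=A^*$. Duality and the second resolvent identity give, for $h\neq 0$ small,
\begin{equation*}
\Delta^h_k a_1(k+i0)=\int_\R \Bigl[(\Delta_k^h A(k+i0))\bigl(I-A(k+i0)\bigr)^{-1}u\Bigr]\,M_1(\cdot,k+h+i0)\,dx .
\end{equation*}
The analogous formula for $b_1$ has $u$ replaced by $e^{-i\lambda x}u$ in the resolvent slot. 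It also has an extra term from the $k$-dependence of $e^{-i\lambda x}$, namely $\int \Delta_k^h(e^{-i\lambda x})\,u\,M_1(\cdot,k+h+i0)$. This term is handled by dominated convergence, since $|\Delta_k^h e^{-i\lambda x}|\lesssim_k |x|$ and $uM_1\in L^1_{1,-}$ when $u\in L^1_1$ and $M_1\in L^\infty_{-1,+}$. Near $k=\frac12$ this uses weight $L^1_{2}$, because $M_1$ has linear growth there.

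\textbf{Case $k\neq\frac12$.} The function $g=(I-A(k+i0))^{-1}u$ lies in $L^1_{1,+}$, because $\|A\|<\frac12$ on this space by smallness of $u$ and duality with Theorem \ref{thm: inv T off disc}. By Lemma \ref{lem: A differntiable k>0}, $\Delta_k^h A(k+i0)g\to\partial_kA(k+i0)g$ in $L^1_{1,+}$.

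Next we need $M_1(\cdot,k+h+i0)\to M_1(\cdot,k+i0)$ weak-$*$ in $L^\infty_{-1,+}=(L^1_{1,+})^*$, with $\|M_1(\cdot,k+h+i0)\|_{L^\infty_{-1,+}}\le 2$ by Theorem \ref{thm: inv T off disc}. Weak-$*$ convergence follows from the same resolvent identity, now tested against arbitrary $f\in L^1_{1,+}$ instead of $u$, together with the strong continuity of $A$ from Lemma \ref{lem: strong cont}.

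Bounded strongly convergent functionals paired with norm-convergent vectors converge, so the pairing passes to the limit. This gives
\begin{equation*}
\partial_k a_1=\int_\R \bigl[\partial_k A\,(I-A)^{-1}u\bigr]\,M_1 ,
\end{equation*}
and similarly for $b_1$. The statements for $\breve a_1$ and $\breve b_1$ follow symmetrically with $B$, $N_1$ and the spaces $L^1_{1,-}$.

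\textbf{Case $k=\frac12$.} Assume $u\in L^1_2\cap L^2_1$. By Lemma \ref{lem: A differentiable k=1/2}, \eqref{eq: A bound L^1_2+} holds, so smallness makes $I-A(\frac12+i0)$ invertible on $L^1_{2,+}$ with $g=(I-A)^{-1}u\in L^1_{2,+}$. The same lemma gives $\Delta^h_kA(\frac12+i0)g\to\partial_kA(\frac12+i0)g$ strongly in $L^1_{2,+}$.

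The main obstacle is that the pairing now needs $M_1(\cdot,\frac12+h+i0)$ bounded in $(L^1_{2,+})^*=L^\infty_{-2,+}$. This is automatic, since $L^\infty_{-1,+}\subset L^\infty_{-2,+}$ and the uniform bound $2$ from Theorem \ref{thm: inv T off disc} still applies. We also need weak-$*$ convergence in $L^\infty_{-2,+}$. Since $L^1_{2,+}$ is dense in $L^1_{1,+}$, and the family is bounded in the stronger norm, this follows from the weak-$*$ convergence in $L^\infty_{-1,+}$ established in the first case via Lemma \ref{lem: a b cont k=1/2}.

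A subtle point is that the resolvent identity must be written on $L^1_{2,+}$ rather than $L^1_{1,+}$. This requires checking that $A(\frac12+h+i0)$ also maps $L^1_{2,+}$ into itself with uniformly small norm for $h$ small. We expect this to follow from the estimate in the proof of Lemma \ref{lem: A differentiable k=1/2}, which only uses $G_L\in L^\infty_{-1,+}+L^2$ uniformly by Lemma \ref{lem: green uniform split}. With this in hand, the limit argument is identical to the first case and yields differentiability at $k=\frac12$.
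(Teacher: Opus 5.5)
Your argument is essentially the paper's. It uses the same dual formula for $\Delta_k^h a_1$ from the second resolvent identity and $T_L=A^*$, the same appeal to Lemma \ref{lem: A differntiable k>0} and Lemma \ref{lem: A differentiable k=1/2} for $\Delta_k^hA\,(I-A)^{-1}u$, and a separate dominated-convergence treatment of the factor $e^{-i\lambda x}$ in $b_1$.

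\textbf{Where your route improves on the paper.} You pass to the limit using only weak-$*$ convergence of the bounded family $M_1(\cdot,k+h+i0)$ in $L^\infty_{-1,+}=(L^1_{1,+})^*$. You obtain this by testing the resolvent identity against arbitrary $f\in L^1_{1,+}$ and using Lemma \ref{lem: strong cont}. The paper instead asserts norm convergence in $L^\infty_{-1,+}$ via Lemma \ref{lem: T anlytc}. That is fine for $k\neq\frac12$, but the lemma explicitly excludes $\frac12+i0$, where the paper itself notes that operator-norm continuity of $T_L$ is lost. Your weak-$*$ argument is therefore what actually justifies the case $k=\frac12$. You also correctly make explicit, via \eqref{eq: A bound L^1_2+}, that $(I-A(\frac12+i0))^{-1}u\in L^1_{2,+}$.

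\textbf{Unnecessary steps.} Your ``subtle point'' and the detour through $L^\infty_{-2,+}$ can be dropped. The identity lives entirely in the $L^1_{1,+}$--$L^\infty_{-1,+}$ pairing, and the only inverse appearing is $(I-A(\frac12+i0))^{-1}$. Only $g$ needs to lie in $L^1_{2,+}$, and the $L^1_{2,+}$ bounds on $\Delta_k^hA(\frac12+i0)$ are already part of Lemma \ref{lem: A differentiable k=1/2}. Convergence in $L^1_{2,+}$ implies convergence in $L^1_{1,+}$. Likewise, no density argument is needed: weak-$*$ convergence against $L^1_{1,+}$ trivially gives it against the subspace $L^1_{2,+}$.

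\textbf{The step that needs repair.} This is the extra $b_1$ term $\int u\,\Delta_k^h(e^{-i\lambda x})\,M_1(\cdot,k+h+i0)\,dx$.
\begin{itemize}
\item The fact $uM_1\in L^1_{1,-}$ only controls $x<0$. With just $\|M_1\|_{L^\infty_{-1,+}}\le 2$, the majorant is $|x|(1+x_+)|u|$, which needs $u\in L^1_2$.
\item For $k\neq\frac12$ you must use the uniform $L^\infty$ bound on $M_1(\cdot,k+h+i0)$, as the paper does. It comes from \eqref{eq: T(m) L inf}, whose constant is uniform for $k+h$ away from $\frac12$ by Lemma \ref{lem: green uniform split}.
\item Dominated convergence on the product also needs pointwise convergence of $M_1(x,k+h+i0)$. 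Your weak-$*$ framework does not supply this directly at $k=\frac12$.
\item The clean fix at $k=\frac12$ is to put the convergence on the other factor. Show $u\,\Delta_k^h e^{-i\lambda x}\to u\,\partial_k e^{-i\lambda x}$ in $L^1_{1,+}$ by dominated convergence, with majorant $C\langle x\rangle^2|u|$ (using $u\in L^1_2$). Then pair with the bounded weak-$*$ convergent family $M_1(\cdot,\frac12+h+i0)$, exactly as in the main term.
\end{itemize}
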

\begin{proof}
    Let $k>0$. We write $\Delta_k^h a_1(k+i0)$ similar as in \eqref{eq: diff a1}:
    \begin{equation}
        \Delta_k^h a_1(k+i0)=\int_\R [\Delta_k^h A(k+i0)(I-A( k+i0))^{-1}u]M_1(\cdot,k+h+i0).
    \end{equation}
    Note that $M_1(\cdot,k+h+i0)\to M_1(\cdot,k+i0)$ in $L^\infty_{-1,+}$ by Lemma \ref{lem: T anlytc}. If $k\ne \frac12$ and $u\in L^1_1\cap L^2_1$ with small norm, we use Lemma \ref{lem: A differntiable k>0}. If $k=\frac12$ and $u\in L^1_2\cap L^2_1$, we use Lemma \ref{lem: A differentiable k=1/2}. In either case we get 
    $$\Delta_k^h A(k+i0)(I-A(k+i0))^{-1}u\to \partial_k A(k+i0)(I-A(k+i0))^{-1}u$$
    in $L^1_{1,+}$ as $h\to 0$. It follows that
    \begin{equation}
        \Delta_k^h a_1(k+i0)\to \int_\R [\partial_k A(k+i0)(I-A(k+i0))^{-1}u]M_1(\cdot,k+i0).
    \end{equation}
    Thus $a_1(k+i0)$ is differentiable at $k$. To show differentiability of $b_1(k+i0)$, we write using \eqref{def: b1}
    \begin{align}\label{eq: Del h b1}
        \Delta_k^h b_1(k+i0)
        =&\int_\R u(x)(\Delta_k^h e^{-i\lambda x})M_1(x,k+h+i0)~dx\notag\\
        &\quad+ \int_{\R}u(x)e^{-i\lambda x}\Delta_k^h M_1(x,k+i0)~dx.
    \end{align}
    The second term in \eqref{eq: Del h b1} can be treated in the same way as $a_1$ was, with $u(x)$ replaced by $u(x)e^{-i\lambda x}$. To take the limit of the first term, we first note the elementary inequality
    \begin{equation}
        |\Delta_k^h e ^{-i\lambda x}|\le \frac{|e^{-i\tilde{h}x}-1|}{|h|}\lesssim_k |x|,
    \end{equation}
    where $\tilde h = Z^{-1}(k+h)-Z^{-1}(k)$. By Theorem \ref{thm: inv T off disc}, $M_1(\cdot,k+h+i0)$ is uniformly bounded in $L^{\infty}_{-1,+}$. When $k\ne \frac12$, by \eqref{eq: T(m) L inf}, $M_1(\cdot,k+h+i0)$ is in $L^\infty$. Since the constant $C(k+h+i0)$ in \eqref{eq: T(m) L inf} can be taken to be uniform for $k+h$ away from $\frac12$ by Lemma \ref{lem: green uniform split}, we have that $\|M_1(\cdot,k+h+i0)\|_{L^\infty}$ is uniformly bounded as $h\to 0$. We summarize the above estimates to conclude 
    \begin{equation}
        |u(x)(\Delta_k^h e^{-i\lambda x})M_1(x,k+h+i0)|\lesssim_k |xu(x)|\in L^1
    \end{equation}
    since $u\in L^1_1$. Lemma \ref{lem: T anlytc} implies $M_1(\cdot,k+h+i0)\to M_1(\cdot,k+i0)$ in $L^{\infty}_{-1,+}$. Thus the first term in \eqref{eq: Del h b1} converges by the dominated convergence theorem. If $k=\frac12$, we have the stronger assumption $u\in L^1_2$. We just use the uniform $L^{\infty}_{-1,+}$ bound of $M_1(\cdot,\frac12+h+i0)$ to get
    \begin{equation}
        |u(x)(\Delta_k^h e^{-i\lambda x})M_1(x,\tfrac12+h+i0)|\lesssim_k \langle x\rangle ^2|u(x)|\in L^1,
    \end{equation}
    then convergence follows as above.
\end{proof}

Next, we show a simple relation between $M_1$ and $N_1$, which holds on almost the entire cut plane $\widetilde{\mathbb C}$, even when limits are taken at $\R^+\pm i0$.

\begin{lemma}\label{lem: M=aN}
    Let $\zeta\in \widetilde{\mathbb C}\setminus \{\frac12\pm i0\}$ be such that $a(\zeta)$ and $\breve a(\zeta)$ are both defined and nonzero (cf. Corollary \ref{cor: a a breve duality}), then we have
    \begin{equation}\label{eq: a a breve = 1}
        a(\zeta)\breve a(\zeta)=1,
    \end{equation}
    and the identity
    \begin{equation}\label{eq: M1 = a N1}
        M_1(x,\zeta)=a(\zeta)N_1(x,\zeta).
    \end{equation}
\end{lemma}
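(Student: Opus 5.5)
The plan is to use the identity \eqref{GL-GR}, $G_L(x,\zeta)-G_R(x,\zeta)=\frac{i}{1-2\zeta}$, to move a Jost function between the two integral equations, and then invoke uniqueness of solutions. This is the same mechanism as in the proof of Theorem \ref{thm: inv T on disc}. Fix $\zeta\in\widetilde{\C}\setminus\{\frac12\pm i0\}$ with $a(\zeta)$ and $\breve a(\zeta)$ both defined and nonzero. By Corollary \ref{cor: a a breve duality} and Theorem \ref{thm: inv T on disc}, both $I-T_L(\zeta)$ on $L^\infty_{-1,+}$ and $I-T_R(\zeta)$ on $L^\infty_{-1,-}$ are then invertible. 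By Lemma \ref{lem: jost exist 1}, $M_1(\cdot,\zeta)$ and $N_1(\cdot,\zeta)$ exist and lie in $L^\infty$, since $\zeta\ne\frac12\pm i0$.

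First, write $G_L=G_R+\frac{i}{1-2\zeta}$ inside the equation for $M_1$:
\[
M_1=1+G_L*(uM_1)=1+\tfrac{i}{1-2\zeta}\int_\R uM_1\,dx+G_R*(uM_1)=a(\zeta)+T_R(\zeta)M_1 .
\]
Since $M_1\in L^\infty\subset L^\infty_{-1,-}$, this says $(I-T_R(\zeta))M_1=a(\zeta)$. The constant function $a(\zeta)$ lies in $L^\infty_{-1,-}$, and $I-T_R(\zeta)$ is invertible, so uniqueness gives $M_1=a(\zeta)(I-T_R(\zeta))^{-1}1=a(\zeta)N_1$. This is \eqref{eq: M1 = a N1}.

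Second, to get \eqref{eq: a a breve = 1}, run the same computation in the other direction, exactly as in \eqref{a=0 vs trivial kernel}. This gives $N_1=\breve a(\zeta)+T_L(\zeta)N_1$, and uniqueness yields $N_1=\breve a(\zeta)M_1$. Combining the two relations gives $M_1=a\breve a\,M_1$. Now $M_1\not\equiv0$, because it solves $(I-T_L)M_1=1$. Hence $a(\zeta)\breve a(\zeta)=1$.

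The only care needed is at the boundary values $\zeta=k\pm i0$, $k\ne\frac12$. There I must check two things. First, \eqref{GL-GR} holds in the limit from the same side: it holds for all $\zeta\ne\frac12\pm i0$ because both Green's functions jump by the same amount \eqref{eq: G up-down}. Second, the $L^\infty$ bound on $M_1,N_1$ from Lemma \ref{lem: jost exist 1} lets each function be fed into the other operator's weighted space. Both points follow from results already stated, so I expect no real obstacle; the main subtlety is just this bookkeeping of function spaces.
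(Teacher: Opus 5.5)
Your proposal is correct and is essentially the paper's proof: rewrite $M_1=1+T_L(\zeta)M_1$ as $M_1=a(\zeta)+T_R(\zeta)M_1$ via \eqref{GL-GR}, invert $I-T_R(\zeta)$ to get $M_1=a(\zeta)N_1$, run the symmetric computation to get $N_1=\breve a(\zeta)M_1$, and cancel the nonzero $M_1$. Your explicit remark that $M_1,N_1\in L^\infty$ is bookkeeping the paper leaves implicit: it ensures each function lies in the other operator's weighted space ($L^\infty_{-1,-}$, respectively $L^\infty_{-1,+}$), which is what makes the uniqueness step legitimate.
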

\begin{proof}
The proof is a calculation similar to that in Theorem \ref{thm: inv T on disc}. By Corollary \ref{cor: a a breve duality}, $a(\zeta)$ is defined and nonzero if and only if $\breve a(\zeta)$ is. For such a $\zeta$, by Theorem \ref{thm: inv T on disc}, $I-T_L(\zeta)$ is invertible on $L^\infty_{-1,+}$ and $I-T_R(\zeta)$ is invertible on $L^\infty_{-1,-}$, with 
$$[I-T_L(\zeta)]M_1(\cdot,\zeta)=1,$$
$$[I-T_R(\zeta)]N_1(\cdot,\zeta)=1.$$
By \eqref{GL-GR}, we have $[T_L(\zeta)-T_R(\zeta)]m=\frac{i}{1-2\zeta}\int_\R um~dx$, so
\begin{equation}
    M_1(\cdot,\zeta)=1+T_L(\zeta)M_1(\cdot,\zeta)=a(\zeta)+T_R(\zeta)M_1(\cdot,\zeta).
\end{equation}
Thus 
\begin{equation}
    M_1(\cdot,\zeta)=[I-T_R(\zeta)]^{-1}a(\zeta)=a(\zeta)N_1(\cdot,\zeta).
\end{equation}
Similarly we have
\begin{equation}
    N_1(\cdot,\zeta)=[I-T_L(\zeta)]^{-1}\breve a(\zeta)=\breve a(\zeta)M_1(\cdot,\zeta).
\end{equation}
It follows that $M_1(\cdot,\zeta)=a(\zeta)\breve a(\zeta)M_1(\cdot,\zeta)$. Since $M_1(\cdot,\zeta)$ is obviously nonzero, we get $a(\zeta)\breve a(\zeta)=1$.
\end{proof}

We next prove an identity relating different scattering fucntions on $\mathbb R^+$.

\begin{lemma}\label{lem: a b identity}
   Let $u\in L^1_1\cap L^2_1$ be real-valued with sufficiently small norm and let $k\in\R^+\setminus\{\frac12\}$. We have the following relations between scattering functions

\begin{align}
\label{ab.id}
|a(k+i0)|^2 = 1 + \frac{2k^*-1}{1-2k} |b(k+i0)|^2,\\
\intertext{and}
\label{abbb.id}
|\breve{a}(k-i0)|^2 = 1 + \frac{2k^*-1}{1-2k} |\breve{b}(k-i0)|^2.
\end{align}

\end{lemma}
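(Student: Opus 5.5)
This identity is an energy conservation law, a Wronskian-type relation, for the real self-adjoint problem. I would derive it from a conserved ``flux'' for the equation \eqref{M diff eqn}. Since $u$ is real, the operator $e^{-D}L_u$ is a symmetric form, so for two solutions of the same spectral equation there should be a bilinear quantity independent of $x$. Concretely, for $k>0$, $k\ne\frac12$, I work with $M_1=M_1(\cdot,k+i0)$ and its complex conjugate. Write $\psi(x)=e^{-ik(x-i)}M_1(x-i)$, a bounded solution of $L_u\psi=-ke^{-2k}\psi$ in the sense of Section \ref{sec: lax pair}. Because $-ke^{-2k}$ is real and $u$ is real, the function $\bar\psi$ paired against $\psi$ satisfies
\begin{equation*}
\langle (D-\tilde u)e^D\psi_1, e^D\psi_2\rangle \text{ symmetric.}
\end{equation*}
The plan is to localize this symmetry with a cutoff and read off the boundary contributions at $\pm\infty$.

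First, multiply the equation $(D-u)F(x-i)=\eta F(x+i)$, where $F(z)=\psi(z)$ and $\eta=-ke^{-2k}$, by $\overline{F(x-i)}$. Subtract the conjugate identity. Integrate over $[-R,R]$, using $\overline{F(\bar z)}$ analytic in the strip, and shift contours by $\pm 2i$ via Lemma \ref{lemma:FT.strip}. The $u$ terms cancel because $u$ is real. What remains is
\begin{equation*}
\tfrac1i\big[|F|^2\big]_{-R}^{R}\ \text{plus a nonlocal flux term}.
\end{equation*}
The flux term is $\eta\int_{[-R,R]}$ of $F(x+i)\overline{F(x-i)}-\overline{F(x+i)}F(x-i)$. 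By Cauchy's theorem on the rectangle $[-R,R]\times[-1,1]$ it reduces to vertical-side integrals $\eta\int_{-1}^{1}\big(\cdots\big)(\pm R+it)\,dt$. So the total is a sum of quantities evaluated at $x=\pm R$, forming a local ``Wronskian'' $W(R)-W(-R)=0$.

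Second, insert the asymptotics from Lemma \ref{lem: anal cont R+}, improved by Lemma \ref{lem: M1 improved regularity} if needed, uniformly in $y$. As $x\to-\infty$, $F\sim e^{-ikz}$. As $x\to+\infty$, $F\sim a e^{-ikz}+b e^{-ik^*z}$, since $e^{-ikz}e^{i\lambda z}=e^{-ik^*z}$ by \eqref{eq: zeta-zeta*}. Evaluate $W$ on a single plane wave $e^{-ikz}$. The result should be a constant multiple of $\frac{d}{dk}(-ke^{-2k})$, which is proportional to $(1-2k)e^{-2k}$, by the usual dispersion-relation identity for flux. The cross terms between $e^{-ikz}$ and $e^{-ik^*z}$ oscillate like $e^{i(k-k^*)R}$. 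Because $W$ is exactly constant, these cross terms must cancel, or they average out when I let $R\to\infty$ along a Cesàro mean. Equating $W(+\infty)=W(-\infty)$ gives
\begin{equation*}
c(k)|a|^2+c(k^*)|b|^2=c(k).
\end{equation*}
Here $c(k)\propto(1-2k)e^{-2k}$, and $e^{-2k}$ matches $e^{-2k^*}$ up to the ratio $k^*/k$. I expect this to give exactly $(2k^*-1)/(1-2k)$ after using $ke^{-2k}=k^*e^{-2k^*}$, but the constant must be checked.

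The same argument applies to $N_1(\cdot,k-i0)$ and gives \eqref{abbb.id}.

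The main obstacle is computing the flux of a plane wave exactly, including the vertical-strip contributions, and showing that the cross terms genuinely vanish rather than merely average out. I would handle the plane-wave flux by direct computation on $e^{-ikz}$. For the cross terms, I would note that $W$ is constant in $R$, so the oscillating part must be identically zero for all large $R$. This forces its coefficient to be zero.
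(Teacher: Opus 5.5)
Your flux argument is a genuinely different route, and it can be made to work. As written, however, it gives the wrong constant, because the asymptotics you insert at $+\infty$ are missing a factor.

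The solution on $|\imag z|<1$ is $F(z)=e^{-ikz}M_1(z+i,k+i0)$. (Your $\psi(x)=e^{-ik(x-i)}M_1(x-i)$ should read $\psi(x-i)=e^{-ik(x-i)}M_1(x)$.) Lemma \ref{lem: anal cont R+} gives $M_1(z+i)\sim a+b\,e^{i\lambda(z+i)}$, hence
\[
F(z)\sim a\,e^{-ikz}+b\,e^{-\lambda}\,e^{-ik^*z}\qquad(x\to+\infty),
\]
not $ae^{-ikz}+be^{-ik^*z}$; you dropped the factor $e^{i\lambda\cdot i}=e^{-\lambda}$.

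Your conserved quantity is
\[
W(x)=\tfrac1i|F(x-i)|^2-i\eta\int_{-1}^{1}F(x+it)\overline{F(x-it)}\,dt,\qquad \eta=-ke^{-2k}.
\]
A plane wave $e^{-i\kappa z}$ with $-\kappa e^{-2\kappa}=\eta$ gives $W=-i(1-2\kappa)e^{-2\kappa}$. With coefficient $b$ you would therefore obtain $|a|^2=1+\frac{2k^*-1}{1-2k}\cdot\frac{k}{k^*}|b|^2$. The relation $ke^{-2k}=k^*e^{-2k^*}$ cannot remove the factor $k/k^*$, so your expectation about the constant is wrong.

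With the correct coefficient, $|be^{-\lambda}|^2e^{-2k^*}=|b|^2e^{-2k}$, since $\lambda=k-k^*$ by \eqref{eq: zeta-zeta*}. You then get exactly $(1-2k)(1-|a|^2)=(1-2k^*)|b|^2$, which is \eqref{ab.id}. The same correction applies to the $N_1(\cdot,k-i0)$ case for \eqref{abbb.id}.

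The cross terms are not an obstacle. Their coefficient is $-i\bigl[e^{-(k+k^*)}+2\eta\,\sinh(k-k^*)/(k-k^*)\bigr]$, which vanishes identically by $ke^{-2k}=k^*e^{-2k^*}$, so no Ces\`aro averaging is needed.

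Finally, $W$ uses $M_1(\cdot+iy)$ up to the top edge $y=2$, where Lemma \ref{lem: M1 improved regularity} is not available for $u\in L^1_1\cap L^2_1$. This is repairable. From \eqref{jost prty: M1 ext R+} and \eqref{eq: R bound} one gets $|M_1(x+iy)|\lesssim 1+(2-y)^{-1/2}$ uniformly in $x$. This bound is integrable in $t$, and it suffices both for a limiting Cauchy theorem on rectangles and for dominated convergence as $x\to\pm\infty$.

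The paper instead stays on the real line. From \eqref{GL-GR}, \eqref{eq: G_L bar} and \eqref{eq: G up-down} it gets $\overline{G_L(-x,k+i0)}=G_L(x,k+i0)-\frac{i}{1-2k}-\frac{i}{1-2k^*}e^{i\lambda x}$. So convolution by $G_L$ is self-adjoint up to two rank-one terms. Expanding $\langle M_1,uM_1\rangle$ with $M_1=1+G_L*(uM_1)$ and using only that $u$ is real then gives \eqref{ab.id} in a few lines.

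The rank-one weights $\frac1{1-2k}$ and $\frac1{1-2k^*}$ are the residues of the symbol $(\xi-k(1-e^{-2\xi}))^{-1}$ at $\xi=0$ and $\xi=\lambda$. These are the reciprocals of the factors $1-2\kappa$, $\kappa\in\{k,k^*\}$, in your plane-wave fluxes, so both proofs encode the same conservation law. The paper's version needs no analytic continuation or boundary regularity. Yours makes the conserved flux explicit, at the cost of the strip bookkeeping described above.
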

\begin{proof}
For the moment, let us abbreviate $M_1=M_1(x,k+i0)$, $G_L=G_L(x,k+i0)$, and $e_\lambda=e^{i\lambda x}$. To prove \eqref{ab.id}, 
we first note the identity
\begin{equation}
\label{ILW.GL.id}
 \langle G_L * f ,  g \rangle 
 	= \langle f,  G_L*g \rangle 
 			+ \frac{i}{1-2k}\langle f,1\rangle \langle 1,g \rangle 
 			+ \frac{1}{1-2k^*}\langle f, e_\lambda \rangle \langle e_\lambda, g \rangle
\end{equation}
where $\langle u, v \rangle=\int_\R u\bar v~dx$.  To get \eqref{ILW.GL.id}, we use \eqref{GL-GR}, \eqref{eq: G_L bar}, \eqref{eq: G up-down} and write 
\begin{equation}
    \overline{G_L(-x,k+i0)} =  G_L(x,k+i0) -\frac{i}{1-2k} - \frac{i}{1-2k^*} e^{i\lambda x}.
\end{equation} 
Since 
\begin{align*}
    \langle G_L*f,g\rangle&=\int_\R\int_\R G_L(x-y,k+i0)f(y)\overline{g(x)}~dy~dx\\
    &=\int_\R\int_\R f(y)\overline{\overline{G_L(-(y-x),k+i0)}g(x)}~dx~dy,
\end{align*}
\eqref{ILW.GL.id} follows.
We now use the equation $M_1 = 1 + G_L*(uM_1)$ and \eqref{ILW.GL.id} to compute
\begin{align*}
\langle M_1,  uM_1 \rangle 
	&=	\langle 1 + G_L*(uM_1),  uM_1 \rangle	\\
	&=	\langle 1, uM_1 \rangle + \langle uM_1,  G_L*(uM_1) \rangle \\
	&\quad	+	\frac{i}{1-2k} \left| \langle uM_1, 1 \rangle \right|^2
				+	\frac{i}{1-2k^*} \left|   \langle uM_1, e_\lambda \rangle \right|^2\\
	&=	\langle 1, uM_1 \rangle + \langle uM_1,  M_1-1 \rangle \\
	&\quad	+	\frac{i}{1-2k} \left| \langle uM_1, 1 \rangle \right|^2
				+	\frac{i}{1-2k^*} \left|   \langle uM_1, e_\lambda \rangle \right|^2
\end{align*}
Since $u$ is real,  $\langle uM_1, M_1 \rangle = \langle M_1, uM_1 \rangle$.  Using the facts that 
$$\langle uM_1, 1 \rangle =\frac{(1-2k)(a(k+i0)-1)}i , \quad \langle uM_1, e_\lambda \rangle = \frac{(1-2k^*) b(k+i0)}i$$ we obtain \eqref{ab.id}. The proof of \eqref{abbb.id} is similar.
\end{proof}

Observe that \eqref{ab.id} and \eqref{abbb.id} implies 
\begin{equation}\label{eq: a > 1}
    |a(k+i0)|\ge 1, \quad |\breve a(k-i0)|\ge 1
\end{equation}
for $k\in \R^+\setminus \{\frac12\}$. We can thus define the trasmission and reflection coefficients on $\R^+$ as follows.

\begin{definition}
    For $k\in \R^+$, $k\ne \frac12$, we define transmission coefficients $\tau(k)$, $\breve \tau(k)$ as
    \begin{equation}\label{def: tau}
        \tau(k)=\frac{1}{a(k+i0)},\quad \breve \tau(k)=\frac{1}{\breve a(k-i0)},
    \end{equation}
    and the reflection coefficients $\rho(k)$, $\breve \rho(k)$ as
    \begin{equation}\label{def: rho}
        \rho(k)=\frac{b(k+i0)}{a(k+i0)},\quad \breve \rho(k)=\frac{\breve b(k-i0)}{\breve a(k-i0)}.
    \end{equation}
\end{definition}

With the above definitions, \eqref{ab.id} and \eqref{abbb.id} can be restated as
\begin{equation}\label{eq: tau rho uni}
    |\tau(k)|^2+\frac{2k^*-1}{1-2k}|\rho(k)|^2=1,
\end{equation}
\begin{equation}\label{eq: breve tau rho uni}
    |\breve \tau(k)|^2+\frac{2k^*-1}{1-2k}|\breve \rho(k)|^2=1.
\end{equation}

It follows from Theorem \ref{thm: a b diff k=1/2} that the transmission and reflection coefficients are differentiable. In particular, we have
\begin{corollary}\label{cor: tau rho cont}
    Let $u\in L^1_1\cap L^2_1$ be real-valued with sufficiently small norm. Then $\tau(k)$, $\breve \tau(k)$, $\rho(k)$, $\breve \rho(k)$ are differentiable on $\R^+\setminus \{\frac12\}$.
\end{corollary}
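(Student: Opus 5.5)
The differentiability follows by combining Theorem \ref{thm: a b diff k=1/2} with the lower bound \eqref{eq: a > 1}. By that theorem, $a_1(k+i0)$, $b_1(k+i0)$, $\breve a_1(k-i0)$ and $\breve b_1(k-i0)$ are differentiable in $k$ on $\R^+\setminus\{\frac12\}$. We then recall that
\[
a(k+i0)=1+\frac{ia_1(k+i0)}{1-2k},\qquad b(k+i0)=\frac{ib_1(k+i0)}{1-2k^*},
\]
and analogously for $\breve a(k-i0)$ and $\breve b(k-i0)$. So it suffices to check that the prefactors $\frac{1}{1-2k}$ and $\frac{1}{1-2k^*}$ are differentiable (indeed real analytic) on $\R^+\setminus\{\frac12\}$.

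For $\frac1{1-2k}$ this is immediate, since $k\ne\frac12$. For $\frac1{1-2k^*}$ we use $k^*=k-\lambda$ with $\lambda=Z^{-1}(k)$, by \eqref{eq: zeta-zeta*}. On the real axis $Z:\R\to(0,\infty)$ is a smooth increasing bijection. Its derivative $Z'(\lambda)=\frac{e^{2\lambda}(e^{2\lambda}-2\lambda-1)}{(e^{2\lambda}-1)^2}$ is positive for real $\lambda\neq0$, and at $\lambda=0$ it extends to $Z'(0)=\frac12$. Hence $Z^{-1}$ is real analytic on $(0,\infty)$ by the inverse function theorem, and so is $k^*$.

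We also need $k^*\ne\frac12$. If $k^*=\frac12$, then $Z(-\lambda)=\frac12=Z(0)$, so $\lambda=0$ and $k=\frac12$, which is excluded. Therefore $a(k+i0)$, $b(k+i0)$, $\breve a(k-i0)$ and $\breve b(k-i0)$ are differentiable on $\R^+\setminus\{\frac12\}$.

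Finally, \eqref{eq: a > 1} gives $|a(k+i0)|\ge1$ and $|\breve a(k-i0)|\ge1$, so neither function vanishes. By the quotient rule, $\tau=1/a$, $\rho=b/a$, $\breve\tau=1/\breve a$ and $\breve\rho=\breve b/\breve a$ are differentiable on $\R^+\setminus\{\frac12\}$.

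There is no serious obstacle here. The only points needing care are nonvanishing of the denominators, which is supplied by \eqref{eq: a > 1}, and smoothness of $k\mapsto k^*$ away from $k=\frac12$, which follows from the inverse function theorem as above.
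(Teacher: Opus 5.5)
Your proposal is correct and follows the same route as the paper, which simply invokes Theorem \ref{thm: a b diff k=1/2} together with the lower bound \eqref{eq: a > 1} and the quotient rule. Your extra check that $k\mapsto k^*=Z(-Z^{-1}(k))$ is smooth with $k^*\ne\frac12$ on $\R^+\setminus\{\frac12\}$ fills in a detail the paper leaves implicit.
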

\begin{proof}
    This is obvious from Theorem \ref{thm: a b diff k=1/2} and \eqref{eq: a > 1}.
\end{proof}

Furthermore, we have the following bounds on the reflection coefficients as $k\searrow 0$ and as $k\nearrow \infty$.
\begin{lemma}\label{lem: D rho uniform est}
There exists a $C>0$ such that 
    \begin{equation}
        |\partial_k\rho(k)|\le \frac{C}{k|\log k|}, ~|\partial_k\breve\rho(k)|\le \frac{C}{k|\log k|}\quad \text{ for }0<k<\tfrac{1}{4},
    \end{equation}
    and 
    \begin{equation}
        |\partial_k\rho(k)|\le C,~|\partial_k\breve \rho(k)|\le C \quad \text{ for }k>1.
    \end{equation}
\end{lemma}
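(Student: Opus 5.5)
We work with $\rho = b/a$ and the analogous $\breve\rho=\breve b/\breve a$, so
\[
\partial_k\rho = \frac{\partial_k b}{a}-\frac{b\,\partial_k a}{a^2}.
\]
By \eqref{eq: a > 1}, $|a(k+i0)|\ge1$. By \eqref{ab.id}, $|b|^2\le \frac{1-2k}{2k^*-1}|a|^2$, where $\frac{1-2k}{2k^*-1}$ is uniformly bounded for $k<\tfrac14$ and for $k>1$. Hence $|b/a|\lesssim 1$, and it suffices to prove
\[
|\partial_k a|,\ |\partial_k b|\lesssim \frac{1}{k|\log k|}\quad (k<\tfrac14),\qquad |\partial_k a|,\ |\partial_k b|\lesssim 1\quad (k>1).
\]
We write $a=1+\frac{i a_1}{1-2k}$ and $b=\frac{i b_1}{1-2k^*}$. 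Using \eqref{eq: k* bounds k small} and \eqref{eq: k* bounds k large}, the derivatives of the prefactors $\frac{1}{1-2k}$ and $\frac{1}{1-2k^*}$ are at most $\frac{1}{k|\log k|^2}$ for small $k$ and bounded for large $k$. The quantities $|a_1|$ and $|b_1|$ are bounded by $\|u\|_{L^1_1}\|M_1\|_{L^\infty_{-1,+}}\lesssim 1$ by Theorem \ref{thm: inv T off disc}. The task therefore reduces to uniform bounds on $\partial_k a_1$ and $\partial_k b_1$.

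For $\partial_k a_1$ we use the representation obtained in the proof of Theorem \ref{thm: a b diff k=1/2}:
\[
\partial_k a_1(k+i0)=\int_\R \bigl[\partial_k A(k+i0)(I-A(k+i0))^{-1}u\bigr]\,M_1(\cdot,k+i0).
\]
The factor $(I-A)^{-1}$ is the adjoint of $(I-T_L)^{-1}$, so its norm on $L^1_{1,+}$ is at most $2$ by Theorem \ref{thm: inv T off disc}. Arguing as in \eqref{eq: Dk G norm est},
\[
\|\partial_k A(k+i0)\|_{L^1_{1,+}\to L^1_{1,+}}\lesssim \|u\|_{L^1_1}\,\|\partial_k G_L(\cdot,k+i0)\|_{L^\infty_{-1,+}},
\]
and Lemma \ref{lem: Dk G uniform} bounds the last norm by $C/k$ for $k<\tfrac14$ and by $C$ for $k>1$. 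Together with the uniform $L^\infty_{-1,+}$ bound on $M_1$, this gives $|\partial_k a_1|\lesssim 1/k$ for small $k$ and $\lesssim1$ for large $k$.

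For $\partial_k b_1$ we use \eqref{eq: Del h b1}. The term containing $\partial_k M_1$ is handled exactly as for $a_1$, with $u$ replaced by $ue^{-i\lambda x}$; since $\operatorname{Im}\lambda=0$ on $\R^+$, this replacement does not change the norms. The remaining term is $\int u\,(\partial_k e^{-i\lambda x})\,M_1$, whose integrand is bounded by $|d\lambda/dk|\,|x|\,|u|\,|M_1|$. We need $\|M_1\|_{L^\infty}$ to be uniform here, not merely $\|M_1\|_{L^\infty_{-1,+}}$. For $k$ away from $\tfrac12$ this follows from \eqref{eq: T(m) L inf}, since Lemma \ref{lem: green uniform split} bounds $G^1_L$ uniformly in $L^\infty$ on $\{k<\tfrac14\}\cup\{k>1\}$: there $|1-2k^*|\gtrsim1$ and $\lambda$ is real. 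The term is then $\lesssim |d\lambda/dk|\,\|u\|_{L^1_1}$, which is $\lesssim 1/k$ for small $k$ and $\lesssim1$ for large $k$.

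The main obstacle is that this crude count yields only $|\partial_k\rho|\lesssim 1/k$ for small $k$, whereas the statement claims the extra factor $|\log k|^{-1}$. The plan for recovering it is to exploit the decay of the prefactors. The leading small-$k$ contributions to $\partial_k\rho$ come through $b=\frac{ib_1}{1-2k^*}$ with $|1-2k^*|\approx|\log k|$, which gives $\partial_k b\approx \partial_k b_1/|\log k|\lesssim \frac{1}{k|\log k|}$. In $\frac{b\,\partial_k a}{a^2}$, the factor $b$ itself is $O(1/|\log k|)$, since $|b_1|\lesssim 1$ and $|1-2k^*|\approx |\log k|$. Multiplying by $|\partial_k a|\lesssim 1/k$ again gives $\frac{1}{k|\log k|}$. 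Thus we must track these $|k^*|^{-1}$ factors carefully rather than use $|b/a|\lesssim1$ at that step. The argument for $\breve\rho$ is identical, using $G_R$, $B(k-i0)$ and $N_1$.
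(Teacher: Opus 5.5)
Your proposal is correct and follows the paper's route: uniform bounds on $a_1,b_1$; the representation of $\partial_k a_1$ and $\partial_k b_1$ through $\partial_k A\,(I-A)^{-1}$, bounded via Lemma~\ref{lem: Dk G uniform}; and finally tracking the factor $|1-2k^*|^{-1}\approx|\log k|^{-1}$ in $b$, which is exactly the bookkeeping the paper leaves implicit. One small slip: $\frac{1-2k}{2k^*-1}=\frac{2k-1}{1-2k^*}$ is not bounded for $k>1$ (it grows like $2k$), so in that range you should get $|b|\lesssim 1$ directly from $b=\frac{ib_1}{1-2k^*}$ with $|1-2k^*|\approx 1$ and $|b_1|\lesssim 1$, as you in effect do later anyway.
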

\begin{proof}
    By Theorem \ref{thm: inv T off disc} and \eqref{jost exist: def M}, $\|M_1(\cdot,k+i0)\|_{L^\infty_{-1,+}}\le C$ for $k\in (0,\frac14)\cap (1,\infty)$. By \eqref{def: a_1}, \eqref{def: b1}, $|a_1(k+i0)|, |b_1(k+i0)|$ are bounded for $k$ in the same range. By the proof of Theorem \ref{thm: a b diff k=1/2}, we can write
    \begin{equation}
        \partial_k a_1(k+i0)=\int_\R [\partial_k A(k+i0)(I-A(k+i0))^{-1}u]M_1(x,k+i0)~dx.
    \end{equation}
    By estiamte \eqref{eq: A norm est} in the proof of Lemma \ref{lem: strong cont}, 
    \begin{equation}
        \|A(k+i0)\|_{L^1_{1,+}\to L^1_{1,+}}\lesssim \|u\|_{L^1_1\cap L^2}\|G_L(\cdot,k+i0)\|_{L^{\infty}_{-1,+}+L^2}.
    \end{equation}
    By Lemma \ref{lem: green uniform split}, we get 
    \begin{equation}
         \|A(k+i0)\|_{L^1_{1,+}\to L^1_{1,+}}\le \tfrac{1}{2}
    \end{equation}
    for $\|u\|_{L^1_1\cap L^2_1}$ sufficiently small and $k\in (0,\frac14)\cap (1,\infty)$. It follows that 
    \begin{equation}
        \left\|(I-A(k+i0))^{-1}\right\|_{L^1_{1,+}\to L^1_{1,+}}\le 2
    \end{equation}
    for $k$ in the same range.
    By estimate \eqref{eq: Dk G norm est} in the proof of Lemma \ref{lem: A differntiable k>0}, we see that
    \begin{equation}
        \|\partial_k A(k+i0)\|_{L^1_{1,+}\to L^1_{1,+}}\lesssim \|u\|_{L^1_1}\|\partial_k G_L(\cdot,k+i0)\|_{L^\infty_{-1,+}}.
    \end{equation}
    Lemma \ref{lem: Dk G uniform} and the above estimates now imply 
    $|\partial_k a_1(k+i0)|\le \frac{C}{k} $
    for $0<k<\frac14$ and 
    $ |\partial_k a_1(k+i0)|\le C$
    for $k>1$.
    Similarly, we can write
    \begin{align}
        \partial_k b_1(k+i0)
        =&\int_\R u(x)(\partial_k e^{-i\lambda x})M_1(x,k+i0)~dx\notag\\
        &\quad+ \int_{\R}[\partial_k A(k+i0)(I-A(k+i0))^{-1}(ue^{-i\lambda \cdot} )]M_1(x,k+i0)~dx
    \end{align}
    and estimate to get
    $|\partial_k b_1(k+i0)|\le \frac{C}{k}$ 
    for $0<k<\frac14$ and $|\partial_k b_1(k+i0)|\le C$
    for $k>1$.
   The claimed bounds on $\partial_k\rho$ now follow from the definition of $\rho$ (see \eqref{def: rho}, \eqref{def: a}, \eqref{def: b}), the above bounds on $a_1$, $b_1$, $\partial_k a_1$, $\partial_k b_1$, and explicit bounds on $\lambda$ and $k^*$ (see \eqref{eq: k* bounds k small}, \eqref{eq: k* bounds k large}).
\end{proof}

With the above lemmas, we can now show the meromorphic dependence of $M_1$ and $N_1$ on $\zeta$ in the whole cut plane $\widetilde{\mathbb C}$. We state this as

 \begin{theorem}\label{thm: general meromorphy of M N}
    Let $\|u\|_{L^1_1\cap L^2_1}<\epsilon$ for $\epsilon$ sufficiently small. Then $N_1(\cdot,\zeta)$ is meromorphic for $\zeta$ in the interior of $\widetilde{\C}$ with continuous boundary values except perhaps at $\zeta=\frac12+i0$. Its poles coincide with zeros of $a(\zeta)$, which can appear only in the interior of $D^+_\delta$ and can accumulate only possibly at $\zeta=\frac12+i0$.
    Similarly, $M_1(\cdot,\zeta)$ is meromorphic for $\zeta$ in the interior of $\widetilde{\C}$ with continuous boundary values except perhaps at $\zeta=\frac{1}{2}-i0$. Its poles coincide with zeros of $\breve a(\zeta)$, which can appear only in the interior of $D^-_\delta$, and can accumulate only possibly at $\zeta=\frac12-i0$.
\end{theorem}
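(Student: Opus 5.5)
The plan is to treat $N_1$; the case of $M_1$ follows by complex conjugation and swapping $L\leftrightarrow R$. The strategy is to split $\widetilde{\C}$ into the region where $N_1$ is given by a convergent Neumann series and the critical half-disc $D^+_\delta$, where $N_1$ is recovered from $M_1$ via Lemma \ref{lem: M=aN}.

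\emph{Off the disc.} On $\widetilde{\C}\setminus D^+_\delta$ we use Theorem \ref{thm: inv T off disc}: $I-T_R(\zeta)$ is invertible with inverse bounded by $2$, so $N_1(\cdot,\zeta)=(I-T_R(\zeta))^{-1}1$ is well defined. By Lemma \ref{lem: T anlytc}, $T_R$ is operator-norm continuous on $\widetilde{\C}\setminus((\R^++i0)\cup\{\frac12-i0\})$ and analytic in its interior. The resolvent identity then transfers analyticity and continuity to $N_1$ in $L^\infty_{-1,-}$; at $\frac12-i0$ we get analyticity but only continuity of boundary values. It remains to obtain continuous boundary values on $(\R^++i0)\setminus D^+_\delta$, where norm continuity fails. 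There we argue as in Lemma \ref{lem: a b cont k=1/2}. Using the strong continuity of $B(\zeta)$ from Lemma \ref{lem: strong cont}, the second resolvent formula
\[
N_1(\cdot,\zeta_2)-N_1(\cdot,\zeta_1)=(I-T_R(\zeta_1))^{-1}[T_R(\zeta_2)-T_R(\zeta_1)]N_1(\cdot,\zeta_2)
\]
gives weak-$*$ continuity of $N_1$ in $L^\infty_{-1,-}=(L^1_{1,-})^*$. Pointwise (locally uniform) continuity in $x$ then follows from the splitting of the kernel and Lemma \ref{lem: G weak cont 1/2}.

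\emph{Inside the disc.} On $D^+_\delta$, Theorem \ref{thm: inv T off disc} makes $M_1$ and $a$ defined, and by Theorem \ref{thm: a b cont} and the corollary after Lemma \ref{lem: T anlytc} both are analytic in the interior, with continuous boundary values on $(\frac12-\delta+i0,\frac12+\delta+i0)$ away from $\frac12+i0$. Corollary \ref{cor: a=0 vs N1 nonexist} says that for $\zeta\ne\frac12+i0$ in $D^+_\delta$, $I-T_R(\zeta)$ is invertible exactly when $a(\zeta)\neq0$. Lemma \ref{lem: M=aN} then gives $N_1=M_1/a$ at those points, and we define $N_1$ by this formula throughout $D^+_\delta$. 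Since $M_1$ is holomorphic and nonzero, and $M_1\to 1$ at $-\infty$, $N_1$ is meromorphic there, with poles exactly at the zeros of $a$. On the part of the boundary in $\R^++i0$, \eqref{eq: a > 1} gives $|a|\ge1$, so $a$ does not vanish there and $M_1/a$ has continuous boundary values. Away from $D^+_\delta$, Corollary \ref{cor: a a breve duality} together with invertibility rules out zeros of $a$. Because $a$ is analytic in the open half-disc and, being nonzero on the boundary, not identically zero, its zeros are isolated. Continuity of $a$ up to the segment with $|a|\ge1$ shows they can accumulate only at $\frac12+i0$. The two definitions of $N_1$ agree on the overlap by uniqueness, which completes the meromorphy claim.

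\emph{Main obstacle.} The delicate step is continuity of boundary values on $(\R^++i0)$, for $N_1$ outside the disc and for $M_1$ inside it. Here $T_R$ (respectively $T_L$) is not norm continuous, so the duality/strong-continuity argument must produce genuine continuity in $L^\infty_{-1,-}$, or at least locally uniform continuity, rather than only continuity of integrals. I would first try to upgrade weak-$*$ convergence using the compactness argument of Lemma \ref{lem: compact T}: the family $\{T_R(\zeta)N_1(\cdot,\zeta)\}$ is equicontinuous with uniform control at $\pm\infty$, so every subsequence has a convergent sub-subsequence. Its limit is identified through the integral equation and uniqueness.
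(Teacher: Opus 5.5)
Your treatment of the interior of $\widetilde{\C}$, of the edge $\R^+-i0$, and of the half-disc $D^+_\delta$ agrees with the paper. The step you flag yourself, boundary continuity of $N_1$ on $(\R^++i0)\setminus D^+_\delta$, has a genuine gap, and the target you set for it is false.

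$N_1$ is \emph{not} norm continuous in $L^\infty_{-1,-}$ along $\R^++i0$. By the remark following Lemma \ref{lem: anal cont R+}, $N_1(x,k+i0)=1+\breve b(k+i0)e^{i\lambda x}+o(1)$ as $x\to+\infty$, and from the definitions $\breve b(k+i0)=\rho(k)$. The weight of $L^\infty_{-1,-}$ is trivial on $x>0$, and $\lambda=Z^{-1}(k)$ is injective in $k$. Hence $\|N_1(\cdot,k_1+i0)-N_1(\cdot,k_2+i0)\|_{L^\infty_{-1,-}}\ge|\breve b(k_1+i0)|+|\breve b(k_2+i0)|$ for $k_1\ne k_2$, and this does not tend to zero as $k_2\to k_1$.

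The same tails defeat your compactness upgrade. The criterion in the proof of Lemma \ref{lem: compact T} needs a single limiting profile at $+\infty$. The family $\{T_R(\zeta)N_1(\cdot,\zeta)\}$ instead carries tails proportional to $e^{i\lambda(\zeta)x}$ with $\lambda(\zeta)$ varying, and such a family is not precompact in $L^\infty(0,\infty)$. Your duality argument (Lemma \ref{lem: strong cont} with Lemma \ref{lem: G weak cont 1/2}) does give weak-$*$ continuity plus locally uniform convergence in $x$. That is strictly weaker than the $L^\infty_{-1,+}$-norm continuity the paper asserts on this edge (see the remark after the theorem).

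The missing observation is that the quotient representation is not confined to the disc. The bound $|a(k+i0)|\ge1$ from \eqref{ab.id} holds for every $k>0$, $k\ne\frac12$. So by Corollary \ref{cor: a a breve duality} and Lemma \ref{lem: M=aN}, $N_1=M_1/a$ on all of $(\R^++i0)\setminus\{\frac12+i0\}$ and on the adjacent part of the upper half plane.

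On that edge $T_L$ \emph{is} norm continuous (Lemma \ref{lem: T anlytc}), with $\|(I-T_L)^{-1}\|\le2$ (Theorem \ref{thm: inv T off disc}). So $M_1$ is continuous in $L^\infty_{-1,+}$, and $a$ is continuous by Theorem \ref{thm: a b cont}. Dividing gives continuity of $N_1$ in $L^\infty_{-1,+}$ at once; the weight $(1+x_+)^{-1}$ is exactly what tames the $e^{i\lambda x}$ tail. No strong-continuity or compactness argument is needed.

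This is the paper's argument. $T_L$ and $T_R$ are norm continuous on complementary edges, so each Jost function is controlled on its bad edge by the other; symmetrically, $M_1=N_1/\breve a$ on $\R^+-i0$ via \eqref{abbb.id}.

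Relatedly, your last paragraph lists $M_1$ on $\R^++i0$ inside $D^+_\delta$ as delicate because $T_L$ is not norm continuous there. It is norm continuous there (away from $\frac12+i0$), and your own disc step already relies on that.
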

\begin{remark}
    $M_1(\cdot,\zeta)$ and $N_1(\cdot,\zeta)$ are meromorphic with values in $L^\infty$ for $\zeta$ in the interior of $\widetilde{\C}$. The continuity  on $\R^++i0$ is with respect to the $L^\infty_{-1,+}$ norm. The continuity on $\R^+-i0$ is with respect to the $L^\infty_{-1,-}$ norm. The continuity at $\zeta=0$ is with respect to the $L^\infty$ norm.
\end{remark}
\begin{proof}
    By Theorem \ref{thm: inv T off disc} and Lemma \ref{lem: T anlytc}, $N_1(\cdot,\zeta)=(I-T_R(\zeta))^{-1}1\in L^\infty_{-1,-}$ is continuous on $\widetilde{\C}\setminus \left((\R^++i0)\cup D^+_\delta\right)$. $M_1(\cdot,\zeta)=(I-T_L(\zeta))^{-1}1\in L^\infty_{-1,+}$ is continuous on $\widetilde{\C}\setminus \left((\R^+-i0)\cup D^-_\delta\right)$. 
    Since $N_1(\cdot,\zeta)$ is defined on $ \widetilde{\C}\setminus \left((\R^++i0)\cup D^+_\delta\right)$, $a(\zeta)\ne 0$ in the same range by Corollary \ref{cor: a=0 vs N1 nonexist}. On the other hand, $a(\zeta)\ne 0$ for $\zeta\in \mathbb R^++i0$, $\zeta\ne \frac12+i0$ by \eqref{ab.id}. It follows that the zeros of $a$ must be contained in $D^+_\delta$. By the analyticity of $a$, these zeros must be isolated, and may only possibly accumulate on $\R^++i0$. However, Theorem \ref{thm: a b cont} implies that $a$ is continuous on $(\R^++i0)\setminus \{\frac12+i0\}$. Thus accumulation can only possibly happen at $\frac12+i0$. A similar argument shows that zeros of $\breve a$ can only occur in $D^-_\delta$, and can accumulate only possibly at $\frac12-i0$.
    By Corollary \ref{cor: a a breve duality} and Lemma \ref{lem: M=aN}, For $\zeta\in (\R^++i0)\cup D^+_\delta$ away from those zeros of $a$, we have $a(\zeta)\ne 0$ and $N_1(\cdot,\zeta)=\frac{M_1(\cdot,\zeta)}{a(\zeta)}$. Thus the continuity of $N_1(\cdot,\zeta)$ for $\zeta\in (\R^++i0)\cup D^+_\delta$ follows from those of $M_1(\cdot,\zeta)$ and $a(\zeta)$. Since $G(x,0)=i\chi_{\R^+}(x)$, by direct computation one gets that $a(0)=e^{i\int_\R u~dx}\ne 0$. By continuity $a(\zeta)\ne 0$ and $N_1(\cdot,\zeta)=\frac{M_1(\cdot,\zeta)}{a(\zeta)}$ for $\zeta$ near $0$. Thus $N_1(\cdot,\zeta)$ is continuous at $0$ as maps both into $L^\infty_{-1,-}$ and $L^\infty_{-1,+}$, hence is continuous with respect to the $L^\infty$ norm at $\zeta=0$. When $\zeta\in \widetilde{\C}\setminus (\R^+\pm i0)$ and is away from those zeros of $a$ in $D^+_\delta$, $N_1(\cdot,\zeta)$ maps analytically into $L^\infty_{-1,-}$ by Lemma \ref{lem: T anlytc}, although we know that each $N_1(\cdot,\zeta)$ is actually in $L^\infty$ (Lemma \ref{lem: jost exist 1}). To get analyticity with respect to the $L^\infty$ norm, we only need to show that $\int_R N_1(x,\zeta) g(x)~dx$ is analytic for every $g\in L^1$ with compact support, but this is obvious due to the analyticity of $N_1(\cdot,\zeta)$ in the $L^\infty_{-1,-}$ norm.
\end{proof}

Note that even though $a_1(\zeta)$ and $b_1(k+i0)$ are continuous at $\zeta=\frac12+i0$ and $k=\frac12$, $a(\zeta)$ and $b(k+i0)$ in general are not, since their definitions  involve division by $1-2\zeta$ and $1-2k^*$. To distinguish different behaviors of $a(\zeta)$ near $\frac12+i0$ and $\breve a(\zeta)$ near $\frac12-i0$, we make the following definition. To emphasize the dependence of the Jost functions and scattering functions on $u$, we denote them by a superscript $u$, such as $a_1^u(\zeta)$, $M_1^u(x,\zeta)$ etc.
\begin{definition}\label{def: genericity}
    Let $\epsilon>0$ be sufficiently small, and $(L^1_1\cap L^2_1)_\epsilon$ the ball centered at 0 of radius $\epsilon$ in $L^1_1\cap L^2_1$. A scattering potential $u\in (L^1_1\cap L^2_1)_\epsilon$ is called upper-generic if $a_1^u(\frac12+i0)\ne 0$ and non-upper-generic if otherwise. It is called lower-generic if $\breve a_1^u(\frac12-i0)\ne 0$ and non-lower-generic otherwise. Here $a_1^u$ is the $a_1$ function defined as in \eqref{def: a_1}, and $\breve a_1^u$ similarly.
\end{definition}

We justify the terminology with the following result.

\begin{theorem}\label{thm: genericity}
    Upper(lower)-generic potentials form a open dense subset of $(L^1_1\cap L^2_1)_\epsilon$. 
\end{theorem}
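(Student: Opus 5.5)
We prove the upper-generic case; the lower one is symmetric, with $N_1$, $T_R$, $\breve a_1$ in place of $M_1$, $T_L$, $a_1$. The plan is to study the map
\[
\Phi:(L^1_1\cap L^2_1)_\epsilon\to\C,\qquad \Phi(u)=a_1^u(\tfrac12+i0)=\int_\R u\,M_1^u(x,\tfrac12+i0)\,dx .
\]
We will show that $\Phi$ is continuous, which gives openness of $\{\Phi\neq 0\}$. We will then show that $\Phi$ restricted to complex lines in $u$ is real-analytic (indeed holomorphic in a complex parameter), with a nontrivial first-order term, which gives density.

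\emph{Continuity.} Put $T^u=T_L^u(\tfrac12+i0)$ on $L^\infty_{-1,+}$. By the proof of Theorem \ref{thm: inv T off disc} (estimate \eqref{eq: est T_L via u}), $u\mapsto T^u$ is linear and bounded, with $\|T^u-T^v\|\le C\|u-v\|_{L^1_1\cap L^2_1}$, and $\|T^u\|\le\tfrac12$ on the $\epsilon$-ball. The Neumann series $M_1^u=(I-T^u)^{-1}1$ then depends continuously (in fact analytically) on $u$ in $L^\infty_{-1,+}$, by the resolvent identity. By Lemma \ref{lem: weighted Holder Young}, the pairing $(u,m)\mapsto\int um$ is bounded on $L^1_1\times L^\infty_{-1,+}$. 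Hence $\Phi$ is continuous, and the upper-generic set $\Phi^{-1}(\C\setminus\{0\})$ is open.

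\emph{Density.} Fix $u_0$ with $\Phi(u_0)=0$ and a real-valued direction $w\in C_0^\infty$. Consider $f(t)=\Phi(u_0+tw)$. Since $T^{u_0+tw}=T^{u_0}+tT^w$ is affine in $t$, the Neumann series shows that $f$ extends to a holomorphic function of complex $t$ in a small disc. Its restriction to real $t$ is therefore real-analytic. If $f\not\equiv0$, its zeros are isolated, so $u_0+tw$ is upper-generic for arbitrarily small real $t\neq0$, and we are done. It remains to choose $w$ with $f\not\equiv 0$. The natural choice is to compute the Taylor series of $\Phi$ at $u=0$. Since $M_1^0\equiv1$, expanding $M_1^u=1+T^u1+O(\|u\|^2)$ gives
\[
\Phi(u)=\int_\R u\,dx+O(\|u\|^2).
\]
Note that $\Phi$ is not linear, so nonvanishing of the first-order term at $0$ does not directly give a good direction at $u_0$.

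The cleaner route is the line through $0$ and $u_0$ itself. Set $g(t)=\Phi(tu_0)$ for $t$ in a complex neighborhood of $[0,1]$; for real $t$ the potential $tu_0$ lies in the ball, and the Neumann series converges for $|t|\,\|u_0\|<\epsilon$. Then $g$ is holomorphic there, with $g(0)=0$ and $g'(0)=\int u_0\,dx$. If $\int u_0\neq0$, then $g\not\equiv0$, its zeros on $[0,1]$ are isolated, and $tu_0$ is upper-generic for real $t$ near $1$, $t\ne1$, with $tu_0\to u_0$. If $\int u_0=0$, first replace $u_0$ by the nearby potential $u_0+\eta\psi$, with $\psi\in C_0^\infty$, $\int\psi=1$ and $\eta$ small. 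Either this perturbed potential is already upper-generic, or we apply the previous argument to it.

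The main obstacle is exactly this step: ensuring that the analytic function along the chosen segment is not identically zero. The first-order coefficient $\int u\,dx$ handles it in the way just described. We still need to check that the Neumann-series expansion at $\zeta=\frac12+i0$ is legitimate uniformly, that is, that $\|T^u\|_{L^\infty_{-1,+}\to L^\infty_{-1,+}}\le C\|u\|_{L^1_1\cap L^2_1}$ at this endpoint. This holds because $G_L^1(\cdot,\tfrac12+i0)\in L^\infty_{-1}$ by Lemma \ref{lem: green uniform split}, and $G_L^2(\cdot,\tfrac12+i0)=0$ there.
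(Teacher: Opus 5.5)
Your proof is correct, and it reaches density by a different mechanism than the paper.

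\textbf{The paper's route.} It fixes a non-generic $u$ and differentiates $t\mapsto a_1^{u+tv}(\tfrac12+i0)$ at $t=0$ in an arbitrary direction $v$. This gives $f'(0)=\int_\R vM_1^u(1-\widetilde{G_L}*w)\,dx$ with $w=[I-(T_L^u)^*]^{-1}u$. It then shows that $M_1^u(1-\widetilde{G_L}*w)$ vanishes only on a null set. That step needs three ingredients: the holomorphic extension of $M_1^u(\cdot,\tfrac12+i0)$ and of $G_L*\widetilde w$ to the strip, the Lusin--Privalov uniqueness theorem, and a Littlewood--Paley argument ruling out $G_L*\widetilde w\equiv 1$.

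\textbf{Your route.} You use only two facts. First, $\Phi(u)=\sum_{n\ge0}\int_\R u\,(T^u)^n1$ is a convergent power series in $u$ on the ball, with linear part $\int_\R u$ at the origin. Second, the identity theorem applies along the segment $t\mapsto tu_0$, after a preliminary perturbation that makes $\int u_0\neq0$. This is legitimate: the ball is star-shaped about $0$, and $\|T^{tu_0}\|=|t|\,\|T^{u_0}\|<1$ on a complex disc of radius larger than $1$. Your endpoint bound at $\zeta=\tfrac12+i0$ also holds, since $G_L^1(\cdot,\tfrac12+i0)\in L^\infty_{-1}$ and $G_L^2=0$ there.

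\textbf{What each buys.} Your argument is shorter and avoids all of the boundary-value and harmonic analysis. The paper's argument yields local information that yours does not. It shows that the differential of $u\mapsto a_1^u(\tfrac12+i0)$ is nonzero at every point of the ball. Hence near any non-generic potential, the non-generic set is locally contained in a smooth hypersurface. It also shows that genericity can be restored by an arbitrarily small perturbation of $u$ itself in a suitably chosen direction $v$, rather than by moving along the ray through the origin.
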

\begin{proof}
    We only show the proof for the upper-generic case. Recall from \eqref{jost prty: def M} that for $\zeta\in D^+_\delta$,
    $M_1^u = (I-T_L^u)1$.
    Thus 
    \begin{equation}\label{eq: Mu1-Mu2}
        M_1^{u_1}-M_1^{u_2} = (I-T_L^{u_1})^{-1}\left(T_L^{u_1}-T_L^{u_2}\right)\left(I-T_L^{u_2}\right)^{-1}1.
    \end{equation}
    By the proof of Theorem \ref{thm: inv T off disc}, $\|T_L^{u_i}\|_{L^\infty_{-1,+}\to L^\infty_{-1,+}}$, $i=1,2$, are bounded by $\frac12$ for $\zeta\in D^+_\delta$, provided $\|u\|_{L^1_1\cap L^2_1}$ is small enough. We also have from \eqref{eq: est T_L via u}
    \begin{align}
        &~\|T_L^{u_1}-T_L^{u_2}\|_{L^\infty_{-1,+}\to L^\infty_{-1,+}}\notag\\
        \le &~C\|G_L(\cdot,\zeta)\|_{L^2_{-1,+}\cap L^\infty_{-1,+}}\|u_1-u_2\|_{L^1_1\cap L^2_1}.\label{eq: TLu1-TLu2}
    \end{align}
    By Lemma \ref{lem: green uniform split}, $\|G_L(\cdot,\zeta)\|_{L^2_{-1,+}\cap L^\infty_{-1,+}}$ is uniformly bounded for $\zeta\in D^+_\delta$. It follows from \eqref{eq: Mu1-Mu2} and \eqref{eq: TLu1-TLu2} that  $u\mapsto M_1^{u}(\cdot,\frac12+i0)$ is continuous from $(L^1_1\cap L^2_1)_\epsilon$ to $L^\infty_{-1,+}$. Since $L^1_1\cap L^2_1$ is continuously embedded in $L^1_{1,+}$, and $L^\infty_{-1,+}=[L^1_{1,+}]^*$, we get from \eqref{def: a_1} that $u\mapsto  a_1^u(\frac12+i0)$ is continuous on $(L^1_1\cap L^2_1)_\epsilon$. It follows that the set of upper-generic potentials is open in $(L^1_1\cap L^2_1)_\epsilon$.

    To see density, let us assume $u\in (L^1_1\cap L^2_1)_\epsilon$ is non-generic and consider 
    \begin{equation}
        f(t)=a_1^{u+tv}(\tfrac12+i0)=\int_\R (u+tv)M_1^{u+tv}~dx
    \end{equation} for $v\in L^1_1\cap L^2_1$ and $|t|$ small. Note that
    \begin{align}
        \partial_t M_1^{u+tv} \big|_{t=0}&= \partial_t (I-T_L^{u+tv})1\big|_{t=0}\notag\\
        &=-(I-T_L^u)^{-1} (\partial_t T_L^{u+tv})\big|_{t=0} (I-T_L^u)^{-1}1\notag\\
        &=-(I-T_L^u)^{-1}\left(G_L*(vM_1^{u})\right).
    \end{align}
    
    It follows that
    \begin{align}
        f'(0)&=\int_\R \left[vM_1^u-u(I-T_L^u)^{-1}(G_L*(vM_1^u))\right]~dx\notag\\
        &=\int_\R vM_1^u(1-\widetilde {G_L}*w)~dx,
    \end{align}
    where $\widetilde{G_L}(x)=G_L(-x)$ and 
    \begin{equation}
        w=[I-(T_L^u)^*]^{-1}u\in L^1_{1,+}.
    \end{equation}
    Here $(T_L^u)^*=A^u$ is given by \eqref{def: A(zeta)} (see also Lemma \ref{lem: strong cont}).

    We claim that the zero level set of $M_1^u (1-\widetilde{G_L}*w)$ is a null set. It follows that the set on which $M_1^u (1-\widetilde{G_L}*w)\ne 0$ has positive measure. We can then choose $v\in L^1_1\cap L^2_1$ such that $f'(0)>0$. This implies that $a_1^{u+tv}>0$ for $|t|>0$ sufficiently small. Thus generic potentials are dense in $(L^1_1\cap L^2_1)_\epsilon$. 

    To prove the claim, we study the zero level sets  $\{M_1=0\}$ and $\{1-\widetilde{G_L}*w=0\}$ in turns. Note that $uM_1\in L^1_{1,-}$ and $\widehat{uM_1}\in L^\infty$.  By \eqref{jost prty: M1 1/2 ext} and \eqref{eq: R bound}, $M_1(x+iy)\to M_1(x+i0)$ almost everywhere as $y\searrow 0$. Similar to the proof of Lemma \ref{lem: anal cont 1}, we can pick a sequence of $f_n\in C_0^\infty$ such that $f_n\to uM_1$ in $L^1_{1,-}$. Using \eqref{jost prty: M1 1/2 ext} and Lemma \ref{lem: inv Fourier R1}, we get that $G_L(x+iy)*f_n(x)$ converges to $G_L(x+iy)*(uM_1)(x)$ uniformly on compact subsets of the strip $0<y<2$. Thus $M_1(x+iy)$ is holomorphic in the strip. By the Lusin-Privalov theorem, if the zero set of $M_1(x+i0)$ has positive measure, $M_1\equiv 0$. This contracts the left limit $\lim_{x\to-\infty}M_1(x+i0)=1$, which follows from \eqref{jost prty: M1 1/2 ext} in the same way as in the proof of Lemma \ref{lem: anal cont 1/2}.

    We now turn to $\{1-\widetilde{G_L}*w=0\}$. In fact, by reversing the direction, we can equivalently study $\{1-G_L*\widetilde{w}=0\}$, with $\widetilde w \in L^1_{1,-}$. By repeating the above proof on $G_L*(uM_1)$, we conclude that $G_L(x+iy)*\widetilde{w}(x)$ is holomorphic in the strip, with almost everywhere boundary values $G_L(x+i0)*\widetilde{w}(x)$ as $y\searrow 0$. Once again, if $\{1-G_L*\widetilde{w}=0\}$ has positive measure, then 
    \begin{equation}\label{eq: GL*w=1}
        G_L*\widetilde{w}\equiv 1.
    \end{equation} For any $N\in 2^\mathbb Z$, denote by $P_N=\varphi(\frac{D}{N})$ the standard Littlewood-Payley projector with frequency comparable to $N$, so that $P_N=P_NP_{N/2\le \cdot\le 2N}$, where $P_{N/2\le \cdot\le 2N}=P_{N/2}+P_N+P_{2N}$. As $P_N$ is a convolution with a zero average Schwartz class function, we have
    \begin{equation}\label{eq: 7.43}
        G_L*(P_N\widetilde{w})=(P_{N/2\le \cdot\le 2N}G_L)*(P_N\widetilde{w})=P_N 1=0.
    \end{equation}
    By \eqref{def: G} we see easily that for $\zeta=\frac12+i0$
    \begin{equation}
        G_L(x)-\mathcal \mathcal F^{-1}\left[\text{P.V.}\tfrac{1}{\xi-\frac12(1-e^{-2\xi})}\right](x)=\tfrac{i}{3}-\tfrac{x}2.
    \end{equation}
    Thus $\widehat{G_L}-\text{P.V.}\frac{1}{\xi-\frac12(1-e^{-2\xi})}$ is a distribution supported at $\xi=0$. Since the symbol of $P_{N/2\le \cdot\le 2N}$ is supported away from $0$, we get
    \begin{equation}\label{eq: 7.45}
        \mathcal \mathcal F\left[P_{N/2\le \cdot\le 2N}G_L\right] = \frac{\varphi(\frac{2\xi}{N})+\varphi(\frac{\xi}{N})+\varphi(\frac{\xi}{2N})}{\xi-\frac12(1-e^{-2\xi})}.
    \end{equation}
    It follows from \eqref{eq: 7.43} and \eqref{eq: 7.45} that 
    \begin{equation}
        \frac{\varphi(\frac{\xi}{N})\widehat{\widetilde{w}}}{\xi-\frac12(1-e^{-2\xi})}=0,
    \end{equation}
    which gives $P_N\widetilde{w}=0$ for all $N$. Since $\widetilde{w}\in L^1_{1,-}\subset L^1$, we get $\widetilde{w}=0$. This contradicts \eqref{eq: GL*w=1}. Therefore, $\{1-\widetilde{G_L}*w=0\}$ must be a null set. Since $$\{M_1 (1-\widetilde{G_L}*w)=0\}\subset \{M_1=0\}\cup \{1-\widetilde{G_L}*w=0\},$$
    $\{M_1 (1-\widetilde{G_L}*w)=0\}$ is a null set. The proof is complete.
\end{proof}

The two notions of genericity turn out to coincide for stronger decay assumptions on $u$.

\begin{theorem}\label{thm: up low generic}
    There exists an $\epsilon>0$ such that if  $\|u\|_{ L^1_2\cap L^2_1}<\epsilon$, then $u$ is upper-generic if and only if it is lower-generic.
\end{theorem}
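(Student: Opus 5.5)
The plan is to compare the two Green's functions at the double point $\zeta=\frac12$ and show that a vanishing $a_1^u(\frac12+i0)$ forces the bounded solution $M_1$ to be a multiple of $N_1$. By \eqref{G_L split 1/2} and \eqref{G_R split 1/2}, and since $\chi_{\R^+}+\chi_{\R^-}=1$ and both splittings carry the same remainder $\mathcal F^{-1}[R_1(\xi,0,\frac12)]$, we have the exact identity
\begin{equation*}
G_L(x,\tfrac12+i0)=G_R(x,\tfrac12-i0)+\tfrac{2i}{3}-x .
\end{equation*}
Hence, for $m$ with enough decay of $um$,
\begin{equation*}
T_L(\tfrac12+i0)m=T_R(\tfrac12-i0)m+\left(\tfrac{2i}{3}-x\right)\int_\R um\,ds+\int_\R s\,u(s)m(s)\,ds .
\end{equation*}
The proof then exploits this identity in both directions.

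First assume $u$ is non-upper-generic, so $a_1^u(\frac12+i0)=\int_\R uM_1(\cdot,\frac12+i0)=0$. Write $M_1=M_1(\cdot,\frac12+i0)\in L^\infty_{-1,+}$, which exists by Theorem \ref{thm: inv T off disc}.
\begin{enumerate}[(1)]
\item Show that $M_1\in L^\infty$. For $x>0$, rewrite $-\int_{-\infty}^x(x-s)uM_1\,ds$ as $\int_x^\infty (x-s)uM_1\,ds+\int_\R s\,uM_1\,ds$, using $\int uM_1=0$.
\item Bound the first integral by $\int_x^\infty s(1+s)|u|\,ds\,\|M_1\|_{L^\infty_{-1,+}}$. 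This is where the hypothesis $u\in L^1_2$ is needed. All other terms in the splitting are bounded exactly as in Lemma \ref{lem: anal cont 1/2}.
\item Conclude $uM_1\in L^1_{1,+}\cap L^2$, so that $T_R(\frac12-i0)M_1$ makes sense.
\item The identity gives $M_1=(1+c)+T_R(\frac12-i0)M_1$ with $c=\int_\R s\,uM_1\,ds$. Since $\frac12-i0\notin D^+_\delta$, Theorem \ref{thm: inv T off disc} makes $I-T_R(\frac12-i0)$ invertible on $L^\infty_{-1,-}$.
\item If $1+c=0$, uniqueness gives $M_1=0$. This contradicts $M_1\to1$ as $x\to-\infty$ from \eqref{jost prty: M1 1/2 left}.
\item Otherwise $M_1=(1+c)N_1(\cdot,\frac12-i0)$. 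Then $\breve a_1^u(\frac12-i0)=\int uN_1=(1+c)^{-1}\int uM_1=0$, so $u$ is non-lower-generic.
\end{enumerate}

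The converse is the mirror argument. Starting from $\int uN_1(\cdot,\frac12-i0)=0$, show as above that $N_1\in L^\infty$, now using the tail $x\to-\infty$ and the weight $\langle x\rangle^2$ on $u$. Then write $N_1=(1-c')+T_L(\frac12+i0)N_1$ via the same identity read backwards, and invoke invertibility of $I-T_L(\frac12+i0)$. This yields $N_1=(1-c')M_1$ and hence $a_1^u(\frac12+i0)=0$. The degenerate case $1-c'=0$ is again excluded by \eqref{jost prty: N1 1/2 right}.

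I expect the main obstacle to be the bookkeeping in the identity itself. First, the mixed weights must be tracked: $T_L$ acts on $L^\infty_{-1,+}$ while $T_R$ acts on $L^\infty_{-1,-}$. This is exactly why boundedness of $M_1$ must be proved before $T_R$ can be applied. Second, the constant $\int s\,uM_1$ must be finite, which requires checking that $uM_1\in L^1_1$. Both points rest on the extra moment $u\in L^1_2$, and the smallness of $\epsilon$ enters only through the invertibility statements of Theorem \ref{thm: inv T off disc}.
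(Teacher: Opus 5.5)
Your argument is correct, and it takes a genuinely different route from the paper.

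\textbf{The paper's route.} It argues through the scattering data. Assuming upper-genericity, it uses $b_1(\tfrac12+i0)=a_1(\tfrac12+i0)$ to extend $\rho$ to $k=\tfrac12$ with $\rho(\tfrac12)=-1$. It then invokes differentiability of $a_1,b_1,\breve a_1$ at $\tfrac12$ (Theorem~\ref{thm: a b diff k=1/2}, which is where $L^1_2$ enters). Finally, if $u$ were not lower-generic, it combines the jump relation \eqref{eq: a jump} from Section~\ref{sec: RH} with $a\breve a=1$. This forces $|1-2k|/|1-\rho(k)\rho(k^*)|$ to stay bounded as $k\to\tfrac12$. That contradicts the vanishing of $\frac{d}{dk}[1-\rho(k)\rho(k^*)]$ at $\tfrac12$, which holds because $dk^*/dk=-1$ there.

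\textbf{Your route.} You work at the single point $\zeta=\tfrac12$ directly with the integral equations, using three ingredients:
\begin{enumerate}
\item the exact identity $G_L(x,\tfrac12+i0)-G_R(x,\tfrac12-i0)=\tfrac{2i}{3}-x$, which is also the $k\to\tfrac12$ limit of $\tfrac{i}{1-2k}+\tfrac{i}{1-2k^*}e^{i\lambda x}$;
\item the second moment of $u$, to show that $a_1=0$ (respectively $\breve a_1=0$) makes the Jost function bounded, so that the opposite operator can be applied;
\item invertibility of that opposite operator, from Theorem~\ref{thm: inv T off disc}.
\end{enumerate}

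\textbf{What each route buys.} Your route avoids the forward reference to Section~\ref{sec: RH}, whose identities are established there under a standing real-valuedness assumption. It also avoids all of the $k$-differentiability machinery at $\tfrac12$, and it never uses that $u$ is real. It further gives a sharper structural picture: non-genericity forces $M_1(\cdot,\tfrac12+i0)$ and $N_1(\cdot,\tfrac12-i0)$ to be bounded and proportional. This is a threshold resonance, in analogy with zero-energy resonances for Schr\"odinger operators. The paper's route, in exchange, expresses genericity through the quantities that enter the Riemann--Hilbert problem: $\rho(\tfrac12)=-1$, and the degeneration of the jump $1-\rho(k)\rho(k^*)$ at $k=\tfrac12$.
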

\begin{proof}
    Suppose $u$ is upper-generic. By definition $a_1(\frac12+i0)\ne 0$. By the definitions of $a_1$ and $b_1$ (cf. \eqref{def: a_1}, \eqref{def: b1}), it's easy to see that $b_1(\frac12+i0)=a_1(\frac12+i0)$. From the definitions of $a$, $b$ and $\rho$ (cf. \eqref{def: a}, \eqref{def: b}, \eqref{def: rho}), we get 
    \begin{equation}
        \rho(k)=\frac{b(k+i0)}{a(k+i0)}=\frac{1-2k}{1-2k^*}\frac{ib_1(k+i0)}{1-2k+ia_1(k+i0)}.
    \end{equation}
    Thus $\rho$ can be extended to $k=\frac12$ with $\rho(\frac12)=-1$. By Theorem \ref{thm: a b diff k=1/2}, $\rho(k)$ is differentiable on $\R^+$, in particular also at $k=\frac12$. If $u$ is also non-lower-generic, then $\breve a_1(\frac12-i0)=0$. By the definition of $\breve a$ (cf. \eqref{def: breve a}, \eqref{def: breve a_1}), we have
    \begin{align}
        \lim_{k\to\frac12}\breve a(k-i0)&=1-i\lim_{k\to\frac12}\frac{\breve a_1(k-i0)-\breve a_1(\frac12-i0)}{1-2k}\notag\\
        &=1+\frac i 2 \partial_k \breve a_1(\tfrac12-i0).\label{eq: breve a limit form 1}
    \end{align}
    We now use the identity \eqref{eq: a jump}, which will be proven in a later section, and the relation \eqref{eq: a a breve = 1} to get
    \begin{align}
        \breve a(k-i0)&=\frac{1}{a(k+i0)[1-\rho(k)\rho(k^*)]}\notag\\
        &=\frac{1}{1-2k+ia_1(k+i0)}\frac{1-2k}{1-\rho(k)\rho(k^*)}.\label{eq: breve a limit form 2}
    \end{align}
    From \eqref{eq: breve a limit form 1}, \eqref{eq: breve a limit form 2}, we see that there exists a $C>0$ such that 
    \begin{equation}
        \frac{|1-2k|}{|1-\rho(k)\rho(k^*)|}\le C
    \end{equation}
    as $k\to\frac12$. Note that
    \begin{equation}
        1-\rho(\tfrac12)\rho(\tfrac12^*)=1-[\rho(\tfrac{1}{2})]^2=0.
    \end{equation}
    Thus
    \begin{equation}
        \left|\frac{d}{dk}[1-\rho(k)\rho(k^*)]\right|\bigg|_{k=0}=\lim_{k\to \frac12}\frac{|1-\rho(k)\rho(k^*)|}{|k-\frac12|}\ge \frac2{C}>0.
    \end{equation}
    However,
    \begin{align}
        \frac{d}{dk}[1-\rho(k)\rho(k^*)]\bigg|_{k=0}&=-\rho'(\tfrac12)\rho(\tfrac{1}{2})-\rho(\tfrac12)\rho'(\tfrac12)\frac{dk^*}{dk}\bigg|_{k=0}\notag\\
        &=-\rho'(\tfrac12)\rho(\tfrac{1}{2})+\rho(\tfrac12)\rho'(\tfrac12)\notag\\
        &=0.\notag
    \end{align}
    This contradiction shows that $u$ being upper-generic implies that $u$ is also lower-generic. The reverse implication can be proven similarly, using identity \eqref{eq: a breve jump}. 
\end{proof}

By Theorem \ref{thm: up low generic}, we can combine the two notions of genericity for $u$ with stronger decay.
\begin{definition}\label{def: combined generic}
    Let $\epsilon>0$ be small enough so that Theorem \ref{thm: up low generic} holds. A potential $u\in (L^1_2\cap L^2_1)_\epsilon$ is called generic if it is upper-(lower-)generic.
\end{definition}
\begin{remark}
    It is curious whether the two notions of genericity continue to coincide for potentials with weaker decay, for instance, for the more general class $(L^1_1\cap L^2_1)_\epsilon$ used in Definition \ref{def: genericity}. We leave such questions to future studies. On the other hand, the scattering theory studied in this paper can be built for either $M_1$ or $N_1$, and does not care about whether the two types of genericity coincide or not.
    %The two notions of genericity are independent of each other in general. In particular, a potential $u$ can be both upper- and lower-generic, or it can be only upper- but not lower-generic. The former is easy to see, as by Theorem \ref{thm: genericity}, the intersection of upper- and lower-generic potentials is still a dense open set in $(L^1_1\cap L^2_1)_\epsilon$. To see the latter possibility, 
\end{remark}

For real potentials $u$, upper-genericity of $u(x)$ is related to lower-genericity of $u(-x)$.

\begin{theorem}\label{thm: LR genericity symmetry}
    Let $u\in (L^1_1\cap L^2_1)_\epsilon$ for $\epsilon>0$ sufficiently small. Suppose $u$ is real-valued. Then $u(x)$ is upper-generic if and only if $u(-x)$ is lower-generic. $u(x)$ is lower-generic if and only if $u(-x)$ is upper-generic.
\end{theorem}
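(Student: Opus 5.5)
The plan is to exploit a reflection–conjugation symmetry of the Jost equations at $\zeta=\tfrac12\pm i0$. Set $\tilde u(x)=u(-x)$. Since $u$ is real, $\tilde u$ is real as well. The weights in $L^1_1\cap L^2_1$ are symmetric under $x\mapsto -x$, so $\|\tilde u\|_{L^1_1\cap L^2_1}=\|u\|_{L^1_1\cap L^2_1}<\epsilon$. In particular Theorem \ref{thm: inv T off disc} applies to both potentials. It gives invertibility of $I-T_L(\tfrac12+i0)$ on $L^\infty_{-1,+}$ and of $I-T_R(\tfrac12-i0)$ on $L^\infty_{-1,-}$, because $\tfrac12+i0\notin D^-_\delta$ and $\tfrac12-i0\notin D^+_\delta$. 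Consequently $M_1^u(\cdot,\tfrac12+i0)$ and $N_1^{\tilde u}(\cdot,\tfrac12-i0)$ are the unique solutions of their integral equations in these spaces (Lemma \ref{lem: jost exist 1}).

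The first step is to verify the symmetry \eqref{eq: G_L bar} at the endpoint:
\begin{equation*}
\overline{G_L(x,\tfrac12+i0)}=G_R(-x,\tfrac12-i0).
\end{equation*}
This follows by passing to the limit $\zeta\to\tfrac12$ from the upper half plane in \eqref{eq: G_L bar}, since complex conjugation carries that approach to the lower-half-plane approach to $\tfrac12-i0$. One can also check it directly from \eqref{G_L split 1/2} and \eqref{G_R split 1/2}. Conjugating $(-x+\tfrac{2i}{3})\chi_{\R^+}(x)$ and reflecting $x\mapsto -x$ produces $(x-\tfrac{2i}{3})\chi_{\R^-}(x)$, which is exactly the polynomial part of $G_R(\cdot,\tfrac12-i0)$. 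For the remaining part, $R_1(\xi,0,\tfrac12)$ is real for real $\xi$ by \eqref{eq: def R12}, so $\overline{\mathcal F^{-1}[R_1](-x)}=\mathcal F^{-1}[R_1](x)$.

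Next, define $N(x)=\overline{M_1^u(-x,\tfrac12+i0)}$. Since $M_1^u\in L^\infty_{-1,+}$, we have $N\in L^\infty_{-1,-}$. Take the equation $M_1^u=1+G_L(\cdot,\tfrac12+i0)*(uM_1^u)$, evaluate it at $-x$, conjugate, use the reality of $u$, substitute $y=-s$ in the convolution, and apply the symmetry from the first step. This gives
\begin{equation*}
N(x)=1+\int_\R G_R(x-s,\tfrac12-i0)\,\tilde u(s)N(s)\,ds .
\end{equation*}
By uniqueness, $N=N_1^{\tilde u}(\cdot,\tfrac12-i0)$. Hence
\begin{equation*}
\breve a_1^{\tilde u}(\tfrac12-i0)=\int_\R \tilde u N\,dx=\int_\R u(-x)\overline{M_1^u(-x,\tfrac12+i0)}\,dx=\overline{a_1^u(\tfrac12+i0)}.
\end{equation*}
Therefore $a_1^u(\tfrac12+i0)\neq 0$ if and only if $\breve a_1^{\tilde u}(\tfrac12-i0)\neq0$, which is the first equivalence. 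The second equivalence follows by the same argument with the roles of $L$ and $R$ exchanged, or by applying the first equivalence to $\tilde u$, since $\tilde{\tilde u}=u$.

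The only delicate point is the first step. The identity \eqref{eq: G_L bar} was stated for $\zeta$ in the cut plane, and at $\tfrac12\pm i0$ the Green's functions are defined by a limiting convention with linear growth. One must check that conjugation sends the $+i0$ convention to the $-i0$ one, and not to the singular limit of $G_R$ at $\tfrac12+i0$. The explicit formulas \eqref{G_L split 1/2} and \eqref{G_R split 1/2} settle this. A second, minor point is that the convolution manipulations require absolute integrability. This holds because $uM_1^u\in L^1_{1,-}$ and $G_L\in L^\infty_{-1,+}+L^2$ by Lemma \ref{lem: weighted Holder Young}.
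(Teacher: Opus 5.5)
Your proposal is correct and follows the paper's proof. The paper establishes the general identity $\overline{M_1^u(x,\zeta)}=N_1^{\tilde u}(-x,\overline\zeta)$ by conjugating and reflecting $(I-T_L^u(\zeta))M_1^u=1$ via \eqref{eq: G_L bar} and invoking uniqueness, then pairs with $u$ to obtain $\overline{a_1^u(\tfrac12+i0)}=\breve a_1^{\tilde u}(\tfrac12-i0)$; you carry out the same computation directly at $\zeta=\tfrac12+i0$, and you add a useful explicit check that conjugation plus reflection exchanges the endpoint conventions \eqref{G_L split 1/2} and \eqref{G_R split 1/2}.
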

\begin{proof}
This is a consequence of a more general fact: 
\begin{equation}\label{eq: M_1 conj}
    \overline{M_1^u(x,\zeta)}=N_1^{\tilde u}(-x,\overline\zeta),
\end{equation}
where $\tilde u(x)=u(-x)$. In fact, \eqref{eq: M_1 conj} implies that
\begin{align*}
    \overline{a_1^u(\tfrac12+i0)}&=\overline{\int_\R u(x)M_1^u(x,\tfrac12+i0)~dx}\\
    &=\int_\R u(x)N_1^{\tilde u}(-x,\tfrac12-i0)~dx\\
    &=\int_\R \tilde u(x)N_1^{\tilde u}(x,\tfrac12-i0)~dx\\
    &=\breve a_1^{\tilde u}(\tfrac12-i0).
\end{align*}
To prove \eqref{eq: M_1 conj}, use \eqref{eq: G_L bar} and note that
\begin{align*}
    \overline{[T_L^u(\zeta)m](-x)}&=\overline{\int_\R G_L(-x-y,\zeta)u(y)m(y)~dy}\\
    &= \int_\R G_R(x+y,\overline\zeta)u(y)\overline{m(y)}~dy\\
    &=\int_\R\int_\R G_R(x-y,\overline\zeta)u(-y)\overline{m(-y)}~dy\\
    &=\int_\R\int_\R G_R(x-y,\overline\zeta)\tilde u(y)\overline{\widetilde m(y)}~dy\\
    &=[T_R^{\tilde u}(\overline{\zeta})\overline{\widetilde m}](x).
\end{align*}
Taking the complex conjugate of 
\begin{equation*}
    [(I-T_L^u(\zeta))M_1^u(\cdot,\zeta)](-x)=1
\end{equation*}
to get
\begin{equation}
    [(I-T_R^{\tilde u}(\overline{\zeta}))\overline{M_1^u(-\cdot,\zeta)}](x)=1.
\end{equation}
Thus
\begin{equation}
    \overline{M_1^u(-x,\zeta)}=N_1^{\tilde u}(x,\overline\zeta),
\end{equation}
which is equivalent to \eqref{eq: M_1 conj}.
\end{proof}

% By Theorems \ref{thm: inv T off disc} and \ref{thm: inv T on disc}, $N_1(\cdot,\zeta)$ exists for $\zeta\in \widetilde{\C}\setminus D^+_\delta$ and also for $\zeta\in D^+_\delta\setminus \{\frac12+i0\}$ if $a(\zeta)\ne 0$. If $u$ is upper-generic, then obviously $a(\zeta)\to \infty$ as $\zeta\to \frac12+i0$. Thus $a(\zeta)\ne 0$ in a sufficiently small upper semi-disc around $\frac12+i0$. Furthermore $|a(\zeta)|\ge 1$ for $\zeta\in \R^++i0$ by \eqref{ab.id}. Thus $a(\zeta)\ne 0$ for $\zeta\in (\R^++i0)\cap D^+_\delta\setminus\{\frac12+i0\}$. It follows that the zeros of $a(\zeta)$ must be in a compact subset in the interior of $D^+_\delta$. Since $a(\zeta)$ is analytic there, there can only be finitely many such zeros. In view of Corollary \ref{cor: a a breve duality} and Lemma \ref{lem: M=aN}, we see that $\breve a(\zeta)$ must be meromorphic. The above discussion can be extended to prove the following.

\begin{corollary}
    Let $u\in (L^1_2\cap L^2_1)_\epsilon$ for $\epsilon>0$ sufficiently small. Suppose $u$ is real-valued. Then $u(x)$ is generic if and only if $u(-x)$ is generic.
\end{corollary}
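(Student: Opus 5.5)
The corollary follows by combining Theorem \ref{thm: up low generic} with Theorem \ref{thm: LR genericity symmetry}, together with one small check on norms.

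First I check that the reflected potential lies in the same class. Set $\tilde u(x)=u(-x)$. The weights $\langle x\rangle^2$ and $\langle x\rangle$ are even, so $\|\tilde u\|_{L^1_2\cap L^2_1}=\|u\|_{L^1_2\cap L^2_1}<\epsilon$. Also $\tilde u$ is real-valued, and $(L^1_2\cap L^2_1)_\epsilon\subset(L^1_1\cap L^2_1)_{\epsilon'}$ for a comparable $\epsilon'$. I take $\epsilon$ small enough that three things hold at once: Theorem \ref{thm: up low generic} applies, Theorem \ref{thm: LR genericity symmetry} applies, and Definition \ref{def: combined generic} makes sense for both $u$ and $\tilde u$.

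Now assume $u$ is generic. By Definition \ref{def: combined generic}, $u$ is upper-generic. Theorem \ref{thm: LR genericity symmetry} then says $\tilde u$ is lower-generic. Since $\tilde u\in(L^1_2\cap L^2_1)_\epsilon$, Theorem \ref{thm: up low generic} makes $\tilde u$ upper-generic as well, so $\tilde u$ is generic.

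For the converse, note that reflecting $\tilde u$ gives back $u$. Applying the same argument with the roles of $u$ and $\tilde u$ exchanged shows that if $\tilde u$ is generic, then $u$ is generic.

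The only point needing care is that a single $\epsilon$ works for both cited theorems. This holds because each theorem only requires $\epsilon$ to be sufficiently small.
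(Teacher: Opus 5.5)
Your proposal is correct and follows essentially the paper's argument, which cites Theorem~\ref{thm: LR genericity symmetry} together with Definition~\ref{def: combined generic}; that definition already rests on Theorem~\ref{thm: up low generic}, which you invoke explicitly. Your check that reflection preserves the $L^1_2\cap L^2_1$ norm, so that $u(-x)$ also lies in $(L^1_2\cap L^2_1)_\epsilon$, is a detail the paper leaves implicit.
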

\begin{proof}
    The assertion is obvious from Theorem \ref{thm: LR genericity symmetry} and Definition \ref{def: combined generic}.
\end{proof}

A quick consequence of upper- and lower-genericity is the finiteness of the zeros of $a$ and $\breve a$. The respective Jost functions will also turn out to be continuous on the entire $\widetilde{\C}$.

\begin{theorem}
    Let $\|u\|_{L^1_1\cap L^2_1}<\epsilon$ for $\epsilon$ sufficiently small. If $u(x)$ is upper-generic, then $N_1(\cdot,\zeta)$ is meromorphic for $\zeta$ in the interior of $\widetilde{\C}$ with continuous boundary values. It has at most finitely many poles contained in $D^+_\delta$ coinciding with zeros of $a(\zeta)$.
    If $u(x)$ is lower-generic, then $M_1(\cdot,\zeta)$ is meromorphic for $\zeta$ in the interior of $\widetilde{\C}$ with continuous boundary values. It has at most finitely many poles contained in $D^-_\delta$, which coincide with zeros of $\breve a(\zeta)$.
\end{theorem}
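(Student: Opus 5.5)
We treat only the upper-generic case; the lower-generic case is the mirror image, with $(M_1,\breve a,D^-_\delta)$ in place of $(N_1,a,D^+_\delta)$. By Theorem \ref{thm: general meromorphy of M N}, $N_1(\cdot,\zeta)$ is already meromorphic in the interior of $\widetilde{\C}$, with continuous boundary values everywhere except possibly at $\frac12+i0$. Its poles are exactly the zeros of $a$, and these lie in the interior of $D^+_\delta$ and can accumulate only at $\frac12+i0$. So two things remain. First, we must show that the zeros of $a$ stay away from $\frac12+i0$, which gives finiteness. Second, we must show that $N_1$ has a continuous boundary value at $\frac12+i0$.

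For the first point we use the formula $a(\zeta)=1+\frac{i a_1(\zeta)}{1-2\zeta}$. By Theorem \ref{thm: a b cont}, $a_1$ is continuous on $\widetilde{\C}\setminus D^-_\delta$, and in particular at $\frac12+i0$. Upper-genericity gives $a_1(\frac12+i0)\neq 0$, so $|a(\zeta)|\to\infty$ as $\zeta\to\frac12+i0$ within the closed upper half-disc. Hence $a$ has no zeros in some punctured neighborhood $D^+_{\delta'}\setminus\{\frac12+i0\}$.

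We also need to rule out zeros near the diameter. By \eqref{ab.id} we have $|a(k+i0)|\ge 1$ for $k\neq\frac12$. Continuity of $a$ on $(\R^++i0)\setminus\{\frac12+i0\}$ then keeps zeros away from each compact piece of the diameter. Near the endpoints of the diameter and the arc, $a\neq 0$ because $N_1$ exists there: this follows from Theorem \ref{thm: inv T off disc} together with Corollary \ref{cor: a=0 vs N1 nonexist}, applied on a slightly enlarged disc if necessary. Thus the zero set of $a$ lies in a compact subset of the open region where $a$ is analytic. Since $a\not\equiv 0$ (for instance $|a|\to\infty$ at $\frac12$), an analytic function has only finitely many zeros on such a compact set.

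For the second point we use Lemma \ref{lem: M=aN}, which gives $N_1(\cdot,\zeta)=M_1(\cdot,\zeta)/a(\zeta)$ near $\frac12+i0$. By Theorem \ref{thm: inv T off disc}, Lemma \ref{lem: strong cont}, and the resolvent argument of Lemma \ref{lem: a b cont k=1/2}, $M_1(\cdot,\zeta)$ stays bounded in $L^\infty_{-1,+}$ and converges to $M_1(\cdot,\frac12+i0)$ at least weak-* (and locally uniformly) as $\zeta\to\frac12+i0$. We define $N_1(\cdot,\frac12+i0)$ to be the limit of this quotient. Since $1/a(\zeta)\to 0$, the quotient gives the limit $0$ in the $L^\infty_{-1,+}$ sense. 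Alternatively, we can write
\[
\frac{M_1}{a}=\frac{(1-2\zeta)M_1}{1-2\zeta+ia_1},
\]
and then the limit is $0$ in $L^\infty_{-1,+}$, because $(1-2\zeta)\to 0$ while $M_1$ stays bounded. We then check that this limit is also the boundary value in the $L^\infty_{-1,-}$ topology used for $N_1$, by comparing it with the integral equation $(I-T_R(\frac12+i0))N_1=1$.

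The main obstacle is this identification of the limit. A limit of $0$ would contradict the normalization $N_1\to 1$ at $+\infty$, so the naive quotient must be handled carefully. The plan is instead to pass to the limit directly in $N_1=1+T_R(\zeta)N_1$. We first obtain a uniform $L^\infty_{-1,-}$ bound on $N_1$ near $\frac12+i0$, using the $a_1\neq 0$ information through the decomposition $M_1=aN_1$. We then use strong continuity of $B(\zeta)$ at $\frac12+i0$ (Lemma \ref{lem: strong cont}) to identify any weak-* limit as a solution at $\frac12+i0$. Uniqueness of that solution, which is what upper-genericity should supply, then gives continuity.
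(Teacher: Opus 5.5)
Your reduction and your finiteness argument are correct and match the paper: Theorem \ref{thm: general meromorphy of M N} leaves $\frac12+i0$ as the only possible accumulation point, and $a=1+\frac{ia_1}{1-2\zeta}$ with $a_1(\frac12+i0)\neq0$ forces $|a|\to\infty$ there. The gap is in the continuity at $\frac12+i0$. You wrote down the paper's argument, rejected it on the basis of a contradiction that does not exist, and replaced it with a plan that cannot work.

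The paper's proof is exactly the observation you discarded. By Theorem \ref{thm: inv T off disc}, $\|M_1(\cdot,\zeta)\|_{L^\infty_{-1,+}}\le 2$ uniformly for $\zeta$ in the upper half-disc near $\frac12+i0$. Hence $N_1=M_1/a$ satisfies $\|N_1(\cdot,\zeta)\|_{L^\infty_{-1,+}}\le 2/|a(\zeta)|\to 0$. Moreover, $L^\infty_{-1,+}$ is precisely the topology in which the paper asserts continuity of $N_1$ on $\R^++i0$ (see the remark after Theorem \ref{thm: general meromorphy of M N}). So the boundary value at $\frac12+i0$ is simply $0$. This does not contradict $N_1(x,\zeta)\to1$ as $x\to+\infty$, because the weight $(1+x_+)^{-1}$ makes the behaviour at $+\infty$ invisible. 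For example, $\chi_{(n,\infty)}$ has $L^\infty_{-1,+}$ norm $(1+n)^{-1}\to0$. Here the region where $N_1(\cdot,\zeta)$ is close to $1$ escapes to $+\infty$ at scale $1/|\lambda|$, with $\lambda=Z^{-1}(\zeta)\to0$. A boundary value need not inherit the normalization, just as $M_1(\cdot,\frac12+i0)$ grows linearly rather than tending to $a$.

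The replacement plan fails for three reasons.
\begin{enumerate}[(i)]
\item $T_R$ is not defined at $\frac12+i0$. For $\imag\zeta>0$ one has $G_R(x,\zeta)=-\frac{i}{1-2\zeta}\chi_{\R^-}(x)+\frac{i}{1-2\zeta^*}e^{i\lambda x}\chi_{\R^+}(x)+H(x,\zeta)$, and both coefficients are of size $1/|\lambda|$. So there is no equation $(I-T_R(\frac12+i0))N_1=1$ to pass to the limit in.
\item Lemma \ref{lem: strong cont} gives strong continuity of $B(\zeta)$ only on $\widetilde{\C}\setminus\{\frac12+i0\}$, which excludes exactly the point you need.
\item Your target statement is false. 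For $\imag\zeta>0$ near $\frac12$ (away from zeros of $a$), $\|N_1(\cdot,\zeta)\|_{L^\infty_{-1,-}}\ge\lim_{x\to+\infty}|N_1(x,\zeta)|=1$. On the other hand, $N_1(\cdot,\zeta)\to0$ locally uniformly by the estimate above. Therefore $N_1$ has no limit in $L^\infty_{-1,-}$ at $\frac12+i0$ at all.
\end{enumerate}
Finally, upper-genericity supplies no uniqueness statement of the kind you invoke. Its only role in the paper is to make $1/a(\zeta)$ have a finite limit, namely $0$, at $\frac12+i0$.
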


\begin{proof}
    Suppose $u$ is upper-generic. By Theorem \ref{thm: general meromorphy of M N}, it remains to show that $\frac1{a(\zeta)}$ has a finite limit as $\zeta\to\frac12+i0$. But this is obvious as $a(\zeta)=1+\frac{i}{1-2\zeta}a_1(\zeta)$, and $a_1(\frac12+i0)\ne 0$ by Definition \ref{def: genericity}. The proof for the lower-generic case is similar.
\end{proof}

             %%  Scattering Data
\section{The bound states}\label{sec: bound states}

In this section, we study bounds states of the $L_u$ operator in the Lax pair. In the first part, we assume $u\in L^1_1\cap L^2_1\cap L^\infty$ with sufficiently small norm, and show that the bound state eigenvalues correspond to zeros of the scattering function $a(\zeta)$. To establish this connection, we need to show additional exponential decay of the eigenfunctions. Such a decay is easier to prove when the scattering potential $u$ has compact support. The general case can then be established by a limiting process. In the second part, we assume $u\in L^1_2\cap L^\infty$, and show that the discrete spectrum of $L_u$ is finite. Note that we do not require smallness of norm for this particular result, although we assume stronger decay in $L^1$.

\subsection{\texorpdfstring{Bound states and $a(\zeta)$}{Bound states and a(zeta)}}

Throughout this section, we assume $\|u\|_{L^1_1\cap L^2_1}$ is sufficiently small, so that the existence theory of Sections \ref{sec: existence} and \ref{sec: scat data} apply. Furthermore, we assume $\|u\|_{L^\infty}$ is small and $u$ is real-valued, so that the self-adjoint spectral theory of Section \ref{sec: lax pair} applies. By Theorem \ref{thm: general meromorphy of M N}, all zeros of $a$ are in the interior of $D^+_\delta$ and may accumulate only possibly at $\frac12+i0$. Denote by $\zeta_j$, $j=1,2,\dots$ an enumeration of the zeros of $a$ in $D^+_\delta\setminus \{\frac12+i0\}$.  Our goal in this section is to show the following theorem.

\begin{theorem}\label{thm: bound states and a}
    Let $\|u\|_{L^1_1\cap L^2_1}<\epsilon$ for some $\epsilon>0$ sufficiently small, and assume $u$ is real-valued. Let $\zeta_j$, $j=1,2,\dots$ be the zeros of $a$ as above. Then 
    \begin{enumerate}[(i)]
        \item The zeros of $\breve a$ (equivalently, the poles of $a$, cf. Lemma \ref{lem: M=aN}) in $D^-_\delta\setminus\{\frac12-i0\}$ are exactly $\{\zeta_j^*~|~j=1,2,\dots\}$. Furthermore, we have
    \begin{equation}\label{M_1 N_1 relation at pole}
        M_1(x,\zeta_j)e^{-i\zeta_j x}=c_jN_1(x,\zeta_j^*)e^{-i\zeta_j^* x},
    \end{equation}
    where
    \begin{equation}\label{def: c_j}
        c_j=\frac{i}{1-2\zeta_j^*}\int_\R u(x)M_1(x,\zeta_j)e^{-i\lambda_j x}~dx\ne 0,
    \end{equation}
    and $\lambda_j=Z^{-1}(\zeta_j)$ (see Definition \ref{def: Z inv}).
    \item For each $j$, let
    \begin{equation}\label{def: phi_j}
        \varphi_j(x)=N_1(x+i,\zeta_j^*)e^{-i\zeta_j^*x}.
    \end{equation}
    Then $\varphi_j\in L^2$ and 
    \begin{equation}\label{eq: a' identity}
        a'(\zeta_j)=\overline{\breve a'(\zeta_j^*)}=-ic_j e^{-\lambda_j}\int_\R |\varphi_j(x)|^2 ~dx,
    \end{equation}
    where  $c_j$ is given by \eqref{def: c_j}. As a result, each $\zeta_j$ is a simple zero of $a$, and each $\zeta_j^*$ is a simple pole of $a$.

    \item Further assume $u\in L^\infty$. Let $\varphi_j$ be given by \eqref{def: phi_j}.
    Then $\varphi_j\in D(L_u)$ and 
    \begin{equation}
        L_u\varphi_j = \eta_j \varphi_j
    \end{equation}
    for $\eta_j=-\zeta_j e^{-2\zeta_j}\in\R$. In other words, $\eta_j\in \sigma_{\text{disc}}(L_u)$.
    \item Further assume $\|u\|_{L^\infty}<\epsilon$ for a sufficiently small $\epsilon>0$. Let $\eta\in \sigma_{\text{disc}}(L_u)$ and $\varphi\in D(L_u)$ satisfy
    \begin{equation}
        L_u\varphi=\eta\varphi,
    \end{equation}
    then there exists a $j$ such that $\eta=\eta_j=-\zeta_j e^{-2\zeta_j}$, and $\varphi=C\varphi_j$ for some constant $C$, where $\varphi_j$ is defined by \eqref{def: phi_j}.
    \end{enumerate}
\end{theorem}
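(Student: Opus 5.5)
Our approach is to reduce everything to the structure already in hand: $a$ and $\breve a$ are the scattering functions of a compact perturbation of $I-T_{L,R}(\zeta)$; the relation $M_1=aN_1$ holds (Lemma \ref{lem: M=aN}); and the Green's function splitting (Proposition \ref{prop: green est}) isolates the pole at $\lambda=Z^{-1}(\zeta)$. We first prove (i)--(iii) for $u\in C_0^\infty$ small, where Lemma \ref{lem: C^inf regularity jost fcn} supplies smoothness and joint holomorphy. We then pass to general $u$ by approximating $u_n\to u$ in $L^1_1\cap L^2_1\cap L^\infty$. This uses continuity of $u\mapsto M_1^u$ (as in the proof of Theorem \ref{thm: genericity}) and Hurwitz's theorem for the zeros of $a^{u_n}$. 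The identification with eigenvalues also uses norm resolvent convergence (Lemma \ref{lem: Lu norm resolv conv}).

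\textbf{Step 1 (part (i)).} At a zero $\zeta_j\in D^+_\delta$ of $a$, $I-T_L(\zeta_j)$ is invertible but $I-T_R(\zeta_j)$ is not (Corollary \ref{cor: a=0 vs N1 nonexist}). We write $G_L(\cdot,\zeta_j)=\frac{i}{1-2\zeta_j}\chi_{\R^+}+\frac{i}{1-2\zeta_j^*}e^{i\lambda_j x}\chi_{\R^+}+H$. Since $a(\zeta_j)=0$, the constant mode of $M_1(\cdot,\zeta_j)$ vanishes as $x\to+\infty$, and $M_1(x,\zeta_j)\sim c_je^{i\lambda_jx}$ there, with $c_j$ as in \eqref{def: c_j}. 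The function $m:=M_1(\cdot,\zeta_j)e^{-i\zeta_jx}e^{i\zeta_j^*x}$ solves the $\zeta_j^*$ equation \eqref{M diff eqn}. This uses $e^{i\lambda x}$ with $\lambda=\zeta-\zeta^*$ from \eqref{eq: zeta-zeta*}, together with $\zeta e^{-2\zeta}=\zeta^*e^{-2\zeta^*}$ from \eqref{eq: zeta e^zeta vs zeta* e^zeta*}. Moreover $m$ tends to $c_j$ at $+\infty$ and decays at $-\infty$, since $\imag\lambda_j>0$.

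By Lemma \ref{lem: equiv int diff} (b), $m/c_j$ solves the $N_1$ integral equation at $\zeta_j^*$, so $m=c_jN_1(\cdot,\zeta_j^*)$. The bad case $c_j=0$ would make $M_1(\cdot,\zeta_j)$ a decaying solution lying in the kernel of $I-T_R$ for both constants; we rule it out as a contradiction with uniqueness. Evaluating the $N_1$ equation at $-\infty$ gives $\breve a(\zeta_j^*)=0$. For the converse, we run the same argument starting from zeros of $\breve a$; the conjugation symmetry \eqref{eq: M_1 conj} for real $u$ shows that the map $\zeta\mapsto\zeta^*$ is a bijection between the two zero sets.

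\textbf{Step 2 (part (ii)).} We differentiate $(I-T_L(\zeta))M_1=1$ in $\zeta$ at $\zeta_j$ and pair the result with the adjoint solution, in the style of Lemma \ref{lem: a b identity}. This expresses $a'(\zeta_j)$ as $\frac{i}{1-2\zeta_j}\int u\,\partial_\zeta M_1$. Using the differential equations, this reduces to an $x$-integral of $M_1(\cdot,\zeta_j)\overline{N_1}$-type products, and by \eqref{M_1 N_1 relation at pole} it becomes $-ic_je^{-\lambda_j}\int|\varphi_j|^2$. Concretely, the Wronskian-type identity comes from $\frac{d}{d\zeta}[D-\zeta(1-e^{-2D})]=-(1-e^{-2D})$ and shifting by $e^{-D}$, which is where $x+i$ in \eqref{def: phi_j} and the factor $e^{-\lambda_j}$ appear. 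Reality of $u$ gives $\overline{N_1(\bar x,\zeta^*)}$ relations, so that $\eta_j$ is real and $\zeta_j^*=\overline{\zeta_j}$ in the appropriate sense; this yields the $\overline{\breve a'}$ equality. Since $\varphi_j$ decays exponentially at both ends and is nonzero, we get $\varphi_j\in L^2$ and $a'(\zeta_j)\neq0$.

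\textbf{Step 3 (parts (iii) and (iv)).} Exponential decay of $N_1(\cdot+iy,\zeta_j^*)e^{-i\zeta_j^*x}$ holds uniformly for $y\in[0,2]$ (from step 1 and Lemma \ref{lem: M1 improved regularity}). Hence $\varphi_j\in\mathbb H^2(\R+i(-1,1))$, and by Remark \ref{rmk: domain of Lu} and Lemma \ref{lem: domain of Lu} we get $\varphi_j\in D(L_u)$. The equation $(D-u)e^D\varphi_j=\eta_je^{-D}\varphi_j$ is \eqref{N diff eqn} rewritten. For (iv), let $\varphi$ be an $L^2$ eigenfunction with $\eta<-\frac1{2e}$, and let $\zeta=-\frac12W_{-1}(2\eta)$. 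Then $e^{i\zeta x}e^{D}\varphi$ solves \eqref{M diff eqn} and tends to $0$ at $-\infty$. By a Lemma \ref{lem: equiv int diff}-type argument, it lies in the kernel of $I-T_L(\zeta)$ or, after normalization, is proportional to $M_1$ with $a(\zeta)=0$. Smallness of $\|u\|_{L^\infty}$ confines $\zeta$ to $D^+_\delta$.

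We expect the main obstacle to be the decay and regularity needed to justify step 3 and the differentiation in step 2, namely showing that $L^2$ eigenfunctions decay exponentially and lie in the strip Hardy space. We would first prove this for compactly supported $u$ via the Fourier–Laplace structure, and then transfer it by approximation.
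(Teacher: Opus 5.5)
Your overall plan is the paper's: compactly supported $u$ first, then $u_n\to u$ with Hurwitz and norm resolvent convergence, plus a strip Wronskian for $a'(\zeta_j)$. However, Step 3 for part (iv) does not work as written.

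For $\imag\zeta>0$ the weight $e^{i\zeta x}$ grows exponentially as $x\to-\infty$. Nothing about $\varphi\in D(L_u)$ makes $e^{i\zeta x}e^D\varphi$ bounded, since domain membership gives regularity, not decay. So Lemma \ref{lem: equiv int diff}(b), which needs $\mathbb H^2(S)+\mathbb H^\infty(S)$, cannot be invoked.

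Your bookkeeping is also reversed. For the true eigenfunction, $e^{i\zeta_jx}e^D\varphi_j=\frac{e^{-\zeta_j^*}}{c_j}M_1(x,\zeta_j)$, which tends to a nonzero constant at $-\infty$ and to $a(\zeta_j)=0$ at $+\infty$. Moreover, $I-T_L(\zeta)$ is invertible for $\zeta\in D^+_\delta$ (Theorem \ref{thm: inv T off disc}), so landing in $\ker(I-T_L(\zeta))$ would force $\varphi=0$. The relevant kernel is that of $I-T_R(\zeta)$.

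You correctly flag this decay as the main obstacle, but the idea that resolves it is missing. It is Lemma \ref{lem: K GR relation}: with $\zeta=-\frac12W_{-1}(2\eta)$, the resolvent kernel $K(x,\eta)=\frac1{2\pi}\int_\R\frac{e^{ix\xi}}{\xi-\eta e^{-2\xi}}\,d\xi$ equals $e^{-ix\zeta}G_R(x,\zeta)$. This holds because moving the contour from $\R+\zeta$ to $\Gamma_R$ crosses no pole for this branch. Since $\psi=e^D\varphi$ satisfies $(\xi-\eta e^{-2\xi})\hat\psi=\widehat{u\psi}$, you get $e^{i\zeta x}\psi=G_R(\cdot,\zeta)*(ue^{i\zeta\cdot}\psi)$. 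For compactly supported $u$ this is bounded and nonzero, hence a nontrivial element of $\ker(I-T_R(\zeta))$, and Theorem \ref{thm: inv T on disc} gives $a(\zeta)=0$. For general $u$, do not try to transfer this decay by approximation. The paper instead compares ranks of spectral projections of $L_{u_n}$ and $L_u$ (Lemma \ref{lem: Lu norm resolv conv}) and uses uniform convergence $a^{(n)}\to a$ near $\zeta$; this also shows the eigenspaces are one-dimensional.

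A second, more technical gap is the passage of (i)--(ii) to general $u$. For $u\in L^1_1\cap L^2_1$ the integral \eqref{def: c_j} is not a priori convergent, because $e^{-i\lambda_jx}$ grows exponentially at $+\infty$. Likewise, $L^\infty_{-1,+}$ convergence of $M_1^{(n)}$ does not let you pass to the limit in $c^{(n)}$. The paper obtains $c_j=\lim_n c^{(n)}$ by evaluating the $u_n$-version of \eqref{M_1 N_1 relation at pole} at a single point $x_0$ with $N_1(x_0,\zeta_j^*)\neq0$, where the Jost functions converge pointwise. It then deduces \eqref{M_1 N_1 relation at pole} for $u$, and only afterwards recovers \eqref{def: c_j}. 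Similarly, Hurwitz gives $k_j$ approximating zeros of $a^{(n)}$, and $k_j=1$ follows only after passing \eqref{eq: a' identity} to the limit, using $c^{(n)}\to c_j$ and $\varphi^{(n)}\to\varphi_j$ in $L^2$.

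Two smaller points. Your Step 1, conjugating \eqref{M diff eqn} by $e^{-i\lambda_j z}$ and applying Lemma \ref{lem: equiv int diff}(b) at $\zeta_j^*$, is a valid alternative to the paper's identity $G_L(x,\zeta)=e^{i\lambda x}G_R(x,\zeta^*)+\frac{i}{1-2\zeta}+\frac{i}{1-2\zeta^*}e^{i\lambda x}$. You still must prove that $M_1(z,\zeta_j)e^{-i\lambda_j z}$ is bounded on the strip and tends to $c_j$ at $+\infty$, and that identity is the quickest way to get both. Also, \eqref{eq: M_1 conj} relates $u$ to $u(-\cdot)$, so it does not give the bijection of zero sets for the same $u$; instead, run the argument backwards from a zero of $\breve a$.
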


We will prove this theorem by showing a series of lemmas, starting with a relation between $G_L$ and $G_R$.

\begin{lemma}\label{lem: GL(zeta) GR(zeta*)}
For $\zeta\in D^+_\delta\setminus\{\frac12+i0\}$, we have
\begin{equation}\label{eq: GL(zeta) GR(zeta*) relation}
    G_L(x,\zeta)=e^{i\lambda x}G_R(x,\zeta^*)+\frac{i}{1-2\zeta}+\frac{i}{1-2\zeta^*}e^{i\lambda x}.
\end{equation}
Similarly, for $\zeta\in D^-_\delta\setminus\{\frac12-i0\}$, we have
\begin{equation}\label{eq: GR(zeta) GL(zeta*) relation}
    G_R(x,\zeta)=G_L(x,\zeta^*)e^{i\lambda x}-\frac{i}{1-2\zeta}-\frac{i}{1-2\zeta^*}e^{i\lambda x}.
\end{equation}
\end{lemma}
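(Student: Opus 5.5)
The plan is to prove \eqref{eq: GL(zeta) GR(zeta*) relation} directly from the Fourier integral \eqref{def: G}, using the substitution $\xi\mapsto \xi+\lambda$ together with \eqref{eq: zeta-zeta*} and \eqref{eq: zeta e^zeta vs zeta* e^zeta*}.

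The first step is an algebraic identity for the symbol. Since $\lambda=\zeta-\zeta^*$ and $\zeta e^{-2\zeta}=\zeta^*e^{-2\zeta^*}$, we have $\zeta e^{-2\lambda}=\zeta^*$. Hence
\begin{equation*}
(\xi+\lambda)-\zeta(1-e^{-2(\xi+\lambda)}) = \xi+\lambda-\zeta+\zeta^* e^{-2\xi}=\xi-\zeta^*(1-e^{-2\xi}).
\end{equation*}
Consequently
\begin{equation*}
e^{i\lambda x}G_R(x,\zeta^*)=\frac1{2\pi}\int_{\Gamma_R}\frac{e^{ix(\xi+\lambda)}}{(\xi+\lambda)-\zeta(1-e^{-2(\xi+\lambda)})}\,d\xi=\frac1{2\pi}\int_{\Gamma_R+\lambda}\frac{e^{ix\xi}}{\xi-\zeta(1-e^{-2\xi})}\,d\xi .
\end{equation*}
So $e^{i\lambda x}G_R(x,\zeta^*)$ is the same integrand as $G_L(x,\zeta)$, but integrated over the translated contour $\Gamma_R+\lambda$. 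This contour passes above the pole at $\lambda$ and lies on the line $\mathbb R+\imag\lambda$.

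The second step is to shift $\Gamma_R+\lambda$ back to $\Gamma_L$. For $\zeta\in D^+_\delta$ with small $\delta$, $\lambda$ is close to $0$ with $\imag\lambda\ge0$. The poles of the integrand near the strip between the two contours are then only $\xi=0$ and $\xi=\lambda$; all other preimages of $\zeta$ under $Z$ are near $\pm i\pi$ or farther away, by Lemma \ref{lem: Z inv domain} and the discussion of Figure \ref{fig: Z_lambda}. The two contours differ exactly by small circles around $0$ and $\lambda$: $\Gamma_L$ passes below both, while $\Gamma_R+\lambda$ passes above both, with $0$ lying below the line $\mathbb R+\imag\lambda$ or being dodged appropriately. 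Hence
\begin{equation*}
G_L-e^{i\lambda x}G_R(\cdot,\zeta^*)=\frac{2\pi i}{2\pi}\Big(\operatorname{Res}_{\xi=0}+\operatorname{Res}_{\xi=\lambda}\Big)\frac{e^{ix\xi}}{\xi-\zeta(1-e^{-2\xi})}.
\end{equation*}
The residues are $\frac{1}{1-2\zeta}$ and $\frac{e^{i\lambda x}}{1-2\zeta e^{-2\lambda}}=\frac{e^{i\lambda x}}{1-2\zeta^*}$, which gives exactly the claimed constants. This is consistent with \eqref{GL-GR} and with the pole terms already appearing in \eqref{eq: G Lp est 1}.

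The main obstacle is justifying the contour shift across the non-compact horizontal strip $0\le\imag\xi\le\imag\lambda$. The vertical connecting segments at $\real\xi=\pm R$ must vanish as $R\to\infty$, and the integrals are only conditionally convergent. I would handle this in the distributional sense, as in Lemma \ref{lemma:FT.strip}, or by pairing with test functions. On the vertical segments the integrand is bounded by $C/R$ for $\real\xi=R$ and by $Ce^{2\xi}$ for $\real\xi=-R$ (cf. Lemma \ref{lem: rel size est 1}). The factor $e^{\pm iR x}$ then gives distributional decay after integrating by parts in $x$. Alternatively, one can subtract $\frac{1}{(1-2\zeta)\xi}$ first and use the $R_2$ bounds of Proposition \ref{prop: remainder}, treating the subtracted term explicitly.

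The boundary cases $\zeta\in\mathbb R^++i0$ (real $\lambda$) follow by continuity in $\zeta$ (Corollary \ref{cor: G cont 1}). The second identity \eqref{eq: GR(zeta) GL(zeta*) relation} follows either by the same computation or by applying \eqref{eq: G_L bar} to the first identity.
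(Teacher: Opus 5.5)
Your proposal is correct and matches the paper's proof. The paper also moves $\Gamma_L$ to $\Gamma_R+\lambda$, picking up the residues $\frac{i}{1-2\zeta}$ at $0$ and $\frac{i}{1-2\zeta^*}e^{i\lambda x}$ at $\lambda$, and then substitutes $\xi\mapsto\xi+\lambda$ with $\zeta e^{-2\lambda}=\zeta^*$ to identify $e^{i\lambda x}G_R(x,\zeta^*)$; you do the same two steps in the opposite order. Your extra remarks fill in details the paper leaves implicit: the vertical segments, the absence of other poles in the thin strip, the boundary case $\zeta\in\mathbb R^++i0$ by continuity, and getting the second identity by conjugation.
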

\begin{proof}
    For $\zeta \in D^+_\delta\setminus\{\frac12+i0\}$, we compute $G_L$ by shifting the contour of integration:
    \begin{align*}
        G_L(x,\zeta)&=\frac1{2\pi}\int_{\Gamma_L}\frac{e^{ix\xi}}{\xi-\zeta(1-e^{-2\xi})}~d\xi\\
        &=\frac1{2\pi}\int_{\Gamma_R+\lambda}\frac{e^{ix\xi}}{\xi-\zeta(1-e^{-2\xi})}~d\xi +\frac{i}{1-2\zeta}+\frac{i}{1-2\zeta^*}e^{i\lambda x}%\label{eq: 231}  %Commented out by PAP 2025-05-20
        \\
        &=\frac{1}{2\pi}\int_{\Gamma_R}\frac{e^{ix(\xi+\lambda)}}{\xi+\lambda-\zeta(1-e^{-2(\xi+\lambda)})}~d\xi+\frac{i}{1-2\zeta}+\frac{i}{1-2\zeta^*}e^{i\lambda x}%\label{eq: 232} % Commented out by PAP 2026-09-18
        \\
        &=\frac{e^{i\lambda x}}{2\pi}\int_{\Gamma_R}\frac{e^{ix\xi}}{\xi-\zeta^*(1-e^{-2\xi})}~d\xi+\frac{i}{1-2\zeta}+\frac{i}{1-2\zeta^*}e^{i\lambda x}\\
        &=e^{i\lambda x}G_R(x,\zeta^*)+\frac{i}{1-2\zeta}+\frac{i}{1-2\zeta^*}e^{i\lambda x}.
    \end{align*}
    The case when $\zeta\in D^-_\delta\setminus\{\frac12-i0\}$ can be proven similarly.
\end{proof}

\begin{lemma}\label{lem: M N relation at zeta_j}
    Let $u\in L^1_1\cap L^2_1$ with small norm. Assume in addition that $u\in C_0^\infty$. Then Theorem \ref{thm: bound states and a} (i) holds.
\end{lemma}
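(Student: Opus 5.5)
The plan is to turn the vanishing of $a(\zeta_j)$ into the statement that the exponentially reweighted function $M_1(\cdot,\zeta_j)e^{i\lambda_j\cdot}$ solves the right-normalized equation at $\zeta_j^*$. Uniqueness for that equation then identifies it with a multiple of $N_1(\cdot,\zeta_j^*)$.

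Throughout, write $\zeta=\zeta_j\in D^+_\delta\setminus\{\frac12+i0\}$ with $\imag\zeta>0$ and $\lambda=\lambda_j=Z^{-1}(\zeta)$. By \eqref{eq: zeta-zeta*} we have $\zeta-\zeta^*=\lambda$, so the normalization $e^{-i\zeta x}$ versus $e^{-i\zeta^*x}$ in \eqref{M_1 N_1 relation at pole} amounts to the factor $e^{-i\lambda x}$. Accordingly, set $m(x)=M_1(x,\zeta)e^{-i\lambda x}$.

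The first step is to convolve the integral equation $M_1=1+G_L(\cdot,\zeta)*(uM_1)$ using Lemma \ref{lem: GL(zeta) GR(zeta*)}:
\begin{equation*}
G_L(x-y,\zeta)=e^{i\lambda(x-y)}G_R(x-y,\zeta^*)+\tfrac{i}{1-2\zeta}+\tfrac{i}{1-2\zeta^*}e^{i\lambda(x-y)}.
\end{equation*}
Substituting this and using $a(\zeta)=1+\frac{i}{1-2\zeta}\int uM_1=0$, the constant terms cancel. Multiplying through by $e^{-i\lambda x}$ then gives
\begin{equation*}
m = c_j + G_R(\cdot,\zeta^*)*(um),
\end{equation*}
with $c_j$ exactly as in \eqref{def: c_j}.

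The second step is to justify this computation. Since $u\in C_0^\infty$, every integral has compact $y$-support, so each term makes sense. Here $m$ grows like $e^{\imag\lambda\,x}$ as $x\to+\infty$. Since $\imag\lambda>0$, the function $m$ is bounded on $(-\infty,A]$, so $m\in L^\infty_{-1,-}$ is not automatic but still holds. Indeed $um$ is compactly supported and $G_R(\cdot,\zeta^*)\in L^\infty_{-1,-}+L^2$ by Lemma \ref{lem: green uniform split}, so the right side of the displayed equation lies in $L^\infty_{-1,-}$. Hence $m$ does too.

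The third step is uniqueness. The point $\zeta^*$ lies in $D^-_\delta$, where invertibility of $I-T_R(\zeta^*)$ is not guaranteed. If $c_j\neq0$ and $I-T_R(\zeta^*)$ is invertible, then $m=c_jN_1(\cdot,\zeta^*)$, which is \eqref{M_1 N_1 relation at pole}.

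The main obstacle is proving $c_j\ne0$ and handling the invertibility at $\zeta^*$.

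For $c_j\ne0$, I would argue by contradiction. If $c_j=0$, then $m$ lies in the kernel of $I-T_R(\zeta^*)$. Then $\zeta^*\in D^-_\delta$, and by Theorem \ref{thm: inv T off disc} the operator $I-T_L(\zeta^*)$ is invertible. By Theorem \ref{thm: inv T on disc} and Corollary \ref{cor: a=0 vs N1 nonexist}, $a(\zeta^*)=0$ and $m$ is a multiple of $M_1(\cdot,\zeta^*)$. In particular $m$ is bounded with limit $0$ at $+\infty$, since $M_1\to a(\zeta^*)=0$ by Lemma \ref{lem: anal cont 1}. Reversing the roles via Lemma \ref{lem: GL(zeta) GR(zeta*)}, $M_1(\cdot,\zeta)=e^{i\lambda x}m$ would then solve the homogeneous left equation at $\zeta$. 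This contradicts invertibility of $I-T_L(\zeta)$ on $D^+_\delta$. A cleaner alternative, which I would try first, is to compare the asymptotics as $x\to-\infty$: there $M_1(\cdot,\zeta)\to1$, so $m\sim e^{-i\lambda x}$. This blows up since $\imag\lambda>0$, which rules out $m\in L^\infty_{-1,-}$ unless the constant term carries it.

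For invertibility at $\zeta^*$, the same kernel argument shows that $I-T_R(\zeta^*)$ is invertible precisely when $a(\zeta^*)\neq0$. So suppose instead that $I-T_R(\zeta^*)$ is not invertible. Then Corollary \ref{cor: a=0 vs N1 nonexist} gives $a(\zeta^*)=0$ and $N_1$ has a pole there. I would exclude this by a reflection symmetry pairing zeros of $a$ in $D^+_\delta$ with zeros of $\breve a$ at the starred points: the identity above, read from the $N_1$ side, sends zeros of $\breve a$ to zeros of $a$ at the starred points.

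Finally, the same identity run in both directions shows that the zeros of $\breve a$ in $D^-_\delta$ are exactly the points $\zeta_j^*$, which completes (i).
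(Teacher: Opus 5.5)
Your first step matches the paper's. Inserting \eqref{eq: GL(zeta) GR(zeta*) relation} into $M_1=1+G_L(\cdot,\zeta_j)*(uM_1)$ and using $a(\zeta_j)=0$ gives $m=c_j+G_R(\cdot,\zeta_j^*)*(um)$ with $m=M_1(\cdot,\zeta_j)e^{-i\lambda_j\cdot}$. The compact support of $u$ is what keeps the integrals finite and puts $m$ in $L^\infty_{-1,-}$. In fact $m$ is bounded and tends to $c_j$ at $+\infty$, so your remark that it grows like $e^{(\imag\lambda)x}$ there is wrong.

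After that, the proposal reads Theorem \ref{thm: inv T off disc} backwards. Part (ii) gives invertibility of $I-T_R(\zeta)$ for every $\zeta\in\widetilde{\C}\setminus D^+_\delta$. This includes $\zeta_j^*$, which lies in the lower half-plane. It is $I-T_L$ whose invertibility can fail on $D^-_\delta$. So your ``main obstacle'' at $\zeta_j^*$ does not exist, and the unspecified reflection-symmetry argument is unnecessary.

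Your argument for $c_j\ne0$ relies on false claims. First, $I-T_L(\zeta_j^*)$ is \emph{not} invertible: the lemma itself yields $\breve a(\zeta_j^*)=0$, and $m$ is a nonzero element of that kernel. Second, $a$ does not vanish at $\zeta_j^*$; it has a pole there. Third, $m\to c_j$, not $0$, at $+\infty$. Even granting these, $e^{i\lambda_j x}m=M_1(\cdot,\zeta_j)$ would not solve a homogeneous equation at $\zeta_j$, because the constant $\tfrac{i}{1-2\zeta_j}\int uM_1=a(\zeta_j)-1=-1$ survives. Your ``cleaner alternative'' has a sign error: for $\imag\lambda_j>0$ and $x\to-\infty$, $|e^{-i\lambda_j x}|=e^{(\imag\lambda_j)x}\to0$, so $m$ decays there.

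The correct completion is short. Invertibility of $I-T_R(\zeta_j^*)$ on $L^\infty_{-1,-}$ gives $m=c_jN_1(\cdot,\zeta_j^*)$ at once. Then $c_j\ne0$, since otherwise $M_1(\cdot,\zeta_j)\equiv0$, contradicting $M_1(x,\zeta_j)\to1$ as $x\to-\infty$.

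Your last paragraph omits how $\breve a(\zeta_j^*)=0$ is obtained. By \eqref{GL-GR}, $G_R(\cdot,\zeta_j^*)=G_L(\cdot,\zeta_j^*)-\tfrac{i}{1-2\zeta_j^*}$. The subtracted constant integrates against $um$ to exactly $c_j$, so the equation becomes $m=G_L(\cdot,\zeta_j^*)*(um)$. Dividing by $c_j$ gives $N_1(\cdot,\zeta_j^*)=T_L(\zeta_j^*)N_1(\cdot,\zeta_j^*)$, and comparing with \eqref{a=0 vs trivial kernel} forces $\breve a(\zeta_j^*)=0$.

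For the converse, start from a zero $\widetilde\zeta$ of $\breve a$ in $D^-_\delta\setminus\{\frac12-i0\}$ and run the same computation with \eqref{eq: GR(zeta) GL(zeta*) relation}. Now use invertibility of $I-T_L(\widetilde\zeta^*)$, valid because $\widetilde\zeta^*$ lies in the upper half-plane, outside $D^-_\delta$ (Theorem \ref{thm: inv T off disc}(i)). This shows $\widetilde\zeta^*$ is a zero of $a$, so $\widetilde\zeta=\zeta_j^*$ for some $j$.
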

\begin{proof}
Recalling the definitions of $M_1$ and $N_1$ from \eqref{jost exist: def M}, \eqref{jost exist: def N}, and using \eqref{eq: GL(zeta) GR(zeta*) relation}, we get
\begin{align}
M_1(x,\zeta_j) 
    &= 1+ \int_\R G_L(x-y,\zeta_j)u(y)M_1(y,\zeta_j)~dy\notag\\
    &= 1+ e^{i\lambda_j x}\int_\R G_R(x-y,\zeta_j^*)u(y)M_1(y,\zeta_j)e^{-i\lambda_j y}~dy\notag\\
    &\qquad+\frac{i}{1-2\zeta_j}\int_\R u(y)M_1(y,\zeta_j)~dy+\frac{ie^{i\lambda_j x}}{1-2\zeta_j^*}\int_\R u(y)M_1(y,\zeta_j)e^{-i\lambda_j y}~dy\notag\\
    &=e^{i\lambda_j x}\int_\R G_R(x-y,\zeta_j^*)u(y)M_1(y,\zeta_j)e^{-i\lambda_j y}~dy \notag\\
    &\qquad + \frac{ie^{i\lambda_j x}}{1-2\zeta_j^*}\int_\R u(y)M_1(y,\zeta_j)e^{-i\lambda_j y}~dy.
    \label{eq: M1 N1 pole relation 1}
\end{align}
To get the last step, we used
\begin{equation}
a(\zeta_j) = 1+\frac{i}{1-2\zeta_j}\int_\R u(y)M_1(y,\zeta_j)~dy=0.
\end{equation}
Equation \eqref{eq: M1 N1 pole relation 1} can be rewritten as
\begin{equation}\label{eq: M1 N1 pole relation 2}
M_1(x,\zeta_j) e^{-i\lambda_j x} = c_j+\int_\R G_R(x-y,\zeta_j^*)u(y)M_1(y,\zeta_j)e^{-i\lambda_j y}~dy,
\end{equation}
where
\begin{equation}\label{eq: M1 N1 pole relation 3}
c_j=\frac{i}{1-2\zeta_j^*}\int_\R u(y)M_1(y,\zeta_j)e^{-i\lambda_j y}~dy.
\end{equation}
The definition of $N_1$ now implies
\begin{equation}\label{eq: M1 N1 pole relation 4}
M_1(x,\zeta_j)e^{-i\lambda_j x} = c_j N_1(x,\zeta_j^*),
\end{equation}
which is equivalent to \eqref{M_1 N_1 relation at pole} since $\lambda_j=\zeta_j-\zeta_j^*$ (see \eqref{eq: zeta-zeta*}). By \eqref{GL-GR}, we have from \eqref{eq: M1 N1 pole relation 2} that
\begin{equation}
    M_1(x,\zeta_j) e^{-i\lambda_j x} = \int_\R G_L(x-y,\zeta_j^*)u(y)M_1(y,\zeta_j)e^{-i\lambda_j y}~dy.
\end{equation}
Since $c_j$ is obviously nonzero from \eqref{eq: M1 N1 pole relation 4}, we have
\begin{equation}
    N_1(x,\zeta_j^*)=\int_\R G_L(x-y,\zeta_j^*)u(y)N_1(y,\zeta_j^*)~dy.
\end{equation}
By \eqref{a=0 vs trivial kernel}, we get $\breve a(\zeta_j^*)=0$. If we start from $\widetilde\zeta\in D^-_\delta\setminus \{\frac12-i0\}$, a zero of $\breve a$, and reverse the above argument using \eqref{eq: GR(zeta) GL(zeta*) relation} instead of \eqref{eq: GL(zeta) GR(zeta*) relation}, we get $\widetilde\zeta^*=\zeta_j$ for some $j$. Thus $\widetilde\zeta=\zeta_j^*$. In other words, the zeros of $\breve a$ in $D^-_\delta\setminus \{\frac12-i0\}$ are precisely the $\zeta_j^*$'s.
\end{proof}

\begin{lemma}\label{lem: phi N relation}
    Suppose $u$ is real-valued, $u\in L^1_1\cap L^2_1$ with small norm, and $u\in L^\infty$. Then Theorem \ref{thm: bound states and a} (i) implies Theorem \ref{thm: bound states and a} (iii).
\end{lemma}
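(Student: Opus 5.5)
We have to show that if (i) holds, then $\varphi_j(x)=N_1(x+i,\zeta_j^*)e^{-i\zeta_j^*x}$ lies in $D(L_u)$ and satisfies $L_u\varphi_j=\eta_j\varphi_j$ with $\eta_j=-\zeta_je^{-2\zeta_j}$. The plan is to check the domain characterization of Lemma \ref{lem: domain of Lu} directly, using the interpretation $(D-u)\varphi(\cdot-i)=\psi(\cdot+i)$ from Section \ref{sec: lax pair}. We will take $\psi=\eta_j\varphi_j$. Two things are needed:
\begin{enumerate}[(a)]
\item decay: $\varphi$ extends to an $\mathbb H^2$ function on the strip $\R+i(-1,1)$, with $e^{D}\varphi\in H^1$;
\item the eigenvalue equation in the form $De^D\varphi_j-ue^D\varphi_j=e^{-D}(\eta_j\varphi_j)$.
\end{enumerate}

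\emph{Step 1: the analytic function.} Set $F(z)=N_1(z,\zeta_j^*)e^{-i\zeta_j^*z}$ for $z\in S$, using Lemma \ref{lem: anal cont 1} for the continuation of $N_1(\cdot,\zeta_j^*)$. Then $\varphi_j(x+iy)=F(x+i+iy)$ for $y\in(-1,1)$, so $e^{D}\varphi_j=F(\cdot+i0)$ and $e^{-D}\varphi_j=F(\cdot+2i)$ in the sense of Definition \ref{def: e^D}.

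The differential equation \eqref{N diff eqn} of Lemma \ref{lem: equiv int diff} at $\zeta=\zeta_j^*$ gives
\[
DN_1-\zeta_j^*\bigl(N_1-N_1(\cdot+2i)\bigr)=uN_1 .
\]
Multiplying by $e^{-i\zeta_j^*x}$ and using $e^{-i\zeta_j^*(x+2i)}=e^{2\zeta_j^*}e^{-i\zeta_j^*x}$, we obtain
\[
(D-u)F(x)=-\zeta_j^*e^{-2\zeta_j^*}F(x+2i)=\eta_jF(x+2i),
\]
where the last equality is \eqref{eq: zeta e^zeta vs zeta* e^zeta*}. This is exactly $(D-u)e^D\varphi_j=e^{-D}(\eta_j\varphi_j)$. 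Reality of $\eta_j$ will follow from self-adjointness once $\varphi_j\in D(L_u)$ is known.

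\emph{Step 2: decay of $F$.} This is the main obstacle. Since $\imag\zeta_j^*<0$ we have $|e^{-i\zeta_j^*x}|=e^{\imag\zeta_j^*\,x}$, which decays as $x\to+\infty$, and $N_1\to1$ there by \eqref{jost prty: N1 limit right}. So on the right we get exponential decay, uniformly in $y$ on compact subsets of $[0,2)$; Lemma \ref{lem: M1 improved regularity} gives uniformity up to $y=2$ if we assume $u\in H^s_1$, which can be arranged by approximation.

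As $x\to-\infty$ we use (i). From \eqref{M_1 N_1 relation at pole},
\[
F(z)=c_j^{-1}M_1(z,\zeta_j)e^{-i\zeta_jz},
\]
with $M_1\to1$ as $x\to-\infty$. We need $\imag\zeta_j>0$ to give decay of $e^{-i\zeta_j x}$ on the left. This fails as stated, since $|e^{-i\zeta_jx}|=e^{\imag\zeta_j x}$ grows as $x\to-\infty$ when $\imag\zeta_j>0$. The fix is to use the relation in the form \eqref{eq: M1 N1 pole relation 4}, $N_1(x,\zeta_j^*)=c_j^{-1}M_1(x,\zeta_j)e^{-i\lambda_jx}$, together with the identity $-i\zeta_j^*x-i\lambda_jx=-i\zeta_jx$. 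The left-side behaviour then comes from $M_1(x,\zeta_j)\to1$ (with $a(\zeta_j)=0$, so $M_1\to0$ on the right), while $e^{-i\lambda_jx}$ with $\imag\lambda_j>0$ decays on the left. One must combine these with care: bound $F$ by $|M_1|$ times $e^{\imag\lambda_j x}$ (up to the sign convention of $\zeta_j^*$) and check the net exponent on each half-line.

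The result of Step 2 should be that $F(\cdot+iy)\in L^2$ uniformly in $y\in[0,2]$. Together with $\partial_xN_1\in L^2$, this gives $e^D\varphi_j\in H^1$, and Remark \ref{rmk: domain of Lu} then gives $\varphi_j,e^{\pm D}\varphi_j\in L^2$. Lemma \ref{lem: domain of Lu} concludes that $\varphi_j\in D(L_u)$ with $L_u\varphi_j=\eta_j\varphi_j$.

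If the growth bookkeeping fails for general $u$, first do the case $u\in C_0^\infty$. There, outside $\operatorname{supp}u$, the Jost functions are explicit combinations of $1$ and $e^{i\lambda x}$ via \eqref{jost prty: M1 ext}, and the exponential decay can be read off directly. Then pass to general $u$ by approximation, using continuity of $N_1$ in $u$ and norm resolvent convergence (Lemma \ref{lem: Lu norm resolv conv}).
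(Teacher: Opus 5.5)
Your framework is the paper's: you check the criterion of Lemma \ref{lem: domain of Lu} through Remark \ref{rmk: domain of Lu}, and you derive the eigenvalue relation from \eqref{N diff eqn} and \eqref{eq: zeta e^zeta vs zeta* e^zeta*}. Step 1 is correct, up to the harmless constant $\varphi_j(z)=e^{-\zeta_j^*}F(z+i)$. The gap is Step 2. It is the only place where (i) is used, and you never complete it because of a sign error. Since $\imag\zeta_j>0$, we have $|e^{-i\zeta_jx}|=e^{\imag\zeta_j x}\to0$ as $x\to-\infty$; it does not grow. So the direct route you discarded is exactly the right one. Your ``fix'' is circular: because $\lambda_j+\zeta_j^*=\zeta_j$, rewriting through \eqref{eq: M1 N1 pole relation 4} returns $F=c_j^{-1}M_1(\cdot,\zeta_j)e^{-i\zeta_j\cdot}$. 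Its net exponent is $\imag\zeta_j$, not $\imag\lambda_j$, which is the very decay you said fails. As it stands, the $L^2$ bound is only asserted. The fallback through $C_0^\infty$ approximation and norm resolvent convergence would itself need a nontrivial limiting argument for eigenfunctions, and it is not needed.

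The correct argument is short. Lemma \ref{lem: anal cont 1} places $M_1(\cdot,\zeta_j)$ and $N_1(\cdot,\zeta_j^*)$ in $\mathbb H^2(S)+\mathbb H^\infty(S)$. Hence $\sup_{0<y<2}\|M_1(\cdot+iy,\zeta_j)\|_{L^2+L^\infty}<\infty$, and likewise for $N_1$. This replaces your pointwise asymptotics, which are uniform only on compact subsets of $[0,2)$, and also the detour through Lemma \ref{lem: M1 improved regularity}. Continuing (i) into the strip, for $|y|<1$,
\[
\|\varphi_j(\cdot+iy)\|_{L^2(-\infty,0)}\le C(c_j,\zeta_j)\,\|M_1(\cdot+i(y+1),\zeta_j)\|_{L^2+L^\infty}\,\|e^{\imag\zeta_j x}\|_{(L^2\cap L^\infty)(-\infty,0)}.
\]
On $(0,\infty)$ the same bound holds with $N_1(\cdot+i(y+1),\zeta_j^*)$ and $e^{\imag\zeta_j^*x}$, using $\imag\zeta_j^*<0$. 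Hence $\varphi_j\in\mathbb H^2(\R+i(-1,1))$.

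Finally, get $e^D\varphi_j\in H^1$ from the equation itself, $De^D\varphi_j=ue^D\varphi_j+\eta_je^{-D}\varphi_j\in L^2$, which uses $u\in L^\infty$. Do not get it from $\partial_xN_1\in L^2$: on the left half-line the factor $e^{-i\zeta_j^*x}$ grows exponentially, so $\partial_xN_1\in L^2$ alone does not control $\partial_x(e^D\varphi_j)$ there.
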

\begin{proof}
    By Remark \ref{rmk: domain of Lu}, we first need to check that $\varphi_j\in \mathbb H^2(\R+i(-1,1))$.  By  \eqref{M_1 N_1 relation at pole}, \eqref{def: phi_j} and Lemma \ref{lem: anal cont 1}, we have
    \begin{align}
        \varphi_j(x+iy)&=N_1(x+i(y+1),\zeta_j^*)e^{-i\zeta_j^*(x+iy)}\label{eq: phi decay R}\\
        &=\frac1{c_j}M_1(x+i(y+1),\zeta_j)e^{-i\zeta_j(x+iy)}.\label{eq: phi decay L}
    \end{align}   
    Thus by \eqref{eq: phi decay R}, \eqref{eq: phi decay L} and Lemma \ref{lem: anal cont 1}, we have
    \begin{align*}
        &~\|\varphi_j(\cdot+iy)\|_{L^2(-\infty,0)} \notag\\
        \le &~\frac{1}{|c_j|}\|M_1(\cdot+i(y+1),\zeta_j)\|_{L^2+L^\infty}e^{2|\real \zeta_j|}\|e^{\imag \zeta_j x}\|_{(L^\infty\cap L^2)(-\infty,0)} \notag\\
        \le &~C(c_j,\zeta_j,M_1),
    \end{align*}
    \begin{align*}
        &~\|\varphi_j(\cdot+iy)\|_{L^2(0,\infty)} \notag\\
        \le &~\|N_1(\cdot+i(y+1),\zeta_j^*)\|_{L^2+L^\infty}e^{2|\real \zeta_j^*|}\|e^{\imag \zeta_j^* x}\|_{(L^\infty\cap L^2)(0,\infty)} \notag\\
        \le &~C(c_j,\zeta_j,N_1),
    \end{align*}
    for all $y\in (-1,1)$. Note that we used the fact that $\imag \zeta_j>0$, $\imag \zeta_j^*<0$. Thus $\varphi_j\in \mathbb H^2(\R\times (-1,1))$, and 
    \begin{equation}
        e^D \varphi_j(x)=\varphi_j(x-i)=N_1(x,\zeta_j^*)e^{-i\zeta_j^* (x-i)},
    \end{equation}
    \begin{equation}
        e^{-D}\varphi(x)=\varphi_j(x+i)=N_1(x+2i,\zeta_j^*)e^{-i\zeta_j^*(x+i)}.
    \end{equation}
    Since $N_1$ satisfies \eqref{N diff eqn}, we have
    \begin{align}
        De^D \varphi_j(x)&=D(N_1(x,\zeta_j^*)e^{-i\zeta_j^* (x-i)})\notag\\
        &=DN_1(x,\zeta_j^*) \cdot e^{-i\zeta_j^* (x-i)}+N_1(x,\zeta_j^*)De^{-i\zeta_j^* (x-i)}\notag\\
        &=\zeta_j^*[N_1(x,\zeta_j^*)-N_1(x+2i,\zeta_j^*)]e^{-i\zeta_j^* (x-i)}+u(x)N_1(x,\zeta_j^*)e^{-i\zeta_j^* (x-i)}\notag\\
        &\qquad -\zeta_j^* N_1(x,\zeta_j^*)e^{-i\zeta_j^* (x-i)}\notag\\
        &=u(x) e^D \varphi_j(x)-\zeta_j^* e^{-2\zeta_j^*}e^{-D}\varphi_j(x).
    \end{align}
    By Lemma \ref{lem: domain of Lu}, $\varphi_j \in D(L_u)$ and $L_u\varphi_j = \eta_j\varphi_j$ with $\eta_j = -\zeta_j^* e^{-2\zeta_j^*}$. $\eta_j\in\R$ since $L_u$ is self-adjoint. The proof is complete in view of \eqref{eq: zeta e^zeta vs zeta* e^zeta*}.
\end{proof}

Now that we know $\eta_j$ is real, the following simple fact follows:
\begin{lemma}\label{lem: zeta* = bar zeta}
    Let $\zeta_j\in D^+_\delta$ satisfy $\eta_j = -\zeta_je^{-2\zeta_j}\in\mathbb R$. Then
    \begin{equation}\label{eq: zeta* = bar zeta}
        \zeta_j^*=\overline{\zeta_j}.
    \end{equation}
\end{lemma}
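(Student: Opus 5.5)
The plan is to exploit the two-to-one structure of the map $\zeta\mapsto -\zeta e^{-2\zeta}$ together with the symmetry $\overline{Z(\lambda)}=Z(\overline\lambda)$. Write $\lambda_j=Z^{-1}(\zeta_j)$. Since $\eta_j$ is real, taking the complex conjugate of $-\zeta_j e^{-2\zeta_j}=\eta_j$ gives $-\overline{\zeta_j}e^{-2\overline{\zeta_j}}=\eta_j$. By \eqref{eq: zeta e^zeta vs zeta* e^zeta*}, the number $\zeta_j^*$ solves the same equation. So the task is to identify which solution of $-w e^{-2w}=\eta_j$ is $\zeta_j^*$, and to show it is $\overline{\zeta_j}$.

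First, reformulate the equation in the $\lambda$-plane. The solutions $w$ of $-we^{-2w}=-\zeta_je^{-2\zeta_j}$ are exactly the numbers $w=Z(\mu)$ with $\mu=\zeta_j-w$. This is the computation already used in the proof of Lemma \ref{lem: Z inv domain}: $w-\zeta_j=\tfrac12 W_k(\cdot)$ for the various branches $W_k$. Moreover, $\mu=\zeta_j-w$ must satisfy $Z(\mu)=\zeta_j-\mu$. One such solution is $\mu=\lambda_j$, giving $w=\zeta_j^*$, and another is $\mu=0$, giving $w=\zeta_j$.

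Next, use the conjugation symmetry. Since $\overline{Z(\mu)}=Z(\overline\mu)$, the conjugate $\overline{\zeta_j}$ corresponds to a preimage $\overline\mu$ of the conjugated equation $Z(\overline\mu)=\overline{\zeta_j}-\overline\mu$. The statement then reduces to the claim $\overline{\zeta_j}=\zeta_j^*$, or equivalently $\zeta_j+\overline{\zeta_j}$ being real together with $\lambda_j=\zeta_j-\overline{\zeta_j}=2i\,\imag\zeta_j$ purely imaginary.

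To prove this, use that $\zeta_j$ lies near $\tfrac12$, since $\zeta_j\in D^+_\delta$ with $\delta$ small. Near $w=\tfrac12$ the map $f(w)=-we^{-2w}$ has a simple critical point: $f'(\tfrac12)=0$ and $f''(\tfrac12)\ne0$. Hence $f$ is two-to-one from a small disc $D(\tfrac12,r)$ onto a neighborhood of $-\tfrac1{2e}$, and the two preimages of any value are exchanged by the local involution $\iota$ fixing $\tfrac12$.

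Both $\zeta_j^*$ and $\overline{\zeta_j}$ lie in $D(\tfrac12,r)$ when $\delta$ is small. For $\overline{\zeta_j}$ this is trivial. For $\zeta_j^*$ it holds because $\lambda_j\to0$ as $\zeta_j\to\tfrac12$, so $\zeta_j^*=Z(-\lambda_j)\to\tfrac12$ by continuity of $Z^{-1}$ near $\tfrac12$. Both are solutions of $f(w)=\eta_j$, and $\zeta_j$ is the other one. Therefore each of them is either $\zeta_j$ or $\iota(\zeta_j)$.

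The main obstacle is excluding the degenerate alternatives $\zeta_j^*=\zeta_j$ and $\overline{\zeta_j}=\zeta_j$, and handling a double root. I would settle these as follows.
\begin{itemize}
\item $\zeta_j^*=\zeta_j$ would force $\lambda_j=0$, which gives $\zeta_j=\tfrac12$. This is excluded.
\item $\overline{\zeta_j}=\zeta_j$ is excluded because $\zeta_j$ lies in the interior of $D^+_\delta$, where $\imag\zeta_j>0$ (Theorem \ref{thm: general meromorphy of M N}).
\item Since $\zeta_j\ne\tfrac12$ and $\tfrac12$ is the only critical point of $f$ in the disc, $\eta_j\ne-\tfrac1{2e}$. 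So the two preimages are distinct.
\end{itemize}
It follows that $\zeta_j^*=\iota(\zeta_j)=\overline{\zeta_j}$, which is \eqref{eq: zeta* = bar zeta}.
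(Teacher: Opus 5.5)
Your proposal is correct and is essentially the paper's argument. Both use that $\tfrac12$ is a simple critical point of $\zeta\mapsto-\zeta e^{-2\zeta}$, so $\eta_j$ has exactly two preimages near $\tfrac12$, and both $\zeta_j^*$ and $\overline{\zeta_j}$ must be the preimage other than $\zeta_j$; the paper rules out coincidence with $\zeta_j$ through the signs of the imaginary parts ($\imag\zeta_j>0>\imag\zeta_j^*,\imag\overline{\zeta_j}$), whereas you use $\lambda_j\neq0$ and $\imag\zeta_j>0$, and your $\lambda$-plane reformulation in the second and third paragraphs is not needed.
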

\begin{proof}
Since $\eta_j=-\zeta_j e^{-2\zeta_j}\in \R$, we have
\begin{equation}
    -\overline{\zeta_j}e^{-2\overline{\zeta_j}}=-\zeta_j e^{-2\zeta_j}.
\end{equation}
On the other hand, we have from \eqref{eq: zeta e^zeta vs zeta* e^zeta*}
\begin{equation}
    -\zeta_j^* e^{-2\zeta_j^*}=-\zeta_j e^{-2\zeta_j}.
\end{equation}
Note that $\zeta=\frac12$ is a double point of the map $\zeta\mapsto -\zeta e^{-2\zeta}$, and $\zeta_j$, $\zeta_j^*$, $\overline{\zeta_j}$ are all in a small neighborhood of $\frac12$, with $\imag \zeta_j>0$, $\imag\zeta_j^*<0$, $\imag\overline{\zeta_j}<0$. It follows that $\zeta_j^*=\overline{\zeta_j}$.
\end{proof}

\begin{lemma}\label{lem: a' for cpt u}
        Let $u\in L^1_1\cap L^2_1$ with small norm and is real-valued. Assume in addition that $u\in C_0^\infty$. Then Theorem \ref{thm: bound states and a} (ii) holds.
\end{lemma}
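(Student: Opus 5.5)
We need to prove Theorem \ref{thm: bound states and a} (ii) for $u\in C_0^\infty$: namely that $\varphi_j\in L^2$ and that $a'(\zeta_j)=\overline{\breve a'(\zeta_j^*)}=-ic_je^{-\lambda_j}\int|\varphi_j|^2$.

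\textbf{Square integrability.} The $L^2$ property of $\varphi_j$ was already obtained in the proof of Lemma \ref{lem: phi N relation}. It follows from \eqref{eq: phi decay R}--\eqref{eq: phi decay L}. On the right, $\varphi_j$ carries the factor $e^{\imag\zeta_j^* x}$, which decays since $\imag\zeta_j^*<0$. On the left, it is proportional to $M_1 e^{\imag\zeta_j x}$, which decays since $\imag\zeta_j>0$.

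\textbf{Wronskian-type identity.} The plan is to differentiate the integral equation for $M_1$ in $\zeta$ at $\zeta_j$ and pair the result against $N_1$ at $\overline{\zeta_j}$. We have $\zeta_j^*=\overline{\zeta_j}$ by Lemma \ref{lem: zeta* = bar zeta}, because $\eta_j$ is real by Lemma \ref{lem: phi N relation}.

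Concretely, set $\dot M=\partial_\zeta M_1(\cdot,\zeta_j)$. This exists, and is smooth and bounded on the closed strip, by Lemma \ref{lem: C^inf regularity jost fcn}. It satisfies
\[
(D-\zeta_j(1-e^{-2D}))\dot M-u\dot M=(1-e^{-2D})M_1 .
\]
Here $e^{-2D}f$ means the value of $f$ at $x+2i$. We also have $a'(\zeta_j)=\frac{i}{1-2\zeta_j}\int u\dot M$, since $a_1(\zeta_j)=-(1-2\zeta_j)/i$ at a zero of $a$; the derivative of the prefactor multiplies $a_1$ and contributes a term of its own.

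A cleaner route is to use the operator $L_u$ directly. With $\psi_\zeta(x)=M_1(x+i,\zeta)e^{-i\zeta x}$, formally we have $L_u\psi_\zeta=-\zeta e^{-2\zeta}\psi_\zeta$ for every $\zeta$. Differentiating in $\zeta$ gives
\[
(L_u-\eta_j)\dot\psi=\partial_\zeta(-\zeta e^{-2\zeta})|_{\zeta_j}\,\psi_{\zeta_j}.
\]
We then take the inner product with $\varphi_j=c_j^{-1}\psi_{\zeta_j}$. By self-adjointness the left side pairs to zero, except for boundary terms at $x\to\pm\infty$. This happens because $\dot\psi$ grows like $x\,e^{-i\zeta_j x}$ on one side, so it is not in $L^2$.

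Those boundary terms must carry the information about $a'$. As $x\to+\infty$, we have $M_1(x,\zeta)\to a(\zeta)$, so
\[
\dot\psi\sim a'(\zeta_j)e^{-i\zeta_j(x)}+O(e^{-\imag\zeta_j x}x)
\]
near $+\infty$, while $\varphi_j$ decays there only like $e^{\imag\zeta_j^* x}$. I would therefore rather compute the pairing with a cutoff. Take $\langle \dot\psi\chi_{<R},(L_u-\eta_j)\varphi_j\rangle=0$ and move $L_u$ across. The commutator with the cutoff localizes near $x=R$. For $L_u=e^D(D-u)e^D$ this is nonlocal, so instead I would write the identity via the $e^{-D}L_u=(D-u)e^D$ form, using the strip contour.

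Integrating $\overline{\varphi_j(\bar z)}\,\dot\psi$-type holomorphic expressions over the rectangle $[-R,R]\times[0,2]$ (after shifting by $i$) converts the $D$ and shift terms into vertical-edge contributions. As $R\to\infty$, the edge at $-R$ vanishes by the decay of $\varphi_j$ together with the boundedness of $\dot\psi$ there. The edge at $+R$ tends to a constant times $a'(\zeta_j)\,\overline{N_1(\cdot,\overline{\zeta_j})}$ limit, which equals $1$, with the exponentials cancelling since $\overline{e^{-i\zeta_j^*x}}=e^{i\zeta_j x}$. Equating this with the bulk term $\partial_\zeta(-\zeta e^{-2\zeta})\|\varphi_j\|^2$, and tracking $c_j$ and the factor $e^{-\lambda_j}$ coming from the shift by $i$ (since $e^{-i\zeta_j(x+i)}$ versus $e^{-i\zeta_j^*(x+i)}$ differ by $e^{\lambda_j}$), should produce \eqref{eq: a' identity}.

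\textbf{Conjugate relation and simplicity.} The identity $a'(\zeta_j)=\overline{\breve a'(\zeta_j^*)}$ follows from the symmetry relation \eqref{eq: G_L bar}, in the manner of Theorem \ref{thm: LR genericity symmetry}, applied to real $u$. Alternatively, one can redo the computation from the $N_1$ side. Since $c_j\neq0$ and $\varphi_j\neq0$, we get $a'(\zeta_j)\neq0$, so each zero is simple. Then $\breve a=1/a$ from Lemma \ref{lem: M=aN} gives the simple pole at $\zeta_j^*$.

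\textbf{Main obstacle.} I expect the main obstacle to be the boundary-term bookkeeping in the contour and Green's-identity argument: getting the exact constant, including the sign, the factor $i$ and $e^{-\lambda_j}$, and justifying the limits at the corners of the strip.
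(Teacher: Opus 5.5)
Your proposal is essentially the paper's proof. You differentiate the shifted eigen-equation $D\psi(x-i,\zeta)-u\psi(x-i,\zeta)=-\zeta e^{-2\zeta}\psi(x+i,\zeta)$ in $\zeta$ at $\zeta_j$. You pair it with the conjugated equation for $\varphi$, written holomorphically in $z$ as $\overline{\varphi(\bar z)}$ and using $\zeta_j^*=\overline{\zeta_j}$, and integrate over $[-R,R]$. Cauchy's theorem on horizontal strips then moves the bulk term $\psi(x+i)\overline{\varphi(x-i)}$ down to $\psi(x)\overline{\varphi(x)}=c_je^{-\lambda_j}|\varphi_j|^2$ and turns the $\eta_j$ shift terms into vertical edges.

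The constant you were worried about comes out cleanly. Write $\eta(\zeta)=-\zeta e^{-2\zeta}$. The endpoint term $-ie^{-2\zeta_j}a'(\zeta_j)$ and the edge term $2i\eta_j a'(\zeta_j)$ combine, and the identity reduces to $i\,\eta'(\zeta_j)\,a'(\zeta_j)=\eta'(\zeta_j)\,c_je^{-\lambda_j}\|\varphi_j\|_{L^2}^2$. Here $\eta'(\zeta_j)=-(1-2\zeta_j)e^{-2\zeta_j}\neq0$ cancels. The limit $\partial_\zeta M_1(x+it+i,\zeta_j)\to a'(\zeta_j)$ as $x\to+\infty$ needs justification; this is where compact support enters, via \eqref{jost prty: M1 ext}.

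One step you propose would fail. The symmetry \eqref{eq: G_L bar} does not give $a'(\zeta_j)=\overline{\breve a'(\zeta_j^*)}$. As in the proof of Theorem~\ref{thm: LR genericity symmetry}, it yields $\overline{M_1^u(x,\zeta)}=N_1^{v}(-x,\overline\zeta)$, hence $\overline{a^u(\zeta)}=\breve a^{v}(\overline\zeta)$ with $v(x)=u(-x)$. That relates $a$ for $u$ to $\breve a$ for the reflected potential, not for $u$ itself, unless $u$ is even.

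You therefore need your second option: rerun the Wronskian computation at $\zeta=\zeta_j^*$ with $\psi(x,\zeta)=N_1(x+i,\zeta)e^{-i\zeta x}$ and $\varphi(x,\zeta)=M_1(x+i,\zeta^*)e^{-i\zeta^* x}$, which is what the paper does. The simple pole of $a$ at $\zeta_j^*$ also depends on this second computation, since it rests on $\breve a'(\zeta_j^*)\neq0$. By themselves, $a'(\zeta_j)\neq0$ and $a\breve a=1$ only give a simple pole of $\breve a$ at $\zeta_j$.
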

\begin{proof}
Let $\zeta\in D^+_\delta\setminus (\R^++i0)$.
Define 
\begin{equation}
    \psi(x+it,\zeta)=M_1(x+it+i,\zeta)e^{-i\zeta(x+it)},
\end{equation}
\begin{equation}
    \varphi(x+it,\zeta)=N_1(x+it+i,\zeta^*)e^{-i\zeta^*(x+it)}
\end{equation}
By Lemma \ref{lem: C^inf regularity jost fcn}, $\psi$ and $\varphi$ are smooth for $x+it\in \R+i [-1,1]$ and $\zeta\in D^+_\delta\setminus (\R^++i0)$. They are also holomorphic for $(x+it,\zeta)$ in the interior of the above set. It follows from \eqref{M diff eqn} and \eqref{N diff eqn} that
\begin{equation}\label{eq: diff eqn psi 1}
    D\psi(x-i,\zeta)-u(x)\psi(x-i,\zeta)=-\zeta e^{-2\zeta} \psi(x+i,\zeta),
\end{equation}
\begin{equation}\label{eq: diff eqn phi 1}
    D\varphi(x-i,\zeta)-u(x)\varphi(x-i,\zeta)=-\zeta^* e^{-2\zeta^*}\varphi(x+i,\zeta).
\end{equation}
Differentiate \eqref{eq: diff eqn psi 1} in $\zeta$ to get
\begin{align}\label{eq: diff eqn psi 2}
    &~D\partial_\zeta \psi(x-i,\zeta)-u(x)\partial_\zeta\psi(x-i,\zeta)\notag\\
    =&~-(1-2\zeta)e^{-2\zeta}\psi(x+i,\zeta)-\zeta e^{-2\zeta}\partial_\zeta\psi(x+i,\zeta).
\end{align}
Evaluating \eqref{eq: diff eqn phi 1} and \eqref{eq: diff eqn psi 2} at $\zeta=\zeta_j$, and noting from \eqref{eq: zeta e^zeta vs zeta* e^zeta*} that $\eta_j=-\zeta_j e^{-2\zeta_j}=-\zeta_j^* e^{-2\zeta_j^*}$, we have
\begin{equation}\label{eq: diff eqn phi 3}
    -D\overline{\varphi(x-i,\zeta_j)}-u(x)\overline{\varphi(x-i,\zeta_j)}=\eta_j \overline{\varphi(x+i,\zeta_j)},
\end{equation}
\begin{align}\label{eq: diff eqn psi 3}
    &~D\partial_\zeta \psi(x-i,\zeta_j)-u(x)\partial_\zeta\psi(x-i,\zeta_j)\notag\\
    =&~-(1-2\zeta_j)e^{-2\zeta_j}\psi(x+i,\zeta_j)+\eta_j\partial_\zeta\psi(x+i,\zeta_j).
\end{align}
Multiply \eqref{eq: diff eqn phi 3} by $-\partial_\zeta\psi(x-i,\zeta_j)$, \eqref{eq: diff eqn psi 3} by $\overline{\varphi(x-i,\zeta_j)}$ and take their sum to get
\begin{align}\label{eq: wroskian 1}
    &D\left[\partial_\zeta \psi(x-i,\zeta_j)\overline{\varphi(x-i,\zeta_j)}\right]\notag\\
    =&-(1-2\zeta_j)e^{-2\zeta_j}\psi(x+i,\zeta_j)\overline{\varphi(x-i,\zeta_j)}\notag\\
    &\quad + \eta_j \left[\partial_\zeta\psi(x+i,\zeta_j)\overline{\varphi(x-i,\zeta_j)}-\partial_\zeta\psi(x-i,\zeta_j)\overline{\varphi(x+i,\zeta_j)}\right].
\end{align}
By analytically continuing \eqref{M_1 N_1 relation at pole}, we get
\begin{equation}
    \psi(x+it,\zeta_j)e^{\zeta_j}=c_j \varphi(x+it,\zeta_j)e^{\zeta_j^*}.
\end{equation} 
By Lemma \ref{lem: M1 improved regularity}, $M_1(x+it+i,\zeta_j)$ and $N_1(x+it+i,\zeta_j^*)$ are bounded for $(x,t)\in \R\times [-1,1]$. It follows that $\psi(x+it,\zeta_j)$ and thus $\varphi(x+it,\zeta_j)$ both have exponential decay as $x\to\pm\infty$. To estimate the terms in the brackets on the right hand side of \eqref{eq: wroskian 1} as $x\to\infty$, we compute
\begin{align}
    &~\partial_\zeta\psi(x+it,\zeta_j)\overline{\varphi(x-it,\zeta_j)}\notag\\
    =&~\left[\partial_\zeta M_1(x+it+i,\zeta_j)-i(x+it)M_1(x+it+i,\zeta_j)\right]\overline{N_1(x-it+i,\zeta_j^*)}.
\end{align}
Here we have used Lemma \ref{lem: zeta* = bar zeta} (reality of $\eta_j$ follows from Lemma \ref{lem: phi N relation}). As $\partial_\zeta M_1(x+it+i,\zeta_j)$ is bounded, $\partial_\zeta\psi(x+it,\zeta_j)\overline{\varphi(x-it,\zeta_j)}$ has exponential decay as $x\to -\infty$. Since $u$ is compactly supported, when $x>0$ is sufficiently large, we have from \eqref{jost prty: M1 ext}
\begin{equation}\label{eq: M_1(x+it+i) 1}
    M_1(x+it+i,\zeta)=a(\zeta)+\mathcal \mathcal F^{-1}\left[R_2(\xi,t+1,\zeta)\mathcal \mathcal F[u(s)M_1(s,\zeta)](\xi)\right](x).
\end{equation}
Take the $\zeta$-derivative of \eqref{eq: M_1(x+it+i) 1} to get
\begin{align}
    \partial_\zeta M_1(x+it+i,\zeta_j)&=a'(\zeta_j)+\mathcal \mathcal F^{-1}\left[\partial_\zeta R_2(\xi,t+1,\zeta_j)\mathcal \mathcal F[u(s)M_1(s,\zeta_j)](\xi)\right](x)\notag\\
    &\qquad +\mathcal \mathcal F^{-1}\left[ R_2(\xi,t+1,\zeta_j)\mathcal \mathcal F[u(s)\partial_\zeta M_1(s,\zeta_j)](\xi)\right](x)
\end{align}
By \eqref{eq: deriv R bound}, \eqref{eq: d_zeta R bound} and smoothness of $u(x)$ and $M_1(x,\zeta_j)$, we obtain that $\partial_\zeta M_1(x+it+i,\zeta_j)$ tends to $a'(\zeta_j)$ as $x\to\infty$ uniformly for $t\in [-1,1]$. Similarly, $(x+it+i)M_1(x+it+i,\zeta_j)$ tend to $0$ uniformly for $t\in [-1,1]$ as $x\to\infty$, while $N_1(x-it+i,\zeta_j^*)\to 1$ uniformly for $t\in[-1,1]$ as $x\to \infty$. It follows that 
\begin{equation}
    \partial_\zeta\psi(x+it,\zeta_j)\overline{\varphi(x-it,\zeta_j)}\to a'(\zeta_j)
\end{equation} 
uniformly for $t\in [-1,1]$ as $x\to\infty$. Similarly,
\begin{equation}
    \partial_\zeta \psi(x-i,\zeta_j)\overline{\varphi(x-i,\zeta_j)}\to a'(\zeta_j)e^{-2\zeta_j}
\end{equation}
as $x\to \infty$ and tends to zero as $x\to-\infty$.
We now integrate \eqref{eq: wroskian 1} on $[-R,R]$ and use the Cauchy integral theorem on the holomorphic functions $\psi(x+it,\zeta_j)\overline{\varphi(x-it,\zeta_j)}$ and $\partial_\zeta\psi(x+it,\zeta_j)\overline{\varphi(x-it,\zeta_j)}$ to get
\begin{align}
    &~\frac1i\partial_\zeta\psi(x-i,\zeta_j)\overline{\varphi(x-i,\zeta_j)}\bigg|_{x=-R}^{x=R}\notag\\
    =&~ -(1-2\zeta_j) e^{-2\zeta_j}\bigg(\int_{-R}^R\psi(x,\zeta_j)\overline{\varphi(x,\zeta_j)}~dx\notag\\
    &\quad +\int_0^1\psi(R+it,\zeta_j)\overline{\varphi(R-it,\zeta_j)}~idt-\int_0^1\psi(-R+it,\zeta_j)\overline{\varphi(-R-it,\zeta_j)}~ idt\bigg)\notag\\
    &\quad +\eta_j \bigg(\int_{-1}^1\partial_\zeta\psi(R+it,\zeta_j)\overline{\varphi(R-it,\zeta_j)}~idt-\int_{-1}^1\partial_\zeta\psi(-R+it,\zeta_j)\overline{\varphi(-R-it,\zeta_j)}~idt\bigg)
\end{align}
Taking the limit as $R\to\infty$ and using the above asymptotic formulas, we get
\begin{equation}
    -ia'(\zeta_j)e^{-2\zeta_j}=-(1-2\zeta_j)e^{-2\zeta_j}e^{\zeta_j^*-\zeta_j}\int_\R c_j|\varphi(x,\zeta_j)|^2~dx+\eta_j a'(\zeta_j)2i,
\end{equation}
which is equivalent to the first part of \eqref{eq: a' identity}. To get the second part involving $\breve a$, we repeat the above calculation using
\begin{equation}
    \psi(x+it,\zeta) = N_1(x+it+i,\zeta)e^{-i\zeta(x+it)},
\end{equation}
\begin{equation}
    \varphi(x+it,\zeta) = M_1(x+it+i,\zeta^*)e^{-i\zeta^*(x+it)},
\end{equation}
for $\zeta\in D^-_\delta\setminus (\R^+-i0)$, and evaluate at $\zeta=\zeta_j^*$. The details are omitted.
\end{proof}

By Lemma \ref{lem: ess spec of Lu}, the discrete eigenvalues of $L_u$ appear  below $-\frac1{2e}$. We will show that they are exactly the $\eta_j$'s obtained in the above lemma and that there are no other discrete eigenvalues. To that end, we need to study the integral kernel of the resolvent of $L_u$. For $\eta<-\frac1{2e}$, let 
\begin{equation}
    K(x,\eta)=\frac1{2\pi}\int_\R \frac{e^{ix\xi}}{\xi-\eta e^{-2\xi}}~d\xi.
\end{equation}
We have the following identity.

\begin{lemma}\label{lem: K GR relation}
    There exists an $\epsilon>0$ sufficiently small, such that if $\eta\in (-\frac{1}{2e}-\epsilon,-\frac{1}{2e})$, then 
    \begin{equation}
        K(x,\eta) =e^{-ix\zeta}G_R(x,\zeta),
    \end{equation}
    where $\zeta=-\frac12W_{-1}(2\eta)$.
\end{lemma}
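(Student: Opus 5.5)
The plan is to establish the identity by a change of variables in the Fourier integral defining $K$, matching it to the integrand of $G_R(\cdot,\zeta)$ shifted by $\zeta$, and then to justify the contour deformation. The key algebraic fact is this. For $\zeta=-\frac12W_{-1}(2\eta)$ we have $-2\zeta e^{-2\zeta}=2\eta$, that is, $\eta=-\zeta e^{-2\zeta}$. Since $2\eta<-\frac1e$ and $2\eta$ is close to $-\frac1e$, $W_{-1}(2\eta)$ is a non-real number close to $-1$. Hence $\zeta$ is close to $\frac12$ with small nonzero imaginary part; we fix the branch so that $\zeta$ lies in $D^-_\delta$ or $D^+_\delta$ as dictated by the convention for $W_{-1}$ on $(-\infty,-\frac1e)$. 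Substituting $\xi=\tau-\zeta$ gives
\begin{equation*}
\xi-\eta e^{-2\xi}=\tau-\zeta+\zeta e^{-2\zeta}e^{-2\tau+2\zeta}=\tau-\zeta(1-e^{-2\tau}),
\end{equation*}
so that $e^{ix\xi}/(\xi-\eta e^{-2\xi})=e^{-ix\zeta}\,e^{ix\tau}/(\tau-\zeta(1-e^{-2\tau}))$. Thus formally
\begin{equation*}
K(x,\eta)=e^{-ix\zeta}\frac1{2\pi}\int_{\R+\zeta}\frac{e^{ix\tau}}{\tau-\zeta(1-e^{-2\tau})}\,d\tau,
\end{equation*}
and it remains to move the contour $\R+\zeta$ back to $\Gamma_R$.

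First, I check that the integral defining $K$ converges and has no real poles. For real $\xi$, the map $\xi\mapsto \xi e^{2\xi}$ has minimum $-\frac1{2e}$, so $\xi-\eta e^{-2\xi}=e^{-2\xi}(\xi e^{2\xi}-\eta)\neq0$ when $\eta<-\frac1{2e}$. The integrand behaves like $1/\xi$ as $\xi\to+\infty$ and decays like $e^{2\xi}$ as $\xi\to-\infty$. So $K$ is defined as an oscillatory/$L^2$ Fourier integral, in the same sense as $G_R$.

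Second, I shift the horizontal line $\R+\zeta$ (height $\imag\zeta$, small) to $\R$, detouring above $0$ as in $\Gamma_R$. The poles of $1/(\tau-\zeta(1-e^{-2\tau}))$ are $0$ and the preimages $Z^{-1}$-type points of $\zeta$. In the strip between $\R$ and $\R+i\imag\zeta$ the only candidates are $\tau=0$ and $\tau=\lambda=Z^{-1}(\zeta)$, which is close to $0$. The point $\tau=\zeta$ itself corresponds to $\xi=0$, which is not a pole. This is the main obstacle: determine exactly which of $0$ and $\lambda$ lie between the two contours, relative to the small upper semicircle of $\Gamma_R$. From $\lambda=\zeta-\zeta^*$ (\eqref{eq: zeta-zeta*}) and Lemma \ref{lem: zeta* = bar zeta}-type reasoning ($\zeta^*=\overline\zeta$ when $-\zeta e^{-2\zeta}$ is real), we get $\lambda=2i\imag\zeta$. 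So $\lambda$ lies on the imaginary axis at height $2\imag\zeta$, which is outside the strip of height $\imag\zeta$. Meanwhile $0$ lies on the boundary line $\R$ and is excluded by the semicircle of $\Gamma_R$. Hence, with the sign of $\imag\zeta$ chosen by the $W_{-1}$ branch, no residues are picked up. If the sign convention instead puts the strip below $\R$, then $\Gamma_R$ passing above $0$ means the residue at $0$ must be tracked. I would verify the sign so that it is consistent with the claimed identity, for which no residue appears.

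Third, I justify that the vertical end contributions vanish. At $\real\tau\to+\infty$ the integrand is $O(1/|\tau|)$ on a bounded-height segment, so these contributions vanish in the distributional/$L^2$ limit exactly as in Lemma \ref{lemma:FT.strip} and Lemma \ref{lem: FT of coth}. At $\real\tau\to-\infty$ the integrand is exponentially small. Taking $\epsilon$ small enough keeps $\zeta$ in $D_\delta$ and keeps the only nearby pole $\lambda$ well controlled, which yields $K(x,\eta)=e^{-ix\zeta}G_R(x,\zeta)$.
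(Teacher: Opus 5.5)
Your route is the paper's route: substitute $\xi=\tau-\zeta$ using $\eta=-\zeta e^{-2\zeta}$, then move $\R+\zeta$ to $\Gamma_R$ without crossing any poles. Your exact location $\lambda=\zeta-\zeta^*=2i\imag\zeta$ (from $\zeta^*=\overline\zeta$) is actually a cleaner version of the paper's appeal to $(Z^{-1})'(\frac12)=2$. Both arguments also implicitly need the region between the two contours to lie in $U$, where $Z$ is injective, so that $0$ and $\lambda$ are the only candidate poles.

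\textbf{The gap.} You leave open the one step that decides the lemma, namely the sign of $\imag\zeta$, and propose to fix it ``consistent with the claimed identity.'' That is not a verification, and the sign is not cosmetic. Suppose $\imag\zeta<0$. Then $\tau=0$ lies between $\R+\zeta$ and $\Gamma_R$, because $\Gamma_R$ passes above it. Meanwhile $\lambda=2i\imag\zeta$ lies below $\R+\zeta$. Picking up the residue $\frac{1}{1-2\zeta}$ at $0$ gives
\[
e^{ix\zeta}K(x,\eta)=G_R(x,\zeta)+\tfrac{i}{1-2\zeta}=G_L(x,\zeta).
\]
So the stated identity would be false for that root. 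The $G_R$ version holds for the root in the upper half-plane, and the $G_L$ version holds for its conjugate.

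\textbf{How to close it.} You must show that $\zeta=-\frac12W_{-1}(2\eta)$ has $\imag\zeta>0$.
\begin{itemize}
\item Under the counterclockwise-continuous convention of \cite{corless1996lambert}, $W_{-1}$ on its cut $(-\infty,-\frac1e)$ is the boundary value from above: $W_{-1}(x)=W_{-1}(x+i0)=W_0(x-i0)$.
\item Near the branch point, $W=-1+p-\frac13p^2+\cdots$ with $p=\pm\sqrt{2(ex+1)}$, and $p<0$ for $W_{-1}$ on $(-\frac1e,0)$.
\item Continuing that branch through the upper half-plane gives $W_{-1}(x)=-1-i\sqrt{2|ex+1|}+O(|ex+1|)$ for $x$ slightly below $-\frac1e$.
\end{itemize}
Hence $\imag W_{-1}(2\eta)<0$, so $\imag\zeta>0$ and $\zeta\in D^+_\delta$. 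This is consistent with the paper's later use of $\zeta_j=-\frac12W_{-1}(2\eta_j)$ as zeros of $a$ in $D^+_\delta$.

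With that established, your contour shift goes through. Take the semicircle of $\Gamma_R$ with radius smaller than $\imag\zeta$ so the two contours do not cross. Your treatment of the vertical end segments is fine.
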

\begin{proof}
    By the definition of the branches of the Lambert-$W$ function, when $\eta\in \R^-$ is in a small neighborhood below $-\frac1{2e}$, $\zeta=-\frac12W_{-1}(2\eta)$ is in a small semi-disc above $\frac12$, and $\eta=-\zeta e^{-2\zeta}$. We can thus write
    \begin{align}
        e^{ix\zeta}K(x,\eta)&=\frac1{2\pi}\int_\R\frac{e^{ix(\xi+\zeta)}}{\xi+\zeta-\zeta(1-e^{-2(\xi+\zeta)})}~d\xi\notag\\
        &=\frac1{2\pi}\int_{\R+\zeta}\frac{e^{ix\xi}}{\xi-\zeta(1-e^{-2\xi})}~d\xi\notag\\
        &=\frac1{2\pi}\int_{\Gamma_R}\frac{e^{ix\xi}}{\xi-\zeta(1-e^{-2\xi})}~d\xi=G_R(x,\zeta).
    \end{align}
    In the last step above, we shift the contour of integration from $\R+\zeta$ to $\Gamma_R=\R+i0$. To be able to do this, we need to verify that the integrand has no poles between the two contours. Since $\zeta$ is in a small semi-disc above $\frac12$, the region between the two contours is contained in the region $U$ given in Lemma \ref{lem: Z inv domain}. It's easy to see that a pole can only occur at a zero of $Z(\xi)-\zeta$. Thus a pole can occur only at $Z^{-1}(\zeta)$ with $Z^{-1}$ given by Definition \ref{def: Z inv}. Recall that $Z^{-1}(\frac12)=0$ and $(Z^{-1})'(\frac12) = 2$. Thus for $\zeta$ in a sufficiently small semi-disc above $\frac12$, $Z^{-1}(\zeta)$ lies above $\R+\zeta$, and is not in the region between the two contours $\R+\zeta$ and $\R+i0$.
\end{proof}
\begin{remark}
    If we picked the other branches of the Lambert-$W$ function, we would have obtained other values of $\zeta$ such that $\eta=-\zeta e^{-2\zeta}$. However, in general, the shift of contour from $\R+\zeta$ to $\Gamma_L$ or $\Gamma_R$ will pass by poles of the integrand, thus producing extra pole terms. The choice of $\zeta$ in Lemma \ref{lem: K GR relation} is only one that allows a shift of contour without producing extra terms. We will see in the following that this mechanism essentially extracts the correct exponential decay rate of the eigenfunctions.
\end{remark}
\begin{lemma}\label{lem: disc spec a=0 relation, cpt}
    Let $u\in L^1_1\cap L^2_1\cap L^\infty$ with small norm. Assume in addition that $u\in C_0^\infty$. Then Thoerem \ref{thm: bound states and a} (iv) holds.
\end{lemma}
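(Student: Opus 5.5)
The plan is to convert the eigenvalue equation into an integral equation using the kernel $K(x,\eta)$, extract the exponential decay of $\varphi$, and then identify $\varphi$ with the Jost function at a zero of $a$. Let $\eta\in\sigma_{\text{disc}}(L_u)$ with $L_u\varphi=\eta\varphi$. By Lemma \ref{lem: ess spec of Lu}, $\eta<-\frac1{2e}$. Since $\|u\|_{L^\infty}<\epsilon$, the quadratic form bound \eqref{eq: semi-bdd} gives $q_u\ge -\frac1{2e}-C\epsilon$. Hence $\eta\in(-\frac1{2e}-C\epsilon,-\frac1{2e})$, and Lemma \ref{lem: K GR relation} applies with $\zeta=-\frac12W_{-1}(2\eta)$ in a small upper semi-disc above $\frac12$, so $\zeta\in D^+_\delta$ and $\eta=-\zeta e^{-2\zeta}$.

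By Lemma \ref{lem: domain of Lu}, $(D-\tilde u)e^D\varphi=\eta e^{-D}\varphi$. Set $f=e^D\varphi\in H^1$. Then $\widehat f(\xi)(\xi-\eta e^{-2\xi})=\widehat{uf}$. For $\eta<-\frac1{2e}$, the symbol $\xi-\eta e^{-2\xi}$ has no real zeros and grows at both ends, so we may divide and obtain
\[
f=K(\cdot,\eta)*(uf)=e^{-i\zeta\cdot}G_R(\cdot,\zeta)*(uf).
\]
We then write $f(x)=e^{-i\zeta x}m(x)$. Because $e^{-i\zeta x}$ factors through the convolution, this gives
\[
m=G_R(\cdot,\zeta)*\bigl(um\bigr)=T_R(\zeta)m,
\]
so $m$ lies in the kernel of $I-T_R(\zeta)$.

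The main obstacle is showing that $m\in L^\infty_{-1,-}$, i.e.\ that $m$ does not blow up as $x\to-\infty$, where $e^{i\zeta x}$ grows. This is where compact support of $u$ is used. Outside $\operatorname{supp}u$, the identity $f=K*(uf)$ with $uf\in L^2$ compactly supported gives $f(x)=\int K(x-y)u(y)f(y)\,dy$. It therefore suffices to show that $K(x,\eta)e^{i\zeta x}=G_R(x,\zeta)$ is bounded for $x<0$ and has at most linear growth for $x>0$. These bounds are exactly the content of Lemma \ref{lem: green uniform split} for $\zeta\in D^+_\delta$, since $G_R^2$ is bounded off $D^-_\delta$, while $G_R^1\chi_{\R^-}$ and $G_R^3$ are controlled. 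Since $u$ has compact support, convolving with $uf\in L^1$ preserves these bounds, so $m\in L^\infty_{-1,-}$, and in fact $m\in L^\infty$ when $\zeta\ne\frac12+i0$. We must also check $\zeta\notin\R^++i0$. This holds because the zero of $Z(\xi)-\zeta$ lies off the real line when $\operatorname{Im}\zeta>0$, which follows from $\eta<-\frac1{2e}$ strictly.

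Since $m\ne0$, $I-T_R(\zeta)$ is not invertible. By Corollary \ref{cor: a=0 vs N1 nonexist}(i), $a(\zeta)=0$, so $\zeta=\zeta_j$ for some $j$. By Theorem \ref{thm: inv T off disc}, $I-T_L(\zeta_j)$ is invertible, and Theorem \ref{thm: inv T on disc} together with its remark shows that the kernel of $I-T_R(\zeta_j)$ is one-dimensional. The proof of Theorem \ref{thm: inv T on disc}, run with $L$ and $R$ interchanged, shows that this kernel is spanned by $M_1(\cdot,\zeta_j)$. Hence $m=CM_1(\cdot,\zeta_j)$, and therefore
\[
e^D\varphi=Ce^{-i\zeta_jx}M_1(x,\zeta_j).
\]
By \eqref{M_1 N_1 relation at pole} (Lemma \ref{lem: M N relation at zeta_j}), this equals $C'N_1(x,\zeta_j^*)e^{-i\zeta_j^*x}$. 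Comparing with \eqref{def: phi_j} and using Lemma \ref{lem: phi N relation} gives $e^D\varphi=C''e^D\varphi_j$. Injectivity of $e^D$ on $L^2$ then yields $\varphi=C\varphi_j$ and $\eta=-\zeta_je^{-2\zeta_j}=\eta_j$.
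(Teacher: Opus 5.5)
Your proposal is correct and follows essentially the same route as the paper: you use Lemma \ref{lem: K GR relation} to show $m=e^{i\zeta x}e^D\varphi$ lies in $\ker(I-T_R(\zeta))$, use the compact support of $u$ to get $m\in L^\infty$, conclude $a(\zeta)=0$, identify $m$ with a multiple of $M_1(\cdot,\zeta_j)$ via invertibility of $I-T_L(\zeta_j)$, and pass to $\varphi_j$ via \eqref{M_1 N_1 relation at pole}. Where you invoke injectivity of $e^D$ the paper uses analytic continuation, which is immaterial. One slip: you misdescribe Lemma \ref{lem: green uniform split} ($G_R^2$ is uniformly bounded off $D^+_\delta$, and any linear growth of $G_R^1\chi_{\R^-}$ occurs at $-\infty$). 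This is harmless, since for the fixed $\zeta$ with $\imag\zeta>0$ one has $G_R(\cdot,\zeta)\in L^\infty+L^2$ directly, so $m\in L^\infty$.
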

\begin{proof}
    By Lemma \ref{lem: domain of Lu}, $\varphi\in L^2$, $e^{\pm D}\varphi\in L^2$, and
    \begin{equation}\label{eq: e-value}
        De^D\varphi-ue^{D}\varphi = \eta e^{-D}\varphi.
    \end{equation}
    Taking the $L^2$ inner product of \eqref{eq: e-value} with $e^D\varphi$, we get
    \begin{equation}
        \eta \langle \varphi,\varphi\rangle = \langle De^{D}\varphi,e^D\varphi\rangle-\langle u e^{D}\varphi, e^D\varphi\rangle.
    \end{equation}
    We estimate
    \begin{align}
        |\langle u e^{D}\varphi, e^D\varphi\rangle|&\le \|u\|_{L^\infty}\langle e^{D} \varphi, e^D\varphi \rangle\notag\\
        &\le \|u\|_{L^\infty}(\langle De^{D}\varphi, e^D\varphi\rangle +e^2 \langle \varphi,\varphi\rangle).
    \end{align}
    It follows from $\min_{\xi\in \R}\xi e^{2\xi}= -\frac1{2e}$ that
    \begin{align}
        \eta\langle\varphi,\varphi\rangle&\ge (1-\|u\|_{L^\infty})\langle De^{D}\varphi, e^D\varphi\rangle-e^2\|u\|_{L^\infty}\langle\varphi,\varphi\rangle\notag\\
        &\ge \left[-\tfrac1{2e}(1-\|u\|_{L^\infty})-e^2\|u\|_{L^\infty}\right]\langle \varphi,\varphi\rangle\notag\\
        &\ge -\tfrac1{2e}\langle\varphi,\varphi\rangle -e^2 \|u\|_{L^\infty}\langle \varphi,\varphi\rangle.
    \end{align}
    Since $\|u\|_{\infty}$ is small, $\eta$ is in a small neighborhood below $-\tfrac1{2e}$. 
    
    Let $\psi(x) = e^D\varphi(x)=\varphi(x-i)$. We have $\psi\in H^1$, $e^{-2D}\psi = e^{-D}\varphi\in L^2$, and 
    \begin{equation}\label{eq: e-value psi 1}
        D\psi-u\psi=\eta e^{-2D}\psi.
    \end{equation}
    Take the Fourier transform of \eqref{eq: e-value psi 1} to get
    \begin{equation}
        (\xi-\eta e^{-2\xi})\widehat\psi(\xi) = \widehat{u\psi}(\xi)
    \end{equation}
    Since $\xi e^{2\xi}\ge -\frac1{2e}$ and $\eta<-\frac1{2e}$, we have that $\xi-\eta e^{-2\xi}$ is bounded away from zero. Thus
    \begin{equation}
        \widehat\psi(x) = \frac1{\xi-\eta e^{-2\xi}}\widehat{u\psi}(x).
    \end{equation}
    It follows that for $\zeta$ given as in Lemma \ref{lem: K GR relation},
    \begin{align}\label{eq: psi integral eq 1}
        e^{i\zeta x}\psi(x) &=e^{i\zeta x} [K(\cdot,\eta)*(u\psi)](x)\notag\\
        &=\int_\R e^{i\zeta x}K(x-y,\eta)u(y)\psi(y)~dy\notag\\
        &=\int_\R G_R(x-y,\zeta) u(y)e^{i\zeta y}\psi(y)~dy.
    \end{align}
    Since $u$ is compactly supported, $e^{i\zeta y}$ is bounded on the support of $u$. It follows from \eqref{eq: psi integral eq 1} that $e^{i\zeta x}\psi(x)\in L^\infty$, and $e^{i\zeta x}\psi(x)\in \ker (I-T_R(\zeta))$. By Theorem \ref{thm: inv T on disc}, $a(\zeta)=0$. Thus $\zeta=\zeta_j$ for some $j$. Since \eqref{eq: psi integral eq 1} can be rewritten as 
    \begin{equation}
        e^{i\zeta x}\psi(x)=-\frac{i}{1-2\zeta}\int_\R u(y)e^{i\zeta y}\psi(y)~dy+\int_\R G_L(x-y,\zeta) u(y)e^{i\zeta y}\psi(y)~dy,
    \end{equation}
    and $I-T_L(\zeta)$ is invertible on $L^\infty_{-1,+}$, we obtain that $e^{i \zeta_j x}\psi(x) = C M_1(x,\zeta_j)$ for $C=-\frac{i}{1-2\zeta}\int_\R u(y)e^{i\zeta y}\psi(y)~dy$. Via analytic continuation we get
    \begin{equation}
        \varphi(x)=\psi(x+i)=Ce^{-i\zeta_j x}M_1(x+i,\zeta_j)=\widetilde{C}\varphi_j(x)
    \end{equation}
    by \eqref{M_1 N_1 relation at pole} and \eqref{def: phi_j}.
\end{proof}

Next, we work to remove the compact support and regularity conditions on $u$ in the previous lemmas in this section. To that end, we assume in the remaining of this section that $u\in L^1_1\cap L^2_1$ with small norm, and is real-valued. We take a sequence of $u_n\in C_0^\infty(\R)$ such that $u_n\to u$ in $L^1_1\cap L^2_1$. We denote the corresponding convolution operators $T_L$, $T_R$, Jost solutions $M_1$, $N_1$, and scattering functions $a$, $b$, etc. by $T_L^{(n)}$, $T_R^{(n)}$, $M_1^{(n)}$, $N_1^{(n)}$, $a^{(n)}$, $b^{(n)}$, etc.
We start by proving the convergence of the Jost functions and scattering coefficients.

\begin{lemma}\label{lem: M1n convergence}
    $M_1^{(n)}(\cdot,\zeta)\to M_1(\cdot,\zeta)$ in $L^\infty_{-1,+}$ uniformly for $\zeta\in D^+_\delta$, and similarly $N_1^{(n)}(\cdot,\zeta)\to N_1(\cdot,\zeta)$ in $(1+x_-)L^\infty$ uniformly for $\zeta\in D^-_\delta$. $a^{(n)}(\zeta)\to a(\zeta)$ uniformly on compact subsets of $D^+_\delta\setminus \{\frac12+i0\}$, and $\breve a\supn(\zeta)\to \breve a(\zeta)$ uniformly on compact subsets of $D^-_\delta\setminus \{\frac12-i0\}$.
\end{lemma}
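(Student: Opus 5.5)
The proof will use the second resolvent identity, in the same way as in the proof of Theorem \ref{thm: genericity}. For $\zeta\in D^+_\delta$ we have $\zeta\in\widetilde{\C}\setminus D^-_\delta$, so Theorem \ref{thm: inv T off disc} applies to both $u$ and $u_n$, since $\|u_n\|_{L^1_1\cap L^2_1}$ is eventually below the threshold $\epsilon$. It gives, uniformly in $\zeta\in D^+_\delta$,
\[
\|(I-T_L(\zeta))^{-1}\|\le 2,\qquad \|(I-T_L\supn(\zeta))^{-1}\|\le 2,
\]
as operators on $L^\infty_{-1,+}$. Then
\[
M_1\supn(\cdot,\zeta)-M_1(\cdot,\zeta)=(I-T_L\supn(\zeta))^{-1}\bigl(T_L\supn(\zeta)-T_L(\zeta)\bigr)M_1(\cdot,\zeta).
\]
The middle factor is the operator $m\mapsto G_L(\cdot,\zeta)*((u_n-u)m)$. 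The estimate \eqref{eq: est T_L via u}, proved via the splitting of Lemma \ref{lem: green uniform split} and Lemma \ref{lem: weighted Holder Young}, has a constant $C(\delta)$ that is uniform over $\widetilde{\C}\setminus D^-_\delta$. It therefore yields $\|T_L\supn(\zeta)-T_L(\zeta)\|\le C\|u_n-u\|_{L^1_1\cap L^2_1}$ uniformly in $\zeta\in D^+_\delta$. Since $\|M_1(\cdot,\zeta)\|_{L^\infty_{-1,+}}\le 2$, we obtain
\[
\|M_1\supn-M_1\|_{L^\infty_{-1,+}}\le 4C\|u_n-u\|_{L^1_1\cap L^2_1}\to0
\]
uniformly on $D^+_\delta$. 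The statement for $N_1\supn$ on $D^-_\delta$ in the space $(1+x_-)L^\infty=L^\infty_{-1,-}$ follows by the mirror argument with $T_R$.

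For the scattering function, recall $a\supn(\zeta)=1+\frac{i}{1-2\zeta}a_1\supn(\zeta)$. On a compact $K\subset D^+_\delta\setminus\{\frac12+i0\}$ the factor $|1-2\zeta|^{-1}$ is bounded, so it suffices to show $a_1\supn\to a_1$ uniformly on $K$. Write
\[
a_1\supn-a_1=\int_\R (u_n-u)M_1\supn\,dx+\int_\R u\,(M_1\supn-M_1)\,dx.
\]
Both terms are handled by the duality pairing between $L^1_{1,+}$ and $L^\infty_{-1,+}$, using that $L^1_1\hookrightarrow L^1_{1,+}$. The first term is bounded by $\|u_n-u\|_{L^1_1}\sup_n\|M_1\supn\|_{L^\infty_{-1,+}}\le 2\|u_n-u\|_{L^1_1}$. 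The second is bounded by $\|u\|_{L^1_1}\|M_1\supn-M_1\|_{L^\infty_{-1,+}}$, which tends to $0$ uniformly on $D^+_\delta$ by the first part. The argument for $\breve a\supn$ is identical with $N_1$.

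The only point that needs care is the claim that the operator bounds are uniform on all of $D^+_\delta$, including the glued boundary $\R^++i0$ and the point $\frac12+i0$. I will check this directly from Lemma \ref{lem: green uniform split}. There, for $\operatorname{Im}\zeta\ge0$, one has $G_L^2=0$, $G_L^1\in L^\infty_{-1}$ with a uniform bound (the bound $C\langle x\rangle$ covers the linear growth near $\frac12+i0$), and $G_L^3=H$ is uniformly bounded in $L^2$. So no degeneration occurs at $\frac12+i0$ for $T_L$; the only excluded point is $\frac12-i0$, which lies outside $D^+_\delta$. The restriction to compact sets avoiding $\frac12+i0$ is needed only for the $a$-statement, because of the factor $(1-2\zeta)^{-1}$.
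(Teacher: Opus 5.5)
Your proposal is correct and follows the same argument as the paper. The paper obtains uniform convergence of $M_1^{(n)}$ in $L^\infty_{-1,+}$ on $D^+_\delta$ from the second resolvent identity, using the $\zeta$-uniform inverse bounds of Theorem \ref{thm: inv T off disc} together with the estimate \eqref{eq: est T_L via u} applied to $u_n-u$ (exactly as in the proof of Theorem \ref{thm: genericity}). It then gets convergence of $a^{(n)}$ on compacts avoiding $\frac12+i0$ from \eqref{def: a} via the $L^1_{1,+}$--$L^\infty_{-1,+}$ pairing. Your explicit check that $T_L$ does not degenerate at $\frac12+i0$ fills in a detail the paper leaves implicit.
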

\begin{proof}
    The proof is similar to how we proved continuous dependence of $T_L^u$ on $u$ in Theorem \ref{thm: genericity}.
    \begin{comment}
        Recall that 
    \begin{align*}
        M_1&=1+T_L(\zeta)M_1,\\
        M_1\supn &= 1+T_L\supn (\zeta)M_1\supn.
    \end{align*}
    Thus 
    \begin{equation}\label{eq: M1-M1supn}
        M_1-M_1\supn = (I-T_L(\zeta))^{-1}\left(T_L(\zeta)-T_L\supn(\zeta)\right)\left(I-T_L\supn(\zeta)\right)^{-1}1.
    \end{equation}
    By the proof of Theorem \ref{thm: inv T off disc}, $\|T_L(\zeta)\|_{L^\infty_{-1,+}\toL^\infty_{-1,+}}$ and $\|T_L\supn(\zeta)\|_{L^\infty_{-1,+}\toL^\infty_{-1,+}}$ are bounded by $\frac12$ for $\zeta\in D^+_\delta$, provided $\|u\|_{L^1_1\cap L^2_1}$ is small enough. We also have from \eqref{eq: est T_L via u}
    \begin{align}
        &~\|T_L(\zeta)-T_L\supn(\zeta)\|_{L^\infty_{-1,+}\toL^\infty_{-1,+}}\notag\\
        \le &~C\|G_L(\cdot,\zeta)\|_{L^2_{-1,+}+L^\infty_{-1,+}}\|u-u_n\|_{\langle x\rangle ^{-1}(L^1\cap L^2)}.\label{eq: TL-TLsupn}
    \end{align}
    By Lemma \ref{lem: green uniform split}, $\|G_L(\cdot,\zeta)\|_{L^2_{-1,+}+L^\infty_{-1,+}}$ is uniformly bounded for $\zeta\in D^+_\delta$. It follows from \eqref{eq: M1-M1supn} and \eqref{eq: TL-TLsupn} 
    \end{comment}
    We obtain similarly that  $M_1^{(n)}(\cdot,\zeta)\to M_1(\cdot,\zeta)$ in $L^\infty_{-1,+}$ uniformly for $\zeta\in D^+_\delta$. The convergence of $a\supn$ to $a$ now follows again from \eqref{def: a} and the pairing between $L^1_1\cap L^2_1$ and $L^\infty_{-1,+}$. The proof for $N_1$ and $\breve a$ is similar.
\end{proof}

We also need equicontinuity of the Jost functions for flexible limit manipulations.

\begin{lemma}\label{lem: M1n equicont}
    $ \{\zeta\mapsto M_1\supn (\cdot,\zeta)\}$ is equicontinuous as a sequence of maps from any compact subset of $D^+_\delta\setminus \{\frac12+i0\}$ to $L^\infty_{-1,+}$. Similarly, $\{\zeta\mapsto N_1\supn (\cdot,\zeta)\}$ is equicontinuous as a family of maps from any compact subset of $D^-_\delta\setminus \{\frac12-i0\}$ to $(1+x_-)L^\infty$.
\end{lemma}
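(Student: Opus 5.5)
We treat only $M_1\supn$; the statement for $N_1\supn$ follows by the symmetric argument with $T_R$, $G_R$ and the weight $1+x_-$. Fix a compact set $K\subset D^+_\delta\setminus\{\frac12+i0\}$. The plan is a resolvent identity in $\zeta$ combined with bounds uniform in $n$. For $\zeta_1,\zeta_2\in K$,
\begin{equation*}
M_1\supn(\cdot,\zeta_1)-M_1\supn(\cdot,\zeta_2)=\big(I-T_L\supn(\zeta_1)\big)^{-1}\big[T_L\supn(\zeta_1)-T_L\supn(\zeta_2)\big]M_1\supn(\cdot,\zeta_2).
\end{equation*}
The first factor is the easy one. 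By Theorem \ref{thm: inv T off disc} and the uniform bound \eqref{eq: est T_L via u}, we have $\|T_L\supn(\zeta)\|_{L^\infty_{-1,+}\to L^\infty_{-1,+}}\le\frac12$ for all $\zeta\in D^+_\delta$ and all $n$. This holds because $\|u_n\|_{L^1_1\cap L^2_1}$ stays below the smallness threshold for large $n$. It follows that $\|(I-T_L\supn(\zeta))^{-1}\|\le 2$ and $\|M_1\supn(\cdot,\zeta)\|_{L^\infty_{-1,+}}\le 2$, both uniformly in $n$ and $\zeta$.

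It therefore remains to show that $\|T_L\supn(\zeta_1)-T_L\supn(\zeta_2)\|$ is small when $\zeta_1,\zeta_2\in K$ are close, uniformly in $n$. Following the estimate \eqref{eq: T cont in zeta} in Lemma \ref{lem: T anlytc}, we will bound this operator norm by
\begin{equation*}
C\,\|G_L(\cdot,\zeta_1)-G_L(\cdot,\zeta_2)\|_{L^2_{-1,+}+L^\infty_{-1,+}}\,\|u_n\|_{L^1_1\cap L^2_1}.
\end{equation*}
The factor $\|u_n\|_{L^1_1\cap L^2_1}$ is bounded uniformly in $n$. The Green's function factor does not depend on $n$ at all. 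By Corollary \ref{cor: G cont 1}, the map $\zeta\mapsto G_L(\cdot,\zeta)$ is continuous into $L^2_{-1,+}+L^\infty_{-1,+}$ on $\widetilde\C\setminus\big((\R^+-i0)\cup\{\frac12+i0\}\big)$, and this set contains $K$. A continuous map on a compact set is uniformly continuous, so the Green's function factor tends to $0$ as $|\zeta_1-\zeta_2|\to0$, uniformly on $K$. Equicontinuity follows.

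The main point needing care is the topology on $K$. Points of $(\R^+ + i0)$ are only approached from the upper half-disc. Corollary \ref{cor: G cont 1} is formulated for exactly this topology on $\widetilde\C$, so the compactness argument should go through. Near $\frac12+i0$ the Green's function is not continuous in this norm, which is precisely why $\frac12+i0$ is excluded from $K$.

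For $N_1\supn$ on $D^-_\delta$ we use the same identity with $T_R\supn$ and the second half of Corollary \ref{cor: G cont 1}. Since $(1+x_-)L^\infty=L^\infty_{-1,-}$, the norms match.
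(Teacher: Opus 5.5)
Your proof is correct and follows the paper's argument: the same resolvent identity, the estimate \eqref{eq: T cont in zeta} applied to $u_n$, and the uniform bound $\|(I-T_L\supn(\zeta))^{-1}\|\le 2$ from Theorem \ref{thm: inv T off disc}. The only difference is how you get uniformity of the Green's function factor: you obtain it from the continuity in Corollary \ref{cor: G cont 1} together with compactness of $K$, whereas the paper cites the explicit estimate \eqref{eq: G cont in zeta near 1/2}, and either works.
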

\begin{proof}
    Using \eqref{eq: T cont in zeta} again we have
    \begin{align}\label{eq: Tsupn cont in zeta}
    &~\|[T_L\supn(\zeta_1)-T_L\supn(\zeta_2)]\|_{L^\infty_{-1,+}\to L^\infty_{-1,+}}\notag\\
    \le&~ C\|G_L(\cdot,\zeta_2)-G_L(\cdot,\zeta_1)\|_{L^2_{-1,+}+L^\infty_{-1,+}} \|u_n\|_{L^1_1\cap L^2_1}.
    \end{align}
    By the estimates on \eqref{eq: G cont in zeta near 1/2} we see that $\|G_L(\cdot,\zeta_2)-G_L(\cdot,\zeta_1)\|_{L^2_{-1,+}+L^\infty_{-1,+}}\to 0$ uniformly as $|\zeta_1-\zeta_2|\to 0$ in any compact subset of $D^+_\delta\setminus \{\frac12+i0\}$. The proof is complete in view of 
    \begin{align}
        &~M_1\supn(\cdot,\zeta_1)-M_1\supn(\cdot,\zeta_2)\notag\\
        =&~\left(I-T_L\supn(\zeta_1)\right)^{-1}\left(T_L\supn(\zeta_1)-T_L\supn(\zeta_2)\right)\left(I-T_L\supn(\zeta_2)\right)^{-1}1.
    \end{align}
\end{proof}

Recall that $\zeta_j$, $j=1,2,\dots$ is an enumeration of the zeros of $a$ in $D^+_\delta\setminus \{\frac12+i0\}$. Denote by $k_j$ the multiplicity of $\zeta_j$. We will eventually show that $k_j=1$, so that all the zeros are simple. But at the moment, they may be bigger than 1. Since the $\zeta_j$'s are isolated, there exists a small circle $\Gamma_j$ centered at each $\zeta_j$ such that $\zeta_j$ is the only zero of $a$ in $\Gamma_j$. We have
\begin{equation}
    k_j=\frac1{2\pi i}\int_{\Gamma_j}\frac{a'(\zeta)}{a(\zeta)}~d\zeta.
\end{equation}
Since $a\supn \to a$, $(a\supn)'\to a'$ uniformly on $\Gamma_j$, there exists $N_j>0$ such that for all $n>N_j$, $a\supn$ has no zero on $\Gamma_j$ and 
\begin{equation}
    k_j=\frac1{2\pi i}\int_{\Gamma_j}\frac{(a\supn)'(\zeta)}{a\supn(\zeta)}~d\zeta.
\end{equation}
In this case, $a\supn$ has exactly $k_j$ zeros inside $\Gamma_j$, all of which are simple by Lemma \ref{lem: a' for cpt u}. Denote by $\zeta\supn_{ji}$, $i=1,\dots, k_j$ an enumeration of these zeros (the index $i$ being different from the complex number $i$ should be clear from context). Note that $\zeta\supn_{ji}$ are defined for all $n>N_j$, $1\le i\le k_j$. We have the following
\begin{lemma}\label{lem: zeta^n_ji convergence}
    For each $j$,
    \begin{equation}
        \max_{1\le i\le k_j} |\zeta\supn_{ji}-\zeta_j|\to 0
    \end{equation}
    as $n\to\infty.$
\end{lemma}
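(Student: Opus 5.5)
This is a Hurwitz-type argument, using the uniform convergence $a\supn\to a$ on compact subsets of $D^+_\delta\setminus\{\frac12+i0\}$ from Lemma \ref{lem: M1n convergence}. First I record why that convergence upgrades to derivatives. The functions $a\supn$ and $a$ are analytic in the interior of $D^+_\delta$ (Theorem \ref{thm: a b cont}), so the Cauchy integral formula on slightly larger circles gives $(a\supn)'\to a'$ uniformly on compact subsets of the interior. This is what was used above to count zeros inside $\Gamma_j$.

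Fix $j$ and let $\epsilon>0$ be smaller than the radius of $\Gamma_j$. Let $\Gamma_j^\epsilon$ be the circle of radius $\epsilon$ centered at $\zeta_j$. The closed annulus $A_\epsilon$ between $\Gamma_j^\epsilon$ and $\Gamma_j$ is a compact subset of the interior of $D^+_\delta$, and it contains no zero of $a$ because $\zeta_j$ is the only zero inside $\Gamma_j$. Hence $m_\epsilon:=\min_{A_\epsilon}|a|>0$. By uniform convergence there is an $N(\epsilon)\ge N_j$ such that $\sup_{A_\epsilon}|a\supn-a|<m_\epsilon$ for all $n>N(\epsilon)$. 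For such $n$, $a\supn$ has no zeros in $A_\epsilon$.

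Since all $k_j$ zeros $\zeta\supn_{ji}$ of $a\supn$ lie inside $\Gamma_j$ and none lies in $A_\epsilon$, they all lie strictly inside $\Gamma_j^\epsilon$. Therefore $\max_i|\zeta\supn_{ji}-\zeta_j|<\epsilon$ for $n>N(\epsilon)$. As $\epsilon$ is arbitrary, the claim follows. As a consistency check, Rouch\'e's theorem on $\Gamma_j^\epsilon$ with the same bound shows that $a\supn$ has exactly $k_j$ zeros inside $\Gamma_j^\epsilon$.

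The only point needing care is that $\Gamma_j$ and the annulus stay away from $\frac12+i0$ and from the boundary segment $\R^++i0$, where only continuity (and not analyticity or uniform convergence) is available. This is ensured by choosing $\Gamma_j$ small: $\zeta_j$ lies in the interior of $D^+_\delta$ by Theorem \ref{thm: general meromorphy of M N}, so a sufficiently small circle around it is contained there.
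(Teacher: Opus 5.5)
Your proof is correct and is essentially the paper's Hurwitz-type argument. The paper reruns the argument-principle count on a smaller circle $\Gamma_{j,\epsilon}$ to find that all $k_j$ zeros of $a\supn$ lie inside it for large $n$. You reach the same conclusion by excluding zeros of $a\supn$ from the closed annulus between $\Gamma_j^\epsilon$ and $\Gamma_j$, using $\min_{A_\epsilon}|a|>0$ and uniform convergence; for this step you do not even need convergence of the derivatives.
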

\begin{proof}
    Let $\Gamma_{j,\epsilon}$ be any small circle centered at $\zeta_j$ contained inside $\Gamma_j$. By repeating the above calculation with $\Gamma_j$ replaced by $\Gamma_{j,\epsilon}$, we see that there are exactly $k_j$ zeros of $a\supn$ inside $\Gamma_{j,\epsilon}$ for $n$ sufficiently large. Of course they are the enumerated $\zeta\supn_{ji}$'s.
\end{proof}

\begin{lemma}\label{lem: M N relation at zeta_j general}
    Theorem \ref{thm: bound states and a} (i) holds.
\end{lemma}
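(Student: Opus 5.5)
Since Lemma \ref{lem: M N relation at zeta_j} already gives Theorem \ref{thm: bound states and a} (i) for each $u_n\in C_0^\infty$, the plan is to pass to the limit $n\to\infty$. The obstacle is that the zeros of $a\supn$ only converge to those of $a$ as clusters. There is a second, independent problem: the reverse inclusion, that every zero of $\breve a$ in $D^-_\delta$ is some $\zeta_j^*$, must be handled directly.

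\textbf{Direct algebraic argument.} First I will try the argument of Lemma \ref{lem: M N relation at zeta_j} verbatim for general $u$, because it never really uses compact support. Fix a zero $\zeta_j$ of $a$. By Theorem \ref{thm: inv T off disc}, $M_1(\cdot,\zeta_j)$ exists in $L^\infty$ (Lemma \ref{lem: jost exist 1}). Inserting \eqref{eq: GL(zeta) GR(zeta*) relation} from Lemma \ref{lem: GL(zeta) GR(zeta*)} into $M_1=1+G_L*(uM_1)$ and using $a(\zeta_j)=0$ gives \eqref{eq: M1 N1 pole relation 2}. For this step I need $uM_1(\cdot,\zeta_j)e^{-i\lambda_j\cdot}\in L^1_1\cap L^2_1$.

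\textbf{Main obstacle: the exponential weight.} This integrability is the delicate point. Since $\imag\lambda_j>0$, the factor $e^{-i\lambda_j y}$ grows as $y\to-\infty$. If a direct estimate of this growth against $M_1\to1$ at $-\infty$ fails, I will use the approximation instead. For $n>N_j$, Lemma \ref{lem: M N relation at zeta_j} gives
\[
M_1\supn(x,\zeta\supn_{ji})e^{-i\lambda\supn_{ji}x}=c\supn_{ji}N_1\supn(x,(\zeta\supn_{ji})^*).
\]
I then let $n\to\infty$ as follows.
\begin{itemize}
\item On the left, Lemmas \ref{lem: M1n convergence} and \ref{lem: M1n equicont} together with Lemma \ref{lem: zeta^n_ji convergence} give $M_1\supn(\cdot,\zeta\supn_{ji})\to M_1(\cdot,\zeta_j)$ locally uniformly, and $\lambda\supn_{ji}\to\lambda_j$.
\item On the right, $\breve a(\zeta_j^*)$ may vanish, so $\zeta_j^*$ is the suspected bad point for $N_1$. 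However, $\zeta_j^*$ lies in $D^-_\delta$, where $N_1$ is defined directly by Theorem \ref{thm: inv T off disc}. Hence the convergence of $N_1\supn$ there is uniform, again by Lemmas \ref{lem: M1n convergence} and \ref{lem: M1n equicont}.
\end{itemize}

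\textbf{Identifying the constant.} Writing $c\supn_{ji}$ via \eqref{def: c_j}, I will get convergence to $c_j$ by evaluating both sides at a fixed $x$. This gives \eqref{M_1 N_1 relation at pole} with pointwise equality, and therefore equality in $L^\infty_{\mathrm{loc}}$. The formula \eqref{def: c_j} for $c_j$ then follows by re-expanding the limiting identity exactly as in \eqref{eq: M1 N1 pole relation 2}. For this, the decay of $M_1(x,\zeta_j)e^{-i\lambda_jx}$ at $-\infty$, equivalently the boundedness of $N_1$, now comes for free from the identity. This closes the integrability gap identified above.

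\textbf{Nonvanishing of $c_j$ and $\breve a(\zeta_j^*)=0$.} Once \eqref{M_1 N_1 relation at pole} holds, $c_j\neq0$ because $M_1\not\equiv0$. The equation $N_1(\cdot,\zeta_j^*)=G_L(\cdot,\zeta_j^*)*(uN_1)$ follows from \eqref{GL-GR}, and \eqref{a=0 vs trivial kernel} then yields $\breve a(\zeta_j^*)=0$.

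\textbf{Reverse inclusion.} For a zero $\tilde\zeta$ of $\breve a$ in $D^-_\delta$, I apply the conjugate argument using \eqref{eq: GR(zeta) GL(zeta*) relation}. This shows $a(\tilde\zeta^*)=0$, so $\tilde\zeta^*=\zeta_j$ for some $j$. Using the involution $*$, it follows that $\tilde\zeta=\zeta_j^*$.
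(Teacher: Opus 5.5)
Your limiting argument is the paper's. You approximate by $u_n\in C_0^\infty$, apply Lemma \ref{lem: M N relation at zeta_j} at the zeros $\zeta\supn_{ji}$, and pass to pointwise limits. You then define $c_j:=\lim_n c\supn_{ji}$ by evaluating at a point $x_0$ with $N_1(x_0,\zeta_j^*)\neq0$ (e.g.\ $x_0$ large), and only afterwards recover \eqref{def: c_j}. For that last step you re-run the computation of Lemma \ref{lem: M N relation at zeta_j}, using the now-known boundedness of $M_1(\cdot,\zeta_j)e^{-i\lambda_j\cdot}=c_jN_1(\cdot,\zeta_j^*)$, and compare the two resulting multiples of $N_1(\cdot,\zeta_j^*)$. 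The paper instead rewrites $c\supn_{ji}$ as $\frac{ic\supn_{ji}}{1-2(\zeta\supn_{ji})^*}\int u_nN_1\supn$ and lets $n\to\infty$; both work.

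However, you put the obstacle on the wrong side. Since $\imag\lambda_j>0$, $|e^{-i\lambda_jy}|=e^{(\imag\lambda_j)y}$ grows as $y\to+\infty$, not $-\infty$. At $-\infty$ there is nothing to check. The issue is at $+\infty$, where $M_1(\cdot,\zeta_j)\to a(\zeta_j)=0$ at an a priori unknown rate while $u$ decays only polynomially. What the limiting identity supplies is exponential decay of $M_1(\cdot,\zeta_j)$ at $+\infty$. Also, a zero of $\breve a$ obstructs $M_1$, not $N_1$: $N_1(\cdot,\zeta_j^*)$ exists simply because $\zeta_j^*\notin D^+_\delta$.

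The reverse inclusion has the mirror-image problem if by ``the conjugate argument'' you mean inserting \eqref{eq: GR(zeta) GL(zeta*) relation} directly into $N_1=1+G_R*(uN_1)$. For a zero $\tilde\zeta\in D^-_\delta$ of $\breve a$, $\tilde\lambda=Z^{-1}(\tilde\zeta)$ has $\imag\tilde\lambda<0$, so $e^{-i\tilde\lambda y}$ grows as $y\to-\infty$. That is exactly where $N_1(\cdot,\tilde\zeta)\to\breve a(\tilde\zeta)=0$ at an unknown rate, so splitting the integral is not justified unless $u$ has compact support.

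This direction must also go through the approximation, and the quickest route is Hurwitz's theorem. Uniform convergence $\breve a\supn\to\breve a$ near $\tilde\zeta$ gives zeros $\tilde\zeta\supn\to\tilde\zeta$ of $\breve a\supn$. By Lemma \ref{lem: M N relation at zeta_j}, each is $(\zeta\supn)^*$ for a zero $\zeta\supn$ of $a\supn$. Then $\zeta\supn\to\tilde\zeta^*$, and uniform convergence $a\supn\to a$ near $\tilde\zeta^*$ gives $a(\tilde\zeta^*)=0$. The paper is equally brief at this point, saying only that it is done in the same way as in the proof of Lemma \ref{lem: M N relation at zeta_j}.
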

\begin{proof}
    By Lemma \ref{lem: M N relation at zeta_j}, we have
    \begin{equation}\label{eq: M1n N1n relation at zeta^n_ji}
        M_1\supn\left(x,\zeta\supn_{ji}\right)e^{-i\zeta\supn_{ji} x}=c\supn_{ji}N_1\supn\left(x,\left(\zeta\supn_{ji}\right)^*\right)e^{-i\left(\zeta\supn_{ji}\right)^* x},
    \end{equation}
    where
    \begin{equation}\label{eq: c^n_ji}
        c\supn_{ji}=\frac{i}{1-2\left(\zeta\supn_{ji}\right)^*}\int_\R u_n(x)M_1\supn\left(x,\zeta\supn_{ji}\right)e^{-i\lambda\supn_{ji} x}~dx,
    \end{equation}
    and $\lambda\supn_{ji}=Z^{-1}\left(\zeta\supn_{ji}\right)$.
    By Lemmas \ref{lem: M1n convergence}, \ref{lem: M1n equicont}, \ref{lem: zeta^n_ji convergence}, for almost every $x\in\mathbb R$, we have
    \begin{align}
        M_1\supn\left(x,\zeta\supn_{ji}\right)e^{-i\zeta\supn_{ji} x}&\to M_1(x,\zeta_j)e^{-i\zeta_j x},\label{eq: M1^n limit}\\
        N_1\supn\left(x,\left(\zeta\supn_{ji}\right)^*\right)e^{-i\left(\zeta\supn_{ji}\right)^* x}&\to N_1(x,\zeta_j^*)e^{-i\zeta_j^*x},\label{eq: N1^n limit}
    \end{align}
    as $n\to\infty$. It is tempting to take a limit as $n\to\infty$ of \eqref{eq: M1n N1n relation at zeta^n_ji} and \eqref{eq: c^n_ji}. However, since $e^{-i\lambda\supn_{ji}x}$ has exponential growth as $x\to\infty$, the $L^\infty_{-1,+}$ convergence of $M\supn_1$ is not strong enough to directly ensure a limit of \eqref{eq: c^n_ji}. Instead, we use the following argument. Since $N_1(x,\zeta_j^*)\to 1$ as $x\to\infty$, there exists $x_0\in \mathbb R$ such that $N_1(x_0,\zeta_j^*)\ne 0$ and \eqref{eq: M1^n limit} and \eqref{eq: N1^n limit} hold at $x=x_0$. Using \eqref{eq: M1n N1n relation at zeta^n_ji} again, we get 
    \begin{equation}
        \lim_{n\to\infty}c\supn_{ji}=\frac{M_1(x_0,\zeta_j)e^{-i\zeta_j x_0}}{N_1(x_0,\zeta_j^*)e^{-i\zeta_j^*x_0}}
    \end{equation}
    exists and is independent of $i$. Denote this limit by $c_j$. We then have by taking a limit of \eqref{eq: M1n N1n relation at zeta^n_ji} that
    \begin{equation}\label{eq: M1 N1 relation repeated}
        M_1(x,\zeta_j)e^{-i\zeta_j x }=c_jN_1(x,\zeta_j^*)e^{-i\zeta_j^* x}.
    \end{equation}
    Recall \eqref{eq: zeta-zeta*} that $\zeta\supn_{ji}-\left(\zeta\supn_{ji}\right)^*=\lambda\supn_{ji}$. It follows from \eqref{eq: M1n N1n relation at zeta^n_ji} and \eqref{eq: c^n_ji} that
    \begin{equation}\label{eq: c^n_ji relation 1}
        c\supn_{ji}=\frac{ic\supn_{ji}}{1-2\left(\zeta\supn_{ji}\right)^*}\int_\R u_n(x)N_1\supn\left(x,\left(\zeta\supn_{ji}\right)^*\right)~dx,
    \end{equation}
    Taking a limit of \eqref{eq: c^n_ji relation 1} as $n\to\infty$ we obtain
    \begin{equation}\label{eq: c_j expression 2}
        c_j=\frac{ic_j}{1-2\zeta_j^*}\int_\R u(x)N_1(x,\zeta_j^*)~dx.
    \end{equation}
    Using \eqref{eq: M1 N1 relation repeated} again, we obtain \eqref{def: c_j}. After canceling $c_j$ from \eqref{eq: c_j expression 2} and recalling \eqref{def: breve a}, we see $\breve a(\zeta_j^*)=0$. Thus $\zeta_j^*$ is a zero of $\breve a$. We can show $\breve a$ has no other zeros than the $\zeta_j^*$ in the same way as in the proof of Lemma \ref{lem: M N relation at zeta_j}.
\end{proof}

Note that the proof of Lemma \ref{lem: phi N relation} does not assume the stronger condition $u\in C_0^\infty$. So Thoerem \ref{thm: bound states and a} (iii) has also been proven.

\begin{lemma}\label{lem: a' general}
    Theorem \ref{thm: bound states and a} (ii) holds. In particular, $k_j=1$ for all $j$.
\end{lemma}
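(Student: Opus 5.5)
The plan is to pass to the limit $n\to\infty$ in the identity of Lemma \ref{lem: a' for cpt u}, applied to the approximants $u_n\in C_0^\infty$, exactly as in the proof of Lemma \ref{lem: M N relation at zeta_j general}. For $n>N_j$ and each $1\le i\le k_j$, Lemma \ref{lem: a' for cpt u} gives
\begin{equation*}
(a\supn)'(\zeta\supn_{ji})=-ic\supn_{ji}e^{-\lambda\supn_{ji}}\int_\R|\varphi\supn_{ji}(x)|^2\,dx,\qquad \varphi\supn_{ji}(x)=N_1\supn\big(x+i,(\zeta\supn_{ji})^*\big)e^{-i(\zeta\supn_{ji})^*x},
\end{equation*}
and the analogous formula for $\overline{(\breve a\supn)'((\zeta\supn_{ji})^*)}$. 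The left side converges to $a'(\zeta_j)$. This follows from the uniform convergence $a\supn\to a$ on compact subsets (Lemma \ref{lem: M1n convergence}), which upgrades to uniform convergence of derivatives by the Cauchy estimates, together with $\zeta\supn_{ji}\to\zeta_j$ (Lemma \ref{lem: zeta^n_ji convergence}). We also have $c\supn_{ji}\to c_j\neq 0$ and $\lambda\supn_{ji}\to\lambda_j$, both shown in the proof of Lemma \ref{lem: M N relation at zeta_j general}. The remaining task is
\begin{equation*}
\int_\R|\varphi\supn_{ji}|^2\,dx\to\int_\R|\varphi_j|^2\,dx .
\end{equation*}

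For this we use the two representations \eqref{eq: phi decay R} and \eqref{eq: phi decay L}. On $x>0$ we write $\varphi\supn_{ji}$ through $N_1\supn$, which carries the factor $e^{\imag(\zeta\supn_{ji})^*x}$. On $x<0$ we write it as $(c\supn_{ji})^{-1}M_1\supn(x+2i,\zeta\supn_{ji})e^{-i\zeta\supn_{ji}(x-i)}$, which carries $e^{\imag\zeta\supn_{ji}\,x}$. Since $\imag\zeta_j>0$, for large $n$ both exponentials are bounded by $e^{-\kappa|x|}$ with a fixed $\kappa>0$.

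It then suffices to obtain two things for the Jost functions on the line $\imag z=1$ (equivalently $y=2$ inside the strip, after shifting):
\begin{itemize}
\item uniform bounds in $L^2+L^\infty$, with at most polynomial growth;
\item pointwise (or $L^2_{\mathrm{loc}}$) convergence.
\end{itemize}
With these in hand, dominated convergence finishes the argument. We obtain both from the representation \eqref{jost prty: M1 ext}, with $R_2$ replaced by $R_1$ or $R_2$ as appropriate, evaluated at $y=1$. Proposition \ref{prop: remainder} bounds $R_i(\cdot,1,\zeta)$ in $L^2$ and $L^\infty$, uniformly for $\zeta$ in compact subsets, and $u_nM_1\supn\to uM_1$ in $L^1\cap L^2$ by Lemmas \ref{lem: M1n convergence} and \ref{lem: M1n equicont}. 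Together these give convergence of $M_1\supn(\cdot+i,\zeta\supn_{ji})$ in $L^2+L^\infty$ on half-lines. The same argument applies to $N_1\supn$.

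The main obstacle is the one already met in Lemma \ref{lem: M N relation at zeta_j general}: the weighted norms $L^\infty_{-1,+}$ only control $M_1$ on one side, so each half-line has to be treated with the Jost function adapted to it. The identity \eqref{eq: M1n N1n relation at zeta^n_ji} is exactly what allows switching between the two representations. On each side we need only $L^\infty_{-1,\pm}$ control times an exponential weight, and $(1+|x|)e^{-\kappa|x|}\in L^2$, so the weight loss is harmless.

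Passing to the limit gives \eqref{eq: a' identity}. Since $c_j\neq0$ and $\varphi_j\not\equiv0$ (because $N_1(x,\zeta_j^*)\to1$ as $x\to\infty$), we get $a'(\zeta_j)\neq0$, so $\zeta_j$ is a simple zero and $k_j=1$. The statement for $\breve a$ follows by the same limit from the conjugate identity. Finally, simplicity of $\zeta_j^*$ as a pole of $a$ follows from $a\breve a=1$ (Lemma \ref{lem: M=aN}).
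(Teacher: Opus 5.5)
Your proposal is correct and follows essentially the same route as the paper: apply Lemma \ref{lem: a' for cpt u} to the approximants $u_n$, then show $\varphi\supn_{ji}\to\varphi_j$ in $L^2$ by using $N_1\supn(\cdot+i,\cdot)$ on $x>0$ and $M_1\supn(\cdot+i,\cdot)$ on $x<0$, both controlled through the representation \eqref{jost prty: M1 ext} at $y=1$, and pass to the limit in the identity. One small slip: the $M_1$-representation of $\varphi_j$ is at height $1$, namely $\varphi_j(x)=c_j^{-1}e^{-\zeta_j^*}M_1(x+i,\zeta_j)e^{-i\zeta_j(x+i)}$, not at $M_1(x+2i,\cdot)$ as you wrote; this corrected form is what your later evaluation at $y=1$ actually uses.
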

\begin{proof}
    Applying Lemmas \ref{lem: M N relation at zeta_j}, \ref{lem: phi N relation} and \ref{lem: a' for cpt u} on $u_n$, we have
    \begin{equation}\label{eq: an' identity 1}
        \left(a\supn\right)'\left(\zeta\supn _{ji}\right)= \overline{ \left(\breve{a}\supn\right)'\left(\left(\zeta\supn _{ji}\right)^*\right)}=-2c\supn_{ji} e^{-\lambda\supn_{ji}}\int_\R\left|\varphi\supn_{ji}\right|^2~dx,
    \end{equation}
    where
    \begin{align}
        \varphi\supn_{ji}(x)&=N_1\supn\left(x+i,\left(\zeta\supn_{ji}\right)^*\right)e^{-i\left(\zeta\supn_{ji}\right)^* x}\label{eq: phin N1 relation 1}\\
        &= \frac1{c\supn_{ji}}M_1\supn\left(x+i,\zeta\supn_{ji}\right)e^{-i\zeta\supn_{ji}(x+i)}e^{-\left(\zeta\supn_{ji}\right)^*}.\label{eq: phin N1 relation 2}
    \end{align}
    We may rewrite $M_1\supn\left(x+i,\zeta\supn_{ji}\right)$ in terms of $M_1\supn\left(x,\zeta\supn_{ji}\right)$ as in \eqref{jost prty: M1 ext}. Since $u_n\to u$ in $L^1_1\cap L^2_1$, $M_1\supn\left(x,\zeta\supn_{ji}\right)\to M_1\left(x,\zeta_j\right)$ in $L^\infty_{-1,+}$ by Lemmas \ref{lem: M1n convergence}, \ref{lem: M1n equicont} and \ref{lem: zeta^n_ji convergence}. $R_2\left(\xi,i,\zeta\supn_{ji}\right)\to R_2(\xi,i,\zeta_j)$ in $L^2$ by \eqref{eq: R bound} and \eqref{eq: d_zeta R bound} (note that the constants are uniform for $\zeta$ in the range we consider). It follows that $M_1\supn\left(x+i,\zeta\supn_{ji}\right)\to M_1(x+i,\zeta_j)$ in $L^\infty$. Similarly, $N_1\supn\left(x+i,\left(\zeta\supn_{ji}\right)^*\right)\to N_1(x+i,\zeta_j)$ in $L^\infty.$ It follows from \eqref{eq: phin N1 relation 1}, \eqref{eq: phin N1 relation 2} and the definition of $\varphi_j$ that $\varphi\supn_{ji}\to \varphi_j$ in $L^2$. Now take the limit of \eqref{eq: an' identity 1} as $n\to\infty$ to get \eqref{eq: a' identity}, and the proof is complete.
     
\end{proof}

Since each $k_j=1$ by the above lemma, we may simplify notation and denote $\zeta\supn_{ji}$ by $\zeta\supn_j$.

\begin{lemma}
    Theorem \ref{thm: bound states and a} (iii) holds.
\end{lemma}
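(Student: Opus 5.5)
The plan is to obtain (iii) for general $u$ by combining two results already in hand, rather than by an approximation argument. Lemma \ref{lem: phi N relation} states that, for real-valued $u\in L^1_1\cap L^2_1\cap L^\infty$ with small norm, Theorem \ref{thm: bound states and a} (i) implies Theorem \ref{thm: bound states and a} (iii). Its proof never uses $u\in C_0^\infty$. Lemma \ref{lem: M N relation at zeta_j general} establishes (i) with no compact support assumption. So I only need to check that the general hypotheses are exactly those under which the proof of Lemma \ref{lem: phi N relation} runs, and then combine the two.

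\textbf{Step 1: the $\mathbb H^2$ property of $\varphi_j$.} The first step of Lemma \ref{lem: phi N relation} shows $\varphi_j\in\mathbb H^2(\R+i(-1,1))$, which by Remark \ref{rmk: domain of Lu} is equivalent to $\varphi_j,e^{\pm D}\varphi_j\in L^2$. Using \eqref{M_1 N_1 relation at pole} with $c_j\ne 0$, $\varphi_j(x+iy)$ has two representations. On $(0,\infty)$ I use $N_1(\cdot+i(y+1),\zeta_j^*)e^{-i\zeta_j^*(x+iy)}$; it decays because $\imag\zeta_j^*<0$ and $N_1(\cdot,\zeta_j^*)\in\mathbb H^2(S)+\mathbb H^\infty(S)$. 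On $(-\infty,0)$ I use $c_j^{-1}M_1(\cdot+i(y+1),\zeta_j)e^{-i\zeta_j(x+iy)}$; it decays because $\imag\zeta_j>0$ and $M_1(\cdot,\zeta_j)\in\mathbb H^2(S)+\mathbb H^\infty(S)$. Both memberships come from Lemma \ref{lem: anal cont 1}, which requires only $u\in L^1\cap L^2$ and $\zeta\notin\R^+$. The hypothesis $\zeta\notin\R^+$ holds here: by Theorem \ref{thm: general meromorphy of M N} the zeros $\zeta_j$ lie in the interior of $D^+_\delta$, so $\zeta_j$ and $\zeta_j^*$ are off the cut.

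\textbf{Step 2: the eigenvalue equation.} Next I verify the equation in the form of Lemma \ref{lem: domain of Lu}. I write $e^{D}\varphi_j=N_1(\cdot,\zeta_j^*)e^{-i\zeta_j^*(x-i)}$ and $e^{-D}\varphi_j=N_1(\cdot+2i,\zeta_j^*)e^{-i\zeta_j^*(x+i)}$. I then apply the differential equation \eqref{N diff eqn}, which holds for general $u\in L^1\cap L^2$ by Lemma \ref{lem: equiv int diff}(a). This gives
\[
De^D\varphi_j-ue^D\varphi_j=-\zeta_j^*e^{-2\zeta_j^*}e^{-D}\varphi_j .
\]
Here $e^D\varphi_j\in H^1$, because $\partial_xN_1\in L^2$ by Lemma \ref{lem: anal cont 1} and the exponential factor decays on the relevant half-line as in Step 1. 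Lemma \ref{lem: domain of Lu} then gives $\varphi_j\in D(L_u)$ and $L_u\varphi_j=\eta_j\varphi_j$, where $\eta_j=-\zeta_j^*e^{-2\zeta_j^*}=-\zeta_je^{-2\zeta_j}$ by \eqref{eq: zeta e^zeta vs zeta* e^zeta*}. This step uses $u\in L^\infty$, since that is what makes $L_u$ self-adjoint.

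\textbf{Step 3: reality of $\eta_j$ and location in the spectrum.} Since $L_u$ is self-adjoint and $\varphi_j\ne0$ (because $N_1(x,\zeta_j^*)\to 1$ as $x\to+\infty$), $\eta_j$ is real. It is isolated below $-\frac1{2e}$ by Lemma \ref{lem: ess spec of Lu}, so $\eta_j\in\sigma_{\text{disc}}(L_u)$.

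The only real obstacle is bookkeeping: confirming that every ingredient of Lemma \ref{lem: phi N relation} is available under the weaker assumption $u\in L^1_1\cap L^2_1\cap L^\infty$ instead of $C_0^\infty$. The ingredients to check are Lemma \ref{lem: anal cont 1}, Lemma \ref{lem: equiv int diff}, and (i) itself, now supplied by Lemma \ref{lem: M N relation at zeta_j general}. I see no further difficulty.

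If the statement is instead meant to cover part (iv) in general, I would argue by approximation. I would use the norm resolvent convergence of Lemma \ref{lem: Lu norm resolv conv} for $u_n\to u$. An eigenvalue $\eta$ of $L_u$ is then a limit of eigenvalues of $L_{u_n}$, which equal $\eta(\zeta_j\supn)$ by Lemma \ref{lem: disc spec a=0 relation, cpt}. Lemma \ref{lem: zeta^n_ji convergence} together with Lemma \ref{lem: M1n convergence} would identify the limit as some $\eta_j$. Simplicity, hence $\varphi=C\varphi_j$, would follow from convergence of the spectral projections, together with the simplicity of the approximating eigenvalues given by Lemma \ref{lem: a' for cpt u}.
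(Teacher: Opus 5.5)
Your argument for (iii) is correct and is exactly the paper's route for general $u$: once Lemma~\ref{lem: M N relation at zeta_j general} gives (i), the paper simply observes that the proof of Lemma~\ref{lem: phi N relation} never used $u\in C_0^\infty$.

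The lemma bearing this label in the paper is actually a mislabeled proof of (iv), by the approximation argument you sketch at the end. For the simplicity part of that sketch you also need $k_j=1$ from Lemma~\ref{lem: a' general}, so that $a\supn$ has exactly one zero near $\zeta_j$ and $L_{u_n}$ has exactly one eigenvalue in the isolating interval around $\eta$. Simplicity of the zeros of $a\supn$ alone only shows that the eigenspace of $\eta$ has dimension $k_j$.
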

\begin{proof}
    Without loss of generality, assume the approximating sequence $u_n$ of $u$ satisfy $\|u_n\|_{L^\infty}\le \|u\|_{L^\infty}$. By Lemma \ref{lem: ess spec of Lu} and \eqref{eq: semi-bdd}, $\sigma_{\text{disc}}(L_{u})\subset (-\infty,-\frac1{2e})$ and is close to $-\frac1{2e}$ as $\|u\|_{L^\infty}$ is small. Note that $\zeta\mapsto -\zeta e^{-2\zeta}$ maps $D_\delta^+\cap \{\imag \zeta>0\}$ biholomorphically to an open set around $-\frac1{2e}$ containing $\sigma_{\text{disc}}(L_u)$. Let $\eta\in \sigma_{\text{disc}}(L_u)$ with eigenspace $E_\eta$. Denote the dimension of $E_\eta$ by $k$. By Lemma \ref{lem: Lu norm resolv conv}, $L_{u_n}\to L_u$ in the norm resolvent sense. Let $(a,b)$ be any small interval containing only the eigenvalue $\eta$. By Theorem VIII.23 of \cite{reed1980methods}, $a,b\notin \sigma(L_{u_n})$ for $n$ sufficiently large, and $P_{(a,b)}(L_{u_n})-P_{(a,b)}(L_u)\to 0$ in operator norm. It follows that the dimension of the range of $P_{(a,b)}(L_{u_n})$ equals $k$ for $n$ sufficiently large. Let $\zeta$ be the unique preimage in $D_\delta^+\cap \{\imag \zeta>0\}$ such that $-\zeta e^{-2\zeta}=\eta$. If $\zeta\ne \zeta_j$ for any $j$, there exists a small disc $U$ around $\zeta$ such that $a$ has no zero in $\overline U$. Since $a\supn \to a$ uniformly on $\overline U$, the same is true for $a\supn$ with $n$ sufficiently large. By Lemma \ref{lem: disc spec a=0 relation, cpt}, $L_{u_n}$ has no eigenvalues in some fixed open neibhorhood of $\eta$ for all $n$ sufficiently large.  This contradicts the above assertion that the projection $P_{(a,b)}(L_{u_n})$ has dimension $k$. Thus $\eta=\eta_j=-\zeta_j e^{-2\zeta_j}$ for some $j$, and $\varphi_j\in E_\eta=E_{\eta_j}$. Take $U$ small enough so that $\zeta_j$ is the only zero of $a$ in $\overline U$. The same is true for $a\supn$ with $n$ sufficiently large. Again by Lemma \ref{lem: disc spec a=0 relation, cpt}, there is only one simple eigenvalue of $L_{u_n}$ in some fixed open neighborhood of $\eta$ for all $n$ sufficiently large. In other words, the projection $P_{(a,b)}(L_{u_n})$ is one dimensional for some $(a,b)$. It follows that $k=1$, i.e. $E_\eta$ is one dimensional. Thus any eigenvector must be a multiple of $\varphi_j$.
\end{proof}

\subsection{Finiteness of the discrete spectrum}
\label{sec: finite discrete spec}
    In  this section, we show that the discrete spectrum of $L_u$ is finite by assuming $u\in L^1_2\cap L^\infty$ and is real-valued, but without a smallness assumption.
    We write $\eta = -\frac1{2e} -\mu^2$ for $\mu>0$. The operator $L_0 -\eta$ is a positive operator and has a positive square root for any $\mu>0$:
\begin{equation}
	\label{A}
		A=A(\mu) = (L_0-\eta)^{\frac12}=(L_0 +\tfrac1{2e} + \mu^2)^{\frac12}=\left( De^{2D} + \tfrac{1}{2e} + \mu^2 \right)^\frac12.
\end{equation}
$A(\mu)$ has a bounded inverse
$$ A^{-1}=A^{-1}(\mu) = \left(D e^{2D} + \tfrac{1}{2e} + \mu^2 \right)^{-\frac12}. $$
We now factor
$$ L_u-\eta = A\left( 1 - (A^{-1} e^D) \tilde u (e^D A^{-1}) \right) A$$
and obtain a corresponding factorization of the resolvent
$$ (L_u-\eta)^{-1} = 
		A^{-1} 
			\left( 1 - C\right)^{-1}
		A^{-1}
$$
where
\begin{equation} 
C=C(\mu) = \left(A^{-1}(\mu) e^D\right) \tilde u\left( e^D A^{-1}(\mu)\right) 
\end{equation}
is a self-adjoint, compact operator on $L^2(\R)$ if $u\in L^p$ for some $p\in (1,\infty)$. Compactness follows from the factorization
\begin{equation}\label{eq: C decomp}
    C(\mu) = C_1(\mu)C_2(\mu)=\left((A^{-1}e^D)\tilde u^{\frac12}\right)\left(|\tilde u|^{\frac12}(e^D A^{-1})\right),
\end{equation}
where $u^{\frac12}=\sgn(u)|u|^{\frac12}$,
and Theorem 4.1 in Chapter 4 of \cite{Simon:2005}. In fact, the symbol of $e^D A^{-1}$ belongs to $L^{2p}$ for any $p>1$, and so does $|u|^{\frac12}$ provided $u \in L^{p}$. As a result $C_2(\mu)$ belongs to the trace ideal $\mathcal I_{2p}$. $C_1(\mu)$ can be analyzed similarly by duality.

We now state the following Birman-Schwinger type principle for discrete eigenvalues of $L_u$. 

\begin{theorem}[Birman-Schwinger Principle]
\label{thm:BS}
Suppose that $u \in L^p\cap L^\infty$ for some $p \in [1,\infty)$. A number $\eta=-\tfrac{1}{2e}-\mu^2$ is an $L^2$-eigenvalue of $L_u$ if and only if $1$ is an eigenvalue of the operator $C(\mu)$. In this case, the dimensions of the eigenspaces are the same. If $u(x)\le 0$, then $L_u$ has no eigenvalues below $\tfrac{1}{2e}$. 
\end{theorem}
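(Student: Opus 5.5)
The plan is the standard Birman--Schwinger argument, using the factorization $L_u-\eta = A(1-C)A$ and transporting eigenvectors back and forth through $A$ and $e^DA^{-1}$.

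\emph{Forward direction.} Let $L_u\varphi=\eta\varphi$ with $\varphi\in D(L_u)$. By Lemma \ref{lem: domain of Lu} we have $e^D\varphi\in H^1$ and
\[
(De^{2D}-\eta)\varphi=e^D\tilde u e^D\varphi
\]
in the weak sense, tested against $e^D\upsilon$ with $\upsilon\in Q$. Set $g=A\varphi$. Since $\varphi\in Q$ and $A^2\le C(1+|D|)e^{2D}+C$ on the Fourier side, $g\in L^2$. Applying $A^{-1}$ to the equation in Fourier variables gives
\[
g=A^{-1}e^D\tilde u\,e^DA^{-1}g=C(\mu)g.
\]
This manipulation is legitimate because the symbol of $e^DA^{-1}$ is bounded. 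Moreover $g\neq0$, since $A$ is injective.

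\emph{Reverse direction.} Let $C g=g$ with $g\in L^2$, and set $\varphi=A^{-1}g$. Then $e^D\varphi=e^DA^{-1}g\in L^2$. From $g=A^{-1}e^D(u\,e^D\varphi)$ we get $A\varphi=A^{-1}e^D(u e^D\varphi)$, hence
\[
A^2\varphi=e^D(ue^D\varphi).
\]
We now verify the domain conditions of Lemma \ref{lem: domain of Lu}. The first is $e^D\varphi\in H^1$: on the Fourier side, $|\xi|e^{\xi}|\widehat\varphi|\lesssim |\xi| e^{\xi}\,|\widehat g|/(|\xi|^{1/2}e^{\xi})$ at high positive frequency. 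Bootstrapping with the identity $\widehat{e^D\varphi}=\widehat{ue^D\varphi}/(\xi-\eta e^{-2\xi})$ and $u\in L^\infty$ then gives $\xi\widehat{e^D\varphi}\in L^2$, as in the proof of Lemma \ref{lem: domain of Lu}. The second condition is $e^{-D}(\eta\varphi)\in L^2$, which holds because $\xi e^{2\xi}-\eta\ge\mu^2$ gives $e^{-\xi}|\widehat\varphi|\lesssim|\widehat g|$ for $\xi<0$. Identity \eqref{eq: Lu phi = psi} with $\psi=\eta\varphi$ is then exactly $A^2\varphi=e^D(ue^D\varphi)$ rewritten. Hence $L_u\varphi=\eta\varphi$.

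\emph{Dimensions.} The maps $\varphi\mapsto A\varphi$ and $g\mapsto A^{-1}g$ are mutually inverse injective linear maps between the two eigenspaces, so the eigenspaces have equal dimension. The eigenspace of $C$ is finite dimensional because $C$ is compact by \eqref{eq: C decomp}.

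\emph{Sign statement.} Suppose $u\le0$. Then
\[
\langle Cg,g\rangle=\int u\,|e^DA^{-1}g|^2\,dx\le0,
\]
so $1$ cannot be an eigenvalue of $C$. Alternatively, the quadratic form gives $q_u(\varphi,\varphi)\ge\langle L_0\varphi,\varphi\rangle\ge-\frac1{2e}\|\varphi\|^2$, so there are no eigenvalues below $-\frac1{2e}$; the ``$\tfrac1{2e}$'' in the statement is presumably a typo for $-\tfrac1{2e}$.

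\emph{Main obstacle.} The only delicate step is the domain bookkeeping in the reverse direction. The relation $A^{-1}e^D(\dots)$ must be interpreted on $L^2$ functions whose Fourier transforms may grow like $e^{\xi}$. I would handle this by working entirely on the Fourier side with the explicit symbols $(\xi e^{2\xi}-\eta)^{\pm1/2}$ and $e^{\xi}$, using the bounds $\xi e^{2\xi}-\eta\ge\mu^2$ and $e^{2\xi}\le(\xi e^{2\xi}-\eta)/\mu^2+C$ for large $\xi$.
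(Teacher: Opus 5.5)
Your proof has the same structure as the paper's: the factorization $L_u-\eta=A(1-C)A$, the maps $\varphi\mapsto A\varphi$ and $g\mapsto A^{-1}g$ between the two eigenspaces, and the form bound for $u\le 0$. Your extra argument via $\langle Cg,g\rangle=\int u\,|e^DA^{-1}g|^2\,dx\le 0$ is also correct, since $A^{-1}e^D$ has a real, positive, bounded symbol and is therefore self-adjoint. You are also right that $\tfrac1{2e}$ in the statement should read $-\tfrac1{2e}$.

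The real difference is in the reverse direction. The paper uses only $A\varphi\in L^2$, i.e.\ $\varphi\in Q$, and then checks $q_u(\chi,\varphi)=\eta\langle\chi,\varphi\rangle$ for all $\chi\in Q$. The form characterization of $D(L_u)$ then gives $L_u\varphi=\eta\varphi$ without proving $e^D\varphi\in H^1$ or $e^{-D}\varphi\in L^2$. You instead verify the explicit domain description of Lemma \ref{lem: domain of Lu}. That is legitimate, but it forces you to prove this extra regularity.

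One of your justifications there is wrong. You claim that $\xi e^{2\xi}-\eta\ge\mu^2$ gives $e^{-\xi}|\widehat\varphi|\lesssim|\widehat g|$ for $\xi<0$. This is false: $\widehat\varphi=\widehat g/(\xi e^{2\xi}-\eta)^{1/2}$, and as $\xi\to-\infty$ the denominator tends to $\sqrt{-\eta}$ while $e^{-\xi}\to\infty$. Knowing only $A\varphi=g\in L^2$ says nothing about $e^{-D}\varphi$.

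The fix is the eigen-identity you already derived, $(\xi e^{2\xi}-\eta)\widehat\varphi=e^{\xi}\,\widehat{ue^D\varphi}$. It gives $e^{-\xi}|\widehat\varphi|\le\mu^{-2}|\widehat{ue^D\varphi}|$, and $ue^D\varphi\in L^2$ because $e^D\varphi=e^DA^{-1}g\in L^2$ and $u\in L^\infty$.

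The same identity makes your ``bootstrapping'' for $e^D\varphi\in H^1$ a one-line bound. The ratio $\xi/(\xi-\eta e^{-2\xi})$ is bounded on $\R$: by $1$ for $\xi\ge 0$ and by $(2e\mu^2)^{-1}$ for $\xi<0$. Hence $|\xi\,\widehat{e^D\varphi}|\lesssim|\widehat{ue^D\varphi}|$ directly. With these corrections your argument is complete.
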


\begin{proof} Suppose that $L_u \varphi=\eta\varphi$ for $\eta$ of the above form. Since $\varphi \in D(L_u)$, we have $|D|^\frac12 e^D u \in L^2(\R)$ and hence $A(\mu) \varphi \in L^2(\R)$. We may compute
$$
	A^{-1}(L_u-\eta)\varphi =  \left( 1 - (A^{-1} e^D) \tilde u (e^D A^{-1}) \right) A \varphi = 0
$$	
so that
$$ A \varphi = (A^{-1} e^D) \tilde u (e^D A^{-1})A\varphi$$
or
$$ \psi = C(\mu) \psi, $$
where $\psi = A(\mu) \varphi$. 

On the other hand, if $\psi \in L^2(\R)$ with $\psi = C(\mu) \psi$, let $\varphi=A^{-1}\psi$. Then $\varphi \in L^2(\R)$ and $A(\mu)\varphi \in L^2(\R)$ so that $\varphi\in Q(q_u)$. From the equation for $\psi$, we compute
	\begin{equation*}
		\varphi = (A^{-2} e^D)\tilde u e^D \varphi
	\end{equation*}
	so that for any $\chi \in Q(q_u)$, 
	\begin{align*}
		\langle |D|^\frac12 e^D\chi,&|D|^\frac12 \sgn (D) e^D \varphi \rangle
			- \eta \langle \chi, \varphi \rangle \\ 
			&= \langle |D|^\frac12 e^D\chi, |D|^\frac12 \sgn(D) (A^{-2} e^D) u e^D \varphi \rangle - \eta\langle \chi,(A^{-2} e^D) u e^D \varphi \rangle\\
            &=\langle e^D\chi, \left((L_0-\eta)A^{-2}\right)ue^D\varphi\rangle\\
			&=\langle e^D \chi, u e^D \varphi  \rangle
	\end{align*}
	from which it follows that
	$$ q_u(\chi,\varphi) = \eta \langle \chi,\varphi \rangle $$
	or $L_u\varphi = \eta \varphi$. 
	
Finally, if $u(x)\le 0$ we have
$$ q_u(\varphi,\varphi) = \langle |D|^\frac12 e^D \varphi,|D|^\frac12 \sgn(D) e^D \varphi \rangle - \langle e^D \varphi, u e^D \varphi \rangle \geq -\frac{1}{2e} \langle \varphi,\varphi \rangle
$$
so that $L_u$ has no eigenvalue in $(-\infty,-\tfrac{1}{2e})$. 
\end{proof}

The following standard result reduces estimates on the number of discrete eigenvalues to the case for $u$ non-negative.
\begin{lemma}\label{lem: u positive}
    Let $u\in L^p\cap L^\infty$ for some $p\in[1,\infty)$ and be real-valued, and let $\mu>0$. Then the number of eigenvalues of $L_u$ below $-\frac1{2e}-\mu^2$ is no more than the number of of eigenvalues of $L_{u_+}$ below $-\frac1{2e}-\mu^2$. Here $u_+=\max(u,0)$ is the positive part of $u$.
\end{lemma}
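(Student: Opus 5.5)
The plan is to combine the Birman--Schwinger principle (Theorem \ref{thm:BS}) with the min--max principle applied to the compact self-adjoint operators $C(\mu)$. The key observation is that $C(\mu)$ depends monotonically on $u$ in the sense of quadratic forms. For any $\psi\in L^2$ we have
\begin{equation*}
\langle C(\mu)\psi,\psi\rangle=\langle \tilde u\, e^DA^{-1}\psi,e^DA^{-1}\psi\rangle\le \langle \tilde u_+\, e^DA^{-1}\psi,e^DA^{-1}\psi\rangle=\langle C_+(\mu)\psi,\psi\rangle,
\end{equation*}
where $C_+(\mu)$ is the operator $C(\mu)$ with $u$ replaced by $u_+$. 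This uses that $e^DA^{-1}$ is bounded on $L^2$ (its symbol $e^{\xi}(\xi e^{2\xi}+\tfrac1{2e}+\mu^2)^{-1/2}$ is bounded) and that $A^{-1}e^D$ is its adjoint.

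Step 1. By min--max, for each $\mu$ the number $n(\mu)$ of eigenvalues of $C(\mu)$ that are $>1$ (counted with multiplicity) satisfies $n(\mu)\le n_+(\mu)$, where $n_+(\mu)$ is the corresponding count for $C_+(\mu)$. The same inequality holds for the eigenvalues $\ge 1$.

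Step 2. Relate $n(\mu)$ to eigenvalues of $L_u$ below $-\frac1{2e}-\mu^2$. I will show that each positive eigenvalue of $C(\mu)$ is strictly increasing as $\mu$ decreases, and tends to $0$ as $\mu\to\infty$. Monotonicity follows because $A(\mu)^{-2}$ is strictly decreasing in $\mu^2$. Decay follows because $\|e^DA^{-1}(\mu)\|\to0$ as $\mu\to\infty$. For fixed $\mu_0$, the eigenvalues of $L_u$ below $\eta_0=-\frac1{2e}-\mu_0^2$ are the values $-\frac1{2e}-\mu^2$ with $\mu>\mu_0$. By Theorem \ref{thm:BS}, at each such $\mu$ the operator $C(\mu)$ has $1$ as an eigenvalue, with multiplicity equal to the dimension of the eigenspace of $L_u$.

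Continuity of $\mu\mapsto C(\mu)$ in operator norm follows from continuity of the symbol, uniformly in $\xi$. With this, a standard eigenvalue-crossing argument gives the identity: the number of eigenvalues of $L_u$ below $\eta_0$ (with multiplicity) equals the number of eigenvalues of $C(\mu_0)$ strictly greater than $1$. Each eigenvalue branch of $C(\mu)$ that exceeds $1$ at $\mu_0$ crosses the value $1$ exactly once on $(\mu_0,\infty)$, by strict monotonicity and decay. Conversely, each crossing produces an eigenvalue branch above $1$ at $\mu_0$.

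Step 3. Apply the identity from Step 2 to both $u$ and $u_+$ (Theorem \ref{thm:BS} applies to $u_+\in L^p\cap L^\infty$). Combined with Step 1, this gives the lemma.

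The main obstacle is the strict monotonicity in Step 2. To make it rigorous, I would not track individual eigenvalue branches. Instead I would use min--max directly: the counting function $N(\mu)=\#\{\text{eigenvalues of }C(\mu)>1\}$ is nonincreasing in $\mu$. This holds because $\langle C(\mu)\psi,\psi\rangle$ is nonincreasing in $\mu$ for every fixed $\psi$. Rewriting the form as $\langle \tilde u f,f\rangle$ with $f=e^DA^{-1}\psi$ would obscure this monotonicity, since $f$ itself depends on $\mu$. It is cleaner to phrase everything via the Birman--Schwinger form: $L_u$ has at least $m$ eigenvalues below $\eta$ iff there is an $m$-dimensional subspace on which $q_u-\eta<0$. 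The latter is equivalent to $\langle \tilde u\,e^D\varphi,e^D\varphi\rangle>\|A\varphi\|^2$ on that subspace. The inequality $u\le u_+$ then gives the comparison immediately.

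Given how much cleaner it is, I would likely adopt this variational form as the actual proof. It needs only that $Q$ is the common form domain for $u$ and $u_+$ and the min--max principle for $L_u$. It avoids all delicate continuity and crossing issues in $\mu$.
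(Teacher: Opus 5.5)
Your final argument is essentially the paper's proof. You count eigenvalues below $\eta=-\frac1{2e}-\mu^2$ via subspaces of the common form domain $Q$ and use $q_u(\varphi,\varphi)-q_{u_+}(\varphi,\varphi)=\langle (u_+-u)e^D\varphi,e^D\varphi\rangle\ge 0$; the paper uses the same inequality in the form min--max principle to get $\lambda_n(L_u)\ge\lambda_n(L_{u_+})$. You are right to abandon the Birman--Schwinger detour, because its stated justification is false: $\langle C(\mu)\psi,\psi\rangle=\int u\,|B_\mu\psi|^2$ with $B_\mu=e^DA^{-1}(\mu)$ is not monotone in $\mu$ for fixed $\psi$, even when $u\ge 0$, since changing $\mu$ changes the shape of $B_\mu\psi$ and not just its size; monotonicity of the eigenvalue counts only appears after substituting $\varphi=A^{-1}\psi$, which is exactly the variational argument you end with.
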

\begin{proof}
    Recall from Lemma \ref{lem: ess spec of Lu} that $-\frac1{2e}$ is the bottom of the essential spectrum. By the quadratic form version of the min-max principle (Theorem XIII.2 in \cite{RSIV:1978}),
    %% Slight change in code to avoid warning about "\atop" and also add space between sup and inf
    \begin{equation}
    \label{eq: min max}
        \lambda_n(L_u) = \sup_{V\in S_n} \,\,
                        \inf_{\substack{\varphi\perp V, \varphi\in Q(L_u)\\ \|\varphi\|=1} }
                        q_u(\varphi,\varphi)
    \end{equation}
    is the $n$-th smallest eigenvalue of $L_u$ below $-\frac1{2e}$ (counting multiplicity), or is $-\frac1{2e}$ if the number of eigenvalues below $-\frac{1}{2e}$ is less than $n$. In \eqref{eq: min max}, $S_n$ is the set of all $n$ dimesnional subspaces of $L^2$, and $Q(L_u)$ is the form domain of $L_u$.
    Note from the proof of Lemma \ref{lem: domain of Lu} that $L_u$ and $L_{u_+}$ have the same form domain. Since 
    \begin{equation}
        q_u(\varphi,\varphi)-q_{u_+}(\varphi,\varphi)=\langle (u_+-u)e^D\varphi, e^D\varphi\rangle \ge 0,
    \end{equation}
    we see that $\lambda_n(L_u)\ge \lambda_n(L_{u_+})$, and the result follows.
\end{proof}

By the above lemma, we may assume without loss of generality that $u\ge 0$ when estimating the number of discrete eigenvalues of $L_u$.
Recall that $C=C_1C_2$ by \eqref{eq: C decomp}. Since $C_1C_2$ has the same nonzero eigenvalues (counting multiplicities) as $C_2C_1$, we turn our problem to estimating the number of eigenvalues $\ge 1$ of 
\begin{equation}
    M(\mu)=C_2(\mu)C_1(\mu)=|\tilde u|^{\frac12} \left(e^{2D}A^{-2}\right)\tilde u^{\frac12}.
\end{equation}
We have 
\begin{lemma}\label{lem: e-val >=1}
    Assume $u\in L^p\cap L^\infty$ for some $p\in[1,\infty)$, and $u\ge 0$. Given $\mu>0$, the number of eigenvalues of $L_u$ (counting multiplicities)  $\le -\tfrac{1}{2e}-\mu^2$ is no more than the number of eigenvalues of $M(\mu)$ (counting multiplicities) $\ge 1$. 
\end{lemma}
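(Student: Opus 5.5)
The plan is to combine the Birman--Schwinger principle (Theorem \ref{thm:BS}) with a monotonicity argument in $\mu$, and then pass from $C(\mu)$ to $M(\mu)$ using the fact that $C_1C_2$ and $C_2C_1$ have the same nonzero spectrum. Since $u\ge 0$, we have $\tilde u^{\frac12}=|\tilde u|^{\frac12}$. Hence
\[
C(\mu)=(A^{-1}e^D)\tilde u(e^DA^{-1})=B^*B,\qquad B=|\tilde u|^{\frac12}e^DA^{-1},
\]
so $C(\mu)\ge 0$ is a compact self-adjoint operator. Likewise $M(\mu)=BB^*$ is nonnegative, compact and self-adjoint. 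The nonzero eigenvalues of $B^*B$ and $BB^*$ coincide with multiplicities, via the standard intertwining $B$ and $B^*$ between eigenspaces. It therefore suffices to show that the number of eigenvalues of $L_u$ in $(-\infty,-\frac1{2e}-\mu^2]$ is at most the number of eigenvalues of $C(\mu)$ that are $\ge 1$.

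The first step is monotonicity. The symbol of $e^{2D}A^{-2}(\mu)$ is $e^{2\xi}/(\xi e^{2\xi}+\frac1{2e}+\mu^2)$. This is positive and strictly decreasing in $\mu$. For $f\ne0$ we have $\langle C(\mu)f,f\rangle=\int e^{2\xi}(\xi e^{2\xi}+\frac1{2e}+\mu^2)^{-1}|\widehat{u^{1/2}\cdot}|^2$, where the appropriate adjoint form of $u^{\frac12}$ is inserted. Thus $\mu\mapsto\langle C(\mu)f,f\rangle$ is nonincreasing, and strictly decreasing whenever $Bf\ne0$. Continuity of $\mu\mapsto C(\mu)$ in operator norm, for $\mu>0$, follows from dominated convergence on the symbol together with the trace-ideal bound used to prove compactness.

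Now let $\mu_k(\nu)$ denote the eigenvalues of $C(\nu)$ in decreasing order, obtained by min-max. They are continuous and nonincreasing in $\nu$, and they tend to $0$ as $\nu\to\infty$, since $\|C(\nu)\|\lesssim\|u\|_{L^\infty}\sup_\xi e^{2\xi}/(\xi e^{2\xi}+\frac1{2e}+\nu^2)\to0$. Suppose $L_u$ has eigenvalues $-\frac1{2e}-\nu_1^2\le\dots\le-\frac1{2e}-\nu_N^2\le-\frac1{2e}-\mu^2$, counted with multiplicity. Then $\nu_j\ge\mu$.

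By Theorem \ref{thm:BS}, at each $\nu_j$ the operator $C(\nu_j)$ has $1$ as an eigenvalue, with multiplicity equal to that of the corresponding eigenvalue of $L_u$. The key counting claim is that each crossing of the level $1$ by an eigenvalue branch corresponds to a distinct branch. This uses the strict decrease: if $C(\nu)f=f$, then $Bf\ne0$, so $\langle C(\nu')f,f\rangle<1$ for $\nu'>\nu$. As a result, eigenvalue branches cross the level $1$ transversally and never return above it. Consequently the number of $k$ with $\mu_k(\mu)\ge1$ is at least the total multiplicity of the eigenvalue $1$ summed over all $\nu\in[\mu,\infty)$, which is $N$.

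The main obstacle is making this crossing-count rigorous. I would handle it with min-max directly, rather than by following individual branches. Set $n(\nu)=\#\{k:\mu_k(\nu)\ge1\}$. Let $\nu_1>\nu_2>\dots$ be the distinct crossing values in $[\mu,\infty)$, with multiplicities $m_j$. I would show that $n(\nu-0)\ge n(\nu+0)+m$ at a crossing of multiplicity $m$. To do this, take the span of the eigenvectors of $C(\nu)$ with eigenvalue $>1$ together with the eigenspace for eigenvalue $1$. By strict monotonicity, $C(\nu')$ exceeds $1$ on this span for $\nu'<\nu$ close to $\nu$. Summing these jumps from $\infty$ down to $\mu$, starting from $n=0$ for large $\nu$, gives $n(\mu)\ge N$.
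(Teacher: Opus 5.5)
Your overall plan is the paper's: Birman--Schwinger (Theorem~\ref{thm:BS}), transfer between $C(\mu)=B^*B$ and $M(\mu)=BB^*$, monotonicity in $\mu$ of the ordered eigenvalues via min--max, and a count showing that distinct eigenvalues of $L_u$ occupy distinct eigenvalue indices. The step that fails as written is the monotonicity, because you attach it to the wrong operator. For fixed $f$ one has $\langle C(\mu)f,f\rangle=\bigl\||\tilde u|^{1/2}e^{D}A^{-1}(\mu)f\bigr\|^2$, and here the $\mu$-dependence acts \emph{before} the multiplication by $|u|^{1/2}$. Its derivative is $-2\mu\,\real\langle A^{-2}g,\tilde u g\rangle$ with $g=e^{D}A^{-1}f$. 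Since $\tilde u$ and $A^{-2}$ do not commute, this has no sign in general: for $g=g_1+tg_2$ with $g_1$ supported where $u>0$ and $g_2$ where $u=0$, the cross term is linear in $t$. The integral you wrote, $\frac1{2\pi}\int e^{2\xi}\bigl(\xi e^{2\xi}+\frac1{2e}+\mu^2\bigr)^{-1}\bigl|\widehat{|u|^{1/2}h}(\xi)\bigr|^2\,d\xi$, is $\langle M(\mu)h,h\rangle$, not $\langle C(\mu)f,f\rangle$. So your counting rests on two vector-level statements that are not justified as stated: that $C(\nu)f=f$ forces $\langle C(\nu')f,f\rangle<1$ for $\nu'>\nu$, and that $C(\nu')\ge 1$ on your span for $\nu'<\nu$.

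The repair is free, since you already have equality of the nonzero spectra of $B^*B$ and $BB^*$ with multiplicities. Do all min--max arguments with $M(\mu)$, which is operator-monotone: $M(\nu')\le M(\nu)$ for $\nu'\ge\nu$, because $\frac{d}{d\mu}\langle M(\mu)\varphi,\varphi\rangle=-2\mu\langle e^{2D}A^{-4}\sqrt u\,\varphi,\sqrt u\,\varphi\rangle\le0$. This is exactly the paper's proof. It also shows that strictness, transversality (which strict decrease would not give anyway), norm continuity in $\mu$, and the telescoping from $\nu=\infty$ are all unnecessary. Write $\lambda_k(\nu)$ for the $k$-th largest eigenvalue of $M(\nu)$. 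If $-\frac1{2e}-\nu^2$ is an eigenvalue of $L_u$ of multiplicity $m$ with $\nu\ge\mu$, there is a block $I(\nu)$ of $m$ indices with $\lambda_k(\nu)=1$, so $\lambda_k(\mu)\ge1$ for $k\in I(\nu)$. Moreover $I(\nu)\cap I(\nu')=\emptyset$ for $\nu\ne\nu'$. Otherwise monotonicity forces $\lambda_k\equiv1$ on the interval between them, which by Theorem~\ref{thm:BS} gives a continuum of eigenvalues of $L_u$ below $-\frac1{2e}$, contradicting discreteness there. The same discreteness supplies the half of your jump inequality you left implicit, namely $n(\nu+0)\le\#\{k:\lambda_k(\nu)>1\}$.
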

\begin{proof}
    Let $\lambda_n(\mu)$ be the $n$-th largest eigenvalue of $M(\mu)$, counting multiplicity. Note that $M(\mu)$ is positive semi-definite, and we have the following min-max principle:
    \begin{equation}
        \lambda_n(\mu)=\max_{V\in S_n}\min_{\varphi\in V, \|\varphi\|_{L^2}=1}\langle M(\mu)\varphi,\varphi\rangle.
    \end{equation}
    Here $S_n$ is the set of $n$ dimensional subspaces of $L^2$. Recalling \eqref{A}, we have 
    \begin{equation}
        \frac{d}{d\mu}\langle M(\mu)\varphi,\varphi\rangle=-2\mu\langle e^{2D}A^{-4}\sqrt u \varphi,\sqrt u\varphi\rangle\le 0.
    \end{equation}
    It follows that $\lambda_n(\mu)$ is a decreasing function of $\mu$. By Theorem \ref{thm:BS}, if $\mu_1\ge \mu$ and $-\tfrac1{2e}-\mu_1^2$ is an eigenvalue of $L_u$ of multiplicity $k$, then $1$ is an eigenvalue of $C(\mu_1)=C_1(\mu_1)C_2(\mu_1)$ of multiplicity $k$. Thus $1$ is an eigenvalue of $M(\mu_1)=C_2(\mu_1)C_1(\mu_1)$ of multiplicity $k$, and there exists a unique $n$ such that $\lambda_n(\mu_1)=\dots=\lambda_{n+k}(\mu_1)=1$. It follows that $\lambda_m(\mu)\ge \lambda_m(\mu_1)=1$ for all $m=n,\dots n+k$. Thus $M(\mu)$ has $k$ eigenvalues $\ge 1$. It remains to show that distinct eigenvalues of $L_u$ below $-\frac1{2e}-\mu^2$ give rise to different values of $n$ in $\lambda_n(\mu)$ by the above procedure. In fact, if $\mu_2>\mu_1\ge \mu$ and $-\tfrac{1}{2e}-\mu_1^2$ and $-\tfrac{1}{2e}-\mu_2^2$ are both eigenvalues of $L_u$, and if for the same $n$ one has $\lambda_n(\mu_1)=\lambda_n(\mu_2)=1$, then $\lambda_n(\mu_3)=1$ for any $\mu_3\in (\mu_1,\mu_2)$ due to the monotonicity of $\lambda_n$. Reversing the above arguments, this means that $-\tfrac{1}{2e}-\mu_3^2$ is an eigenvalue of $L_u$ for all $\mu_3\in (\mu_1,\mu_2)$. This contradicts the fact that $L_u$ only has discrete eigenvalues below $-\tfrac{1}{2e}$.
\end{proof}

Next we decompose $M(\mu)$ into a singular and a regular part.
\begin{proposition}
    Let $u\in L^1_2\cap L^\infty$ and $u\ge 0$. Then for every $\mu>0$, $M(\mu)\in \mathcal I_2$, and it has a decomposition
    \begin{equation}
        M(\mu)=\frac{L(\mu)}\mu+ R(\mu^2),
    \end{equation}
    where $R(\kappa)$ is an $\mathcal I_2$-valued holomorphic function at $\kappa=0$, and $L(\mu)$ satisfies 
    \begin{equation}\label{eq: L O(mu)}
        \|L(\mu)-L(0)\|_{\mathcal I_2}\lesssim \mu
    \end{equation} 
    for $\mu$ near 0, where $L(0)$ is the integral operator with kernel
    \begin{equation}
        K_{L(0)}=\frac{\sqrt{u(x)u(y)}}{2\sqrt e}e^{-\frac12 i(x-y)}.
    \end{equation}
\end{proposition}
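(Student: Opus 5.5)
The plan is to write $M(\mu)$ as an integral operator and split its Fourier-side symbol into a singular part and a regular part. The operator $e^{2D}A^{-2}(\mu)$ is the Fourier multiplier with symbol
\[
m_\mu(\xi)=\frac{e^{2\xi}}{\xi e^{2\xi}+\tfrac1{2e}+\mu^2}.
\]
Hence $M(\mu)$ has kernel $\sqrt{u(x)}\,k_\mu(x-y)\sqrt{u(y)}$ with $k_\mu=\mathcal F^{-1}m_\mu$. The denominator $f(\xi)=\xi e^{2\xi}+\tfrac1{2e}$ vanishes only at $\xi_0=-\tfrac12$, where it has a nondegenerate minimum:
\[
f(\xi_0)=0,\qquad f'(\xi_0)=0,\qquad f''(\xi_0)=2e^{-1}.
\]
So $f(\xi)+\mu^2\approx e^{-1}(\xi+\tfrac12)^2+\mu^2$ near $\xi_0$. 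As $\xi\to-\infty$, the symbol $m_\mu$ behaves like $e^{2\xi}/(\tfrac1{2e}+\mu^2)$, which is harmless. As $\xi\to+\infty$, $m_\mu\sim 1/\xi$, which is only $L^2$ and not $L^1$; this is why the result is stated in $\mathcal I_2$ rather than as a pointwise kernel bound.

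The singular part comes from the quadratic model at $\xi_0$. Let $\chi$ be a smooth cutoff equal to $1$ near $\xi_0$, supported in a small neighborhood of $\xi_0$. Set
\[
s_\mu(\xi)=\frac{e^{-1}}{e^{-1}(\xi+\tfrac12)^2+\mu^2},\qquad \mathcal F^{-1}s_\mu(x)=\frac{1}{2\sqrt e\,\mu}\,e^{-\sqrt e\,\mu|x|}e^{-\frac i2x},
\]
using $e^{2\xi_0}=e^{-1}$. Define $L(\mu)/\mu$ as the integral operator with kernel $\sqrt{u(x)u(y)}\,\mathcal F^{-1}s_\mu(x-y)$. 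Then $L(0)$ is exactly the stated rank-one kernel. Using $|e^{-\sqrt e\mu|x-y|}-1|\le\sqrt e\,\mu|x-y|$, the Hilbert--Schmidt norm of $L(\mu)-L(0)$ is bounded by
\[
\mu\Big(\iint u(x)u(y)|x-y|^2\,dx\,dy\Big)^{1/2}\lesssim\mu\,\|u\|_{L^1_1}.
\]
This gives \eqref{eq: L O(mu)}; it uses $u\in L^1_1$, which is implied by $u\in L^1_2$.

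The regular part is $R(\mu^2)$, the operator with symbol $r_\kappa=m_\mu-s_\mu$, $\kappa=\mu^2$. I will show that $r_\kappa$ extends holomorphically to $\kappa$ near $0$, with values in a class of symbols whose kernels $\sqrt u\,\check r_\kappa(x-y)\sqrt u$ are Hilbert--Schmidt. I will split $r_\kappa=\chi r_\kappa+(1-\chi)m_\mu-(1-\chi)s_\mu$.

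The pieces away from $\xi_0$ are easy. Off $\operatorname{supp}\chi$, the denominator $f+\kappa$ stays bounded away from zero for $|\kappa|$ small, so $(1-\chi)m_\mu$ and $(1-\chi)s_\mu$ are holomorphic in $\kappa$. Each is a sum of an $L^2$ piece (the $1/\xi$ tail at $+\infty$, and $s_\mu$ itself) and an $L^1$ piece (the $e^{2\xi}$ decay at $-\infty$). An $L^2$ symbol gives a kernel in $L^2$, hence a bounded function $\check r$, and $\sqrt u\,\check r\,\sqrt u$ is then Hilbert--Schmidt with norm $\lesssim\|u\|_{L^1}\|r\|_{L^2}$. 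An $L^1$ symbol gives a bounded kernel and is handled the same way.

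The local piece $\chi r_\kappa$ is the main obstacle. Write
\[
e^{2\xi}(f+\kappa)^{-1}-e^{-1}(q+\kappa)^{-1},\qquad q(\xi)=e^{-1}(\xi+\tfrac12)^2 .
\]
Put everything over the common denominator $(f+\kappa)(q+\kappa)$. The numerator is
\[
e^{2\xi}(q+\kappa)-e^{-1}(f+\kappa)=O(|\xi-\xi_0|^3)+O(\kappa|\xi-\xi_0|),
\]
because $e^{2\xi}-e^{-1}=O(\xi-\xi_0)$ and $q-f=O((\xi-\xi_0)^3)$. So near $\xi_0$ the difference behaves like
\[
\frac{t^3+\kappa t}{(t^2+\kappa)^2},\qquad t=\xi-\xi_0 .
\]
The dangerous part is odd in $t$ to leading order, with size about $1/t$. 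This is not integrable in $t$, but it is a principal-value singularity, and the exact $\kappa$-dependence is not holomorphic at $\kappa=0$.

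To handle this I will first try to subtract one more explicit correction of the form $c\,t\,(t^2+\kappa)^{-1}$, or equivalently use the next Taylor coefficient of $1/m_\mu$ at $\xi_0$. Its kernel is $\operatorname{sgn}(x)e^{-\sqrt\kappa|x|}e^{-ix/2}$, which I expand as $\operatorname{sgn}(x)\big(1-\sqrt\kappa|x|+\dots\big)$. The odd powers of $\sqrt\kappa$ produce the terms $|x|^{2m+1}\operatorname{sgn}(x)$. The odd terms in $\mu$ would be absorbed into $L$, keeping the $O(\mu)$ bound, while the even powers are polynomial in $x-y$ and holomorphic in $\kappa$. Kernels that grow like $|x-y|^2$ are Hilbert--Schmidt against $\sqrt{u(x)u(y)}$ exactly when $u\in L^1_2$, and this is where the $L^1_2$ hypothesis is needed.

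After these explicit subtractions, the remaining local symbol will be shown to be a smooth function of $(t,\kappa)$, using a change of variables $t\mapsto\tau$ with $f=e^{-1}\tau^2$ (Morse lemma) at the critical point. It will then be holomorphic in $\kappa$ with values in $L^1$, so its kernel is bounded and the resulting operator is in $\mathcal I_2$.

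Finally, $M(\mu)\in\mathcal I_2$ for each fixed $\mu>0$ follows directly: for fixed $\mu>0$ the symbol $m_\mu$ is in $L^2$ with kernel in $L^2$, and $u\in L^1$.
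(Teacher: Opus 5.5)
Your identification of the leading singularity is correct: the quadratic model $s_\mu$ has kernel $\frac{1}{2\sqrt e\,\mu}e^{-\sqrt e\mu|x|}e^{-ix/2}$, which reproduces $L(0)$, and the parts of the symbol away from $\xi_0=-\frac12$ are harmless. The genuine gap is the construction of a holomorphic $R(\kappa)$.

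The non-holomorphic $\kappa$-dependence of $m_\mu$ sits in its two poles, and these are not the model's poles. With $t=\xi+\frac12$ one has $e\,(f+\kappa)=t^2+\frac43t^3+t^4+\dots+e\kappa$. So the true poles are $t_\pm=\pm i\sqrt e\,\mu+\frac23e\mu^2+O(\mu^3)$, whereas $s_\mu$ and $t/(t^2+e\kappa)$ have poles at $\pm i\sqrt e\,\mu$. Expanding in the regime $t\sim\sqrt\kappa$, the remainder $m_\mu-s_\mu-c\,t/(t^2+e\kappa)$ has leading part $(\frac23-c)\frac{t}{t^2+e\kappa}+\frac43\frac{e\kappa\, t}{(t^2+e\kappa)^2}$. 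For every constant $c$ this is of size $\kappa^{-1/2}$. The last term alone has kernel $\frac i3\sqrt{e\kappa}\,x\,e^{-\sqrt{e\kappa}|x|}e^{-ix/2}$, which is odd in $\mu$, and terms of the same type recur at every order.

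So your claim that after finitely many explicit subtractions the local symbol is smooth in $(t,\kappa)$ is false. A Morse change of variables does not rescue it: it places the poles at $\pm i\sqrt{e\kappa}$ only in the new variable $\tau$, while the Fourier phase $e^{ix\xi}$ is not linear in $\tau$.

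Your fallback also fails in $\mathcal I_2$. You propose to expand kernels like $\sgn(x)e^{-\sqrt\kappa|x|}$ in powers of $\sqrt\kappa$ and sort the terms between $L$ and $R$. But the Hilbert--Schmidt norm of $\sqrt{u(x)}\,k(x-y)\sqrt{u(y)}$ is $\big(\iint|k(x-y)|^2u(x)u(y)\,dx\,dy\big)^{1/2}$. A kernel of size $|x-y|^n$ therefore needs $u\in L^1_{2n}$, not $L^1_n$. Under $u\in L^1_2$, already the $\kappa\,x^2\sgn(x)$ term is in general not Hilbert--Schmidt. Nor can the series be resummed: in closed form its even part $\sgn(x)\cosh(\sqrt\kappa\,x)$ grows exponentially. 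The same miscount affects your bound for $L(\mu)-L(0)$: $\iint u(x)u(y)|x-y|^2$ is controlled by $\|u\|_{L^1}\|u\|_{L^1_2}$, not by $\|u\|_{L^1_1}$.

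The missing idea is to subtract the exact pole contributions rather than a model. The paper shifts the contour in $h(x,\mu)=\frac1{2\pi}\int_\R e^{ix\xi}e^{2\xi}\big(\xi e^{2\xi}+\frac1{2e}+\mu^2\big)^{-1}d\xi$ to $\R\pm i\pi$ for $\pm x>0$. In $|\imag\xi|\le\pi$ the denominator vanishes only at $\zeta_\pm=-\frac12\pm i\sqrt e\mu+O(\mu^2)$, so $h=\pm\frac{i}{1+2\zeta_\pm}e^{i\zeta_\pm x}+r(x,\mu^2)$. Here $r$ is an integral over $\R\pm i\pi$ whose denominator $(\xi\pm i\pi)e^{2\xi}+\frac1{2e}+\kappa$ stays away from $0$ for small complex $\kappa$. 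Hence $r$ is manifestly holomorphic in $\kappa$ with uniformly bounded $L^2$ norm, and $R(\kappa)$ is holomorphic in $\mathcal I_2$.

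The exact residue term defines $L(\mu)$ through $l(x,\mu)=\pm\frac{i\mu}{1+2\zeta_\pm}e^{i\zeta_\pm x}$. Since $|\zeta_\pm+\frac12|\lesssim\mu$ and $\pm\imag\zeta_\pm>0$, one gets $|l(x,\mu)-l(x,0)|\lesssim\mu\langle x\rangle$, which yields $\|L(\mu)-L(0)\|_{\mathcal I_2}\lesssim\mu$ from $u\in L^1_2$. A Fourier-side equivalent would be Weierstrass division of $e^{2\xi}/(f+\kappa)$ by the exact quadratic $(t-t_+)(t-t_-)$.

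Finally, an $L^2$ symbol gives an $L^2$ kernel, not a bounded one; the $1/\xi$ tail produces a logarithmic singularity at $x=0$. Your Hilbert--Schmidt bounds for those pieces, and for $M(\mu)$ itself, should therefore use $\iint|k(x-y)|^2u(x)u(y)\le\|k\|_{L^2}^2\|u\|_{L^1}\|u\|_{L^\infty}$, i.e.\ the hypothesis $u\in L^\infty$.
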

\begin{proof}
    $e^{2D}A^{-2}(\mu)$ is a convolution operator with an $L^2$ kernel 
    \begin{equation}\label{eq: h def}
        h(x,\mu)=\frac{1}{2\pi}\int_\R \frac{e^{2\xi}e^{ix\xi}}{\xi e^{2\xi}+\frac1{2e}+\mu^2}~d\xi.
    \end{equation}
    One obviously has 
    \begin{equation}
        \|M(\mu)\|_{\mathcal I_2}^2\le \|h\|_{L^2}^2\|u\|_{L^1}\|u\|_{L^\infty}. 
    \end{equation}
    Thus $M(\mu)\in \mathcal I_2$. We now shift the contour of integration in \eqref{eq: h def} from $\R$ to $\R\pm i\pi$ when $\pm x>0$. Note that $\xi e^{2\xi}+\tfrac1{2e}+\mu^2$ has exactly two zeros in the strip $-\pi\le \imag \xi\le\pi$ that are complex conjugates of each other. For $\mu>0$ small, they are given by 
    \begin{equation}\label{eq: z pm}
        \zeta_\pm = -\frac12\pm i\sqrt{e}\mu + \mathcal O(\mu^2).
    \end{equation}
    After shifting the contour and accounting for the pole, we get for $\pm x>0$
    \begin{align}
        h(x,\mu)&=\pm \frac{ie^{i\zeta_\pm x}}{1+2\zeta_\pm}+\frac{e^{\mp \pi x}}{2\pi}\int_\R \frac{e^{ix\xi}e^{2\xi}}{(\xi\pm i\pi) e^{2\xi}+\frac1{2e}+\mu^2}~d\xi\notag\\
        &:=\frac{l(x,\mu)}{\mu}+r(x,\mu^2).
    \end{align}
    Note that for $\pm x>0$,
    \begin{equation}
        l(x,\mu)=\pm \frac{i\mu}{1+2\zeta_\pm}e^{i\zeta_\pm x},
    \end{equation}
    and
    \begin{equation}
        l(x,0)=\frac{1}{2\sqrt e}e^{-\frac12i x}
    \end{equation}
    by \eqref{eq: z pm}. We also have
    \begin{equation}
        |l(x,\mu)-l(x,0)|\lesssim \mu \langle x\rangle 
    \end{equation}
    by the elementary estimate $|e^{-\mu|x|}-1|\le \mu|x|$. Letting $L(\mu)$ be the integral operator with kernel $l(x-y,\mu)\sqrt{u(x)u(y)}$, we get 
    \begin{align}
        \|L(\mu)-L(0)\|_{\mathcal I_2}^2&= \int_\R\int_\R |l(x-y,\mu)-l(x-y,0)|^2u(x)u(y)~dx~dy\notag\\
        &\lesssim \mu^2\int_\R\int_\R  (1+x^2+y^2)u(x)u(y)~dx~dy\notag\\
        &\lesssim \mu^2 \|u\|_{L^1_2}^2,
    \end{align}
    from which \eqref{eq: L O(mu)} follows. Similarly, we let $R(\kappa)$ be the integral operator with kernel $r(x-y,\kappa)\sqrt{u(x)u(y)}$. It is easy to see that $r(x,\kappa)$ has uniformly bounded $L^2$ norm for $\kappa\in \C $ near $0$. As a result, $R(\kappa)$ is bounded uniformly in $\mathcal I_2$ for $\kappa$ near 0. A standard argument involving the Cauchy integral formula on difference quotients shows that the $\mathcal I_2$-valued analyticity of $R(\kappa)$ follows from analyticity of the complex-valued functions $\langle \psi, R(\kappa)\varphi\rangle$ for any $L^2$ functions $\psi,\varphi$, which then follows by differentiation under the integral sign.
\end{proof}
Note that $L(0)$ is a rank 1 operator with eigenvalue $\lambda_0=\text{Tr}(L(0))=\frac{\|u\|_{L^1}}{2\sqrt e}$.
We use the decomposition in the above lemma to perturb $\lambda_0$ to an eigenvalue of $\mu M(\mu)=L(\mu)+\mu R(\mu^2)$. In the following, to remove clutter in the notation, we denote $L(0)$ by $L_0$ and $R(0)$ by $R_0$.

\begin{lemma}\label{lem: M e-val}
    Assume $u\in L^p\cap L^\infty$ for some $p\in[1,\infty)$, and $u\ge 0$. For every $\mu>0$ sufficiently close to $0$, there exists a unique eigenvalue $\lambda$ of $\mu M(\mu)$ in a small neighborhood of $\lambda_0$. Furthermore, $\lambda$ has the following expansion:
    \begin{equation}
        \lambda = \lambda_0+\frac{1}{\lambda_0}\text{Tr}\left[L_0(L-L_0+\mu R_0)\right]+\mathcal O(\mu^2).
    \end{equation}
\end{lemma}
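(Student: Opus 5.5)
The plan is analytic (Kato--Rellich type) perturbation theory for an isolated simple eigenvalue, applied to the operator family $\mu M(\mu)=L(\mu)+\mu R(\mu^2)$ on $L^2$. The preceding proposition gives $\mu M(\mu)\to L_0$ in $\mathcal I_2$, hence in operator norm: $\|L(\mu)-L_0\|_{\mathcal I_2}\lesssim\mu$ and $\|\mu R(\mu^2)\|_{\mathcal I_2}\lesssim \mu$. (We will use the decay hypotheses of that proposition, $u\in L^1_2\cap L^\infty$.) The operator $L_0$ has rank one, with kernel $\frac{1}{2\sqrt e}f(x)\overline{f(y)}$, where $f(x)=\sqrt{u(x)}e^{-\frac i2 x}$. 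Its spectrum is therefore $\{0,\lambda_0\}$, and $\lambda_0=\|u\|_{L^1}/(2\sqrt e)>0$ is a simple eigenvalue. The rank-one Riesz projection is $P_0=\lambda_0^{-1}L_0$, given by $P_0\varphi=\|f\|^{-2}\langle \varphi,f\rangle f$.

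First, fix a circle $\Gamma$ centered at $\lambda_0$ with radius $r<\lambda_0/2$. The resolvent $(L_0-z)^{-1}$ is uniformly bounded on $\Gamma$. Write $E(\mu)=\mu M(\mu)-L_0$, so that $\|E(\mu)\|\lesssim\mu$. A Neumann series gives that $(\mu M(\mu)-z)^{-1}$ exists for $z\in\Gamma$ once $\mu$ is small. The Riesz projection
\[
P(\mu)=-\frac{1}{2\pi i}\oint_\Gamma (\mu M(\mu)-z)^{-1}\,dz
\]
then satisfies $\|P(\mu)-P_0\|\lesssim\mu<1$, so $\operatorname{rank}P(\mu)=1$. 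Hence exactly one eigenvalue $\lambda$, of algebraic multiplicity one, lies inside $\Gamma$. This gives existence and uniqueness.

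For the expansion, use $\lambda=\operatorname{Tr}(\mu M(\mu)P(\mu))$. Expand the resolvent to second order:
\[
(L_0+E-z)^{-1}=R_z-R_zER_z+R_zER_zER_z(1+ER_z)^{-1},\qquad R_z=(L_0-z)^{-1}.
\]
Inserting this into $\lambda=-\frac1{2\pi i}\operatorname{Tr}\oint_\Gamma z(L_0+E-z)^{-1}dz$, the zeroth-order term gives $\lambda_0$. The first-order term gives the standard $\operatorname{Tr}(P_0E)$, since the residue of $zR_zER_z$ yields $P_0EP_0$ plus terms of trace zero. The remainder is $O(\|E\|^2)=O(\mu^2)$; the trace is controlled because $P_0$ has rank one, or equivalently because all operators involved are Hilbert--Schmidt.

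Finally, $\operatorname{Tr}(P_0E)=\lambda_0^{-1}\operatorname{Tr}\big(L_0(L(\mu)-L_0+\mu R(\mu^2))\big)$. Analyticity of $R$ at $0$ gives $\|R(\mu^2)-R_0\|_{\mathcal I_2}\lesssim\mu^2$, so replacing $\mu R(\mu^2)$ by $\mu R_0$ costs $O(\mu^3)$. This yields the stated formula, with $L=L(\mu)$.

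The main obstacle is making sure every error term is genuinely $O(\mu^2)$ in trace, rather than only in operator norm. The remedy is to work with $\operatorname{Tr}(\cdot\,P)$, where the rank-one projection $P_0$ (or $P(\mu)$) turns operator-norm bounds into trace bounds. One can also compute $\lambda$ directly as $\langle \mu M(\mu)\psi,\phi\rangle/\langle\psi,\phi\rangle$, using $P(\mu)f$ and $P(\mu)^*f$.
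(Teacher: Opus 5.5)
Your argument is correct, but it follows a different route from the paper.

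\textbf{The paper's route.} The paper never forms a Riesz projection. With $X=L-L_0+\mu R$ it factors
\[
1-\frac{\mu M(\mu)}{\lambda}=\Bigl(1-\frac{X}{\lambda}\Bigr)\Bigl(1-\bigl(1-\tfrac{X}{\lambda}\bigr)^{-1}\frac{L_0}{\lambda}\Bigr),
\]
and notes that the first factor is invertible for $\mu$ small and $\lambda$ near $\lambda_0$. Since $L_0$ has rank one, the condition $\det_2(1-\mu M/\lambda)=0$ reduces to the scalar equation $\lambda=\operatorname{Tr}\bigl(L_0(1-X/\lambda)^{-1}\bigr)$. Existence and uniqueness then come from a contraction argument, and the expansion from one step of the Neumann series, after replacing $1/\lambda$ by $1/\lambda_0$ and $R$ by $R_0$.

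\textbf{Your route.} You write $\lambda=\operatorname{Tr}(\mu M(\mu)P(\mu))$ as a contour integral and expand the resolvent to second order. This makes the trace bookkeeping more delicate. The trace must be taken after integrating, because the integrands $zR_z$ and $zR_zER_z$ contain pieces that need not be trace class, namely $-(1-P_0)$ and $z^{-1}(1-P_0)E(1-P_0)$; these integrate to zero over $\Gamma$. The remainder is trace class with norm $O(\mu^2)$ because $ER_zE$ is a product of two Hilbert--Schmidt operators. Your outline respects both points.

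\textbf{What each buys.} Your approach gives algebraic multiplicity one for $\lambda$ directly from $\operatorname{rank}P(\mu)=1$. This is exactly what the subsequent finiteness theorem needs when it subtracts $(\lambda/\mu)^2$ only once from $\|M(\mu)\|_{\mathcal I_2}^2$. In the paper this simplicity is only implicit: the kernel of $1-K$ for a rank-one $K$ is at most one-dimensional, and $M(\mu)$ is self-adjoint. The paper's route is shorter, stays at the level of a scalar equation, and uses no contour integrals.

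\textbf{Hypotheses.} You were right to import $u\in L^1_2\cap L^\infty$ from the preceding proposition, together with $u\not\equiv 0$ so that $\lambda_0>0$. The lemma's stated hypothesis $u\in L^p\cap L^\infty$ does not by itself give the decomposition $\mu M=L+\mu R$ on which both proofs depend.
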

\begin{proof}
    We use the regularized Fredholm determinant for Hilbert-Schmidt operators. $\lambda$ is an eigenvalue of $\mu M(\mu)$ if and only if $\text{det}_2\left(1-\frac{\mu M(\mu)}{\lambda}\right)=0$. We write
    \begin{align}
        1-\tfrac{\mu M(\mu)}{\lambda}&=1-\tfrac{L-L_0+\mu R}{\lambda}-\tfrac{L_0}{\lambda}\notag\\
        &=\left(1-\tfrac{L-L_0+\mu R}{\lambda}\right)\left(1-\left(1-\tfrac{L-L_0+\mu R}{\lambda}\right)^{-1}\tfrac{L_0}{\lambda}\right)
    \end{align}
    By \eqref{eq: L O(mu)}, $L-L_0+\mu R$ has small operator norm when $\mu$ is small. So $1-\tfrac{L-L_0+\mu R}{\lambda}$ is invertible when $\mu$ is small and $\lambda$ is close to $\lambda_0$. Thus we have
    \begin{equation}\label{eq: det = 0}
        \text{det}_2\left(1-\left(1-\tfrac{L-L_0+\mu R}{\lambda}\right)^{-1}\tfrac{L_0}{\lambda}\right)=0.
    \end{equation}
    Note that $L_0$ is a rank 1 operator, and so is $\left(1-\tfrac{L-L_0+\mu R}{\lambda}\right)^{-1}\tfrac{L_0}{\lambda}$. Thus \eqref{eq: det = 0} is reduced to
    \begin{equation}
        1=\text{Tr}\left(\left(1-\tfrac{L-L_0+\mu R}{\lambda}\right)^{-1}\tfrac{L_0}{\lambda}\right),
    \end{equation}
    or 
    \begin{equation}\label{eq: e-val of M}
        \lambda = \text{Tr}\left(L_0\left(1-\tfrac{L-L_0+\mu R}{\lambda}\right)^{-1}\right).
    \end{equation}
    The existence and uniqueness of $\lambda$ near $\lambda_0$ solving \eqref{eq: e-val of M} now follows easily from a contraction mapping argument for small $\mu$. To get the small $\mu$ expansion of $\lambda$, we expand $\left(1-\tfrac{L-L_0+\mu R}{\lambda}\right)^{-1}$ in a Neumann series:
    \begin{align}
        \lambda&=\text{Tr}(L_0)+\text{Tr}\left(L_0\tfrac{L-L_0+\mu R}{\lambda}\right)+\mathcal O(\mu^2)\notag\\
        &=\lambda_0+\tfrac1{\lambda_0} \text{Tr}[L_0(L-L_0+\mu R_0)]+\mathcal O(\mu^2).
    \end{align}
\end{proof}

We are now ready to show the main result
\begin{theorem}
    Let $u\in L^1_2\cap L^\infty$ and be real-valued, then the discrete spectrum of $L_u$ is finite.
\end{theorem}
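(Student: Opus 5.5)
By Lemma \ref{lem: u positive} I will first reduce to the case $u\ge 0$, replacing $u$ by $u_+\in L^1_2\cap L^\infty$. If $u_+\equiv 0$ there is nothing to prove, by Theorem \ref{thm:BS}. Lemma \ref{lem: ess spec of Lu} says the discrete spectrum lies in $[-M,-\tfrac1{2e})$ and can accumulate only at $-\tfrac1{2e}$. So it suffices to find $\mu_0>0$ such that $L_u$ has only finitely many eigenvalues in $(-\tfrac1{2e}-\mu_0^2,-\tfrac1{2e})$. I will count eigenvalues with multiplicity. By Lemma \ref{lem: e-val >=1}, the number of eigenvalues of $L_u$ that are $\le -\tfrac1{2e}-\mu^2$ is bounded by $N(\mu):=\#\{\text{eigenvalues of } M(\mu)\ge 1\}$. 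The aim is therefore a bound on $N(\mu)$ that is uniform as $\mu\searrow 0$. Letting $\mu\to0$ then bounds the total number of eigenvalues below $-\tfrac1{2e}$.

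I will use the decomposition $M(\mu)=L(\mu)/\mu+R(\mu^2)$. The eigenvalues of $M(\mu)$ that are $\ge1$ are exactly the eigenvalues of $\mu M(\mu)=L(\mu)+\mu R(\mu^2)$ that are $\ge \mu$. I split $\mu M(\mu)=L_0+E(\mu)$ with $E(\mu)=L(\mu)-L_0+\mu R(\mu^2)$. By \eqref{eq: L O(mu)} and the uniform $\mathcal I_2$ bound on $R$, $\|E(\mu)\|_{\mathcal I_2}\lesssim\mu$. Since $L_0$ has rank one, min-max (Weyl's inequality for self-adjoint operators) gives
\[
\lambda_{n+1}(\mu M(\mu))\le \lambda_2(L_0)+\lambda_n(E(\mu))=\lambda_n(E(\mu)).
\]
Hence $N(\mu)\le 1+\#\{\text{eigenvalues of } E(\mu)\ge\mu\}$. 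Lemma \ref{lem: M e-val} identifies the one eigenvalue near $\lambda_0$. The remaining issue is that $\|E\|_{\mathcal I_2}\lesssim\mu$ only yields $\#\{\text{eigenvalues}\ge\mu\}\le \|E\|_{\mathcal I_2}^2/\mu^2=O(1)$. That is in fact already a uniform bound, so the count is finite.

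Concretely, Chebyshev's inequality for Hilbert--Schmidt operators gives
\[
\#\{\text{eigenvalues of } E(\mu)\ \ge \mu\}\le \mu^{-2}\|E(\mu)\|_{\mathcal I_2}^2\le C\big(\|u\|_{L^1_2}^2+\|h\text{-remainder}\|^2\|u\|_{L^1}\|u\|_{L^\infty}\big).
\]
This is independent of $\mu\in(0,\mu_0)$. Therefore $N(\mu)\le 1+C$ for all small $\mu$. Each eigenvalue below $-\tfrac1{2e}$ is $\le -\tfrac1{2e}-\mu^2$ for small enough $\mu$, so the total number of discrete eigenvalues is at most $1+C<\infty$. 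One point to check is that $E(\mu)$ is self-adjoint, which is needed for the counting. The operator $\mu M(\mu)$ is self-adjoint because $u\ge0$. The operator $L_0$ has kernel $\sqrt{u(x)u(y)}\,e^{-i(x-y)/2}/(2\sqrt e)$, which is Hermitian. So $E(\mu)$ is self-adjoint.

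The main obstacle is the constant in $\|L(\mu)-L_0\|_{\mathcal I_2}\lesssim \mu$, i.e.\ \eqref{eq: L O(mu)}. This requires $\|u\|_{L^1_2}$ through the bound $|l(x,\mu)-l(x,0)|\lesssim\mu\langle x\rangle$, which is exactly where the hypothesis $u\in L^1_2$ enters. I also need the remainder kernel $r(\cdot,\mu^2)$ to be bounded in $L^2$ uniformly in $\mu$, so that $\mu\|R(\mu^2)\|_{\mathcal I_2}\lesssim\mu$; both are provided by the preceding proposition. Lemma \ref{lem: M e-val} is not strictly needed for finiteness, but I will cite it to confirm that the rank-one part contributes at most one eigenvalue for small $\mu$.
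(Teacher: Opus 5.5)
Your proof is correct, and it differs from the paper's in the final counting step. The paper also reduces to $u\ge 0$, passes to $M(\mu)$, and uses the same splitting $\mu M(\mu)=L_0+E(\mu)$. It then expands
\[
\|M(\mu)\|_{\mathcal I_2}^2=\mu^{-2}\lambda_0^2+2\mu^{-2}\mathrm{Tr}\bigl[L_0(L-L_0+\mu R_0)\bigr]+O(1)
\]
and subtracts $(\lambda/\mu)^2$, where $\lambda/\mu$ is the single large eigenvalue. That subtraction needs the second-order expansion of $\lambda$ from Lemma \ref{lem: M e-val}, obtained via $\mathrm{det}_2$ and a contraction argument. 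Its purpose is to cancel the cross term $2\mu^{-2}\mathrm{Tr}(L_0E)$, which is of size $\mu^{-1}$ and would otherwise ruin the $O(1)$ bound.

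Your use of Weyl's inequality, $\lambda_{n+1}(L_0+E)\le\lambda_n(E)$ (equivalently, min-max restricted to the codimension-one subspace orthogonal to the range of $L_0$), discards the rank-one part before any squaring. So the cross term never appears, and the Chebyshev bound $\mu^{-2}\|E(\mu)\|_{\mathcal I_2}^2=O(1)$ finishes the argument. This makes Lemma \ref{lem: M e-val} and the Fredholm-determinant machinery unnecessary, as you note. Your self-adjointness checks on $\mu M(\mu)$ and $L_0$, which are what make Weyl's inequality legitimate, are right.

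The paper's route gives extra information: the precise location $\lambda/\mu\approx\lambda_0/\mu$ of the one eigenvalue of $M(\mu)$ that blows up at threshold. Neither argument needs that information for finiteness.

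Both proofs rest on the same input from the preceding proposition: the $O(\mu)$ bound on $\|L(\mu)-L_0\|_{\mathcal I_2}$, which is where $u\in L^1_2$ enters, and the uniform $L^2$ bound on the remainder kernel $r(\cdot,\mu^2)$.
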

\begin{proof}
    By Lemma \ref{lem: u positive}, we can assume $u\ge 0$. By Lemma \ref{lem: e-val >=1}, we only need to estimate the number of eigenvalues of $M(\mu)$ that are $\ge 1$ as $\mu\searrow 0$. We compute the Hilbert-Schmidt norm of $M(\mu)$ as 
    \begin{align}
        \|M(\mu)\|_{\mathcal I_2}^2&=\text{Tr}(M^2)\notag\\
        &=\tfrac{1}{\mu^2}\text{Tr}\left[(L_0+L-L_0+\mu R)^2\right]\notag\\
        &=\tfrac{1}{\mu^2}\left(\text{Tr}(L_0^2)+2\text{Tr}[L_0(L-L_0+\mu R_0)]+\mathcal O(\mu^2)\right)\notag\\
        &=\tfrac{\lambda_0^2}{\mu^2}+\tfrac{2}{\mu^2}\text{Tr}[L_0(L-L_0+\mu R_0)]+\mathcal O(1).
    \end{align}
    In the last step, we used $\text{Tr}(L_0^2)=(\text{Tr} ~L_0)^2$, as $L_0$ is of rank 1. By Lemma \ref{lem: M e-val}, $M(\mu)$ has an eigenvalue $\tfrac{\lambda}{\mu}$, with small $\mu$ expansion:
    \begin{equation}
        \left(\tfrac{\lambda}{\mu}\right)^2=\tfrac{\lambda_0^2}{\mu^2}+\tfrac{2}{\mu^2}\text{Tr}[L_0(L-L_0+\mu R_0)]+\mathcal O(1).
    \end{equation}
    The number of eigenvalues of $M(\mu)$ that are $\ge 1$ is obviously bounded by 
    \begin{equation}
        \|M(\mu)\|_{\mathcal I_2}^2-\left(\tfrac{\lambda}{\mu}\right)^2=\mathcal O(1)
    \end{equation}
    as $\mu\searrow 0$.  The proof is complete.
\end{proof}                    %%  Bound States
%%%%%%%%%%%%%%%%%%%%
%
%		rhp.tex	-	Riemann-Hilbert
%						Problem
%
%%%%%%%%%%%%%%%%%%%%

\section{Riemann-Hilbert problems}\label{sec: RH}

In this section, we show that the scattering coefficients and Jost solutions satisfy certain jump relations across $\R^+$, which make them solutions of certain nonlocal Riemann-Hilbert problems in the spectral variable $\zeta$. These problems will set the basis for future studies of the inverse scattering transform.

Once again, throughout this section, we assume $u\in L^1_1\cap L^2_1$ is real-valued with sufficiently small norm, so that the existence and analyticity theories of Sections \ref{sec: prop of jost}, \ref{sec: existence}, \ref{sec: scat data} apply.

\begin{lemma}
    For $k\in \mathbb R^+$, we have 
    \begin{equation}\label{eq: GL(zeta) GL(zeta*)}
        G_L(x,k+i0)=G_L(x,k^*+i0)e^{i\lambda x},
    \end{equation}
    \begin{equation}\label{eq: GR(zeta) GR(zeta*)}
        G_R(x,k-i0)=G_R(x,k^*-i0)e^{i\lambda x},
    \end{equation}
    where $\lambda=Z^{-1}(k)$ (see Definition \ref{def: Z inv}).
\end{lemma}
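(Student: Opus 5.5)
The plan is to mimic the contour-shift computation in the proof of Lemma \ref{lem: GL(zeta) GR(zeta*)}, now with $k$ real. Fix $k\in\R^+$, $k\ne\frac12$, and put $\lambda=Z^{-1}(k)\in\R$, so that $k^*=Z(-\lambda)=k-\lambda$ by \eqref{eq: zeta-zeta*}. The starting point is the algebraic identity
\[
\xi+\lambda-k\bigl(1-e^{-2(\xi+\lambda)}\bigr)=\xi-k^*\bigl(1-e^{-2\xi}\bigr),
\]
which holds because $ke^{-2\lambda}=k-\lambda=k^*$; this is the relation $\lambda=k(1-e^{-2\lambda})$.

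Rather than shifting contours, I work at $\zeta=k+i\epsilon$ and let $\epsilon\searrow0$. Write $\lambda_\epsilon=Z^{-1}(k+i\epsilon)$. In the integral defining $G_L(x,k+i\epsilon)$ over $\Gamma_L$, substitute $\xi\mapsto\xi+\lambda_\epsilon$. Since $\imag\lambda_\epsilon>0$ is small, the contour $\Gamma_L$ becomes $\Gamma_L-\lambda_\epsilon$. I then deform this back to a contour of the form $\Gamma_L$ or $\Gamma_R$ near the real axis, tracking any poles that are crossed.

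In the shifted variable, the poles of $\xi\mapsto 1/(\xi-(k+i\epsilon)^*(1-e^{-2\xi}))$ near $\R$ sit at $0$ (the image of the original pole $\lambda_\epsilon$) and at $-\lambda_\epsilon$ (the image of the original pole $0$). The pole at $-\lambda_\epsilon$ lies just below the real axis. The resulting kernel is $G_{L\text{ or }R}(x,(k+i\epsilon)^*)$, plus possibly one residue term $\pm\frac{i}{1-2k}e^{-i\lambda x}$, multiplied overall by $e^{i\lambda_\epsilon x}$.

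The next step is to identify the limit of $(k+i\epsilon)^*$. I compute its imaginary part via $\zeta^*=\zeta-Z^{-1}(\zeta)$ and $(Z^{-1})'(k)$. Using \eqref{eq: G up-down} and \eqref{GL-GR} as needed, I then check that all the stray terms $\frac{i}{1-2k}$ and $\frac{i}{1-2k^*}e^{i\lambda x}$ cancel. What should remain is $e^{i\lambda x}G_L(x,k^*+i0)$.

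The main obstacle is exactly this bookkeeping. It requires deciding on which side of $\R^+$ the point $(k+i\epsilon)^*$ lies, and which side of each small semicircle the shifted poles fall. As a consistency check, I compare the two expressions for the jump between $k\pm i0$: one from \eqref{eq: G up-down} applied at $k$, and one from applying it at $k^*$ with $\lambda$ replaced by $-\lambda$. These should match after multiplication by $e^{i\lambda x}$.

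The identity \eqref{eq: GR(zeta) GR(zeta*)} then follows from \eqref{eq: GL(zeta) GL(zeta*)} by the symmetry \eqref{eq: G_L bar}. Taking complex conjugates and replacing $x$ by $-x$ turns $G_L(\cdot,k+i0)$ into $G_R(\cdot,k-i0)$. Since $\lambda$ is real, the factor $e^{i\lambda x}$ is preserved under this operation.
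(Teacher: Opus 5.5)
Your route works, but the sign you leave open is where all the content is. Also, the consistency check you propose is false as stated.

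\emph{The sign.} You have $0<Z'<1$ on $\R$: for $t=2\lambda\neq0$ the inequality $Z'(\lambda)<1$ is equivalent to $e^{t}(1-t)<1$, and $Z'(0)=\tfrac12$. So with $\zeta=k+i\epsilon$ you get $\imag\lambda_\epsilon=\epsilon/Z'(\lambda)+O(\epsilon^2)>\epsilon$, hence $\imag\zeta^*<0$. The involution therefore sends $k+i0$ to $k^*-i0$, not $k^*+i0$.

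\emph{The residue.} The shifted contour $\Gamma_L-\lambda_\epsilon$ passes below both $0$ and $-\lambda_\epsilon$. By contrast, $G_L(x,\zeta^*)$ is computed on $\Gamma_L$, which passes above $-\lambda_\epsilon$. Since $\zeta^*e^{2\lambda_\epsilon}=\zeta$, the residue at $-\lambda_\epsilon$ is $e^{-i\lambda_\epsilon x}/(1-2\zeta)$, and you get
\[
G_L(x,\zeta)=e^{i\lambda_\epsilon x}G_L(x,\zeta^*)+\frac{i}{1-2\zeta}.
\]
For $\zeta\in D^+_\delta$ this is Lemma \ref{lem: GL(zeta) GR(zeta*)} combined with \eqref{GL-GR}.

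\emph{The limit.} Letting $\epsilon\searrow0$ gives $G_L(x,k+i0)=e^{i\lambda x}G_L(x,k^*-i0)+\frac{i}{1-2k}$. Now apply \eqref{eq: G up-down} at $k^*$, where $Z^{-1}(k^*)=-\lambda$ and $(k^*)^*=k$. It gives $G_L(x,k^*+i0)-G_L(x,k^*-i0)=\frac{ie^{-i\lambda x}}{1-2k}$, which absorbs the constant exactly. The case $k=\frac12$ is trivial, since then $\lambda=0$ and $k^*=k$.

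\emph{The consistency check.} Comparing jumps would fail. The jump at $k$ is $\frac{ie^{i\lambda x}}{1-2k^*}$, whereas $e^{i\lambda x}$ times the jump at $k^*$ is $\frac{i}{1-2k}$. The identity is one-sided; on the other side of the cut one has
\[
G_L(x,k-i0)=e^{i\lambda x}G_L(x,k^*-i0)+\frac{i}{1-2k}-\frac{ie^{i\lambda x}}{1-2k^*}.
\]
The jump at $k^*$ is not supposed to mirror the jump at $k$; its role is to cancel the residue constant above. Your deduction of \eqref{eq: GR(zeta) GR(zeta*)} from \eqref{eq: G_L bar} is correct.

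\emph{Comparison with the paper.} The paper avoids all of this by substituting $\xi\mapsto\xi+\lambda$ with the real $\lambda$ directly in the boundary-value integral. Read $k+i0$ as the rule that the contour passes below both real poles $0$ and $\lambda$. Then $\Gamma_L-\lambda$ passes below $-\lambda$ and $0$, which are exactly the real poles of $1/(\xi-k^*(1-e^{-2\xi}))$. Passing below both is precisely the rule defining $G_L(x,k^*+i0)$, so no pole is crossed and no jump formula enters.

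That gives a two-line proof, with the matching of prescriptions left implicit. Your version makes the limit explicit. It also shows why the identity holds with $k^*+i0$ even though the natural continuation of $*$ lands on $k^*-i0$: the residue picked up at $-\lambda_\epsilon$ is exactly the jump of $G_L$ at $k^*$.
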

\begin{proof}
    This lemma is essentially the same as Lemma \ref{lem: GL(zeta) GR(zeta*)}, although here we don't try to connect $G_L$ to $G_R$.    Make a change of variable $\xi\to \xi+\lambda$ in the integral defining $G_L$ (see \eqref{def: G}) to get
    \begin{align}
        G_L(x,k+i0)&=\frac1{2\pi}\int_{\Gamma_L}\frac{e^{ix\xi}}{\xi-(k+i0)(1-e^{-2\xi})}~d\xi\notag\\
        &=\frac1{2\pi}\int_{\Gamma_L-\lambda}\frac{e^{ix(\xi+\lambda)}}{\xi+\lambda-(k+i0)(1-e^{-2(\xi+\lambda)})}~d\xi\notag\\
        &=\frac{e^{i\lambda x}}{2\pi}\int_{\Gamma_L}\frac{e^{ix\xi}}{\xi-(k^*+i0)(1-e^{-2\xi})}~d\xi\notag\\
        &=G_L(x,k^*+i0)e^{i\lambda x}.\notag
    \end{align}
    In the above calculations, we used the relations \eqref{eq: zeta-zeta*} and $k e^{-2\lambda}=k^*$. This proves \eqref{eq: GL(zeta) GL(zeta*)}. The corresponding \eqref{eq: GR(zeta) GR(zeta*)} can be proven similarly.
\end{proof}

\begin{lemma}
    For $k\in \R$, we have
    \begin{equation}\label{eq: Me(zeta) M1(zeta*)}
        M_e(x,k+i0)= M_1(x,k^*+i0)e^{i\lambda x},
    \end{equation}
    \begin{equation}\label{eq: Ne(zeta) N1(zeta*)}
        N_e(x,k-i0)= N_1(x,k^*-i0)e^{i\lambda x}.
    \end{equation}
\end{lemma}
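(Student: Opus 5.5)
The plan is to combine the Green's function identity \eqref{eq: GL(zeta) GL(zeta*)} with the uniqueness of solutions to the integral equations. I read the statement for $k\in\R^+$, since $M_e$, $N_e$ and $k^*$ are only defined there, and I exclude the exceptional points where the relevant operators are not invertible.

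I treat \eqref{eq: Me(zeta) M1(zeta*)} first; \eqref{eq: Ne(zeta) N1(zeta*)} is identical with $G_R$, $T_R$ and \eqref{eq: GR(zeta) GR(zeta*)} in place of their left counterparts. Set $m(x)=M_1(x,k^*+i0)e^{i\lambda x}$, where $\lambda=Z^{-1}(k)$ is real. I will show that $m$ solves \eqref{jost prty: def M_e} at $k+i0$. Multiplying $M_1(\cdot,k^*+i0)=1+G_L(\cdot,k^*+i0)*(uM_1(\cdot,k^*+i0))$ by $e^{i\lambda x}$ and writing $e^{i\lambda x}=e^{i\lambda(x-y)}e^{i\lambda y}$ inside the convolution gives
\begin{equation*}
m(x)=e^{i\lambda x}+\int_\R G_L(x-y,k^*+i0)e^{i\lambda(x-y)}u(y)m(y)\,dy .
\end{equation*}
By \eqref{eq: GL(zeta) GL(zeta*)}, the kernel in this integral equals $G_L(x-y,k+i0)$. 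Hence $m=e^{i\lambda x}+G_L(\cdot,k+i0)*(um)$, which is exactly \eqref{jost prty: def M_e}.

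The remaining work is bookkeeping about where each object lives. Since $|e^{i\lambda x}|=1$, multiplication by $e^{i\lambda x}$ preserves $L^\infty_{-1,+}$, so $m\in L^\infty_{-1,+}$. For $k$ away from $D^-_\delta$ (so in particular when $k+i0\notin D^-_\delta$ and $k^*+i0\neq\frac12+i0$), Theorem \ref{thm: inv T off disc} gives invertibility of $I-T_L$ at both $k+i0$ and $k^*+i0$. Lemma \ref{lem: jost exist 1} then shows that $M_1(\cdot,k^*+i0)$ and $M_e(\cdot,k+i0)$ exist and are the unique solutions in $L^\infty_{-1,+}$. Since $m$ solves the same equation as $M_e(\cdot,k+i0)$, uniqueness yields $m=M_e(\cdot,k+i0)$.

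The one point I expect to need care is the relation \eqref{eq: GL(zeta) GL(zeta*)} itself. It has to be applied with the same side of the cut, $+i0$, at both $k$ and $k^*$. This is consistent because $k^*\in\R^+$ and the limit $k^*+i0$ corresponds, through $\lambda\mapsto-\lambda$, to the same upper boundary of $V$. The case $k=\frac12$ gives $k^*=k$, so the identity holds trivially there.
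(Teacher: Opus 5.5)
Your proposal is correct and follows the paper's argument. The paper multiplies the $M_e$ equation by $e^{-i\lambda x}$, uses \eqref{eq: GL(zeta) GL(zeta*)} to see that $M_e(\cdot,k+i0)e^{-i\lambda x}$ solves the $M_1$ equation at $k^*+i0$, and concludes by uniqueness; you run the same computation in the opposite direction and invoke uniqueness for $M_e(\cdot,k+i0)$ instead. One small remark: your restriction to $k$ ``away from $D^-_\delta$'' is unnecessary, since $k+i0$ never lies in $D^-_\delta$, and Theorem~\ref{thm: inv T off disc} gives invertibility of $I-T_L$ on all of $\R^++i0$ and of $I-T_R$ on all of $\R^+-i0$.
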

\begin{proof}
    Recalling from \eqref{jost prty: def M_e} and  the integral equations of $M_e $, and using \eqref{eq: GL(zeta) GL(zeta*)}, we get
    \begin{align*}
        M_e(x,k+i0)e^{-i\lambda x}&= 1+e^{-i\lambda x}\int_\R G_L(x-y,k+i0)u(y)M_e(y,k+i0)~dy\\
        &=1+\int_\R G_L(x-y,k^*+i0)e^{-i\lambda y}M_e(y,k+i0)~dy.
    \end{align*}
    Comparing this with \eqref{jost prty: def M}, we get \eqref{eq: Me(zeta) M1(zeta*)}. The proof of \eqref{eq: Ne(zeta) N1(zeta*)} is similar.
\end{proof}

\begin{lemma}
    For $k\in \R^+$, $k\ne \frac12$, we have
    \begin{equation}\label{eq: jump N1}
        N_1(x,k+i0)-N_1(x,k-i0)=\rho(k) N_e(x,k-i0),
    \end{equation}
    \begin{equation}\label{eq: jump M1}
        M_1(x,k+i0)-M_1(x,k-i0)=\breve\rho(k) M_e(x,k+i0).
    \end{equation}
    Here $\rho$ and $\breve \rho$ are given in \eqref{def: rho}. 
\end{lemma}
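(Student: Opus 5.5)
The plan is to derive both jump relations from the integral equations by the same mechanism used in Theorem \ref{thm: inv T on disc} and Lemma \ref{lem: M=aN}: compare the Green's functions on the two sides of $\R^+$, rewrite one Jost solution as a solution of the other side's integral equation with a modified inhomogeneous term, and then invoke uniqueness. I treat \eqref{eq: jump N1} in detail; \eqref{eq: jump M1} is the mirror argument with $L$ and $R$ interchanged.

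First I set up the two $N$-equations. By \eqref{eq: G up-down},
\[
G_R(x,k+i0)=G_R(x,k-i0)+\frac{i}{1-2k^*}e^{i\lambda x}.
\]
Inserting this into $N_1(\cdot,k+i0)=1+G_R(\cdot,k+i0)*(uN_1(\cdot,k+i0))$ gives
\[
N_1(\cdot,k+i0)=1+\breve b(k+i0)e^{i\lambda x}+T_R(k-i0)N_1(\cdot,k+i0),
\]
with $\breve b(k+i0)$ as in \eqref{def: breve b}. Existence of $N_1(\cdot,k+i0)$ for $k\neq\frac12$ is supplied by Theorem \ref{thm: general meromorphy of M N}: it equals $M_1(\cdot,k+i0)/a(k+i0)$, and $a(k+i0)\neq0$ by \eqref{ab.id}.

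Next I invert. For $k\neq\frac12$, $I-T_R(k-i0)$ is invertible on $L^\infty_{-1,-}$ by Theorems \ref{thm: inv T off disc} and \ref{thm: inv T on disc}, since $a(k-i0)\neq 0$. Applying the inverse and using $(I-T_R(k-i0))^{-1}1=N_1(\cdot,k-i0)$ and $(I-T_R(k-i0))^{-1}e^{i\lambda\cdot}=N_e(\cdot,k-i0)$ yields
\[
N_1(\cdot,k+i0)-N_1(\cdot,k-i0)=\breve b(k+i0)\,N_e(\cdot,k-i0).
\]
It remains to show that $\breve b(k+i0)=\rho(k)=b(k+i0)/a(k+i0)$. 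Using $N_1(\cdot,k+i0)=M_1(\cdot,k+i0)/a(k+i0)$ together with the definitions \eqref{def: breve b} and \eqref{def: b}, the two integrals coincide, which gives $\breve b(k+i0)=b(k+i0)/a(k+i0)=\rho(k)$.

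For \eqref{eq: jump M1}, I run the same argument with the $M$-equations. By \eqref{eq: G up-down},
\[
G_L(\cdot,k-i0)=G_L(\cdot,k+i0)-\frac{i}{1-2k^*}e^{i\lambda\cdot},
\]
so
\[
M_1(\cdot,k-i0)=1-b(k-i0)e^{i\lambda x}+T_L(k+i0)M_1(\cdot,k-i0).
\]
Inverting $I-T_L(k+i0)$ gives
\[
M_1(\cdot,k+i0)-M_1(\cdot,k-i0)=b(k-i0)M_e(\cdot,k+i0).
\]
Writing $M_1(\cdot,k-i0)=a(k-i0)N_1(\cdot,k-i0)$ by Lemma \ref{lem: M=aN}, and using $a(k-i0)=1/\breve a(k-i0)$, gives $b(k-i0)=\breve b(k-i0)/\breve a(k-i0)=\breve\rho(k)$.

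The step I expect to be the main obstacle is justifying invertibility of $I-T_R(k-i0)$ and $I-T_L(k+i0)$ on the correct weighted spaces at every $k\neq\frac12$, including $k$ inside $D^\mp_\delta$, so that the uniqueness step is valid. I would handle this through Corollary \ref{cor: a a breve duality} combined with \eqref{eq: a > 1}, which guarantees that $a$ and $\breve a$ are defined and nonzero on $\R^+\pm i0$. A secondary point to check is that $e^{i\lambda x}\in L^\infty$, which holds because $\lambda\in\R$, so the inhomogeneous term lies in the space on which the inverse acts.
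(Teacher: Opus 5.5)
Your proof is correct and is essentially the paper's argument. You rewrite a Jost function from one side of the cut as a solution of the other side's integral equation using the jump of the Green's function, invert the operator $I-T_R(k-i0)$ or $I-T_L(k+i0)$, and identify the coefficient via Lemma \ref{lem: M=aN}. Those two operators are invertible for every $k$ by Theorem \ref{thm: inv T off disc} alone, since $k\mp i0\notin D^\pm_\delta$, so the step you flag as the main obstacle is not one; the on-disc input with $|a(k+i0)|,|\breve a(k-i0)|\ge 1$ is needed only for the existence of $N_1(\cdot,k+i0)$ and $M_1(\cdot,k-i0)$, which you do supply.

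The only difference from the paper is bookkeeping. You apply \eqref{eq: G up-down} alone to $N_1(\cdot,k+i0)$ and $M_1(\cdot,k-i0)$, whereas the paper combines it with \eqref{GL-GR} on $M_1(\cdot,k+i0)$ and $N_1(\cdot,k-i0)$. Your route gives the sign in \eqref{eq: jump M1} directly; the $\breve b(k-i0)$ term in the paper's intermediate identity \eqref{N1.abbb} should in fact carry a minus sign.
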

\begin{proof}

In view of the definition 
    \eqref{def: rho}, and relation \eqref{eq: M1 = a N1},
we only need to prove 
\begin{align}
    \label{M1.ab}
M_1(x,k+i0)	
	&=	a(k+i0)N_1(x,k-i0) + b(k+i0)N_e(x,k-i0),\\
\label{N1.abbb}
N_1(x,k-i0)	
	&=	\breve{a}(k-i0) M_1(x,k+i0) +  \breve{b}(k-i0) M_e(x,k+i0),
\end{align}
where $a$, $b$, $\breve{a}$ and $\breve{b}$ are given by \eqref{def: a}, \eqref{def: breve a}, \eqref{def: b}, \eqref{def: breve b}. 
In the following, we only show proof of \eqref{M1.ab}.
Let $k\in \R^+\setminus\{\frac12\}$. Using 
\eqref{eq: G up-down} and \eqref{GL-GR} we get
\begin{equation}\label{eq: T_L up-down}
    [T_L(k+i0)-T_L(k-i0)]m=e^{i\lambda x} \frac{i}{1-2k^*}\int_\R u(y)m(y)e^{-i\lambda y}~dy,
\end{equation}
\begin{equation}\label{eq: T_L-T_R}
    [T_L(k-i0)-T_R(k-i0)]m=\frac{i}{1-2k}\int_\R u(y)m(y)~dy.
\end{equation}
Recall by the definition of $M_1$ that
\begin{equation}
    [I-T_L(k+i0)]M_1(\cdot,k+i0)=1.
\end{equation}
Using \eqref{eq: T_L up-down} and \eqref{eq: T_L-T_R} and the definitions of $a(\zeta)$ and $b(k+i0)$ are given by \eqref{def: a} and \eqref{def: b}, we have
\begin{align}
    &~[I-T_R(k-i0)]M_1(x,k+i0)\notag\\
    =&~1+[T_L(k+i0)-T_L(k-i0)]M_1(x,k+i0)\notag\\
    &\quad +[T_L(k-i0)-T_R(k-i0)]M_1(x,k+i0)\notag\\
    =&~a(k+i0)+b(k+i0)e^{i\lambda x}.
\end{align}
%\begin{equation}
%\label{M1.ie.GR}
%M_1(x,\zeta)=
%	a(\zeta) + b(\zeta) e_\lambda(x) + \int G_R(x-x',\zeta) u(x') M_1(x') \, dx'
%\end{equation}
Since $I-T_R(k-i0)$ is invertible by Theorem \ref{thm: inv T off disc}, we have by the definitions of $N_1$ and $N_e$ that
\begin{align}
    M_1(x,k+i0)
    &=a(k+i0)[I-T_R(k-i0)]^{-1}1+b(k+i0)[I-T_R(k-i0)]^{-1}e^{i\lambda x}\notag\\
    &= a(k+i0)N_1(x,k-i0)+b(k+i0)N_e(x,k).
\end{align}

%\eqref{eq: jump N1} and \eqref{eq: jump M1} are simply restatements of \eqref{M1.ab} and \eqref{N1.abbb} 
    
%     By \eqref{jost prty: def N}
%     \begin{align}
%         N_1(\cdot, \zeta+i0)-N_1(\cdot,\zeta-i0)=&~[G_R(\cdot,\zeta+i0)-G_R(\cdot,\zeta-i0)]*(uN_1(\cdot,\zeta+i0)) \notag\\
% &~+ G_R(\cdot,\zeta-i0)*(u(N_1(\cdot,\zeta+i0)-N_1(\cdot,\zeta-i0)))\notag\\
% =&~ e^{i\lambda x}\frac{i}{1-2\zeta^*}\int_\R u(y)N_1(y,\zeta+i0)e^{-i\lambda y}~dy\notag\\
% &~+ G_R(\cdot,\zeta-i0)*(u(N_1(\cdot,\zeta+i0)-N_1(\cdot,\zeta-i0))).
%     \end{align}
%     By \eqref{jost prty: def N_e}, it remains to verify that 
%     \begin{equation}\label{eq: rho 2nd form}
%         \rho(\zeta)=\frac{i}{1-2\zeta^*}\int_\R u(y)N_1(y,\zeta+i0)e^{-i\lambda y}~dy.
%     \end{equation}
%     \eqref{eq: rho 2nd form} is simply a consequence of the definitions \eqref{beta.zeta}, \eqref{def: rho} and the relation \eqref{eq: M1 = a N1 }. \eqref{eq: jump M1} can be proven similarly.
    
\end{proof}

\begin{lemma}
    For $k\in \R^+$, $k\ne \frac12$, we have
    \begin{equation}\label{eq: a jump}
        a(k+i0)[1-\rho(k)\rho(k^*)]=a(k-i0),
    \end{equation}
    \begin{equation}\label{eq: a breve jump}
        \breve a(k+i0)=[1-\breve \rho(k)\breve \rho(k^*)]\breve a(k-i0),
    \end{equation}
\end{lemma}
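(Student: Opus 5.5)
The plan is to read both identities off the jump relations \eqref{M1.ab} and \eqref{N1.abbb}, combined with $M_1=aN_1$ (Lemma \ref{lem: M=aN}), the relations \eqref{eq: Me(zeta) M1(zeta*)} and \eqref{eq: Ne(zeta) N1(zeta*)}, and the linear independence of the Jost functions, which we detect through their asymptotics at $\pm\infty$.

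For \eqref{eq: a jump}, start from \eqref{M1.ab}:
\[
M_1(x,k+i0)=a(k+i0)N_1(x,k-i0)+b(k+i0)N_e(x,k-i0).
\]
By \eqref{eq: Ne(zeta) N1(zeta*)}, $N_e(x,k-i0)=N_1(x,k^*-i0)e^{i\lambda x}$. Next write $M_1(x,k+i0)=a(k+i0)N_1(x,k+i0)$ using Lemma \ref{lem: M=aN}, and substitute \eqref{eq: jump N1} for $N_1(x,k+i0)$. This expresses everything in terms of the pair $N_1(x,k-i0)$ and $N_e(x,k-i0)$.

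A cleaner route is to apply \eqref{M1.ab} at $k$ and at $k^*$ and then compare coefficients. Note that $(k^*)^*=k$ and $Z^{-1}(k^*)=-\lambda$. Also $\rho(k^*)$ is defined through $b(k^*+i0)$ and $M_1(x,k^*+i0)$, and \eqref{eq: Me(zeta) M1(zeta*)} says $M_e(x,k+i0)=M_1(x,k^*+i0)e^{i\lambda x}$. Apply $[I-T_R(k-i0)]$ to $M_1(x,k-i0)$, as in the proof of \eqref{M1.ab}. One gets
\[
[I-T_R(k-i0)]M_1(x,k-i0)=1+\tfrac{i}{1-2k}\int uM_1(\cdot,k-i0)=a(k-i0),
\]
so $M_1(x,k-i0)=a(k-i0)N_1(x,k-i0)$. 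Then subtract the two expressions:
\[
M_1(k+i0)-M_1(k-i0)=[a(k+i0)-a(k-i0)]N_1(k-i0)+b(k+i0)N_e(k-i0).
\]
By \eqref{eq: jump M1}, the left side equals $\breve\rho(k)M_e(x,k+i0)=\breve\rho(k)e^{i\lambda x}M_1(x,k^*+i0)$. Expand $M_1(x,k^*+i0)$ by \eqref{M1.ab} at $k^*$:
\[
M_1(x,k^*+i0)=a(k^*+i0)N_1(x,k^*-i0)+b(k^*+i0)N_e(x,k^*-i0),
\]
where $N_e(x,k^*-i0)e^{i\lambda x}=N_1(x,k-i0)$ and $N_1(x,k^*-i0)e^{i\lambda x}=N_e(x,k-i0)$.

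Matching the coefficients of $N_1(x,k-i0)$ and $N_e(x,k-i0)$, which is legitimate because they have distinct asymptotics $1$ and $e^{i\lambda x}$ as $x\to+\infty$ with $\lambda\neq0$ real, gives two equations:
\[
a(k+i0)-a(k-i0)=\breve\rho(k)\,b(k^*+i0),\qquad b(k+i0)=\breve\rho(k)\,a(k^*+i0).
\]
The second equation gives $\breve\rho(k)=b(k+i0)/a(k^*+i0)$. Substituting into the first,
\[
a(k+i0)-a(k-i0)=a(k+i0)\,\rho(k)\,\rho(k^*),
\]
using $\rho(k)=b(k+i0)/a(k+i0)$ and $\rho(k^*)=b(k^*+i0)/a(k^*+i0)$. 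This is exactly \eqref{eq: a jump}.

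The identity \eqref{eq: a breve jump} follows by the symmetric argument with the roles of $L$ and $R$ swapped: use \eqref{N1.abbb}, \eqref{eq: jump N1} and \eqref{eq: Ne(zeta) N1(zeta*)}, and compare asymptotics as $x\to-\infty$. Alternatively, one can invoke $a\breve a=1$ from \eqref{eq: a a breve = 1}, provided it is available on both sides of the cut.

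The main obstacle is justifying that the coefficient comparison is rigorous. This requires checking:
\begin{itemize}
\item that $M_1(\cdot,k-i0)$ exists, and that $I-T_R(k-i0)$ and $I-T_L(k+i0)$ are invertible at both $k$ and $k^*$ (by Theorem \ref{thm: inv T off disc}, since $\R^+\pm i0$ avoids the relevant discs except near $\tfrac12$, which is excluded);
\item that $a(k^*+i0)\ne0$, which follows from \eqref{eq: a > 1};
\item the independence claim, which uses the limits \eqref{eq: N1 k-i0 limit right} and \eqref{eq: Ne k-i0 limit right}.
\end{itemize}
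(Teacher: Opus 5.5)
Your argument is correct. Its skeleton matches the paper's: both proofs reduce \eqref{eq: a jump} to two relations and then eliminate $\breve\rho(k)$. The relations are $a(k+i0)-a(k-i0)=\breve\rho(k)\,b(k^*+i0)=\breve\rho(k)\rho(k^*)a(k^*+i0)$ and $b(k+i0)=\rho(k)a(k+i0)=\breve\rho(k)a(k^*+i0)$, the latter being \eqref{eq: rho rho breve rel 1}.

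What differs is how these relations are extracted. The paper rewrites \eqref{eq: jump M1} with \eqref{eq: Me(zeta) M1(zeta*)} and pairs it against $\tfrac{i}{1-2k}u$ and against $\tfrac{i}{1-2k^*}ue^{-i\lambda x}$. It then reads $a$ and $b$ directly off their integral definitions. This is shorter and needs neither \eqref{M1.ab} at $k^*$ nor any independence argument. However, its second computation tacitly uses $\tfrac{i}{1-2k^*}\int_\R uM_1(\cdot,k-i0)e^{-i\lambda x}\,dx=\breve\rho(k)$, i.e.\ $b(k-i0)=\breve\rho(k)$. That identity follows from \eqref{eq: jump M1} together with $[I-T_L(k+i0)]M_1(\cdot,k-i0)=1-b(k-i0)e^{i\lambda x}$ (by \eqref{eq: T_L up-down}), but the paper does not state it.

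Your coefficient matching in the basis $N_1(\cdot,k-i0)$, $N_e(\cdot,k-i0)$ never needs $b(k-i0)$. Instead it uses \eqref{M1.ab} at both $k$ and $k^*$, the factorization $M_1(\cdot,k-i0)=a(k-i0)N_1(\cdot,k-i0)$, and independence from the asymptotics at $+\infty$. Equivalently, you could apply $I-T_R(k-i0)$ to both sides and compare $\alpha+\beta e^{i\lambda x}$.

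A few small corrections.

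\begin{itemize}
\item For $0<|k-\tfrac12|<\delta$ the point $k-i0$ lies in $D^-_\delta$. So existence of $M_1(\cdot,k-i0)$ does not come from Theorem \ref{thm: inv T off disc}; it comes from Theorem \ref{thm: inv T on disc} together with $|\breve a(k-i0)|\ge 1$ from \eqref{eq: a > 1}. The operators $I-T_R(k-i0)$ and $I-T_L(k+i0)$, and their $k^*$ versions, are covered by the off-disc theorem for every $k$, with no exception near $\tfrac12$.
\item Your alternative via $a\breve a=1$ only gives $\breve a(k+i0)=[1-\rho(k)\rho(k^*)]\breve a(k-i0)$. To reach \eqref{eq: a breve jump} you also need $\rho(k)\rho(k^*)=\breve\rho(k)\breve\rho(k^*)$. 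This follows by multiplying $\rho(k)a(k+i0)=\breve\rho(k)a(k^*+i0)$ by its $k^*$ counterpart, as in Lemma \ref{lem: rho rho* unit circle}.
\item For the mirrored argument, a direct computation gives $[I-T_L(k+i0)]N_1(\cdot,k-i0)=\breve a(k-i0)-\breve b(k-i0)e^{i\lambda x}$. So the $\breve b$ term in \eqref{N1.abbb} should carry a minus sign; that version is the one consistent with \eqref{eq: jump M1} and \eqref{eq: rho rho breve rel 2}. Using the printed sign flips both of your mirrored coefficient identities, but \eqref{eq: a breve jump} comes out the same because the two sign changes cancel in the elimination.
\end{itemize}
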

\begin{proof}
    By \eqref{eq: jump M1}, \eqref{def: a}, \eqref{def: rho} and \eqref{eq: Me(zeta) M1(zeta*)} we have for $k\in \R^+$, $k\ne \frac12$
    \begin{align}
        a(k+i0)-a(k-i0) &= \breve \rho(k)\frac{i}{1-2k}\int_\R u(x)M_e(x,k+i0)~dx\notag\\
        &= \breve \rho(k) \frac{i}{1-2k}\int_\R u(x) M_1(x,k^*+i0) e^{i\lambda x}~dx\notag\\
        &=\breve \rho(k)a(k^*+i0)\frac{i}{1-2k}\int_\R u(x) N_1(x,k^*+i0) e^{i\lambda x}~dx\notag\\
        &=\breve \rho(k)\rho(k^*) a(k^*+i0). \label{eq: a jump 1}
    \end{align}
    By \eqref{def: a}, \eqref{def: b}, \eqref{def: rho} and \eqref{eq: jump M1}, we have
    \begin{align}
        \rho(k) a(k+i0)&=\frac{i}{1-2k^*}\int_\R u(x)M_1(x,k+i0) e^{-i\lambda x}~dx\notag\\
        &=\frac{i}{1-2k^*}\int_\R u(x)[M_1(x,k-i0)e^{-i\lambda x}+\breve \rho(k)M_1(x,k^*+i0)]~dx\notag\\
        &=\breve \rho(k)+\breve\rho(k)[a(k^*+i0)-1]\notag\\
        &=\breve\rho(k) a(k^*+i0).\label{eq: rho rho breve rel 1}
    \end{align}
    Eliminating $\breve \rho(k)$ from \eqref{eq: a jump 1} and \eqref{eq: rho rho breve rel 1}, we get \eqref{eq: a jump}. A similar calculation gives
    \begin{equation}
        \breve a(k+i0)-\breve a(k-i0)=-\rho(k)\breve \rho(k^*)\breve a(k^*-i0), 
    \end{equation}
    \begin{equation}\label{eq: rho rho breve rel 2}
        \breve\rho(k) \breve a(k-i0)=\rho(k) \breve a(k^*-i0),
    \end{equation}
    from which \eqref{eq: a breve jump} follows.
\end{proof}
\begin{remark}
    From \eqref{eq: a jump}, we get
    \begin{equation}
        \frac{a(k-i0)}{a(k+i0)}=\frac{a(k^*-i0)}{a(k^*+i0)},
    \end{equation}
    as the jump $1-\rho(k)\rho(k^*)$ is invariant under $k\to k^*$.
\end{remark}

\begin{lemma}\label{lem: rho rho* unit circle}
    For $k\in \R^+$, $k\ne \frac12$, we have $\rho(k)\rho(k^*)=\breve \rho(k)\breve \rho(k^*)$, with $|\rho(k)\rho(k^*)|=|\breve \rho(k)\breve \rho(k^*)|<1$. 
    
\end{lemma}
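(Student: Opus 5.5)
The plan has three steps: first prove the product identity, then prove the strict bound $|\rho(k)\rho(k^*)|<1$ for one ordering of the pair $\{k,k^*\}$, and finally transfer it to the other ordering by the symmetry of the product.

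\textbf{Step 1: the identity.} The two relations already proved give
\[
\rho(k)a(k+i0)=\breve\rho(k)a(k^*+i0),\qquad \breve\rho(k)\breve a(k-i0)=\rho(k)\breve a(k^*-i0).
\]
Write the first relation at $k$ and at $k^*$. Since $k^{**}=k$, multiplying them gives
\[
\rho(k)\rho(k^*)\,a(k+i0)a(k^*+i0)=\breve\rho(k)\breve\rho(k^*)\,a(k^*+i0)a(k+i0).
\]
By \eqref{eq: a > 1}, $|a(k+i0)|\ge 1$ and $|a(k^*+i0)|\ge 1$, so both are nonzero. Cancelling them yields $\rho(k)\rho(k^*)=\breve\rho(k)\breve\rho(k^*)$, and hence the two moduli agree as well.

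\textbf{Step 2: the bound.} I will use the unitarity relation \eqref{eq: tau rho uni}:
\[
|\tau(k)|^2+\frac{2k^*-1}{1-2k}|\rho(k)|^2=1.
\]
First I need the sign of $\frac{2k^*-1}{1-2k}$. For $k\neq\tfrac12$, one of $k,k^*$ exceeds $\tfrac12$ and the other is below $\tfrac12$, since $\tfrac12$ is the double point of $-\zeta e^{-2\zeta}$ on $\R^+$. So the weight is positive, and $\tau(k)=1/a(k+i0)\neq 0$. It follows that
\[
|\rho(k)|^2<\frac{1-2k}{2k^*-1}.
\]
Applying the same relation at $k^*$ gives
\[
|\rho(k^*)|^2<\frac{1-2k^*}{2k-1}.
\]
Multiplying the two inequalities, the right-hand sides have product exactly $1$, so $|\rho(k)\rho(k^*)|^2<1$.

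\textbf{Step 3: the expected obstacle.} The relation \eqref{eq: tau rho uni} was proved for every $k\in\R^+\setminus\{\tfrac12\}$, so in principle it applies at both $k$ and $k^*$. What I must check carefully is that $\rho(k^*)$ really denotes $b(k^*+i0)/a(k^*+i0)$, with $\lambda$ replaced by $Z^{-1}(k^*)=-\lambda$. This is the same object that appears in the relation $\rho(k)a(k+i0)=\breve\rho(k)a(k^*+i0)$, and the check is just a careful matching of definitions. The statement for $\breve\rho$ then follows from Step 1, or independently from \eqref{eq: breve tau rho uni} by the same argument.
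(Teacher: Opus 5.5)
Your proposal is correct and follows the paper's proof step by step. The paper also multiplies \eqref{eq: rho rho breve rel 1} at $k$ and at $k^*$, cancels the nonzero factors $a(k+i0)$ and $a(k^*+i0)$ using $|a|\ge 1$, then writes \eqref{eq: tau rho uni} at $k$ and at $k^*$ as $1-|\tau|^2\in[0,1)$ and multiplies, the weights having product $1$; your check that $k$ and $k^*$ lie on opposite sides of $\tfrac12$, so the weight is positive, just makes explicit what the paper uses implicitly.
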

\begin{remark}
    In view of this lemma and \eqref{eq: a a breve = 1}, we see that \eqref{eq: a jump} and \eqref{eq: a breve jump} are essentially the same equation.
\end{remark}
\begin{proof}
    Replacing $k$ by $k^*$ in \eqref{eq: rho rho breve rel 1}, we have
    \begin{equation}\label{eq: rho rho breve rel 3}
        \rho(k^*)a(k^*+i0)=\breve \rho(k^*)a(k+i0).
    \end{equation}
    Multiplying \eqref{eq: rho rho breve rel 1} with \eqref{eq: rho rho breve rel 3}, and recalling $|a(k+i0)|\ge 1$ from Lemma \ref{lem: a b identity}, we obtain
    \begin{equation}
        \rho(k)\rho(k^*)=\breve\rho(k)\breve\rho(k^*)\frac{a(k^*+i0)}{a(k+i0)}\frac{a(k+i0)}{a(k^*+i0)}=\breve\rho(k)\breve\rho(k^*).
    \end{equation}
    By \eqref{eq: tau rho uni} we have
    \begin{equation}\label{eq: rho tau uni sub 1}
        \frac{2k^*-1}{1-2k}|\rho(k)|^2=1-|\tau(k)|^2\in [0,1).
    \end{equation}
    Replace $k$ by $k^*$ to get
    \begin{equation}\label{eq: rho tau uni sub 2}
        \frac{2k-1}{1-2k^*}|\rho(k^*)|^2=1-|\tau(k^*)|^2\in [0,1).
    \end{equation}
    Multiplying \eqref{eq: rho tau uni sub 1} with \eqref{eq: rho tau uni sub 2}, we get
    \begin{equation}
        |\rho(k)\rho(k^*)|^2<1.
    \end{equation}
\end{proof}

\begin{lemma}\label{lem: M N limit infty}
    As $\zeta\to\infty$, $\zeta\in \widetilde{\mathbb C} $,
    \begin{equation}\label{eq: jost sol limit at inf}
        \|M_1(\cdot,\zeta)-1\|_{L^\infty}\to 0,\quad \|N_1(\cdot,\zeta)-1\|_{L^\infty}\to 0
    \end{equation}
    
\end{lemma}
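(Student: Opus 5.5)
We argue directly from the integral equations $(I-T_L(\zeta))M_1=1$ and $(I-T_R(\zeta))N_1=1$, and we treat only $M_1$; the argument for $N_1$ is symmetric. For $|\zeta|$ large we have $\zeta\notin D^\pm_\delta$, so Theorem \ref{thm: inv T off disc} gives $\|(I-T_L(\zeta))^{-1}\|_{L^\infty_{-1,+}\to L^\infty_{-1,+}}\le 2$. We first upgrade this to $L^\infty$. By Lemma \ref{lem: green uniform split} and \eqref{eq: T(m) L inf}, $T_L(\zeta)$ maps $L^\infty_{-1,+}$ into $L^\infty$, with a bound uniform for $\zeta$ away from $\frac12\pm i0$. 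Hence $M_1(\cdot,\zeta)$ is bounded in $L^\infty$ uniformly for large $|\zeta|$.

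Next we write
\[
M_1-1=(I-T_L)^{-1}T_L1=T_L1+T_L(M_1-1).
\]
It suffices to show $\|T_L(\zeta)\|_{L^\infty\to L^\infty}\to 0$, or at least $\|T_L(\zeta)1\|_{L^\infty}\to0$ together with $\|T_L(\zeta)\|$ uniformly bounded by $\frac12$ on $L^\infty$ (smallness of $u$ handles the latter). To bound $T_L(\zeta)m=G_L(\cdot,\zeta)*(um)$, we split $G_L$ by sector using Proposition \ref{prop: G decay}.
\begin{itemize}
\item In the sector $|\arg\zeta|\ge\alpha$, we have $G_L=\frac{i}{1-2\zeta}\chi_{\R^+}+\bigl(G_L\chi_{\R^-}+(G_R+\tfrac{i}{1-2\zeta}-\tfrac{i}{1-2\zeta})\chi_{\R^+}\bigr)$. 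Via \eqref{GL-GR}, the first piece contributes at most $|1-2\zeta|^{-1}\|u\|_{L^1}\|m\|_{L^\infty}$. The remainder has $L^2$ norm $O(|\zeta|^{-1/2})$ by \eqref{eq: G decay 4}, so it contributes at most $C|\zeta|^{-1/2}\|u\|_{L^2}\|m\|_{L^\infty}$.
\item In the sector $|\arg\zeta|<\alpha$, by \eqref{eq: G Lp est 1} we have $G_L=\frac{i}{1-2\zeta}\chi_{\R^+}\pm\frac{i\chi_{\R^\pm}}{1-2\zeta^*}e^{i\lambda x}+H$. Here $\|H\|_{L^2}=O(|\zeta|^{-1/2})$ by \eqref{eq: G decay 1}. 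Since $\pm\imag\lambda\ge0$ on the relevant half-line, the exponential piece is bounded by $|1-2\zeta^*|^{-1}$.
\end{itemize}

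The main obstacle is the $e^{i\lambda x}$ term in the second sector. Since $\lambda\approx\zeta$, $\zeta^*\to 0$ as $\zeta\to\infty$ in that sector, so $(1-2\zeta^*)^{-1}\to1$ and this coefficient does not decay. Its contribution is
\[
\frac{ie^{i\lambda x}}{1-2\zeta^*}\int_{-\infty}^x e^{-i\lambda s}u(s)m(s)\,ds
\]
(or the analogous integral over $(x,\infty)$). For the first term $T_L1$, the relevant quantity is the oscillatory integral $\int_{-\infty}^x e^{i\lambda(x-s)}u(s)\,ds$ with $\real\lambda\to\infty$ and $\imag\lambda\ge0$. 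We prove that this tends to $0$ uniformly in $x$ by density. For $u\in C_0^\infty$, one integration by parts gives a bound $O(|\lambda|^{-1})$ uniformly in $x$, using $|e^{i\lambda(x-s)}|\le1$ on $s<x$. For general $u$, approximate in $L^1$; the integral operator is uniformly bounded by $\|u\|_{L^1}$. This is a Riemann–Lebesgue argument.

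For the iterated term $T_L(M_1-1)$, the same Riemann–Lebesgue argument is not immediately available, because $m=M_1-1$ depends on $\zeta$. Instead we use $\|T_L(\zeta)\|_{L^\infty\to L^\infty}\le C\|u\|_{L^1\cap L^2}\le\frac12$. This gives
\[
\|M_1-1\|_{L^\infty}\le 2\|T_L(\zeta)1\|_{L^\infty}\to0.
\]

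Finally, as $\zeta$ approaches $\infty$ along $\R^+\pm i0$ (with $\lambda$ real), the bounds above still hold: \eqref{eq: G decay 1} and \eqref{eq: G decay 4} are uniform in $\zeta$, and the oscillatory estimate is unchanged.
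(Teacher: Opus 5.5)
Your proof is correct, but in the sector $|\arg\zeta|<\alpha$ it takes a different route from the paper. You use the section's standing smallness of $u$ to make $T_L(\zeta)$ a uniform contraction on $L^\infty$ for all large $\zeta$. The non-decaying Volterra piece $\frac{i}{1-2\zeta^*}\int_{-\infty}^x e^{i\lambda(x-y)}u(y)m(y)\,dy$ has norm at most $|1-2\zeta^*|^{-1}\|u\|_{L^1}\le 2\|u\|_{L^1}$ once $\zeta^*$ is small. Everything then reduces to $\|T_L(\zeta)1\|_{L^\infty}\to0$, which is a Riemann--Lebesgue statement for $\int_{-\infty}^x e^{i\lambda(x-y)}u(y)\,dy$.

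The paper instead splits $T_L=R_L+S_L$ and inverts $I-R_L$ exactly by solving a first-order ODE, giving \eqref{eq: (I-R) inv}. It expands $M_1$ in the Neumann series \eqref{eq: M1 remainder} in $(I-R_L)^{-1}S_L$. It then runs the same density/integration-by-parts argument on the function $U$ of \eqref{eq: def U}, which carries the extra phase $e^{\frac{i}{1-2\zeta^*}\int_y^x u}$.

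The paper's route is longer but uses no smallness of $u$ at this step. It only needs $\|S_L\|\to0$ and a bound on $\|(I-R_L)^{-1}\|_{L^\infty\to L^\infty}$ depending only on $\|u\|_{L^1}$. This is consistent with the remark after Lemma \ref{lem: M N first order remainder} that smallness can be dropped for large $\zeta$. The explicit resolvent \eqref{eq: (I-R) inv} is also reused in the proof of Lemma \ref{lem: log(1-rho rho*) bound} to control $b(k+i0)$ as $k\to\infty$.

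Your route is shorter and adequate under this section's hypotheses. The smallness threshold must now also ensure $C\|u\|_{L^1\cap L^2}\le\frac12$ for your uniform constant $C$.

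One minor point: your first paragraph's uniform-in-$\zeta$ $L^\infty$ bound on $M_1$ is not justified as cited, since Lemma \ref{lem: green uniform split} only gives uniformity on compact sets. You do not need it. The contraction bound itself gives $\|M_1\|_{L^\infty}\le2$, and the absorption step only requires $M_1(\cdot,\zeta)\in L^\infty$ for each fixed $\zeta$, which Lemma \ref{lem: jost exist 1} provides.
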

\begin{proof}
    We only show the proof for $M_1$ as that for $N_1$ is similar. Recall from \eqref{jost exist: def M} that $M_1(\cdot,\zeta)=(I-T_L(\zeta))^{-1}1$, where $T_L(\zeta) f=G_L(\cdot,\zeta)*(uf)$. By \eqref{eq: G Lp est 3}, \eqref{eq: G decay 4}, if $|\arg\zeta|\ge\alpha$ for some small $\alpha>0$ and $|\zeta|$ is sufficiently large, 
    \begin{equation}\label{eq: TL norm decay}
        \|T_L(\zeta)\|_{L^\infty\to L^\infty}\le \frac{C}{|\zeta|}\|u\|_{L^1}+\frac{C}{\sqrt{|\zeta|}}\|u\|_{L^2}.
    \end{equation}
    Thus \eqref{eq: jost sol limit at inf} follows. If $|\arg\zeta|<\alpha$, we show the proof for the case $\imag \zeta\ge 0$ only, and the case $\imag \zeta\le 0$ will be analogous. By \eqref{eq: G Lp est 1}, we may write 
    \begin{equation}\label{def: R_L 1}
        T_L(\zeta)=R_L(\zeta)+S_L(\zeta),
    \end{equation}
    where 
    \begin{equation}\label{def: SL(zeta)}
        S_L(\zeta)f = \left(\frac{i}{1-2\zeta}\chi_{\R^+}+H(\cdot,\zeta)\right)*(uf)
    \end{equation}
    and
    \begin{equation}\label{def: R_L 2}
        [R_L(\zeta)f](x) = \frac{i}{1-2\zeta^*}\int_{-\infty}^ x e^{i\lambda (x-y)}u(y)f(y)~dy.
    \end{equation}
    Note that for $\zeta$ in the range we consider, $\lambda$ is defined and has non-negative imaginary part.
    By \eqref{def: SL(zeta)}, \eqref{eq: G decay 1}, we again have
    \begin{equation}\label{eq: S_L decay}
        \|S_L(\zeta)\|_{L^\infty\to L^\infty}\le \frac{C}{|\zeta|}\|u\|_{L^1}+\frac{C}{\sqrt{|\zeta|}}\|u\|_{L^2}.
    \end{equation}
    It is easy to verify by solving an ODE that
    \begin{equation}\label{eq: (I-R) inv}
        [(I-R_L(\zeta))^{-1}g](x)=g(x)+\frac{i}{1-2\zeta^*}\int_{-\infty}^x e^{\frac{i}{1-2\zeta^*}\int_y^x u(t)~dt}e^{i\lambda(x-y)}u(y)g(y)~dy.
    \end{equation}
    We get
    \begin{align}
        M_1(\cdot,\zeta)&=(I-R_L-S_L)^{-1}1\notag\\
        &=\left(I-(I-R_L)^{-1}S_L)^{-1}(I-R_L\right)^{-1}1\notag\\
        &=(I-R_L)^{-1}1+\sum_{n=1}^\infty \left((I-R_L)^{-1}S_L\right)^n1.\label{eq: M1 remainder}
    \end{align}
    By \eqref{eq: (I-R) inv}, $\|\left(I-R_L(\zeta)\right)^{-1}\|_{L^\infty\to L^\infty}$ is uniformly bounded as $\zeta\to\infty$. It follows from \eqref{eq: S_L decay} that the $L^\infty$ norm of the series in \eqref{eq: M1 remainder} tends to 0 as $\zeta\to\infty$. It remains to show that $(I-R_L)^{-1}1$, or
    \begin{equation}\label{eq: def U}
        U(x,\zeta)=\int_{-\infty}^x e^{\frac{i}{1-2\zeta^*}\int_y^x u(t)~dt}e^{i\lambda(x-y)}u(y)~dy
    \end{equation}
    tends to 0 in $L^\infty$ as $\zeta\to\infty$. We approximate $u$ in $L^1$ by an $L^1$-bounded sequence of $u_n\in C_0^\infty$, and denote by $U_n$ the  function corresponding to \eqref{eq: def U} with $u$ replaced by $u_n$. Note that as $\zeta\to\infty$, $\lambda\to\infty$ and $\zeta^*\to0$. Thus for $\zeta$ of large modulus we have
    \begin{equation}
        \|U(\cdot,\zeta)-U_n(\cdot,\zeta)\|_{L^\infty}\le e^{C\|u\|_{L^1}}\left(C\|u-u_n\|_{L^1} \|u_n\|_{L^1}+ \|u-u_n\|_{L^1}\right).
    \end{equation}
    Thus we only need to show $\|U_n(\cdot,\zeta)\|_{L^\infty}\to 0$ as $\zeta\to\infty$ for all fixed $n$. Since $u_n\in C_0^\infty$, we can write $e^{i\lambda(x-y)}=\frac{\partial_y e^{i\lambda(x-y)}}{-i\lambda}$ and integrate by parts:
    \begin{align*}
        &~|U_n(x,\zeta)|\notag\\
        =&~\left|-\frac{u_n(x)}{i\lambda}+\frac{1}{i\lambda}\int_{-\infty}^x e^{\frac{i}{1-2\zeta^*}\int_y^x u_n(t)~dt}e^{i\lambda(x-y)}\left(-\frac{i}{1-2\zeta^*}(u_n(y))^2+u_n'(y)\right)~dy\right|\notag\\
        \le &~\frac1{|\lambda|}\left(\|u_n\|_{L^\infty}+Ce^{C\|u_n\|_{L^1}}(\|u_n\|_{L^2}^2+\|u_n'\|_{L^1})\right)\to 0
    \end{align*}
    as $u_n\in C_0^\infty$ is fixed.
\end{proof}

\begin{corollary}\label{eq: a limit infty}
    As $\zeta\to\infty$, $\zeta\in \widetilde{\mathbb C} $,
    \begin{align}
        &|\zeta|\left\|a(\zeta)-1+\frac{i\int_\R u}{2\zeta}\right\|_{L^\infty}\to 0, \\
        &|\zeta|\left\|\breve a (\zeta)-1-\frac{i\int_\R u}{2\zeta}\right\|_{L^\infty}\to 0.
    \end{align}
\end{corollary}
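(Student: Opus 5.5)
The plan is to start from the definition $a(\zeta)=1+\frac{i}{1-2\zeta}a_1(\zeta)$ with $a_1(\zeta)=\int_\R uM_1(\cdot,\zeta)\,dx$. Since $\frac{i}{1-2\zeta}=-\frac{i}{2\zeta}+\mathcal O(|\zeta|^{-2})$, we have
\begin{equation*}
a(\zeta)-1+\frac{i\int_\R u}{2\zeta}=-\frac{i}{2\zeta}\int_\R u\,(M_1(\cdot,\zeta)-1)\,dx+\mathcal O(|\zeta|^{-2})\,|a_1(\zeta)|.
\end{equation*}
Multiplying by $|\zeta|$, the first term is bounded by $\frac12\|u\|_{L^1}\|M_1(\cdot,\zeta)-1\|_{L^\infty}$, which tends to $0$ by Lemma \ref{lem: M N limit infty}. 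The second term is $\mathcal O(|\zeta|^{-1})|a_1(\zeta)|$. Here $|a_1(\zeta)|\le\|u\|_{L^1}\|M_1(\cdot,\zeta)\|_{L^\infty}$ stays bounded, again because $M_1\to1$ uniformly for large $|\zeta|$. The same argument, applied to $\breve a(\zeta)=1-\frac{i}{1-2\zeta}\breve a_1(\zeta)$ with $N_1$ in place of $M_1$, gives the second limit with the opposite sign.

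One point needs care: making sense of $a(\zeta)$ for all large $\zeta\in\widetilde{\C}$. For $|\zeta|$ large, $\zeta$ lies outside $D^\pm_\delta$, so Theorem \ref{thm: inv T off disc} makes $M_1$ and $N_1$ (hence $a$ and $\breve a$) defined on both $\R^+\pm i0$ and off the cut. Lemma \ref{lem: jost exist 1} then places them in $L^\infty$. Lemma \ref{lem: M N limit infty} is stated uniformly on $\widetilde{\C}$, including the boundary values $\R^+\pm i0$, so no separate treatment of the cut is needed. Since the quantity inside the norm is a scalar, the $L^\infty$ norm in the statement is just the absolute value, read as a supremum over $\zeta$ in the sense of the limit.

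The main obstacle is already resolved: it is the uniform decay $\|M_1(\cdot,\zeta)-1\|_{L^\infty}\to0$ in the sector $|\arg\zeta|<\alpha$, where the oscillatory pole term $R_L$ must be handled separately. Lemma \ref{lem: M N limit infty} takes care of this. What remains is bookkeeping. If a rate were needed, one would instead expand $M_1=1+T_L1+\dots$ and use Proposition \ref{prop: G decay}, but only $o(1)$ is required here.
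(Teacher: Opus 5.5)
Your proposal is correct and is the paper's argument: the paper's proof simply says the claim is obvious from the definitions \eqref{def: a}, \eqref{def: breve a} together with the uniform convergence $\|M_1(\cdot,\zeta)-1\|_{L^\infty}\to0$ and $\|N_1(\cdot,\zeta)-1\|_{L^\infty}\to0$ of Lemma \ref{lem: M N limit infty}. Your splitting via $\frac{i}{1-2\zeta}=-\frac{i}{2\zeta}+\mathcal O(|\zeta|^{-2})$, and your remark that $a$ and $\breve a$ are defined for large $|\zeta|$ because $\zeta\notin D^\pm_\delta$ there, just make that bookkeeping explicit.
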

\begin{proof}
    This is obvious by Lemma \ref{lem: M N limit infty} and \eqref{def: a}, \eqref{def: breve a}.
\end{proof}

In order to write $a$ as a solution to an RH problem with jump relation \eqref{eq: a jump}, it is useful to control the growth and decay of $1-\rho(k)\rho(k^*)$ globally on $\R^+$. Lemma \ref{lem: rho rho* unit circle} shows that $1-\rho(k)\rho(k^*)$ is in the unit disc centered at $1$. This disc falls in $\mathbb C\setminus (-\infty,0]$, on which the principal branch of $\log$ is defined such that $\log 1=0$. Using the principal branch logarithm, we have
\begin{lemma}\label{lem: log(1-rho rho*) bound}
    $\log(1-\rho(k)\rho(k^*))\in L^2(\R^+)$, and $\partial_k\log(1-\rho(k)\rho(k^*))\in L^\infty(1,\infty)$.
\end{lemma}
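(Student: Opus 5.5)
Since $1-\rho(k)\rho(k^*)$ lies in the disc $\{|w-1|<1\}$ by Lemma \ref{lem: rho rho* unit circle}, the principal logarithm satisfies $|\log(1-w)|\lesssim |w|$ as long as $|w|$ stays bounded away from $1$. It also satisfies $|\partial_k\log(1-w)|=|\partial_k w|/|1-w|$. The plan is therefore to reduce both claims to decay estimates on $w(k)=\rho(k)\rho(k^*)$, combined with a uniform lower bound on $|1-w|$ away from compact sets. Near any compact subset of $\R^+\setminus\{\frac12\}$, continuity of $\rho$ (Corollary \ref{cor: tau rho cont}) and $|w|<1$ make $\log(1-w)$ bounded. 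At $k=\frac12$ the only issue is local square integrability. There I would use the unitarity relation \eqref{eq: tau rho uni}: the weight $\frac{2k^*-1}{1-2k}$ is bounded above and below near $k=\frac12$, so $\rho$ is bounded there. I would also need $|w|$ bounded away from $1$ near $\frac12$. If that fails in the non-generic case, I would fall back on the integrable singularity $|\log|1-w||$, using differentiability of $a_1,b_1$ to get $|1-w|\gtrsim |k-\frac12|^{m}$. This is the step I am least sure of.

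\textbf{Behavior as $k\to\infty$ and $k\to 0$.} As $k\to\infty$ we have $k^*\approx ke^{-2k}\to 0$, and the map $k\mapsto k^*$ exchanges the two ends of $\R^+$. So I need $\rho(k)$ both for $k$ large and for $k$ small. For $\rho(k)=b(k+i0)/a(k+i0)$, note that $|a|\ge1$ by \eqref{eq: a > 1} and $b=\frac{i b_1}{1-2k^*}$ with $b_1$ bounded (Lemma \ref{lem: D rho uniform est}). As $k\to\infty$, $1-2k^*\to1$, so $\rho(k)$ is merely bounded. As $k\to0$, $|k^*|\approx|\log k|$, so $|\rho(k)|\lesssim 1/|\log k|$. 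Hence $|w(k)|\lesssim 1/|\log k^*|\approx 1/k$ as $k\to\infty$. As $k\to 0$ we likewise get $|w(k)|\lesssim 1/|\log k|$. The large-$k$ end gives $\int_1^\infty k^{-2}\,dk<\infty$. The small-$k$ end gives $\int_0^{1/4}|\log k|^{-2}\,dk<\infty$ as well. In both regimes $|w|\to0$, so $|1-w|\ge\frac12$, and the logarithm estimate applies. This proves $\log(1-w)\in L^2(\R^+)$.

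\textbf{Derivative on $(1,\infty)$.} Here $|1-w|\ge\frac12$ for large $k$, and by continuity and compactness $|1-w|$ is bounded below on any $[1,K]$. It therefore suffices to bound $\partial_k w=\rho'(k)\rho(k^*)+\rho(k)\rho'(k^*)\frac{dk^*}{dk}$. The first term is bounded by Lemma \ref{lem: D rho uniform est} ($|\rho'|\le C$ for $k>1$), together with boundedness of $\rho$. For the second term I combine three bounds: $|\rho'(k^*)|\le \frac{C}{k^*|\log k^*|}$, and $|\frac{dk^*}{dk}|\approx ke^{-2k}\approx k^*$ from \eqref{eq: k* bounds k large}, and $|\rho(k)|$ bounded. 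These give a contribution $\lesssim 1/|\log k^*|\approx 1/k$, which is bounded. On the compact part $[1,K]$, which may contain $\frac12^*$-type issues only if $K$ is near nothing special (indeed $k>1$ forces $k^*<\frac12$ and $k^*\neq\frac12$), differentiability from Corollary \ref{cor: tau rho cont} gives boundedness. This yields $\partial_k\log(1-w)\in L^\infty(1,\infty)$.

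The main obstacle is the point $k=\frac12$ for the $L^2$ claim, as discussed above; everything else follows directly from Lemmas \ref{lem: D rho uniform est} and \ref{lem: Dk G uniform} and the asymptotics of $k^*$.
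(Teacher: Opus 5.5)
Your argument away from $k=\tfrac12$ is sound. At $k\to\infty$ it is actually simpler than the paper's route. You use $b(k^*+i0)=\tfrac{ib_1(k^*+i0)}{1-2k}$ to get $|\rho(k^*)|\lesssim 1/k$, hence $|\rho(k)\rho(k^*)|\lesssim 1/k$ directly. The paper instead proves the stronger fact $\rho\in L^2(1,\infty)$ through the splitting $T_L=R_L+S_L$ and Plancherel.

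The genuine gap is at $k=\tfrac12$, and it is worse than you suggest. You also have the two cases reversed. Your primary route needs $|w|$ bounded away from $1$ near $\tfrac12$, but this fails in the \emph{generic} case. If $a_1(\tfrac12+i0)\neq0$ (an open dense condition by Theorem \ref{thm: genericity}), then $\tau(k)=\tfrac{1-2k}{1-2k+ia_1(k+i0)}\to0$ and $\rho(k)\to-1$ as $k\to\tfrac12$ (cf. the proof of Theorem \ref{thm: up low generic}). So $w\to1$, and $\log(1-w)$ really does blow up.

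Your fallback cannot supply the lower bound you need. Differentiability of $a_1,b_1$ at $\tfrac12$ requires $u\in L^1_2$, which is not the standing hypothesis $u\in L^1_1\cap L^2_1$. Even where it holds, a Taylor expansion bounds $|1-w|$ from above, never from below. Indeed, in the generic case the computation in the proof of Theorem \ref{thm: up low generic} shows $\tfrac{d}{dk}(1-w)=0$ at $\tfrac12$. So you only learn $|1-w|=o(|k-\tfrac12|)$, and nothing rules out faster vanishing.

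The missing idea is to apply \eqref{eq: tau rho uni} at both $k$ and $k^*$ and multiply. The weights $\tfrac{2k^*-1}{1-2k}$ and $\tfrac{2k-1}{1-2k^*}$ cancel, giving the exact identity
\[
|\rho(k)\rho(k^*)|^2=(1-|\tau(k)|^2)(1-|\tau(k^*)|^2),\qquad\text{hence}\qquad 1-|w|^2\ge|\tau(k)|^2 .
\]
Only continuity of $a_1(k+i0)$ at $\tfrac12$ is needed (Theorem \ref{thm: a b cont}): it gives $|1-2k+ia_1(k+i0)|\lesssim1$ there, so $|\tau(k)|\gtrsim|1-2k|$. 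Therefore
\[
|1-w|\ge1-|w|\ge\tfrac12(1-|w|^2)\gtrsim|1-2k|^2 .
\]
Since $1-w$ lies in the disc $|z-1|<1$, its argument is bounded, so $|\log(1-w)|\lesssim1+\bigl|\log|1-2k|\bigr|$. This is square integrable near $\tfrac12$, and the argument covers the generic and non-generic cases alike.
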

\begin{proof}
    By Corollary \ref{cor: tau rho cont} and Lemma \ref{lem: rho rho* unit circle}, we only need to control $\log(1-\rho(k)\rho(k^*))$ as $k\searrow 0$, $k\to\frac12$, and $k\nearrow \infty$ in order to show that it's in $L^2(\R^+)$.
    It's easy to see from \eqref{def: G}, \eqref{jost exist: def M}, and \eqref{def: a} that  $G_L(x,0)=i\chi_{\R^+}(x)$, $M_1(x,0)=e^{i\int_{-\infty}^x u(s)~ds}$, and $a(0)=e^{i\int_\R u(s)~ds}$. By Remark \ref{rmk: cont at 0}, we have $a(k+i0)\to a(0)\ne 0$ as $k\searrow 0$. We now consider the limit of $b(k+i0)$ as $k\searrow 0$. In fact, $b_1(k+i0)$ is obviously bounded by \eqref{def: b1}, while $k^*\to \infty$. Thus $b(k+i0)\to 0$ by \eqref{def: b}, and $\rho(k)\to 0$ as $k\searrow 0$. 
    
    As $k\nearrow\infty$, we recall from Corollary \ref{eq: a limit infty} that $a(k)\to 1$. We want to show that $b(k+i0)$, and as a consequence $\rho(k)$, is in $L^2(1,\infty)$. From \eqref{eq: M1 remainder} and \eqref{eq: S_L decay}, we have that
    \begin{align}
        &-i(1-2k^*)b(k+i0)\notag\\
        =&\int_\R e^{-i\lambda x} u(x) \{[(1-R_L)^{-1}1](x)+[(I-R_L)^{-1}S_L1(x)]\}~dx+
        \mathcal O(\tfrac{1}{\lambda}).
    \end{align}
    Note that $u\in L^1\cap L^2$. By \eqref{eq: (I-R) inv} and Plancherel's theorem,
    \begin{align}
        &\int_\R e^{-i\lambda x} u(x) [(1-R_L)^{-1}1](x)~dx\notag\\
        =& 2\pi \hat u(\lambda)+\int_\R \left(e^{\frac{i}{1-2k^*}\int_y^\infty u(t)~dt}-1\right)u(y)e^{-i\lambda y}~dy\notag\\
        =&\int_\R e^{\frac{i}{1-2k^*}\int_y^\infty u(t)~dt}u(y)e^{-i\lambda y}~dy\label{eq: rho(tau) tail first}
    \end{align}
    is in $L^2((1,\infty),d\lambda)=L^2((1,\infty),dk)$ as $k=Z(\lambda)$ and $Z'(\lambda)\approx 1$ for $\lambda$ large. We also have that \eqref{eq: rho(tau) tail first} tends to 0 as $\lambda\nearrow\infty$ by the Riemann-Lebesgue Lemma. Similarly, we have
    \begin{align}
        &\int_\R e^{-i\lambda x} u(x) [(1-R_L)^{-1}S_L1](x)~dx\notag\\
        =&\int_\R e^{\frac{i}{1-2k^*}\int_y^\infty u(t)~dt}u(y)(S_L1)(y)e^{-i\lambda y}~dy.\label{eq: rho(tau) tail}
    \end{align}
    We want to show that \eqref{eq: rho(tau) tail} is bounded by $\frac{C}{k}$ for some $C>0$.
    By \eqref{def: SL(zeta)},
   \begin{equation}\label{eq: SL1}
       [S_L 1](x)=\frac i{1-2k}\int_{-\infty}^x u(y)~dy+[H(\cdot,k+i0)*u](x).
   \end{equation}
   The first term of \eqref{eq: SL1} is obviously bounded by $\frac{C\|u\|_{L^1}}{k}$. By \eqref{eq: rho(tau) tail}, it remains to show that $\|H(\cdot,k+i0)*u\|_{L^2}\le \frac{C}{k}$. Recall from \eqref{def: H} that $\widehat{H}$ is the left hand side of \eqref{eq: G sym decay 3}, which is bounded in $L^\infty$ by $\frac{C}{k}$. Thus $\|H*u\|_{L^2}=\|\widehat{H}\hat u\|_{L^2}\le \frac{C\|u\|_{L^2}}{k}$. In summary, we have shown that $\rho(k)\in L^2(1,\infty)$ and $\rho(k)\to 0$ as $k\to\infty$. It follows that $\rho(k)\rho(k^*)\in L^2(1,\infty)$ and $\rho(k)\rho(k^*)\to 0$ as $k\searrow 0$ and as $k\nearrow\infty$. Thus the same is true for $\log(1-\rho(k)\rho(k^*))$.
   
   It remains to control $\log(1-\rho(k)\rho(k^*))$ as $k\to\frac12$.
    Note that 
    \begin{equation}
        |\log(1-\rho(k)\rho(k^*))|\lesssim 1+\left|\log |1-\rho(k)\rho(k^*)|\right|,
    \end{equation}
    and $|1-\rho(k)\rho(k^*)|\le 2$ by Lemma \ref{lem: rho rho* unit circle}. We only need to show $|1-\rho(k)\rho(k^*)|$ has a lower bound as $k\to\frac12$. Observe that
   \begin{align}
       |1-\rho(k)\rho(k^*)|\ge 1-|\rho(k)\rho(k^*)|&\ge \tfrac12(1+|\rho(k)\rho(k^*)|)(1-|\rho(k)\rho(k^*)|)\notag\\
       &=\tfrac12(1-|\rho(k)\rho(k^*)|^2).
   \end{align}
   By \eqref{eq: rho tau uni sub 1}
   \begin{align}
       1-|\rho(k)\rho(k^*)|^2&=1-(1-|\tau(k)|^2)(1-|\tau(k^*)|^2)\notag\\
       &=|\tau(k)|^2+|\tau(k^*)|^2(1-|\tau(k)|^2)\notag\\
       &\ge |\tau(k)|^2.
   \end{align}
   By \eqref{def: tau} and \eqref{def: a}, 
   \begin{equation}
       \tau(k)=\frac{1-2k}{1-2k+ia_1(k+i0)}.
   \end{equation}
   By Theorem \ref{thm: a b cont}, $a_1(k+i0)$ is continuous at $\frac12$, and so $|1-2k+ia_1(k+i0)|\lesssim 1$ as $k\to\frac12$. It follows that  
   \begin{align}
       |1-\rho(k)\rho(k^*)|\ge |\tau(k)|^2 \gtrsim |1-2k|^2
   \end{align}
   as $k\to \frac12$. Thus
   \begin{equation}
       |\log(1-\rho(k)\rho(k^*))|\lesssim 1+|\log|1-2k||
   \end{equation}
   as $k\to \frac12$. This completes the proof that $\log(1-\rho(k)\rho(k^*))\in L^2(\R^+)$.

   Note that
   \begin{equation}
       \partial_k\log(1-\rho(k)\rho(k^*)) = \frac{-\rho'(k)\rho(k^*)-\rho(k)\rho'(k^*)\frac{dk^*}{dk}}{1-\rho(k)\rho(k^*)}.
   \end{equation}
   Recall that as $k\nearrow \infty$, $\rho(k)\rho(k^*)\to 0$. The $L^\infty$ bounds then follows from Lemma \ref{lem: D rho uniform est} and \eqref{eq: k* bounds k large}. In fact, we have $\rho(k), \rho'(k)\in L^\infty(1,\infty)$, $|\rho(k^*)|\lesssim \frac1{k^*}$, $|\rho'(k^*)|\lesssim \frac{1}{k^*|\log k^*|}$, $|\frac{dk^*}{dk}|\lesssim {k^*}$ for $k>1$. We actually also have $\partial_k \log(1-\rho(k)\rho(k^*))\to 0$ as $k\nearrow \infty$.
\end{proof}

The above lemmas imply that $a(\zeta)$ can be recovered from the function $\rho(k)$, $k\in \R^+\setminus \{\frac12\}$ by solving an RH problem. More precisely, 

\begin{theorem}\label{thm: a from rho}
    Let $u\in L^1_1\cap L^2_1$ be real-valued with small norm, and assume $a(\zeta)$ has finitely many zeros $\zeta_j$, $j=1,\dots,N$ in $D_\delta^+$. Then
    \begin{equation}\label{eq: a from rho}
        a(\zeta)=\exp\left(-\frac1{2\pi i}\int_0^\infty\frac{\log(1-\rho(k)\rho(k^*))}{k-\zeta}~dk\right)\prod_{j=1}^N\frac{\zeta-\zeta_j}{\zeta-\zeta_j^*}.
    \end{equation}
\end{theorem}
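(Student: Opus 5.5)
The strategy is to divide $a$ by the finite Blaschke-type product, so that the quotient is zero-free. Its logarithm then solves a scalar additive Riemann--Hilbert problem on $\R^+$ with jump $-\log(1-\rho(k)\rho(k^*))$, and Liouville's theorem identifies it with the Cauchy integral. Concretely, set
\[
F(\zeta)=a(\zeta)\prod_{j=1}^N\frac{\zeta-\zeta_j^*}{\zeta-\zeta_j},\qquad \zeta\in\C\setminus\R^+ .
\]

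First, $F$ is holomorphic and nonvanishing on $\C\setminus\R^+$. By Theorem \ref{thm: general meromorphy of M N} and Theorem \ref{thm: bound states and a}(i)--(ii), $a$ is analytic off $\R^+$ except for the simple poles at $\zeta_j^*$. These are the zeros of $\breve a$, via $a\breve a=1$ in Lemma \ref{lem: M=aN}. The only zeros of $a$ are the simple zeros $\zeta_j$, so the product cancels both exactly.

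Second, $F$ has continuous nonvanishing boundary values on $\R^+\pm i0$ away from $k=\frac12$. For the lower side we use $a(k-i0)$, which is defined off $D^-_\delta$ and on $D^-_\delta$ via $a=1/\breve a$, with $\breve a(k-i0)\ne0$ by \eqref{eq: a > 1}. The product has no jump across $\R^+$, so \eqref{eq: a jump} gives
\[
F(k+i0)=F(k-i0)\,[1-\rho(k)\rho(k^*)]^{-1}.
\]
By Lemma \ref{lem: rho rho* unit circle}, $1-\rho\rho^*$ lies in the disc of radius $1$ about $1$, so its principal logarithm is well defined. Since $\C\setminus\R^+$ is simply connected and $F\ne0$ there, we can fix a branch $\log F$ with $\log F\to0$ at infinity. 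This normalization is consistent because Corollary \ref{eq: a limit infty} gives $a=1+O(1/\zeta)$, and the product is also $1+O(1/\zeta)$. Then
\[
\log F(k+i0)-\log F(k-i0)=-\log(1-\rho(k)\rho(k^*))+2\pi i\,n(k)
\]
for some integer-valued $n(k)$. By continuity $n$ is locally constant on $(0,\frac12)$ and on $(\frac12,\infty)$. Both sides tend to $0$ as $k\to\infty$: the jump of $\log F$ because $\log F\to 0$, and $\log(1-\rho\rho^*)$ by Lemma \ref{lem: log(1-rho rho*) bound}. Hence $n=0$ on $(\frac12,\infty)$. Near $k=0$, continuity of $a$ at $0$ (Remark \ref{rmk: cont at 0}) with $a(0)=e^{i\int u}$ and $\rho(k)\to0$ gives $n=0$ on $(0,\frac12)$ as well.

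Third, let $\Phi(\zeta)=-\frac1{2\pi i}\int_0^\infty\frac{\log(1-\rho\rho^*)}{k-\zeta}\,dk$, which is well defined because $\log(1-\rho\rho^*)\in L^2(\R^+)$ by Lemma \ref{lem: log(1-rho rho*) bound}. By the Plemelj formula, $\Phi$ has the same jump as $\log F$, so $E=\log F-\Phi$ is holomorphic on $\C\setminus\R^+$ with no jump across $(0,\frac12)\cup(\frac12,\infty)$. Since the jump is continuous away from $0$ and $\frac12$, Morera's theorem shows that $E$ extends holomorphically across $\R^+\setminus\{0,\frac12\}$. Both $\log F$ and $\Phi$ tend to $0$ at infinity; for $\Phi$ this follows from the $L^2$ bound away from $\R^+$, and along $\R^+$ from the tail behavior in Lemma \ref{lem: log(1-rho rho*) bound}. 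So once the points $0,\frac12$ are shown removable, Liouville's theorem gives $E\equiv0$, and exponentiating yields \eqref{eq: a from rho}.

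The main obstacle is this removability at $0$ and $\frac12$. At $0$ I would use that $a$ is continuous and nonzero, so $\log F$ is bounded, while $\Phi$ has at most logarithmic growth for an $L^2$ density that vanishes as $k\to0$. Hence $E=o(|\zeta|^{-1})$ near $0$, which makes the singularity removable. At $\frac12$ the difficulty is real, because $a$ may blow up like $(1-2\zeta)^{-1}$ in the upper-generic case, so $\log F=O(|\log|\zeta-\tfrac12||)$. On the other side, $\log(1-\rho\rho^*)=O(1+|\log|1-2k||)$ by the estimate at the end of the proof of Lemma \ref{lem: log(1-rho rho*) bound}, so $\Phi$ also has at most logarithmic singularity at $\frac12$. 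The difference $E$ is therefore $O(|\log|\zeta-\tfrac12||)=o(|\zeta-\tfrac12|^{-1})$, and the singularity is again removable. The step that needs the most care is checking the two-sided bound on $\log a$ near $\frac12\pm i0$. This combines continuity of $a_1$ at $\frac12+i0$ (Theorem \ref{thm: a b cont}), the formula $a=1+\frac{ia_1}{1-2\zeta}$, and control of $\breve a$ near $\frac12-i0$. The bound from below uses $|a(k+i0)|\ge1$ on $\R^+$ together with the fact that no zeros of $a$ accumulate at $\frac12$, which holds since there are finitely many.
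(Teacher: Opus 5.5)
Your route is the paper's. There, $\tilde a$ is defined as the right side of \eqref{eq: a from rho}, Plemelj--Privalov gives the jump \eqref{eq: a jump}, zeros and poles are matched, and $\tilde a/a$ is shown to be a bounded entire function tending to $1$. Your $E=\log F-\Phi$ is just $-\log(\tilde a/a)$. The additive version costs you the branch bookkeeping, which you handle correctly ($n\equiv0$ from the limits at $0$ and $\infty$); the multiplicative comparison avoids it. The paper disposes of the points $0$ and $\tfrac12$ with ``a standard argument using Morera's theorem near $\R^+$''. You are right that isolated singularities there must be excluded, but your argument at $\tfrac12$ does not do it.

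The failing step is the bound $\log F=O(|\log|\zeta-\tfrac12||)$.

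In the upper half-disc, $|a(k+i0)|\ge1$ on the diameter plus the absence of zeros near $\tfrac12$ gives no polynomial lower bound for $|a|$. The function $\exp\bigl(-i/(\zeta-\tfrac12)\bigr)$ has modulus $1$ on $\R$, has no zeros, and has modulus $e^{-\sin\theta/r}$ at $\zeta=\tfrac12+re^{i\theta}$. It is even of the form $1+\frac{ia_1}{1-2\zeta}$ with $a_1$ continuous at $\tfrac12+i0$ (necessarily $a_1(\tfrac12+i0)=0$).

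In the lower half-disc $a=1/\breve a$, so you need two-sided control of $\breve a$ near $\tfrac12-i0$, which you do not supply.

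What closes this is genericity in the sense of Definition \ref{def: genericity}. If $a_1(\tfrac12+i0)\ne0$ and $\breve a_1(\tfrac12-i0)\ne0$, then by Lemma \ref{lem: a b cont k=1/2} you get $(1-2\zeta)a(\zeta)\to ia_1(\tfrac12+i0)$ in the closed upper half-disc and $a(\zeta)/(1-2\zeta)\to i/\breve a_1(\tfrac12-i0)$ in the closed lower one. That gives the logarithmic bound. For $u\in L^1_2\cap L^2_1$ the two conditions coincide by Theorem \ref{thm: up low generic}. The theorem only assumes finitely many zeros, so without genericity some further input is needed.

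Separately, a pointwise bound on the density does not bound the Cauchy integral $\Phi$ near the contour. So ``at most logarithmic growth'' of $\Phi$ at $0$ and $\tfrac12$ is unjustified as stated. A cleaner criterion is that the singularity at $\zeta_0$ is removable once $\oint_{|\zeta-\zeta_0|=r}|E|\,|d\zeta|\to0$, since then every negative Laurent coefficient vanishes. For $\Phi$ this holds with rate $O(\sqrt r)$: arc length on circles is a Carleson measure with a uniform constant, so Cauchy integrals of $L^2$ densities are uniformly bounded in $L^2$ of circles. For $\log F$ it follows from the bounds above (boundedness at $0$, logarithmic growth at $\tfrac12$).
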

\begin{proof}
    Denote the right hand side of \eqref{eq: a from rho} by $\tilde a(\zeta)$. By the Plemelj-Privalov theorem, $\tilde a(\zeta)$ is holomorphic in $\mathbb C\setminus [0,\infty)$ with boundary values on $\R^+\pm i0$ satisfying \eqref{eq: a jump} (with $a$ replaced by $\tilde a$). $\tilde a(\zeta)$ matches the zeros and poles of $a(\zeta)$ by Theorem \ref{thm: bound states and a} (i), (ii). Since all zeros of $a$ are simple, we conclude that $\frac{\tilde a(\zeta)}{a(\zeta)}$ has no poles and is in fact entire by a standard argument using Morera's theorem near $\R^+$. Note that $a(\zeta)\to 1$ as $\zeta\to\infty$ in $\widetilde{\C}$ by Corollary \ref{eq: a limit infty}. Also, by the standard $L^2$ and H\"older theory of the Cauchy integral, Lemma \ref{lem: log(1-rho rho*) bound} implies that $\tilde a(\zeta)$ is bounded for $\zeta\in \widetilde{\C}$, $|\zeta|\ge 10$. It follows that $\frac{\tilde a(\zeta)}{a(\zeta)}$ is a bounded entire function, thus must be a constant. Note that $\tilde a(\zeta)\to 1$ as $\zeta\to\infty$ with $|\arg \zeta|>\alpha$ for any $\alpha>0$, Thus $\frac{\tilde a(\zeta)}{a(\zeta)}=1$.
\end{proof}
%\begin{remark}
 %   Theorem \ref{thm: a from rho} accidentally implies that the poles $\zeta_j^*$ of $a$ are also simple. This can be established more directly, without assuming $a$ has finitely many zeros, by proving an analogous identity to \eqref{eq: a' identity} for $\breve a'(\zeta_j^*)$. In fact, one could establish a dual theory for $\breve a$ and $M_1$ in parallel to $a$ and $N_1$. We omit the details for simplicity.
%\end{remark}

To extract the next order of the asymptotic limit of $M_1$ and $N_1$, we impose the extra assumptions that $Du\in L^1$ and $D^2 u \in L^2$. A convenient space that meets all the requirements is $H^{2,2}$. Recall that $H^{2,2}$ is the space of functions $f$ such that $\|f\|_{H^{2,2}}=\sum_{j=0}^2\|\langle x \rangle ^{2-j}D^jf\|_{L^2}<\infty$. We first prove a simple lemma for convenience of later estimates.

\begin{lemma}\label{lem: H^2,2 Fourier decay}
    There exists a $C>0$ such that for all $u\in H^{2,2}$, $x\in\mathbb R$ and $\lambda\in \mathbb C$ with $\imag \lambda\ge 0$,
    \begin{equation}
        \left|\int_{-\infty}^x e^{i\lambda (x-y)}Du(y)~dy\right|\le \frac{C}{\sqrt{|\lambda|}}\|u\|_{H^{2,2}}.
    \end{equation}
\end{lemma}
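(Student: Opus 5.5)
The plan is to split into the regime $|\lambda|\le 1$ and the regime $|\lambda|\ge 1$, and to interpolate between a trivial bound and an integration-by-parts bound. Throughout, $\imag\lambda\ge 0$ and $y\le x$ give $|e^{i\lambda(x-y)}|\le 1$.

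First I record the trivial bound
\[
\left|\int_{-\infty}^x e^{i\lambda(x-y)}Du(y)\,dy\right|\le \|Du\|_{L^1}\le C\|\langle x\rangle Du\|_{L^2}\le C\|u\|_{H^{2,2}}.
\]
This settles the case $|\lambda|\le 1$.

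For $|\lambda|\ge 1$, I integrate by parts once, using $e^{i\lambda(x-y)}=\frac{\partial_y e^{i\lambda(x-y)}}{-i\lambda}$. Since $u\in H^{2,2}$, $Du$ is continuous and decays at $-\infty$, so
\[
\int_{-\infty}^x e^{i\lambda(x-y)}Du(y)\,dy=\frac{Du(x)}{-i\lambda}+\frac{1}{i\lambda}\int_{-\infty}^x e^{i\lambda(x-y)}\partial_yDu(y)\,dy .
\]
The second term is bounded by $|\lambda|^{-1}\|D^2u\|_{L^1}$. Because $\langle x\rangle^{-1}\in L^2$, Cauchy--Schwarz would need a weight on $D^2u$, and $H^{2,2}$ carries none at that order. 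So the full $|\lambda|^{-1}$ rate is not available; this is exactly why only $|\lambda|^{-1/2}$ is claimed.

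To handle this, I will avoid $D^2u\in L^1$ and use a cutoff at scale $\epsilon=|\lambda|^{-1}$. I split $\int_{-\infty}^x=\int_{x-\epsilon}^x+\int_{-\infty}^{x-\epsilon}$.
\begin{itemize}
\item On the short piece $\int_{x-\epsilon}^x$, Cauchy--Schwarz gives a bound $\sqrt{\epsilon}\,\|Du\|_{L^2}$.
\item On the long piece $\int_{-\infty}^{x-\epsilon}$, I integrate by parts. The boundary term is $|\lambda|^{-1}|Du(x-\epsilon)|\le |\lambda|^{-1}\|Du\|_{L^\infty}\lesssim |\lambda|^{-1}\|u\|_{H^2}$.
\item The remaining integral is $|\lambda|^{-1}\int_{-\infty}^{x-\epsilon}e^{i\lambda(x-y)}D^2u(y)\,dy$. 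Here I still need an $L^1$-type control of $D^2u$, which $H^{2,2}$ does not give.
\end{itemize}

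The resolution is to treat $D^2u\in L^2$ through a damped estimate rather than an $L^1$ one. By Cauchy--Schwarz,
\[
\left|\int_{-\infty}^{x} e^{i\lambda(x-y)}f(y)\,dy\right|\le \|e^{-\imag\lambda\, s}\|_{L^2(\R^+)}\|f\|_{L^2},
\]
but this uses $\imag\lambda$, which may vanish. So instead I work with the weighted norm. The step $\|D^2u\|_{L^1}\le \|\langle x\rangle D^2 u\|_{L^2}\|\langle x\rangle^{-1}\|_{L^2}$ would require the weight $\langle x\rangle D^2u$. I expect this weighted $D^2u$ control, equivalently $D^2u\in L^1$, to be the main obstacle, and I will check whether the paper's $H^{2,2}$ norm is intended to include it.

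If $\|D^2u\|_{L^1}$ is not available, the fallback is the Fourier-side argument. Write the integral as $\big(e^{i\lambda\cdot}\chi_{\R^+}\big)*Du$, whose transform is $\frac{i\,\xi\hat u(\xi)}{\xi-\lambda}$ up to constants. Its inverse transform is bounded by
\[
\int\frac{|\xi||\hat u(\xi)|}{|\xi-\lambda|}\,d\xi .
\]
I split this integral at $|\xi-\lambda|\ge|\lambda|/2$ and $|\xi-\lambda|<|\lambda|/2$.
\begin{itemize}
\item Where $|\xi-\lambda|\ge|\lambda|/2$, I use $|\xi|/|\xi-\lambda|\lesssim 1+|\xi|/|\lambda|$ together with $\langle\xi\rangle^2\hat u\in L^2$. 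Cauchy--Schwarz then gives $|\lambda|^{-1/2}$, since $\big\|\,|\xi|\langle\xi\rangle^{-2}\mathbf 1_{|\xi|\gtrsim|\lambda|}\big\|_{L^2}\sim|\lambda|^{-1/2}$.
\item Where $|\xi-\lambda|<|\lambda|/2$, we have $|\xi|\sim|\lambda|$. I use $\xi^2\hat u\in L^2$ and the smoothness of $\hat u$ coming from $\langle x\rangle^2u\in L^2$. This lets me treat $\frac{1}{\xi-\lambda}$ as a principal value, costing a Hilbert-transform bound of $\hat u$ on a window of size $|\lambda|$, which again gives $|\lambda|^{-1/2}$.
\end{itemize}

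Here the boundedness of the Hilbert transform, uniform as $\imag\lambda\searrow0$, replaces the missing $L^1$ control. This is the route I would actually pursue.
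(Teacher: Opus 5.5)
There is a genuine gap. You identify the obstacle correctly, but neither of your routes overcomes it.

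\textbf{Real-space attempt.} The $H^{2,2}$ norm, $\sum_{j=0}^2\|\langle x\rangle^{2-j}D^ju\|_{L^2}$, puts no weight on $D^2u$, so $\|D^2u\|_{L^1}$ is indeed unavailable. Cutting at distance $|\lambda|^{-1}$ from $y=x$ does not help. The long piece is still a half-line, and integrating by parts there again requires $D^2u\in L^1$.

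\textbf{Fourier fallback.} This does not close either. The majorant $\int|\xi||\hat u(\xi)|/|\xi-\lambda|\,d\xi$ diverges logarithmically as $\imag\lambda\searrow0$ unless $\hat u(\real\lambda)=0$, so it cannot be the uniform bound. In the resonant region you would need a pointwise bound, at the single point $\lambda$, on the truncated Hilbert transform of $\xi\hat u(\xi)e^{ix\xi}$, uniformly in $x$ and in $\imag\lambda$. The $L^2$-boundedness of the Hilbert transform on a window of width $|\lambda|$ gives no such pointwise bound. That step is exactly where the rate $|\lambda|^{-1/2}$ must come from, and your proposal asserts it rather than proves it.

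\textbf{The missing idea.} Localize in physical space around the origin at scale $|\lambda|$, not around $x$ at scale $|\lambda|^{-1}$. Take $|\lambda|\ge 1$; your trivial bound already covers $|\lambda|\le1$. Split $\{y\le x\}$ into $\{|y|\le|\lambda|\}$ and $\{|y|\ge|\lambda|\}$.

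On the first set, integrate by parts. The boundary terms are $\lesssim|\lambda|^{-1}\|Du\|_{L^\infty}\lesssim|\lambda|^{-1}\|u\|_{H^2}$. The interior term is at most
\[
\frac1{|\lambda|}\int_{|y|\le|\lambda|}|D^2u|\,dy\le\frac{\sqrt{2|\lambda|}}{|\lambda|}\,\|D^2u\|_{L^2}.
\]
So $D^2u$ is needed in $L^1$ only on a window of length $2|\lambda|$, and Cauchy--Schwarz costs exactly $\sqrt{|\lambda|}$ there.

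On the second set, do not integrate by parts. The weight on $Du$ gives
\[
\int_{|y|\ge|\lambda|}|Du|\,dy\le\Big(\int_{|y|\ge|\lambda|}\langle y\rangle^{-2}\,dy\Big)^{1/2}\|\langle y\rangle Du\|_{L^2}\lesssim|\lambda|^{-1/2}\|u\|_{H^{2,2}}.
\]
Both pieces are $O(|\lambda|^{-1/2})$, and this balance is where the exponent comes from. This is the paper's proof.

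A frequency-side proof can also be made to work, but it needs three things your sketch lacks:
\begin{itemize}
\item Cancellation is exploited only where $|\xi-\real\lambda|\lesssim|\lambda|^{-1}$, the dual of the spatial scale $|\lambda|$.
\item The rest of the resonant window is handled with absolute values and Cauchy--Schwarz.
\item You need pointwise bounds such as $|\xi|^{3/2}|\hat u(\xi)|+|\xi|^{1/2}|\hat u'(\xi)|\lesssim\|u\|_{H^{2,2}}$.
\end{itemize}
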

\begin{proof}
    We decompose the integral into the two regions: $I_1$ is the integral on $y\le x, |y|\le |\lambda|$; $I_2$ is the integral on $y\le x, |y|\ge |\lambda|$. We integrate $I_1$ by part to get 
    \begin{equation}\label{eq: I1 first}
        I_1=\frac{1}{\lambda}\int_{y\le x, |y|\le|\lambda|}e^{i\lambda(x-y)}D^2u(y)~dy + I_3,
    \end{equation}
    where the boundary terms $I_3$ can be estimated by
    \begin{equation}
        |I_3|\lesssim \frac{\|Du\|_{L^\infty}}{|\lambda|}\lesssim \frac{\|u\|_{H^2}}{|\lambda|}.
    \end{equation}
    The integral in \eqref{eq: I1 first} can be estiamted as
    \begin{equation}
        \int_{|y|\le |\lambda|} |D^2 u|^2~dy\le \left(\int_{|y|\le|\lambda|}dy\right)^{\frac12}\|D^2 u\|_{L^2}\le \sqrt{|\lambda|}\|u\|_{H^2}.
    \end{equation}
    Together we get
    $$|I_1|\lesssim\frac{\|u\|_{H^2}}{\sqrt{|\lambda|}}.$$
    Finally
    \begin{equation*}
        |I_2|\le \int_{|y|\ge |\lambda|}|Du(y)|~dy\le \left(\int_{|y|\ge |\lambda|}\langle y\rangle^{-2}dy\right)^{\frac12}\|\langle y\rangle Du(y)\|_{L^2}\lesssim\frac1{\sqrt{|\lambda|}}\|u\|_{H^{2,2}}.
    \end{equation*}
\end{proof}

\begin{lemma}\label{lem: M N first order remainder}
    Assume $u\in H^{2,2}$ with sufficiently small norm. Define the reminders
    \begin{align}
        R_M(\cdot,\zeta)=M_1(\cdot,\zeta)-1-\frac{i}{2\zeta}\left(T_+u-\frac12\int_\R u ~dx\right),\\
        R_N(\cdot,\zeta)=N_1(\cdot,\zeta)-1-\frac{i}{2\zeta}\left(T_+u+\frac12\int_\R u ~dx\right).
    \end{align}
    Then there exists a $C=C(u)>0$ such that for sufficiently large $ \zeta \in\widetilde{\mathbb C}\cup (\R\pm i0) $,
    \begin{equation}
        \|R_M(\cdot,\zeta)\|_{L^\infty} \le \frac{C}{|\zeta|^{3/2}},
    \quad 
        \|R_N(\cdot,\zeta)\|_{L^\infty} \le \frac{C}{|\zeta|^{3/2}},
    \end{equation}
    and for every sufficiently small $\alpha>0$, there exists a $C=C(u,\alpha)>0$, such that for $ \zeta \in \widetilde{\mathbb C}\cup (\R\pm i0) $ sufficiently large, $|\arg\zeta|>\alpha$, we have
    \begin{equation}\label{eq: M1 first order remainder}
        \|R_M(\cdot,\zeta)\|_{L^\infty} \le \frac{C}{|\zeta|^2},\quad
        \|R_N(\cdot,\zeta)\|_{L^\infty} \le \frac{C}{|\zeta|^2}.
    \end{equation}
\end{lemma}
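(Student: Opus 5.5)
The plan is to expand $M_1=(I-T_L(\zeta))^{-1}1$ to second order, $M_1=1+T_L1+T_L(M_1-1)$, and to show that $T_L1$ produces the stated first-order term while $T_L(M_1-1)$ is of higher order. The argument for $N_1$ is the same with $T_R$, $G_R$ and the conjugate estimates. The sign of the mean term changes because of \eqref{GL-GR}: the two differ by $\frac{i}{1-2\zeta}\int u$, and $\frac{i}{2\zeta-1}\approx\frac{i}{2\zeta}$.

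\emph{Sector $|\arg\zeta|\ge\alpha$.} Here we have $T_L(\zeta)1=G_L(\cdot,\zeta)*u$. Applying \eqref{eq: G decay 5} with $f=u$ gives
\[
\Bigl\|T_L1-\tfrac{i}{2\zeta-1}\bigl(T_+u-\tfrac12\textstyle\int u\bigr)\Bigr\|_{L^\infty}\lesssim |\zeta|^{-2}\|\langle\xi\rangle\hat u\|_{L^1}\lesssim|\zeta|^{-2}\|u\|_{H^{2,2}} .
\]
Replacing $\frac{i}{2\zeta-1}$ by $\frac{i}{2\zeta}$ costs $O(|\zeta|^{-2})$, since $T_+u\in L^\infty$ when $u\in H^{2,2}$; this follows from Lemma~\ref{lem: bdry val Sf}. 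For the remainder $T_L(M_1-1)$, the first step gives $\|M_1-1\|_{L^\infty}\lesssim|\zeta|^{-1}$, because $T_L1=O(|\zeta|^{-1})$ and $\|T_L\|_{L^\infty\to L^\infty}\le\frac12$ by \eqref{eq: TL norm decay}. Using $\|T_L\|_{L^\infty\to L^\infty}\lesssim|\zeta|^{-1/2}$ alone only yields $|\zeta|^{-3/2}$. To reach $|\zeta|^{-2}$ I will iterate once more and write $M_1-1=T_L1+T_L^2M_1$. Then I will estimate $T_L(u\,g)$ with $g=T_L1$ by applying \eqref{eq: G decay 5} again, now with $f=ug$ in place of $u$. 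This requires $\langle\xi\rangle\widehat{ug}\in L^1$ with norm $O(|\zeta|^{-1})$, so I need one derivative of $T_L1$ in $L^2$, of size $O(|\zeta|^{-1})$. That bound comes from the symbol bound \eqref{eq: G sym decay 1} together with $D$-regularity of $u$. The last term, $T_L^3M_1$, contributes $|\zeta|^{-3/2}$ and must also be pushed to $|\zeta|^{-2}$ by the same mechanism, since each application of $T_L$ to a function $f$ with $\langle\xi\rangle \hat{(uf)}\in L^1$ gains a full factor $|\zeta|^{-1}$.

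\emph{Sector $|\arg\zeta|<\alpha$ (the main obstacle).} Here I use the splitting $T_L=R_L+S_L$ from \eqref{def: R_L 1}, \eqref{def: SL(zeta)}, \eqref{def: R_L 2}. Estimate \eqref{eq: G decay 2} shows that $S_L1$, together with the $\frac{\pm i\chi_{\mathbb R^\pm}}{1-2\zeta^*}e^{i\lambda\cdot}$ correction, reproduces $\frac{i}{2\zeta}(T_-u-\frac12\int u)$ up to $O(|\zeta|^{-2})$. The term $R_L1$, which has prefactor $\frac{1}{1-2\zeta^*}\approx1$ since $\zeta^*\to0$, is $\int_{-\infty}^xe^{i\lambda(x-y)}u(y)\,dy$. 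Integrating by parts once gives $-\frac{u(x)}{i\lambda}+\frac1{i\lambda}\int_{-\infty}^xe^{i\lambda(x-y)}u'(y)\,dy$, and by Lemma~\ref{lem: H^2,2 Fourier decay} the last term is $O(|\lambda|^{-3/2})=O(|\zeta|^{-3/2})$ because $\lambda\sim\zeta$ by \eqref{eq: zeta near +inf}. Since $T_\pm=T\pm i$, the difference between $T_-u$ and $T_+u$ is $-2iu$, and the $-\frac{u}{i\lambda}$ term together with $\frac{1}{1-2\zeta^*}-1$ should account exactly for this discrepancy. Verifying this cancellation at order $1/\zeta$, with $\lambda-\zeta=o(1)$ and in fact exponentially small, is the delicate bookkeeping step. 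The higher-order terms are handled through the Neumann series \eqref{eq: M1 remainder}. For this I use $\|(I-R_L)^{-1}\|_{L^\infty\to L^\infty}\lesssim1$ from \eqref{eq: (I-R) inv} and $\|S_L\|_{L^\infty\to L^\infty}\lesssim|\zeta|^{-1/2}$. Each remaining term is at least quadratic in quantities of size $O(|\zeta|^{-1})$ or $O(|\zeta|^{-1/2}\cdot|\zeta|^{-1})$, which gives the weaker bound $|\zeta|^{-3/2}$ in this sector.
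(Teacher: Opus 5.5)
The sector $|\arg\zeta|\ge\alpha$ is sound but takes a different route from the paper, and the sector $|\arg\zeta|<\alpha$ has a genuine gap.

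\textbf{Sector $|\arg\zeta|\ge\alpha$.} You iterate $M_1-1=T_L1+T_L^21+T_L^2(M_1-1)$ and need regularity only of $T_L1=G_L*u$. The paper instead applies \eqref{eq: G decay 5} once with $f=uM_1$ and feeds in the bound $\|DM_1\|_{L^\infty}\lesssim|\zeta|^{-1}$. Two small corrections to your version:
\begin{itemize}
\item The last term is already $T_L^3M_1=T_L^2(M_1-1)=O(|\zeta|^{-1/2}\cdot|\zeta|^{-1/2}\cdot|\zeta|^{-1})$ by operator norms, so nothing needs to be pushed further.
\item For $G_L*(u\,T_L1)$ you do not need $\langle\xi\rangle\widehat{u\,T_L1}\in L^1$, which one derivative in $L^2$ would not give. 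What you need is $\|u\,T_L1\|_{H^1\cap L^1}\lesssim|\zeta|^{-1}$, available from \eqref{eq: G sym decay 1} and $u\in H^2$, together with Cauchy--Schwarz against $(|\zeta|+|\xi|)^{-1}\in L^2$.
\end{itemize}

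\textbf{Sector $|\arg\zeta|<\alpha$: the gap.} The operator $R_L(\zeta)$ of \eqref{def: R_L 2} does not become small. For $\zeta\in\R^++i0$ the number $\lambda$ is real, and testing on $f(y)=e^{i\lambda y}\sgn u(y)$ gives $\|R_L\|_{L^\infty\to L^\infty}\ge\|u\|_{L^1}/|1-2\zeta^*|$. The series \eqref{eq: M1 remainder} expands only in $S_L$ and keeps $(I-R_L)^{-1}=I+R_L(I-R_L)^{-1}$ whole. So after subtracting $R_L1+S_L1$ you are left with terms such as $R_L(I-R_L)^{-1}S_L(I-R_L)^{-1}1$ and $R_L(I-R_L)^{-1}R_L1$.

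Each of these is an $O(1)$ operator applied to an $O(|\zeta|^{-1})$ function. The bounds you cite, $\|(I-R_L)^{-1}\|\lesssim1$ and $\|S_L\|\lesssim|\zeta|^{-1/2}$, therefore give only $O(|\zeta|^{-1})$; these terms are not quadratic in small quantities. Equivalently, in your original splitting $T_L(M_1-1)$ is only $O(|\zeta|^{-1})$ in this sector. $R_Lg$ gains a factor $|\lambda|^{-1}$ only after integrating by parts in $\int_{-\infty}^xe^{i\lambda(x-y)}u g\,dy$, which needs a good bound on $\partial_x(ug)$ in $L^1$. If $g$ resonates with $e^{i\lambda x}$ there is no gain at all. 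You supply such derivative control only for $g=1$.

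\textbf{What is missing.} You need a derivative estimate for the Jost function itself in this sector. The paper differentiates the integral equation,
$DM_1=(I-T_L)^{-1}[G_L*((M_1-1)Du)+G_L*Du]$. It then uses $\|M_1-1\|_{L^\infty}\lesssim|\zeta|^{-1}$, together with Lemma \ref{lem: H^2,2 Fourier decay} for the oscillatory part of $G_L*Du$, to get $\|DM_1\|_{L^\infty}\lesssim|\zeta|^{-1/2}$. Hence $\|\partial_x(u(M_1-1))\|_{L^1}\lesssim|\zeta|^{-1/2}$.

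With this in hand the paper avoids expanding altogether. It applies \eqref{eq: G decay 2} with $f=uM_1$ and integrates by parts once in the single term $\int_{-\infty}^xe^{i\lambda(x-y)}uM_1\,dy$. The boundary term $-uM_1/(i\lambda)$ produces exactly the $T_-\to T_+$ shift you anticipated. That cancellation is a one-line check, not a delicate step: $\frac{u}{\zeta}-\frac{u}{(1-2\zeta^*)\lambda}$ is exponentially small relative to $|\zeta|^{-1}$, since $\lambda=\zeta-\zeta^*$ and $\zeta^*\lambda\to0$ exponentially. What remains is $|\zeta|^{-1}\cdot O(|\zeta|^{-1/2})$.

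To rescue your decomposition you would need analogous derivative bounds for every iterate, such as $(I-R_L)^{-1}S_L(I-R_L)^{-1}1$. That amounts to the same work as the paper's derivative estimate on $M_1$.
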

\begin{remark}
    The small norm assumption on $u$ was posed only for the existence theory of Section \ref{sec: existence} to apply so that $M_1$, $N_1$ are defined for almost all $\zeta$. However, since $G_L$, $G_R$ decay in $\zeta$ as $\zeta\to\infty$, the smallness assumption can be dropped if we only define $M_1$ and $N_1$ for $\zeta$ large.
\end{remark}
\begin{proof}
    We only show the proof for $R_M$. The proof for $R_N$ is similar, if only we notice \eqref{GL-GR}.
    We first consider the case $|\arg \zeta|>\alpha$. Take the derivative of \eqref{jost prty: def M} to get
    \begin{equation}\label{eq: DM regularity}
        DM_1=(I-T_L)^{-1}(G_L*(M_1 Du)).
    \end{equation}
    Recall that $\|(I-T_L(\zeta))^{-1}\|_{L^\infty\to L^\infty}$ is uniformly bounded by \eqref{eq: TL norm decay}, and $\|M_1(\cdot,\zeta)\|_{L^\infty}$ is uniformly bounded. We get uniform boundedness of $\|DM_1(\cdot,\zeta)\|_{L^\infty}$ since $Du\in L^1\cap L^2$ and $G_L$ satisfies \eqref{eq: G Lp est 3}, \eqref{eq: G decay 4}. By \eqref{eq: G decay 5}, 
    \begin{align}\label{eq: M1 remainder sub 1}
        &~\left\|M_1(\cdot,\zeta)-1-\frac{i}{2\zeta-1}\left(T_+(uM_1(\cdot,\zeta))-\frac12\int_\R uM_1(x,\zeta) ~dx\right)\right\|_{L^\infty}\notag\\
        \lesssim&~ \frac{1}{|\zeta|}\left\|\frac{\langle \xi\rangle }{|\zeta|+|\xi|}\widehat{uM_1}(\xi)\right\|_{L^1}.
    \end{align}
    Note that $\|uM_1(\cdot,\zeta)\|_{H^1\cap L^1}$ is uniformly bounded, and $T_+$ is bounded from $H^1\cap L^1$ to $H^1+L^\infty\subset L^\infty$ (see \eqref{eq: T bounded}). \eqref{eq: M1 remainder sub 1} implies 
    \begin{equation}\label{eq: M1-1 L infty}
        \|M_1-1\|_{L^\infty}\lesssim \frac{1}{|\zeta|}
    \end{equation}
     for $|\zeta|$ sufficiently large. 
     Rewrite \eqref{eq: DM regularity} as
     \begin{equation}\label{eq: DM regularity 1}
         DM_1=(I-T_L)^{-1}[G_L*((M_1-1)Du)+G_L*Du].
     \end{equation}
     We estimate by using \eqref{eq: G Lp est 3} and \eqref{eq: G decay 4} again that
     \begin{align}
         \|G_L*((M_1-1)Du)\|_{L^\infty}&\le \|G_L\|_{L^\infty+L^2}\|(M_1-1)Du\|_{L^1\cap L^2}\notag\\
         &\lesssim \frac{1}{|\zeta|^{1/2}}\|M_1-1\|_{L^\infty}\|Du\|_{L^1\cap L^2}\notag\\
         &\lesssim \frac{1}{|\zeta|^{3/2}},\label{eq: DM reg split 1}
         \end{align}
     and by \eqref{eq: G decay 5} that
     \begin{equation}\label{eq: DM reg split 2}
         \|G_L*(Du)\|_{L^\infty}\lesssim \frac{1}{|\zeta|}(\|Du\|_{H^1\cap L^1}+\|\langle \xi\rangle \hat u\|_{L^1})\lesssim \frac{1}{|\zeta|}.
     \end{equation}
     It follows from \eqref{eq: DM regularity 1}, \eqref{eq: DM reg split 1}, \eqref{eq: DM reg split 2} that $\|DM_1\|_{L^\infty}\lesssim \frac{1}{|\zeta|}$. Thus 
     \begin{equation}\label{eq: u(M-1) sub 1}
         \|u(M_1-1)\|_{H^1}\lesssim \frac{1}{|\zeta|}.
     \end{equation}This implies that
    \begin{equation}\label{eq: uM1 approx u}
        \left\|T_+(u(M_1-1))-\frac12\int_\R u(M_1-1)~dx\right\|_{L^\infty}\lesssim \frac{1}{|\zeta|}.
    \end{equation}
    Thus we may replace the terms in the big parentheses on the left hand side of  \eqref{eq: M1 remainder sub 1} by $T_+u-\frac12\int_\R u~dx$ with a tolerable error. Also, $\frac1{2\zeta-1}$ can be replaced by $\frac1{2\zeta}$ with a tolerable error. It remains to show that the right hand side of \eqref{eq: M1 remainder sub 1} is of order $\frac1{|\zeta|^2}$. Indeed, 
    \begin{align}\label{eq: M1 remainder last term}
        \left\|\frac{\langle \xi\rangle }{|\zeta|+|\xi|}\widehat{uM_1}(\xi)\right\|_{L^1}&\lesssim \left\|\frac{1}{|\zeta|+|\xi|}\right\|_{L^2}\|u(M_1-1)\|_{H^1}+\left\|\frac{\langle\xi\rangle \hat u(\xi)}{|\zeta|+|\xi|}\right\|_{L^1}\notag\\
        &\lesssim\frac{1}{|\zeta|^{3/2}}+\left\|\frac{1}{(|\zeta|+|\xi|)\langle\xi\rangle }\right\|_{L^2}\|u\|_{H^2} \lesssim\frac{1}{|\zeta|}.
    \end{align}

    We now consider the case $|\arg \zeta|<\alpha$. We will only treat the case when $\imag \zeta\ge 0$, as the other case is similar. Working similarly as above, we get that $\|M_1(\cdot,\zeta)\|_{L^{\infty}}$, $\|DM_1(\cdot,\zeta)\|_{L^\infty}$, $\|uM_1(\cdot,\zeta)\|_{H^1}$ are uniformly bounded. By \eqref{eq: G decay 2}, we get 
    \begin{align}\label{eq: M1 remainder sub 2}
        \bigg\|M_1(x,\zeta)-1&-\frac{i}{1-2\zeta^*}\int_{-\infty}^ x e^{i\lambda(x-y)}u(y)M_1(y,\zeta)~dy\notag\\
        &-\frac{i}{2\zeta-1}\left(T_-(uM_1(\cdot,\zeta))(x)-\frac12\int_\R u(y)M_1(y,\zeta) ~dy\right)\bigg\|_{L^\infty}\notag\\
         \lesssim&~ \frac{1}{|\zeta|}\left\|\frac{\langle \xi\rangle }{|\zeta|+|\xi|}\widehat{uM_1}(\xi)\right\|_{L^1}.
    \end{align}
    We rewrite the first integral term above by integration by parts
    \begin{align}
        &\int_{-\infty}^x e^{i\lambda(x-y)}u(y)M_1(y,\zeta)~dy\notag\\
        =&\frac{1}{i\lambda }\left(-u(x)M_1(x,\zeta)+\int_{-\infty}^x e^{i\lambda(x-y)}\partial_y(u(y)M_1(y,\zeta))~dy\right).
    \end{align}
    Thus we have $\|M_1-1\|\lesssim\frac{1}{|\zeta|}$ once again. We repeat the splitting \eqref{eq: DM regularity 1} and estimate like \eqref{eq: DM reg split 1}, using $\|G_L\|_{L^\infty+L^2}\lesssim 1$ this time, to get
    \begin{equation}
        \|G_L*((M_1-1)Du)\|_{L^\infty}\lesssim\frac{1}{|\zeta|}.
    \end{equation}
    By \eqref{eq: G decay 2} again and Lemma \ref{lem: H^2,2 Fourier decay}, we get
    \begin{align}
        \|G_L*(Du)\|_{L^\infty}&\lesssim \frac{1}{|\zeta|}(\|Du\|_{H^1\cap L^1}+\|u\|_{H^2})+\left\|\int_{-\infty}^x e^{i\lambda(x-y)}Du(y)~dy\right\|_{L^\infty}\notag\\
        &\lesssim \frac{1}{\sqrt{|\zeta|}}.
    \end{align}
    We obtain in a similar way as above that
    $\|DM_1\|_{L^\infty}\lesssim\frac{1}{\sqrt{|\zeta|}}$, and 
    \begin{equation}\label{eq: u(M-1) sub 2}
        \|u(M_1-1)\|_{H^1}\lesssim\frac{1}{\sqrt{|\lambda|}}.
    \end{equation} 
    It follows that
\begin{equation}
        \left\|T_+(u(M_1-1))-\frac12\int_\R u(M_1-1)~dx\right\|_{L^\infty}\lesssim \frac{1}{\sqrt{|\zeta|}}.
    \end{equation}
    Thus we may replace the terms in the big parentheses on the left hand side of \eqref{eq: M1 remainder sub 2} by $T_-u-\frac12\int_\R u$ with a tolerable error. As before, we replace $\frac1{2\zeta-1}$ by $\frac{1}{2\zeta}$ and $\frac{1}{1-2\zeta^*}$ by $1$ with a tolerable error. We obtain from \eqref{eq: M1 remainder sub 2} that 
    \begin{align}\label{eq: M1 remainder sub 3}
        &~\|R_M(x,\zeta)\|_{L^\infty}\notag\\
        \lesssim &~\frac{1}{|\zeta|}\left(\left\|\int_{-\infty}^x e^{i\lambda(x-y)}D(uM_1)(y)~dy\right\|_{L^\infty}+\left\|\frac{\langle \xi\rangle }{|\zeta|+|\xi|}\widehat{uM_1}(\xi)\right\|_{L^1}\right).
    \end{align}
    Note that 
    \begin{equation}
        \|D(u(M_1-1))\|_{L^1}\le \|Du\|_{L^1}\|M_1-1\|_{L^\infty}+\|u\|_{L^1}\|DM_1\|_{L^\infty}\lesssim\frac{1}{\sqrt{|\zeta|}}.
    \end{equation}
    We conclude by using Lemma \ref{lem: H^2,2 Fourier decay} again that
    \begin{align}
        &~\left\|\int_{-\infty}^x e^{i\lambda(x-y)}D(uM_1)(y)~dy\right\|_{L^\infty}\notag\\
        \le&~ \|D(u(M_1-1))\|_{L^1}+\left\|\int_{-\infty}^x e^{i\lambda(x-y)}Du(y)~dy\right\|_{L^\infty}\lesssim\frac{1}{\sqrt{|\zeta|}}.
    \end{align}
    The last term on the right hand side of \eqref{eq: M1 remainder sub 3} can be estimated in the same way as \eqref{eq: M1 remainder last term}, with \eqref{eq: u(M-1) sub 1} replaced by \eqref{eq: u(M-1) sub 2}. We obtain 
    \begin{equation}
        \left\|\frac{\langle \xi\rangle }{|\zeta|+|\xi|}\widehat{uM_1}(\xi)\right\|_{L^1}\lesssim\frac{1}{|\zeta|}.
    \end{equation}
    The proof is complete.
\end{proof}

With the knowledge of the direct scattering problem we have obtained so far, we define for $u\in L^1_1\cap L^2_1$ with sufficiently small norm the  scattering data 
\begin{equation}\label{def: Sigma}
    \Sigma = \{ \rho,\{\zeta_j\}_{j=1}^N, \{C_j\}_{j=1}^N\},
\end{equation}
where
%\begin{equation}
%    I=\int_\R u(x)~dx
%\end{equation}
%is the total mass of $u$; 
$\rho$ is the reflection coefficient given in \eqref{def: rho}; $\{\zeta_j\}_{j=1}^N$ (in general $N$ may be finite or $\infty$) are the poles of the transmission coefficient $\tau(\zeta)$; and finally $\{C_j\}_{j=1}^N$ are the norming constants given by
\begin{equation}\label{def: Cj}
    C_j = \left(\int_\R |\varphi_j(x)|^2~dx\right)^{-1},
\end{equation}
where $\varphi_j$ are the normalized eigenfunctions given in \eqref{def: phi_j}.

We observe that $N_1(x,\zeta)$ as a function of $\zeta$ solves the following non-local RH problem with data depending only on $\Sigma$:

\begin{proposition}\label{prop: RH}
    Let $u\in L^1_1\cap L^2_1$ be real-valued with small norm. Let $W(\zeta)=N_1(x,\zeta)$ be the Jost function constructed by Theorem \ref{thm: inv T off disc} and Theorem \ref{thm: inv T on disc}, and let $\Sigma$ be the scattering data given as above. Then $W(\zeta)$ is holomorphic for $\zeta\in \mathbb C\setminus[0,\infty)\setminus \{\zeta_j~|~j=1,\dots, N\}$ such that
    \begin{enumerate}
        \item $W$ has non-tangential boundary values on $\R^+\pm i 0$, such that
        \begin{equation}\label{eq: W jump}
            W(k+i0)-W(k-i0)=\rho(k)e^{ix\lambda}W(k^*-i0),
        \end{equation}
        where $\lambda=Z^{-1}(k)$ (see Definition \ref{def: Z inv}).
        \item $W$ has simple poles only at each $\zeta_j$ with residue
        \begin{equation}\label{eq: W residue}
            \underset{\zeta=\zeta_j}{\text{Res}}~ W = iC_j e^{\lambda_j (1+ix) }W(\zeta_j^*),
        \end{equation}
        where $\lambda_j = Z^{-1}(\zeta_j)$.
        \item $W(\zeta)\to 1$ as $\zeta\to\infty$ in $\mathbb C\setminus \R^+$.
    \end{enumerate}
        
\end{proposition}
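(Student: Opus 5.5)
The plan is to assemble the three properties from results already in hand, using the identity $N_1=M_1/a$ off the zeros of $a$. Holomorphy of $W$ on $\mathbb C\setminus[0,\infty)$ away from the $\zeta_j$ is Theorem \ref{thm: general meromorphy of M N}. There $N_1(\cdot,\zeta)$ is meromorphic with values in $L^\infty$, so pointwise evaluation at $x$ is holomorphic; more carefully, one passes through the extension formula \eqref{jost prty: M1 ext} for $N_1$, or uses the smoothness in Lemma \ref{lem: C^inf regularity jost fcn} after approximation. The same theorem shows that the poles occur only at zeros of $a$. By Theorem \ref{thm: bound states and a} these zeros are the $\zeta_j$ and are simple. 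Property (3) is Lemma \ref{lem: M N limit infty}.

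For the jump (1), start from \eqref{eq: jump N1}: $N_1(x,k+i0)-N_1(x,k-i0)=\rho(k)N_e(x,k-i0)$. Then substitute \eqref{eq: Ne(zeta) N1(zeta*)}, namely $N_e(x,k-i0)=N_1(x,k^*-i0)e^{i\lambda x}$. This gives \eqref{eq: W jump} directly. The existence of non-tangential boundary values is the continuity statement of Theorem \ref{thm: general meromorphy of M N} on $\R^+\pm i0$, which fails only possibly at $\tfrac12+i0$. There a finite limit requires genericity, or else one accepts the statement for $k\ne\tfrac12$.

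The residue (2) is the main computation. Near $\zeta_j$, write $N_1(x,\zeta)=M_1(x,\zeta)/a(\zeta)$ using Lemma \ref{lem: M=aN}. Since the zero is simple, the residue is $M_1(x,\zeta_j)/a'(\zeta_j)$. By \eqref{M_1 N_1 relation at pole},
\[
M_1(x,\zeta_j)=c_je^{i(\zeta_j-\zeta_j^*)x}N_1(x,\zeta_j^*)=c_je^{i\lambda_jx}N_1(x,\zeta_j^*).
\]
By \eqref{eq: a' identity},
\[
a'(\zeta_j)=-ic_je^{-\lambda_j}\|\varphi_j\|_{L^2}^2=-ic_je^{-\lambda_j}/C_j.
\]
Dividing, the $c_j$ cancels, which is why $c_j\neq0$ is needed. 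The result is
\[
\operatorname{Res}=iC_je^{\lambda_j}e^{i\lambda_jx}N_1(x,\zeta_j^*)=iC_je^{\lambda_j(1+ix)}W(\zeta_j^*),
\]
and Lemma \ref{lem: zeta* = bar zeta} identifies $\zeta_j^*=\overline{\zeta_j}$.

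The main obstacle is bookkeeping rather than analysis. First, the normalization of $\varphi_j$ in \eqref{def: phi_j} involves the shift $x+i$. This is consistent with \eqref{eq: a' identity} as proved, so it needs no extra factor, but the constant $e^{-\lambda_j}$ should be double-checked against the Wronskian computation in Lemma \ref{lem: a' for cpt u}. Second, evaluating at fixed real $x$ is legitimate because $N_1(\cdot,\zeta)$ is continuous in $x$. This follows from $\partial_xN_1\in L^2$ in Lemma \ref{lem: anal cont 1}, so the $L^\infty$-valued meromorphy yields pointwise meromorphy via the Cauchy integral formula.
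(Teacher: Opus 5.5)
Your proposal is correct and follows essentially the same route as the paper, which likewise cites Theorem \ref{thm: general meromorphy of M N} for holomorphy and boundary values, combines \eqref{eq: jump N1} with \eqref{eq: Ne(zeta) N1(zeta*)} for the jump, uses Lemma \ref{lem: M N limit infty} for the limit at infinity, and invokes Theorem \ref{thm: bound states and a} (i), (ii) together with Lemma \ref{lem: M=aN} for the residue. Your explicit computation $\mathrm{Res}_{\zeta=\zeta_j}W=M_1(x,\zeta_j)/a'(\zeta_j)=iC_je^{\lambda_j(1+ix)}N_1(x,\zeta_j^*)$ is the step the paper leaves implicit. The factor $e^{-\lambda_j}$ in \eqref{eq: a' identity} is indeed consistent with the Wronskian identity at the end of Lemma \ref{lem: a' for cpt u}, so no correction is needed, and your caveat about the boundary value at $\tfrac12+i0$ is appropriate.
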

\begin{proof}
    The holomorphy of $W$ and existence of its boundary values are proven in Theorem \ref{thm: general meromorphy of M N}. \eqref{eq: W jump} follows from \eqref{eq: jump N1}, \eqref{eq: Ne(zeta) N1(zeta*)}. \eqref{eq: W residue} follows from Theorem \ref{thm: bound states and a} (i), (ii), and Lemma \ref{lem: M=aN}. The limit of $W$ at infinity follows from Lemma \ref{lem: M N limit infty}.
\end{proof}
\begin{remark}
    Of course, we can state a dual RH problem for $M_1(x,\zeta)$. The theory will be similar.
\end{remark}
Note that the non-locality of the jump condition in the above RH problem is contained entirely in the inner composition of $W$ with $k\mapsto k^*$.
If one can solve the above function $W$ from only the given scattering data $\Sigma$, then $u$ can be recovered from $\Sigma$, as is shown by the following 
\begin{proposition}\label{prop: rec u}
    Suppose $u\in H^{2,2}$ is real-valued with sufficiently small norm. Let $W(\zeta)=N_1(x,\zeta)$ be given as in Proposition \ref{prop: RH}. Then 
    \begin{equation}
        u(x) = -2 \real\lim_{\zeta\to\infty} \zeta(W(\zeta)-1).
    \end{equation}
\end{proposition}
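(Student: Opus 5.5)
The plan is to read off the $1/\zeta$ coefficient from Lemma \ref{lem: M N first order remainder} and then take real parts. That lemma gives, for $\zeta$ large in $\widetilde{\mathbb C}$,
\[
N_1(x,\zeta)=1+\frac{i}{2\zeta}\Big(T_+u(x)+\tfrac12\int_\R u\,dx\Big)+R_N(x,\zeta),
\qquad \|R_N(\cdot,\zeta)\|_{L^\infty}\le C|\zeta|^{-3/2}.
\]
Multiplying by $\zeta$ gives
\[
\zeta(W(\zeta)-1)=\tfrac i2\,T_+u(x)+\tfrac i4\int_\R u\,dx+\zeta R_N(x,\zeta).
\]
The remainder satisfies $|\zeta R_N|\le C|\zeta|^{-1/2}\to0$ uniformly in $x$. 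This holds for any direction of approach, including $|\arg\zeta|<\alpha$, because the $|\zeta|^{-3/2}$ bound is uniform over the whole large-$\zeta$ region. So the limit exists and equals
\[
\tfrac i2\,T_+u(x)+\tfrac i4\int_\R u .
\]

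Next I take real parts. I use $T_+=T+i$ from \eqref{def: T pm}, so $\tfrac i2T_+u=\tfrac i2Tu-\tfrac12u$. The key observation is that $T$ is convolution with the real kernel $-\tfrac12\,\text{P.V.}\coth\tfrac{\pi x}{2}$ (Lemma \ref{lem: FT of coth}). Hence $Tu$ is real for real $u$, and $\tfrac i2 Tu$ is purely imaginary. Likewise $\tfrac i4\int u$ is purely imaginary. Therefore
\[
\real\lim_{\zeta\to\infty}\zeta(W(\zeta)-1)=-\tfrac12u(x),
\]
which is the claimed formula $u=-2\real\lim_{\zeta\to\infty}\zeta(W(\zeta)-1)$.

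The main point needing care is pointwise meaning. For $u\in H^{2,2}$, $u$ is continuous and $Tu\in L^\infty$ is continuous, since $u\in H^1\cap L^1$ and by \eqref{eq: T bounded}. The identity then holds for every $x$, with the limit uniform in $x$.

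A secondary check is that $W=N_1$ is defined for all large $\zeta$. Theorem \ref{thm: inv T off disc} covers $\zeta$ outside $D^\pm_\delta$, which contains every sufficiently large $\zeta$, and poles occur only in $D^+_\delta$. So no pole obstructs the limit.

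I would also verify the sign convention in the statement of $R_N$ against the relation $G_L-G_R=\tfrac{i}{1-2\zeta}$ in \eqref{GL-GR}. The constant term only affects the imaginary part, so the result is insensitive to that sign.
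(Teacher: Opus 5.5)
Your proof is correct and is essentially the paper's argument. The paper also reads off the $1/\zeta$ term of $N_1$ from Lemma \ref{lem: M N first order remainder} and takes real parts, using that $Tu$ and $\int_\R u$ are real. Consequently only $\real\bigl(\tfrac{i}{2}T_+u\bigr)=-\tfrac12 u$ survives.
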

\begin{proof}
    This is a direct consequence of Lemma \ref{lem: M N first order remainder}, recalling that $Tu$ and $\int_\R u$ are real.
\end{proof}	                            %%	Riemann-Hilbert problem
\appendix
\section{Time evolution of the scattering data}\label{app: A}

We  now derive formally the time evolution of the scattering data $\Sigma$ defined in section \ref{sec: RH}. In particular, we assume $u$ is a classical solution with sufficiently rapid decay and regularity in space. We regard all Jost solutions as depending on $t$ now since $u$ also does.
We prefer to work with the original eigenfunctions here. In view of \eqref{eq: M1 vs eik}, \eqref{eq: N1 vs eik}, we define for $\zeta\in \widetilde{\C}$ not equal to any of the poles of $a(\zeta,t)$ or $\tfrac12-i0$, 
\begin{equation}
    e^{-i\zeta x}_{-\infty} = e^{-i\zeta x} e^{-D}M_1(x,\zeta,t)=e^{-i\zeta x}M_1(x+i,\zeta,t),
\end{equation}
and for $\zeta\in \widetilde{\C}$ not equal to and zeros of $a(\zeta,t)$ or $\tfrac12+i0$
\begin{equation}\label{def: e i zeta x+}
    e^{-i\zeta x}_{+\infty} = e^{-i\zeta x}e^{-D}N_1 (x,\zeta,t)= e^{-i\zeta x}N_1(x+i,\zeta,t).
\end{equation}
It follows from Lemmas \ref{lem: M1 improved regularity}, \ref{lem: improved regularity k=1/2}, \ref{lem: equiv int diff}, \ref{lem: equiv int diff on R+}, \ref{lem: equiv diff int eq 1/2} (see also the discussion at the beginning of Section \ref{sec: lax pair} for the definition of $e^{-D}L_u$) that 
\begin{equation}\label{eq: e val general zeta}
    (e^{-D} L_u) e^{-i\zeta x}_{\pm\infty}=-\zeta e^{-2\zeta}e^{-D} e^{-i\zeta x}_{\pm\infty},
\end{equation}
and for $\zeta\notin \R^+-i0$,
\begin{equation}
    e^{-i\zeta (x+iy)}_{-\infty}\sim e^{-i\zeta (x+iy)} \quad \text{ as }x\to-\infty \text{ for all }y\in [-1,1],
\end{equation}
for $\zeta\notin \R^++i0$,
\begin{equation}
    e^{-i\zeta (x+iy)}_{+\infty}\sim e^{-i\zeta (x+iy)} \quad \text{ as }x\to+\infty \text{ for all }y\in [-1,1].
\end{equation}
for all $y\in [-1,1]$. Note that $e^{-i\zeta x}_{\pm\infty}$  can have exponential growth as $x\to\pm\infty$ unless $\zeta$ is real. The $e^{-ikx}_{\pm\infty}$ given by \eqref{eq: eigen original}, \eqref{eq: asym original} coincide now with $e^{-i(k-i0)x}_{+\infty}$ and $e^{-i(k+i0)x}_{-\infty}$.  

By taking the $t$-derivative of \eqref{jost prty: def M}, \eqref{jost prty: def N}, and \eqref{jost prty: M1 ext}, we obtain that $\partial_tM_1$, $\partial_t N_1$, like $M_1$, $N_1$, extend to functions holomorphic  in $z=x+iy$ on the strip $S$ and continuous on $\overline S$, and depend meromorphically on $\zeta\in \C\setminus \R^+$. We can modify the proof of \eqref{jost prty: M1 limit left} to \eqref{jost prty: N1 limit left} accordingly to obtain 
\begin{equation}
    \lim_{x\to-\infty}\partial_t M_1 = 0,~\lim_{x\to+\infty} \partial_t M_1=\partial_t a,
\end{equation}
\begin{equation}
    \lim_{x\to-\infty}\partial_t N_1 = \partial_t\left(\tfrac{1}{a}\right),~\lim_{x\to+\infty} \partial_t N_1=0,
\end{equation}
uniformly for $y\in [0,2]$, for $\zeta\in \C\setminus \R^+$ not equal to the poles or zeros of $a(\zeta,t)$.
In other words, the $t$-derivative commutes with the asymptotic limits of $M_1,N_1$ at $x=\pm \infty$. Similar arguments apply to $x$-derivatives of  $M_1$, $N_1$, and when $\zeta\in\mathbb R^+\pm i0$. By the above discussion of $e^{-ikx}_{\pm\infty}$, we also obtain that for all $k\in \mathbb R$, the $x$ and $t$-derivatives of $e^{-ikx}_{\pm\infty}$ commute with their asymptotics at $x=\pm\infty$. %Note that $e^{-ikx}_{\pm\infty}$ depends on $t$ implicitly through $u$. 

\begin{lemma}
For all $\zeta\in \widetilde{\C}$ for which $e^{-i\zeta x}_{\pm\infty}$ is defined, 
\begin{equation}\label{eq: time evo eig}
    \partial_t e^{-i\zeta x}_{\pm\infty} + B_ue^{-i\zeta  x}_{\pm\infty} = -i(\zeta -\zeta ^2)e^{-i\zeta x}_{\pm\infty}.
\end{equation}
\end{lemma}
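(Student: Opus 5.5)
The plan is the classical Lax-pair argument. Let $\eta=-\zeta e^{-2\zeta}$ and $f=e^{-i\zeta x}_{\pm\infty}$. By \eqref{eq: e val general zeta}, $e^{-D}(L_u-\eta)f=0$.

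\textbf{Step 1: solutions of the spectral problem.} Differentiate this identity in $t$. Then apply the Lax equation \eqref{eq: Lax} in the form $\partial_t(e^{-D}L_u)=e^{-D}[L_u,B_u]$, as operators on functions analytic in the strip $\{-1<\imag z<1\}$. Since $\eta$ does not depend on $t$, this gives
\begin{equation*}
e^{-D}(L_u-\eta)\big(\partial_t f+B_u f\big)=0 .
\end{equation*}
So $g:=\partial_t f+B_u f$ solves the same spectral equation as $f$. Here I use the regularity of $\partial_tM_1,\partial_tN_1$ and their $x$-derivatives on $\overline S$ from the preceding discussion. This makes $g$ an admissible function in the sense of Definition \ref{def: e^D}. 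One point needs care: $B_u$ contains $\widetilde{T_iu_x}$, and I need the analytic continuation of $B_uf$ to the strip. For this I use the alternative forms \eqref{Lax pair: B 1} and \eqref{Lax pair: B 2} of $B_u$ together with Lemma \ref{lem: T pm 0}. This is exactly how the computation \eqref{eq: Lax commutator} was justified.

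\textbf{Step 2: asymptotics of $g$.} Take the case $e^{-i\zeta x}_{+\infty}$, so $f=e^{-i\zeta x}N_1(x+i,\zeta,t)$, and let $x\to+\infty$. Time derivatives commute with the limits, so $\partial_t f=e^{-i\zeta x}\,o(1)$. Since $u$ and $u_x$ decay, $T_iu_x\to0$. The remaining part of $B_u$ is $iD^2+iD$. Applied to $e^{-i\zeta x}(1+o(1))$, it gives
\begin{equation*}
(iD^2+iD)e^{-i\zeta x}=i(\zeta^2-\zeta)e^{-i\zeta x},
\end{equation*}
up to $o(1)$ corrections, which are controlled because the $x$-derivatives of $N_1$ also commute with the limit. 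Hence
\begin{equation*}
h:=g+i(\zeta-\zeta^2)f=e^{-i\zeta x}\,o(1)\quad\text{as } x\to+\infty,
\end{equation*}
uniformly in $y\in[-1,1]$. Also, $h$ solves the spectral problem.

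\textbf{Step 3: uniqueness, the main obstacle.} It remains to show that a solution of the spectral problem with $e^{i\zeta x}h\to0$ at $+\infty$ must vanish. Set $m=e^{i\zeta(x-i)}h(x-i)$. By the same computation as in \eqref{M diff eqn}, $m$ satisfies the differential equation \eqref{N diff eqn} and has limit $0$ at $+\infty$. Rerunning the argument of Lemma \ref{lem: equiv int diff}(b), with limit $0$ in place of $1$ at $+\infty$ (and its analogues Lemmas \ref{lem: equiv int diff on R+} and \ref{lem: equiv diff int eq 1/2} on $\R^+\pm i0$), gives $m=T_R(\zeta)m$. Then $m=0$ wherever $I-T_R(\zeta)$ is invertible, which is exactly where $e^{-i\zeta x}_{+\infty}$ is defined (Theorems \ref{thm: inv T off disc} and \ref{thm: inv T on disc}). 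This proves \eqref{eq: time evo eig} for the $+$ sign. The $-\infty$ case uses $M_1$, $T_L$ and the limit $x\to-\infty$ in the same way.

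The delicate points are two. First, admissibility: I must check that $h$ lies in $\mathbb H^2(S)+\mathbb H^\infty(S)$ (or $\mathbb H^\infty_{-1,\pm}$ at $\tfrac12\pm i0$), so that Lemma \ref{lemma:FT.strip} applies. Second, on $\R^+\pm i0$, the exponential $e^{i\lambda x}$ from $G_R$ produces an extra term at the opposite end. That end is not used in the uniqueness step, so it should cause no trouble. For $\zeta$ off $\R$ I would rely on meromorphy and extend by continuity where needed.
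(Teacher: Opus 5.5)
Your argument matches the paper's in outline: the Lax equation shows $(\partial_t+B_u)f$ solves the same spectral problem, you read off its asymptotics, and you conclude by uniqueness as in Lemma \ref{lem: equiv int diff}(b). The paper, however, applies this only at $\zeta=k\in\R^-$. It then extends the identity to every admissible $\zeta$ by meromorphy and by continuity of both sides up to $\R^+\pm i0$.

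You instead run the asymptotic and uniqueness steps directly at each $\zeta$. This fails at $\zeta=k+i0\in\R^++i0$ for $e^{-i\zeta x}_{+\infty}$, and symmetrically at $\R^+-i0$ for $e^{-i\zeta x}_{-\infty}$. Your closing claim that the extra exponential sits at the opposite end is wrong in these cases. In \eqref{G_R split +} the term $\frac{i}{1-2k^*}e^{-\lambda y}e^{i\lambda x}$ carries $\chi_{\R^+}$. Hence $N_1(x+iy,k+i0)=1+\breve b(k+i0)e^{i\lambda(x+iy)}+o(1)$ as $x\to+\infty$ (see the remark after Lemma \ref{lem: anal cont R+}). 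This is why the paper asserts $e^{-i\zeta x}_{+\infty}\sim e^{-i\zeta x}$ at $+\infty$ only for $\zeta\notin\R^++i0$.

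Consequently Step 2 is false at these points. Take $f=e^{-ikx}N_1(x+i,k+i0,t)$ and use $k-\lambda=k^*$. Then as $x\to+\infty$,
\begin{equation*}
h=\partial_tf+B_uf+i(k-k^2)f=e^{-\lambda}\bigl[\partial_t\breve b+i(k^{*2}-k^*-k^2+k)\breve b\bigr]e^{-ik^*x}+o(1).
\end{equation*}
The vanishing of the bracket is an evolution law for $\breve b(k+i0,t)$ of the same type as \eqref{eq: b evo}. It is a consequence of the lemma, not something available as an input.

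Correspondingly, in Step 3 your $m$ tends to $c\,e^{i\lambda x}$ at $+\infty$, with $c$ the bracket, rather than to $0$. Uniqueness applied to $m-c\,N_e(\cdot,k-i0)$ then only gives $m=c\,N_e(\cdot,k-i0)$, not $m=0$.

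The repair is the paper's reduction. Prove the identity where the normalized Jost functions have clean limits at the relevant end: the interior of $\widetilde{\C}$, or just $\R^-$, where there is also no exponential growth to normalize away. Then reach $\R^+\pm i0$ by continuity in $\zeta$ of the boundary values of both sides, rather than by a separate asymptotic argument there.
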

\begin{proof}
    Since both sides depend meromorphically on $\zeta$ with continuous boundary values on $\mathbb R^+\pm i0$, we only need to show \eqref{eq: time evo eig} for $\zeta=k\in \mathbb R^-$. Take the $t$-derivative of \eqref{eq: e val general zeta} and use Lemma \ref{lem: Lax pair} to get
\begin{equation}
    e^{-D}[L_u,B_u]e^{-ikx}_{\pm\infty} +(e^{-D}L_u) \partial_t e^{-ikx}_{\pm\infty}=-ke^{-2k}e^{-D}\partial_t e^{-ikx}, 
\end{equation}
or
\begin{equation}
    (e^{-D}L_u)(\partial_t +B_u)e^{-ikx}_{\pm\infty}=-ke^{-2k}e^{-D}(\partial_t + B_u)e^{-ikx}_{\pm\infty}.
\end{equation}
Thus $(\partial_t+B_u)e^{-ikx}_{\pm\infty}$ satisfies the same equation \eqref{eq: e val general zeta} as $e^{-ikx}_{\pm\infty}$. By uniqueness of the Jost solutions with the given asymptotics (see Lemma \ref{lem: equiv int diff} (b)),  it remains to show 
\begin{equation}
    (\partial_t+B_u)e^{-ik\cdot}_{\pm\infty}(x+iy)\sim -i(k-k^2)e^{-ik(x+iy)} \quad \text{ as }x\to\pm\infty,
\end{equation}
for all $y\in [-1,1]$. This is obvious by \eqref{Lax pair: B}, the commutativity of the asymptotics of $e^{-ikx}_{\pm\infty}$ with the $t$ and $x$-derivatives, and the decay of $T_iu_x$ at infinity due to the decay of $u_x$.
\end{proof}

We can now compute evolution of the scattering coefficients $a(\zeta,t)$ and $b(k,t)$.

\begin{lemma}
    For $\zeta\in \widetilde{\C}\setminus \{\tfrac12\pm i0\}$ which is not a pole of $a(\zeta,0)$, we have
    \begin{equation}\label{eq: a evo}
        a(\zeta,t)=a(\zeta,0).
    \end{equation}
    For $k\in \mathbb R^+\setminus \{\tfrac12\}$, we have
    \begin{equation}\label{eq: b evo}
        b(k,t) = b(k,0)e^{i(k^*-k)(1-k-k^*)t}.
    \end{equation}
 \end{lemma}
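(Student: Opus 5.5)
The plan is to read off the time dependence of the scattering coefficients by applying the evolution identity \eqref{eq: time evo eig} to the relations that express one family of Jost solutions in terms of the other. Both statements will be handled the same way.

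\emph{Step 1: invariance of $a$.} By Lemma \ref{lem: M=aN}, $M_1(x,\zeta,t)=a(\zeta,t)N_1(x,\zeta,t)$. Applying $e^{-D}$ and multiplying by $e^{-i\zeta x}$ gives
\[
e^{-i\zeta x}_{-\infty}=a(\zeta,t)\,e^{-i\zeta x}_{+\infty}.
\]
Differentiating in $t$ yields
\[
\partial_t e^{-i\zeta x}_{-\infty}=\partial_t a\; e^{-i\zeta x}_{+\infty}+a\,\partial_t e^{-i\zeta x}_{+\infty}.
\]
Since $B_u$ is linear, \eqref{eq: time evo eig} applied to both Jost solutions forces
\[
\partial_t a\; e^{-i\zeta x}_{+\infty}=0.
\]
Because $e^{-i\zeta x}_{+\infty}$ is not identically zero, we get $\partial_t a=0$. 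Equivalently, one can let $x\to+\infty$ in $\partial_t M_1$, using $\lim_{x\to+\infty}\partial_t M_1=\partial_t a$ together with the fact that $t$ and $x$ derivatives commute with the asymptotics.

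This argument works first for $\zeta\in\C\setminus\R^+$ away from the zeros and poles of $a$. It then extends to the boundary values on $\R^+\pm i0$ by the continuity in Theorem \ref{thm: a b cont}, and to the remaining points by meromorphy.

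\emph{Step 2: evolution of $b$.} For $k\in\R^+\setminus\{\frac12\}$, apply $e^{-D}$ and multiply by $e^{-ikx}$ in \eqref{M1.ab}, using \eqref{eq: Ne(zeta) N1(zeta*)}. This should give
\[
e^{-ikx}_{-\infty}=a(k+i0)\,e^{-ikx}_{+\infty}+b(k+i0)\,e^{-ik^*x}_{+\infty},
\]
where the last Jost solution is the one at $k^*-i0$, so that $e^{-ik^*x}_{+\infty}=e^{-ikx}e^{-D}N_e$. Apply $\partial_t+B_u$ to both sides and use \eqref{eq: time evo eig}: the left side and the $a$-term acquire the eigenvalue $-i(k-k^2)$, while the $b$-term acquires $-i(k^*-k^{*2})$. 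Since $\partial_t a=0$ by Step 1, this leaves
\[
\bigl[\partial_t b+\bigl(-i(k^*-k^{*2})+i(k-k^2)\bigr)b\bigr]\,e^{-ik^*x}_{+\infty}=0.
\]
Hence
\[
\partial_t b=i\bigl[(k^*-k)-(k^{*2}-k^2)\bigr]b=i(k^*-k)(1-k-k^*)\,b,
\]
which integrates to \eqref{eq: b evo}.

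\emph{Main obstacle.} The delicate points are bookkeeping ones.
\begin{itemize}
\item One has to check that $e^{-ik^*x}_{+\infty}$ is exactly the $k^*$-Jost solution to which \eqref{eq: time evo eig} applies. This holds because $N_e(x,k-i0)=N_1(x,k^*-i0)e^{i\lambda x}$ and $k-\lambda=k^*$, so the exponential prefactors match.
\item One needs $e^{-ik^*x}_{+\infty}\not\equiv0$ in order to cancel it.
\item For \eqref{eq: time evo eig}, the boundary values on $\R^+\pm i0$ must be taken consistently, with the $+i0$ and $-i0$ sides paired as above. This is justified by the continuity of the Jost functions from Theorem \ref{thm: general meromorphy of M N}.
\end{itemize}
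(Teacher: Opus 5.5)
Your proposal is essentially the paper's proof: you differentiate $e^{-i\zeta x}_{-\infty}=a\,e^{-i\zeta x}_{+\infty}$ (from Lemma \ref{lem: M=aN}) and the decomposition coming from \eqref{M1.ab} in $t$, apply \eqref{eq: time evo eig}, and cancel the nonvanishing Jost solution; the only difference is that the paper first reduces to $\zeta\in\R^-$, where $a(\cdot,t)$ has no zeros or poles for any $t$, rather than working on all of $\C\setminus\R^+$. One small bookkeeping point: $e^{-ikx}e^{-D}N_e(x,k-i0)=e^{-\lambda}e^{-ik^*x}_{+\infty}$, since $e^{i\lambda x}$ picks up a factor $e^{-\lambda}$ under the shift $x\mapsto x+i$, but this $t$-independent constant (which the paper also suppresses) does not change the ODE for $b$.
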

 \begin{proof}
     Since $a(\zeta,t)$ is meromorphic in $\zeta$ with continuous boundary values, we only need to show \eqref{eq: a evo} for $\zeta=k\in \R^-$. For such $k$, we have by \eqref{eq: M1 = a N1}
     \begin{equation}\label{eq: e_-inf = a e_+inf}
         e^{-i k x}_{-\infty}=a(k,t)e^{-ikx}_{+\infty}.
     \end{equation}
     Taking the $t$-derivative and using \eqref{eq: time evo eig} we get
     \begin{equation}
         -(B_u+i(k-k^2))e^{-ikx}_{-\infty} = [\partial_t a - a(B_u+i(k-k^2))]e^{-ikx}_{+\infty}.
     \end{equation}
     Using \eqref{eq: e_-inf = a e_+inf} again we get
     \begin{equation}
         (\partial_t a )e^{-ikx}_{+\infty}=0,
     \end{equation}
     which implies \eqref{eq: a evo}.
     Now for $k>0$, $k\ne \tfrac12$, we have by \eqref{M1.ab}
     \begin{equation}\label{eq: e_-inf = ae+be}
         e^{-ikx}_{-\infty} = a(k,t)e^{-ikx}_{+\infty}+b(k,t)e^{-ik^*x}_{+\infty}.
     \end{equation}
     Taking the $t$-derivative and using \eqref{eq: time evo eig}, \eqref{eq: a evo}, we get
     \begin{align}
         -(B_u+i(k-k^2))e^{-ikx}_{-\infty} &= -a(B_u+i(k-k^2))e^{-ikx}_{+\infty}\notag\\
         &\qquad [\partial_t b -b(B_u+i(k^*-k^{*2}))]e^{-ik^*x}_{+\infty}.
     \end{align}
     Using \eqref{eq: e_-inf = ae+be} again we get
     \begin{equation}
         [\partial_t b -ib(k^*-k^{*2}-k+k^2)]e^{-ik^*x}_{+\infty}=0,
     \end{equation}
     which implies \eqref{eq: b evo}.
 \end{proof}

 The above lemma gives us the evolution of $\rho$ and $\{\zeta_j\}_{j=1}^N$ in \eqref{def: Sigma} as 
 \begin{align}
     \rho(k,t) &= \rho(k,0)e^{i(k^*-k)(1-k-k^*)t},\\
     \zeta_j(t) &= \zeta_j(0),
 \end{align}
 as $\rho$ is defined by \eqref{def: rho}, and $\zeta_j$ are the zeros of $a$ as defined at the beginning of Section \ref{sec: bound states}.

 Finally, the evolution of $C_j$ is given by
 \begin{lemma}
     For each $j=1,\dots, N$,
     \begin{equation}\label{eq: evo Cj}
         C_j(t) = C_j(0)e^{2\imag [\zeta_j(1-\zeta_j)]t}.
     \end{equation}
 \end{lemma}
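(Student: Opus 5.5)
The plan is to express $C_j$ through the eigenfunction $\varphi_j$, use the time-evolution equation \eqref{eq: time evo eig} at $\zeta=\zeta_j^*=\overline{\zeta_j}$ (Lemma \ref{lem: zeta* = bar zeta}), and then differentiate $\int_\R|\varphi_j|^2\,dx$ in $t$. By \eqref{def: phi_j} and \eqref{def: e i zeta x+}, $\varphi_j(x,t)=N_1(x+i,\zeta_j^*,t)e^{-i\zeta_j^*x}=e^{-i\zeta_j^* x}_{+\infty}$. Since $\zeta_j$ is time independent by \eqref{eq: a evo}, so is $\zeta_j^*$, and \eqref{eq: time evo eig} gives
\begin{equation*}
\partial_t\varphi_j=-B_u\varphi_j-i\big(\zeta_j^*-(\zeta_j^*)^2\big)\varphi_j .
\end{equation*}
Here $\varphi_j\in L^2$ and decays exponentially at both ends, by Theorem \ref{thm: bound states and a}(iii) and the proof of Lemma \ref{lem: phi N relation}.

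Next I would compute
\begin{equation*}
\frac{d}{dt}\int_\R|\varphi_j|^2\,dx=2\real\langle\partial_t\varphi_j,\varphi_j\rangle=-2\real\langle B_u\varphi_j,\varphi_j\rangle+2\imag\big(\zeta_j^*-(\zeta_j^*)^2\big)\|\varphi_j\|^2_{L^2}.
\end{equation*}
Since $B_u$ is skew-adjoint for real $u$, as remarked after \eqref{Lax pair: B 2}, the first term vanishes. To justify this, I would note that $\varphi_j$ lies in the analytic strip class of Remark \ref{rmk: domain of Lu}, has smoothness coming from Lemma \ref{lem: M1 improved regularity}, and decays exponentially, so the integrations by parts for $iD^2+iD$ and the real multiplier $iT_iu_x$ are legitimate. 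Using $\zeta_j^*=\overline{\zeta_j}$, we have $\imag(\zeta_j^*-(\zeta_j^*)^2)=-\imag(\zeta_j-\zeta_j^2)$. Hence $\frac{d}{dt}\|\varphi_j\|^2=-2\imag[\zeta_j(1-\zeta_j)]\|\varphi_j\|^2$. Inverting via \eqref{def: Cj} gives $\frac{d}{dt}C_j=2\imag[\zeta_j(1-\zeta_j)]C_j$, which integrates to \eqref{eq: evo Cj}.

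The main obstacle is whether the normalization of $\varphi_j$ is preserved in time. In the proof of \eqref{eq: time evo eig}, uniqueness was fixed by the asymptotics $e^{-i\zeta x}_{+\infty}\sim e^{-i\zeta x}$ as $x\to+\infty$. Because $\zeta_j^*$ lies in the lower half plane, that normalization compares exponentially decaying quantities, and the argument only pins $(\partial_t+B_u)\varphi_j$ down as a solution of the same eigenvalue equation. Theorem \ref{thm: bound states and a}(iv) says the eigenspace is one dimensional, so $(\partial_t+B_u)\varphi_j=c(t)\varphi_j$. To identify $c(t)=-i(\zeta_j^*-(\zeta_j^*)^2)$, I would compare leading asymptotics as $x\to+\infty$. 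The ratio $\varphi_j e^{i\zeta_j^*x}=N_1(x+i,\zeta_j^*,t)\to1$, and its $t$- and $x$-derivatives tend to $0$ by the commutation-of-limits discussion preceding \eqref{eq: time evo eig}. The term $T_iu_x\to0$. So $c$ is read off from $(iD^2+iD)e^{-i\zeta_j^*x}$, which yields the constant above. This step, together with the decay needed for skew-adjointness, is where care is required.
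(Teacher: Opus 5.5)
Your proposal is correct and follows the paper's proof. You identify $\varphi_j=e^{-i\zeta_j^* x}_{+\infty}$, apply \eqref{eq: time evo eig}, drop the $B_u$ term by skew-adjointness, and use $\zeta_j^*=\overline{\zeta_j}$ to get $\frac{d}{dt}\|\varphi_j\|^2=-2\imag[\zeta_j(1-\zeta_j)]\|\varphi_j\|^2$. The normalization issue you flag is already covered in the paper: \eqref{eq: time evo eig} is proved by meromorphic continuation from $\R^-$ for every $\zeta$ at which $e^{-i\zeta x}_{+\infty}$ is defined, and $\zeta_j^*$ qualifies because it is a zero of $\breve a$, not of $a$; so your one-dimensional-eigenspace argument is a valid but unnecessary alternative.
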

 \begin{proof}
     By \eqref{def: phi_j}, \eqref{def: e i zeta x+}, we see that $\varphi_j = e^{-i\zeta_j^* x}_{+\infty}$.
    By \eqref{eq: time evo eig},
    \begin{equation}
        \partial_t\varphi_j + B_u\varphi_j = -i(\zeta_j^*-\zeta_j^{*2})\varphi_j.
    \end{equation}
    By the skew-symmetry of $B_u$, we have
    \begin{align}
        \partial_t\langle\varphi_j,\varphi_j\rangle &= 2\real \langle \partial_t\varphi_j,\varphi_j\rangle\notag\\
        &=2\real \langle -i(\zeta_j^*-\zeta_j^{*2})\varphi_j,\varphi_j\rangle\notag\\
        &=2\imag(\zeta_j^*-\zeta_j^{*2})\langle \varphi_j,\varphi_j\rangle\notag\\
        &=2\imag(\zeta_j^2-\zeta_j)\langle \varphi_j,\varphi_j\rangle.
    \end{align}
    Here we used \eqref{eq: zeta* = bar zeta} in the last step. \eqref{eq: evo Cj} now follows from \eqref{def: Cj}.
 \end{proof}                             %%  Time evo of scat data 
\section{Convergence of ILW Lax pair to those of KdV and BO}\label{app: B}
In this appendix, we provide some  calculations that show convergence of the ILW Lax pair to those of the Kortweg-deVries (KdV) equation and the Benjamin-Ono (BO) equation. %One may make these formal limits rigorous by proving uniform estimates on the solution as $\delta$ approaches the limiting values, but that is not the main purpose of the current paper. 
As it turns out, to get the usual KdV and BO equations in the limits, we need to take slightly different scaling normalizations of ILW. 

We first consider the limit to KdV. This corresponds to taking $\delta\searrow 0$ in the following version of ILW:
\begin{equation}\label{eq: ILW delta}
u_t+\tfrac1{\delta^2}u_x+2uu_x+\tfrac1\delta T^\delta u_{xx}=0.
\end{equation}
Here 
\begin{equation}
T^\delta f(x) = \text{P.V.}\frac{1}{2\delta}\int_{-\infty}^\infty \coth\frac{\pi(y-x)}{2\delta}f(y)~dy = i\coth(\delta D)f.
\end{equation}
Let $u$ be real-valued and in a bounded subset of $L^\infty_tH^k_x$ for some large $k$. We have as $\delta\searrow 0$:
\begin{equation}
T^\delta u_{xx} = -\tfrac1\delta u_x-\tfrac\delta3 u_{xxx}+\mathcal O(\delta^3),
\end{equation}
where the error term is measured in $L^\infty_t H^{k-5}_x$.
Thus \eqref{eq: ILW delta} can be written as
\begin{equation}
u_t+2uu_x-\tfrac13u_{xxx}=\mathcal O(\delta^2).
\end{equation}
Thus a solution to \eqref{eq: ILW delta} can be regarded as an approximate solution to the KdV equation
\begin{equation}\label{eq: KdV}
u_t+2uu_x-\tfrac13u_{xxx}=0
\end{equation}
for $\delta$ small. We now consider a normalized Lax pair for \eqref{eq: ILW delta}. Let 
\begin{equation}\label{Lax pair: L delta}
L_u^\delta=e^{\delta D}\left[\tfrac{1}{\delta}\left(D-\tfrac1{2\delta}\right)-\tilde u\right]e^{\delta D}+\tfrac1{2\delta^2},
\end{equation}
\begin{equation}\label{Lax pair: B delta}
B_u^\delta=e^{\delta D}(\tfrac i \delta D^2+i\widetilde{T^\delta_-u_x})e^{-\delta D}-\tfrac i\delta L_u^\delta.
\end{equation}
Here $T^\delta_\pm=T^\delta \pm i I$. One can find the connection from the normalized Lax pair to the standard one \eqref{Lax pair: L}, \eqref{Lax pair: B} by inserting $\delta$ in front of $D$ and normalize to get a finite limit. Note that in addition to the usual normalization of adding constants depending on $\delta$, we also need some unusual normalizations such as shifting $D$ to $D-\frac1{2\delta}$, and the addition of an entire copy of $L_u^\delta$ in $B_u^\delta$, which has no effect in the commutator $[L_u^\delta,B_u^\delta]$, but plays an important role in taming the limiting behavior of the $B_u^\delta$ operator. Let us first check \eqref{Lax pair: L delta}, \eqref{Lax pair: B delta} are indeed a Lax pair for \eqref{eq: ILW delta}. In fact, by a calculation similar to Lemma \ref{lem: Lax pair} we get 
\begin{equation}\label{eq: B.lax}
-\partial_t e^{-\delta D}L_u^\delta+e^{-\delta D}[L_u^\delta,B_u^\delta]=\left(\widetilde{u_t}+\tfrac1{\delta^2}\widetilde{u_x}+\tfrac1\delta \widetilde{T^\delta u_{xx}}+2\widetilde{uu_x}\right)e^{\delta D}=0,
\end{equation}
so the assertion holds. Instead of looking at limits of $L^\delta_u$ and $B^\delta_u$, we renormalize and look at 
\begin{align}
    \overline L^\delta_u &= e^{-\delta D}L^\delta_u e^{\delta D}=\left[\tfrac{1}{\delta}\left(D-\tfrac1{2\delta}\right)-\tilde u\right]e^{2\delta D}+\tfrac1{2\delta^2},  \\
    \overline B_u^\delta &= e^{-\delta D}B_u e^{\delta D}=\tfrac i \delta D^2+i\widetilde{T^\delta_-u_x}-\tfrac i\delta \overline L_u^\delta.
\end{align}
\eqref{eq: B.lax} can be written as 
\begin{equation}
    -\partial_t \overline L^\delta_u+[\overline L^\delta_u,\overline B^\delta_u]=\left(\widetilde{u_t}+\tfrac1{\delta^2}\widetilde{u_x}+\tfrac1\delta \widetilde{T^\delta u_{xx}}+2\widetilde{uu_x}\right)e^{2\delta D}=0.
\end{equation}
Let $\epsilon>0$ be some small constant. Regard $\overline L _u^\delta$ and $\overline B_u^\delta$ as operators from $(\langle D\rangle^{4}+e^{\epsilon D})^{-1}L^2$ to $L^2$, where $(\langle D\rangle^{4}+e^{\epsilon D})^{-1}L^2=\{f\in L^2~|~(\langle \xi\rangle^4+e^{\epsilon\xi})\hat f\in L^2\}$. We then have
\begin{align}
\overline L_u^\delta
&=D^2-\tilde u +\delta\left(\tfrac43 D^3-2\tilde u D\right)+\mathcal O(\delta^2),\\
\overline B_u^\delta&=
\tfrac i\delta (D^2-\tilde {u})+i\widetilde{Du}-\tfrac i\delta \overline L_u^\delta+\mathcal O(\delta)\notag\\
&=-i\left(\tfrac43D^3-2\tilde u D-\widetilde{Du}\right)+\mathcal O(\delta).
\end{align}
Here the error is measured in the $(\langle D\rangle^{4}+e^{\epsilon D})^{-1}L^2\to L^2$ operator norm. 
It follows that
\begin{align}
\overline L_u^0&=D^2-\tilde u,\\
\overline B_u^0&=-i\left(\tfrac43D^3-\tilde u D-D\widetilde{u}\right).
\end{align}
Note that these are the Lax pair for the KdV equation \eqref{eq: KdV}.

The limit to the BO equation turns out to be less straightforward. We first take a different $\delta$-scaling of the ILW equation:
\begin{equation}\label{eq: ILW delta BO}
u_t+\tfrac1\delta u_x+2uu_x+T^\delta u_{xx}=0.
\end{equation}
Assuming $u$ as before, we have as $\delta\to\infty$:
\begin{equation}
T^\delta u_{xx} = Hu_{xx}+\mathcal O(\delta^{-2}),
\end{equation}
where the error is measured in $L^\infty_tH^{k-2}_x$.
Here $H=i\text{sgn}(D)$ is the Hilbert transform.
Thus the left hand side of \eqref{eq: ILW delta BO} is
\begin{equation}
u_t+2uu_x+Hu_{xx}+\mathcal O(\delta^{-2}).
\end{equation}
Thus a solution to \eqref{eq: ILW delta BO} can be regarded as an approximate solution to the BO equation
\begin{equation}
u_t+2uu_x+Hu_{xx}=0
\end{equation}
for $\delta$ large. We now consider a different normalization of the Lax pair:
\begin{align}
L &= e^{\delta D}\left(D-\tilde u\right)e^{\delta D},\\
B &= e^{-\delta D}\left(iD^2+\tfrac{iD}{\delta}+i\widetilde{T^\delta_+u_x}\right)e^{\delta D}\\
&=e^{\delta D}\left(iD^2+\tfrac{iD}{\delta}+i\widetilde{T^\delta_-u_x}\right)e^{-\delta D}.\notag
\end{align}
For simplicity of notation, we suppress the superscript $\delta$ and subscript $u$ in $L$ and $B$.
A calculation as before shows 
\begin{equation}\label{eq: Lax pre BO 1}
-\partial_t e^{-\delta D}L +e^{-\delta D}[L,B]=\left(\widetilde{u_t}+\tfrac1\delta\widetilde{u_x}+2\widetilde{uu_x}+\widetilde{T^\delta u_{xx}}\right)e^{\delta D}=0.
\end{equation}
However, these operators don't obviously converge to the Lax pair of the BO equation. To clarify the convergence, we make a series of changes to the Lax pair.  
For $k\in \mathbb R$, and $\eta=-ke^{-2\delta k}$, let
\begin{align}
    L_1&=\widetilde{e^{ikx}}e^{-\delta D}(L-\eta) e^{-\delta D}\widetilde{e^{-ikx}}\\
    &=D-k(1-e^{-2\delta D})-\tilde u,\notag\\
    B_1&=\widetilde{e^{ikx}}e^{\delta D}Be^{-\delta D}\widetilde{e^{-ikx}}-2iDL_1\\
    &=iD^2+\tfrac{iD}{\delta}+i\widetilde{T^\delta_+u_x}-2iD(D+ke^{-2\delta D}-\tilde u)+ik^2-\tfrac{ik}{\delta}.\notag\\
    &=-iD^2-2ikDe^{-2\delta D}+\tfrac{iD}{\delta}+i\widetilde{T^\delta_+u_x}+2iD\tilde u+ik^2-\tfrac{ik}{\delta}.
\end{align}
Note that
\begin{align}
    -\partial_t L_1+[L_1,B_1]&=\widetilde{e^{ikx}}e^{-\delta D}\{\partial_t L+[L,B]+(B-e^{2\delta D}Be^{-2\delta D})(L-\eta)\} e^{-\delta D}\widetilde{e^{-ikx}}\notag\\
    &\quad -2i[L_1,DL_1]\notag\\
    &=\widetilde{e^{ikx}}(e^{-\delta D}Be^{\delta D}-e^{\delta D}Be^{-\delta D})\widetilde{e^{-ikx}}L_1+2i[D,L_1]L_1\notag\\
    &=2i\widetilde{Du}L_1+2i[D,-\tilde u]L_1\notag\\
    &=0.
\end{align}
Thus $L_1$, $B_1$ is also a Lax pair.
Take $k=1$ and let 
\begin{align}
    L_2&=L_1+k\notag\\
    &= D+e^{-2\delta D}-\tilde u,\\
    B_2&=B_1-ik^2+\tfrac{ik}{\delta}\notag\\
    &=-iD(D+2e^{-2\delta D})+\tfrac{iD}{\delta}+i\widetilde{T^\delta u_x}+i(D\tilde u+\tilde u D).
\end{align}
It's easy to see that $L_2$, $B_2$ is still a Lax pair, and that $L_2$ is self-adjoint and bounded from below, whereas $B_2$ is skew-adjoint. Thus for $\kappa>0$ sufficiently large, $L_2+\kappa$ and $B_2+\kappa$ are both invertible. Let 
\begin{equation}
    L_3=L_2+\kappa,~B_3=B_2+\kappa.
\end{equation}
We have
\begin{equation}
    -\partial_t L_3+[L_3,B_3]=0,
\end{equation}
which implies
\begin{equation}\label{eq: inverse lax}
    -B_3^{-1}(\partial_t L_3^{-1})B_3^{-1}+[B_3^{-1},L_3^{-1}]=0.
\end{equation}
Denote $h(\xi,\delta) = \xi+e^{-2\delta \xi}+\kappa$. Note that $h\ge \frac{1+\log(2\delta)}{2\delta}+\kappa$. Now
\begin{align}
    L_3^{-1}&=[h(D,\delta)-\tilde u]^{-1}\notag\\
    &=[h(D,\delta)]^{-1/2}\{1-[h(D,\delta)]^{-1/2}\tilde u[h(D,\delta)]^{-1/2}\}^{-1}[h(D,\delta)]^{-1/2}
\end{align}
Recall that $\kappa>0$ is sufficiently large. As $\delta\nearrow \infty$,  $\|[h(D,\delta)]^{-1/2}\|_{L^2\to L^2}$ is uniformly small and $[h(D,\delta)]^{-1/2}$ converges strongly to $(D+\kappa)^{-1/2}C_+$, where $C_+=\chi_{\R^+}(D)$ is the Cauchy (Szeg\"o) projection onto positive frequencies.  It follows that $L_3^{-1}$ converges strongly to 
\begin{align}
    L_3^{-1,\infty} =~&(D+\kappa)^{-1/2}C_+\notag\\&\quad\{1-(D+\kappa)^{-1/2}C_+
     \tilde u(D+\kappa)^{-1/2}C_+\}^{-1}(D+\kappa)^{-1/2}C_+.
\end{align}
$L^{-1,\infty}_3$ is obviously not invertible on $L^2$. However, if we restrict it to $L^{2,+}=C_+L^2$, it becomes invertible. If fact $L_3^{-1,\infty}\big|_{L^2,+}=(L_4^\infty)^{-1}$, where 
\begin{equation}
    L_4^\infty=C_+(D+\kappa-\tilde u)C_+=D-C_+\tilde u+\kappa.
\end{equation}
We can take the strong limit of $B_3^{-1}$ similarly to get 
\begin{align}
    B_3^{-1,\infty}=~&[-iD^2+\kappa]^{-1/2}C_+\notag\\
    &\quad \{1+[-iD^2+\kappa]^{-1/2}C_+[i\widetilde{Hu_x}+i(D\tilde u +\tilde u D)][-iD^2+\kappa]^{-1/2}C_+\}^{-1}\notag\\
    &\qquad [-iD^2+\kappa]^{-1/2}C_+.
\end{align}
To provide the necessary bounds on the operators, one only need the elementary uniform estimates
\begin{equation}
    \left|\frac{1}{(-i\xi^2-2i\xi e^{-2\delta \xi}+\kappa)^{1/2}}\right|\lesssim \frac1{\sqrt \kappa},
\end{equation}
\begin{equation}
    \left|\frac{\xi}{(-i\xi^2-2i\xi e^{-2\delta \xi}+\kappa)^{1/2}}\right|\lesssim 1.
\end{equation}
We can again write $B_3^{-1,\infty}\big|_{L^{2,+}}=(B_4^\infty)^{-1}$, where
\begin{equation}
    B_4^\infty = -iD^2+ C_+[i\widetilde{Hu_x}+i(D\tilde u+\tilde uD)]+ \kappa.
\end{equation}
The limit of \eqref{eq: inverse lax} gives 
\begin{equation}
    -(B_4^\infty)^{-1}\left(\partial_t (L_4^\infty)^{-1}\right)(B_4^\infty)^{-1}+[{(B_4^\infty)^{-1},(L_4^\infty)^{-1}}]=0,
\end{equation}
which then implies
\begin{equation}
    -\partial_t L_4^\infty+[L_4^\infty, B_4^\infty] = 0.
\end{equation}
Finally letting $L_5^\infty = L_4^\infty-\kappa$, $B_5^\infty = B_4^\infty-\kappa$, we recover the Lax pair for the BO equation on $L^{2,+}$:
\begin{align}
    L_5^\infty&=D-C_+\tilde u,\\
    B_5^\infty &= -iD^2+ C_+[i\widetilde{Hu_x}+i(D\tilde u+\tilde uD)].
\end{align}                            %%  Limits to KdV/BO
%% Appendix

\bibliographystyle{amsplain}
\bibliography{ILW}

\end{document}